\theoremstyle{plain}
        \newtheorem{theorem}{Theorem}[section]
        \newtheorem*{theorem*}{Theorem}
        \newtheorem*{conj*}{Conjecture}
        \newtheorem{lemma}[theorem]{Lemma}
        \newtheorem{prop}[theorem]{Proposition}
        \newtheorem{cor}[theorem]{Corollary}
\theoremstyle{definition}
        \newtheorem{definition}[theorem]{Definition}
        \newtheorem{rem}[theorem]{Remark}
         \newtheorem{rems}[theorem]{Remarks}
         \newtheorem*{assumptions}{The Assumptions}
\theoremstyle{remark}
        \newtheorem*{remark}{Remark}
\numberwithin{equation}{section}
\numberwithin{theorem}{section}
\numberwithin{table}{section}
\numberwithin{figure}{section}
\providecommand{\defn}[1]{\emph{#1}}
\newcommand{\diam}  {\operatorname{diam}}
\newcommand{\inte}  {\operatorname{inte}}
\newcommand{\inter}  {\operatorname{int}}
\newcommand{\id} {\operatorname{id}}
\newcommand{\card} {\operatorname{card}}
\newcommand{\supp}{\operatorname{supp}}
\newcommand{\R}{\mathbb{R}}
\newcommand{\B}{\mathbb{B}}    
\newcommand{\C}{\mathbb{C}}      
\newcommand{\N}{\mathbb{N}}      
\newcommand{\Z}{\mathbb{Z}}      
\newcommand{\D}{\mathbb{D}}     
\renewcommand{\H}{\mathbb{H}}    
\providecommand{\abs}[1]{\lvert#1\rvert}
\providecommand{\Abs}[1]{\left|#1\right|}
\providecommand{\Absbig}[1]{\bigl|#1\bigr|}
\providecommand{\Absbigg}[1]{\biggl|#1\biggr|}
\providecommand{\AbsBig}[1]{\Bigl|#1\Bigr|}
\providecommand{\AbsBigg}[1]{\Biggl|#1\Biggr|}
\providecommand{\norm}[1]{\|#1\|}
\providecommand{\Norm}[1]{\left\|#1\right\|}
\providecommand{\Normbig}[1]{\bigl\|#1\bigr\|}
\providecommand{\Normbigg}[1]{\biggl\|#1\biggr\|}
\providecommand{\NormBig}[1]{\Bigl\|#1\Bigr\|}
\providecommand{\NormBigg}[1]{\Biggl\|#1\Biggr\|}
\renewcommand{\:}{\colon}
\def\({\left(}
\def\({\right)}
\def\[{\left[}
\def\]{\right]}
\renewcommand{\b}{\mathfrak{b}}
\newcommand{\w}{\mathfrak{w}}
\renewcommand{\c}{\mathfrak{c}}
\newcommand{\I}{\mathbf{i}}
\newcommand{\crit}{\operatorname{crit}}
\newcommand{\post}{\operatorname{post}}
\newcommand{\LIP}{\operatorname{LIP}}
\newcommand{\CC}{\mathcal{C}}
 \newcommand{\DD}{\mathbf{D}}
\newcommand{\X} {\mathbf{X}}
\newcommand{\E} {\mathbf{E}}
\newcommand{\V} {\mathbf{V}}
\newcommand{\W} {\mathbf{W}}
\newcommand{\CCC}{C}
\newcommand{\PPP}{\mathcal{P}}
\newcommand{\MMM}{\mathcal{M}}
\newcommand{\Holder}[1] {\CCC^{0,#1}}
\newcommand{\Hseminorm}[2] {\Abs{#2}_{#1}}
\newcommand{\Hseminormbig}[2] {\Absbig{#2}_{#1}}
\newcommand{\Hseminormbigg}[2] {\Absbigg{#2}_{#1}}
\newcommand{\HseminormBig}[2] {\AbsBig{#2}_{#1}}
\newcommand{\Hnorm}[3] {\Norm{#2}_{\Holder{#1}{#3}}}
\newcommand{\Hnormbig}[3] {\Normbig{#2}_{\Holder{#1}{#3}}}
\newcommand{\Hnormbigg}[3] {\Normbigg{#2}_{\Holder{#1}{#3}}}
\newcommand{\HnormBig}[3] {\NormBig{#2}_{\Holder{#1}{#3}}}
\newcommand{\NHnorm}[4] {\Norm{#3}^{[#2]}_{\Holder{#1}{#4}}}
\newcommand{\NHnormbig}[4] {\Normbig{#3}^{[#2]}_{\Holder{#1}{#4}}}
\newcommand{\NHnormBig}[4] {\NormBig{#3}^{[#2]}_{\Holder{#1}{#4}}}
\newcommand{\HseminormD}[2] {\Abs{#2}_{#1}}
\newcommand{\NHnormD}[3] {\Norm{#3}^{[#2]}_{#1}}
\newcommand{\vertiii}[1]{{\left\vert\kern-0.25ex\left\vert\kern-0.25ex\left\vert #1 
    \right\vert\kern-0.25ex\right\vert\kern-0.25ex\right\vert}}
\newcommand{\OpHnormD}[2] {\vertiii{#2}_{#1}}
\newcommand{\NOpHnormD}[3] {\vertiii{#3}^{[#2]}_{#1}}
\newcommand{\RR}{\mathcal{L}}
\newcommand{\RRR}{\mathbbm{L}}
\newcommand{\MM}{\mathbbm{M}}
\newcommand{\lcm}{\operatorname{lcm}}
\newcommand{\XX}{\mathbb{X}}
\newcommand{\wt}[1]{\widetilde{#1}}
\newcommand{\Arg}{\operatorname{Arg}}
\newcommand{\minus}{\scalebox{0.6}[0.6]{$-\!\!\;$}}
\newcommand{\circsmall}{\scalebox{0.6}[0.6]{$\circ$}}
\newcommand{\Orb}{\mathfrak{P}}
\newcommand{\ti}{\vartriangle}
\newcommand{\ee}{\boldsymbol{\shortparallel}}
\newcommand{\e}{\boldsymbol{\shortmid}}
\newcommand{\po}{\bullet}
\newcommand{\p}{\operatorname{p}}
\newcommand{\np}{\operatorname{np}}
\newcommand{\DS}{\mathfrak{D}}
\newcommand{\Inv}{\operatorname{Inv}}
\begin{document}
\title[Prime orbit theorems for expanding Thurston maps]{Prime orbit theorems for expanding Thurston maps}
\author{Zhiqiang~Li, Tianyi~Zheng}
\address{Institute for Mathematical Sciences, Stony Brook University, Stony Brook, NY 11794--3660}
\email{lizq@math.stonybrook.edu}
\address{Department of Mathematics, University of California, San Diego, San Diego, CA 92093--0112}
\email{tzheng2@ucsd.edu}

\subjclass[2010]{Primary: 37C30; Secondary: 37C35, 37F15, 37B05, 37D35}

\keywords{expanding Thurston map, postcritically-finite map, rational map, Latt\`{e}s map, Prime Orbit Theorem, Prime Number Theorem, Ruelle zeta function, dynamical zeta function, dynamical Dirichlet series, thermodynamical formalism, Ruelle operator, transfer operator, strong non-integrability, non-local integrability.}

\begin{abstract}
We obtain an analogue of the prime number theorem for a class of branched covering maps on the $2$-sphere called expanding Thurston maps $f$, which are topological models of some rational maps without any smoothness or holomorphicity assumption. More precisely, by studying dynamical zeta functions and, more generally, dynamical Dirichlet series for $f$, we show that the number of primitive periodic orbits of $f$, ordered by a weight on each point induced by a non-constant (eventually) positive real-valued H\"{o}lder continuous function $\phi$ on $S^2$ satisfying some additional regularity conditions, is asymptotically the same as the well-known logarithmic integral, with an exponential error term. Such a result, known as a Prime Orbit Theorem, follows from our quantitative study of the holomorphic extension properties of the associated dynamical zeta functions and dynamical Dirichlet series. In particular, the above result applies to postcritically-finite rational maps whose Julia set is the whole Riemann sphere. Moreover, we prove that the regularity conditions needed here are generic; and for a Latt\`{e}s map $f$ and a continuously differentiable (eventually) positive function $\phi$, such a Prime Orbit Theorem holds if and only if $\phi$ is not co-homologous to a constant. 
\end{abstract}

\maketitle

\tableofcontents

\section{Introduction}

\subsection{History and motivations}

Counting is probably one of the very first mathematical activities that predates any written history of mankind. It remains at the core of virtually all fields of mathematics to count important objects in the field and study their statistical properties. 

One useful idea in such studies is to code the important objects in a function in the form of a polynomial or a series. Perhaps the most famous of such functions is the \emph{Riemann zeta function}
\begin{equation*}
\zeta_{\operatorname{Riemann}} (s) \coloneqq \sum\limits_{n=1}^{+\infty} \frac{1}{n^s}  
                            =  \prod\limits_{p \text{ prime}}  ( 1 - p^{-s}  )^{-1},    \qquad\qquad  \Re(s) > 1,
\end{equation*}
whose analytic properties were studied by B.~Riemann in the 19th century, even though the product formula was already known to L.~Euler in the 18th century. Analytic properties of the Riemann zeta function are closely related to the distribution of prime numbers. It is known that the assertion that $\zeta_{\operatorname{Riemann}}$ has a non-vanishing holomorphic extension on the line $\Re(s) = 1$ except for a simple pole at $s=1$ is equivalent to the famous Prime Number Theorem of Ch.~J.~de~la~Vall\'ee-Poussin and J.~Hadamard stating that the number $\pi(T)$ of primes no larger than $T>0$ satisfies 
\begin{equation*}
\pi(T) \sim \operatorname{Li} (T) ,  \qquad\qquad  \text{as } T \to +\infty,
\end{equation*}
where $\operatorname{Li}(y)$ is the well-known \defn{Eulerian logarithmic integral function}
\begin{equation}      \label{eqDefLogIntegral}
\operatorname{Li} (y) \coloneqq \int_2^y\! \frac{1}{\log u} \,\mathrm{d} u,  \qquad\qquad y>0.
\end{equation}
A more careful study of $\zeta_{\operatorname{Riemann}}$ reveals that a condition of H.~von~Koch from 1901 \cite{vK01} on the error term in the Prime Number Theorem, namely,
\begin{equation*}
\pi(T) = \operatorname{Li} (T)  + \operatorname{O} \bigl( \sqrt{T} \log T \bigr),  \qquad\qquad  \text{as } T \to +\infty,
\end{equation*}
is equivalent to the Riemann hypothesis (see also \cite[Section~5.1]{BCRW08}).

The idea of studying zeta functions was first introduced by A.~Selberg in 1956 from number theory into geometry, where (primitive) closed geodesics serve the role of prime numbers. He defined a zeta function
\begin{equation}    \label{eqDefZetaFnSelberg}
\zeta_{\operatorname{Selberg}} (s) \coloneqq   \prod\limits_{\gamma \in \Orb }   \prod\limits_{ n=0 }^{+\infty}   \bigl( 1 - e^{ -(s + n) l(\gamma) }  \bigr),    \qquad\qquad  \Re(s) > 1,
\end{equation}
where $\Orb$ denotes the set of primitive closed geodesics and $l(\gamma)$ is the length of the geodesic $\gamma$ \cite{Se56}.

H.~Huber established the first \emph{Prime Geodesic Theorem}, as an analogue of the Prime Number Theorem, for surfaces of constant negative curvature in 1961, where A.~Selberg's work \cite{Se56} was implicitly used.

\begin{theorem}[H.~Huber \cite{Hu61}]
Let $M$ be a compact surface of constant curvature $-1$, and by $\pi(T)$ we denote the number of primitive closed geodesics $\gamma$ of length $l(\gamma) \leq T$. 

Then there exists $\alpha \in (0,1)$ such that 
\begin{equation*}
\pi(T) = \operatorname{Li}  \bigl(  e^T  \bigr)  + \operatorname{O} \bigl( e^{\alpha T} \bigr),  \qquad\qquad  \text{as } T \to +\infty.
\end{equation*} 
\end{theorem}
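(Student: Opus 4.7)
My plan is to follow the classical template that derives the Prime Number Theorem from the Riemann zeta function, but with the Selberg zeta function $\zeta_{\operatorname{Selberg}}$ from~\eqref{eqDefZetaFnSelberg} playing the role of $\zeta_{\operatorname{Riemann}}$ and the primitive closed geodesics of $M$ playing the role of the rational primes. Differentiating the product in~\eqref{eqDefZetaFnSelberg} termwise gives, for $\Re(s)>1$,
\begin{equation*}
\frac{\zeta_{\operatorname{Selberg}}'(s)}{\zeta_{\operatorname{Selberg}}(s)}
=\sum_{\gamma\in\Orb}\sum_{n=0}^{+\infty}\frac{l(\gamma)\,e^{-(s+n)l(\gamma)}}{1-e^{-(s+n)l(\gamma)}},
\end{equation*}
a Dirichlet-series-type generating function whose Mellin/Perron inversion will encode the weighted counting function $\theta(T)\coloneqq\sum_{\gamma\in\Orb,\,l(\gamma)\leq T}l(\gamma)$, the geometric analogue of Chebyshev's $\theta$.

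The central analytic input is a meromorphic continuation of $\zeta_{\operatorname{Selberg}}$ to all of $\mathbb{C}$ together with precise control on the location of its zeros. I would obtain this from the Selberg trace formula applied to a convenient family of test functions (for instance ones built from the heat kernel or resolvent of the Laplace--Beltrami operator $\Delta_M$): this identifies the non-trivial zeros of $\zeta_{\operatorname{Selberg}}$ with the Laplace spectrum $\{\lambda_j\}$ via the relation $\lambda_j=s_j(1-s_j)$. Because $M$ is compact there is a spectral gap: the eigenvalue $\lambda_0=0$ produces a simple zero at $s=1$, while every other eigenvalue $\lambda_j>0$ contributes zeros with real part at most $s_{*}\coloneqq\max\{1/2,\,(1+\sqrt{1-4\lambda_1})/2\}<1$. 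Thus $\zeta_{\operatorname{Selberg}}$ has a non-vanishing holomorphic extension to the strip $\{s\in\mathbb{C}:s_{*}<\Re(s)\leq 1,\ s\neq 1\}$.

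With these analytic properties in hand, the third step is a contour-shift argument. I would apply Perron's formula to $-\zeta_{\operatorname{Selberg}}'/\zeta_{\operatorname{Selberg}}$, move the contour past $\Re(s)=1$ to a vertical line inside the zero-free strip, collect the residue $e^T$ from the simple pole at $s=1$, and bound the shifted contour using polynomial growth estimates for $\zeta_{\operatorname{Selberg}}$ in the imaginary direction (again a byproduct of the trace formula). After discarding the exponentially small contribution of non-primitive geodesics, this yields
\begin{equation*}
\theta(T)=e^T+O\bigl(e^{\beta T}\bigr)\qquad\text{for some }\beta\in(s_{*},1),
\end{equation*}
and an Abel summation based on the identity $\pi(T)=\int_{0^+}^{T}l^{-1}\,\mathrm{d}\theta(l)$ converts this into $\pi(T)=\operatorname{Li}(e^T)+O(e^{\alpha T})$ for any $\alpha\in(\beta,1)$, as required.

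The hard part is the second step: producing the meromorphic continuation of $\zeta_{\operatorname{Selberg}}$ and pinning down the location of its zeros. This genuinely relies on non-commutative harmonic analysis on $\operatorname{PSL}(2,\mathbb{R})$ and on the compactness of $M$, since the Selberg trace formula is the only known bridge between the length spectrum of $M$, which drives $\zeta_{\operatorname{Selberg}}$, and the Laplace spectrum, which controls its zeros; the polynomial growth bounds in the imaginary direction needed to justify the contour shift also come out of this same machinery. Once the trace formula and the accompanying growth estimates are in place, the rest of the argument is a standard Tauberian manipulation.
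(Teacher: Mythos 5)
The paper does not prove this statement---it cites it directly from Huber \cite{Hu61} and only remarks that Selberg's work \cite{Se56} was implicitly used---so there is no proof in the paper to compare against. Your sketch is the standard modern argument and is consistent with the paper's framing: it is the constant-curvature, compact special case of the three-step Pollicott--Sharp paradigm that the paper lays out in the next paragraphs (bound each term of the log-derivative of the zeta function, produce a non-vanishing holomorphic extension across a vertical strip with a single simple pole, then extract the counting asymptotic by a contour shift and a Tauberian/Abel summation step). Two imprecisions worth tightening. First, $\zeta_{\operatorname{Selberg}}$ is entire of order $2$, so it does \emph{not} have polynomial growth in $\abs{\Im(s)}$; what the contour shift really needs is a polynomial bound on the logarithmic derivative $\zeta_{\operatorname{Selberg}}'/\zeta_{\operatorname{Selberg}}$ along a vertical line inside the zero-free strip, which is obtained from bounds on $\log\abs{\zeta_{\operatorname{Selberg}}}$ together with the non-vanishing (precisely the maneuver the paper carries out for its own zeta functions in Lemma~\ref{lmLogDerivFromEE} and Theorem~\ref{thmLogDerivative}), or one can replace Perron by a smoothed integral and avoid the pointwise bound altogether. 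Second, the expansion $-\log\bigl(1-e^{-(s+n)l(\gamma)}\bigr)=\sum_{k\ge1}k^{-1}e^{-k(s+n)l(\gamma)}$ contributes $k\ge 2$ terms (repeated traversals) and $n\ge 1$ terms; the clean way to discard them is that their combined Dirichlet series is holomorphic for $\Re(s)>\tfrac12$, hence produces no pole to the right of the zero-free strip, rather than appealing informally to exponential smallness in $T$. With those two points made explicit the argument is correct.
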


The zeta functions were then introduced into dynamics by M.~Artin and B.~Mazur \cite{AM65} in 1965 for diffeomorphisms and by S.~Smale \cite{Sm67} in 1967 for Anosov flows, where (primitive) periodic orbits serve the role of prime numbers. S.~Smale used A.~Selberg's formulation in the context of geodesic flows on surfaces of constant negative curvature due to the direct correspondence between closed geodesics on the surface and periodic orbits of the geodesic flow. A related formulation of zeta functions for flows was later proposed and studied by D.~Ruelle \cite{Rue76a, Rue76b, Rue76c} in 1976, which behaves better under changes of time scale in the more general context of Axiom A systems. More precisely, for Anosov flows, the Ruelle zeta function is defined as
\begin{equation}   \label{eqDefZetaFnRuelle}
\zeta_{\operatorname{Ruelle}} (s) \coloneqq   \prod\limits_{\gamma \in \Orb }   \bigl( 1 - e^{ - s l(\gamma) }  \bigr)^{-1} ,    \qquad\qquad  \Re(s) > 1,
\end{equation}
where $\Orb$ denotes the set of primitive periodic orbits of the flow and $l(\gamma)$ is the length of the orbit $\gamma$. With this interpretation of $\Orb$ and $l(\gamma)$, we have
\begin{equation}   \label{eqRuelleSelberg}
\zeta_{\operatorname{Ruelle}} (s)  
= \frac{ \zeta_{\operatorname{Selberg}}(s+1)}{ \zeta_{\operatorname{Selberg}}(s) }
\end{equation}
when both sides are defined.

Extensive researches have been carried out in geometry and dynamics in establishing Prime Geodesic Theorems on various spaces and Prime Orbit Theorems for various flows and other dynamical systems. We recall but a few such results here and by no means claim to give a complete review of the literature.

We denote by $\pi(T)$ the number of primitive periodic orbits $\gamma$ of ``length'' (appropriately interpreted for the corresponding dynamical system) $l(\gamma) \leq T$. By a Prime Orbit Theorem without an error term, we mean the assertion that there exists a constant $h>0$ such that 
$\pi(T) \sim  \operatorname{Li}  \bigl(  e^{hT}  \bigr)$ as $T \to +\infty$. By a Prime Orbit Theorem with an exponential error term, we mean the assertion that there exist constants $h>0$ and $\delta\in(0,h)$ such that 
$\pi(T)=  \operatorname{Li}  \bigl(  e^{hT}  \bigr)  + \operatorname{O}\bigl(  e^{(h-\delta)T}  \bigr)$ as $T \to +\infty$. 

Generalizing the first order asymptotics of H.~Huber for geodesic flows over compact surfaces of constant negative curvature, G.~A.~Margulis established in his thesis in 1970 \cite{Mar04} (see also \cite{Mar69}) a Prime Orbit Theorem without an error term for the geodesic flows over compact Riemannian manifolds with variable negative curvature, and more generally, for weak-mixing Anosov flows preserving a smooth volume. Similar results were obtained by P.~Sarnak in his thesis in 1980 for non-compact surfaces of finite volume \cite{Sa80}.

For geodesic flows over convex-cocompact surfaces of constant negative curvature, a Prime Orbit Theorem without an error term was obtained conditionally by L.~Guillop\'e \cite{Gu86} and later unconditionally by S.~P.~Lalley \cite{La89}.

The exponential error terms in the Prime Orbit Theorems in the contexts above (except in H.~Huber's result) were out of reach until D.~Dolgopyat's seminal work on the exponential mixing of Anosov flows in his thesis \cite{Dol98}, where he developed an ingenuous approach to get new upper bounds on the norms of the complex Ruelle (transfer) operators on some appropriate function spaces. M.~Pollicott and R.~Sharp \cite{PoSh98} combined these bounds with techniques from number theory to get a Prime Orbit Theorem with an exponential error term for the geodesic flows over compact surfaces of variable negative curvature. For related works on closed geodesics satisfying some homological constraints, see R.~Phillips and ~P.~Sarnak \cite{PhSa87}, S.~P.~Lalley \cite{La89}, A.~Katsuda and T.~Sunada \cite{KS90}, M.~Pollicott \cite{Po91}, R.~Sharp \cite{Sh93}, M.~Babillot and F.~Ledrappier \cite{BabLe98}, M.~Pollicott and R.~Sharp \cite{PoSh98}, N.~Anantharaman \cite{An00a, An00b}, etc.

The elegant idea of M.~Pollicott and R.~Sharp in \cite{PoSh98} used in establishing the error term for their Prime Orbit Theorem, is summarized in a nutshell below:
\begin{enumerate}
\smallskip
\item[(1)] Obtain a quantitative bound for each term in the additive form of the Ruelle zeta function $\zeta_{\operatorname{Ruelle}}$ (compare with (\ref{eqDefZetaFn}) and (\ref{eqZetaFnOrbForm})) in terms of the operator norm of the Ruelle operator via an argument of D.~Ruelle \cite{Rue90} that matches the preimage points and periodic points of the symbolic dynamics induced by the Bowen--Ratner symbolic coding for the geodesic flows.

\smallskip
\item[(2)] By combining the bound above with D.~Dolgopyat's bound \cite{Dol98} on the norms of the Ruelle operator on some appropriate function spaces, derive a non-vanishing holomorphic extension to $\zeta_{\operatorname{Ruelle}}$ on a vertical strip $h-\epsilon \leq \Re(s) \leq h$, for some $\epsilon>0$, except for a simple pole at $s=h$, where $h\in\R$ is the smallest number such that the additive form of $\zeta_{\operatorname{Ruelle}} (s)$ converges on $\{ s\in\C \,|\, \Re(s)> h\}$, and additionally, obtain a quantitative bound of $\abs{ \zeta_{\operatorname{Ruelle}} }$ on this strip (compare with (\ref{eqZetaBound_SFT})).

\smallskip
\item[(3)] Establish the Prime Orbit Theorem with an exponential error term from the bound of $\abs{ \zeta_{\operatorname{Ruelle}} }$ above via standard arguments from analytic number theory.
\end{enumerate}

Variations and simplifications of this general strategy of M.~Pollicott and R.~Sharp, relying on the machinery of D.~Dolgopyat, have been adapted by many authors in various contexts, see for example, F.~Naud \cite{Na05}, L.~N.~Stoyanov \cite{St11}, P.~Giulietti, C.~Liverani, and M.~Pollicott \cite{GLP13}, H.~Oh and D.~Winter \cite{OW16, OW17}, D.~Winter \cite{Wi16}, etc. The importance of the analytic properties of various dynamical zeta functions in understanding the distribution of periodic orbits now becomes apparent. Not surprisingly, in view of the connection between \cite{Dol98} and \cite{PoSh98}, dynamical zeta functions are also closely related to the decay of correlations and resonances. As M.~Pollicott has put it, these are basically ``two sides of the same coin''. For related researches on the side of decay of correlations, see for example,  D.~Dolgopyat \cite{Dol98}, C.~Liverani \cite{Liv04}, A.~Avila, S.~Gou\"ezel, and J.~C.~Yoccoz \cite{AGY06}, L.~N.~Stoyanov \cite{St01, St11}, V.~Baladi and C.~Liverani \cite{BalLiv12}, V.~Baladi, M.~Demers, and C.~Liverani \cite{BDL18}, etc.

In the context of convex-cocompact surfaces $M$ of constant negative curvature, i.e., $M = \Gamma \backslash \H^2$ being the quotient of a \emph{classical Fuchsian Schottky group} $\Gamma$ (see \cite[Section~4.1]{Na05}) acting on the hyperbolic plane $\H^2$, F.~Naud \cite{Na05} established in 2005 a Prime Orbit Theorem with an exponential error term by producing some vertical strip in $\C$ on which the Selberg zeta function $\zeta_{\operatorname{Selberg}}$ (resp.\ the Ruelle zeta function $\zeta_{\operatorname{Ruelle}}$) has a non-vanishing holomorphic extension except a simple zero (resp.\ pole, see (\ref{eqRuelleSelberg})). For stronger results on the zero free strip and distribution of zeros in these contexts, see the recent works of J.~Bourgain, A.~Gamburd, and P.~Sarnak \cite{BGS11}, F.~Naud \cite{Na14}, H.~Oh and D.~Winter \cite{OW16}, S.~Dyatlov and J.~Zahl \cite{DZ16}, J.~Bourgain and S.~Dyatlov \cite{BD17}.

In the context of subgroups of the group of orientation preserving isometries of higher dimensional real hyperbolic space $\H^n$ and more general settings, T.~Roblin \cite{Ro03} proved a Prime Orbit Theorem without an error term for geometrically finite subgroups, G.~A.~Margulis, A.~Mohammadi, and H.~Oh \cite{MMO14} established an exponential error term for geometrically finite subgroups under additional conditions, and D.~Winter \cite{Wi16} showed a Prime Orbit Theorem with an exponential error term for convex-cocompact subgroups. A form of Prime Orbit Theorem without an error term for abelian covers of some hyperbolic manifolds was established by H.~Oh and W.~Pan \cite{OP18}.

In the same work \cite{Na05}, F.~Naud also established the first Prime Orbit Theorem with an exponential error term in complex dynamics, for a class of hyperbolic polynomials $z^2 + c$, $c\in(-\infty, -2)$. One key feature of this class of polynomials is that their Julia sets are Cantor sets. For an earlier work on dynamical zeta functions for a class of sub-hyperbolic quadratic polynomials, see V.~Baladi, Y.~Jiang, and H.~H.~Rugh \cite{BJR02}. For hyperbolic rational maps, S.~Waddington studied a variation of the Ruelle zeta function defined by strictly preperiodic points instead of periodic points (compare with (\ref{eqDefZetaFn}) and (\ref{eqZetaFnOrbForm})), and established a corresponding form of Prime Orbit Theorem without an error term in \cite{Wad97}.

The study of iterations of polynomials and rational maps, known as complex dynamics, dates back to the work of G.~K{\oe}nigs, E.~Schr\"oder, and others in the 19th century. This subject was developed into an active area of research, thanks to the remarkable works of S.~Latt\`{e}s, C.~Carath\'eodory, P.~Fatou, G.~Julia, P.~Koebe, L.~Ahlfors, L.~Bers, M.~Herman, A.~Douady, D.~P.~Sullivan, J.~H.~Hubbard, W.~P.~Thurston, J.-C.~Yoccoz, C.~McMullen, J.~Milnor, M.~Lyubich, M.~Shishikura, and many others.

In the early 1980s, D.~P.~Sullivan \cite{Su85, Su83} introduced a ``dictionary'', known as \emph{Sullivan's dictionary} nowadays, linking the theory of complex dynamics with another classical area of conformal dynamical systems, namely, geometric group theory, mainly concerning the study of Kleinian groups acting on the Riemann sphere. Many dynamical objects in both areas can be similarly defined and results similarly proven, yet essential and important differences remain.

The Prime Orbit Theorems with exponential error terms of F.~Naud in \cite{Na05} can be considered as another new correspondence in Sullivan's dictionary. Despite active researches on dynamical zeta functions and Prime Orbit Theorems in many areas of dynamical systems, especially the works of L.~N.~Stoyanov \cite{St11}, G.~A.~Margulis, A.~Mohammadi, and H.~Oh \cite{MMO14}, and D.~Winter \cite{Wi16} on the group side of Sullivan's dictionary, the authors are not aware of similar entries in complex dynamics since F.~Naud \cite{Na05}, until the recent work of H.~Oh and D.~Winter \cite{OW17}. At a suggeston of D.~P.~Sullivan regarding holonomies, H.~Oh and D.~Winter established a Prime Orbit Theorem (as well as the equidistribution of holonomies) with an exponential error term for hyperbolic rational maps in \cite{OW17}. A rational map is \emph{hyperbolic} if the closure of the union of forward orbits of critical points is disjoint from its Julia set. The Julia set of a hyperbolic rational map has zero area. A rational map is forward-expansive on some neighborhood of its Julia set if and only if it is hyperbolic. The novelty and emphasis of this paper somewhat differs from that of \cite{OW17}, see Subsection~\ref{subsctPlan} for more details.

In Sullivan's dictionary, Kleinian groups, i.e., discrete subgroups of M\"obius transformations on the Riemann sphere, correspond to rational maps, and convex-cocompact Kleinian groups correspond to rational maps that exhibit strong expansion properties such as hyperbolic rational maps, semi-hyperbolic rational maps, and postcritically-finite sub-hyperbolic rational maps. See insightful discussions on this part of the dictionary in \cite[Chapter~1]{BM17}, \cite[Chapter~1]{HP09}, and \cite[Section~1]{LM97}.

One important question in conformal dynamical systems is: ``\emph{What is special about conformal dynamical systems in a wider class of dynamical systems characterized by suitable metric-topological conditions?}''

W.~P.~Thurston gave an answer to this question in his celebrated combinatorial characterization theorem of \emph{postcritically-finite}  rational maps (i.e., the union of forward orbits of critical points is a finite set) on the Riemann sphere among a class of more general topological maps, known as Thurston maps nowadays \cite{DH93}. A \emph{Thurston map} is a (non-homeomorphic) branched covering map on the topological $2$-sphere $S^2$ whose finitely many critical points are all preperiodic (see Subsection~\ref{subsctThurstonMap} for a precise definition). Thurston's theorem asserts that a Thurston map is essentially a rational map if and only if there exists no so-called \emph{Thurston obstruction}, i.e., a collection of simple closed curves on $S^2$ subject to certain conditions \cite{DH93}. 

Under Sullivan's dictionary, the counterpart of Thurston's theorem in the geometric group theory is Cannon's Conjecture \cite{Ca94}. This conjecture predicts that an infinite, finitely presented Gromov hyperbolic group $G$ whose boundary at infinity $\partial_\infty G$ is a topological $2$-sphere is a Kleinian group. Gromov hyperbolic groups can be considered as metric-topological systems generalizing the conformal systems in the context, namely, convex-cocompact Kleinian groups. Inspired by Sullivan's dictionary and their interest in Cannon's Conjecture, M.~Bonk and D.~Meyer, along with others, studied a subclass of Thurston maps by imposing some additional condition of expansion. A new characterization theorem of rational maps from a metric space point of view is established in this context by M.~Bonk and D.~Meyer \cite{BM10, BM17}, and by P.~Ha\"issinsky and K.~M.~Pilgrim \cite{HP09}. Roughly speaking, we say that a Thurston map is \emph{expanding} if for any two points $x,y\in S^2$, their preimages under iterations of the map get closer and closer. For each expanding Thurston map, we can equip the $2$-sphere $S^2$ with a natural class of metrics, called \emph{visual metrics}. As the name suggests, these metrics are constructed in a similar fashion as the visual metrics on the boundary $\partial_\infty G$ of a Gromov hyperbolic group $G$. See Subsection~\ref{subsctThurstonMap} for a more detailed discussion on these notions.

\begin{theorem}[M.~Bonk \& D.~Meyer \cite{BM10, BM17}, P.~Ha\"issinsky \& K.~M.~Pilgrim \cite{HP09}]  \label{thmBM}
An expanding Thurston map is conjugate to a rational map if and only if the sphere $(S^2,d)$ equipped with a visual metric $d$ is quasisymmetrically equivalent to the Riemann sphere $\widehat\C$ equipped with the chordal metric.
\end{theorem}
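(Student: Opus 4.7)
The plan is to compare two different characterizations of a well-behaved metric on $S^2$: on one hand the intrinsic visual metric $d$ built from the tile combinatorics of the expanding Thurston map $f$, and on the other hand the chordal metric $\sigma$ pulled back through a topological conjugacy to a rational map. The common tool in either direction is the cell decomposition of $S^2$ obtained from the pre-images $f^{-n}(\CC)$ of a Jordan curve $\CC$ containing $\post(f)$, whose $2$-dimensional cells are the \emph{tiles of level $n$}.

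For the forward direction, I would assume that $f$ is topologically conjugate to a postcritically-finite rational map $R$ with Julia set $\widehat{\C}$ via a homeomorphism $h\colon S^2\to\widehat{\C}$. From the very construction of a visual metric with expansion factor $\Lambda>1$, every level-$n$ tile has $d$-diameter comparable to $\Lambda^{-n}$, contains a $d$-ball of comparable radius, and is $d$-separated from every disjoint level-$n$ tile by at least a definite fraction of its diameter. I would then show that the images of these tiles under $h$ enjoy the same three comparability and roundness estimates with respect to $\sigma$. Away from $\post(R)$ this follows from Koebe-type distortion bounds on the universal orbifold cover of $\widehat{\C}\setminus\post(R)$; at the finitely many postcritical points, $R^n$ is locally conjugate to $z\mapsto z^m$ for bounded $m$, so roundness and comparability survive with larger constants. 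Once both tile families are "comparable and round at every level", a standard three-point covering argument forces $h$ to be quasisymmetric with a distortion function depending only on the data.

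For the reverse direction, suppose $\phi\colon (S^2,d)\to(\widehat{\C},\sigma)$ is a quasisymmetric homeomorphism and set $g\coloneqq \phi\circ f\circ\phi^{-1}$, a branched covering of $\widehat{\C}$. Transferring the tile decomposition through $\phi$, the iterate $f^n$ — which maps every level-$(n+k)$ tile homeomorphically onto a level-$k$ tile with uniform bilipschitz-up-to-scale control in the visual metric — is conjugated to $g^n$, which is therefore quasiregular with a distortion bound $K$ independent of $n$. Thus $g$ is \emph{uniformly quasiregular} on $\widehat{\C}$. I would then invoke Sullivan's theorem on uniformly quasiregular maps: the weak-$\ast$ limit $\mu$ of the Ces\`aro averages $\frac{1}{n}\sum_{k=0}^{n-1}(g^k)^{\ast}\mu_0$ of pull-backs of the standard Beltrami coefficient $\mu_0\equiv 0$ is a $g$-invariant measurable complex structure with $\norm{\mu}_{\infty}<1$. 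Solving the Beltrami equation produces a quasiconformal $\psi\colon\widehat{\C}\to\widehat{\C}$ for which $\psi\circ g\circ \psi^{-1}$ is holomorphic (hence rational), and $\psi\circ\phi$ is the desired topological conjugacy of $f$ to a rational map.

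The hard part will be controlling the combinatorics at the branch points. In the forward direction, one must quantify how Koebe's distortion theorem interacts with the finitely many local power-law models at postcritical points, where the local degree of $R^n$ can grow without bound along the orbit even though it is bounded at each individual point. In the reverse direction, one must verify that $g$ is \emph{genuinely} uniformly quasiregular across the branch set, not merely on its complement, so that Sullivan's invariant-structure argument applies globally; this in turn requires the visual metric to be well-adapted at critical points, which is one of the key features of the Bonk--Meyer construction. A secondary technical point is that the visual metric is defined only up to snowflake equivalence and need not be a length metric, so every metric argument must be phrased in cross-ratio or purely tile-combinatorial terms.
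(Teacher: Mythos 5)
The paper does not prove this statement; it cites it to \cite[Theorem~18.1~(ii)]{BM17} (with attribution also to \cite{BM10} and \cite{HP09}), so there is no in-paper proof to compare against. Your sketch is faithful to the Bonk--Meyer argument on both sides. In the forward direction the key step is exactly what you describe: Koebe distortion on inverse branches of $R^n$, together with the local power models at postcritical points, shows that the tiles for $R$ are uniformly round and comparable in the chordal metric; since the conjugating homeomorphism carries $f$-tiles to $R$-tiles level by level, a tile-comparison three-point argument promotes this to quasisymmetry of the conjugacy. In the reverse direction, showing $g=\phi\circ f\circ\phi^{-1}$ is uniformly quasiregular via the metric distortion of iterates on tiles, and then appealing to qc-straightening of uniformly quasiregular self-maps of $\widehat\C$, is also the right route.

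One imprecision worth flagging: Beltrami coefficients do not form a linear space, so the Euclidean Ces\`aro average $\tfrac{1}{n}\sum_{k<n}(g^k)^{\ast}\mu_0$ is not the object Sullivan uses, and a weak-$\ast$ limit of such linear averages need not be $g$-invariant. The invariant measurable conformal structure is obtained as a hyperbolic barycenter of the bounded orbit $\{(g^k)^{\ast}\mu_0\}_k$ in the Teichm\"uller metric on measurable ellipse fields (as in Sullivan's and Tukia's treatments), not by linear averaging. Since you are citing the qc-straightening theorem rather than re-deriving it, this is a matter of presentation rather than a gap in your plan, but the mechanism you named is not the one that actually produces the invariant structure.
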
   

See \cite[Theorem~18.1~(ii)]{BM17} for a proof. For an equivalent formulation of Cannon's conjecture from a similar point of view, see \cite[Conjecture~5.2]{Bon06}. The definition of the chordal metric is recalled in Remark~\ref{rmChordalVisualQSEquiv} and the notion of quasisymmetric equivalence in Subsection~\ref{subsctLattes}.

We remark on the subtlety of the expansion property of expanding Thurston maps by pointing out that such maps are never forward-expansive due to the critical points. In fact, each expanding Thurston map without periodic critical points is \emph{asymptotically $h$-expansive}, but not \emph{$h$-expansive}; on the other hand, expanding Thurston maps with at least one periodic critical point are not even asymptotically $h$-expansive \cite{Li15}. Asymptotic $h$-expansiveness and $h$-expansiveness are two notions of weak expansion introduced by M.~Misiurewicz \cite{Mis73} and R.~Bowen \cite{Bow72}, respectively. Note that forward-expansiveness implies $h$-expansiveness, which in turn implies asymptotic $h$-expansiveness \cite{Mis76}.

Thanks to the fundamental works of W.~P.~Thurston, M.~Bonk, D.~Meyer, P.~Ha\"issinsky, and K.~M.~Pilgrim, the dynamics and geometry of expanding Thurston maps and similar topological branched covering maps has attracted considerable amount of interests, with motivations from both complex dynamics as well as Sullivan's dictionary. Under the dictionary, an expanding Thurston map corresponds to a Gromov hyperbolic group whose boundary at infinity is the topological $2$-sphere, and the special case of a rational expanding Thurston map (i.e., a postcritically-finite rational map whose Julia set is the whole Riemann sphere) corresponds to a convex-cocompact Kleinian group whose limit set is homeomorphic to a $2$-sphere (i.e., a cocompact lattice of $\rm{PSL}(2,\C)$) (see \cite[Chapter~1]{BM17}, \cite[Section~1]{Yi15}, and compare with \cite[Chapter~1]{HP09}).

Lastly, we want to remark that we have not been able to make connections to another successful approach to dynamical zeta functions dating back to the work of J.~Milnor and W.~P.~Thurston in 1988 on the \emph{kneading determinant} for real $1$-dimensional dynamics with critical points \cite{MT88}. The Milnor--Thurston kneading theory has been developed and used by many authors since then, see for example, V.~Baladi and D.~Ruelle \cite{BR96}, V.~Baladi, A.~Kitaev, D.~Ruelle, and S.~Semmes \cite{BKRS97}, M.~Baillif \cite{Bai04}, M.~Baillif and V.~Baladi \cite{BB05}, H.~H.~Rugh \cite{Rug16}, and V.~Baladi \cite[Chapter~3]{Bal18}.

\subsection{Main results}

The main goal of this paper is to establish a Prime Orbit Theorem with an exponential error term for expanding Thurston maps by a quantitative investigation on the holomorphic extension properties of the related dynamical zeta functions as well as more general \emph{dynamical Dirichlet series}. In the holomorphic context, as a special case, these results hold for postcritically-finite rational maps whose Julia set is the whole Riemann sphere.

To the best of the authors' knowledge, ours is the first Prime Orbit Theorem with an exponential error term in complex dynamics outside of hyperbolic rational maps, in constract to the abundance of remarkable results on the other side of Sullivan's dictionary mentioned above.\footnote{It has come to our attention that M.~Pollicott and M.~Urba\'{n}ski have recently completed a monograph \cite{PoU17} in which they established, among other things, asymptotic counting results without an error term for periodic points (as opposed to primitive periodic orbits considered in this paper) for a remarkable collection of hyperbolic and parabolic conformal dynamical systems, among them, hyperbolic and parabolic rational maps. Our emphasis is different and results disjoint from \cite{PoU17}.} We also want to emphasize that our setting is completely topological, without any holomorphicity or smoothness assumptions on the dynamical systems or the potentials, with metric and geometric structures arising naturally from the dynamics of our maps, while most if not all of the previous results of Prime Orbit Theorems were established for smooth dynamical systems.

Much of the difficulty in the study of the ergodic theory of complex dynamics comes from the singularities caused by critical points in the Julia set. In this sense, postcritically-finite maps are naturally the first class of rational maps to be considered after hyperbolic rational maps. We believe that the techniques and approaches we develop in this paper can be used in the investigations of dynamical zeta functions and Prime Orbit Theorems for more general rational maps and other (non-smooth) branched covering maps on topological spaces.

Before stating our main results, we first briefly recall dynamical zeta functions and define dynamical Dirichlet series in our context. See Subsection~\ref{subsctDynZetaFn} for a more detailed discussion.

Let $f\: S^2\rightarrow S^2$ be an expanding Thurston map and $\psi\in\CCC(S^2,\C)$ be a complex-valued continuous function on $S^2$. We denote by the formal infinite product
\begin{equation*}  
\zeta_{f,\,\minus\psi} (s) \coloneqq 
\exp \Biggl( \sum\limits_{n=1}^{+\infty} \frac{1}{n} \sum\limits_{ x = f^n(x) } e^{-s S_n \psi(x)} \Biggr), \qquad s\in\C,
\end{equation*}
the \defn{dynamical zeta function} for the map $f$ and the \emph{potential} $\psi$. Here we write $S_n \psi (x)  \coloneqq \sum_{j=0}^{n-1} \psi(f^j(x))$ as defined in (\ref{eqDefSnPt}). We remark that $\zeta_{f,\,\minus\psi}$ is the Ruelle zeta function for the suspension flow over $f$ with roof function $\psi$ if $\psi$ is positive. We define the \emph{dynamical Dirichlet series} associated to $f$ and $\psi$ as the formal infinite product
\begin{equation*}  
\DS_{f,\,\minus\psi,\, \deg_f} (s) \coloneqq \exp \Biggl( \sum\limits_{n=1}^{+\infty} \frac{1}{n} \sum\limits_{x = f^n(x)} e^{-s S_n \psi(x)} \deg_{f^n}(x) \Biggr), \qquad s\in\C.
\end{equation*}
Here $\deg_{f^n}$ is the \emph{local degree} of $f^n$ at $x\in S^2$ (see Definition~\ref{defBranchedCover}).

Note that if $f\: S^2 \rightarrow S^2$ is an expanding Thurston map, then so is $f^n$ for each $n\in\N$ (Remark~\ref{rmExpanding}). 

Recall that a function is holomorphic (resp.\ meromorphic) on a closed set if it is holomorphic (resp.\ meromorphic) on an open set containing this closed set.

\begin{theorem}[Holomorphic extensions of dynamical Dirichlet series and zeta functions for expanding Thurston maps]  \label{thmZetaAnalExt_InvC}
Let $f\: S^2 \rightarrow S^2$ be an expanding Thurston map, and $d$ be a visual metric on $S^2$ for $f$. Given $\alpha\in(0,1]$. Let $\phi \in \Holder{\alpha}(S^2,d)$ be an eventually positive real-valued H\"{o}lder continuous function that is not co-homologous to a constant in $\CCC( S^2 )$. Denote by $s_0$ the unique positive number with $P(f,-s_0 \phi) = 0$.  

Then there exists $N_f\in\N$ depending only on $f$ such that for each $n\in \N$ with $n\geq N_f$, the following statements hold for $F\coloneqq f^n$ and $\Phi\coloneqq \sum_{i=0}^{n-1} \phi \circ f^i$:

\begin{enumerate}
\smallskip
\item[(i)] Both the dynamical zeta function $\zeta_{F,\,\minus \Phi} (s)$ and the dynamical Dirichlet series $\DS_{F,\,\minus \Phi,\,\deg_F} (s)$ converge on the open half-plane $\{s\in\C \,|\, \Re(s) > s_0 \}$ and extend to non-vanishing holomorphic functions on the closed half-plane $\{s\in\C \,|\, \Re(s) \geq s_0\}$ except for the simple pole at $s=s_0$.

\smallskip
\item[(ii)] Assume in addition that $\phi$ satisfies the $\alpha$-strong non-integrability condition. Then there exists a constant $\epsilon_0 \in (0, s_0)$ such that both the dynamical zeta function $\zeta_{F,\,\minus \Phi} (s)$ and the dynamical Dirichlet series $\DS_{F,\,\minus \Phi,\,\deg_F} (s)$ converge on the open half-plane $\{s\in\C \,|\, \Re(s) > s_0 \}$ and extend to non-vanishing holomorphic functions on the closed half-plane $\{s\in\C \,|\, \Re(s) \geq s_0 - \epsilon_0 \}$ except for the simple pole at $s=s_0$. Moreover, for each $\epsilon >0$, there exist constants $C_\epsilon >0$, $a_\epsilon \in (0, \epsilon_0]$, and $b_\epsilon\geq  2 s_0+1$ such that
\begin{equation}   \label{eqZetaBound}
     \exp\left( - C_\epsilon \abs{\Im(s)}^{2+\epsilon} \right) 
\leq \abs{\zeta_{ F,\,\minus\Phi} (s) }
\leq \exp\left(   C_\epsilon \abs{\Im(s)}^{2+\epsilon} \right) 
\end{equation}
and
\begin{equation}   \label{eqWeightedZetaBound}
     \exp\left( - C_\epsilon \abs{\Im(s)}^{2+\epsilon} \right) 
\leq \Absbig{ \DS_{ F,\,\minus\Phi,\,\deg_F} (s) }
\leq \exp\left(   C_\epsilon \abs{\Im(s)}^{2+\epsilon} \right) 
\end{equation}
for all $s\in\C$ with $\abs{\Re(s) - s_0} < a_\epsilon$ and $\abs{\Im(s)}  \geq b_\epsilon$.
\end{enumerate}

\end{theorem}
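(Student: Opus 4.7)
The plan is to reduce the analytic continuation of $\zeta_{F,\,\minus\Phi}$ and $\DS_{F,\,\minus\Phi,\,\deg_F}$ to the spectral theory of the complex Ruelle transfer operators $\RR_{-s\Phi}$ acting on a H\"older function space on $(S^2,d)$. Passing from $f$ to $F=f^n$ for $n$ larger than a combinatorial threshold $N_f$ is what allows the machinery to apply: it gives the tiles in the Markov partition of $F$ enough room to carry out the Dolgopyat-type construction, and, crucially, it lets us absorb the failure of finite-to-oneness of the symbolic coding when $f$ has periodic critical points. Starting from the definitions, we have the formal series
\begin{align*}
\log \zeta_{F,\,\minus\Phi}(s) &= \sum_{k=1}^{\infty} \frac{1}{k} \sum_{F^k x = x} e^{-s S_k \Phi(x)}, \\
\log \DS_{F,\,\minus\Phi,\,\deg_F}(s) &= \sum_{k=1}^{\infty} \frac{1}{k} \sum_{F^k x = x} e^{-s S_k \Phi(x)} \deg_{F^k}(x).
\end{align*}
The weighted inner sum on the second line is exactly (up to boundary corrections handled by a Manning-type matching between preimage orbits and periodic points) the intrinsic trace of $\RR_{-s\Phi}^k$ for an expanding Thurston map; the unweighted sum differs from this by a controlled error, removed by the telescoping argument against a H\"older partition of unity adapted to the Markov tiles.

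For part (i), the key tool is the Ruelle--Perron--Frobenius picture for $\RR_{-s\Phi}$: its spectral radius is $e^{P(F,\,-\Re(s)\Phi)}$, and at $s=s_0$ this equals $1$, realized as a simple isolated eigenvalue with a strictly positive eigenfunction, with the rest of the spectrum contained in a strictly smaller disk. The hypothesis that $\phi$, and hence $\Phi$, is not co-homologous to a constant rules out other eigenvalues of modulus one for the twisted operator $\RR_{-(s_0+it)\Phi}$ when $t\neq 0$, by a standard Liv\v{s}ic-type argument applied to periodic orbit averages. Analytic perturbation of the dominant eigenvalue and spectral projector in the complex parameter $s$ then produces the desired non-vanishing holomorphic extension of $\zeta_{F,\,\minus\Phi}$ and $\DS_{F,\,\minus\Phi,\,\deg_F}$ to a neighborhood of $\{\Re(s)\geq s_0\}$, with the simple pole at $s_0$ forced by the simple eigenvalue $1$ of $\RR_{-s_0\Phi}$.

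For part (ii), the $\alpha$-strong non-integrability condition powers a Dolgopyat-type cancellation argument: one constructs auxiliary Dolgopyat operators out of $\RR_{-s\Phi}$ for $\abs{\Im(s)}$ large, and establishes a uniform contraction estimate of the form
\begin{equation*}
\Norm{\RR_{-s\Phi}^{N}}_{\alpha} \leq C_\epsilon \abs{\Im(s)}^{A_\epsilon} \rho_\epsilon^{\, N}, \qquad \rho_\epsilon \in (0,1),
\end{equation*}
valid uniformly in the strip $\abs{\Re(s)-s_0}<a_\epsilon$, $\abs{\Im(s)}\geq b_\epsilon$. Feeding this estimate into the series expansions above, together with the telescoping reduction of the unweighted sum to the intrinsic trace and an elementary summation in $k$, yields the holomorphic extension of $\zeta_{F,\,\minus\Phi}$ and $\DS_{F,\,\minus\Phi,\,\deg_F}$ into the strip and the upper bounds in \eqref{eqZetaBound} and \eqref{eqWeightedZetaBound}; the matching lower bounds follow by applying the same operator estimate to the series for $-\log\zeta_{F,\,\minus\Phi}$ and $-\log\DS_{F,\,\minus\Phi,\,\deg_F}$.

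The hard part, and the main obstacle, is the uniform contraction bound above on the twisted Ruelle operator. The visual metric $d$ is genuinely non-smooth and non-Euclidean, and the Markov tiles for $f$ are connected, so H\"older characteristic functions on single tiles are not available; this forces the use of a \emph{split} version of the Ruelle operator and of a $B$-cone $K_B(E,d)$ tailored to the visual metric, for which all of the basic inequalities, cone contraction, spectral gap, and Dolgopyat-type cancellation estimates must be re-established from scratch. When $f$ has periodic critical points, the Bowen--Ratner-type symbolic coding fails to be finite-to-one, and a careful adaptation of Manning's counting argument is needed to match periodic points on the tile boundaries with periodic orbits of the symbolic system; these technical features are exactly why the theorem is stated for $F=f^n$ with $n\geq N_f$ rather than for $f$ itself.
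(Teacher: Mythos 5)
There is a genuine gap in your reduction from the symbolic level to the map $F$ itself, and it sits exactly where you invoke Manning. You write that the weighted inner sum is the intrinsic trace of $\RR_{-s\Phi}^k$ ``up to boundary corrections handled by a Manning-type matching between preimage orbits and periodic points,'' and later that ``a careful adaptation of Manning's counting argument is needed.'' This is precisely the step that fails: the tile-based symbolic coding $\pi_\ti \: \Sigma_{A_\ti}^+ \rightarrow S^2$ is not uniformly finite-to-one when $f$ has periodic critical points (it is uncountable-to-one on a dense set), so Manning's bookkeeping of preimages cannot be repaired and no version of it is used in the paper. The purpose of introducing the dynamical Dirichlet series $\DS_{F,\,\minus\Phi,\,\deg_F}$ at all is to sidestep this failure. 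You treat $\DS$ as a convenient repackaging, but it is the vehicle that makes the reduction possible: the $\deg_{F^k}(x)$ weights are exactly what is needed for the overcounting from the coding to cancel.

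What the paper actually does, and what your proposal omits entirely, is a four-system bookkeeping on the invariant Jordan curve. After establishing Theorem~\ref{thmZetaAnalExt_SFT} for the tile subshift $\bigl(\Sigma_{A_\ti}^+,\sigma_{A_\ti}\bigr)$, the passage to $\DS_{F,\,\minus\Phi,\,\deg_F}$ rests on the pointwise counting identity of Theorem~\ref{thmNoPeriodPtsIdentity}, which requires introducing two further subshifts $\bigl(\Sigma_{A_\e}^+,\sigma_{A_\e}\bigr)$ and $\bigl(\Sigma_{A_\ee}^+,\sigma_{A_\ee}\bigr)$ recording edge dynamics on $\CC$ and the finite system $\bigl(\V^0,f|_{\V^0}\bigr)$, and yields
\begin{equation*}
   \DS_{F,\,\minus\Phi,\,\deg_F} (s)
=  \zeta_{ \sigma_{A_{\ti}}, \, \minus \Phi \circsmall \pi_{\ti} } (s)
   \frac{   \zeta_{ \sigma_{A_{\e}}, \, \minus \Phi \circsmall \pi_{\e} } (s)   \zeta_{ f|_{\V^0}, \, \minus \Phi|_{\V^0} } (s)   }
        {   \zeta_{ \sigma_{A_{\ee}}, \, \minus \Phi \circsmall \pi_{\e} \circsmall \pi_{\ee} } (s)  }.
\end{equation*}
To use this, one still needs the three boundary factors to be non-vanishing holomorphic on a slightly larger half-plane; that is the content of Theorem~\ref{thmPressureOnC}, the strict inequality of topological pressures restricted to $\CC$, itself a nontrivial argument built on the $E_m$ device from \cite{Li15}. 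None of this appears in your proposal, and without it you cannot extend $\DS_{F,\,\minus\Phi,\,\deg_F}$, let alone $\zeta_{F,\,\minus\Phi}$.

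A second omission: even after $\DS_{F,\,\minus\Phi,\,\deg_F}$ is under control, passing to the unweighted $\zeta_{F,\,\minus\Phi}$ introduces the finite Euler-type factors over $\Orb^>(f|_{\V^0})$, and one must rule out the possibility that some $1 - \deg_f(\tau)\, e^{-s_0\, l_\phi(\tau)}$ vanishes, which would put a zero on the critical line. The paper handles this (``Claim~2'') with a contradiction argument using an auxiliary Dirichlet series with a truncated coefficient function; your proposal does not address the issue at all. Your description of the split Ruelle operator, the $B$-cone, and the Dolgopyat cancellation is correct and matches the paper's strategy for establishing the symbolic Theorem~\ref{thmZetaAnalExt_SFT}, but the deduction of Theorem~\ref{thmZetaAnalExt_InvC} from it requires the combinatorial and pressure-theoretic machinery on $\CC$ that your proposal replaces with an unavailable Manning argument.
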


Here by $P(f,\psi)$ we denote the topological pressure of $f$ with respect to a potential $\psi \in \CCC(S^2)$ (see Subsection~\ref{subsctThermodynFormalism}).  A real-valued continuous function $\phi \in \CCC(S^2)$ is \emph{co-homologous} to a constant  in $\CCC( S^2 )$ if there is a constant $K\in\R$ and a real-valued continuous function $\beta \in \CCC ( S^2 )$ with $\phi = K + \beta \circ f - \beta$. The function $\phi$ is \emph{eventually positive} if $\phi + \phi\circ f + \dots + \phi \circ f^n$ is strictly positive on $S^2$ for all sufficiently large $n\in\N$ (see Definition~\ref{defEventuallyPositive}). We postpone the discussion of the $\alpha$-strong non-integrability condition on $\phi$ until Theorem~\ref{thmSNIGeneric}.

\begin{rem}    \label{rmNf}
The integer $N_f$ can be chosen as the minimum of $N(f,\wt{\CC})$ from Lemma~\ref{lmCexistsL} over all Jordan curves $\wt{\CC}$ with $\post f \subseteq \wt{\CC} \subseteq S^2$, in which case $N_f = 1$ if there exists an Jordan curve $\CC\subseteq S^2$ satisfying $f(\CC)\subseteq \CC$, $\post f\subseteq \CC$, and no $1$-tile in $\X^1(f,\CC)$ joins opposite sides of $\CC$ (see Definition~\ref{defJoinOppositeSides}). The same number $N_f$ will be used in the statements of Theorem~\ref{thmLogDerivative}, Theorem~\ref{thmPrimeOrbitTheorem}, and Theorem~\ref{thmLattesPOT}. Here the set $\X^1(f,\CC)$ of $1$-tiles consists of closures of connected components of $S^2 \setminus f^{-1}(\CC)$ (see Subsection~\ref{subsctThurstonMap} for more detailed discussions). We also remark that most properties of expanding Thurston maps $f$ whose proofs rely on the Markov partitions can be established for $f$ after being verified first for $f^n$ for all $n\geq N_f$. However, some of the finer properties established for iterates of $f$ still remain open for the map $f$ itself, see for example, \cite{Mey13, Mey12}. That being said, we do expect all theorems in this subsection to hold for $f$ itself, but the verification may require new techniques.
\end{rem}

Below is a symbolic version of Theorem~\ref{thmZetaAnalExt_InvC}. For the notion of subshift of finite type, and the corresponding subshift of finite type $\bigl( \Sigma_{A_{\ti}}^+, \sigma_{A_{\ti}} \bigr)$ for an expanding Thurston map induced by a Jordan curve from Remark~\ref{rmNf}, see Proposition~\ref{propTileSFT} and the general discussions in Subsection~\ref{subsctSFT}.

\begin{theorem}[Holomorphic extensions of the symbolic zeta functions]  \label{thmZetaAnalExt_SFT}
Let $f\: S^2 \rightarrow S^2$ be an expanding Thurston map with an Jordan curve $\CC\subseteq S^2$ satisfying $f(\CC)\subseteq \CC$, $\post f\subseteq \CC$, and no $1$-tile in $\X^1(f,\CC)$ joins opposite sides of $\CC$. Let $d$ be a visual metric on $S^2$ for $f$. Given $\alpha\in(0,1]$. Let $\phi \in \Holder{\alpha}(S^2,d)$ be an eventually positive real-valued H\"{o}lder continuous function that is not co-homologous to a constant in $\CCC( S^2 )$. Denote by $s_0$ the unique positive number with $P(f,-s_0 \phi) = 0$. Let $\bigl(\Sigma_{A_{\ti}}^+,\sigma_{A_{\ti}}\bigr)$ be the one-sided subshift of finite type associated to $f$ and $\CC$ defined in Proposition~\ref{propTileSFT}, and let $\pi_{\ti}\: \Sigma_{A_{\ti}}^+\rightarrow S^2$ be the factor map as defined in (\ref{eqDefTileSFTFactorMap}).

Then the dynamical zeta function $\zeta_{\sigma_{A_{\ti}},\,\minus \phi\circsmall\pi_{\ti}} (s)$ converges on the open half-plane $\{s\in\C \,|\, \Re(s) > s_0 \}$, and the following statements hold:
\begin{enumerate}
\smallskip
\item[(i)] The function $\zeta_{\sigma_{A_{\ti}},\,\minus \phi\circsmall\pi_{\ti}} (s)$ extends to a non-vanishing holomorphic function on the closed half-plane $\{s\in\C \,|\, \Re(s) \geq s_0 \}$ except for the simple pole at $s=s_0$. 

\smallskip
\item[(ii)] Assume in addition that $\phi$ satisfies the $\alpha$-strong non-integrability condition. Then there exists a constant $\wt\epsilon_0 \in (0, s_0)$ such that $\zeta_{\sigma_{A_{\ti}},\,\minus \phi\circsmall\pi_{\ti}} (s)$  extends to a non-vanishing holomorphic function on the closed half-plane $\{s\in\C \,|\, \Re(s) \geq s_0 - \wt\epsilon_0 \}$ except for the simple pole at $s=s_0$. Moreover, for each $\epsilon >0$, there exist constants $\wt{C}_\epsilon >0$, $\wt{a}_\epsilon \in (0,s_0)$, and $\wt{b}_\epsilon \geq 2 s_0 + 1$ such that
\begin{equation}   \label{eqZetaBound_SFT}
     \exp\bigl( - \wt{C}_\epsilon \abs{\Im(s)}^{2+\epsilon} \bigr) 
\leq \Absbig{ \zeta_{\sigma_{A_{\ti}},\,\minus \phi\circsmall\pi_{\ti}} (s) }
\leq \exp\bigl(   \wt{C}_\epsilon \abs{\Im(s)}^{2+\epsilon} \bigr) 
\end{equation}
for all $s\in\C$ with $\abs{\Re(s) - s_0} < \wt{a}_\epsilon$ and $\abs{\Im(s)}  \geq \wt{b}_\epsilon$.
\end{enumerate}
\end{theorem}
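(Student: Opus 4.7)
The plan is to reduce the statement to spectral estimates for the complex Ruelle (transfer) operator $\RR_{-s\phi\circsmall\pi_{\ti}}$ acting on a suitable Banach space of H\"older continuous functions on the one-sided subshift $\Sigma_{A_{\ti}}^+$. Starting from the defining formal identity
\[
\log \zeta_{\sigma_{A_{\ti}},\,\minus\phi\circsmall\pi_{\ti}}(s) = \sum_{n=1}^{\infty} \frac{1}{n} \sum_{\sigma_{A_{\ti}}^n \xi = \xi} e^{-s S_n(\phi\circsmall\pi_{\ti})(\xi)},
\]
the inner sum for each fixed $n$ is, by Ruelle's trace formula for subshifts of finite type, essentially the trace of $\RR_{-s\phi\circsmall\pi_{\ti}}^n$ (up to finitely many boundary corrections coming from the transition matrix $A_{\ti}$). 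Thus meromorphic continuation and growth of $\zeta_{\sigma_{A_{\ti}},\,\minus\phi\circsmall\pi_{\ti}}$ both translate into questions about eigenvalues and iterated operator norms of $\RR_{-s\phi\circsmall\pi_{\ti}}$.

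For part (i), the normalization $P(f,-s_0\phi)=0$ transfers through the factor map $\pi_{\ti}$ to the symbolic pressure identity $P(\sigma_{A_{\ti}},-s_0\phi\circsmall\pi_{\ti})=0$, so the Ruelle--Perron--Frobenius theorem gives that $\RR_{-s_0\phi\circsmall\pi_{\ti}}$ has spectral radius $1$ with $1$ a simple isolated eigenvalue and the rest of the spectrum inside a disk of radius strictly less than $1$. This delivers both the simple pole of $\zeta_{\sigma_{A_{\ti}},\,\minus\phi\circsmall\pi_{\ti}}$ at $s=s_0$ and a holomorphic, non-vanishing extension in a neighborhood of $s_0$ on the vertical line. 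To exclude other singularities or zeros on $\{\Re(s)=s_0\}$, I would show that for $t\neq 0$ the operator $\RR_{-(s_0+it)\phi\circsmall\pi_{\ti}}$ has spectral radius strictly less than $1$: any peripheral eigenfunction, via the standard Liv\v{s}ic-type argument, would force $t\phi\circsmall\pi_{\ti}$ to be cohomologous modulo $2\pi$ to a constant on $\Sigma_{A_{\ti}}^+$, and pushing the resulting cocycle down to $S^2$ through $\pi_{\ti}$ contradicts the assumption that $\phi$ is not co-homologous to a constant in $\CCC(S^2)$.

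For part (ii), the $\alpha$-strong non-integrability hypothesis is exactly what is needed to feed a Dolgopyat-type contraction estimate: there are constants $\rho\in(0,1)$, $C>0$, and $N\in\N$ such that, on an appropriate Dolgopyat cone-adapted H\"older norm, for all $s=\xi+it$ with $|\xi-s_0|$ sufficiently small and $|t|$ sufficiently large one has
\[
\bigl\|\RR_{-s\phi\circsmall\pi_{\ti}}^{N\lceil\log |t|\rceil}\bigr\| \leq C\,|t|^{C}\rho^{\lceil\log|t|\rceil}.
\]
Summing the trace series via a dyadic split into $n\leq A\log|t|$ and $n>A\log|t|$, the tail is controlled by iterating the above bound, while the short-range head is estimated trivially using the operator norm of $\RR_{-s\phi\circsmall\pi_{\ti}}$ on $\Holder{\alpha}$. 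This produces the bound (\ref{eqZetaBound_SFT}): the $|t|^{2}$ portion comes from the head (length $\sim\log|t|$ times a pressure bound of order $|t|$), and the extra $\epsilon$ absorbs the polynomial loss in passing between the ambient H\"older norm and the cone norm. The same operator estimate, combined with a compactness argument on any bounded rectangle of the strip $\{\Re(s)\geq s_0-\wt\epsilon_0\}$, shows that $I-\RR_{-s\phi\circsmall\pi_{\ti}}$ is invertible away from $s=s_0$ and yields holomorphy and non-vanishing of $\zeta_{\sigma_{A_{\ti}},\,\minus\phi\circsmall\pi_{\ti}}$ on that strip.

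The principal obstacle is the Dolgopyat bound itself. The symbolic space $\Sigma_{A_{\ti}}^+$ carries no smooth structure, the factor $\pi_{\ti}$ is typically not injective on the combinatorial boundaries of tiles, the invariant cone of densities must be tailored to the non-Euclidean visual metric $d$, and the oscillatory cancellation that produces contraction has to be extracted from the $\alpha$-strong non-integrability of $\phi$ rather than from any hyperbolicity of an ambient smooth flow. Once this Dolgopyat-type estimate is installed, the passage from operator norms to the bounds on $|\zeta_{\sigma_{A_{\ti}},\,\minus\phi\circsmall\pi_{\ti}}(s)|$ is routine; the delicate point is balancing the short- and long-time contributions carefully enough to pin the exponent down to $2+\epsilon$ rather than a worse power of $|\Im(s)|$.
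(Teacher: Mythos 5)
Your outline matches the paper's proof in overall shape, but there is one structural divergence worth flagging and one technical imprecision.

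For part (i) your approach is essentially the paper's. The paper encodes the Ruelle--Perron--Frobenius content through the analytic pressure function $\exp(P(\sigma_{A_{\ti}},\cdot))$ following Parry--Pollicott, writes $\zeta_{\sigma_{A_{\ti}},\,\minus\phi\circsmall\pi_{\ti}}$ locally as a quotient with $1-\exp(P(\sigma_{A_{\ti}},-s\phi\circ\pi_{\ti}))$ in the denominator, and excludes additional zeros on $\Re(s)=s_0$ exactly via the cohomological rigidity statement you invoke; the nontrivial content in ``pushing the cocycle down through $\pi_{\ti}$'' is precisely Theorem~\ref{thmNLI}~(v)$\Longrightarrow$(ii), which requires the orbifold machinery of Section~\ref{sctNLI} since $\pi_{\ti}$ is not injective on the boundaries of cylinders. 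Your plan implicitly assumes this descent, so be aware it is not a formality.

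For part (ii) your choice to run the Dolgopyat machine on the symbolic transfer operator $\RR_{-s\phi\circsmall\pi_{\ti}}$ acting on $\Holder{\alpha}\bigl(\Sigma_{A_{\ti}}^+,d_\theta\bigr)$ is a genuine departure, and it is the step where a gap opens up. The $\alpha$-strong non-integrability condition (Definition~\ref{defStrongNonIntegrability}) and the cancellation estimates (Proposition~\ref{propSNI}, Lemma~\ref{lmUHDichotomySum}, Proposition~\ref{propDolgopyatOperator}) are all formulated on $S^2$ with the visual metric $d$, using quantitative cell-size bounds from Lemma~\ref{lmCellBoundsBM}, the $B$-cones $K_B(E,d)$, and the distortion Lemma~\ref{lmMetricDistortion}; none of that carries over verbatim to $\bigl(\Sigma_{A_{\ti}}^+,d_\theta\bigr)$, which has no comparable self-similar geometry and whose metric structure depends on the auxiliary parameter $\theta$. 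The paper avoids the transfer problem altogether by introducing the split Ruelle operator $\RRR_{-s\phi}$ on $\CCC\bigl(X^0_\b,\C\bigr)\times\CCC\bigl(X^0_\w,\C\bigr)$ and then reducing the periodic-orbit sum to operator norms of $\RRR_{-s\phi}^n$ via a telescoping argument (Proposition~\ref{propTelescoping}); your ``trace of $\RR^n$'' is not literally the right object (there is no trace formula for these quasi-compact operators), and there are no ``boundary corrections from $A_{\ti}$'' in the purely symbolic sum: the exact identity (\ref{eqPfpropTelescoping_ZnRewrite}) holds, and the telescoping controls the error from replacing periodic points by reference points. If you insist on staying on the subshift you would need to reprove the entire Dolgopyat apparatus there, which would in particular require a symbolic reformulation of strong non-integrability with matching upper and lower $\alpha$-H\"older exponents (the heart of Proposition~\ref{propSNI}), and it is not clear that $d_\theta$ supports this without effectively reimporting the sphere's geometry through $\pi_{\ti}$.
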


Theorem~\ref{thmZetaAnalExt_InvC} leads to the following bound for the logarithmic derivative of the zeta function $\zeta_{F,\, \minus\Phi}$.

\begin{theorem}  \label{thmLogDerivative}
Let $f\: S^2 \rightarrow S^2$ be an expanding Thurston map, and $d$ be a visual metric on $S^2$ for $f$. Let $\phi \in \Holder{\alpha}(S^2,d)$ be an eventually positive real-valued H\"{o}lder continuous function with an exponent $\alpha\in (0,1]$ that satisfies the $\alpha$-strong non-integrability condition. Denote by $s_0$ the unique positive number with $P(f,-s_0 \phi) = 0$. 

Then there exists $N_f\in\N$ depending only on $f$ such that for each $n\in \N$ with $n\geq N_f$, the following statement holds for $F\coloneqq f^n$ and $\Phi\coloneqq \sum_{i=0}^{n-1} \phi \circ f^i$:

There exist constants $a \in (0, s_0)$, $b\geq 2 s_0 + 1$, and $D>0$ such that 
\begin{equation}   \label{eqLogDerivative}
          \Absbigg{  \frac{ \zeta'_{F,\, \minus\Phi}(s) }{ \zeta_{F,\, \minus\Phi}(s) } }
\leq  D \abs{\Im(s)}^{\frac12}
\end{equation}
for all $s\in\C$ with $\abs{\Re(s) - s_0} < a$ and $\abs{\Im(s)}  \geq b$.
\end{theorem}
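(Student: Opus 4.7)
The plan is to deduce the bound on the logarithmic derivative from the non-vanishing holomorphic extension and two-sided growth estimates for $\zeta_{F,-\Phi}$ established in Theorem~\ref{thmZetaAnalExt_InvC}(ii), combined with a Borel--Carath\'eodory-type argument refined by the Dolgopyat-style spectral bounds on the complex Ruelle operator $\mathcal{L}_{-s\phi}$ that underlie the proof of that theorem.

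First, fix a small auxiliary $\epsilon > 0$ and apply Theorem~\ref{thmZetaAnalExt_InvC}(ii) to obtain $a_\epsilon \in (0, \epsilon_0]$, $b_\epsilon \ge 2s_0 + 1$, and $C_\epsilon > 0$ such that $\zeta_{F,-\Phi}$ is holomorphic and non-vanishing on the strip $\Sigma := \{s \in \C : |\Re(s) - s_0| < a_\epsilon\}$ apart from the simple pole at $s_0$, with $\exp(\pm C_\epsilon |\Im(s)|^{2+\epsilon})$ bounding $|\zeta_{F,-\Phi}(s)|$ from above and below on $|\Im(s)| \ge b_\epsilon$. Remove the pole by setting $g(s) := (s - s_0)\zeta_{F,-\Phi}(s)$, which is holomorphic and non-vanishing on $\Sigma$, and pick a holomorphic branch $h := \log g$ on the simply connected strip. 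Then
\[
\frac{\zeta'_{F,-\Phi}(s)}{\zeta_{F,-\Phi}(s)} = h'(s) - \frac{1}{s - s_0},
\]
and since $|(s - s_0)^{-1}| \le 1/(b - 1) = O(1)$ for $|\Im(s)| \ge b$, the task reduces to showing $|h'(s)| = O(|\Im(s)|^{1/2})$.

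A direct application of Borel--Carath\'eodory on a disk of fixed radius $R < a_\epsilon$ around $s$ would give only $|h'(s)| = O(|\Im(s)|^{2+\epsilon})$, which is too weak. To extract the sharper exponent $1/2$, I would return to the convergent Dirichlet-series identity
\[
\frac{\zeta'_{F,-\Phi}(s)}{\zeta_{F,-\Phi}(s)} = -\sum_{m=1}^{\infty} \frac{1}{m} \sum_{F^m(x) = x} S_m\Phi(x)\, e^{-s S_m\Phi(x)}, \qquad \Re(s) > s_0,
\]
rewrite each inner sum via Ruelle's trick as a trace of a weighted iterate of the complex Ruelle operator, and split the series at a cutoff $m = M \sim c\log|\Im(s)|$. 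The short range $m \le M$ is controlled by operator-norm bounds together with a linear-in-$m$ loss coming from $S_m\Phi$; the tail $m > M$ is controlled by the Dolgopyat-type contraction $\|\mathcal{L}_{-s\phi}^k\|_\alpha \le C\rho^k$ (with $\rho < 1$ uniform in $|\Im(s)| \ge b$) that lies at the heart of Theorem~\ref{thmZetaAnalExt_InvC}(ii). Optimizing the cutoff $M$ balances the two contributions and produces the exponent $1/2$.

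The principal obstacle will be the careful bookkeeping required to extract precisely the exponent $1/2$: one must track the uniformity in $|\Im(s)|$ of the Dolgopyat contraction rate across the strip, and handle Ruelle's preimage matching in the presence of critical points on the boundary of the Markov partition---which is why the hypothesis $n \ge N_f$ on the iterate $F = f^n$ is inherited from Theorem~\ref{thmZetaAnalExt_InvC}. Once these estimates are assembled, $|h'(s)| = O(|\Im(s)|^{1/2})$, and the bound~\eqref{eqLogDerivative} follows.
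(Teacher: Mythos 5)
Your proposal starts on the right track: you correctly remove the simple pole, observe that a direct Borel--Carath\'eodory argument on a fixed disk gives only a bound of order $\abs{\Im(s)}^{2+\epsilon}$, and recognize this is too weak. But the alternative route you then propose --- splitting the convergent Dirichlet series at $m = M \sim c\log\abs{\Im(s)}$ and balancing short-range operator bounds against the tail's Dolgopyat contraction --- cannot produce the exponent $\tfrac{1}{2}$ from the estimates actually available. The operator-norm bound of Theorem~\ref{thmOpHolderNorm}, which is what lies behind Theorem~\ref{thmZetaAnalExt_InvC}(ii), has the shape $\OpHnormD{\alpha}{\RRR_{\minus s\phi}^m} \lesssim \abs{\Im(s)}^{1+\epsilon}\rho_\epsilon^m$. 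The $\abs{\Im(s)}^{1+\epsilon}$ sits \emph{outside} the sum over $m$; truncation and balancing act only on the $m$-dependence, so the best you extract is $\abs{\zeta'_{F,\minus\Phi}/\zeta_{F,\minus\Phi}} = \operatorname{O}\bigl(\abs{\Im(s)}^{1+\epsilon}\bigr)$, which is still not $\abs{\Im(s)}^{1/2}$. Moreover, the operator/trace representation you want to use has to be justified for $\Re(s) < s_0$, where the Dirichlet series for $\zeta'/\zeta$ no longer converges, and this requires extra analytic-continuation work.

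The decisive ingredient you are missing is a \emph{Phragm\'en--Lindel\"of interpolation in the horizontal direction}. The paper first shows, directly from the absolutely convergent Dirichlet series for $\zeta'_{F,\minus\Phi}/\zeta_{F,\minus\Phi}$, that this logarithmic derivative is $\operatorname{O}(1)$, uniformly in $t$, on any vertical line $\Re(s) = s_0 + \text{const} > s_0$. It then uses the quantitative Borel--Carath\'eodory statement (Lemma~\ref{lmLogDerivFromEE}, applied with centers $z$ on the line $\Re(z) = s_0 + a_\epsilon/4$ and disk radius $R = a_\epsilon/3$ reaching to $\Re(s) = s_0 - a_\epsilon/12$, with $\epsilon := 1$ so $U = \operatorname{O}(\abs{\Im(z)}^3)$) to get a \emph{polynomial} bound of order $\abs{\Im(s)}^{3}$ on a vertical line slightly to the \emph{left} of $s_0$. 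Finally, it applies the Phragm\'en--Lindel\"of theorem (Theorem~\ref{thmPhragmenLindelof}) to $h(s) := \zeta'_{F,\minus\Phi}(s)/\zeta_{F,\minus\Phi}(s) + (s - s_0)^{-1}$ on the strip $s_0 - a_\epsilon/12 \le \Re(s) \le s_0 + a_\epsilon/200$, with exponents $k_1 = 3$ on the left edge and $k_2 = 0$ on the right edge; linear interpolation of the exponent then gives a value $< 1/2$ at every $\Re(s)$ with $\abs{\Re(s) - s_0} \le a_\epsilon/200$. This is where $\abs{\Im(s)}^{1/2}$ comes from --- by interpolating between the crude cubic bound (left) and the $\operatorname{O}(1)$ bound (right), not from any balancing inside the Dirichlet series.
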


Given an expanding Thurston map  $f\: S^2 \rightarrow S^2$ and a real-valued function $\psi\: S^2 \rightarrow \R$,  we define the weighted length $l_{f,\psi} (\tau) $ (induced by $f$ and $\psi$) of a primitive periodic orbit 
\begin{equation*}
\tau \coloneqq \{x, f(x), \cdots, f^{n-1}(x) \} \in \Orb(f)
\end{equation*}
as
\begin{equation}   \label{eqDefComplexLength}
l_{f,\psi} (\tau) \coloneqq \psi(x) + \psi(f(x)) + \cdots + \psi(f^{n-1}(x)).
\end{equation}
We denote by
\begin{equation}   \label{eqDefPiT}
\pi_{f,\psi}(T) \coloneqq \card \{ \tau \in \Orb(f)  \,|\, l_{f,\psi}( \tau )  \leq T \}, \qquad \text{for } T>0,
\end{equation}
the number of primitive periodic orbits with weighted length upto $T$. See (\ref{eqDefSetAllPeriodicOrbits}) for the precise definition of $\Orb(f)$.

The corresponding Prime Orbit Theorems follow from Theorem~\ref{thmZetaAnalExt_InvC} and Theorem~\ref{thmLogDerivative}.

\begin{theorem}[Prime Orbit Theorems for expanding Thurston maps]  \label{thmPrimeOrbitTheorem}
Let $f\: S^2 \rightarrow S^2$ be an expanding Thurston map, and $d$ be a visual metric on $S^2$ for $f$. Let $\phi \in \Holder{\alpha}(S^2,d)$ be an eventually positive real-valued H\"{o}lder continuous function with an exponent $\alpha\in (0,1]$. Denote by $s_0$ the unique positive number with $P(f,-s_0 \phi) = 0$. 

Then there exists $N_f\in\N$ depending only on $f$ such that for each $n\in \N$ with $n\geq N_f$, the following statements hold for $F\coloneqq f^n$ and $\Phi\coloneqq \sum_{i=0}^{n-1} \phi \circ f^i$:

\begin{enumerate}
\smallskip
\item[(i)] The asymptotic relation
\begin{equation*} 
\pi_{F,\Phi}(T) \sim \operatorname{Li}\bigl( e^{s_0 T} \bigr)     \qquad\qquad \text{as } T \to + \infty
\end{equation*}
holds if and only if $\phi$ is not co-homologous to a constant in $\CCC( S^2 )$, i.e., there are no constant $K\in\R$ and function $\beta \in \CCC ( S^2 )$ with $\phi = K + \beta \circ f - \beta$.

\smallskip
\item[(ii)] Assume that $\phi$ satisfies the $\alpha$-strong non-integrability condition. Then there exists a constant $\delta \in (0, s_0)$ such that
\begin{equation*} 
\pi_{F,\Phi}(T) = \operatorname{Li}\bigl( e^{s_0 T} \bigr)  + \operatorname{O} \bigl( e^{(s_0 - \delta)T} \bigr)      \qquad\qquad \text{as } T \to + \infty.
\end{equation*}
\end{enumerate}
Here $\operatorname{Li}(\cdot)$ is the Eulerian logarithmic integral function defined in (\ref{eqDefLogIntegral}).
\end{theorem}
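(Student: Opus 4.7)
The plan is to derive both assertions from the analytic information about $\zeta_{F,\,\minus\Phi}$ established in Theorem~\ref{thmZetaAnalExt_InvC} and the logarithmic derivative bound in Theorem~\ref{thmLogDerivative}, following the strategy of Pollicott--Sharp~\cite{PoSh98}. The bridge between the zeta function and orbit counts is the identity
\begin{equation*}
-\frac{\zeta'_{F,\,\minus\Phi}(s)}{\zeta_{F,\,\minus\Phi}(s)} = \sum_{\tau \in \Orb(F)} \sum_{k=1}^{\infty} l_{F,\Phi}(\tau) \, e^{-sk l_{F,\Phi}(\tau)}, \qquad \Re(s) > s_0,
\end{equation*}
which expresses $-\zeta'/\zeta$ as the Laplace--Stieltjes transform of the weighted counting function $\psi_{F,\Phi}(T) \coloneqq \sum_{\tau,\,k\,:\, k l_{F,\Phi}(\tau) \leq T} l_{F,\Phi}(\tau)$. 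In both parts, I first extract the asymptotics of $\psi_{F,\Phi}$ by Mellin-type inversion on a suitable contour, then transfer these asymptotics to $\pi_{F,\Phi}$ by partial summation after stripping off the contributions of non-primitive iterates.

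For part~(i), in the forward direction, the assumption that $\phi$ is not co-homologous to a constant lets Theorem~\ref{thmZetaAnalExt_InvC}(i) supply a non-vanishing holomorphic extension of $\zeta_{F,\,\minus\Phi}$ to the closed half-plane $\{\Re(s) \geq s_0\}$ except for the simple pole at $s = s_0$, so that $-\zeta'/\zeta$ is meromorphic there with a single simple pole of residue $1$ at $s_0$. A direct application of the Wiener--Ikehara Tauberian theorem to the Laplace--Stieltjes transform above then yields $\psi_{F,\Phi}(T) \sim e^{s_0 T}/s_0$, and partial summation, combined with the uniform lower bound $l_{F,\Phi}(\tau) \geq c \, n(\tau)$ coming from eventual positivity of $\Phi$, converts this into $\pi_{F,\Phi}(T) \sim \operatorname{Li}(e^{s_0 T})$ since the non-primitive contributions are $\operatorname{O}(\pi_{F,\Phi}(T/2))$. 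For the reverse direction, if $\phi = K + \beta \circ F - \beta$ in $\CCC(S^2)$ for some $K \in \R$ and $\beta \in \CCC(S^2)$, then $l_{F,\Phi}(\tau) = K n(\tau)$ on every primitive periodic orbit $\tau$ of period $n(\tau)$, so the distinct weighted lengths form the discrete set $K\N$; combined with the exponential growth of the number of periodic points of $F$, this forces $\pi_{F,\Phi}$ to be a lattice-supported step function jumping only at integer multiples of $K$, which is incompatible with the smooth asymptotic $\operatorname{Li}(e^{s_0 T})$.

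For part~(ii), I would combine Theorem~\ref{thmZetaAnalExt_InvC}(ii) with Theorem~\ref{thmLogDerivative} via a truncated Perron-type formula. Writing
\begin{equation*}
\psi_{F,\Phi}(T) = \frac{1}{2\pi \I} \int_{c-\I\infty}^{c+\I\infty} \Biggl( -\frac{\zeta'_{F,\,\minus\Phi}(s)}{\zeta_{F,\,\minus\Phi}(s)} \Biggr) \frac{e^{sT}}{s} \,\mathrm{d} s, \qquad c > s_0,
\end{equation*}
I would truncate the contour at height $\abs{\Im(s)} = U$ for a parameter $U = U(T)$ to be optimized, shift the vertical line to $\Re(s) = s_0 - \delta'$ inside the strip of holomorphy furnished by Theorem~\ref{thmZetaAnalExt_InvC}(ii), and collect the residue at $s = s_0$, which contributes the main term $e^{s_0 T}/s_0$. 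The remaining two horizontal segments and the shifted vertical line are then bounded using the polynomial estimate $\bigl|\zeta'_{F,\,\minus\Phi}/\zeta_{F,\,\minus\Phi}\bigr| \leq D \abs{\Im(s)}^{1/2}$ from Theorem~\ref{thmLogDerivative}; optimizing $U$ as a subexponential function of $T$ produces $\psi_{F,\Phi}(T) = e^{s_0 T}/s_0 + \operatorname{O}(e^{(s_0 - \delta)T})$ for some $\delta \in (0, s_0)$, and a final Abel summation transfers this into $\pi_{F,\Phi}(T) = \operatorname{Li}(e^{s_0 T}) + \operatorname{O}(e^{(s_0 - \delta)T})$.

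The principal technical obstacle I anticipate lies not in the analytic input, which is entirely supplied by Theorems~\ref{thmZetaAnalExt_InvC} and~\ref{thmLogDerivative}, but in the bookkeeping: keeping the truncation error in the Perron formula small enough to preserve the exponential error bound, smoothing the indicator function near the jumps of $\psi_{F,\Phi}$ (for instance by a Cesàro average that can be undone at the end), and executing the partial-summation passage from $\psi_{F,\Phi}$ to $\pi_{F,\Phi}$ while simultaneously subtracting the non-primitive iterates. Since the weighted lengths are uniformly bounded below and the number of periodic points of $F$ grows only exponentially in $n$, these steps should be routine modulo careful accounting, yielding the stated asymptotics.
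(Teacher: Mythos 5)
Your overall strategy coincides with the paper's. For the backward implication in (i), the paper simply defers to Parry--Pollicott \cite[Theorem~6.9]{PP90} once Theorem~\ref{thmZetaAnalExt_InvC}(i) supplies the non-vanishing holomorphic extension of $\zeta_{F,-\Phi}$ to $\overline{\H}_{s_0}$ except for the simple pole at $s_0$; the Laplace-transform identity for $-\zeta'/\zeta$, the Ikehara--Wiener step giving $\psi_{F,\Phi}(T)\sim e^{s_0T}/s_0$, and the partial-summation passage to $\pi_{F,\Phi}(T)\sim\operatorname{Li}(e^{s_0T})$ are exactly that argument. For (ii) the paper defers to Pollicott--Sharp \cite[Section~3]{PoSh98}, whose truncated-Perron, contour-shift, residue-plus-error argument is the one you sketch, with the $\abs{\Im(s)}^{1/2}$ bound of Theorem~\ref{thmLogDerivative} controlling the shifted integrals.

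Where you deviate is the forward implication in (i). You argue directly: if $\phi=K+\beta\circ f-\beta$ then $\Phi=nK+\beta\circ F-\beta$, so every $l_{F,\Phi}(\tau)$ lies in $(nK)\N$, and $\pi_{F,\Phi}$ is constant on each interval $\bigl[m\cdot nK,(m+1)\cdot nK\bigr)$, over which $\operatorname{Li}(e^{s_0T})$ grows by a factor tending to $e^{s_0nK}=\deg F>1$; this is incompatible with $\pi_{F,\Phi}\sim\operatorname{Li}(e^{s_0T})$. (Two small slips: you wrote $\phi=K+\beta\circ F-\beta$ where the theorem's cohomology relation is over $f$, and the lattice is $(nK)\N$ rather than $K\N$; neither affects the reasoning.) The paper instead routes this through Corollary~\ref{corNecessary} together with Theorem~\ref{thmNLI}, extracting a contradiction from an explicit lower bound on $\pi_{f,\phi}(nK)$ in terms of periodic-point counts, which requires keeping track of critical-point multiplicities via \cite[Lemma~5.11]{Li16}. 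Your lattice argument is shorter and more self-contained — it uses only that all weighted lengths are integer multiples of a fixed positive number and that $s_0>0$ — whereas the paper's route produces a quantitative liminf/limsup discrepancy. Both are valid; your version buys brevity, theirs an explicit estimate that does not rely on the exact value $\deg F$ of the growth factor.
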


Note that $\lim\limits_{y\to+\infty} \frac{ \operatorname{Li}(y) }{  \frac{y}{ \log y } }  = 1$, thus we also get $\pi_{F,\Phi}(T) \sim \frac{ \exp(s_0 T) }{ s_0 T}$ as $T\to + \infty$.

Once Theorem~\ref{thmZetaAnalExt_InvC} and Theorem~\ref{thmLogDerivative} are established, Theorem~\ref{thmPrimeOrbitTheorem} follows from standard number-theoretic arguments. More precisely, a proof of the backward implication in statement~(i) in Theorem~\ref{thmPrimeOrbitTheorem}, relying on statement~(i) in Theorem~\ref{thmZetaAnalExt_InvC} and the Ikehara--Wiener Tauberian Theorem (see \cite[Appendix~I]{PP90}), is verbatim the same as that of \cite[Theorem~6.9]{PP90} on pages 106--109 of \cite{PP90} (after defining $h\coloneqq s_0$, $\lambda(\tau) \coloneqq l_{F,\Phi}(\tau)$, $\pi\coloneqq \pi_{F,\Phi}$, and $\zeta \coloneqq \zeta_{F, \, \minus s_0 \Phi}$ in the notation of \cite{PP90}) with an additional observation that $\lim\limits_{y\to+\infty} \frac{ \operatorname{Li}(y) }{  \frac{y}{ \log y } }  = 1$. The forward implication in statement~(i) in Theorem~\ref{thmPrimeOrbitTheorem} follows immediately from Proposition~\ref{propSNI2NLI} and Theorem~\ref{thmNLI}. Similarly, a proof of statement~(ii) in Theorem~\ref{thmPrimeOrbitTheorem}, relying on Theorem~\ref{thmLogDerivative} and statement~(ii) in Theorem~\ref{thmZetaAnalExt_InvC}, is verbatim the same as that of \cite[Theorem~1]{PoSh98} presented in \cite[Section~3]{PoSh98}. We omit the these proofs here and direct the interested readers to the references cited above.

 \begin{rem}
We remark that Theorems~\ref{thmZetaAnalExt_InvC}, \ref{thmZetaAnalExt_SFT}, \ref{thmLogDerivative}, and \ref{thmPrimeOrbitTheorem} apply to general expanding Thurston maps including ones that are conjugate to rational maps and ones that are \emph{obstructed} (in the sense of W.~P.~Thurston's characterization theorem). In particular, these theorems apply to the special case when $S^2$ is the Riemann sphere $\widehat{\C}$ and $f\: \widehat{\C} \rightarrow \widehat{\C}$ is a rational expanding Thurston map, i.e., $f$ is a postcritically-finite rational map whose Julia set is the whole sphere $\widehat{\C}$ (or equivalently, $f$ is a postcritically-finite rational map without periodic critical points).
 \end{rem}
 
The following Prime Orbit Theorem for rational expanding Thurston maps follows immediately from Remark~\ref{rmChordalVisualQSEquiv} and statement~(i) in Theorem~\ref{thmPrimeOrbitTheorem}.

\begin{cor}  \label{corPrimeOrbitTheorem_rational}
Let $f\: \widehat{\C} \rightarrow \widehat{\C}$ be a postcritically-finite rational map without periodic critical points. Let $\sigma$ be the chordal metric Riemann sphere $\widehat{\C}$, and $\phi \in \Holder{\alpha} \bigl( \widehat{\C}, \sigma \bigr)$ be an eventually positive real-valued H\"{o}lder continuous function with an exponent $\alpha\in (0,1]$.

Then there exists a unique positive number $s_0>0$ with $P(f,-s_0 \phi) = 0$ and there exists $N_f\in\N$ depending only on $f$ such that for each $n\in \N$ with $n\geq N_f$, the following statement holds for $F\coloneqq f^n$ and $\Phi\coloneqq \sum_{i=0}^{n-1} \phi \circ f^i$:

The asymptotic relation
\begin{equation*} 
\pi_{F,\Phi}(T) \sim \operatorname{Li}\bigl( e^{s_0 T} \bigr)     \qquad\qquad \text{as } T \to + \infty.
\end{equation*} 
holds if and only if $\phi$ is not co-homologous to a constant in $\CCC \bigl( \widehat{\C} \bigr)$.
\end{cor}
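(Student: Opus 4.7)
The plan is to derive the corollary as an immediate application of Theorem~\ref{thmPrimeOrbitTheorem}(i), the only work being to reconcile hypotheses phrased in terms of the chordal metric $\sigma$ on $\widehat{\C}$ with those of Theorem~\ref{thmPrimeOrbitTheorem}, which are phrased in terms of a visual metric $d$ for $f$. First I would observe that a postcritically-finite rational map without periodic critical points has Julia set equal to all of $\widehat{\C}$ and is an expanding Thurston map in the sense of the paper, with $S^2$ identified with $\widehat{\C}$. In particular, Theorem~\ref{thmPrimeOrbitTheorem} applies and provides the integer $N_f$ and, for any eventually positive real-valued continuous potential, the unique number $s_0 > 0$ with $P(f, -s_0 \phi) = 0$.

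Next I would fix an arbitrary visual metric $d$ for $f$ on $\widehat{\C}$ and invoke Remark~\ref{rmChordalVisualQSEquiv} together with the rational case of Theorem~\ref{thmBM} to conclude that the identity map $(\widehat{\C}, \sigma) \to (\widehat{\C}, d)$ is a quasisymmetric homeomorphism. Since $(\widehat{\C}, \sigma)$ is bounded and Ahlfors $2$-regular, any quasisymmetric self-homeomorphism of $\widehat{\C}$ is bi-H\"older; hence $\phi \in \Holder{\alpha}(\widehat{\C}, \sigma)$ implies $\phi \in \Holder{\alpha'}(\widehat{\C}, d)$ for some $\alpha' \in (0,1]$ depending only on $\alpha$ and on the quasisymmetric data. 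Since Theorem~\ref{thmPrimeOrbitTheorem}(i) requires only that $\phi$ be H\"older continuous for \emph{some} exponent with respect to the visual metric, this change of exponent is harmless.

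It then suffices to note that the remaining notions involved are purely dynamical and metric-independent: eventual positivity of $\phi$ is pointwise, the topological pressure $P(f, -s_0\phi)$ and hence the defining equation for $s_0$ depend only on the topological dynamics of $f$ and on $\phi$ as a continuous function, and the condition that $\phi$ be co-homologous to a constant in $\CCC(\widehat{\C})$ is stated entirely inside $\CCC(\widehat{\C})$. Applying Theorem~\ref{thmPrimeOrbitTheorem}(i) to the expanding Thurston map $f$, the visual metric $d$, and the potential $\phi \in \Holder{\alpha'}(\widehat{\C}, d)$ then yields the asymptotic equivalence $\pi_{F,\Phi}(T) \sim \operatorname{Li}(e^{s_0 T})$ together with its equivalence to $\phi$ not being co-homologous to a constant. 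The only conceivable obstacle is the drop in H\"older exponent under the quasisymmetric identification; since statement~(i) is qualitative and does not involve a quantitative dependence on $\alpha$, this is inessential.
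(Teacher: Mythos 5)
Your proposal is correct and follows essentially the same route as the paper: the paper states in one sentence that Corollary~\ref{corPrimeOrbitTheorem_rational} follows immediately from Remark~\ref{rmChordalVisualQSEquiv} and Theorem~\ref{thmPrimeOrbitTheorem}~(i), and your write-up simply makes explicit the steps packed into that remark (identification of $f$ as an expanding Thurston map, the identity map being quasisymmetric between $(\widehat{\C},\sigma)$ and $(\widehat{\C},d)$ by Theorem~\ref{thmBM}, the resulting change of H\"older exponent, and the metric-independence of eventual positivity, topological pressure, and the cohomology condition). One minor remark: you do not need Ahlfors $2$-regularity of the chordal sphere; the paper's cited fact (quasisymmetric embeddings of bounded connected metric spaces are H\"older, from \cite[Section~11.1 and Corollary~11.5]{He01}) applied to the identity and its inverse between the two bounded connected metrics on $\widehat{\C}$ already gives the bi-H\"older conclusion.
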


\smallskip
 
The strong non-integrability condition on the potential $\phi$ essential in the theorems mentioned above is introduced in Subsection~\ref{subsctSNI}. The idea was first used by D.~Dolgopyat \cite{Dol98}. In the contexts of classical smooth dynamical systems on Riemannian manifolds with smooth potentials, the corresponding condition is often equivalent to a weaker condition, called \emph{non-local integrability condition}, introduced in our context in Section~\ref{sctNLI}. We can actually show that in the context of expanding Thurston maps, a potential is non-locally integrable if and only if it is not co-homologous to a constant (see Theorem~\ref{thmNLI} for more details). However, as we are endeavoring out of Riemannian settings into more general self-similar metric spaces in this paper, the equivalence between the strong non-integrability condition and the non-local integrability condition for smooth potentials is not expected except for Latt\`{e}s maps, for reasons discussed in Subsection~\ref{subsctLattes}. Nevertheless, generic potentials always satisfy the strong non-integrability condition, as stated more precisely in the following theorem.
 
\begin{theorem}[Genericity]     \label{thmSNIGeneric}
Let $f\: S^2 \rightarrow S^2$ be an expanding Thurston map, and $d$ be a visual metric on $S^2$ for $f$. Given $\alpha\in(0,1]$. We equipped the set $\Holder{\alpha}(S^2,d)$ of real-valued H\"{o}lder continuous functions with an exponent $\alpha$ with the topology induced by the H\"{o}lder norm $\Hnorm{\alpha}{\cdot}{(S^2,d)}$. Let $\mathcal{S}^\alpha$ be the subset of  $\Holder{\alpha}(S^2,d)$ consisting of functions satisfies the $\alpha$-strong non-integrability condition.

Then $\mathcal{S}^\alpha$ is open in $\Holder{\alpha}(S^2,d)$. Moreover, the following statements hold:
\begin{enumerate}
\smallskip
\item[(i)] $\mathcal{S}^\alpha$ is an open dense subset of $\Holder{\alpha}(S^2,d)$ if $\alpha \in (0,1)$.

\smallskip
\item[(ii)] $\mathcal{S}^1$ is an open dense subset of $\Holder{1}(S^2,d)$ if the expansion factor $\Lambda$ of $d$ is not equal to the combinatorial expansion factor $\Lambda_0(f)$ of $f$.
\end{enumerate}
\end{theorem}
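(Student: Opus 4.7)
The plan is to verify the three assertions of the theorem --- openness, density for $\alpha\in(0,1)$, and density for $\alpha=1$ when $\Lambda\neq\Lambda_0(f)$ --- in turn. Openness is the softest of the three. The $\alpha$-strong non-integrability condition should take the form of a quantitative lower bound: there exist constants $c>0$ and $n\in\N$ together with a finite witnessing configuration built from branches of $f^{-n}$ and points in $S^2$ along which an explicit linear expression in Birkhoff sums $S_n\phi$ is bounded below by $c$ times an appropriate power of a visual-metric scale. If I replace $\phi$ by $\phi+\eta$ with $\eta\in\Holder{\alpha}(S^2,d)$, then the same linear expression in $S_n(\phi+\eta)$ is perturbed by at most $n \Hseminorm{\alpha}{\eta}  (\text{scale})^\alpha$, using only $\alpha$-Hölder control of $\eta$. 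Hence once $\Hnorm{\alpha}{\eta}{(S^2,d)}$ is below a threshold depending on $\phi$ and the witnessing depth $n$, the same witness continues to certify the strong non-integrability of $\phi+\eta$, now with constant $c/2$, giving openness of $\mathcal{S}^\alpha$.

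For density when $\alpha\in(0,1)$, given $\phi\in\Holder{\alpha}(S^2,d)$ and $\delta>0$, I would construct a Lipschitz perturbation $\psi$ tailored to a chosen pair of distinct inverse branches of $f^{-n}$ on some $n$-tile, so that for generic small $t\in\R$ the function $\phi+t\psi$ exhibits a non-zero temporal distance along this configuration. The key observation is that a Lipschitz function supported in a set of $d$-diameter $\rho$ has $\alpha$-Hölder seminorm bounded by $C\rho^{1-\alpha}\Hseminorm{1}{\psi}$; pushing the support into sufficiently deep tiles shrinks this bound even after scaling by $t$ of fixed size. A linear parameter sweep in $t$ then shows that the set of bad values of $t$ for which the witness temporal distance vanishes is contained in finitely many hyperplane conditions, hence nowhere dense in a neighbourhood of $0$. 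Picking any admissible $t$ provides $\phi+t\psi\in\mathcal{S}^\alpha$ within $\delta$ of $\phi$ in Hölder norm, and density follows.

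The case $\alpha=1$ is the delicate one and I expect it to be the main obstacle. The previous smoothing trick fails because Lipschitz is already the sharpest regularity class tied to $d$; any Lipschitz perturbation of fixed non-trivial Lipschitz seminorm retains a non-negligible $\Holder{1}$ norm regardless of support. The hypothesis $\Lambda\neq\Lambda_0(f)$ is used to exploit the mismatch between the $d$-diameter $\asymp\Lambda^{-n}$ of $n$-tiles and their combinatorial complexity $\asymp\Lambda_0(f)^n$. This disparity permits the construction of $\psi$ as a superposition of tile-localised Lipschitz bumps whose individual supports are chosen so that the telescoping of their contributions to the witness temporal distance persists, while their combined Lipschitz seminorm is controlled by a convergent geometric series in $\Lambda/\Lambda_0(f)$ or its reciprocal, depending on which of the two factors is larger. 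The hardest technical point will be aligning the witness configuration with this superposition so that the bumps contribute asymmetrically to the Birkhoff sums along the two competing inverse branches, and checking that they do not self-interfere; when $\Lambda=\Lambda_0(f)$ the geometric control degenerates and this balance is lost, which is precisely the mechanism I expect to account for the necessity of the hypothesis.
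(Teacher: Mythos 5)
Your outline identifies the right three assertions, and the openness argument is broadly in the spirit of the paper's, but the density arguments rest on a misunderstanding of how strong the $\alpha$-strong non-integrability condition actually is, and this gap is fatal to the proposal as written.

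\textbf{The quantifier structure of the condition.} Definition~\ref{defStrongNonIntegrability} demands a uniform lower bound (\ref{eqSNIBoundsDefn}) over \emph{all} levels $M\geq M_0$, \emph{all} $M$-tiles $X\subseteq Y^{M_0}_\c$, and \emph{all} depths $N\geq N_0$. Your density argument for (i) proposes a single Lipschitz bump ``tailored to a chosen pair of distinct inverse branches of $f^{-n}$ on some $n$-tile'' followed by a parameter sweep in $t$ showing that the witness temporal distance is non-zero for generic $t$. That would at best certify non-vanishing of \emph{one} temporal distance -- essentially showing $\phi+t\psi$ is non-locally integrable in the sense of Definition~\ref{defLI}, which by Theorem~\ref{thmNLI} is equivalent to not being co-homologous to a constant, and whose density is indeed soft. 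But membership in $\mathcal{S}^\alpha$ requires the uniform bound at infinitely many scales simultaneously, and a bump supported in one deep tile only perturbs finitely many of the relevant configurations. Your ``push the support into deep tiles'' trick, which exploits the snowflake estimate $\Hseminorm{\alpha}{\cdot}\lesssim \rho^{1-\alpha}\Hseminorm{1}{\cdot}$ for $\alpha<1$, makes the Hölder norm of a \emph{single} bump small, but does not produce the multi-scale structure needed. The paper's Theorem~\ref{thmPerturbToStrongNonIntegrable} constructs $\Upsilon$ as a \emph{superposition over all} $m\in\N$ and all backward-orbit positions $j\in\N$ of bump functions $\Upsilon_{v_1(\tau_j(X)),\,M_0+mN+j}$ with carefully designed amplitudes $\asymp C_{26}\Lambda^{-\alpha(M_0+mN+j)}\varepsilon$, an adaptive selection $\delta_X\in\{0,1\}$ that kills accidental cancellations, and a tile-combinatorial level structure (Property~(c) of the bumps, with the decay factor $(D_N-1)^{-(m-1)}$) that controls the Hölder seminorm of the sum. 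The convergence of the Hölder norm of this infinite superposition is exactly where the hypothesis $\Lambda^\alpha < \Lambda_0(f)$ enters, since $D_N\sim\Lambda_0(f)^N$; when $\Lambda^\alpha = \Lambda_0(f)$ the series fails to converge. Your part-(ii) paragraph gestures at this mechanism, but does not supply the recursive multi-scale construction or explain how to avoid self-interference; and your part-(i) argument bypasses it entirely, which is the actual gap. The paper does not treat (i) and (ii) separately: both are special cases of the single inequality $\Lambda^\alpha < \Lambda_0(f)$ (which holds automatically for $\alpha<1$ since $\Lambda\leq\Lambda_0(f)$), and the construction is the same.

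\textbf{A secondary issue in the openness estimate.} Your bound of the perturbation by ``$n\,\Hseminorm{\alpha,\,(S^2,d)}{\eta}\,(\mathrm{scale})^\alpha$'' is weaker than what is needed. Because (\ref{eqSNIBoundsDefn}) must be preserved for \emph{all} $N\geq N_0$, one needs an $N$-independent bound for the change in the ratio on the left. The right estimate comes from the distortion Lemma~\ref{lmSnPhiBound}: for $\varsigma_i(x_1(X))$ and $\varsigma_i(x_2(X))$ lying in the same $(N+M_0)$-tile, the contributions along the backward orbit form a \emph{geometric} series and give $\abs{S_N\eta(\varsigma_i(x_1(X)))-S_N\eta(\varsigma_i(x_2(X)))}\leq \frac{C_0\Hseminorm{\alpha,\,(S^2,d)}{\eta}}{1-\Lambda^{-\alpha}}\,d(x_1(X),x_2(X))^\alpha$, uniformly in $N$. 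After dividing by $d(x_1(X),x_2(X))^\alpha$ one gets a threshold purely in terms of $\Hseminorm{\alpha,\,(S^2,d)}{\eta}$; this is the paper's (\ref{eqPfthmSNIGeneric}). The factor $n$ in your estimate would (absent further cancellation) fail to give this uniformity. You also omit the preliminary reduction, via Lemma~\ref{lmCexistsL} and Lemma~\ref{lmSNIwoC}, to the case of an $f$-invariant Jordan curve $\CC$, needed before any of the tile combinatorics can be set up.
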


The H\"{o}lder norm $\Hnorm{\alpha}{\cdot}{(S^2,d)}$ is recalled in Section~\ref{sctNotation}. The definition of the combinatorial expansion factor $\Lambda_0(f)$ of $f$ is given in (\ref{eqDefCombExpansionFactor}). See \cite[Chapter~16]{BM17} for a more detailed discussion on $\Lambda_0(f)$. In particular, the inequality $\Lambda \leq \Lambda_0(f)$ always holds for the expansion factor $\Lambda$ of any visual metric $d$ for an expanding Thurston map $f$. 

We note here that for each $\alpha\in(0,1]$, the set of real-valued H\"{o}lder continuous functions $\phi \in \Holder{\alpha}(S^2,d)$ that are eventually positive is an open subset of $\Holder{\alpha}(S^2,d)$ with respect to either the uniform norm or the H\"older norm.

For Latt\`{e}s maps $f\: \widehat{\C} \rightarrow \widehat{\C}$ (see Definition~\ref{defLattesMap}) and continuously differentiable potentials $\phi\: \widehat{\C} \rightarrow \R$ on the Riemann sphere, the equivalence between the strong non-integrability condition and the condition that $\phi$ is not co-homologous to a constant is established in Proposition~\ref{propLattes}, depending crucially on the properties of the canonical orbifold metric (see Remark~\ref{rmCanonicalOrbifoldMetric}).

\begin{rem}    \label{rmLattesCohomology}
Let $f\: \widehat{\C} \rightarrow \widehat{\C}$ be a Latt\`{e}s map on the Riemann sphere $\widehat{\C}$ equipped with the spherical metric. It follows immediately from Proposition~\ref{propLattes} that we can replace the conditions on $\phi$ (including the additional assumptions that $\phi$ satisfies the $\alpha$-strong non-integrability condition) in Theorems~\ref{thmZetaAnalExt_InvC}, \ref{thmZetaAnalExt_SFT}, \ref{thmLogDerivative}, and \ref{thmPrimeOrbitTheorem} by the following condition:
\begin{enumerate}
\smallskip
\item[] $\phi \: \widehat{\C} \rightarrow \R$ is an eventually positive, continuously differentiable, real-valued function that is not co-homologous to a constant in $\CCC\bigl( \widehat{\C} \bigr)$, 
\end{enumerate}
and these theorems still hold.
\end{rem}

In particular, we have the following characterization of the Prime Orbit Theorem in the context of Latt\`{e}s maps.

\begin{theorem}[Prime Orbit Theorem for Latt\`{e}s maps]   \label{thmLattesPOT}
Let $f\: \widehat{\C} \rightarrow \widehat{\C}$ be a Latt\`{e}s map on the Riemann sphere $\widehat{\C}$. Let $\phi \: \widehat{\C} \rightarrow \R$ be an eventually positive, continuously differentiable real-valued function on $\widehat{\C}$. Then there exists a unique positive number $s_0>0$ with $P(f,-s_0 \phi) = 0$ and there exists $N_f\in\N$ depending only on $f$ such that the following statements are equivalent:
\begin{enumerate}
\smallskip
\item[(i)] $\phi$ is not co-homologous to a constant in $\CCC\bigl(\widehat{\C} \bigr)$, i.e., there are no constant $K\in\R$ and function $\beta \in \CCC \bigl( \widehat{\C} \bigr)$ with $\phi = K + \beta \circ f - \beta$.

\smallskip
\item[(ii)] For each $n\in \N$ with $n\geq N_f$, we have 
\begin{equation*}  
\pi_{F,\Phi}(T) \sim \operatorname{Li}\bigl( e^{s_0 T} \bigr)   \qquad\qquad \text{as } T \to + \infty,
\end{equation*}
where  $F\coloneqq f^n$ and $\Phi\coloneqq \sum_{i=0}^{n-1} \phi \circ f^i$.

\smallskip
\item[(iii)] For each $n\in \N$ with $n\geq N_f$, there exists a constant $\delta \in (0, s_0)$ such that
\begin{equation*}  
\pi_{F,\Phi}(T) = \operatorname{Li}\bigl( e^{s_0 T} \bigr)  + \operatorname{O} \bigl( e^{(s_0 - \delta)T} \bigr)      \qquad\qquad \text{as } T \to + \infty,
\end{equation*}
where  $F\coloneqq f^n$ and $\Phi\coloneqq \sum_{i=0}^{n-1} \phi \circ f^i$.
\end{enumerate}  
Here $\operatorname{Li}(\cdot)$ is the Eulerian logarithmic integral function defined in (\ref{eqDefLogIntegral}).
\end{theorem}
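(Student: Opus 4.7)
The statement is a Lattès-specific strengthening of Theorem~\ref{thmPrimeOrbitTheorem}: the pair of hypotheses ($\alpha$-Hölder regularity plus $\alpha$-strong non-integrability) is replaced by the cleaner assumption that $\phi$ is continuously differentiable and not cohomologous to a constant. The number $s_0$ with $P(f,-s_0\phi)=0$ is produced as in Theorem~\ref{thmPrimeOrbitTheorem}, its existence and uniqueness following from the strict monotonicity and strict convexity of $s \mapsto P(f,-s\phi)$ together with the eventual positivity of $\phi$; the integer $N_f$ is the one from Remark~\ref{rmNf}. Hence the substantive work is to verify the three implications (i)$\Rightarrow$(iii), (iii)$\Rightarrow$(ii), and (ii)$\Rightarrow$(i).

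For (i)$\Rightarrow$(iii), I would first upgrade the regularity of $\phi$ to fit the hypotheses of Theorem~\ref{thmPrimeOrbitTheorem}~(ii). In the Lattès setting, the canonical orbifold metric from Remark~\ref{rmCanonicalOrbifoldMetric} is a visual metric $d$ for $f$ that is bilipschitz equivalent to the spherical metric away from the (finitely many) orbifold points, and locally Hölder with a controlled exponent near them. Consequently every $C^1$ function on $\widehat{\C}$ (with respect to the spherical metric) is $\alpha$-Hölder continuous on $(\widehat{\C},d)$ for a suitable $\alpha \in (0,1]$. I would then invoke Remark~\ref{rmLattesCohomology}, i.e., the content of Proposition~\ref{propLattes}: in the Lattès setting, a $C^1$ potential that is not cohomologous to a constant satisfies the $\alpha$-strong non-integrability condition. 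Applying Theorem~\ref{thmPrimeOrbitTheorem}~(ii) then produces a constant $\delta \in (0,s_0)$ realizing~(iii).

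The implication (iii)$\Rightarrow$(ii) is immediate from the standard asymptotic $\operatorname{Li}(y) \sim y/\log y$ as $y \to +\infty$, which gives $\operatorname{O}(e^{(s_0-\delta)T}) = o(\operatorname{Li}(e^{s_0 T}))$. For (ii)$\Rightarrow$(i) I would argue by contrapositive. If $\phi = K + \beta \circ f - \beta$ for some $K \in \R$ and some $\beta \in \CCC(\widehat{\C})$, then for a primitive periodic orbit $\tau$ of $F = f^n$ of minimal $F$-period $m$, the weighted length telescopes to $l_{F,\Phi}(\tau) = mnK$. Thus all weighted lengths lie in the discrete lattice $nK\cdot\N$, and $\pi_{F,\Phi}$ is a step function with integer-spaced jumps, which is incompatible with the smoothly growing $\operatorname{Li}(e^{s_0T})$. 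Alternatively, (ii)$\Rightarrow$(i) is precisely the forward direction of Theorem~\ref{thmPrimeOrbitTheorem}~(i), attributed in the paper to Proposition~\ref{propSNI2NLI} together with Theorem~\ref{thmNLI}.

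The only genuine obstacle in this chain is the input Proposition~\ref{propLattes}, which converts the purely dynamical condition of cohomological non-triviality into the analytically usable $\alpha$-strong non-integrability condition; this step is Lattès-specific and relies on the flat affine structure of the torus cover accessed through the canonical orbifold metric. Once that proposition is granted, the present theorem is an assembly: the Hölder-regularity upgrade from $C^1$ via the canonical orbifold metric, the invocation of Theorem~\ref{thmPrimeOrbitTheorem}, and the step-function obstruction in the degenerate coboundary case are each routine.
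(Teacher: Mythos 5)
Your proposal reproduces the structure of the paper's proof. You open, as the paper does, by upgrading the regularity of $\phi$ to Hölder via Proposition~\ref{propLattes} (the paper obtains $\phi\in\Holder{1}\bigl(\widehat{\C},\omega_f\bigr)$, not just ``some'' $\alpha$), then obtain existence and uniqueness of $s_0$ from Corollary~\ref{corS0unique} (strict monotonicity of $t\mapsto P(f,-t\phi)$ already suffices; the convexity you mention is not needed), take $N_f$ from Remark~\ref{rmNf}, and reduce (i)$\Rightarrow$(iii) to Theorem~\ref{thmPrimeOrbitTheorem}~(ii) through Proposition~\ref{propLattes}'s equivalence between non-cohomology and $1$-strong non-integrability. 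The implication (iii)$\Rightarrow$(ii) is trivial, as you note.

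For (ii)$\Rightarrow$(i) you depart from the paper. The paper argues by contradiction via Theorem~\ref{thmNLI} and Corollary~\ref{corNecessary}: if $\phi$ were cohomologous to a constant, Theorem~\ref{thmNLI} gives local integrability, and Corollary~\ref{corNecessary} (a quantitative lower-bound count on periodic orbits) shows $\pi_{F,\Phi}(T)\not\sim\operatorname{Li}\bigl(e^{s_0T}\bigr)$. Your lattice/step-function argument is a valid and in fact more elementary alternative: if $\phi = K + \beta\circ f - \beta$ then every weighted length of an $F$-orbit is a positive integer multiple of $nK$ (eventual positivity of $\phi$, evaluated at a fixed point of $f$, forces $K>0$), so $\pi_{F,\Phi}$ is constant on each interval $[(m-1)nK,\,mnK)$; writing $a_m$ for that constant and comparing $a_m/L\bigl((m-1)nK\bigr)\to 1$ with $a_m/L\bigl(mnK - \tfrac{nK}{2}\bigr)\to 1$, where $L(T)=\operatorname{Li}\bigl(e^{s_0T}\bigr)$, forces $L\bigl(mnK-\tfrac{nK}{2}\bigr)/L\bigl((m-1)nK\bigr)\to 1$, contradicting the fact that this ratio tends to $e^{s_0 nK/2}>1$. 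You assert the ``incompatibility'' without carrying out this ratio comparison; it should be spelled out for a complete proof. Finally, your secondary route cites Proposition~\ref{propSNI2NLI} for the forward implication of Theorem~\ref{thmPrimeOrbitTheorem}~(i), echoing the attribution given in the paper's discussion after that theorem, but Proposition~\ref{propSNI2NLI} (strong non-integrability implies non-local integrability) is not the ingredient that yields ``asymptotic $\Rightarrow$ not cohomologous''; the correct input — and the one the paper actually uses in the proof of the present theorem — is Corollary~\ref{corNecessary} together with Theorem~\ref{thmNLI}.
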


\subsection{Plan of the paper}      \label{subsctPlan}
We adapt the general strategy of M.~Pollicott and R.~Sharp \cite{PoSh98} mentioned above combined with the machinery of D.~Dolgopyat \cite{Dol98} in our context. Apart from our new metric-topological setting which differs from the more classical smooth settings in the literature, there are other major obstacles to carrying out this plan. We describe such difficulties and comment on our tactics in overcoming them below before giving a summary of the contents of each section.

\begin{enumerate}
\smallskip
\item[(i)] One key ingredient shared both in the strategy of M.~Pollicott and R.~Sharp \cite{PoSh98} and in the original machinery of D.~Dolgopyat \cite{Dol98} is finite symbolic codings of the dynamics\footnote{Many efforts have been made to remove the requirement of finite symbolic coding in D.~Dolgopyat's machinery for applications in decay of correlations, notably the functional approaches of V.~Baladi, S.~Gou\"{e}zel, C.~Liverani, M.~Tsujii, and others.}.  There is no general finite symbolic coding for rational maps outside of the hyperbolic case. However, thanks to the works of J.~W.~Cannon, W.~J.~Floyd, and W.~R.~Parry \cite{CFP07} and M.~Bonk and D.~Meyer \cite{BM17}, we know that for each postcritically-finite rational map whose Julia set is the whole Riemann sphere , or more generally, each expanding Thurston map $f$, there exists $N\in\N$ such that for each $n>N$, the iteration $F\coloneqq f^n$ has some forward invariant Jordan curve $\CC$ on the sphere $S^2$ that induces finite Markov partitions for $F$. We actually need a slightly stronger result (see Lemma~\ref{lmCexistsL}) that was first established in \cite{Li18}. It is known that there exist expanding Thurston maps without such forward invariant Jordan curves (see \cite[Example~15.11]{BM17}).

\smallskip
\item[(ii)] The symbolic coding induced by the finite Markov partitions mentioned above is not finite-to-one for general expanding Thurston maps $F$, and A.~Manning's argument that are used in the literature for symbolic codings that are finite-to-one does not apply here to account for the periodic points on the boundaries of cylinder sets in the symbolic space induced by the Markov partitions (that we call \emph{tiles}). To overcome this obstacle, we introduce the dynamical Dirichlet series $\DS_{F,\,\minus\psi,\, \deg_F}$ and study its holomorphic extension properties from the symbolic coding instead, reducing the part of Theorem~\ref{thmZetaAnalExt_InvC} on $\DS_{F,\,\minus\psi,\, \deg_F}$ to Theorem~\ref{thmZetaAnalExt_SFT}.  It seems to be the first instance in the field where such general dynamical Dirichlet series other than $L$-functions are crucially used.

\smallskip
\item[(iii)] D.~Dolgopyat's machinery builds upon the existence of a spectral gap of the Ruelle operator acting on appropriate function spaces from the study of thermodynamical formalism for the corresponding dynamical systems. However, for a general rational map $f$ with at least one critical point in the Julia set---so $f$ is not a hyperbolic rational map---the Ruelle operator is not invariant on the set of $\alpha$-H\"{o}lder continuous functions on the Riemann sphere equipped with the spherical metric (see \cite[Remark~3.1]{DPU96}), even though it is known $\alpha$-H\"{o}lder continuous functions are mapped by all iterations of the Ruelle operator to $\alpha'$-H\"{o}lder continuous functions for some $\alpha'\in(0,\alpha)$ sufficiently small (see \cite[Section~3]{DPU96}). It is not clear how to incorporate this technical phenomenon with the intricate machinery of D.~Dolgopyat. In the context of expanding Thurston maps in this paper, we use a fruitful point of view introduced in the thesis of the first-named author (see \cite{Li18}) in developing the thermodynamical formalism. Namely, instead of spherical metric (or chordal metric), we use the more dynamically natural metrics on the sphere, i.e., the visual metrics. In this setting, the Ruelle operator (for an $\alpha$-H\"{o}lder continuous potential)
preserves the set of $\alpha$-H\"{o}lder continuous functions for $\alpha\in(0,1]$, and moreover, most of the classical results similar to those in the thermodynamical formalism for expansive systems with specification property hold including the existence of spectral gap. See \cite{Li18} for a detailed study of the thermodynamical formalism in this setting.

\smallskip
\item[(iv)] One still cannot use the Ruelle operator for an expanding Thurston map and a H\"{o}lder continuous potential introduced in \cite{Li18} in the proof of Theorem~\ref{thmZetaAnalExt_SFT} directly. One key problem is that in order to use D.~Ruelle's argument to bound the dynamical Dirichlet series in terms of the operator norm of the Ruelle operator by matching the preimage points in the Ruelle operator with periodic points in the dynamical Dirichlet series, one needs to apply the Ruelle operator to characteristic functions supported on the tiles. However, such characteristic functions are not (H\"{o}lder) continuous. Our strategy here is to ``split'' the Ruelle operator $\RR_\psi$ into ``pieces'' $\RR_{\psi, X^0_\c, E}^{(n)}$ corresponding to each tile (see Definition~\ref{defSplitRuelle}) and to piece them together to define a new operator $\RRR_\psi$ on the product space $\CCC\bigl( X^0_\b, \C \bigr) \times \CCC\bigl( X^0_\w, \C \bigr)$ that we call the \emph{split Ruelle operator} for the map $F$ and potential $\psi$ (see Definition~\ref{defSplitRuelleOnProductSpace}). Here $X^0_\b$ and $X^0_\w$ are the (closures) of the Jordan regions on the sphere induced by the invariant Jordan curve $\CC$. We then deduce various properties including spectral gap of $\RRR_\psi$ from, or in a similar way to, those of $\RR_\psi$ in Section~\ref{sctSplitRuelleOp}.

\smallskip
\item[(v)] We also need to relate the dynamical Dirichlet series $\DS_{F,\,\minus\psi,\, \deg_F}$ and the dynamical zeta function $\zeta_{F,\,\minus\psi}$. As mentioned above, A.~Manning's argument used in the literature for other dynamical systems does not apply in our setting since our symbolic coding is not finite-to-one in general. Instead, we establish the quotient of $\DS_{F,\,\minus\psi,\, \deg_F}$ and $\zeta_{F,\,\minus\psi}$ as a combination of a product and a quotient of the dynamical zeta functions for three symbolic dynamical systems on the boundaries of the tiles (see (\ref{eqDirichletSeriesIsCombZetaFns})) by a careful study of the combinatorics of tiles (see Subsection~\ref{subsctDynOnC_Combinatorics}) with ideas from \cite{Li16}. 

\smallskip
\item[(vi)] In order to deduce the holomorphic extension properties of $\zeta_{F,\,\minus\psi}$ from those of $\DS_{F,\,\minus\psi,\, \deg_F}$, we need to show that there are strictly less dynamics on the boundaries of tiles than on the tiles themselves measured by the topological pressures of corresponding symbolic dynamical systems, which is not a priori clear. A key construction (see (\ref{eqDefEm})) first introduced in \cite{Li15} is used in Subsection~\ref{subsctDynOnC_TopPressure} to establish such relations in Theorem~\ref{thmPressureOnC}.

\smallskip
\item[(vii)] The existence of critical points in the Julia set, and more seriously of periodic critical points for some expanding Thurston maps, also gives rise to obstacles in the proof of the equivalence of the non-local integrability condition (see Definition~\ref{defLI}) and the cohomology conditions on the potentials (see Theorem~\ref{thmNLI}). For example, an inverse branch of an expanding Thurston map with a periodic critical point may not have a fixed point. We nevertheless successfully establish such equivalence in Theorem~\ref{thmNLI} by a careful study of the universal orbifold covers, introduced by W.~P.~Thurston \cite{Th80} in 1970s for geometry of $3$-manifolds, in our context in Section~\ref{sctNLI}.

\smallskip
\item[(viii)] In the case of rational expanding Thurston maps $f\: \widehat{\C}  \rightarrow \widehat{\C}$ and continuously differentiable potentials $\phi \: \widehat{\C} \rightarrow \R$, one would hope to establish the Prime Orbit Theorem under the weaker assumption that $\phi$ is not co-homologous to a constant, or more precisely, to derive from this assumption the strong non-integrability condition. This is often done in the literature in smooth dynamical systems. However, the dynamically more nature metrics, i.e., the visual metrics, that we use to develop the thermodynamical formalism are not quite compatible with the smooth structure on $\widehat{\C}$. In fact, the chordal metric $\sigma$ (see Remark~\ref{rmChordalVisualQSEquiv} for the definition) is never a visual metric for $f$ (see \cite[Lemma~8.12]{BM17}), and $\bigl(\widehat{\C}, \sigma \bigr)$ is snowflake equivalent to $(S^2,d)$ for some visual metric $d$ if and only if $f$ is topologically conjugate to a Latt\`{e}s map (see \cite[Theorem~18.1~(iii)]{BM17} and Definition~\ref{defLattesMap} below). Moreover, the canonical orbifold metric $\omega_f$ (see Remark~\ref{rmCanonicalOrbifoldMetric}) is a visual metric if and only if $f$ is a Latt\`{e}s map. Nevertheless, we provide a positive answer in Theorem~\ref{thmLattesPOT} for Latt\`{e}s maps using properties of the canonical orbifold metric in this setting. The case of all rational expanding Thurston maps is still open.
\end{enumerate}

\smallskip

We will now give a brief description of the structure of this paper.

After fixing some notation in Section~\ref{sctNotation}, we give a review of the dynamical and geometric notions and basic facts needed in this paper in Section~\ref{sctPreliminaries}. We first discuss thermodynamical formalism in Subsection~\ref{subsctThermodynFormalism} very briefly. General branched covering maps between surfaces are reviewed in Subsection~\ref{subsctBranchedCoveringMaps} even though we are only concerned with the special case of Thurston maps on the topological $2$-sphere in the whole paper except Section~\ref{sctNLI}, . 

Subsection~\ref{subsctThurstonMap} focuses on the main objects of investigation of this paper, namely, expanding Thurston maps $f$. Many notions and results from M.~Bonk and D.~Meyer \cite{BM17} and the previous works of the first-named author \cite{Li16, Li15, Li18, Li17} that are crucially used in this paper are recorded for the convenience of the reader. We recall the notion of \emph{eventually positive potentials} at the end of this subsection and show that for such a potential $\phi$, the topological pressure $t \mapsto P(f, -t\phi)$ has unique zero $t= s_0$ in Corollary~\ref{corS0unique}.

In Subsection~\ref{subsctSFT}, we first recall and set notation for the symbolic dynamical systems known as subshifts of finite type. Some basic properties of the topological pressure and its relations with periodic points and preimage points in this context are recalled in Proposition~\ref{propSFT}, Lemma~\ref{lmUnifBddToOneFactorPressure}, and Lemma~\ref{lmCylinderIsTile}. We then construct a one-sided subshift of finite type $\bigl(\Sigma_{A_{\ti}}^+,\sigma_{A_{\ti}}\bigr)$ in Proposition~\ref{propTileSFT} encoding the dynamics on the tiles induced by an expanding Thurston map with some forward invariant Jordan curve $\CC\subseteq S^2$, and show that $f$ is a factor of $\bigl(\Sigma_{A_{\ti}}^+,\sigma_{A_{\ti}} \bigr)$ with the factor map $\pi_\ti \: \Sigma_{A_{\ti}}^+ \rightarrow S^2$ given by (\ref{eqDefTileSFTFactorMap}).

In Subsection~\ref{subsctDynZetaFn}, we recall in (\ref{eqDefZetaFn}) the notion of \emph{dynamical zeta function} $\zeta_{g,\,\minus\psi}$ for a continuous map $g$ on a compact metric space $(X,d)$ and a complex-valued continuous function $\psi$ on $X$. More generally, for an additional complex-valued function $w$ on $X$, we introduce the \emph{dynamical Dirichlet series} $\DS_{g,\,\minus\psi,\,w}$ in Definition~\ref{defDynDirichletSeries} as an analogue of Dirichlet series in analytic number theory, where $w$ corresponds to a strongly multiplicative arithmetic function. We establish the convergence of $\DS_{g,\,\minus\psi,\,w}$ and an analogue of the Euler product formula for $\DS_{g,\,\minus\psi,\,w}$ in Lemma~\ref{lmDynDirichletSeriesConv_general} under relevant assumptions in our context on periodic points. The corresponding results for $\zeta_{f,\,\minus\phi}$, $\DS_{f,\,\minus\phi,\,w}$, and $\zeta_{\sigma_{A_{\ti}},\,\minus\phi\circsmall\pi_{\ti}}$ are recorded in Proposition~\ref{propZetaFnConv_s0} for some expanding Thurston map $f$ and an eventually positive (see Definition~\ref{defEventuallyPositive}) real-valued H\"{o}lder continuous function $\phi$ on $S^2$.

In Section~\ref{sctAssumptions}, we state the assumptions on some of the objects in this paper, which we are going to repeatedly refer to later as \emph{the Assumptions}. Note that not all assumptions are assumed in all the statements in this paper. In fact, we have to gradually remove the dependence on some of the assumptions before proving our main results. This makes the paper more technically involved.

The goal of Section~\ref{sctDynOnC} is to establish a relation between the dynamical zeta functions $\zeta_{f,\,\minus\phi}$ and $\zeta_{\sigma_{A_{\ti}},\,\minus\phi\circsmall\pi_{\ti}}$ for an expanding Thurston map $f$ with some forward invariant Jordan curve $\CC \subseteq S^2$ and a H\"{o}lder continuous potential $\phi\: S^2 \rightarrow \R$ through a careful investigation on the dynamics induced by $f$ on the curve $\CC$. Consequently, we reduce Theorem~\ref{thmZetaAnalExt_InvC} on $f$ to Theorem~\ref{thmZetaAnalExt_SFT} on the corresponding one-sided subshift of finite type $\bigl(\Sigma_{A_{\ti}}^+,\sigma_{A_{\ti}} \bigr)$ defined from the tiles induced by $\CC$.

In Subsection~\ref{subsctDynOnC_Construction}, we construct two one-sided subshift of finite type $\bigl(\Sigma_{A_{\e}}^+,\sigma_{A_{\e}} \bigr)$ and $\bigl(\Sigma_{A_{\ee}}^+,\sigma_{A_{\ee}} \bigr)$ induced by $f$ on $\CC$. We study the combinatorics induced by $f$ and $\CC$ in Subsection~\ref{subsctDynOnC_Combinatorics} and establish in Theorem~\ref{thmNoPeriodPtsIdentity} a counting formula (\ref{eqNoPeriodPtsIdentity}) relating the ``multiplicity'' of an arbitrary point $y\in S^2$ in the sets of periodic points for the four related symbolic dynamical systems $\bigl(\Sigma_{A_{\ti}}^+,\sigma_{A_{\ti}} \bigr)$, $\bigl(\Sigma_{A_{\e}}^+,\sigma_{A_{\e}} \bigr)$, $\bigl(\Sigma_{A_{\ee}}^+,\sigma_{A_{\ee}} \bigr)$, and $(\V^0, f|_{\V^0} )$ where the set $\V^0$ coincides with the finite set $\post f$ of postcritical points of $f$ (i.e., union of forward orbits of critical values of $f$).

Before deducing Theorem~\ref{thmZetaAnalExt_InvC} from Theorem~\ref{thmZetaAnalExt_SFT} in Subsection~\ref{subsctDynOnC_Reduction} using the counting formula in Theorem~\ref{thmNoPeriodPtsIdentity}, we need to show in Subsection~\ref{subsctDynOnC_TopPressure} that topological pressures of the dynamical systems induced by $f$ and a real-valued H\"{o}lder continuous functions $\psi$  on $\CC$ are strictly smaller than the topological pressure $P(f,\psi)$ of $f$ and $\psi$ (see Theorem~\ref{thmPressureOnC}). Such a calculation uses a characterization of topological pressure (\ref{eqTopPressureDefPrePeriodPts}) in terms of refining sequences of finite open covers, and relies on a fine quantitative control over the rate of increase of the number of preimages of a point $q\in S^2$ under iterations of $f|_\CC$ that stay close along the orbit. In order to derive such a quantitative control in Proposition~\ref{propEmBound}, we use a key device $E_m(q_n,\,q_{n-1},\,\dots,\,q_1;\,q)$ (see (\ref{eqDefEm})) to keep track of such quantities that were first introduced and crucially used to establish some weak expansion property, namely, the \emph{asymptotic $h$-expansiveness} of expanding Thurston maps with no periodic critical points in \cite{Li15}. The method used here to bound $E_m(q_n,\,q_{n-1},\,\dots,\,q_1;\,q)$ is different from that in \cite{Li15}. In \cite{Li15}, the non-recurrence of critical points is the key. But here we allow periodic critical points, and the bound relies crucially on the topology of the ($1$-dimensional) Jordan curve instead. 

Section~\ref{sctNLI} is devoted to characterizations of a necessary condition, called \emph{non-local integrability condition}, on the potential $\phi\: S^2 \rightarrow \R$ for the Prime Orbit Theorems (Theorem~\ref{thmPrimeOrbitTheorem}). We recall the notion of  \emph{temporal distance} in Definition~\ref{defTemporalDist}, and define the non-local integrability condition on a potential $\phi$ for an expanding Thurston map $f\: S^2 \rightarrow S^2$. The characterizations are summarized in Theorem~\ref{thmNLI}. In particular, a real-valued H\"{o}lder continuous $\phi$ on $S^2$ is non-locally integrable if and only if $\phi$ is co-homologous to a constant in the set of real-valued continuous functions on $S^2$. We recall that a precise counting formula was obtained in \cite[Theorem~1.1]{Li16} in the case of potentials $\phi$ co-homologous to a constant $K\in \R$, i.e., $\phi= K + \tau\circ f - \tau$ for some continuous function $\tau \: S^2 \rightarrow \R$. We show that in this case the Prime Orbit Theorems do not hold in Corollary~\ref{corNecessary}, establishing the non-local integrability condition as a necessary condition for the Prime Orbit Theorems for expanding Thurston maps.

In order to establish Theorem~\ref{thmNLI}, we recall in Subsection~\ref{subsctNLI_Orbifold} the notion of orbifolds introduced in general by W.~P.~Thurston in 1970s in his study of geometry of $3$-manifolds (see \cite[Chapter~13]{Th80}). We follow the setup from \cite{BM17} for expanding Thurston maps. We recall the definitions of ramification functions, orbifolds, universal orbifold covering maps, deck transformations, and inverse branches, before showing in Proposition~\ref{propInvBranchFixPt} that, roughly speaking, each inverse branch on the universal orbifold cover has a unique attracting fixed point (possibly at infinity).

In Subsection~\ref{subsctNLI_Proof}, we first deduce in Lemma~\ref{lmLiftLI} a consequence of the local-integrability condition on $f$ and $\phi$ to a condition on the inverse branches of $f$ and the lifting of $\phi$ to the universal orbifold cover of $f$. Then the proof of Theorem~\ref{thmNLI} is given.

In Section~\ref{sctSplitRuelleOp}, we define appropriate variations of the Ruelle operator on the suitable function spaces in our context and establish some important inequalities that will be used later. More precisely, in Subsection~\ref{subsctSplitRuelleOp_Construction}, for an expanding Thurston map $f$ with a forward invariant Jordan curve $\CC\subseteq S^2$ and a complex-valued H\"{o}lder continuous function $\psi$, we ``split'' the Ruelle operator $\RR_{\psi} \: \CCC(S^2,\C) \rightarrow \CCC(S^2,\C)$ (see (\ref{eqDefRuelleOp})) into pieces $\RR_{\psi,X^0_\c,E}^{(n)} \: \CCC(E,\C) \rightarrow C\bigl(X^0_\c,\C\bigr)$ in Definition~\ref{defSplitRuelle}, for $\c\in\{\b,\w\}$, $n\in\N_0$, and a union $E$ of $n$-tiles in the cell decomposition $\X^n(f,\CC)$ of $S^2$ induced by $f$ and $\CC$. Such construction is crucial to the proof of Proposition~\ref{propTelescoping} where the image of characteristic functions supported on $n$-tiles under $\RR_{\psi,X^0_\c,E}^{(n)}$ are used to relate periodic points and preimage points of $f$. We then define the \emph{split Ruelle operators} $\RRR_\psi$ in Definition~\ref{defSplitRuelleOnProductSpace} on the product space $\CCC\bigl(X^0_\b,\C\bigr) \times \CCC\bigl(X^0_\w, \C\bigr)$ by piecing together $\RR_{\psi,X^0_{\c_1},X^0_{\c_2}}^{(1)}$, $\c_1,\c_2\in\{\b,\w\}$. The operator norm of $\RRR_\psi$ induced by the \emph{normalized H\"{o}lder norm} (see (\ref{eqDefNormalizedHolderNorm})) on the space $\Holder{\alpha}((X,d),\C)$ of complex-valued H\"{o}lder continuous functions is recorded in Definition~\ref{defOpHNormDiscont} with a equivalent characterization given in Lemma~\ref{lmSplitRuelleCoordinateFormula}.

Subsection~\ref{subsctSplitRuelleOp_BasicIneq} is devoted to establishing various inequalities, among them the \emph{basic inequalities} in Lemma~\ref{lmBasicIneq}, that are indispensable in the arguments in Section~\ref{sctDolgopyat}, adapted into our context from the machinery of D.~Dolgopyat.

In Subsection~\ref{subsctSplitRuelleOp_SpectralGap}, we verify the spectral gap for $\RRR_\psi$ that are essential in the proof of the basic inequalities in Lemma~\ref{lmBasicIneq}.

Section~\ref{sctPreDolgopyat} contains arguments to bound the dynamical zeta function $\zeta_{\sigma_{A_{\ti}},\,\minus\phi\circsmall\pi_{\ti}}$ with the bounds of the operator norm of $\RRR_{\minus s\phi}$, for an expanding Thurston map $f$ with some forward invariant Jordan curve $\CC$ and an eventually positive real-valued H\"{o}lder continuous potential $\phi$.

Subsection~\ref{subsctRuelleEstimate} contains the proof of Proposition~\ref{propTelescoping}, which provides a bound of the dynamical zeta function $\zeta_{\sigma_{A_{\ti}},\,\minus\phi\circsmall\pi_{\ti}}$ for the symbolic system $\bigl(\Sigma_{A_{\ti}}^+, \sigma_{A_{\ti}}\bigr)$ asscociated to $f$ in terms of the operator norms of $\RRR_{\minus s\phi}^n$, $n\in\N$ and $s\in\C$ in some vertical strip with $\abs{\Im(s)}$ large enough. The idea of the proof originated from D.~Ruelle \cite{Rue90}.

In Subsection~\ref{subsctOperatorNorm}, we establish in Theorem~\ref{thmOpHolderNorm} an exponential decay bound on the operator norm $\OpHnormD{\alpha}{\RRR_{\minus s\phi}^n}$ of $\RRR_{\minus s\phi}^n$, $n\in\N$, assuming the bound stated in Theorem~\ref{thmL2Shrinking}. Theorem~\ref{thmL2Shrinking} will be proved at the end of Subsection~\ref{subsctCancellation}.

Combining the bounds in Proposition~\ref{propTelescoping} and Theorem~\ref{thmOpHolderNorm}, we give a proof of Theorem~\ref{thmZetaAnalExt_SFT} in Subsection~\ref{subsctProofthmZetaAnalExt_SFT}.

In Subsection~\ref{subsctProofthmLogDerivative}, we deduce Theorem~\ref{thmLogDerivative} from Theorem~\ref{thmZetaAnalExt_InvC} following the ideas from \cite{PoSh98} using basic complex analysis.

In Section~\ref{sctDolgopyat}, we adapt the arguments of D.~Dolgopyat \cite{Dol98} in our metric-topological setting aiming to prove Theorem~\ref{thmL2Shrinking} at the end of this section, consequently establishing Theorem~\ref{thmZetaAnalExt_InvC}, Theorem~\ref{thmZetaAnalExt_SFT}, Theorem~\ref{thmLogDerivative}, and Theorem~\ref{thmPrimeOrbitTheorem}.

In Subsection~\ref{subsctSNI}, we first give a formulation of the \emph{$\alpha$-strong non-integrability condition}, $\alpha\in(0,1]$, for our setting (Definition~\ref{defStrongNonIntegrability}) and then show its independence on the choice of the Jordan curve $\CC$ in Lemma~\ref{lmSNIwoC}.

In Subsection~\ref{subsctDolgopyatOperator}, a consequence of the $\alpha$-strong non-integrability condition that we will use in the remaining  part of this section is formulated in Proposition~\ref{propSNI}. We remark that it is crucial for the arguments in Subsection~\ref{subsctCancellation} to have the same exponent $\alpha\in(0,1]$ in both the lower bound and the upper bound in (\ref{eqSNIBounds}). The definition of the Dolgopyat operator $\MM_{J,s,\phi}$ in our context is given in Definition~\ref{defDolgopyatOperator} after important constants in the construction are carefully chosen (see for example, (\ref{eqDef_m0}) through (\ref{eqDefmb})). The adjustment of such constants is one key difficulty in the intricate mechaniery of D.~Dolgopyat.

In Subsection~\ref{subsctCancellation}, we adapt the cancellation arguments of D.~Dolgopyat to establish the $l^2$-bound in Theorem~\ref{thmL2Shrinking}.

Section~\ref{sctExamples} is devoted to examples of potentials that satisfy the $\alpha$-strong non-integrability condition, and the investigation of the genericity of this condition.

Subsection~\ref{subsctLattes} focuses on the Latt\`{e}s maps (recalled in Definition~\ref{defLattesMap}). We show in Proposition~\ref{propLattes} that for a Latt\`{e}s map $f$ and a continuously differentiable real-valued potential $\phi \: \widehat{\C} \rightarrow \R$ the weaker condition of non-local integrability implies a stronger condition, namely, $1$-strong non-integrability for some visual metric $d$ for $f$. We include the proof of Theorem~\ref{thmLattesPOT} at the end of Subsection~\ref{subsctLattes}.

Similar results for continuously differentiable potentials seem to be too much to expect for general rational expanding Thurston maps, since the chordal metric $\sigma$ (see Remark~\ref{rmChordalVisualQSEquiv} for definition) is never a visual metric for $f$ (see \cite[Lemma~8.12]{BM17}), which prevents us to get the same exponent in both the lower bound and upper bound in (\ref{eqSNIBounds}) in Proposition~\ref{propSNI}. Nevertheless, we can still show that the $\alpha$-strong non-integrability condition is generic in the set $\Holder{\alpha}(S^2,d)$ of real-valued H\"{o}lder continuous functions with an exponent $\alpha$ equipped with the H\"{o}lder norm, i.e., there exists an open dense subset of functions satisfying such condition in $\Holder{\alpha}(S^2,d)$, provided $\alpha\in(0,1)$ or $\Lambda^\alpha < \Lambda_0(f)$ as stated in Theorem~\ref{thmSNIGeneric}. A constructive proof of Theorem~\ref{thmSNIGeneric} is given at the end of Subsection~\ref{subsctGeneric}, relying on Theorem~\ref{thmPerturbToStrongNonIntegrable} that gives a construction of a potential $\phi$ that satisfies the $\alpha$-strong non-integrability condition arbitrarily close to a given real-valued H\"{o}lder continuous potential $\psi \in \Holder{\alpha}(S^2,d)$.

\subsection*{Acknowledgments} 
The first-named author is grateful to the Institute for Computational and Experimental Research in Mathematics (ICERM) at Brown University for the hospitality during his stay from February to May 2016 where he learned about this area of research while participating the Semester Program ``Dimension and Dynamics'' as a research postdoctoral fellow. The authors want to  express their gratitude to Mark~Pollicott for his introductory discussion on dynamical zeta functions and Prime Orbit Theorems in ICERM and many conversations since then, and to Hee~Oh for her beautiful talks on her work and helpful comments, while the first-named author also wants to thank Jianyu~Chen and  Polina~Vytnova for interesting discussions on this area of research. Most of this work is done during the first-named author's stay at the Institute for Mathematical Sciences (IMS) at Stony Brook University as a postdoctoral fellow. He wants to thank IMS and his postdoctoral advisor Mikhail~Yu.~Lyubich for the great support and hospitality.

\section{Notation} \label{sctNotation}
Let $\C$ be the complex plane and $\widehat{\C}$ be the Riemann sphere. For each complex number $z\in \C$, we denote by $\Re(z)$ the real part of $z$, and by $\Im(z)$ the imaginary part of $z$. We denote by $\D$ the open unit disk $\D \coloneqq \{z\in\C \,|\, \abs{z}<1 \}$ on the complex plane $\C$. For each $a\in\R$, we denote by $\H_a$ the open (right) half-plane $\H_a\coloneqq \{ z\in\C \,|\, \Re(z) > a\}$ on $\C$, and by $\overline{\H}_a$ the closed (right) half-plane $\overline{\H}_a \coloneqq \{ z\in\C \,|\, \Re(z) \geq  a\}$.

We follow the convention that $\N \coloneqq \{1,2,3,\dots\}$, $\N_0 \coloneqq \{0\} \cup \N$, and $\widehat{\N} \coloneqq \N\cup \{+\infty\}$, with the order relations $<$, $\leq$, $>$, $\geq$ defined in the obvious way. For $x\in\R$, we define $\lfloor x\rfloor$ as the greatest integer $\leq x$, and $\lceil x \rceil$ the smallest integer $\geq x$. As usual, the symbol $\log$ denotes the logarithm to the base $e$, and $\log_c$ the logarithm to the base $c$ for $c>0$. The symbol $\I$ stands for the imaginary unit in the complex plane $\C$. For each $z\in\C\setminus \{0\}$, we denote by $\Arg(z)$ the principle argument of $z$, i.e., the unique real number in $(-\pi,\pi]$ with the property that $\abs{z} e^{\I \Arg(z)} = z$. The cardinality of a set $A$ is denoted by $\card{A}$. 

Given real-valued functions $u$, $v$, and $w$ on $(0,+\infty)$. We write $u(T) \sim v(T)$ as $T\to +\infty$ if $\lim_{T\to+\infty} \frac{u(T)}{v(T)} = 1$, and write $u(T) = v(T) + \operatorname{O} ( w(T) )$ as $T\to +\infty$ if $\limsup_{T\to+\infty} \Absbig{ \frac{u(T) - v(T) }{w(T)} } < +\infty$.

Let $g\: X\rightarrow Y$ be a map between two sets $X$ and $Y$. We denote the restriction of $g$ to a subset $Z$ of $X$ by $g|_Z$. 

Given a map $f\: X\rightarrow X$ on a set $X$. The inverse map of $f$ is denoted by $f^{-1}$. We write $f^n \coloneqq \underbrace{ f\circ  \cdots \circ f}_{n}$ for the $n$-th iterate of $f$, and $f^{-n} \coloneqq \left( f^n \right)^{-1}$, for $n\in\N$. We set $f^0 \coloneqq \id_X$, where the identity map $\id_X\: X\rightarrow X$ sends each $x\in X$ to $x$ itself. For each $n\in\N$, we denote by
\begin{equation} \label{eqDefSetPeriodicPts}
P_{n,f} \coloneqq \bigl\{ x\in X \,\big|\, f^n(x)=x, f^k(x)\neq x, k\in\{1,2,\dots,n-1\} \bigr\}
\end{equation}
the \defn{set of periodic points of $f$ with periodic $n$}, and by
\begin{equation}  \label{eqDefSetPeriodicOrbits} 
\Orb(n,f) \coloneqq \bigl\{ \bigl\{f^i(x)  \,\big|\, i\in \{0,1,\dots,n-1\} \bigr\} \,\big|\, x\in P_{n,f}  \bigr\}
\end{equation}
the \defn{set of primitive periodic orbits of $f$ with period $n$}. The set of all periodic orbits of $f$ is denoted by
\begin{equation} \label{eqDefSetAllPeriodicOrbits} 
\Orb(f) \coloneqq \bigcup\limits_{n=1}^{+\infty}  \Orb(n,f).
\end{equation}

Given a complex-valued function $\varphi\: X\rightarrow \C$, we write
\begin{equation}    \label{eqDefSnPt}
S_n \varphi (x)  = S_{n}^f \varphi (x)  \coloneqq \sum\limits_{j=0}^{n-1} \varphi(f^j(x)) 
\end{equation}
for $x\in X$ and $n\in\N_0$. The supscript $f$ is often omitted when the map $f$ is clear from the context. Note that when $n=0$, by definition we always have $S_0 \varphi = 0$.

Let $(X,d)$ be a metric space. For subsets $A,B\subseteq X$, we set $d(A,B) \coloneqq \inf \{d(x,y)\,|\, x\in A,\,y\in B\}$, and $d(A,x)=d(x,A) \coloneqq d(A,\{x\})$ for $x\in X$. For each subset $Y\subseteq X$, we denote the diameter of $Y$ by $\diam_d(Y) \coloneqq \sup\{d(x,y)\,|\,x,y\in Y\}$, the interior of $Y$ by $\inter Y$, and the characteristic function of $Y$ by $\mathbbm{1}_Y$, which maps each $x\in Y$ to $1\in\R$ and vanishes otherwise. We use the convention that $\mathbbm{1}=\mathbbm{1}_X$ when the space $X$ is clear from the context. For each $r>0$ and each $x\in X$, we denote the open (resp.\ closed) ball of radius $r$ centered at $x$ by $B_d(x, r)$ (resp.\ $\overline{B_d}(x,r)$). 

We set $\CCC(X)$ (resp.\ $B(X)$) to be the space of continuous (resp.\ bounded Borel) functions from $X$ to $\R$, $\MMM(X)$ the set of finite signed Borel measures, and $\PPP(X)$ the set of Borel probability measures on $X$. We denote by $\CCC(X,\C)$ (resp.\ $B(X,\C)$) the space of continuous (resp.\ bounded Borel) functions from $X$ to $\C$. Obviously $\CCC(X) \subseteq \CCC(X,\C)$ and $B(X)\subseteq B(X,\C)$. We will adopt the convention that unless specifically referring to $\C$, we only consider real-valued functions. If we do not specify otherwise, we equip $\CCC(X)$ and $\CCC(X,\C)$ with the uniform norm $\Norm{\cdot}_{\CCC^0(X)}$. For $g\in\CCC(X)$ we set $\MMM(X,g)$ to be the set of $g$-invariant Borel probability measures on $X$. 

The space of real-valued (resp.\ complex-valued) H\"{o}lder continuous functions with an exponent $\alpha\in (0,1]$ on a compact metric space $(X,d)$ is denoted by $\Holder{\alpha}(X,d)$ (resp.\ $\Holder{\alpha}((X,d),\C)$). For each $\psi\in\Holder{\alpha}((X,d),\C)$, we denote
\begin{equation}   \label{eqDef|.|alpha}
\Hseminorm{\alpha,\,(X,d)}{\psi} \coloneqq \sup \bigg\{\frac{\abs{\psi(x)- \psi(y)}}{ d(x,y)^\alpha} \,\bigg|\, x,y\in X, \,x\neq y \bigg\},
\end{equation}
and for $b\in\R\setminus\{0\}$, the \emph{normalized H\"{o}lder norm} of $\psi$ is defined as
\begin{equation}  \label{eqDefNormalizedHolderNorm}
\NHnorm{\alpha}{b}{\psi}{(X,d)} \coloneqq \frac{1}{\abs{b}} \Hseminorm{\alpha,\,(X,d)}{\psi}  + \Norm{\psi}_{\CCC^0(X)},
\end{equation}
and for each bounded linear operator $L\: \Holder{\alpha}((X,d),\C) \rightarrow \Holder{\alpha}((X,d),\C)$, the \emph{normalized} operator norm of $L$ is
\begin{equation}   \label{eqDefNormalizedOpHNorm}
\NHnorm{\alpha}{b}{L}{(X,d)} \coloneqq \sup\Bigg\{ \frac{\NHnorm{\alpha}{b}{L(v)}{(X,d)}}{\NHnorm{\alpha}{b}{v}{(X,d)}}  \,\Bigg|\,  v\in\Holder{\alpha}((X,d),\C) \text{ with } v\not\equiv 0  \Bigg\},
\end{equation}
while the standard H\"{o}lder norm of $\psi$ and operator norm of $L$ are denoted by $\Hnorm{\alpha}{\psi}{(X,d)}$ and $\Hnorm{\alpha}{L}{(X,d)}$, respectively, as usual, i.e.,
\begin{equation}    \label{eqDefHolderNorm}
\Hnorm{\alpha}{\psi}{(X,d)} \coloneqq \NHnorm{\alpha}{1}{\psi}{(X,d)} = \NHnorm{\alpha}{-1}{\psi}{(X,d)} 
= \Hseminorm{\alpha,\,(X,d)}{\psi}  + \Norm{\psi}_{\CCC^0(X)},
\end{equation}
and respectively,
\begin{equation}    \label{eqDefOpHNorm}
\Hnorm{\alpha}{L}{(X,d)} \coloneqq \NHnorm{\alpha}{1}{L}{(X,d)} = \NHnorm{\alpha}{-1}{L}{(X,d)} .
\end{equation}

For a Lipschitz map $g\: (X,d)\rightarrow (X,d)$ on a metric space $(X,d)$, we denote the Lipschitz constant by
\begin{equation}   \label{eqDefLipConst}
\LIP_d(g) \coloneqq \sup \biggl\{ \frac{d(g(x),g(y))}{d(x,y)} \,\bigg|\, x,y\in X \text{ with } x\neq y\biggr\}.
\end{equation}

\section{Preliminaries}  \label{sctPreliminaries}

\subsection{Thermodynamical formalism}  \label{subsctThermodynFormalism}

We first review some basic concepts from dynamical systems. We refer the readers to \cite[Chapter~3]{PrU10}, \cite[Chapter~9]{Wal82} or \cite[Chapter~20]{KH95} for more detailed studies of these concepts.

Let $(X,d)$ be a compact metric space and $g\:X\rightarrow X$ a continuous map. For $n\in\N$ and $x,y\in X$,
\begin{equation*}
d^n_g(x,y) \coloneqq \operatorname{max}\bigl\{  d \bigl(g^k(x),g^k(y)\bigr)  \big| k\in\{0,1,\dots,n-1\} \!\bigr\}
\end{equation*}
defines a new metric on $X$. A set $F\subseteq X$ is \defn{$(n,\epsilon)$-separated}, for some $n\in\N$ and $\epsilon>0$, if for each pair of distinct points $x,y\in F$, we have $d^n_g(x,y)\geq \epsilon$. For $\epsilon > 0$ and $n\in\N$, let $F_n(\epsilon)$ be a maximal (in the sense of inclusion) $(n,\epsilon)$-separated set in $X$.

For each real-valued continuous function $\phi \in\CCC(X)$, the following limits exist and are equal, and we denote these limits by $P(g,\phi)$ (see for example, \cite[Theorem~3.3.2]{PrU10}):
\begin{align}  \label{defTopPressure}
P(g,\phi)  \coloneqq &  \lim \limits_{\epsilon\to 0} \limsup\limits_{n\to+\infty} \frac{1}{n} \log  \sum\limits_{x\in F_n(\epsilon)} \exp(S_n \phi(x)) \notag \\
                          =        &  \lim \limits_{\epsilon\to 0} \liminf\limits_{n\to+\infty} \frac{1}{n} \log  \sum\limits_{x\in F_n(\epsilon)} \exp(S_n \phi(x)), 
\end{align}
where $S_n \phi (x) = \sum\limits_{j=0}^{n-1} \phi\left(g^j(x)\right)$ is defined in (\ref{eqDefSnPt}). We call $P(g,\phi)$ the \defn{topological pressure} of $g$ with respect to the \emph{potential} $\phi$. The quantity $h_{\operatorname{top}}(g) \coloneqq P(g,0)$ is called the \defn{topological entropy} of $g$. Note that $P(g,\phi)$ is independent of $d$ as long as the topology on $X$ defined by $d$ remains the same (see \cite[Section~3.2]{PrU10}).

A \defn{cover} of $X$ is a collection $\xi=\{A_j \,|\, j\in J\}$ of subsets of $X$ with the property that $\bigcup\xi = X$, where $J$ is an index set. The cover $\xi$ is an \defn{open cover} if $A_j$ is an open set for each $j\in J$. The cover $\xi$ is \defn{finite} if the index set $J$ is a finite set.

Let $\xi=\{A_j\,|\,j\in J\}$ and $\eta=\{B_k\,|\,k\in K\}$ be two covers of $X$, where $J$ and $K$ are the corresponding index sets. We say $\xi$ is a \defn{refinement} of $\eta$ if for each $A_j\in\xi$, there exists $B_k\in\eta$ such that $A_j\subseteq B_k$. The \defn{common refinement} $\xi \vee \eta$ of $\xi$ and $\eta$ defined as
\begin{equation*}
\xi \vee \eta \coloneqq \{A_j\cap B_k \,|\, j\in J,\, k\in K\}
\end{equation*}
is also a cover. Note that if $\xi$ and $\eta$ are both open covers (resp., measurable partitions), then $\xi \vee \eta$ is also an open cover (resp., a measurable partition). Define $g^{-1}(\xi) \coloneqq \{g^{-1}(A_j) \,|\,j\in J\}$, and denote for $n\in\N$,
\begin{equation*}
\xi^n_g \coloneqq \bigvee\limits_{j=0}^{n-1} g^{-j}(\xi) = \xi\vee g^{-1}(\xi)\vee\cdots\vee g^{-(n-1)}(\xi),
\end{equation*}
and let $\xi^\infty_g$ be the smallest $\sigma$-algebra containing $\bigcup\limits_{n=1}^{+\infty}\xi^n_g$.

A \defn{measurable partition} $\xi$ of $X$ is a cover $\xi=\{A_j\,|\,j\in J\}$ of $X$ consisting of countably many mutually disjoint Borel sets $A_j$, $j\in J$, where $J$ is a countable index set. 
%
The \defn{entropy} of a measurable partition $\xi$ is
\begin{equation*}
H_{\mu}(\xi) \coloneqq -\sum\limits_{j\in J} \mu(A_j)\log\left(\mu (A_j)\right),
\end{equation*}
where $0\log 0$ is defined to be 0. One can show (see \cite[Chapter 4]{Wal82}) that if $H_{\mu}(\xi)<+\infty$, then the following limit exists:
\begin{equation*}
h_{\mu}(g,\xi) \coloneqq \lim\limits_{n\to+\infty} \frac{1}{n} H_{\mu}(\xi^n_g) \in[0,+\infty).
\end{equation*}

The \defn{measure-theoretic entropy} of $g$ for $\mu$ is given by
\begin{equation}   \label{eqDefMeasThEntropy}
h_{\mu}(g) \coloneqq \sup\{h_{\mu}(g,\xi)\,|\, \xi   \text{ is a measurable partition of } X  \text{ with } H_{\mu}(\xi)<+\infty\}.   
\end{equation}
For each real-valued continuous function $\phi\in\CCC(X)$, the \defn{measure-theoretic pressure} $P_\mu(g,\phi)$ of $g$ for the measure $\mu$ and the potential $\phi$ is
\begin{equation}  \label{eqDefMeasTheoPressure}
P_\mu(g,\phi) \coloneqq  h_\mu (g) + \int \! \phi \,\mathrm{d}\mu.
\end{equation}

By the Variational Principle (see for example, \cite[Theorem~3.4.1]{PrU10}), we have that for each $\phi\in\CCC(X)$,
\begin{equation}  \label{eqVPPressure}
P(g,\phi)=\sup\{P_\mu(g,\phi)\,|\,\mu\in \MMM(X,g)\}.
\end{equation}
In particular, when $\phi$ is the constant function $0$,
\begin{equation}  \label{eqVPEntropy}
h_{\operatorname{top}}(g)=\sup\{h_{\mu}(g)\,|\,\mu\in \MMM(X,g)\}.
\end{equation}
A measure $\mu$ that attains the supremum in (\ref{eqVPPressure}) is called an \defn{equilibrium state} for the map $g$ and the potential $\phi$. A measure $\mu$ that attains the supremum in (\ref{eqVPEntropy}) is called a \defn{measure of maximal entropy} of $g$.

Given a continuous map $g\: X\rightarrow X$ on a compact metric space $(X,d)$ and a real-valued continuous potential $\varphi \in \CCC(X)$. By \cite[Theorem~3.3.2]{PrU10}, our definition of the topological pressure $P(g,\varphi)$ in (\ref{defTopPressure}) coincides with the definition presented in \cite[Section~3.2]{PrU10}. More precisely, combining (3.2.3), Lemma~3.2.1, Definition~3.2.3, and Lemma~3.2.4 from \cite{PrU10}, the topological pressure $P(g,\varphi)$ of $g$ with respect to $\varphi$ is also given by
\begin{equation}   \label{eqEquivDefByCoverForTopPressure}
 P(g,\varphi) 
            =  \lim\limits_{m\to +\infty}  \lim\limits_{n\to +\infty}  \frac{1}{n} \log 
                \inf\Biggl\{ \sum\limits_{V\in\mathcal{V}}  \exp \Bigl( \sup\limits_{x\in V}   S_n\varphi(x)   \Bigr)
                             \,\Bigg|\,  \mathcal{V} \subseteq \bigvee\limits_{i=0}^{n} g^{-i}(\xi_m),\,\bigcup \mathcal{V} = X  \Biggr\}    ,
\end{equation}
where $\{\xi_m\}_{m\in\N_0}$ is an arbitrary sequence of finite open covers of $X$ with $\lim\limits_{m\to+\infty} \max \{ \diam_d(U) \,|\, U\in \xi_m \} = 0$.

\smallskip

One of the main tools in the study of the existence, uniqueness, and other properties of equilibrium states is the \emph{Ruelle operator}. We will postpone the discussion of the Ruelle operators of expanding Thurston maps to Subsection~\ref{subsctThurstonMap}.

\subsection{Branched covering maps bewteen surfaces} \label{subsctBranchedCoveringMaps}
This paper is devoted to the discussion of expanding Thurston maps, which are branched covering maps on $S^2$ with certain expansion properties. We will discuss such branched covering maps in detail in Subsection~\ref{subsctThurstonMap}. However, since we are going to use lifting properties of branched covering maps and universal orbifold covers in Section~\ref{sctNLI}, we need to discuss briefly branched covering maps between surfaces in general here. For more detailed discussions on the concepts and results in this subsection, see \cite[Appendices~A.6]{BM17} and references therein. For a study of branched covering maps between more general topological spaces, see P.~Ha\"{\i}ssinsky and K.~M.~Pilgrim \cite{HP09}. 

This subsection is only used in Section~\ref{sctNLI}. Relevant concepts and results adapted to the special case of branched covering maps on $S^2$ will be reiterated in Subsection~\ref{subsctThurstonMap}. The readers may safely skip this subsection in the first reading.

In this paper, a \defn{surface} is a connected and oriented $2$-dimensional topological manifold.

\begin{definition}[Branched covering maps between surfaces]   \label{defBranchedCover}
Let $X$ and $Y$ be (connected and oriented) surfaces, and $f\: X\rightarrow Y$ be a continuous map. Then $f$ is a \defn{branched covering map} (between $X$ and $Y$) if for each point $q\in Y$ there exists an open set $V \subseteq Y$ with $q\in V$ and there exists a collection $\{U_i\}_{i\in I}$ of open sets $U_i\subseteq X$ for some index set $I\neq \emptyset$ such that the following conditions are satisfied:
\begin{enumerate}
\smallskip
\item[(i)] $f^{-1}(V)$ is a disjoint union $f^{-1}(V) = \bigcup_{i\in I} U_i$,

\smallskip
\item[(ii)] $U_i$ contains precisely one point $p_i \in f^{-1}(q)$ for each $i\in I$, and

\smallskip
\item[(iii)] for each $i\in I$, there exists $d_i \in\N$, and orientation-preserving homeomorphisms $\varphi_i \: U_i\rightarrow\D$ and $\psi_i\: V\rightarrow\D$ with $\varphi_i(p_i) = 0$ and $\psi_i(q) = 0$ such that 
\begin{equation}   \label{eqBranchCoverMapLocalPowerMap}
\bigl( \psi_i \circ f \circ \varphi_i^{-1} \bigr) (z) = z^{d_i}
\end{equation}
for all $z\in\D$.
\end{enumerate}

The positive integer $d_i$ is called the \defn{local degree} of $f$ at $p \coloneqq p_i$, denoted by $\deg_f(p)$.
\end{definition}

\begin{rem}
We say that the set $V\subseteq Y$ is \defn{evenly covered} by $f$ if conditions~(i) and (ii) above are both satisfied.
\end{rem}

Note that in Definition~\ref{defBranchedCover}, we do not require $X$ and $Y$ to be compact. In fact we will need to use the incompact case in Section~\ref{sctNLI}.
 
Note that the local degree $\deg_f(p_i)=d_i$ in Definition~\ref{defBranchedCover} is uniquely determined by $p\coloneqq p_i$. If $q'\in V$ is a point close to, but distinct from, $q=f(p)$, then $\deg_f(p)$ is equal to the number of distinct preimages of $q'$ under $f$ close to $p$. In particular, near $p$ (but not at $p$) the map $f$ is $d$-to-$1$, where $d=\deg_f(p)$.

Every branched covering map $f\: X\rightarrow Y$ is surjective, \defn{open} (i.e., images of open sets are open), and \defn{discrete} (i.e., the preimage set $f^{-1}(q)$ of every point $q\in Y$ has no limit points in $X$). Every covering map is also a branched covering map.

A \defn{critical point} of a branched covering map $f \: X\rightarrow Y$ is a point $p\in X$ with $\deg_f(p) \geq 2$. We denote the set of critical points of $f$ by $\crit f$. A \defn{critical value} is a point $q\in Y$ such that $f^{-1}(q)$ contains a critical point of $f$. The set of critical points of $f$ is \defn{discrete} in $X$ (i.e., it has no limit points in $X$), and the set of critical values of $f$ is discrete in $Y$. The map $f$ is an orientation-preserving local homeomorphism near each point $p\in X \setminus \crit f$.

Branched covering maps between surfaces behave well under compositions. We record the facts from Lemma~A.16 and Lemma~A.17 in \cite{BM17} in the following lemma.

\begin{lemma}[Compositions of branched covering maps]   \label{lmBranchedCoverCompositionBM}
Let $X$, $Y$, and $Z$ be (connected and oriented) surfaces, and $f\: X\rightarrow Z$, $g\: Y\rightarrow Z$, and $h\: X\rightarrow Y$ be continuous maps such that $f=g\circ h$.
\begin{enumerate}
\smallskip
\item[(i)] If $g$ and $h$ are branched covering maps, and $Y$ and $Z$ are compact, then $f$ is also a branched covering map, and moreover, for each $x\in X$, we have
\begin{equation*}
\deg_f(x) = \deg_g(h(x)) \cdot \deg_h(x).
\end{equation*}

\smallskip
\item[(ii)] If $f$ and $g$ are branched covering maps, then $h$ is a branched covering map. Similarly, if $f$ and $h$ are branched covering maps, then $g$ is a branched covering map.
\end{enumerate}

If, in addition, $X$, $Y$, and $Z$ are Riemann surfaces and the two branched covering maps in the hypotheses of \textup{(i)} or \textup{(ii)} are holomorphic, then the third map is also holomorphic.
\end{lemma}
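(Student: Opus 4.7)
The plan is to reduce everything to the standard local power-map model $z\mapsto z^d$ on $\D$ provided by Definition~\ref{defBranchedCover}, and then to handle two unavoidable local topological calculations. First I would prove an auxiliary normal-form lemma: given $a,b\in\N$ and an orientation-preserving homeomorphism $\rho\:(\D,0)\to(\D,0)$, after possibly shrinking the source disk, the continuous map $z\mapsto \rho(z^a)^b$ is conjugate, via orientation-preserving homeomorphisms of disks sending $0$ to $0$, to $z\mapsto z^{ab}$. Both sides are orientation-preserving branched coverings of $\D$ with a single critical point and critical value at $0$ of total degree $ab$; the conjugating homeomorphisms are built by lifting through the universal orbifold cover of the disk punctured at $0$ with orbifold order $ab$.

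For part~(i), fix $r\in Z$. Since $Y$ is compact and $g$ is continuous and discrete (the preimage of a point under a branched covering has no limit points), the set $g^{-1}(r)=\{q_j\}_{j\in J}$ is finite. The branched covering structure of $g$ yields a neighborhood $W_0$ of $r$ with $g^{-1}(W_0)=\bigsqcup_{j\in J}V_j$ and local model $w\mapsto w^{d_j}$ on $V_j$, where $d_j=\deg_g(q_j)$; the branched covering structure of $h$ at each $q_j$ then yields $V'_j\subseteq V_j$ with $h^{-1}(V'_j)=\bigsqcup_{k\in I_j}U_{j,k}$, and local model $z\mapsto z^{a_{j,k}}$ on each $U_{j,k}$, where $a_{j,k}=\deg_h(p_{j,k})$. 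Because $J$ is finite, $W\coloneqq\bigcap_{j\in J}g(V'_j)$ is open, and after shrinking $V_j$ to $V_j\cap g^{-1}(W)\subseteq V'_j$ the local power-map form for $g$ survives. Then $f^{-1}(W)$ is the disjoint union of the pieces $U_{j,k}\cap h^{-1}(V_j)$, on each of which $f=g\circ h$ reads $z\mapsto \rho_j(z^{a_{j,k}})^{d_j}$ in the chosen coordinates, where $\rho_j$ is the orientation-preserving self-homeomorphism of $\D$ comparing $h$'s target coordinate and $g$'s source coordinate on $V_j$. The normal-form lemma converts this to $z\mapsto z^{d_j a_{j,k}}$, yielding both the branched-covering property of $f$ and the multiplicative degree identity.

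For part~(ii), consider first the case that $f$ and $g$ are branched coverings, and let $q\in Y$, $r\coloneqq g(q)$. Choose $W$ together with the decomposition $f^{-1}(W)=\bigsqcup_\ell U_\ell$ from $f$, and a neighborhood $V_{j_0}$ of $q$ with local model $w\mapsto w^{d_{j_0}}$ from $g$, where $d_{j_0}=\deg_g(q)$. Each $U_\ell$ is connected and $h(U_\ell)\subseteq g^{-1}(W)=\bigsqcup_j V_j$, so $h(U_\ell)$ lies in a single $V_j$; those $U_\ell$ with $h(U_\ell)\subseteq V_{j_0}$ then cover $h^{-1}(V_{j_0})$ disjointly and contain all preimages of $q$ under $h$. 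On such a $U_\ell$, the coordinate representation $H\:\D\to\D$ of $h|_{U_\ell}$ satisfies $H(0)=0$ and $H(z)^{d_{j_0}}=z^{d_\ell}$, where $d_\ell=\deg_f(p_\ell)$. As $z$ traverses a small loop around $0$ once, the continuous single-valued lift $H$ winds some integer number $n$ of times around $0$; then $z\mapsto H(z)^{d_{j_0}}$ winds $n d_{j_0}$ times while $z\mapsto z^{d_\ell}$ winds $d_\ell$ times, forcing $d_{j_0}\mid d_\ell$ and $H(z)=\zeta z^{d_\ell/d_{j_0}}$ for some $d_{j_0}$-th root of unity $\zeta$. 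Multiplying $g$'s source coordinate on $V_{j_0}$ by a $d_{j_0}$-th root of unity absorbs $\zeta$ while preserving $g$'s local power-map form, producing the standard branched cover model for $h$ at $p_\ell$. The remaining subcase, where $f$ and $h$ are given and $g$ is to be recovered, is handled by the symmetric argument: starting from $r\in Z$, the decomposition from $f$ is matched against $h$'s structure near each preimage of each $q\in g^{-1}(r)$, and an analogous winding argument recovers $g$'s local power-map form.

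Finally, holomorphy is immediate in part~(i); in part~(ii), once the topological branched-covering structure on the remaining map has been established, that map agrees locally with a composition of one given holomorphic map with a local holomorphic inverse of the other, away from the discrete exceptional set consisting of critical points of the given maps, and Riemann's removable singularity theorem applied in local holomorphic coordinates extends holomorphy across the exceptional set. The hard parts of the proof will be the normal-form lemma underlying part~(i) and the rigidity statement $H(z)^{d_{j_0}}=z^{d_\ell}\Rightarrow d_{j_0}\mid d_\ell$ used in part~(ii); both require orientation-sensitive winding-number arguments near the branch points, and together they force the multiplicative behavior of the local degrees under composition and factorization.
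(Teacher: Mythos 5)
This lemma is not proved in the paper; it is recorded from \cite[Lemmas A.16 and A.17]{BM17} without proof, so there is no in-paper argument to measure you against. Your overall strategy---reduce to the local power-map model of Definition~\ref{defBranchedCover}, prove a normal-form lemma for $z\mapsto\rho(z^a)^b$, and use winding numbers to pin down the local form of the ``missing'' map---is the standard and correct one, and the removable-singularity step for holomorphy is fine. One small inaccuracy worth fixing: in part (ii) you write $H(z)^{d_{j_0}}=z^{d_\ell}$, but the target coordinates on $W$ supplied by $f$'s structure and by $g$'s structure at $r$ need not coincide, so what you actually obtain is $H(z)^{d_{j_0}}=\rho\bigl(z^{d_\ell}\bigr)$ for some orientation-preserving homeomorphism $\rho$ of the disk fixing $0$. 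The winding-number count is unaffected (since $\rho$ has degree $1$), but you must invoke the normal-form lemma from part (i) once more to conclude.

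The genuine gap is surjectivity in part (ii). Your argument produces the branched-covering local model for $h$ at a point $q\in Y$ only under the tacit assumption that $h^{-1}(q)\neq\emptyset$---that is, that at least one $U_\ell$ satisfies $h(U_\ell)\subseteq V_{j_0}$. Definition~\ref{defBranchedCover} requires the index set to be nonempty at \emph{every} $q\in Y$, so $h$ must in particular be surjective, and nothing you have written shows this; a priori $h$ could send every component $U_\ell$ into the same $V_j$ and miss $q$ entirely. To close the gap: away from $B\coloneqq g(\crit g)\cup f(\crit f)$, Lemma~\ref{lmBranchCoverToCover} turns $f$ and $g$ into covering maps over $Z\setminus B$, and $h$ restricted to $X\setminus f^{-1}(B)$ is a continuous lift of one cover through the other; by the classification of connected covers such a lift is itself a covering map and hence surjects onto the connected set $Y\setminus g^{-1}(B)$, after which the remaining points of $g^{-1}(B)$ are reached by a short limiting argument through $f$'s local model. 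The ``symmetric argument'' you invoke for the second half of (ii) likewise needs a real write-up: there one must exhibit a decomposition of $g^{-1}(W)$, not of $h^{-1}(V)$, and it is neither immediate that distinct $U_\ell$ have disjoint $h$-images nor that $g^{-1}(r)$ is discrete before $g$ is known to be a branched covering.
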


Let $\pi\: X \rightarrow Y$ be a branched covering map, $Z$ a topological space, and $f\: Z\rightarrow Y$ be a continuous map. A continuous map $g\: Z\rightarrow X$ is called a \defn{lift} of $f$ (by $\pi$) if $\pi\circ g = f$, i.e., the following diagram commutes:
\begin{equation*}
\xymatrix{                                       & X \ar[d]^{\pi} \\ 
            Z   \ar[ur]^{g}    \ar[r]_{f}        & Y.}
\end{equation*}

We record Lemma~A.18 in \cite{BM17} below.

\begin{lemma}[Lifting paths by branched covering maps]  \label{lmLiftPathBM}
Let $X$ and $Y$ be (connected and oriented) surfaces, $\pi\: X\rightarrow Y$ be a branched covering map, $\gamma \: [0,1]\rightarrow Y$ be a path in $Y$, and $x_0\in \pi^{-1}(\gamma(0))$. Then there exists a path $\lambda\: [0,1]\rightarrow X$ with $\lambda(0)=x_0$ and $\pi\circ \lambda = \gamma$.
\end{lemma}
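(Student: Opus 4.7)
The plan is to reduce the global path-lift to a finite sequence of local liftings, each handled by the explicit power-map model from Definition~\ref{defBranchedCover}. The first step is a partition. By Definition~\ref{defBranchedCover} every $q \in Y$ admits an evenly covered open neighborhood $V \ni q$, so by continuity of $\gamma$, compactness of $[0,1]$, and the Lebesgue number lemma, I can choose a partition $0 = t_0 < t_1 < \cdots < t_n = 1$ together with open sets $V_1, \ldots, V_n \subseteq Y$, each evenly covered by $\pi$, with $\gamma([t_{j-1}, t_j]) \subseteq V_j$ for every $j \in \{1, \ldots, n\}$.

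I then define $\lambda$ on $[0, t_j]$ by induction on $j$, starting from $\lambda(0) = x_0$. At the inductive step, assume $\lambda$ has been built on $[0, t_{j-1}]$ with $\pi \circ \lambda = \gamma|_{[0, t_{j-1}]}$. Because $\gamma(t_{j-1}) \in V_j$ and $\pi^{-1}(V_j) = \bigsqcup_i U_i^{(j)}$ is a disjoint union of open sets, the point $\lambda(t_{j-1})$ lies in exactly one sheet $U = U_{i_0}^{(j)}$, and it suffices to extend the lift inside $U$. Using the coordinate charts $\varphi \: U \to \D$ and $\psi \: V_j \to \D$ in which $\pi$ becomes $z \mapsto z^d$ (with $d = \deg_\pi(p)$ for the distinguished point $p \in U$), this reduces to the following local problem: given a continuous $\tilde\gamma \: [a, b] \to \D$ and a point $w_0 \in \D$ with $w_0^d = \tilde\gamma(a)$, produce a continuous $\mu \: [a, b] \to \D$ with $\mu(a) = w_0$ and $\mu(t)^d = \tilde\gamma(t)$ for all $t \in [a, b]$.

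I would solve the local problem as follows. Let $S = \tilde\gamma^{-1}(0)$, which is closed in $[a, b]$. On each connected component of $[a, b] \setminus S$ the restriction of $z \mapsto z^d$ from $\D \setminus \{0\}$ to $\D \setminus \{0\}$ is a genuine $d$-sheeted covering map, so classical path-lifting yields a continuous lift on that component with any prescribed initial value chosen from the $d$ preimages of the initial point; on the component containing $a$ (if $a \notin S$) I select the lift with value $w_0$ at $a$, which is possible since $w_0^d = \tilde\gamma(a)$, and on the other components I choose branches arbitrarily. I then set $\mu \equiv 0$ on $S$. Continuity of $\mu$ at each $t^* \in S$ is automatic, since the identity $\abs{\mu(t)} = \abs{\tilde\gamma(t)}^{1/d}$, valid wherever $\mu$ has been defined by the branch construction, forces $\abs{\mu(t)} \to 0$ as $t \to t^*$ regardless of the branch used.

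The main technical subtlety is precisely the behavior of the lift across critical points of $\pi$, i.e., where $\tilde\gamma$ touches the origin in the local model. Unlike in the pure covering-space case, the lift is non-unique at such points, so the usual open-and-closed argument on the maximal interval of liftability cannot be applied verbatim; the modulus-preservation identity $\abs{\mu} = \abs{\tilde\gamma}^{1/d}$ is what lets us splice together independent branch choices over arbitrarily complicated zero sets of $\tilde\gamma$ while retaining continuity. Once the local problem is solved on $[t_{j-1}, t_j]$, composing with the inverse coordinate chart $\varphi^{-1}$ extends $\lambda$ to $[0, t_j]$, and after $n$ inductive steps we obtain the required global lift $\lambda \: [0, 1] \to X$ with $\lambda(0) = x_0$ and $\pi \circ \lambda = \gamma$.
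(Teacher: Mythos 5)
Your proof is correct. The paper does not reproduce a proof of this lemma (it cites [BM17, Lemma A.18]), but your argument — partitioning $[0,1]$ so each piece lands in an evenly covered neighborhood, reducing to the local power-map model $z \mapsto z^d$ on $\D$, lifting by classical covering-space theory on the complement of $\tilde\gamma^{-1}(0)$, setting $\mu \equiv 0$ on $\tilde\gamma^{-1}(0)$, and verifying continuity there via the modulus identity $\abs{\mu} = \abs{\tilde\gamma}^{1/d}$ — is exactly the standard argument for this fact, and the non-uniqueness of branch choices at the zero set is correctly identified as the place where this differs from ordinary covering-space path lifting.
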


Branched covering maps are closely related to covering maps. The following lemma recorded from \cite[Lemma~A11]{BM17} makes such a connection explicit.

\begin{lemma}   \label{lmBranchCoverToCover}
Let $X$ and $Y$ be (connected and oriented) surfaces, and $f\: X\rightarrow Y$ be a branched covering map. Suppose $P\subseteq Y$ is a set with $f(\crit f) \subseteq P$ that is discrete in $Y$. Then $f\: X\setminus f^{-1}(P) \rightarrow Y \setminus P$ is a covering map.
\end{lemma}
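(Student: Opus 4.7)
The plan is to verify directly from the definition that every point $q\in Y\setminus P$ admits an evenly covered neighborhood lying entirely in $Y\setminus P$, with each sheet mapping homeomorphically onto it. First I would note that the restriction $f\:X\setminus f^{-1}(P)\rightarrow Y\setminus P$ is well-defined (if $x\notin f^{-1}(P)$ then $f(x)\notin P$) and continuous as a restriction of a continuous map, and it is surjective because $f$ itself is surjective and $f^{-1}(Y\setminus P)=X\setminus f^{-1}(P)$.

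Next, fix $q\in Y\setminus P$ and apply Definition~\ref{defBranchedCover} to obtain an open neighborhood $V\subseteq Y$ of $q$, a family $\{U_i\}_{i\in I}$ of pairwise disjoint open subsets of $X$ with $f^{-1}(V)=\bigcup_{i\in I}U_i$, a unique preimage $p_i\in U_i\cap f^{-1}(q)$, and homeomorphisms $\varphi_i\:U_i\to\D$, $\psi_i\:V\to\D$ conjugating $f|_{U_i}$ to $z\mapsto z^{d_i}$, where $d_i=\deg_f(p_i)$. The key observation is that $q\notin P\supseteq f(\crit f)$, so no $p_i$ is critical, hence $d_i=1$ for every $i\in I$. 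Consequently each $f|_{U_i}\:U_i\to V$ is a homeomorphism.

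Now I would use the discreteness of $P$ in $Y$ to choose an open neighborhood $V'\subseteq V$ of $q$ with $V'\cap P=\emptyset$ (possible since $q\notin P$ and $P$ has no accumulation point in $Y$). Setting $U_i'\coloneqq U_i\cap f^{-1}(V')$, the sets $U_i'$ are pairwise disjoint open subsets of $X$ with
\begin{equation*}
f^{-1}(V')=\bigcup_{i\in I}U_i'\subseteq X\setminus f^{-1}(P),
\end{equation*}
since $V'\cap P=\emptyset$ forces $f^{-1}(V')\cap f^{-1}(P)=\emptyset$. Because $f|_{U_i}$ is a homeomorphism onto $V$, its restriction to $U_i'=(f|_{U_i})^{-1}(V')$ is a homeomorphism onto $V'$. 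Thus $V'$ is an evenly covered neighborhood of $q$ in $Y\setminus P$, proving that $f\:X\setminus f^{-1}(P)\to Y\setminus P$ is a covering map.

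There is no real obstacle here; the argument is a straightforward unpacking of Definition~\ref{defBranchedCover} combined with the two hypotheses $f(\crit f)\subseteq P$ (which kills local degrees) and $P$ discrete (which allows the shrinking of the base neighborhood). The only minor subtlety is to remember that one must shrink $V$ to avoid $P\setminus\{q\}$ even though $q\notin P$, since $V$ itself may contain other points of $P$; discreteness of $P$ makes this shrinking possible.
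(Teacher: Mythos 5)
Your proof is correct. Note, however, that the paper does not actually prove Lemma~\ref{lmBranchCoverToCover}: it simply records the statement from Bonk--Meyer (\cite[Lemma~A.11]{BM17}) as a known fact, so there is no proof in the paper to compare against. Your direct verification from Definition~\ref{defBranchedCover} is the natural argument and is sound: well-definedness, continuity, and surjectivity of the restriction are immediate; the hypothesis $f(\crit f)\subseteq P$ together with $q\notin P$ forces $\deg_f(p_i)=1$ for every preimage $p_i$ of $q$, so each $f|_{U_i}\:U_i\to V$ is already a homeomorphism; and discreteness of $P$ (which makes $P$ closed, hence $Y\setminus P$ open) lets you shrink $V$ to $V'=V\cap(Y\setminus P)$, after which the sheets $U_i'=(f|_{U_i})^{-1}(V')$ give the required even covering inside $X\setminus f^{-1}(P)$. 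One small point worth making explicit is that each $U_i'$ is nonempty (being homeomorphic to $V'$ via $f|_{U_i}$), so the sheets genuinely cover $V'$; you implicitly use this but it is worth stating.
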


We will use the lifting properties of covering maps in Section~\ref{sctNLI}. The proof and the terminology of the following lemma formulated as Lemma~A.6 in \cite{BM17} can be found in \cite[Section~1.3, Proposition~1.34, and Proposition~1.33]{Ha02} (see also \cite[Section~1.4 and Theorem~4.17]{Fo81}).

\begin{lemma}[Lifting by covering maps]   \label{lmLiftCoveringMap}
Let $X$ and $Y$ be (connected and oriented) surfaces, $\pi\: X\rightarrow Y$ be a covering map, and $Z$ be a path-connected and locally path-connected topological space.
\begin{enumerate}
\smallskip
\item[(i)] Suppose $g_1, g_2\: Z\rightarrow X$ are two continuous maps such that $\pi\circ g_1 = \pi\circ g_2$. If there exists $z_0\in Z$ with $g_1(z_0) = g_2(z_0)$, then $g_1=g_2$.

\smallskip
\item[(ii)] Suppose $Z$ is simply connected, $f\: Z\rightarrow Y$ is a continuous map, and $z_0\in Z$ and $x_0\in X$ are points such that $f(z_0)=\pi(x_0)$. Then there exists a continuous map $g\: Z\rightarrow X$ such that $g(z_0) = x_0$ and $f = \pi \circ g$.
\end{enumerate}
\end{lemma}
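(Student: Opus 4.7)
The plan is to prove the two statements separately, using standard arguments about covering maps and path-lifting that ultimately go back to the fact that lifts are determined locally by which sheet of $\pi^{-1}(V)$ they land in.

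For statement (i), I would show that the coincidence set $A \coloneqq \{ z\in Z \,|\, g_1(z) = g_2(z) \}$ is nonempty (by hypothesis $z_0\in A$), open, and closed in $Z$; path-connectedness of $Z$ implies connectedness, which then forces $A = Z$. To see that $A$ is open, fix $z\in A$ with $x \coloneqq g_1(z) = g_2(z)$, choose an evenly covered neighborhood $V$ of $\pi(x)$, and let $U$ be the component of $\pi^{-1}(V)$ containing $x$, so that $\pi|_U \: U\to V$ is a homeomorphism. By continuity of $g_1$ and $g_2$, there is a neighborhood $W$ of $z$ with $g_i(W)\subseteq U$ for $i=1,2$, and on $W$ we get $g_1 = (\pi|_U)^{-1}\circ(\pi\circ g_1) = (\pi|_U)^{-1}\circ(\pi\circ g_2) = g_2$. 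For closedness, if $g_1(z) \neq g_2(z)$, choose disjoint sheets above an evenly covered neighborhood of $\pi(g_1(z)) = \pi(g_2(z))$ and preimage them under $g_1,g_2$ to obtain an open neighborhood of $z$ disjoint from $A$.

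For statement (ii), I would construct the lift $g$ pointwise by path-lifting. For each $z\in Z$, choose a path $\gamma\:[0,1]\to Z$ from $z_0$ to $z$; Lemma~\ref{lmLiftPathBM} (or rather, the classical path-lifting property for covering maps, which is a special case) produces a unique lift $\wt{\gamma}\:[0,1]\to X$ of $f\circ\gamma$ with $\wt{\gamma}(0) = x_0$, and I set $g(z) \coloneqq \wt{\gamma}(1)$. The definition is independent of the choice of $\gamma$: given a second path $\gamma'$ with the same endpoints, simple connectedness of $Z$ produces a homotopy between $\gamma$ and $\gamma'$ rel endpoints; composing with $f$ and invoking the homotopy-lifting property of the covering $\pi$ gives a homotopy in $X$ between $\wt{\gamma}$ and $\wt{\gamma'}$ rel endpoints, so $\wt{\gamma}(1) = \wt{\gamma'}(1)$. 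This yields a well-defined map $g\: Z\to X$ with $g(z_0) = x_0$ and $\pi\circ g = f$ by construction.

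It remains to verify continuity of $g$, which is where local path-connectedness enters. For $z\in Z$ set $x\coloneqq g(z)$, pick an evenly covered neighborhood $V\ni f(z)$ with $\pi|_U\: U\to V$ a homeomorphism where $U$ is the sheet containing $x$, and choose a path-connected open neighborhood $W$ of $z$ with $f(W)\subseteq V$. For any $z'\in W$, concatenate a fixed path from $z_0$ to $z$ with a path in $W$ from $z$ to $z'$; the second piece of the resulting lift lies in $U$ because $(\pi|_U)^{-1}\circ f$ gives a continuous lift on $W$ agreeing with $\wt{\gamma}$ at $z$ and thus equal to the unique path-lift by statement~(i). Hence $g(W) = (\pi|_U)^{-1}(f(W))$, which is continuous on $W$. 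The only mild subtlety (and the one place where hypotheses are used in an essential way) is the compatibility of the two sources of non-uniqueness — different paths to the same endpoint and different endpoints reached by nearby paths — which is handled cleanly by the homotopy-lifting property; I would treat this as the main point to verify carefully, as the rest reduces to standard diagram-chasing with evenly covered neighborhoods.
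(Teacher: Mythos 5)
Your argument is correct, and it is essentially the standard proof of the unique lifting property and the lifting criterion for covering maps (Propositions~1.34 and~1.33 in Hatcher, or Section~1.4 and Theorem~4.17 in Forster); the paper gives no proof of its own but cites exactly these references, so your approach coincides with theirs.
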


\subsection{Thurston maps} \label{subsctThurstonMap}
In this subsection, we go over some key concepts and results on Thurston maps, and expanding Thurston maps in particular. For a more thorough treatment of the subject, we refer to \cite{BM17}.

Let $S^2$ denote an oriented topological $2$-sphere. A continuous map $f\:S^2\rightarrow S^2$ is called a \defn{branched covering map} on $S^2$ if for each point $x\in S^2$, there exists a positive integer $d\in \N$, open neighborhoods $U$ of $x$ and $V$ of $y=f(x)$, open neighborhoods $U'$ and $V'$ of $0$ in $\widehat{\C}$, and orientation-preserving homeomorphisms $\varphi\:U\rightarrow U'$ and $\eta\:V\rightarrow V'$ such that $\varphi(x)=0$, $\eta(y)=0$, and
\begin{equation*}
(\eta\circ f\circ\varphi^{-1})(z)=z^d
\end{equation*}
for each $z\in U'$. The positive integer $d$ above is called the \defn{local degree} of $f$ at $x$ and is denoted by $\deg_f (x)$. 

Note that the definition of branched covering maps on $S^2$ mentioned above is compatible with Definition~\ref{defBranchedCover}, see the discussion succeeding Lemma~A.10 in \cite{BM17} for more details.

The \defn{degree} of $f$ is
\begin{equation}   \label{eqDeg=SumLocalDegree}
\deg f=\sum\limits_{x\in f^{-1}(y)} \deg_f (x)
\end{equation}
for $y\in S^2$ and is independent of $y$. If $f\:S^2\rightarrow S^2$ and $g\:S^2\rightarrow S^2$ are two branched covering maps on $S^2$, then so is $f\circ g$, and
\begin{equation} \label{eqLocalDegreeProduct}
 \deg_{f\circsmall g}(x) = \deg_g(x)\deg_f(g(x)), \qquad \text{for each } x\in S^2,
\end{equation}   
and moreover, 
\begin{equation}  \label{eqDegreeProduct}
\deg(f\circ g) =  (\deg f)( \deg g).
\end{equation}

A point $x\in S^2$ is a \defn{critical point} of $f$ if $\deg_f(x) \geq 2$. The set of critical points of $f$ is denoted by $\crit f$. A point $y\in S^2$ is a \defn{postcritical point} of $f$ if $y = f^n(x)$ for some $x\in\crit f$ and $n\in\N$. The set of postcritical points of $f$ is denoted by $\post f$. Note that $\post f=\post f^n$ for all $n\in\N$.

\begin{definition} [Thurston maps] \label{defThurstonMap}
A Thurston map is a branched covering map $f\:S^2\rightarrow S^2$ on $S^2$ with $\deg f\geq 2$ and $\card(\post f)<+\infty$.
\end{definition}

We now recall the notation for cell decompositions of $S^2$ used in \cite{BM17} and \cite{Li17}. A \defn{cell of dimension $n$} in $S^2$, $n \in \{1,2\}$, is a subset $c\subseteq S^2$ that is homeomorphic to the closed unit ball $\overline{\B^n}$ in $\R^n$. We define the \defn{boundary of $c$}, denoted by $\partial c$, to be the set of points corresponding to $\partial\B^n$ under such a homeomorphism between $c$ and $\overline{\B^n}$. The \defn{interior of $c$} is defined to be $\inte (c) = c \setminus \partial c$. For each point $x\in S^2$, the set $\{x\}$ is considered a \defn{cell of dimension $0$} in $S^2$. For a cell $c$ of dimension $0$, we adopt the convention that $\partial c=\emptyset$ and $\inte (c) =c$. 

We record the following three definitions from \cite{BM17}.

\begin{definition}[Cell decompositions]\label{defcelldecomp}
Let $\DD$ be a collection of cells in $S^2$.  We say that $\DD$ is a \defn{cell decomposition of $S^2$} if the following conditions are satisfied:

\begin{itemize}

\smallskip
\item[(i)]
the union of all cells in $\DD$ is equal to $S^2$,

\smallskip
\item[(ii)] if $c\in \DD$, then $\partial c$ is a union of cells in $\DD$,

\smallskip
\item[(iii)] for $c_1,c_2 \in \DD$ with $c_1 \neq c_2$, we have $\inte (c_1) \cap \inte (c_2)= \emptyset$,  

\smallskip
\item[(iv)] every point in $S^2$ has a neighborhood that meets only finitely many cells in $\DD$.

\end{itemize}
\end{definition}

\begin{definition}[Refinements]\label{defrefine}
Let $\DD'$ and $\DD$ be two cell decompositions of $S^2$. We
say that $\DD'$ is a \defn{refinement} of $\DD$ if the following conditions are satisfied:
\begin{itemize}

\smallskip
\item[(i)] every cell $c\in \DD$ is the union of all cells $c'\in \DD'$ with $c'\subseteq c$.

\smallskip
\item[(ii)] for every cell $c'\in \DD'$ there exits a cell $c\in \DD$ with $c'\subseteq c$,

\end{itemize}
\end{definition}

\begin{definition}[Cellular maps and cellular Markov partitions]\label{defcellular}
Let $\DD'$ and $\DD$ be two cell decompositions of  $S^2$. We say that a continuous map $f \: S^2 \rightarrow S^2$ is \defn{cellular} for  $(\DD', \DD)$ if for every cell $c\in \DD'$, the restriction $f|_c$ of $f$ to $c$ is a homeomorphism of $c$ onto a cell in $\DD$. We say that $(\DD',\DD)$ is a \defn{cellular Markov partition} for $f$ if $f$ is cellular for $(\DD',\DD)$ and $\DD'$ is a refinement of $\DD$.
\end{definition}

Let $f\:S^2 \rightarrow S^2$ be a Thurston map, and $\CC\subseteq S^2$ be a Jordan curve containing $\post f$. Then the pair $f$ and $\CC$ induces natural cell decompositions $\DD^n(f,\CC)$ of $S^2$, for $n\in\N_0$, in the following way:

By the Jordan curve theorem, the set $S^2\setminus\CC$ has two connected components. We call the closure of one of them the \defn{white $0$-tile} for $(f,\CC)$, denoted by $X^0_\w$, and the closure of the other the \defn{black $0$-tile} for $(f,\CC)$, denoted by $X^0_\b$. The set of \defn{$0$-tiles} is $\X^0(f,\CC) \coloneqq \bigl\{ X_\b^0,X_\w^0 \bigr\}$. The set of \defn{$0$-vertices} is $\V^0(f,\CC) \coloneqq \post f$. We set $\overline\V^0(f,\CC) \coloneqq \{ \{x\} \,|\, x\in \V^0(f,\CC) \}$. The set of \defn{$0$-edges} $\E^0(f,\CC)$ is the set of the closures of the connected components of $\CC \setminus  \post f$. Then we get a cell decomposition 
\begin{equation*}
\DD^0(f,\CC) \coloneqq \X^0(f,\CC) \cup \E^0(f,\CC) \cup \overline\V^0(f,\CC)
\end{equation*}
of $S^2$ consisting of \emph{cells of level $0$}, or \defn{$0$-cells}.

We can recursively define unique cell decompositions $\DD^n(f,\CC)$, $n\in\N$, consisting of \defn{$n$-cells} such that $f$ is cellular for $(\DD^{n+1}(f,\CC),\DD^n(f,\CC))$. We refer to \cite[Lemma~5.12]{BM17} for more details. We denote by $\X^n(f,\CC)$ the set of $n$-cells of dimension 2, called \defn{$n$-tiles}; by $\E^n(f,\CC)$ the set of $n$-cells of dimension 1, called \defn{$n$-edges}; by $\overline\V^n(f,\CC)$ the set of $n$-cells of dimension 0; and by $\V^n(f,\CC)$ the set $\big\{x\,\big|\, \{x\}\in \overline\V^n(f,\CC)\big\}$, called the set of \defn{$n$-vertices}. The \defn{$k$-skeleton}, for $k\in\{0,1,2\}$, of $\DD^n(f,\CC)$ is the union of all $n$-cells of dimension $k$ in this cell decomposition. 

We record Proposition~5.16 of \cite{BM17} here in order to summarize properties of the cell decompositions $\DD^n(f,\CC)$ defined above.

\begin{prop}[M.~Bonk \& D.~Meyer \cite{BM17}] \label{propCellDecomp}
Let $k,n\in \N_0$, let   $f\: S^2\rightarrow S^2$ be a Thurston map,  $\CC\subseteq S^2$ be a Jordan curve with $\post f \subseteq \CC$, and   $m=\card(\post f)$. 
 
\smallskip
\begin{itemize}

\smallskip
\item[(i)] The map  $f^k$ is cellular for $\bigl( \DD^{n+k}(f,\CC), \DD^n(f,\CC) \bigr)$. In particular, if  $c$ is any $(n+k)$-cell, then $f^k(c)$ is an $n$-cell, and $f^k|_c$ is a homeomorphism of $c$ onto $f^k(c)$.

\smallskip
\item[(ii)]  Let  $c$ be  an $n$-cell.  Then $f^{-k}(c)$ is equal to the union of all 
$(n+k)$-cells $c'$ with $f^k(c')=c$.

\smallskip
\item[(iii)] The $1$-skeleton of $\DD^n(f,\CC)$ is  equal to  $f^{-n}(\CC)$. The $0$-skeleton of $\DD^n(f,\CC)$ is the set $\V^n(f,\CC)=f^{-n}(\post f )$, and we have $\V^n(f,\CC) \subseteq \V^{n+k}(f,\CC)$. 

\smallskip
\item[(iv)] $\card(\X^n(f,\CC))=2(\deg f)^n$,  $\card(\E^n(f,\CC))=m(\deg f)^n$,  and $\card (\V^n(f,\CC)) \leq m (\deg f)^n$.

\smallskip
\item[(v)] The $n$-edges are precisely the closures of the connected components of $f^{-n}(\CC)\setminus f^{-n}(\post f )$. The $n$-tiles are precisely the closures of the connected components of $S^2\setminus f^{-n}(\CC)$.

\smallskip
\item[(vi)] Every $n$-tile  is an $m$-gon, i.e., the number of $n$-edges and the number of $n$-vertices contained in its boundary are equal to $m$.  

\smallskip
\item[(vii)] Let $F\coloneqq f^k$ be an iterate of $f$ with $k \in \N$. Then $\DD^n(F,\CC) = \DD^{nk}(f,\CC)$.
\end{itemize}
\end{prop}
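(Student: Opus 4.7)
The plan is to bootstrap from the defining recursive property that $f$ is cellular for $(\DD^{n+1}(f,\CC), \DD^n(f,\CC))$ and that the cell decompositions are uniquely determined by this requirement (the content of the cited \cite[Lemma~5.12]{BM17}). For part~(i), I would induct on $k$: the base case $k=1$ is the definition, and for the inductive step write $f^{k+1} = f \circ f^k$. Given an $(n+k+1)$-cell $c$, the inductive hypothesis applied at level $n+1$ gives that $f^k|_c$ is a homeomorphism of $c$ onto an $(n+1)$-cell $c'$, and then $f|_{c'}$ is a homeomorphism of $c'$ onto an $n$-cell; the composition is the desired homeomorphism. Part~(vii) follows from the same uniqueness: both $\DD^n(F,\CC)$ and $\DD^{nk}(f,\CC)$ satisfy the defining property of being the unique refinement of $\DD^0(f,\CC) = \DD^0(F,\CC)$ making $F^n = f^{nk}$ cellular down to $\DD^0$, so they coincide.

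For part~(ii), the inclusion $\supseteq$ is immediate from (i). For $\subseteq$, given $x \in f^{-k}(c)$, let $c''$ be the unique $(n+k)$-cell whose interior contains $x$ (unique by axiom~(iii) of a cell decomposition); by~(i), $f^k(c'')$ is an $n$-cell, and since $f^k(x) \in \inter(f^k(c'')) \cap c$, the disjoint-interiors axiom forces $f^k(c'') = c$. Part~(iii) proceeds by a parallel induction on $n$: the $1$-skeleton of $\DD^0$ equals $\CC$ by construction, and the inductive step, combined with the identification of $\DD^{n+1}$ as the pullback refinement, yields that the $1$-skeleton of $\DD^{n+1}$ is $f^{-1}$ applied to the $1$-skeleton of $\DD^n$, hence equals $f^{-(n+1)}(\CC)$; the same argument handles the $0$-skeleton, which equals $f^{-n}(\post f)$. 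Part~(v) is then the topological statement identifying $n$-cells of dimension $2$ (respectively $1$) with closures of connected components of the complements of the lower-dimensional skeleta, which follows from (iii) together with standard facts about cell decompositions.

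For the counting in~(iv), I would use $\deg f^n = (\deg f)^n$ together with~(ii). Over each of the two $0$-tiles $X^0_\c$, the map $f^n$ induces a bijection between the interiors of the $n$-tiles contained in $(f^n)^{-1}(X^0_\c)$ and $\deg f^n$ many copies of $\inter(X^0_\c)$, as no tile interior meets $\crit f^n$; this gives $\card \X^n = 2(\deg f)^n$. The edge count $m(\deg f)^n$ follows identically, using $\card \E^0 = m$. For vertices, $\V^n = f^{-n}(\post f)$ has cardinality bounded by $m \cdot \deg f^n$, with equality possibly failing at postcritical branch values. Part~(vi) follows by applying~(i) with $k = n$: each $n$-tile $X$ maps homeomorphically via $f^n$ onto a $0$-tile, which is an $m$-gon since $\CC$ is subdivided by its $m$ postcritical points into $m$ edges and $m$ vertices; the homeomorphism transports this polygonal structure back to $X$.

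The main subtlety is not in any of the inductive steps above but in the very existence and uniqueness of the refinement $\DD^{n+1}(f,\CC)$ of $\DD^n(f,\CC)$ at each stage — one must know that $f$ restricts to a homeomorphism from each closed component of $S^2 \setminus f^{-1}(\text{$1$-skeleton of }\DD^n)$ onto a $2$-cell of $\DD^n$, which in turn rests on $f$ being an open, surjective branched covering that restricts to a genuine covering away from its critical set (Lemma~\ref{lmBranchCoverToCover}). Once that foundational fact is in hand from \cite[Lemma~5.12]{BM17}, everything in~(i)--(vii) is straightforward inductive bookkeeping.
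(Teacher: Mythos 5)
The paper records this result as Proposition~5.16 of \cite{BM17} and gives no proof of its own, so there is no in-paper argument to compare against. Your reconstruction is a sensible sketch in broad outline, but the proof of~(ii) as you wrote it has a genuine gap. You pick the unique $(n+k)$-cell $c''$ with $x \in \inte(c'')$, note $f^k(x) \in \inte(f^k(c'')) \cap c$, and then invoke the disjoint-interiors axiom to force $f^k(c'') = c$. That axiom compares \emph{interiors} of two cells; you have only placed $f^k(x)$ in $c$, not in $\inte(c)$. If $f^k(x) \in \partial c$, then $f^k(x)$ lies in $\inte(\sigma)$ for some proper face $\sigma \subsetneq c$, and the axiom gives $f^k(c'') = \sigma$, not $c$ --- you have produced a cell containing $x$ that maps onto a face of $c$, which is not what~(ii) asserts. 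The conclusion is still true but requires one more idea: since $f^k$ is open (a branched covering), $f^{-k}(c) = \overline{f^{-k}(\inte c)}$, and since $\bigcup\{c' : f^k(c')=c\}$ is closed (a finite union of cells), the interior case you \emph{did} handle, which gives $f^{-k}(\inte c) \subseteq \bigcup\{c' : f^k(c')=c\}$, implies the full inclusion upon taking closures. Alternatively, one can establish~(v) first from the interior case alone and deduce~(ii) from the component description of tiles and edges. Because your~(iii) and~(v) quote~(ii), the gap propagates until it is patched.

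As a smaller point, your argument for~(vii) invokes a global ``unique refinement of $\DD^0$ making $f^{nk}$ cellular'' characterization, but the cited Lemma~5.12 of \cite{BM17} gives uniqueness level-by-level: given $\DD^n$, the decomposition making $f$ cellular over $\DD^n$ is unique. The clean argument is an induction on $n$: with $\DD^n(F,\CC)=\DD^{nk}(f,\CC)$ in hand, part~(i) shows $F=f^k$ is cellular for $\bigl(\DD^{(n+1)k}(f,\CC),\DD^{nk}(f,\CC)\bigr)$, and the level-wise uniqueness forces $\DD^{n+1}(F,\CC)=\DD^{(n+1)k}(f,\CC)$. Your phrasing elides this inductive bookkeeping, which is where the uniqueness is actually being used.
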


We also note that for each $n$-edge $e\in\E^n(f,\CC)$, $n\in\N_0$, there exist exactly two $n$-tiles $X,X'\in\X^n(f,\CC)$ such that $X\cap X' = e$.

For $n\in \N_0$, we define the \defn{set of black $n$-tiles} as
\begin{equation*}
\X_\b^n(f,\CC) \coloneqq \bigl\{X\in\X^n(f,\CC) \, \big|\,  f^n(X)=X_\b^0 \bigr\},
\end{equation*}
and the \defn{set of white $n$-tiles} as
\begin{equation*}
\X_\w^n(f,\CC) \coloneqq \bigl\{X\in\X^n(f,\CC) \, \big|\, f^n(X)=X_\w^0 \bigr\}.
\end{equation*}
It follows immediately from Proposition~\ref{propCellDecomp} that
\begin{equation}   \label{eqCardBlackNTiles}
\card ( \X_\b^n(f,\CC) ) = \card  (\X_\w^n(f,\CC) ) = (\deg f)^n 
\end{equation}
for each $n\in\N_0$.

From now on, if the map $f$ and the Jordan curve $\CC$ are clear from the context, we will sometimes omit $(f,\CC)$ in the notation above.

If we fix the cell decomposition $\DD^n(f,\CC)$, $n\in\N_0$, we can define for each $v\in \V^n$ the \defn{$n$-flower of $v$} as
\begin{equation}   \label{defFlower}
W^n(v) \coloneqq \bigcup  \{\inte (c) \,|\, c\in \DD^n,\, v\in c \}.
\end{equation}
Note that flowers are open (in the standard topology on $S^2$). Let $\overline{W}^n(v)$ be the closure of $W^n(v)$. We define the \defn{set of all $n$-flowers} by
\begin{equation}   \label{defSetNFlower}
\W^n \coloneqq \{W^n(v) \,|\, v\in\V^n\}.
\end{equation}
\begin{rem}  \label{rmFlower}
For $n\in\N_0$ and $v\in\V^n$, we have 
\begin{equation*}
\overline{W}^n(v)=X_1\cup X_2\cup \cdots \cup X_m,
\end{equation*}
where $m \coloneqq 2\deg_{f^n}(v)$, and $X_1, X_2, \dots X_m$ are all the $n$-tiles that contain $v$ as a vertex (see \cite[Lemma~5.28]{BM17}). Moreover, each flower is mapped under $f$ to another flower in such a way that is similar to the map $z\mapsto z^k$ on the complex plane. More precisely, for $n\in\N_0$ and $v\in \V^{n+1}$, there exist orientation preserving homeomorphisms $\varphi\: W^{n+1}(v) \rightarrow \D$ and $\eta\: W^{n}(f(v)) \rightarrow \D$ such that $\D$ is the unit disk on $\C$, $\varphi(v)=0$, $\eta(f(v))=0$, and 
\begin{equation*}
(\eta\circ f \circ \varphi^{-1}) (z) = z^k
\end{equation*}
for all $z\in D$, where $k \coloneqq \deg_f(v)$. Let $\overline{W}^{n+1}(v)= X_1\cup X_2\cup \cdots \cup X_m$ and $\overline{W}^n(f(v))= X'_1\cup X'_2\cup \cdots \cup X'_{m'}$, where $X_1, X_2, \dots X_m$ are all the $(n+1)$-tiles that contain $v$ as a vertex, listed counterclockwise, and $X'_1, X'_2, \dots X'_{m'}$ are all the $n$-tiles that contain $f(v)$ as a vertex, listed counterclockwise, and $f(X_1)=X'_1$. Then $m= m'k$, and $f(X_i)=X'_j$ if $i\equiv j \pmod{k}$, where $k=\deg_f(v)$. (See also Case~3 of the proof of Lemma~5.24 in \cite{BM17} for more details.) If particular, both $W^n(v)$ and $\overline{W}^n(v)$ are simply connected.
\end{rem}

We denote, for each $x\in S^2$ and $n\in\Z$,
\begin{equation}  \label{defU^n}
U^n(x) \coloneqq \bigcup \{Y^n\in \X^n \,|\,    \text{there exists } X^n\in\X^n  
                                        \text{ with } x\in X^n, \, X^n\cap Y^n \neq \emptyset  \}  
\end{equation}
if $n\geq 0$, and set $U^n(x) \coloneqq S^2$ otherwise. 

We can now give a definition of expanding Thurston maps.

\begin{definition} [Expansion] \label{defExpanding}
A Thurston map $f\:S^2\rightarrow S^2$ is called \defn{expanding} if there exists a metric $d$ on $S^2$ that induces the standard topology on $S^2$ and a Jordan curve $\CC\subseteq S^2$ containing $\post f$ such that 
\begin{equation*}
\lim\limits_{n\to+\infty}\max \{\diam_d(X) \,|\, X\in \X^n(f,\CC)\}=0.
\end{equation*}
\end{definition}

By \cite[Lemma~6.1]{BM17}, $\card(\post f) \geq 3$ for each expanding Thurston map $f$.

\begin{rems}  \label{rmExpanding}
It is clear from Proposition~\ref{propCellDecomp}~(vii) and Definition~\ref{defExpanding} that if $f$ is an expanding Thurston map, so is $f^n$ for each $n\in\N$. We observe that being expanding is a topological property of a Thurston map and independent of the choice of the metric $d$ that generates the standard topology on $S^2$. By Lemma~6.2 in \cite{BM17}, it is also independent of the choice of the Jordan curve $\CC$ containing $\post f$. More precisely, if $f$ is an expanding Thurston map, then
\begin{equation*}
\lim\limits_{n\to+\infty}\max \!\big\{ \! \diam_{\wt{d}}(X) \,\big|\, X\in \X^n\bigl(f,\wt\CC \hspace{0.5mm}\bigr)\hspace{-0.3mm} \big\}\hspace{-0.3mm}=0,
\end{equation*}
for each metric $\wt{d}$ that generates the standard topology on $S^2$ and each Jordan curve $\wt\CC\subseteq S^2$ that contains $\post f$.
\end{rems}

P.~Ha\"{\i}ssinsky and K.~M.~Pilgrim developed a notion of expansion in a more general context for finite branched coverings between topological spaces (see \cite[Section~2.1 and Section~2.2]{HP09}). This applies to Thurston maps and their notion of expansion is equivalent to our notion defined above in the context of Thurston maps (see \cite[Proposition~6.4]{BM17}). Such concepts of expansion are natural analogues, in the contexts of finite branched coverings and Thurston maps, to some of the more classical versions, such as expansive homeomorphisms and forward-expansive continuous maps between compact metric spaces (see for example, \cite[Definition~3.2.11]{KH95}), and distance-expanding maps between compact metric spaces (see for example, \cite[Chapter~4]{PrU10}). Our notion of expansion is not equivalent to any such classical notion in the context of Thurston maps. One topological obstruction comes from the presence of critical points for (non-homeomorphic) branched covering maps on $S^2$. In fact, as mentioned in the introduction, there are subtle connections between our notion of expansion and some classical notions of weak expansion. More precisely, one can prove that an expanding Thurston map is asymptotically $h$-expansive if and only if it has no periodic points. Moreover, such a map is never $h$-expansive. See \cite{Li15} for details.

For an expanding Thurston map $f$, we can fix a particular metric $d$ on $S^2$ called a \emph{visual metric for $f$} . For the existence and properties of such metrics, see \cite[Chapter~8]{BM17}. For a visual metric $d$ for $f$, there exists a unique constant $\Lambda > 1$ called the \emph{expansion factor} of $d$ (see \cite[Chapter~8]{BM17} for more details). One major advantage of a visual metric $d$ is that in $(S^2,d)$ we have good quantitative control over the sizes of the cells in the cell decompositions discussed above. We summarize several results of this type (\cite[Proposition~8.4, Lemma~8.10, Lemma~8.11]{BM17}) in the lemma below.

\begin{lemma}[M.~Bonk \& D.~Meyer \cite{BM17}]   \label{lmCellBoundsBM}
Let $f\:S^2 \rightarrow S^2$ be an expanding Thurston map, and $\CC \subseteq S^2$ be a Jordan curve containing $\post f$. Let $d$ be a visual metric on $S^2$ for $f$ with expansion factor $\Lambda>1$. Then there exist constants $C\geq 1$, $C'\geq 1$, $K\geq 1$, and $n_0\in\N_0$ with the following properties:
\begin{enumerate}
\smallskip
\item[(i)] $d(\sigma,\tau) \geq C^{-1} \Lambda^{-n}$ whenever $\sigma$ and $\tau$ are disjoint $n$-cells for $n\in \N_0$.

\smallskip
\item[(ii)] $C^{-1} \Lambda^{-n} \leq \diam_d(\tau) \leq C\Lambda^{-n}$ for all $n$-edges and all $n$-tiles $\tau$ for $n\in\N_0$.

\smallskip
\item[(iii)] $B_d(x,K^{-1} \Lambda^{-n} ) \subseteq U^n(x) \subseteq B_d(x, K\Lambda^{-n})$ for $x\in S^2$ and $n\in\N_0$.

\smallskip
\item[(iv)] $U^{n+n_0} (x)\subseteq B_d(x,r) \subseteq U^{n-n_0}(x)$ where $n= \lceil -\log r / \log \Lambda \rceil$ for $r>0$ and $x\in S^2$.

\smallskip
\item[(v)] For every $n$-tile $X^n\in\X^n(f,\CC)$, $n\in\N_0$, there exists a point $p\in X^n$ such that $B_d(p,C^{-1}\Lambda^{-n}) \subseteq X^n \subseteq B_d(p,C\Lambda^{-n})$.
\end{enumerate}

Conversely, if $\wt{d}$ is a metric on $S^2$ satisfying conditions \textnormal{(i)} and \textnormal{(ii)} for some constant $C\geq 1$, then $\wt{d}$ is a visual metric with expansion factor $\Lambda>1$.
\end{lemma}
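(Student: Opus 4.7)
The plan is to prove Lemma~\ref{lmCellBoundsBM} by first constructing $d$ combinatorially in terms of the cell decompositions $\DD^n(f,\CC)$, reading off properties (i) and (ii) essentially from the construction, and then deriving (iii)--(v) as consequences. The key combinatorial input is a uniform bound on how many distinct $n$-tiles can simultaneously meet at a single point (namely $\deg_f$ times the number of tiles meeting at an $(n-1)$-vertex, ultimately bounded by $2\max_{v\in\V^0}\deg_{f^n}(v)$, which grows at most polynomially in $n$ while level scales geometrically), together with the fact that for each pair of distinct points $x,y\in S^2$ there is a unique largest $n$ with $U^n(x)\cap U^n(y)\neq\emptyset$ (finiteness here uses Definition~\ref{defExpanding}).

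First, I would fix $\Lambda>1$ satisfying a suitable combinatorial bound for $f$ and $\CC$, and for $x\neq y$ set $m(x,y)$ to be the largest $n\in\N_0$ such that there exist $X,Y\in\X^n(f,\CC)$ with $x\in X$, $y\in Y$, and $X\cap Y\neq\emptyset$. One defines $d(x,y)\asymp \Lambda^{-m(x,y)}$ (this requires a standard chaining argument to promote the resulting ``quasi-metric'' to an honest metric at the cost of a multiplicative constant; see the snowflake-chain construction in \cite[Chapter~8]{BM17}). Property (i) is then essentially tautological: if $\sigma$ and $\tau$ are disjoint $n$-cells and $x\in\sigma$, $y\in\tau$, then any two $n$-tiles containing $x$ and $y$ respectively cannot meet $\sigma\cap\tau=\emptyset$, so $m(x,y)<n$, giving $d(x,y)\geq C^{-1}\Lambda^{-n}$. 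Property (ii), the upper bound on diameters of $n$-cells, follows by chaining through intersecting $n$-tiles; the matching lower bound uses (i) applied to the $0$-cells inside an $n$-edge or $n$-tile (pushed down via Proposition~\ref{propCellDecomp}(i)).

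Next, property (iii) follows from (i) and (ii) applied to $U^n(x)$: its diameter is $\leq 3C\Lambda^{-n}$ (at most three $n$-tile diameters), giving the outer ball inclusion, while any $n$-tile disjoint from $U^n(x)$ must be at distance $\geq C^{-1}\Lambda^{-n}$ from $x$ by (i), giving the inner inclusion. Property (iv) is a bookkeeping consequence of (iii) with $n_0$ chosen so that $K\Lambda^{-n_0}\leq 1\leq K^{-1}\Lambda^{n_0}$, rescaling indices by $\log\Lambda$. For (v), I would use that each $n$-tile $X^n$ has nonempty interior, pick $p\in\inte X^n$, note that the union of the other tiles meeting $X^n$ only along $\partial X^n$ is bounded, and apply (i) to the disjoint $n$-cells $\{p\}$ and $\partial X^n$ after passing to a slightly higher level (with a loss absorbed into $C$).

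The converse is the final piece: if $\wt d$ satisfies (i) and (ii) with some constant, then for any $x,y\in S^2$ with $x\neq y$, one compares $\wt d(x,y)$ to $\Lambda^{-m(x,y)}$ directly: the upper bound comes from summing diameters along a chain of $m(x,y)$-tiles joining $x$ to $y$, and the lower bound from (i) applied at level $m(x,y)+1$. This gives $\wt d$ bi-Lipschitz equivalent to $d$ with the same $\Lambda$, hence $\wt d$ is visual. The main obstacle in carrying out this program rigorously is the chaining step turning the quasi-metric $\Lambda^{-m(x,y)}$ into a genuine metric while keeping the constants controlled; this is where one uses the uniform bound on the number of $n$-tiles meeting at a vertex together with the expansion hypothesis, and where the precise value of the ``snowflake'' parameter $\Lambda$ is constrained.
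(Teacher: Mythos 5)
The paper does not prove this lemma; it is cited directly from \cite[Proposition~8.4, Lemma~8.10, Lemma~8.11]{BM17}, so there is no in-paper argument to compare against. Your overall plan --- compare $d(x,y)$ with $\Lambda^{-m(x,y)}$, which is essentially the \emph{definition} of a visual metric, then read off (i)--(v) --- is the right framework. But the step you dismiss as ``essentially tautological'' is precisely where the work lies, and your justification for it is incorrect.

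For (i), you claim that if $\sigma$ and $\tau$ are disjoint $n$-cells with $x\in\sigma$, $y\in\tau$, then every $n$-tile containing $x$ is disjoint from every $n$-tile containing $y$, hence $m(x,y)<n$. This fails because the $n$-tiles $X\ni x$ and $Y\ni y$ need not be contained in $\sigma$ or $\tau$. Concretely, take $X$ a single $n$-tile, let $\sigma$ and $\tau$ be two disjoint $n$-edges on $\partial X$, and pick $x\in\sigma$, $y\in\tau$. Then $x,y\in X$, so $X$ itself witnesses $m(x,y)\geq n$ while $\sigma\cap\tau=\emptyset$. Property~(i) is true, but it needs the genuinely non-trivial Bonk--Meyer combinatorics (flower control and the ``joins opposite sides'' machinery from Chapter~5 of \cite{BM17}) to show $m(x,y)\leq n+k_0$ for a uniform $k_0$; only then does $d(x,y)\gtrsim\Lambda^{-m(x,y)}\gtrsim\Lambda^{-n}$ follow with the constant absorbing $\Lambda^{k_0}$. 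Two secondary issues: in (v) you propose applying (i) to ``the disjoint $n$-cells $\{p\}$ and $\partial X^n$'', but $\{p\}$ is an $n$-cell only when $p$ is an $n$-vertex and $\partial X^n$ is a union of $n$-cells rather than a single cell, so (i) does not apply as stated (the actual argument must pick $p$ carefully using the $m$-gon structure of Proposition~\ref{propCellDecomp}(vi) and argue at a deeper level); and your opening remark that the number of $n$-tiles meeting at a point ``grows at most polynomially in $n$'' is false when $f$ has a periodic critical point $v$, where $\deg_{f^n}(v)$ grows exponentially --- though fortunately that count plays no role here.
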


Recall that $U^n(x)$ is defined in (\ref{defU^n}).

In addition, we will need the fact that a visual metric $d$ induces the standard topology on $S^2$ (\cite[Proposition~8.3]{BM17}) and the fact that the metric space $(S^2,d)$ is linearly locally connected (\cite[Proposition~18.5]{BM17}). A metric space $(X,d)$ is \defn{linearly locally connected} if there exists a constant $L\geq 1$ such that the following conditions are satisfied:
\begin{enumerate}
\smallskip

\item  For all $z\in X$, $r > 0$, and $x,y\in B_d(z,r)$ with $x\neq y$, there exists a continuum $E\subseteq X$ with $x,y\in E$ and $E\subseteq B_d(z,rL)$.

\smallskip

\item For all $z\in X$, $r > 0$, and $x,y\in X \setminus B_d(z,r)$ with $x\neq y$, there exists a continuum $E\subseteq X$ with $x,y\in E$ and $E\subseteq X \setminus B_d(z,r/L)$.
\end{enumerate}
We call such a constant $L \geq 1$ a \defn{linear local connectivity constant of $d$}.

\begin{rem}   \label{rmChordalVisualQSEquiv}
If $f\: \widehat\C \rightarrow \widehat\C$ is a rational expanding Thurston map, then a visual metric is quasisymmetrically equivalent to the chordal metric on the Riemann sphere $\widehat\C$ (see Theorem~\ref{thmBM}). Here the chordal metric $\sigma$ on $\widehat\C$ is given by
$
\sigma(z,w) =\frac{2\abs{z-w}}{\sqrt{1+\abs{z}^2} \sqrt{1+\abs{w}^2}}
$
for $z,w\in\C$, and $\sigma(\infty,z)=\sigma(z,\infty)= \frac{2}{\sqrt{1+\abs{z}^2}}$ for $z\in \C$. We also note that quasisymmetric embeddings of bounded connected metric spaces are H\"{o}lder continuous (see \cite[Section~11.1 and Corollary~11.5]{He01}). Accordingly, the classes of H\"{o}lder continuous functions on $\widehat\C$ equipped with the chordal metric and on $S^2=\widehat\C$ equipped with any visual metric for $f$ are the same (upto a change of the H\"{o}lder exponent).
\end{rem}

A Jordan curve $\CC\subseteq S^2$ is \defn{$f$-invariant} if $f(\CC)\subseteq \CC$. We are interested in $f$-invariant Jordan curves that contain $\post f$, since for such a Jordan curve $\CC$, we get a cellular Markov partition $(\DD^1(f,\CC),\DD^0(f,\CC))$ for $f$. According to Example~15.11 in \cite{BM17}, such $f$-invariant Jordan curves containing $\post{f}$ need not exist. However, M.~Bonk and D.~Meyer \cite[Theorem~15.1]{BM17} proved that there exists an $f^n$-invariant Jordan curve $\CC$ containing $\post{f}$ for each sufficiently large $n$ depending on $f$. A slightly stronger version of this result was proved in \cite[Lemma~3.11]{Li16} and we record it below.

\begin{lemma}[M.~Bonk \& D.~Meyer \cite{BM17}, Z.~Li \cite{Li16}]  \label{lmCexistsL}
Let $f\:S^2\rightarrow S^2$ be an expanding Thurston map, and $\wt{\CC}\subseteq S^2$ be a Jordan curve with $\post f\subseteq \wt{\CC}$. Then there exists an integer $N(f,\wt{\CC}) \in \N$ such that for each $n\geq N(f,\wt{\CC})$ there exists an $f^n$-invariant Jordan curve $\CC$ isotopic to $\wt{\CC}$ rel.\ $\post f$ such that no $n$-tile in $\X^n(f,\CC)$ joins opposite sides of $\CC$.
\end{lemma}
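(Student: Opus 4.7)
The plan is to combine Bonk--Meyer's Theorem~15.1 in \cite{BM17}, which supplies an $f^n$-invariant Jordan curve $\CC$ isotopic to $\wt{\CC}$ rel.\ $\post f$ for all sufficiently large $n$, with the quantitative control of visual metrics over cell decompositions provided by Lemma~\ref{lmCellBoundsBM}, which forces $n$-tiles to be too small to touch two non-adjacent $0$-edges of $\CC$ once $n$ is large.

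First, I would fix a visual metric $d$ for $f$ with expansion factor $\Lambda>1$ and invoke Bonk--Meyer's theorem to obtain an integer $N_1=N_1(f,\wt{\CC})\in\N$ such that for every $n\geq N_1$ there is an $f^n$-invariant Jordan curve $\CC_n\supseteq \post f$ isotopic to $\wt{\CC}$ rel.\ $\post f$. The Bonk--Meyer construction produces $\CC_n$ as a limit (under an appropriate convergence) of an edge path in $f^{-n}(\wt{\CC})$ that visits the postcritical points in the cyclic order inherited from $\wt{\CC}$; by choosing the initial data carefully, the trace of $\CC_n$ can be localized inside a controlled union of tiles at a fixed auxiliary level, so that the Hausdorff distance between $\CC_n$ and $\wt{\CC}$ is bounded by a quantity independent of $n$.

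Second, for the tile condition I would argue by expansion. If $\card(\post f)=3$ then no two $0$-edges of $\CC_n$ are opposite, and the conclusion is vacuous. When $\card(\post f)\geq 4$, any pair of opposite $0$-edges $e,e'$ of $\CC_n$ is a pair of disjoint closed arcs meeting only through the rest of $\CC_n$. Since the isotopy fixes $\post f$ and the trace of $\CC_n$ is trapped in a fixed neighborhood of $\wt{\CC}$, the separation $d(e,e')$ admits a lower bound $\delta=\delta(f,\wt{\CC})>0$ independent of $n$. By Lemma~\ref{lmCellBoundsBM}~(ii), each $n$-tile $X\in\X^n(f,\CC_n)$ satisfies $\diam_d(X)\leq C\Lambda^{-n}$, so choosing $N_2\in\N$ with $C\Lambda^{-N_2}<\delta$ ensures that for all $n\geq N_2$ no $n$-tile can meet two opposite $0$-edges of $\CC_n$ simultaneously. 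Setting $N(f,\wt{\CC})\coloneqq \max\{N_1,N_2\}$ then yields the lemma.

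The principal obstacle is the uniform separation $\delta$ of opposite $0$-edges of $\CC_n$: a priori the Bonk--Meyer limit curve could deviate significantly from $\wt{\CC}$ as $n$ grows, and if $d(e,e')\to 0$ along the sequence $\{\CC_n\}$ one loses the quantitative comparison with the tile diameter. I would therefore need to revisit the construction of $\CC_n$ in \cite[Section~15]{BM17} and arrange the initialization so that $\CC_n$ lies in the union of tiles of one fixed finite level $n_0$, chosen once and for all before the induction starts; this confines $\CC_n$ to a neighborhood of $\wt{\CC}$ of size $\operatorname{O}(\Lambda^{-n_0})$ uniformly in $n$ and secures the desired lower bound $\delta$, after which the remaining estimates reduce to the elementary diameter comparison above.
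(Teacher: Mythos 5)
Your overall strategy — invoking Bonk--Meyer's Theorem~15.1 for the invariant curve and then using expansion to shrink tiles below the separation of opposite edges, with the curve $\CC_n$ confined to a controlled neighborhood of $\wt{\CC}$ — is the right one and in the spirit of the actual source (Li \cite{Li16}). You also correctly isolate the central delicacy: the invariant curve must not drift away from $\wt{\CC}$. However, two genuine gaps remain.

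First, the $\card(\post f)=3$ case is mishandled. By Definition~\ref{defJoinOppositeSides}, when there are exactly three postcritical points, ``joining opposite sides'' means meeting all three $0$-edges of $\CC$, and this is not a vacuous condition: every pair of the three $0$-edges shares a vertex, so a large $n$-tile — or any $n$-tile containing two distinct postcritical points — would meet all three edges. You need the same smallness and separation argument in this case; it does not hold for free.

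Second, the diameter estimate $\diam_d(X)\leq C\Lambda^{-n}$ from Lemma~\ref{lmCellBoundsBM}(ii) carries a constant $C$ that depends on the Jordan curve $\CC$ used to build the cell decomposition, not only on $f$ and $d$. Because your $\CC_n$ varies with $n$, the step ``choose $N_2$ with $C\Lambda^{-N_2}<\delta$'' implicitly requires a single constant $C$ uniform over the entire family $\{\CC_n\}$, and you do not establish that. Your proposed remedy — confining $\CC_n$ to a neighborhood of $\wt{\CC}$ of size $\operatorname{O}(\Lambda^{-n_0})$ with $n_0$ fixed in advance — is the right instinct, but one still has to derive the uniformity of the cell-decomposition constants from it. For example, if one could arrange $\CC_n \subseteq f^{-n_0}(\wt{\CC})$, then each $n$-tile in $\X^n(f,\CC_n)$ would be a connected union of at most $2(\deg f)^{n_0}$ many $(n+n_0)$-tiles of $(f,\wt{\CC})$, giving a uniform bound; but the Bonk--Meyer limit curve is not a priori contained in $f^{-n_0}(\wt{\CC})$ for any fixed $n_0$, since it arises as a Hausdorff limit requiring arbitrarily deep levels. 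Making the confinement precise, and deriving the uniform tile-diameter bound from it, is the real content of the cited lemma and is exactly what your sketch elides.
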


The phrase ``joining opposite sides'' has a specific meaning in our context. 

\begin{definition}[Joining opposite sides]  \label{defJoinOppositeSides} 
Fix a Thurston map $f$ with $\card(\post f) \geq 3$ and an $f$-invariant Jordan curve $\CC$ containing $\post f$.  A set $K\subseteq S^2$ \defn{joins opposite sides} of $\CC$ if $K$ meets two disjoint $0$-edges when $\card( \post f)\geq 4$, or $K$ meets  all  three $0$-edges when $\card(\post f)=3$. 
 \end{definition}
 
Note that $\card (\post f) \geq 3$ for each expanding Thurston map $f$ \cite[Lemma~6.1]{BM17}.

We now summarize some basic properties of expanding Thurston maps in the following theorem.

\begin{theorem}[Z.~Li \cite{Li18}, \cite{Li16}]   \label{thmETMBasicProperties}
Let $f\:S^2 \rightarrow S^2$ be an expanding Thurston map, and $d$ be a visual metric on $S^2$ for $f$ with expansion factor $\Lambda>1$. Then the following statements are satisfied:
\begin{enumerate}
\smallskip
\item[(i)] The map $f$ is Lipschitz with respect to $d$.

\smallskip
\item[(ii)] The map $f$ has $1+ \deg f$ fixed points, counted with weight given by the local degree of the map at each fixed point. In particular, $\card \sum\limits_{x\in P_{1,f^n}} \deg_{f^n}(x) = 1 + \deg f^n$.
\end{enumerate}
\end{theorem}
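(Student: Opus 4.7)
For (i), my plan is to exploit the quantitative control over tile sizes from Lemma~\ref{lmCellBoundsBM} together with the fact that $f$ maps each $(n+1)$-tile homeomorphically onto an $n$-tile, scaling diameters by a factor of at most roughly $\Lambda$. Given $x,y \in S^2$ with $d(x,y)$ small, I would choose $n\in\N_0$ with $d(x,y) \asymp \Lambda^{-n}$; then Lemma~\ref{lmCellBoundsBM}~(iii)--(iv) places both points inside $U^{n-n_0}(x)$, which is a union of $(n-n_0)$-tiles whose cardinality is bounded by a constant depending only on $f$. The image $f(U^{n-n_0}(x))$ is therefore a union of the same bounded number of $(n-n_0-1)$-tiles, each of diameter at most $C\Lambda^{-(n-n_0-1)} \lesssim \Lambda\, d(x,y)$ by Lemma~\ref{lmCellBoundsBM}~(ii). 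This yields $d(f(x),f(y)) \lesssim \Lambda\, d(x,y)$ for small $d(x,y)$, and the global Lipschitz estimate follows by compactness of $S^2$ for pairs of points at larger distance.

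For (ii), my plan is to establish the weighted fixed-point count by a combinatorial argument using cellular Markov partitions. Since $f^n$ is again an expanding Thurston map (Remark~\ref{rmExpanding}), it suffices to prove $\sum_{x \in P_{1,f}} \deg_f(x) = 1 + \deg f$ for an arbitrary expanding Thurston map $f$. I would first apply Lemma~\ref{lmCexistsL} to pass to a sufficiently high iterate admitting an invariant Jordan curve $\CC \supseteq \post f$, obtaining a cellular Markov partition whose refinements $\DD^n$ (by Proposition~\ref{propCellDecomp}) provide arbitrarily fine tile decompositions adapted to the dynamics of $f$. The strategy is then to enumerate ``self-mapped'' tiles---those tiles $X$ satisfying $X \subseteq f^k(X)$ where $f^k$ maps $X$ onto the containing $0$-tile---and observe that each such tile harbors a unique fixed point by Brouwer's fixed point theorem applied to the inverse branch, which is a genuine contraction in the visual metric owing to the expansion of $f$. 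Cross-referencing this count against the flower description around each fixed point from Remark~\ref{rmFlower} yields the weighted identity.

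The hard part will be the precise enumeration of self-mapped tiles around a fixed vertex of high local degree: at such a vertex with $\deg_f(x)=k\geq 2$, the local model is topologically $z\mapsto z^k$, which arranges the $2k$ tiles of the flower in an alternating color pattern, and the subset of self-mapped tiles among them must be computed and reconciled with the intended weight $k$ in the global sum. A careful case analysis distinguishing fixed points in the interior of $0$-tiles (each contributing $1$), fixed points in the interior of $0$-edges, and fixed points at vertices of $\V^0$---combined with the combinatorics of the black/white coloring compatible with the position of $\CC$---will be required to harmonize the local counts with the global tile count and extract $1 + \deg f$. The detailed bookkeeping, which is specific to the expanding Thurston map setting where the cellular Markov structure and the flower description around vertices afford clean enumeration unavailable in more general branched-covering contexts, is carried out in \cite{Li16, Li18}.
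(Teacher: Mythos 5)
Your sketch for (i) is essentially correct: place $x,y$ in a common $U^{n-n_0}(x)$ with $n$ chosen so that $d(x,y) \asymp \Lambda^{-n}$, use Proposition~\ref{propCellDecomp}~(i) to map each $(n-n_0)$-tile onto an $(n-n_0-1)$-tile, and invoke Lemma~\ref{lmCellBoundsBM}~(ii). One detail worth making explicit: the image $f(U^{n-n_0}(x))$ is a union of boundedly many $(n-n_0-1)$-tiles, each of diameter $\lesssim \Lambda^{n_0+1}d(x,y)$, but to bound $d(f(x),f(y))$ by the diameter of the union (rather than of a single tile) you need that the union is connected and that the number of tiles is uniformly bounded; the first holds because $U^m(x)$ is connected, and the second because a vertex has at most $2\deg f$ incident tiles. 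This is consistent with the cited Lemma~3.12 of \cite{Li18}.

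For (ii), there is a genuine structural gap in your reduction. You pass via Lemma~\ref{lmCexistsL} to an iterate $F = f^N$ admitting an $F$-invariant Jordan curve $\CC$, and then propose to enumerate self-mapped tiles for the resulting Markov partition. But that partition is a cellular Markov partition \emph{for $F$}, not for $f$: the refinement property $\DD^{k+1}(f,\CC) \prec \DD^k(f,\CC)$ fails for a single step of $f$ when $\CC$ is only $F$-invariant, since $1$-tiles of $\DD^1(f,\CC)$ need not be contained in single $0$-tiles. Consequently the notion of ``a $1$-tile $X$ with $X \subseteq f(X)$'' is not even coherently defined at the $f$-scale, and your Brouwer/inverse-branch argument counts fixed points of $F$, yielding $\sum_{y\in P_{1,F}} \deg_F(y) = 1 + \deg F$. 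Deducing from this the statement $\sum_{x\in P_{1,f}} \deg_f(x) = 1 + \deg f$ for $f$ itself is not automatic — note that $P_{1,f} \subsetneq P_{1,F}$ in general and the local degrees interact with this inclusion nontrivially — and this reduction is in fact the delicate point. You would need either to work with a curve $\CC$ satisfying $f(\CC)\subseteq\CC$ directly (which, as \cite[Example~15.11]{BM17} shows, may not exist), or an argument independent of Markov partitions — e.g.\ computing the Lefschetz fixed-point index at each fixed point and showing it equals the local degree, which requires its own careful treatment of critical fixed points. Separately, your appeal to Brouwer gives existence of a fixed point in each self-mapped tile, but uniqueness requires the hypothesis that no $1$-tile joins opposite sides of $\CC$ (this is Lemma~\ref{lmAtMost1}); Lemma~\ref{lmCexistsL} does supply that hypothesis for the iterate, but it should be invoked explicitly, and your ``genuine contraction in the visual metric'' claim for the inverse branch is not quite the mechanism used here.
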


Theorem~\ref{thmETMBasicProperties}~(i) was shown in \cite[Lemma~3.12]{Li18}. Theorem~\ref{thmETMBasicProperties}~(ii) follows from \cite[Theorem~1.1]{Li16} and Remark~\ref{rmExpanding}.

We record the following two lemmas from \cite{Li16} (see \cite[Lemma~4.1 and Lemma~4.2]{Li16}) which give us almost precise information on the locations of the periodic points of an expanding Thurston map.

\begin{lemma}[Z.~Li \cite{Li16}]  \label{lmAtLeast1}
Let $f$ be an expanding Thurston map with an $f$-invariant Jordan curve  $\CC$ containing $\post f$. If $X\in \X^1_{\w\w}(f,\CC) \cup \X^1_{\b\b}(f,\CC)$ is a white $1$-tile contained in the while $0$-tile $X^0_\w$ or a black $1$-tile contained in the black $0$-tile $X^0_\b$, then $X$ contains at least one fixed point of $f$. If $X\in \X^1_{\w\b}(f,\CC) \cup \X^1_{\b\w}(f,\CC)$ is a white $1$-tile contained in the black $0$-tile $X^0_\b$ or a black $1$-tile contained in the white $0$-tile $X^0_\w$, then $\inte (X)$ contains no fixed points of $f$.
\end{lemma}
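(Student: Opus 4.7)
The plan is to dispatch the two assertions separately; both will follow quickly once one unpacks what the cellular Markov property $(\DD^1(f,\CC),\DD^0(f,\CC))$ from Proposition~\ref{propCellDecomp}(i) says, together with a small amount of elementary topology. Throughout, I will write $X^0$ for the $0$-tile containing $X$ and $X^{0\prime}$ for the other $0$-tile, and use the convention that a white (resp.\ black) $1$-tile is mapped by $f$ onto $X^0_\w$ (resp.\ $X^0_\b$).

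For the first assertion, the approach is to produce a fixed point via Brouwer's fixed-point theorem applied to an inverse branch of $f$. If $X \in \X^1$ is a $1$-tile contained in a $0$-tile $X^0$ of the \emph{same} color, then by the color convention $f(X) = X^0$, and Proposition~\ref{propCellDecomp}(i) asserts that $f|_X \: X \to X^0$ is a homeomorphism. Since $X$ is a $2$-cell, it is homeomorphic to a closed disk. Inverting and then restricting to $X \subseteq X^0$ yields a continuous self-map $g \coloneqq (f|_X)^{-1}|_X \: X \to X$ of this closed disk. Brouwer's theorem supplies some $p \in X$ with $g(p) = p$; applying $f$ to both sides gives $f(p) = p$, producing a fixed point of $f$ inside $X$.

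For the second assertion, the approach is a contradiction argument based on the fact that $\CC = \partial X^0_\b = \partial X^0_\w$ separates the two $0$-tiles. Suppose $p \in \inte(X)$ is a fixed point of $f$, where $X \in \X^1$ is a $1$-tile whose ambient $0$-tile $X^0$ has the opposite color, so that $f(X) = X^{0\prime}$. The point $p$ has an open neighborhood (in $S^2$) contained in $X \subseteq X^0$, and any such neighborhood must miss $\CC$, since otherwise it would contain points of the other $0$-tile by the Jordan curve theorem applied to $\CC$. Therefore $p \in X^0 \setminus \CC$. On the other hand, $f(p) = p$ together with $f(p) \in f(X) = X^{0\prime}$ forces $p \in X^0 \cap X^{0\prime} \subseteq \CC$, a contradiction.

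Neither part presents a serious obstacle; the first is a clean Brouwer application to an inverse branch on a topological disk, and the second is a separation argument pinning a putative interior fixed point to $\CC$ while simultaneously excluding it from $\CC$. The only subtle observation is the containment $\inte(X) \cap \CC = \emptyset$ for a $1$-tile $X$ with $X \subseteq X^0$, which the third paragraph handles by combining the openness of $\inte(X)$ in $S^2$ with the separation property of the Jordan curve $\CC$.
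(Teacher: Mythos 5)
Your proof is correct. The paper does not reproduce a proof of this lemma (it cites \cite[Lemma~4.1 and Lemma~4.2]{Li16}), but your argument is the standard one: the first assertion is proved by applying Brouwer's fixed-point theorem to the inverse branch $(f|_X)^{-1}$, which is a continuous self-map of the closed $2$-cell $X$ because $X \subseteq X^0 = f(X)$, and the second follows because a putative fixed point $p \in \inte(X) \subseteq X^0 \setminus \CC$ would also satisfy $p = f(p) \in f(X) = X^{0\prime}$, forcing $p \in X^0 \cap X^{0\prime} = \CC$, a contradiction. The only supplementary remark is that the containment $\inte(X) \cap \CC = \emptyset$, which you derive from the Jordan curve theorem, also follows directly from Proposition~\ref{propCellDecomp}(v): $\inte(X)$ is a connected component of $S^2 \setminus f^{-1}(\CC)$, and $\CC \subseteq f^{-1}(\CC)$ since $\CC$ is $f$-invariant.
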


Recall that cells in the cell decompositions are by definition closed sets, and the set of $0$-tiles $\X^0(f,\CC)$ consists of the white $0$-tile $X^0_\w$ and the black $0$-tile $X^0_\b$.

\begin{lemma}[Z.~Li \cite{Li16}]   \label{lmAtMost1}
Let $f$ be an expanding Thurston map with an $f$-invariant Jordan curve  $\CC$ containing $\post f$ such that no $1$-tile in $\X^1(f,\CC)$ joins opposite sides of $\CC$. Then for every $n\in\N$, each $n$-tile $X^n \in\X^n(f,\CC)$ contains at most one fixed point of $f^n$.
\end{lemma}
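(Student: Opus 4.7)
The plan is to reduce to the case $n = 1$ and then proceed by contradiction using iterated inverse branches, with a separate analysis depending on whether the tile has the same color as the $0$-tile containing it.

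First I will reduce to the case $n = 1$. By Proposition~\ref{propCellDecomp}(vii), $\X^1(f^n, \CC) = \X^n(f, \CC)$, and since $f(\CC) \subseteq \CC$ implies $f^n(\CC) \subseteq \CC$, the iterate $F \coloneqq f^n$ is an expanding Thurston map with $F$-invariant Jordan curve $\CC \supseteq \post F = \post f$. Moreover, since $f(\CC) \subseteq \CC$, Proposition~\ref{propCellDecomp}(iii) shows that the $1$-skeleton $f^{-m}(\CC)$ of $\DD^m(f, \CC)$ contains the $1$-skeleton $f^{-1}(\CC)$ of $\DD^1(f, \CC)$ for every $m \geq 1$, so every $m$-tile of $f$ is contained in some $1$-tile of $f$. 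Hence the hypothesis that no $1$-tile of $f$ joins opposite sides of $\CC$ forces the same for every $m$-tile of $f$, and in particular for every $1$-tile of $F$. Thus it suffices to prove the lemma in the case $n = 1$: each $X \in \X^1(f, \CC)$ contains at most one fixed point of $f$.

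Now suppose for contradiction that distinct points $p, q$ in some $1$-tile $X$ are both fixed by $f$, and set $X^0 \coloneqq f(X)$, a $0$-tile. Since $p = f(p)$ and $q = f(q)$, we have $\{p, q\} \subseteq X \cap X^0$. In the first (easy) case, $X \subseteq X^0$, i.e., $X$ has the same color as the $0$-tile containing it. Then the inverse branch $h \coloneqq (f|_X)^{-1} \: X^0 \to X \subseteq X^0$ is a continuous self-map of $X^0$ fixing both $p$ and $q$. By the cellularity of $f$ for $(\DD^{k+1}(f,\CC), \DD^k(f,\CC))$ from Proposition~\ref{propCellDecomp}(i), a straightforward induction shows that $Y_k \coloneqq h^k(X^0)$ is a $k$-tile for each $k \in \N_0$, and the sequence $\{Y_k\}$ is nested: $Y_k \subseteq Y_{k-1} \subseteq X^0$. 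Both $p$ and $q$ lie in every $Y_k$, yet Lemma~\ref{lmCellBoundsBM}(ii) gives $\diam_d(Y_k) \leq C\Lambda^{-k} \to 0$, contradicting $p \neq q$.

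In the second (hard) case, $X \not\subseteq X^0$, so $X$ has opposite color from the $0$-tile containing it and thus $X \in \X^1_{\w\b}(f, \CC) \cup \X^1_{\b\w}(f, \CC)$. By Lemma~\ref{lmAtLeast1}, $\inte(X)$ contains no fixed points of $f$, hence $p, q \in \partial X$. Since $f|_X$ is a cellular homeomorphism mapping $\partial X$ onto $\partial X^0 = \CC$, any fixed point on $\partial X$ must lie in $\CC$, so $\{p, q\} \subseteq X \cap \CC$. Now I will invoke the ``no opposite sides'' hypothesis: the $0$-edges of $\CC$ meeting $X$ pairwise share a $0$-vertex and therefore lie in a connected chain, so $X \cap \CC$ is contained in a proper sub-arc $\alpha \subseteq \CC$ whose endpoints are $0$-vertices. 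Restricting to this arc, $f|_\alpha$ is a monotone map into $\CC$ that expands each $1$-edge of $\CC$ contained in $\alpha$ onto a strictly longer $0$-edge (quantitatively by Lemma~\ref{lmCellBoundsBM}(ii)); a piecewise-monotone expanding map of an arc admits at most one fixed point, which is the desired contradiction.

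The main obstacle is the second case: there is no self-map of $X^0$ to iterate as in the first case, so one cannot directly apply the visual diameter bound on $X$. The ``no opposite sides'' hypothesis is exactly what is needed to confine the possible fixed points to a sub-arc of $\CC$, reducing the problem to a one-dimensional analysis of the invariant dynamics $f|_\CC$ where an expansion-plus-monotonicity argument closes out the proof.
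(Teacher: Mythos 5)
Your reduction to $n = 1$ is correct: since $\CC$ is $f$-invariant, $f^{-1}(\CC) \subseteq f^{-m}(\CC)$ for every $m$, so every $m$-tile sits inside a $1$-tile, and the ``no opposite sides'' hypothesis for $1$-tiles propagates to $\X^1(f^n,\CC) = \X^n(f,\CC)$. The first case ($X$ and $f(X)$ of the same color) is also handled correctly: the inverse branch $(f|_X)^{-1}$ becomes a self-map of $f(X)$, its $k$-fold image is a $k$-tile (this is exactly Lemma~\ref{lmCylinderIsTile} applied to the constant sequence $X_i \equiv X$, cleaner than appealing to ``a straightforward induction''), both fixed points survive each iteration, and the visual diameter bound of Lemma~\ref{lmCellBoundsBM}~(ii) forces $p = q$.

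The second case, however, has a fatal gap. Your closing inference --- ``a piecewise-monotone expanding map of an arc admits at most one fixed point'' --- is false: the tent map $T(x) = 2\min\{x,\,1-x\}$ on $[0,1]$ is piecewise monotone with constant expansion factor $2$ and fixes both $0$ and $2/3$. To salvage this you would need $f|_\alpha$ to be genuinely monotone, not just piecewise monotone, but you have not established that, and in general it fails: $f|_\CC$ folds at every $1$-vertex in $\crit f|_\CC$, and the ``no opposite sides'' hypothesis confines $X \cap \CC$ to a proper sub-arc but says nothing about whether such a fold point lies between $p$ and $q$ inside $\alpha$. Note also that this case cannot be dismissed as vacuous: when $f|_\CC$ reverses orientation at a fixed point $p \in \CC$, both $1$-tiles containing $p$ land in your second case, so there is no way to route through the first-case argument. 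The reduction of the uniqueness question to a generic one-dimensional fixed-point principle is therefore the wrong tool here; the cited source must close this configuration by a different argument.
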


The following lemma proved in \cite[Lemma~3.13]{Li18} generalizes \cite[Lemma~15.25]{BM17}.

\begin{lemma}[M.~Bonk \& D.~Meyer \cite{BM17}, Z.~Li \cite{Li18}]   \label{lmMetricDistortion}
Let $f\:S^2 \rightarrow S^2$ be an expanding Thurston map, and $\CC \subseteq S^2$ be a Jordan curve that satisfies $\post f \subseteq \CC$ and $f^{n_\CC}(\CC)\subseteq\CC$ for some $n_\CC\in\N$. Let $d$ be a visual metric on $S^2$ for $f$ with expansion factor $\Lambda>1$. Then there exists a constant $C_0 > 1$, depending only on $f$, $d$, $\CC$, and $n_\CC$, with the following property:

If $k,n\in\N_0$, $X^{n+k}\in\X^{n+k}(f,\CC)$, and $x,y\in X^{n+k}$, then 
\begin{equation}   \label{eqMetricDistortion}
\frac{1}{C_0} d(x,y) \leq \frac{d(f^n(x),f^n(y))}{\Lambda^n}  \leq C_0 d(x,y).
\end{equation}
\end{lemma}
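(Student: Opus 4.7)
The plan is to argue combinatorially, using the cellular property of $f^n$ from Proposition~\ref{propCellDecomp}~(i) together with the cell-diameter and cell-separation estimates in Lemma~\ref{lmCellBoundsBM}. In fact the $f^{n_{\CC}}$-invariance assumption on $\CC$ will not be used; only $\post f \subseteq \CC$ enters the argument.

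I first fix $x, y \in X^{n+k}$ with $x \neq y$ (the case $x = y$ is trivial), and set $u \coloneqq f^n(x)$, $v \coloneqq f^n(y)$, and $Y^k \coloneqq f^n(X^{n+k}) \in \X^k(f,\CC)$. By Proposition~\ref{propCellDecomp}~(i), the restriction $f^n|_{X^{n+k}} \: X^{n+k} \to Y^k$ is a homeomorphism which for each $m \in \N_0$ sends $(n+k+m)$-tiles contained in $X^{n+k}$ bijectively onto $(k+m)$-tiles contained in $Y^k$. Since this map is injective on $X^{n+k}$, the induced bijection preserves intersections. I then define $m^\ast$ to be the largest integer $m \geq 0$ for which there exist $(n+k+m)$-tiles $X_1, X_2 \subseteq X^{n+k}$ with $x \in X_1$, $y \in X_2$, and $X_1 \cap X_2 \neq \emptyset$. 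This maximum exists: the value $m = 0$ is realized by $X_1 = X_2 = X^{n+k}$, while by Lemma~\ref{lmCellBoundsBM}~(ii) every $(n+k+m)$-tile has diameter at most $C\Lambda^{-(n+k+m)}$, so $d(x,y) > 0$ forces $m$ to be bounded.

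Picking realizers $X_1, X_2$ of $m^\ast$ and their images $\tilde X_i \coloneqq f^n(X_i)$, Lemma~\ref{lmCellBoundsBM}~(ii) yields the upper bounds
\[
d(x,y) \leq \diam_d(X_1) + \diam_d(X_2) \leq 2C\Lambda^{-(n+k+m^\ast)}, \qquad d(u,v) \leq 2C\Lambda^{-(k+m^\ast)}.
\]
For the matching lower bounds, I pick any $(n+k+m^\ast+1)$-tiles $\sigma, \tau \subseteq X^{n+k}$ with $x \in \sigma$ and $y \in \tau$; these exist because $\DD^{n+k+m^\ast+1}$ refines $\DD^{n+k}$, making $X^{n+k}$ a union of $(n+k+m^\ast+1)$-tiles. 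By the maximality of $m^\ast$ one has $\sigma \cap \tau = \emptyset$, so Lemma~\ref{lmCellBoundsBM}~(i) gives $d(x,y) \geq d(\sigma,\tau) \geq C^{-1}\Lambda^{-(n+k+m^\ast+1)}$; applying $f^n$ and invoking the intersection-preserving bijection, the images $f^n(\sigma), f^n(\tau)$ are disjoint $(k+m^\ast+1)$-tiles in $Y^k$ containing $u$ and $v$, so analogously $d(u,v) \geq C^{-1}\Lambda^{-(k+m^\ast+1)}$. Combining the four estimates eliminates $m^\ast$ and produces (\ref{eqMetricDistortion}) with $C_0 \coloneqq 2\Lambda C^2$.

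The only delicate point — and what I view as the principal obstacle — is that when $x$ or $y$ lies in $\partial X^{n+k}$, some $(n+k+m^\ast+1)$-tiles containing $x$ or $y$ spill out of $X^{n+k}$ and lie outside the scope of the cellular bijection between $X^{n+k}$ and $Y^k$. The remedy is to insist throughout that the auxiliary tiles used to define $m^\ast$ and to produce the separating pair $\sigma, \tau$ be contained in $X^{n+k}$: for the lower-bound step I only need some disjoint pair of cells separating $x$ from $y$, and by the maximality of $m^\ast$ every choice of $(n+k+m^\ast+1)$-tiles inside $X^{n+k}$ containing $x$ and $y$ is automatically disjoint. Consequently no separate case analysis on the position of $x$ or $y$ within $X^{n+k}$ is required.
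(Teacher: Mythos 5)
Your combinatorial strategy (defining a scale $m^\ast$ at which $x,y$ are last housed in intersecting tiles inside $X^{n+k}$, then comparing diameter and separation bounds at levels $m^\ast$ and $m^\ast+1$ downstairs and upstairs via the cellular homeomorphism $f^n|_{X^{n+k}}$) is sound in outline, and your bookkeeping does correctly produce an estimate of the form $\frac{1}{C_0}d(x,y)\le d(f^nx,f^ny)/\Lambda^n\le C_0 d(x,y)$ once the tile-level inputs are in place. However, there is a genuine gap in the step that supplies the separating tiles.

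The lower-bound step asserts the existence of $(n+k+m^\ast+1)$-tiles $\sigma,\tau\subseteq X^{n+k}$ with $x\in\sigma$, $y\in\tau$, justified by \textquotedblleft $\DD^{n+k+m^\ast+1}$ refines $\DD^{n+k}$.\textquotedblright\ That refinement is equivalent to $f^{-(n+k)}(\CC)\subseteq f^{-(n+k+1)}(\CC)$, i.e.\ to $f(\CC)\subseteq\CC$, which is precisely $n_\CC=1$. The lemma only assumes $f^{n_\CC}(\CC)\subseteq\CC$ for some $n_\CC\ge 1$, and when $n_\CC>1$ consecutive cell decompositions do \emph{not} nest: a $(j+1)$-tile can straddle $f^{-j}(\CC)$, and a point in the interior of a $j$-tile may fail to lie in any $(j+1)$-tile contained in that $j$-tile. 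Your opening remark that \textquotedblleft the $f^{n_\CC}$-invariance assumption on $\CC$ will not be used\textquotedblright\ is therefore incorrect, and it is inconsistent with the statement itself, which makes $C_0$ depend on $n_\CC$: a proof in which the invariance never enters could not produce a constant that does. (Dropping the $\subseteq X^{n+k}$ constraint to recover existence of $\sigma,\tau$ does not help, since then $f^n$ need not be injective on $\sigma\cup\tau$ and disjointness of $\sigma,\tau$ no longer forces disjointness of $f^n(\sigma),f^n(\tau)$, killing the lower bound on $d(u,v)$.)

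The argument is salvageable, and the repair shows exactly where $n_\CC$ enters. Iterating the hypothesis gives $f^{mn_\CC}(\CC)\subseteq\CC$, hence $\DD^{n+k+mn_\CC}(f,\CC)$ does refine $\DD^{n+k}(f,\CC)$ for every $m\ge 0$. Define $m^\ast$ instead as the largest $m\ge 0$ for which intersecting $(n+k+mn_\CC)$-tiles $X_1,X_2\subseteq X^{n+k}$ exist with $x\in X_1$, $y\in X_2$, and take the separating pair $\sigma,\tau$ at level $n+k+(m^\ast+1)n_\CC$; the refinement now guarantees their existence inside $X^{n+k}$, maximality of $m^\ast$ forces $\sigma\cap\tau=\emptyset$, and the rest of your computation goes through verbatim with $\Lambda$ replaced by $\Lambda^{n_\CC}$ in the gap between levels, yielding $C_0=2C^2\Lambda^{n_\CC}$ — which now visibly depends on $n_\CC$, as the lemma requires. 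With this modification I believe your proof is correct; as written it covers only the special case $n_\CC=1$ (which is the scope of \cite[Lemma~15.25]{BM17}), not the general case claimed.
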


We summarize the existence, uniqueness, and some basic properties of equilibrium states for expandning Thurston maps in the following theorem.

\begin{theorem}[Z.~Li \cite{Li18}]   \label{thmEquilibriumState}
Let $f\:S^2 \rightarrow S^2$ be an expanding Thurston map and $d$ a visual metric on $S^2$ for $f$. Let $\phi,\gamma\in \Holder{\alpha}(S^2,d)$ be real-valued H\"{o}lder continuous functions with an exponent $\alpha\in(0,1]$. Then the following statements are satisfied:
\begin{enumerate}
\smallskip
\item[(i)] There exists a unique equilibrium state $\mu_\phi$ for the map $f$ and the potential $\phi$.

\smallskip
\item[(ii)] For each $t\in\R$, we have
$
\frac{\mathrm{d}}{\mathrm{d}t} P(f,\phi + t\gamma) = \int \!\gamma \,\mathrm{d}\mu_{\phi + t\gamma}.
$

\smallskip
\item[(iii)] If $\CC \subseteq S^2$ is a Jordan curve containing $\post f$ with the property that $f^{n_\CC}(\CC)\subseteq \CC$ for some $n_\CC\in\N$
, then
\begin{equation*}
\mu_\phi \biggl( \, \bigcup\limits_{i=0}^{+\infty}  f^{-i}(\CC)  \biggr)  = 0.
\end{equation*}
\end{enumerate}
\end{theorem}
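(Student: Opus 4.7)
The plan is to adapt the classical Ruelle operator approach to our setting via the visual metric. For $\phi \in \Holder{\alpha}(S^2,d)$, define the Ruelle operator
\begin{equation*}
\RR_\phi g(y) \coloneqq \sum_{x \in f^{-1}(y)} \deg_f(x) e^{\phi(x)} g(x),
\end{equation*}
where the local-degree weight accounts for the branched nature of $f$. The decisive observation, and the reason for working with the visual metric rather than the chordal metric on $\widehat{\C}$ in the rational case, is that the bounded distortion furnished by Lemma~\ref{lmMetricDistortion} guarantees that $\RR_\phi$ maps $\Holder{\alpha}(S^2,d)$ into itself with the \emph{same} exponent $\alpha$.

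For statement~(i), I would establish a Ruelle--Perron--Frobenius theorem for $\RR_\phi$: produce a simple leading eigenvalue $e^{P(f,\phi)}$ with a strictly positive eigenfunction $h_\phi \in \Holder{\alpha}(S^2,d)$ and a unique eigenmeasure $\nu_\phi$ of the dual operator $\RR_\phi^\ast$, with the remainder of the spectrum of $\RR_\phi$ on $\Holder{\alpha}(S^2,d)$ strictly inside a smaller disk (spectral gap). The eigenmeasure $\nu_\phi$ is obtained via a Schauder--Tychonoff fixed-point argument applied to the normalized dual; $h_\phi$ arises as a limit of $e^{-nP(f,\phi)} \RR_\phi^n \mathbbm{1}$, after passing to an iterate admitting an invariant Jordan curve via Lemma~\ref{lmCexistsL} so that the tile combinatorics of Proposition~\ref{propCellDecomp} become available. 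The candidate equilibrium state $\mu_\phi \coloneqq h_\phi \nu_\phi$ satisfies the Gibbs bound
\begin{equation*}
c_1 \leq \frac{\mu_\phi(X^n)}{\exp\bigl(S_n\phi(x) - nP(f,\phi)\bigr)} \leq c_2, \qquad x \in X^n \in \X^n(f,\CC),
\end{equation*}
with constants $c_1, c_2 > 0$ independent of $n$ and $X^n$; verifying that this $\mu_\phi$ realizes equality in the variational principle~(\ref{eqVPPressure}) is routine from the Gibbs bound, and any other equilibrium state must coincide with $\mu_\phi$ by the spectral gap.

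For statement~(ii), convexity of $t \mapsto P(f,\phi + t\gamma)$ from~(\ref{eqVPPressure}) forces differentiability outside a countable set, and the uniqueness from~(i) upgrades this to differentiability everywhere with derivative $\int \! \gamma \,\mathrm{d}\mu_{\phi + t\gamma}$: each one-sided derivative corresponds via the variational principle to some equilibrium state for $\phi + t\gamma$ paired against $\gamma$, and uniqueness identifies all these values. For statement~(iii), I would combine the Gibbs bound from~(i) with the fact that the ``reduced dynamics'' $f|_\CC$ on the invariant Jordan curve has strictly smaller topological complexity than $f$ itself---formally, the symbolic coding of $\CC$ embeds into one with fewer symbols per step than the full tile coding, so the number of $n$-tiles meeting $\CC$ grows at a rate strictly below $(\deg f)^n$. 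After replacing $f$ by $f^{n_\CC}$ to assume $f(\CC) \subseteq \CC$, using $f$-invariance of $\mu_\phi$ to reduce the claim to $\mu_\phi(\CC) = 0$, and summing the upper Gibbs bound over the tiles meeting $\CC$ at each level, the excess in the complexity rate yields an exponentially decaying estimate that forces $\mu_\phi(\CC) = 0$.

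The main obstacle is securing the spectral gap for $\RR_\phi$ on $\Holder{\alpha}(S^2,d)$. In classical uniformly expanding smooth settings this follows routinely from Lasota--Yorke inequalities, but here the presence of critical points---possibly periodic ones---and the fact that the tile symbolic coding is not finite-to-one along boundaries prevent a direct application of those methods. The resolution is to replace the smooth metric by the visual metric and to exploit the precise distortion estimate in Lemma~\ref{lmMetricDistortion} together with tile-level combinatorics, rather than smooth expansion; once the spectral gap is in hand, (ii) and (iii) follow by comparatively standard thermodynamic arguments.
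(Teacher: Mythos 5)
This theorem is stated as a recalled result from \cite{Li18}; the present paper supplies no proof, only pointers to \cite[Theorem~1.1]{Li18}, \cite[Theorem~6.13]{Li18}, and \cite[Proposition~7.1]{Li18} for parts~(i), (ii), and~(iii) respectively. Your plan for~(i) and~(ii) does match the thermodynamic-formalism route underlying those results: the degree-weighted Ruelle operator, the key observation (stressed in the present paper's introduction) that the visual metric makes $\RR_\phi$ preserve $\Holder{\alpha}(S^2,d)$ with the same H\"older exponent, the Gibbs property of the resulting state, and the convexity-plus-uniqueness upgrade to differentiability of the pressure. Two small mismatches worth flagging: in \cite{Li18} the eigenfunction is produced as a Ces\`aro limit of $\RR_{\overline\phi}^j\mathbbm{1}$ (see the recalled Theorem~\ref{thmMuExist}), not directly as $\lim e^{-nP}\RR_\phi^n\mathbbm{1}$; and uniqueness there rests on the Gibbs property and uniqueness of the eigenmeasure rather than on a Lasota--Yorke spectral gap --- the spectral gap in H\"older norm (for the split Ruelle operator) is a new contribution of the present paper (Theorem~\ref{thmSpectralGap}), so you should not lean on it when reconstructing the cited result.

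Your sketch for~(iii), however, contains a genuine gap. You propose to cover $\CC$ by the $n$-tiles meeting it, apply the upper Gibbs bound $\mu_\phi(X^n)\leq c_2\,e^{S_n\phi(x)-nP}$, and conclude decay from the fact that only $O(\gamma^n)$ tiles meet $\CC$, with $\gamma<\deg f$. This works when the Gibbs weights at a given level are comparable, i.e.\ for $\phi$ constant (the measure of maximal entropy, where each $n$-tile has weight $\approx(\deg f)^{-n}$, so the sum is $\lesssim(\gamma/\deg f)^n\to0$). For a general H\"older $\phi$ the weights $e^{S_n\phi-nP}$ of different $n$-tiles can differ by factors as large as $e^{n(\sup\phi-\inf\phi)}$, so a subexponentially small count of tiles does not by itself prevent them from carrying a bounded-below fraction of the Gibbs weight. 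What you actually need is the strict pressure gap $P(f|_\CC,\phi|_\CC)<P(f,\phi)$; this is Theorem~\ref{thmPressureOnC} of the present paper, but that theorem's proof explicitly \emph{invokes} statement~(iii) to get the strict inequality, so using it here would be circular. The argument in \cite[Proposition~7.1]{Li18} therefore cannot be the bare tile count you describe; a mechanism consistent with the machinery recalled in Section~3.3 is to use the identity $\mu_\phi(\CC)=\int\RR_{\wt\phi}^{\,n}\mathbbm{1}_\CC\,\mathrm{d}\mu_\phi$, together with the observation that $f(\CC)\subseteq\CC$ forces $\RR_{\wt\phi}^{\,n}\mathbbm{1}_\CC$ to vanish off $\CC$, and then to show $\RR_{\wt\phi}^{\,n}\mathbbm{1}_\CC$ is uniformly bounded strictly below~$1$ on $\CC$ --- a step that requires genuine input about the dynamics on the invariant curve, not merely a count of the tiles that meet it.
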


Theorem~\ref{thmEquilibriumState}~(i) is part of \cite[Theorem~1.1]{Li18}. Theorem~\ref{thmEquilibriumState}~(ii) follows immediately from \cite[Theorem~6.13]{Li18} and the uniqueness of equilibrium states in Theorem~\ref{thmEquilibriumState}~(i). Theorem~\ref{thmEquilibriumState}~(iii) was established in \cite[Proposition~7.1]{Li18}.

The following two distortion lemmas serve as cornerstones in the developement of thermodynamical formalism for expanding Thurston maps in \cite{Li18} (see \cite[Lemma~5.1 and Lemma~5.2]{Li18}).

\begin{lemma}[Z.~Li \cite{Li18}]    \label{lmSnPhiBound}
Let $f\:S^2 \rightarrow S^2$ be an expanding Thurston map and $\CC \subseteq S^2$ be a Jordan curve containing $\post f$ with the property that $f^{n_\CC}(\CC)\subseteq \CC$ for some $n_\CC\in\N$. Let $d$ be a visual metric on $S^2$ for $f$ with expansion factor $\Lambda>1$. Let $\phi\in \Holder{\alpha}(S^2,d)$ be a real-valued H\"{o}lder continuous function with an exponent $\alpha\in(0,1]$. Then there exists a constant $C_1=C_1(f,\CC,d,\phi,\alpha)$ depending only on $f$, $\CC$, $d$, $\phi$, and $\alpha$ such that
\begin{equation}  \label{eqSnPhiBound}
\Abs{S_n\phi(x)-S_n\phi(y)}  \leq C_1 d(f^n(x),f^n(y))^\alpha,
\end{equation}
for $n,m\in\N_0$ with $n\leq m $, $X^m\in\X^m(f,\CC)$, and $x,y\in X^m$. Quantitatively, we choose 
\begin{equation}   \label{eqC1Expression}
C_1 \coloneqq \frac{\Hseminorm{\alpha,\, (S^2,d)}{\phi} C_0}{1-\Lambda^{-\alpha}},
\end{equation}
where $C_0 > 1$ is a constant depending only on $f$, $\CC$, and $d$ from Lemma~\ref{lmMetricDistortion}.
\end{lemma}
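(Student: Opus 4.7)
The plan is to establish this as a standard Bowen-type distortion estimate, leveraging the metric distortion control from Lemma~\ref{lmMetricDistortion} together with the Hölder condition on $\phi$. First I would telescope the Birkhoff sum as
\begin{equation*}
S_n\phi(x) - S_n\phi(y) = \sum_{j=0}^{n-1} \bigl( \phi(f^j(x)) - \phi(f^j(y)) \bigr),
\end{equation*}
so that the problem reduces to controlling each term $|\phi(f^j(x)) - \phi(f^j(y))|$ uniformly in $j$ by $d(f^n(x), f^n(y))^\alpha$ with summable coefficients.

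Next I would use the cellular structure. Since $x, y \in X^m \in \X^m(f,\CC)$ and $0 \leq j \leq n-1 \leq m-1$, Proposition~\ref{propCellDecomp}(i) ensures that $f^j(x), f^j(y)$ both lie in the $(m-j)$-tile $f^j(X^m) \in \X^{m-j}(f,\CC)$. Then I would apply Lemma~\ref{lmMetricDistortion} to these two points, with the iterate $f^{n-j}$ replacing $f^n$ and with $k = m-n \geq 0$, to get the one-sided distortion estimate
\begin{equation*}
d(f^j(x), f^j(y)) \leq C_0 \Lambda^{-(n-j)}\, d(f^n(x), f^n(y)),
\end{equation*}
where $C_0$ is the constant from Lemma~\ref{lmMetricDistortion}. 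Combining with the Hölder bound $|\phi(u)-\phi(v)| \leq \Hseminorm{\alpha,\,(S^2,d)}{\phi}\, d(u,v)^\alpha$ yields
\begin{equation*}
|\phi(f^j(x)) - \phi(f^j(y))| \leq \Hseminorm{\alpha,\,(S^2,d)}{\phi}\, C_0^\alpha \Lambda^{-\alpha(n-j)}\, d(f^n(x), f^n(y))^\alpha.
\end{equation*}

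Finally I would sum over $j = 0, 1, \ldots, n-1$, controlling the geometric series by
\begin{equation*}
\sum_{j=0}^{n-1} \Lambda^{-\alpha(n-j)} = \sum_{i=1}^n \Lambda^{-\alpha i} \leq \frac{1}{1 - \Lambda^{-\alpha}}.
\end{equation*}
Since $C_0 \geq 1$, we have $C_0^\alpha \leq C_0$, and the constant collapses to the advertised value $C_1 = \Hseminorm{\alpha,\,(S^2,d)}{\phi}\, C_0 / (1 - \Lambda^{-\alpha})$. There is no serious obstacle here; the only point requiring care is verifying that the hypotheses of Lemma~\ref{lmMetricDistortion} are applied with the correct indices ($n-j$ iterates starting from an $(m-j)$-tile), which is automatic once one tracks that $m \geq n$ guarantees the points remain inside a tile of the appropriate level at every step of the orbit up to time $n$.
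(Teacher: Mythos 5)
Your argument is correct and is the standard Bowen-type distortion estimate that \cite{Li18} uses: telescope the Birkhoff sum, apply Proposition~\ref{propCellDecomp}(i) to keep the orbit points in a common tile of decreasing level, use the lower bound in Lemma~\ref{lmMetricDistortion} (with $n-j$ iterates and residual tile depth $k=m-n$) to get the contraction $d(f^j(x),f^j(y))\le C_0\Lambda^{-(n-j)}d(f^n(x),f^n(y))$, combine with the H\"older seminorm, and sum the geometric series, finally replacing $C_0^\alpha$ by $C_0$ since $C_0>1$ and $\alpha\le 1$. The only minor slack is that $\sum_{i=1}^n\Lambda^{-\alpha i}\le \Lambda^{-\alpha}/(1-\Lambda^{-\alpha})$, which you loosened to $1/(1-\Lambda^{-\alpha})$, but that is immaterial for the claimed constant.
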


\begin{lemma}[Z.~Li \cite{Li18}]    \label{lmSigmaExpSnPhiBound}
Let $f\:S^2 \rightarrow S^2$ be an expanding Thurston map and $\CC \subseteq S^2$ be a Jordan curve containing $\post f$ with the property that $f^{n_\CC}(\CC)\subseteq \CC$ for some $n_\CC\in\N$. Let $d$ be a visual metric on $S^2$ for $f$ with expansion factor $\Lambda>1$. Let $\phi\in \Holder{\alpha}(S^2,d)$ be a real-valued H\"{o}lder continuous function with an exponent $\alpha\in(0,1]$. Then there exists $C_2=C_2(f,\CC,d,\phi,\alpha)  \geq 1$ depending only on $f$, $\CC$, $d$, $\phi$, and $\alpha$ such that for each $x,y\in S^2$, and each $n\in\N_0$, we have
\begin{equation}   \label{eqSigmaExpSnPhiBound}
\frac{\sum\limits_{x'\in f^{-n}(x)}  \deg_{f^n}(x')  \exp (S_n\phi(x'))}{\sum\limits_{y'\in f^{-n}(y)}  \deg_{f^n}(y')  \exp (S_n\phi(y'))} \leq \exp\left(4C_1 Ld(x,y)^\alpha\right) \leq C_2,
\end{equation}
where $C_1$ is the constant from Lemma~\ref{lmSnPhiBound}. Quantitatively, we choose 
\begin{equation}  \label{eqC2Bound}
C_2 \coloneqq \exp\left(4C_1 L \left(\diam_d(S^2)\right)^\alpha \right)  = \exp\biggl(4 \frac{\Hseminorm{\alpha,\, (S^2,d)}{\phi} C_0}{1-\Lambda^{-\alpha}} L \left(\diam_d (S^2)\right)^\alpha \biggr),
\end{equation}
where $C_0 > 1$ is a constant depending only on $f$, $\CC$, and $d$ from Lemma~\ref{lmMetricDistortion}.
\end{lemma}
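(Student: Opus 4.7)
The plan is to adapt the classical distortion estimate for Ruelle-type transfer operators: pair the preimages of $x$ with those of $y$ via path lifting and chain the pointwise bound from Lemma~\ref{lmSnPhiBound} through this pairing. The first inequality in (\ref{eqSigmaExpSnPhiBound}) is the substance of the lemma; the second follows immediately from $d(x,y) \leq \diam_d(S^2)$ and the quantitative choice of $C_2$.

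First, I would construct the pairing. Using the linear local connectivity of $(S^2, d)$ with constant $L$ (\cite[Proposition~18.5]{BM17}), one produces a continuum $E \subseteq S^2$ with $x, y \in E$ and $\diam_d(E) \leq L \cdot d(x, y)$. Pick an arc $\gamma \subseteq E$ from $x$ to $y$; if $x$ and $y$ lie in distinct $0$-tiles, choose a splitting point $z \in \gamma \cap \CC$, so that each of the two subarcs of $\gamma$ is contained in one of the two $0$-tiles $X^0_\b$, $X^0_\w$. By the path-lifting property of branched coverings (Lemma~\ref{lmLiftPathBM}), each subarc of $\gamma$ admits a lift under $f^n$ starting at any prescribed preimage of its initial point, and the total number of lifts starting at $x' \in f^{-n}(x)$ equals $\deg_{f^n}(x')$. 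Because each subarc lies in a single $0$-tile, the cellular Markov partition structure (Proposition~\ref{propCellDecomp}) forces each such lift to lie in a single $n$-tile. Concatenating lifts across the splitting point $z$ yields a one-to-one correspondence between the multiset in which each $x' \in f^{-n}(x)$ appears $\deg_{f^n}(x')$ times and the multiset in which each $y' \in f^{-n}(y)$ appears $\deg_{f^n}(y')$ times; the identity $\sum_{x'\in f^{-n}(x)} \deg_{f^n}(x') = (\deg f)^n = \sum_{y'\in f^{-n}(y)} \deg_{f^n}(y')$ from (\ref{eqDeg=SumLocalDegree}) guarantees the two multisets have equal cardinality.

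Second, I would estimate the Birkhoff sum differences. For each concatenated lift with starting point $x' \in f^{-n}(x)$ and ending point $y' \in f^{-n}(y)$, each subarc-lift lies in a common $n$-tile, so Lemma~\ref{lmSnPhiBound} with $m = n$ applies and bounds the difference of $S_n\phi$ at the two endpoints of that subarc by $C_1$ times the $\alpha$-th power of the image-side distance between those endpoints on $\gamma$. Summing over the (at most two) subarcs and using $\diam_d(E) \leq L \cdot d(x, y)$ together with $L^\alpha \leq L$ (valid since $L \geq 1$ and $\alpha \in (0, 1]$), with an extra factor absorbing combinatorial constants at the splitting point, yields
\[
\bigl| S_n\phi(x') - S_n\phi(y') \bigr| \leq 4 C_1 L \cdot d(x, y)^\alpha.
\]
Rewriting both numerator and denominator of the ratio in (\ref{eqSigmaExpSnPhiBound}) as sums indexed by the common multiset of concatenated lifts, and applying this pointwise bound together with $e^a / e^b \leq e^{|a-b|}$, proves the first inequality in (\ref{eqSigmaExpSnPhiBound}).

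The main obstacle is to carry out the pairing cleanly near critical values of $f^n$ and across the splitting point $z$, where several lifts can meet at a common vertex of $\DD^n(f, \CC)$ and multiple $n$-tiles share that vertex. This is resolved by reducing via continuity to the case of a generic $y$ (away from critical values of $f^n$, where all local degrees equal $1$), together with the explicit local-power-map description of $f^n$ near vertices provided by Remark~\ref{rmFlower}, which makes the combinatorial matching between lifts and preimages transparent.
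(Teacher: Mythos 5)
Your overall plan---produce a small continuum $E\ni x,y$ via linear local connectivity, match the preimages of $x$ and $y$ (counted with $\deg_{f^n}$), then chain Lemma~\ref{lmSnPhiBound} along the match---is the right one, and it is essentially the route taken in \cite[Lemma~5.2]{Li18}. But two steps in your sketch do not close as written.

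The serious gap is the claim that, after splitting $\gamma$ at one point $z\in\gamma\cap\CC$, ``each of the two subarcs of $\gamma$ is contained in one of the two $0$-tiles.'' An arc inside the continuum $E$ can cross $\CC$ arbitrarily many times; you only get to choose one splitting point, and then at least one of the two pieces can still wander between $X^0_\b$ and $X^0_\w$. Consequently a lift of that piece under $f^n$ need not stay in a single $n$-tile, and the chain you build has no a priori bounded length. Since for $\alpha<1$ the $\alpha$-H\"older increments along a long chain do not telescope into a single $d(x,y)^\alpha$, this is not just a bookkeeping nuisance---without a bound on the chain length the final estimate is lost. The fix is to drop the arc entirely. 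If $x$ and $y$ lie in distinct open $0$-tiles, connectedness of $E$ already gives a single point $z\in E\cap\CC$; no lifting of a whole path is needed. Use instead the identity
\[
\sum_{x'\in f^{-n}(x)}\deg_{f^n}(x')\,e^{S_n\phi(x')}
\;=\;\sum_{X^n\in\X^n_\c(f,\CC)} e^{S_n\phi\left((f^n|_{X^n})^{-1}(x)\right)},
\qquad\text{whenever } x\in X^0_\c,
\]
which is the clean version of the multiset bookkeeping you were after via concatenated lifts (it follows from the flower description in Remark~\ref{rmFlower}: a preimage $x'$ with $\deg_{f^n}(x')=d$ sits in exactly $d$ black and $d$ white $n$-tiles). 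For $x\in X^0_\b$ and $y\in X^0_\w$, set $z'=(f^n|_{X^n})^{-1}(z)$ for each $X^n\in\X^n_\b$; since $z\in\CC$ we have $z'\in\partial X^n$, so $z'$ lies on a white $n$-tile $Y^n$ sharing that boundary cell, and $X^n\mapsto Y^n$ gives the matching (a genuine bijection when $z\notin\post f$, and by a short counting argument in the exceptional case; this is where your ``reduce to generic $y$'' idea correctly belongs). Then two applications of Lemma~\ref{lmSnPhiBound}, one inside $X^n$ with endpoints $x',z'$ and one inside $Y^n$ with endpoints $z',y'$, give the bound. If $x,y$ lie in a common $0$-tile, one application suffices with the identity matching.

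A minor quantitative point: the LLC condition as stated in the paper gives a continuum $E\subseteq B_d(x,rL)$ for $r$ slightly larger than $d(x,y)$, hence $\diam_d(E)\leq 2Ld(x,y)$, not $Ld(x,y)$. With $L\geq1$ and $\alpha\in(0,1]$ one has $(2L)^\alpha\leq 2L$, so each of the two chain steps contributes at most $2C_1Ld(x,y)^\alpha$; this is exactly where the factor $4$ in $4C_1L$ comes from, rather than from an unspecified ``extra factor absorbing combinatorial constants.''
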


Recall that the main tool used in \cite{Li18} to develop the thermodynamical formalism for expanding Thurston maps is the Ruelle operator. We will need a complex version of the Ruelle operator in this paper discussed in \cite{Li17}. We summarize relevant definitions and facts about the Ruelle operator below and refer the readers to \cite[Chapter~3.3]{Li17} for a detailed discussion.

Let $f\: S^2\rightarrow S^2$ be an expanding Thurston map and $\phi\in \CCC(S^2,\C)$ be a complex-valued continuous function.  The \defn{Ruelle operator} $\RR_\phi$ (associated to $f$ and $\phi$) acting on $\CCC(S^2,\C)$ is defined as the following
\begin{equation}   \label{eqDefRuelleOp}
\RR_\phi(u)(x)= \sum\limits_{y\in f^{-1}(x)}  \deg_f(y) u(y) \exp(\phi(y)),
\end{equation}
for each $u\in \CCC(S^2,\C)$. Note that $\RR_\phi$ is a well-defined and continuous operator on $\CCC(S^2,\C)$. The Ruelle operator $\RR_\phi \: \CCC(S^2,\C) \rightarrow \CCC(S^2,\C)$ has a natural extension to the space of complex-valued bounded Borel functions $B(S^2,\C)$ (equipped with the uniform norm) given by (\ref{eqDefRuelleOp}) for each $u\in B(S^2,\C)$. 

We observe that if $\phi\in\CCC(S^2)$ is real-valued, then $\RR_\phi ( \CCC(S^2)) \subseteq \CCC(S^2)$ and $\RR_\phi ( B(S^2)) \subseteq B(S^2)$. The adjoint operator $\RR_\phi^*\: \CCC^*(S^2)\rightarrow \CCC^*(S^2)$ of $\RR_\phi$ acts on the dual space $\CCC^*(S^2)$ of the Banach space $\CCC(S^2)$. We identify $\CCC^*(S^2)$ with the space $\MMM(S^2)$ of finite signed Borel measures on $S^2$ by the Riesz representation theorem.

When $\phi\in\CCC(S^2)$ is real-valued, we denote 
\begin{equation}   \label{eqDefPhi-}
\overline\phi \coloneqq \phi - P(f,\phi).
\end{equation}

We record the following three technical results on the Ruelle operators in our context.

\begin{lemma}[Z.~Li \cite{Li18}]    \label{lmR1properties}
Let $f\:S^2 \rightarrow S^2$ be an expanding Thurston map and $\CC \subseteq S^2$ be a Jordan curve containing $\post f$ with the property that $f^{n_\CC}(\CC)\subseteq \CC$ for some $n_\CC\in\N$. Let $d$ be a visual metric on $S^2$ for $f$ with expansion factor $\Lambda>1$. Let $\phi\in \Holder{\alpha}(S^2,d)$ be a real-valued H\"{o}lder continuous function with an exponent $\alpha\in(0,1]$. Then there exists a constant $C_3=C_3(f, \CC, d, \phi, \alpha)$ depending only on $f$, $\CC$, $d$, $\phi$, and $\alpha$ such that for each $x,y\in S^2$ and each $n\in \N_0$ the following equations are satisfied
\begin{equation}  \label{eqR1Quot}
\frac{\RR_{\overline{\phi}}^n(\mathbbm{1})(x)}{\RR_{\overline{\phi}}^n(\mathbbm{1})(y)} \leq \exp\left(4C_1 Ld(x,y)^\alpha\right) \leq C_2,
\end{equation}
\begin{equation}   \label{eqR1Bound}
\frac{1}{C_2} \leq \RR_{\overline{\phi}}^n(\mathbbm{1})(x)  \leq C_2,
\end{equation}
\begin{equation}    \label{eqR1Diff}
 \Abs{\RR_{\overline{\phi}}^n(\mathbbm{1})(x) - \RR_{\overline{\phi}}^n(\mathbbm{1})(y) }  
\leq  C_2 \left( \exp\left(4C_1 Ld(x,y)^\alpha\right) - 1 \right)  \leq C_3 d(x,y)^\alpha,   
\end{equation}
where $C_1, C_2$ are constants in Lemma~\ref{lmSnPhiBound} and Lemma~\ref{lmSigmaExpSnPhiBound} depending only on $f$, $\CC$, $d$, $\phi$, and $\alpha$.
\end{lemma}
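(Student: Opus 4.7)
The plan is to reduce the three estimates in turn: (\ref{eqR1Quot}) should be essentially immediate from Lemma~\ref{lmSigmaExpSnPhiBound}; the uniform sandwich bound (\ref{eqR1Bound}) will require an additional normalization input (an eigenmeasure); and (\ref{eqR1Diff}) should then follow by combining the previous two with the elementary estimate $e^t - 1 \le t e^{t_0}$ on $[0,t_0]$.

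First I would iterate the definition (\ref{eqDefRuelleOp}) using the local-degree chain rule $\deg_{f^n}(y) = \prod_{j=0}^{n-1} \deg_f(f^j(y))$ and the relation $S_n \overline{\phi} = S_n \phi - nP(f,\phi)$ to rewrite
\begin{equation*}
\RR_{\overline{\phi}}^n(\mathbbm{1})(x) = e^{-nP(f,\phi)} \sum_{y\in f^{-n}(x)} \deg_{f^n}(y)\exp(S_n\phi(y)).
\end{equation*}
Since the prefactor $e^{-nP(f,\phi)}$ cancels in the ratio appearing in (\ref{eqR1Quot}), this inequality is literally (\ref{eqSigmaExpSnPhiBound}), and the constants $C_1, C_2$ are inherited unchanged.

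Next, for (\ref{eqR1Bound}), my plan is to invoke the existence (proved in \cite{Li18} during the construction of the equilibrium state) of a Borel probability measure $m_{\overline{\phi}}$ on $S^2$ satisfying $\RR_{\overline{\phi}}^* m_{\overline{\phi}} = m_{\overline{\phi}}$; that eigenvalue equation holds because $e^{P(f,\overline{\phi})} = 1$ is the leading eigenvalue, using $P(f,\overline\phi) = P(f,\phi) - P(f,\phi) = 0$. Pairing with $\mathbbm{1}$ and iterating gives $\int \RR_{\overline{\phi}}^n(\mathbbm{1}) \, \mathrm{d} m_{\overline{\phi}} = 1$ for every $n \in \N_0$. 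Since $\RR_{\overline{\phi}}^n(\mathbbm{1})$ is continuous and strictly positive on the connected compact space $S^2$, the intermediate value theorem furnishes $x_0 \in S^2$ (depending on $n$) with $\RR_{\overline{\phi}}^n(\mathbbm{1})(x_0) = 1$; applying (\ref{eqR1Quot}) once with $y = x_0$ and once with $x = x_0$ then yields the two-sided bound $C_2^{-1} \le \RR_{\overline{\phi}}^n(\mathbbm{1})(x) \le C_2$.

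Finally, (\ref{eqR1Diff}) should come from the factorization
\begin{equation*}
\bigl|\RR_{\overline{\phi}}^n(\mathbbm{1})(x) - \RR_{\overline{\phi}}^n(\mathbbm{1})(y)\bigr| = \RR_{\overline{\phi}}^n(\mathbbm{1})(y) \biggl| \tfrac{\RR_{\overline{\phi}}^n(\mathbbm{1})(x)}{\RR_{\overline{\phi}}^n(\mathbbm{1})(y)} - 1 \biggr|,
\end{equation*}
bounding the leading factor by $C_2$ via (\ref{eqR1Bound}) and the deviation of the ratio from $1$ by $\exp(4C_1 L d(x,y)^\alpha) - 1$ via (\ref{eqR1Quot}) applied symmetrically. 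Converting the exponential into a power of $d(x,y)$ through $e^t - 1 \le t e^{t_0}$ on $[0, t_0]$ with $t_0 \coloneqq 4 C_1 L (\diam_d(S^2))^\alpha$ then produces a valid $C_3$, depending only on $f, \CC, d, \phi, \alpha$ through $C_1, C_2, L, \diam_d(S^2)$. The genuine obstacle is the middle step: the ratio bound alone controls only oscillation, and the soft pressure asymptotic $n^{-1} \log \|\RR_{\overline{\phi}}^n \mathbbm{1}\|_\infty \to 0$ does not exclude subexponential drift in either direction. The eigenmeasure $m_{\overline{\phi}}$ from \cite{Li18} is precisely the ingredient that pins the normalization down uniformly in $n$.
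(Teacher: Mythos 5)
The paper does not reprove this lemma but cites \cite[Lemma~5.15]{Li18}, so there is no internal proof to compare against; your reconstruction is correct and is the argument one would expect. Cancelling $e^{-nP(f,\phi)}$ reduces (\ref{eqR1Quot}) exactly to (\ref{eqSigmaExpSnPhiBound}), and the final estimate (\ref{eqR1Diff}) follows from the factorization $\Abs{\RR_{\overline{\phi}}^n(\mathbbm{1})(x)-\RR_{\overline{\phi}}^n(\mathbbm{1})(y)}\le \RR_{\overline{\phi}}^n(\mathbbm{1})(y)\bigl(e^{4C_1Ld(x,y)^\alpha}-1\bigr)$, the bound (\ref{eqR1Bound}), and the mean-value estimate $e^t - 1 \le t e^{t_0}$ on $[0, t_0]$ with $t_0 = 4C_1 L (\diam_d(S^2))^\alpha = \log C_2$, giving $C_3 = 4C_1 L C_2^2$.

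You have correctly diagnosed (\ref{eqR1Bound}) as the crux: the ratio bound constrains only oscillation, not overall scale. Normalizing by the $\RR_{\overline{\phi}}^*$-invariant probability measure $m_\phi$, which forces $\int \RR_{\overline{\phi}}^n(\mathbbm{1}) \, \mathrm{d}m_\phi = 1$ for all $n\in\N_0$ and hence points where $\RR_{\overline{\phi}}^n(\mathbbm{1})$ is $\le 1$ and $\ge 1$, is the standard device. (You do not actually need connectedness or the intermediate value theorem: taking $y$ in (\ref{eqR1Quot}) to be a point where $\RR_{\overline{\phi}}^n(\mathbbm{1})\le 1$ gives the upper bound $\le C_2$, and a point where it is $\ge 1$ gives the lower bound $\ge C_2^{-1}$.) One small caveat to your closing remark that the eigenmeasure is ``precisely the ingredient'': one could alternatively observe that $A_n \coloneqq \sup \RR_{\overline{\phi}}^n(\mathbbm{1})$ is submultiplicative and $a_n \coloneqq \inf \RR_{\overline{\phi}}^n(\mathbbm{1})$ supermultiplicative, so Fekete's lemma together with $\lim_{n} n^{-1}\log A_n = P(f,\overline{\phi}) = 0$ (Corollary~\ref{corRR^nConvToTopPressureUniform}) forces $a_n\le 1\le A_n$, and the ratio bound then finishes. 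But that corollary is derived in this paper from the much deeper Theorem~\ref{thmRR^nConv}, which itself rests on the eigenmeasure, so your route avoids a circularity within this paper's exposition and is almost certainly the one taken in \cite{Li18}.
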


Lemma~\ref{lmR1properties} was proved in \cite[Lemma~5.15]{Li18}. The next theorem is part of \cite[Theorem~5.16]{Li18}.

\begin{theorem}[Z.~Li \cite{Li18}]   \label{thmMuExist}
Let $f\:S^2 \rightarrow S^2$ be an expanding Thurston map and $\CC \subseteq S^2$ be a Jordan curve containing $\post f$ with the property that $f^{n_\CC}(\CC)\subseteq \CC$ for some $n_\CC\in\N$. Let $d$ be a visual metric on $S^2$ for $f$ with expansion factor $\Lambda>1$. Let $\phi\in \Holder{\alpha}(S^2,d)$ be a real-valued H\"{o}lder continuous function with an exponent $\alpha\in(0,1]$. Then the sequence $\Big\{\frac{1}{n}\sum\limits_{j=0}^{n-1} \RR_{\overline{\phi}}^j(\mathbbm{1})\Big\}_{n\in\N}$ converges uniformly to a function $u_\phi \in \Holder{\alpha}(S^2,d)$, which satisfies
\begin{equation}    \label{eqRu=u}
\RR_{\overline{\phi}}(u_\phi)  = u_\phi,
\end{equation}
and
\begin{equation}    \label{eqU_phiBounds}
\frac{1}{C_2} \leq u_\phi(x) \leq C_2, \qquad  \text{for each } x\in S^2,
\end{equation}
where $C_2\geq 1$ is a constant from Lemma~\ref{lmSigmaExpSnPhiBound}.
\end{theorem}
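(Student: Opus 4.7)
The plan is to combine the uniform estimates of Lemma~\ref{lmR1properties} with an Arzel\`a--Ascoli compactness argument and a mean-ergodic identification of the limit as an $\RR_{\overline{\phi}}$-eigenfunction.

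First, observe that the pointwise two-sided bound (\ref{eqR1Bound}) and the H\"older estimate (\ref{eqR1Diff}) are both preserved under taking convex combinations. Hence the Ces\`aro averages $u_n \coloneqq \frac{1}{n}\sum_{j=0}^{n-1}\RR_{\overline{\phi}}^j(\mathbbm{1})$ satisfy, for every $n\in\N$ and every $x,y\in S^2$,
\begin{equation*}
\frac{1}{C_2} \leq u_n(x) \leq C_2 \qquad\text{and}\qquad \abs{u_n(x)-u_n(y)} \leq C_3 d(x,y)^\alpha,
\end{equation*}
with the same constants $C_2, C_3$. Consequently $\{u_n\}_{n\in\N}$ is uniformly bounded and equi-H\"older, so by the Arzel\`a--Ascoli theorem it is relatively compact in $(\CCC(S^2),\norm{\cdot}_{\CCC^0(S^2)})$.

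Next I identify any uniform subsequential limit as a fixed point of $\RR_{\overline{\phi}}$. The telescoping identity
\begin{equation*}
\RR_{\overline{\phi}}(u_n) - u_n = \frac{1}{n}\bigl( \RR_{\overline{\phi}}^n(\mathbbm{1}) - \mathbbm{1} \bigr)
\end{equation*}
combined with (\ref{eqR1Bound}) forces $\Norm{\RR_{\overline{\phi}}(u_n) - u_n}_{\CCC^0(S^2)} \leq \frac{2C_2}{n} \to 0$. Since $\RR_{\overline{\phi}}$ is continuous on $\CCC(S^2)$ with the sup norm, passing to a uniformly convergent subsequence $u_{n_k} \to u_\phi$ yields (\ref{eqRu=u}). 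The pointwise bounds transfer to $u_\phi$, giving (\ref{eqU_phiBounds}), and the lower semi-continuity of $\Hseminorm{\alpha,\,(S^2,d)}{\cdot}$ under uniform convergence puts $u_\phi \in \Holder{\alpha}(S^2,d)$.

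The main obstacle is upgrading subsequential convergence to convergence of the full sequence $\{u_n\}$, i.e.\ ruling out distinct uniform accumulation points. I would tackle this by building in parallel an $\RR_{\overline{\phi}}^*$-invariant Borel probability measure $m_\phi$ as a weak-$*$ accumulation point of the dual Ces\`aro averages $\frac{1}{n}\sum_{j=0}^{n-1}(\RR_{\overline{\phi}}^*)^j \nu$ for any $\nu\in \PPP(S^2)$; a parallel application of the telescoping identity on the dual side shows that any such limit is $\RR_{\overline{\phi}}^*$-fixed. One then argues that positive continuous $\RR_{\overline{\phi}}$-fixed functions normalized by $\int u \, \mathrm{d} m_\phi = \int \mathbbm{1} \, \mathrm{d} m_\phi$ are unique, which is natural because $P(f,\overline{\phi})=0$ by the normalization (\ref{eqDefPhi-}), making $1$ the dominant eigenvalue of $\RR_{\overline{\phi}}$ on the positive cone in $\Holder{\alpha}(S^2,d)$. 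Since every subsequential limit of $\{u_n\}$ automatically satisfies this normalization (as the pairing of $u_n$ with $m_\phi$ is invariant under $n$), all accumulation points coincide with $u_\phi$, yielding uniform convergence of the full sequence. An alternative and stronger route, if available at this stage, is to establish a Ruelle--Perron--Frobenius type contraction for $\RR_{\overline{\phi}}$ on a suitable cone of positive equi-H\"older functions, which would give uniform convergence of $\RR_{\overline{\phi}}^n(\mathbbm{1})$ itself and make the Ces\`aro average convergence automatic.
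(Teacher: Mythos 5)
Your proposal assembles the right ingredients: the uniform two-sided bounds and equi-H\"{o}lder estimates from Lemma~\ref{lmR1properties} pass to the Ces\`{a}ro averages $u_n \coloneqq \frac{1}{n}\sum_{j=0}^{n-1}\RR_{\overline{\phi}}^j(\mathbbm{1})$, Arzel\`{a}--Ascoli gives precompactness in the uniform topology, the telescoping identity $\RR_{\overline{\phi}}(u_n) - u_n = \frac{1}{n}\bigl(\RR_{\overline{\phi}}^n(\mathbbm{1})-\mathbbm{1}\bigr)$ forces every uniform accumulation point to be $\RR_{\overline{\phi}}$-fixed, and the parallel construction of an $\RR_{\overline{\phi}}^*$-fixed measure $m_\phi$ supplies a normalization $\int u_n\,\mathrm{d}m_\phi \equiv \int\mathbbm{1}\,\mathrm{d}m_\phi$ that is inherited by every accumulation point. (Note that the paper itself gives no proof of this statement --- it is imported from \cite{Li18} --- so there is no in-text argument to compare against; your route must be judged on its own.)

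The one genuine gap is the step on which the whole argument pivots: uniqueness of the positive continuous $\RR_{\overline{\phi}}$-fixed function under the given normalization. Your justification --- that this ``is natural because $P(f,\overline{\phi})=0$, \ldots\ making $1$ the dominant eigenvalue of $\RR_{\overline{\phi}}$ on the positive cone'' --- is circular: the claim that $1$ is a \emph{simple} dominant eigenvalue on the positive cone is exactly the uniqueness you need, not a reason for it. A direct proof is short and should be supplied. If $u_1,u_2$ are positive continuous $\RR_{\overline{\phi}}$-fixed functions, set $c \coloneqq \inf_{S^2} u_1/u_2 > 0$ and $w \coloneqq u_1 - cu_2$; then $w$ is a nonnegative continuous $\RR_{\overline{\phi}}$-fixed function vanishing at some $x_0\in S^2$. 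From $0 = \RR_{\overline{\phi}}^n(w)(x_0) = \sum_{y\in f^{-n}(x_0)}\deg_{f^n}(y)\,w(y)\exp\bigl(S_n\overline{\phi}(y)\bigr)$ and the strict positivity of every weight, $w$ vanishes on $\bigcup_{n\in\N}f^{-n}(x_0)$, which is dense (by Proposition~\ref{propCellDecomp}~(i) each $f^{-n}(x_0)$ meets every $n$-tile of the appropriate color, and $n$-tile diameters tend to zero by Lemma~\ref{lmCellBoundsBM}~(ii)); hence $w\equiv 0$, so $u_1 = cu_2$, and the common normalization against $m_\phi$ gives $c=1$. With this inserted your proof closes. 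Your alternative cone-contraction route would also work, but is far heavier machinery than these few lines.
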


Let $f\:S^2 \rightarrow S^2$ be an expanding Thurston map and $d$ be a visual metric on $S^2$ for $f$ with expansion factor $\Lambda>1$. Let $\phi\in \Holder{\alpha}(S^2,d)$ be a real-valued H\"{o}lder continuous function with an exponent $\alpha\in(0,1]$. Then we denote
\begin{equation}    \label{eqDefPhiWidetilde}
\wt{\phi} \coloneqq \phi - P(f,\phi) + \log u_\phi - \log ( u_\phi \circ f ),
\end{equation}
where $u_\phi$ is the continuous function given by Theorem~\ref{thmMuExist}.

The next theorem follows immediately from \cite[Theorem~6.8 and Corollary~6.10]{Li18}.

\begin{theorem}[Z.~Li \cite{Li18}]   \label{thmRR^nConv} 
Let $f\:S^2 \rightarrow S^2$ be an expanding Thurston map. Let $d$ be a visual metric on $S^2$ for $f$ with expansion factor $\Lambda>1$. Let $b \in (0,+\infty)$ be a constant and $h\:[0,+\infty)\rightarrow [0,+\infty)$ be an abstract modulus of continuity. Let $H$ be a bounded subset of $\Holder{\alpha}(S^2,d)$ for some $\alpha \in (0,1]$. Then for each $v\in \CCC_h^b(S^2,d)$ and each $\phi\in H$, we have
\begin{equation}   \label{eqRR^nConv}
\lim\limits_{n\to+\infty} \Norm{\RR_{\overline\phi}^n (v) - u_\phi \int\! v\,\mathrm{d}m_\phi}_{\CCC^0(S^2)} = 0.
\end{equation} 
Moreover, the convergence in (\ref{eqRR^nConv}) is uniform in $v\in\CCC_h^b(S^2,d)$ and $\phi\in H$. Here we denote by $m_\phi$ the unique eigenmeasure of $\RR^*_\phi$, the function $u_\phi$ as defined in Theorem~\ref{thmMuExist}, and $\overline\phi = \phi - P(f,\phi)$.
\end{theorem}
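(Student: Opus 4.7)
The plan is to establish this Ruelle--Perron--Frobenius type convergence via equicontinuity of the iterates together with a uniqueness argument for subsequential limits. The core input consists of the distortion lemmas (Lemma~\ref{lmSnPhiBound}, Lemma~\ref{lmSigmaExpSnPhiBound}, Lemma~\ref{lmR1properties}), the metric distortion control of Lemma~\ref{lmMetricDistortion}, and the cell diameter bounds of Lemma~\ref{lmCellBoundsBM}.

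First I would show that for each $v \in \CCC_h^b(S^2,d)$ and each $\phi \in H$, the sequence $\{\RR_{\overline\phi}^n(v)\}_{n\in\N_0}$ is a uniformly bounded, equicontinuous family, with sup-norm bound and modulus of continuity depending only on $b$, $h$, and the uniform $\Holder{\alpha}$-bound on $H$. Expanding
\[
\RR_{\overline\phi}^n(v)(x) = \sum_{y \in f^{-n}(x)} \deg_{f^n}(y) \, v(y) \, \exp(S_n \overline\phi(y)),
\]
one pairs preimages of nearby $x,x'$ so that the corresponding $y,y'$ lie in a common $n$-tile; then Lemma~\ref{lmMetricDistortion} yields $d(y,y') \leq C_0 \Lambda^{-n} d(x,x')$, Lemma~\ref{lmSnPhiBound} bounds $|S_n\overline\phi(y) - S_n\overline\phi(y')|$ by $C_1 d(x,x')^\alpha$, and Lemma~\ref{lmR1properties} controls $\RR_{\overline\phi}^n(\mathbbm{1})$ above and below. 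The $v$-oscillation is controlled by $h(C_0 \Lambda^{-n} d(x,x'))$, giving a uniform modulus $\widetilde{h}$ that depends only on $h$, $b$, $\Lambda$, and the constants $C_0, C_1, C_2$, each of which is uniform on the bounded set $H$.

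Next, set $w_n \coloneqq \RR_{\overline\phi}^n(v)$ and note that $\int w_n \, \mathrm{d}m_\phi = \int v \, \mathrm{d}m_\phi$ for every $n$ by duality, since $m_\phi$ is a fixed point of $\RR_{\overline\phi}^*$. By Arzel\`a--Ascoli, every subsequence of $\{w_n\}$ has a uniformly convergent sub-subsequence $w_{n_k} \to w^*$, and dominated convergence gives $\int w^* \, \mathrm{d}m_\phi = \int v \, \mathrm{d}m_\phi$. To identify $w^*$, consider $\widetilde{w} \coloneqq w^*/u_\phi$, which is well-defined and continuous since $u_\phi \geq C_2^{-1}>0$ by Theorem~\ref{thmMuExist}. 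Using the standard conjugacy that relates $\RR_{\overline\phi}$ acting by $v \mapsto \RR_{\overline\phi}(v)/u_\phi$ (after the change of variables $v = u_\phi \widetilde{v}$) to the Koopman-type operator $\widetilde{v} \mapsto \widetilde{v} \circ f$ adjoint to the equilibrium state $\mu_\phi = u_\phi m_\phi$, one sees that $\widetilde{w}$ is an $\RR_{\overline\phi}$-invariant continuous function rescaled by $u_\phi$; invoking mixing (and in particular ergodicity) of $(f,\mu_\phi)$, which follows from uniqueness of the equilibrium state in Theorem~\ref{thmEquilibriumState}~(i), forces $\widetilde{w}$ to be constant. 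The integral identity then pins down $w^* = u_\phi \int v\,\mathrm{d}m_\phi$, so the full sequence $w_n$ converges uniformly to this limit by a standard subsequential argument.

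The main obstacle will be the uniformity in $v \in \CCC_h^b(S^2,d)$ and $\phi \in H$. For $v$-uniformity the equicontinuity modulus $\widetilde{h}$ of the first step depends only on $h$ and $b$, so a contradiction argument (assume non-uniform convergence on $\CCC_h^b(S^2,d)$, extract by equicontinuity a subsequential uniform limit of $\RR_{\overline\phi}^{n_k}(v_k)$, and contradict the identification of limits) upgrades pointwise to uniform convergence over the class. For $\phi$-uniformity I would trace the constants $C_0, C_1, C_2, C_3$ and verify that they depend on $\phi$ only through $\Hseminorm{\alpha,(S^2,d)}{\phi}$, which is uniformly bounded on $H$; combined with continuous dependence of $u_\phi$ and $m_\phi$ on $\phi \in \Holder{\alpha}(S^2,d)$ (which itself follows from the same equicontinuity plus uniqueness package applied to a family of perturbations), this promotes the equicontinuity into uniform convergence over $H$. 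The delicate point throughout is ensuring that the pairing of preimages used to derive equicontinuity can be carried out uniformly despite possible critical points of $f^n$ on the boundaries of tiles, which is handled by working first on the open interiors of tiles and extending by continuity.
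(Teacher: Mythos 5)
Note first that the paper does not itself prove this statement; it is attributed to and cited from \cite{Li18} (``The next theorem follows immediately from Theorem~6.8 and Corollary~6.10 of \cite{Li18}''), so there is no internal proof to compare against. Judging your proposal on its own terms, the overall strategy --- equicontinuity via the distortion lemmas, then identification of subsequential limits --- is the standard route to Ruelle--Perron--Frobenius convergence, but it has a concrete gap in the identification step.

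You extract a subsequential limit $w^*$ of $\{\RR_{\overline\phi}^n(v)\}$ and assert that $w^*$ (equivalently $\widetilde w = w^*/u_\phi$) is $\RR_{\overline\phi}$-invariant. This does not follow from anything you have established: a subsequential limit of an orbit under a continuous operator is not a fixed point, only an element of the $\omega$-limit set. The standard repair is a monotonicity argument that you omit: since $\RR_{\overline\phi}$ is positivity-preserving and $\RR_{\overline\phi}(u_\phi)=u_\phi$, the maximum of $\RR_{\overline\phi}^n(v)/u_\phi$ is non-increasing and the minimum is non-decreasing, so the oscillation of $\RR_{\overline\phi}^n(v)/u_\phi$ converges to some $c\geq 0$. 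Extracting a uniform subsequential limit $g^*$ then yields a continuous function of oscillation $c$ such that every normalized transfer-operator iterate of $g^*$ has oscillation exactly $c$. A maximum-principle argument forces $c=0$: at a point where $\widetilde\RR^m(g^*)$ attains its maximum, every weight in the sum over preimages is positive, so the maximum-level set of $g^*$ is backward-invariant under $f$; density of $\bigcup_n f^{-n}(x)$ for each $x$ (which follows from Lemma~\ref{lmCellBoundsBM}) then makes $g^*$ constant, a contradiction when $c>0$. Only at that point does the duality identity $\int\RR_{\overline\phi}^n v\,\mathrm{d}m_\phi=\int v\,\mathrm{d}m_\phi$ pin down the constant.

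Relatedly, ``mixing (and in particular ergodicity)'' is not quite the correct tool where you invoke it. A function fixed by the normalized transfer operator $\widetilde\RR$ is \emph{not} an $f$-invariant (Koopman-fixed) function; the two operators are $L^2(\mu_\phi)$-adjoints of each other, not the same. One can salvage an ergodicity argument by noting that a nonnegative $\widetilde\RR$-fixed $\widetilde w$ makes $\widetilde w\,\mu_\phi$ an $f$-invariant measure absolutely continuous with respect to $\mu_\phi$, hence a constant multiple of $\mu_\phi$ by ergodicity; but then upgrading ``$\widetilde w$ is $\mu_\phi$-a.e.\ constant'' to ``$\widetilde w$ is everywhere constant'' requires full support of $\mu_\phi$, which you do not mention. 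The topological-exactness/maximum-principle argument above is both what is needed to establish the unproven invariance claim and what finishes the identification cleanly. The remainder of your sketch --- the equicontinuity bound, the duality step, and the compactness-plus-contradiction argument for uniformity over $\CCC_h^b(S^2,d)$ and $H$ with constants controlled by $\Hseminorm{\alpha,\,(S^2,d)}{\phi}$ --- is correct in outline.
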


A measure $\mu\in\PPP(S^2)$ is an \defn{eigenmeasure} of $\RR_\phi^*$ if $\RR_\phi^*\mu = c\mu$ for some $c\in\R$. See \cite[Corollary~6.10]{Li18} for the uniqueness of the measure $m_\phi$.

We only need the following two corollaries of Theorem~\ref{thmRR^nConv} in this paper.

\begin{cor}     \label{corUphiCountinuous}
Let $f\:S^2 \rightarrow S^2$ be an expanding Thurston map and $d$ be a visual metric on $S^2$ for $f$ with expansion factor $\Lambda>1$. Let $\phi\in \Holder{\alpha}(S^2,d)$ be a real-valued H\"{o}lder continuous function with an exponent $\alpha\in(0,1]$. We define a map $\tau \: \R\rightarrow \Holder{\alpha}(S^2,d)$ by setting $\tau(t) = u_{t\phi}$. Then $\tau$ is continuous with respect to the uniform norm $\norm{\cdot}_{\CCC^0(S^2)}$ on $\Holder{\alpha}(S^2,d)$.
\end{cor}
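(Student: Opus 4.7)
The plan is to prove the continuity of $\tau(t)=u_{t\phi}$ in uniform norm by a standard three-$\varepsilon$ argument based on Theorem~\ref{thmRR^nConv}, which provides the key uniform convergence. Fix $t_0\in\R$ and restrict attention to the compact interval $I\coloneqq [t_0-1,t_0+1]$. The first observation is that $H\coloneqq\{t\phi\,|\,t\in I\}$ is a bounded subset of $\Holder{\alpha}(S^2,d)$, since $\Hnorm{\alpha}{t\phi}{(S^2,d)}\le(|t_0|+1)\Hnorm{\alpha}{\phi}{(S^2,d)}$. Applying Theorem~\ref{thmRR^nConv} with this $H$ and with the single test function $v=\mathbbm{1}$ (which is trivially equicontinuous and bounded), together with the observation that $m_{t\phi}$ is a probability measure and therefore $u_{t\phi}\int\mathbbm{1}\,\mathrm{d}m_{t\phi}=u_{t\phi}$, we obtain
\begin{equation*}
\lim_{n\to+\infty}\sup_{t\in I}\Normbig{\RR_{\overline{t\phi}}^{n}(\mathbbm{1})-u_{t\phi}}_{\CCC^0(S^2)}=0.
\end{equation*}

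Next, I would show that for each fixed $n\in\N$, the map $t\mapsto\RR_{\overline{t\phi}}^{n}(\mathbbm{1})$ is continuous from $\R$ to $\CCC(S^2)$ in uniform norm. Writing out
\begin{equation*}
\RR_{\overline{t\phi}}(u)(x)=e^{-P(f,t\phi)}\sum_{y\in f^{-1}(x)}\deg_f(y)\,u(y)\,e^{t\phi(y)},
\end{equation*}
one sees that it suffices to verify that (a) $t\mapsto P(f,t\phi)$ is continuous and (b) $t\mapsto e^{t\phi}$ is continuous into $\CCC(S^2)$ with the uniform norm. Item (a) is immediate from Theorem~\ref{thmEquilibriumState}~(ii), which gives $\frac{\mathrm{d}}{\mathrm{d}t}P(f,t\phi)=\int\phi\,\mathrm{d}\mu_{t\phi}$; in particular $P(f,t\phi)$ is differentiable in $t$. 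Item (b) follows at once from the uniform boundedness of $\phi$ on $S^2$ and the fact that $z\mapsto e^z$ is uniformly continuous on bounded subsets of $\C$. A straightforward induction on $n$, using that the Ruelle operator preserves the finite sum structure and that each iterate $\RR_{\overline{t\phi}}^{n}(\mathbbm{1})$ is uniformly bounded on $I$ by (\ref{eqR1Bound}), then yields the continuity of $t\mapsto\RR_{\overline{t\phi}}^{n}(\mathbbm{1})\in\CCC(S^2)$ for every fixed $n\in\N$.

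Finally, given $\varepsilon>0$, I would use the uniform convergence in the first step to pick $n$ so large that $\Norm{\RR_{\overline{t\phi}}^{n}(\mathbbm{1})-u_{t\phi}}_{\CCC^0(S^2)}<\varepsilon/3$ for every $t\in I$, then use the continuity in the second step to choose $\delta\in(0,1)$ so that $|t-t_0|<\delta$ implies $\Normbig{\RR_{\overline{t\phi}}^{n}(\mathbbm{1})-\RR_{\overline{t_0\phi}}^{n}(\mathbbm{1})}_{\CCC^0(S^2)}<\varepsilon/3$. The triangle inequality then gives $\Norm{u_{t\phi}-u_{t_0\phi}}_{\CCC^0(S^2)}<\varepsilon$ whenever $|t-t_0|<\delta$, proving continuity of $\tau$ at $t_0$.

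I do not anticipate a serious obstacle here; all the heavy lifting has already been done in Theorem~\ref{thmRR^nConv} and Theorem~\ref{thmEquilibriumState}. The mildest technical point is verifying the continuity of finite iterates of $\RR_{\overline{t\phi}}$ in $t$, which is elementary but requires carefully noting that the local-degree-weighted preimage sum has only $\deg f$ terms and that the normalization factor $e^{-P(f,t\phi)}$ varies continuously in $t$.
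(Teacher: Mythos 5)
Your proposal is correct and follows essentially the same route as the paper's own proof: apply Theorem~\ref{thmRR^nConv} with $v=\mathbbm{1}$ and $H=\{t\phi\,|\,t\in I\}$ to get uniform-in-$t$ convergence of $\RR_{\overline{t\phi}}^n(\mathbbm{1})$ to $u_{t\phi}$, then observe that for each fixed $n$ the map $t\mapsto\RR_{\overline{t\phi}}^n(\mathbbm{1})$ is continuous, and conclude via uniform limit of continuous functions. The only cosmetic difference is that the paper invokes continuity of $t\mapsto P(f,t\phi)$ directly from \cite[Theorem~3.6.1]{PrU10}, whereas you appeal to Theorem~\ref{thmEquilibriumState}~(ii) to get differentiability (hence continuity); both are valid and your three-$\varepsilon$ write-up is just the explicit form of the paper's ``uniform limit of continuous functions is continuous'' step.
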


\begin{proof}
Fix an arbitrary bounded open interval $I \subseteq \R$. For each $n\in\N$, define $T_n \: I \rightarrow \CCC(S^2,d)$ by $T_n(t) \coloneqq \RR^n_{\overline{t\phi}}  ( \mathbbm{1}_{S^2} )$ for $t \in I$. Since $\overline{t\phi}  = t\phi - P(f, t\phi)$, by (\ref{eqDefRuelleOp}) and the continuity of the topological pressure (see for example, \cite[Theorem~3.6.1]{PrU10}), we know that $T_n$ is a continuous function with respect to the uniform norm $\norm{\cdot}_{\CCC^0(S^2)}$ on  $\CCC(S^2,d)$. Applying Theorem~\ref{thmRR^nConv} with $v\coloneqq \mathbbm{1}_{S^2}$ and $H\coloneqq \{ t\phi \,|\, t\in I \}$, we get that $T_n(t)$ converges to $\tau|_I (t)$ in the uniform norm on  $\CCC(S^2,d)$ uniformly in $t\in I$ as $n\to +\infty$. Hence $\tau(t)$ is continuous on $I$. Recall $u_{t\phi} \in \Holder{\alpha}(S^2,d)$ (see Theorem~\ref{thmMuExist}). Therefore $\tau(t)$ is continuous in $t\in\R$ with respect to the uniform norm on $\Holder{\alpha}(S^2,d)$.
\end{proof}

\begin{cor}  \label{corRR^nConvToTopPressureUniform}
Let $f\:S^2 \rightarrow S^2$ be an expanding Thurston map. Let $d$ be a visual metric on $S^2$ for $f$ with expansion factor $\Lambda>1$. Let $H$ be a bounded subset of $\Holder{\alpha}(S^2,d)$ for some $\alpha \in (0,1]$. Then for each $\phi\in H$ and $x\in S^2$, we have
\begin{equation}   \label{eqRR^nConvToTopPressure}
P(f,\phi)= \lim\limits_{n\to +\infty} \frac{1}{n} \log \RR_{\phi}^n(\mathbbm{1})(x) 
         = \lim\limits_{n\to +\infty} \frac{1}{n} \log \sum\limits_{y\in f^{-n}(x)} \deg_{f^n}(y) \exp (S_n\phi(y)).
\end{equation} 
Moreover, the convergence in (\ref{eqRR^nConvToTopPressure}) is uniform in $\phi\in H$ and $x\in S^2$.
\end{cor}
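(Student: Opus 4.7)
The plan is to derive the corollary as a uniform repackaging of Theorem~\ref{thmRR^nConv} applied to the constant function $v=\mathbbm{1}_{S^2}$. The second equality in the displayed formula is purely definitional: iterating (\ref{eqDefRuelleOp}) and using the multiplicativity of local degrees (\ref{eqLocalDegreeProduct}) yields $\RR_\phi^n(\mathbbm{1})(x) = \sum_{y\in f^{-n}(x)} \deg_{f^n}(y) \exp(S_n\phi(y))$, so only the first equality needs attention.

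Setting $\overline{\phi} \coloneqq \phi - P(f,\phi)$, the elementary identity $S_n\overline{\phi} = S_n\phi - nP(f,\phi)$ together with the formula for $\RR_\phi^n(\mathbbm{1})(x)$ above gives the pointwise relation
$$\RR_\phi^n(\mathbbm{1})(x) = e^{nP(f,\phi)}\,\RR_{\overline{\phi}}^n(\mathbbm{1})(x).$$
Taking logarithms and dividing by $n$, it therefore suffices to show that $\frac{1}{n}\log \RR_{\overline{\phi}}^n(\mathbbm{1})(x) \to 0$ uniformly in $\phi \in H$ and $x\in S^2$.

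For this, I would first invoke Lemma~\ref{lmCexistsL} to fix a Jordan curve $\CC \subseteq S^2$ with $\post f \subseteq \CC$ and $f^{n_\CC}(\CC) \subseteq \CC$ for some $n_\CC \in \N$, so that Theorem~\ref{thmMuExist} and Lemma~\ref{lmSigmaExpSnPhiBound} apply. Applying Theorem~\ref{thmRR^nConv} with $v\coloneqq \mathbbm{1}_{S^2}$, the trivial modulus of continuity $h\equiv 0$, and any $b\geq 1$, I obtain that $\RR_{\overline{\phi}}^n(\mathbbm{1})$ converges to $u_\phi$ in the uniform norm on $S^2$, uniformly in $\phi \in H$. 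By (\ref{eqU_phiBounds}) of Theorem~\ref{thmMuExist}, the limit $u_\phi$ satisfies $C_2^{-1} \leq u_\phi \leq C_2$, where $C_2$ is the constant from Lemma~\ref{lmSigmaExpSnPhiBound}. Inspecting (\ref{eqC2Bound}) shows that $C_2$ depends on $\phi$ only through $\Hseminorm{\alpha,(S^2,d)}{\phi}$, so the boundedness of $H$ in $\Holder{\alpha}(S^2,d)$ allows a single constant $C_2$ to work for every $\phi \in H$.

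Combining these two inputs, there exists $N\in\N$ such that for all $n\geq N$, all $\phi\in H$, and all $x\in S^2$, we have $(2C_2)^{-1} \leq \RR_{\overline{\phi}}^n(\mathbbm{1})(x) \leq 2C_2$. Consequently $\bigabs{\log \RR_{\overline{\phi}}^n(\mathbbm{1})(x)} \leq \log(2C_2)$, and hence $\frac{1}{n}\log \RR_{\overline{\phi}}^n(\mathbbm{1})(x) \to 0$ uniformly in $\phi \in H$ and $x\in S^2$. This gives (\ref{eqRR^nConvToTopPressure}) with uniform convergence. I do not foresee any real obstacle: the only point requiring care is the bookkeeping that the constant $C_2$ bounding $u_\phi$ is uniform over the Hölder-bounded family $H$, which is immediate from the explicit form (\ref{eqC2Bound}).
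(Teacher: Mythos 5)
Your proof is correct and follows essentially the same route as the paper's: apply Theorem~\ref{thmRR^nConv} with $v=\mathbbm{1}$ to get $\RR_{\overline\phi}^n(\mathbbm{1}) \to u_\phi$ uniformly in $\phi\in H$ and $x$, then use the two-sided bound $C_2^{-1}\leq u_\phi\leq C_2$ from Theorem~\ref{thmMuExist} together with the observation that (\ref{eqC2Bound}) shows $C_2$ is uniformly bounded over $H$. Your extra remark invoking Lemma~\ref{lmCexistsL} to pin down a Jordan curve is a small bit of added bookkeeping that the paper leaves implicit, but the argument is the same.
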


\begin{proof}
The second equality in (\ref{eqRR^nConvToTopPressure}) follows from (\ref{eqDefRuelleOp}) and (\ref{eqLocalDegreeProduct}). Substitute $u \coloneqq \mathbbm{1}$ into Theorem~\ref{thmRR^nConv}, we get that 
$
\RR^n_{\overline\phi}(\mathbbm{1})(x)= e^{-nP(f,\phi)} \RR^n_{\phi}(\mathbbm{1})(x) 
$
converges to $u_\phi(x)$ uniformly in $\phi\in H$ and $x\in S^2$ as $n\to +\infty$. By Theorem~\ref{thmMuExist}, the function $u_\phi$ is continuous and $\frac{1}{C_2} \leq u_\phi(x)\leq C_2$ for the same constant $C_2>1$ from Lemma~\ref{lmSigmaExpSnPhiBound}. Note that $C_2$ depends only on $f$, $\CC$, $d$, $\phi$, and $\alpha$, but it is bounded on $H$ by (\ref{eqC2Bound}). Therefore $\frac{1}{n} \log \RR_{\overline\phi}^n(\mathbbm{1})(x) =-P(f,\phi)+ \frac{1}{n} \log \RR_{\phi}^n(\mathbbm{1})(x) $ converges to $0$ uniformly in $\phi\in H$ and $x\in S^2$ as $n\to+\infty$. 
\end{proof}

Another characterization of the topological pressure in our context analogous to (\ref{eqRR^nConvToTopPressure}) but in terms of periodic points was obtained in \cite[Proposition~6.8]{Li15}. We record it below.

\begin{prop}[Z.~Li \cite{Li15}]    \label{propTopPressureDefPeriodicPts}
Let $f\:S^2 \rightarrow S^2$ be an expanding Thurston map and $d$ be a visual metric on $S^2$ for $f$ with expansion factor $\Lambda>1$. Let $\phi\in \Holder{\alpha}(S^2,d)$ be a real-valued H\"{o}lder continuous function with an exponent $\alpha\in(0,1]$. Fix an arbitrary sequence of functions $\{ w_n \: S^2\rightarrow \R\}_{n\in\N}$ satisfying $w_n(y) \in [1,\deg_{f^n}(y)]$ for each $n\in\N$ and each $y\in S^2$. Then
\begin{equation}  \label{eqTopPressureDefPrePeriodPts}
P(f,\phi)  = \lim\limits_{n\to +\infty} \frac{1}{n} \log \sum\limits_{y\in P_{1,f^n}} w_n(y) \exp (S_n\phi(y)).
\end{equation}  
\end{prop}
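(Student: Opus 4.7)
Since $1 \leq w_n(y) \leq \deg_{f^n}(y)$ for all $y$ and $n$, the asserted identity reduces to the two inequalities $\limsup_n \frac{1}{n}\log\sum_{y \in P_{1,f^n}} \deg_{f^n}(y) \exp(S_n\phi(y)) \leq P(f,\phi) \leq \liminf_n \frac{1}{n}\log\sum_{y \in P_{1,f^n}} \exp(S_n\phi(y))$. Both bounds will be obtained by sandwiching the periodic-point sum between subexponential multiples of the Ruelle-operator sum $\RR_\phi^n \mathbbm{1}(x_0) = \sum_{z \in f^{-n}(x_0)}\deg_{f^n}(z) \exp(S_n\phi(z))$, whose logarithmic growth rate equals $P(f,\phi)$ uniformly in $x_0 \in S^2$ by Corollary~\ref{corRR^nConvToTopPressureUniform}.

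\textbf{Tile-by-tile matching.} By Lemma~\ref{lmCexistsL}, for each sufficiently large $n$ fix an $f^n$-invariant Jordan curve $\CC_n \supseteq \post f$ isotopic rel~$\post f$ to a once-and-for-all chosen curve $\wt\CC$, such that no $n$-tile of $(f, \CC_n)$ joins opposite sides of $\CC_n$. Viewing $f^n$ as an expanding Thurston map (Remark~\ref{rmExpanding}) with this curve, Lemma~\ref{lmAtMost1} gives at most one fixed point of $f^n$ per $n$-tile of $(f, \CC_n)$, while Lemma~\ref{lmAtLeast1} gives at least one in each tile in $\X^n_{\w\w}(f,\CC_n) \cup \X^n_{\b\b}(f,\CC_n)$. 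Choose interior reference points $x_0^\c \in \inter X_\c^0$ for $\c\in\{\b,\w\}$; Proposition~\ref{propCellDecomp}(i) produces, for each tile $X \in \X^n(f,\CC_n)$, a unique interior preimage $z_X$ of $x_0^{\c(X)}$ with $\deg_{f^n}(z_X)=1$, where $\c(X)$ denotes the color of the $0$-tile $f^n(X)$. The distortion bound Lemma~\ref{lmSnPhiBound} applied to $f^n$ and the potential $S_n^f\phi$ on the level-$1$ decomposition $\X^n(f,\CC_n)$ then yields $|S_n\phi(u) - S_n\phi(v)| \leq C_*$ for $u,v$ in a common $n$-tile, with $C_*$ bounded independently of $n$ by (\ref{eqC1Expression}), since $\Lambda$, $d$, and $\Hseminorm{\alpha,(S^2,d)}{\phi}$ are $n$-independent and Lemma~\ref{lmCexistsL} produces the $\CC_n$ as isotopes of $\wt\CC$, keeping the constant $C_0$ of Lemma~\ref{lmMetricDistortion} uniform.

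\textbf{Deriving the bounds.} Pairing each interior periodic point $y_X \in \inter X \cap P_{1,f^n}$ with the preimage $z_X$ in the same tile gives $\exp(S_n\phi(y_X)) \leq e^{C_*}\exp(S_n\phi(z_X))$ and the reverse. Summing over $X$ and applying Lemma~\ref{lmSigmaExpSnPhiBound} to pool the two color contributions $\RR_\phi^n \mathbbm{1}(x_0^\b)$ and $\RR_\phi^n \mathbbm{1}(x_0^\w)$ into a common $\RR_\phi^n \mathbbm{1}(x_0)$ yields the upper sandwich bound $\sum_{y\in\inter(X)\cap P_{1,f^n},\,X\in\X^n(f,\CC_n)} \deg_{f^n}(y)\exp(S_n\phi(y)) \leq 2 C_2 e^{C_*} \RR_\phi^n \mathbbm{1}(x_0)$, and the corresponding lower bound follows from restricting the pairing to diagonal tiles $\X^n_{\w\w} \cup \X^n_{\b\b}$, whose preimage subsum remains a positive uniform fraction of $\RR_\phi^n \mathbbm{1}(x_0)$ by Lemma~\ref{lmSigmaExpSnPhiBound}. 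Periodic points lying on the $n$-skeleton necessarily satisfy $y \in f^{-n}(\post f) \cap P_{1,f^n} \subseteq \post f$, a finite set of size at most $m = \card\post f$; each such $y$ obeys the trivial estimate $\deg_{f^n}(y) \exp(S_n\phi(y)) \leq \RR_\phi^n \mathbbm{1}(y) \leq C_2 \RR_\phi^n \mathbbm{1}(x_0)$ from $y\in f^{-n}(y)$ and Lemma~\ref{lmSigmaExpSnPhiBound}, so the skeleton contribution is bounded by $m C_2 \RR_\phi^n \mathbbm{1}(x_0)$. Dividing by $n$, taking logarithms, and invoking Corollary~\ref{corRR^nConvToTopPressureUniform} closes both estimates.

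\textbf{Main obstacle.} The principal technical difficulty is the uniform-in-$n$ control of the distortion constant $C_*$ and the pooling constant $C_2$ (cf.\ (\ref{eqC1Expression}) and (\ref{eqC2Bound})) across the varying family of invariant curves $\{\CC_n\}_n$. This relies on Lemma~\ref{lmCexistsL} producing the $\CC_n$ as isotopes of a fixed $\wt\CC$ relative to $\post f$, keeping the visual-metric constant $C_0$ of Lemma~\ref{lmMetricDistortion} independent of $n$, together with the fact that the expansion factor $\Lambda$ and the H\"older data of $\phi$ do not depend on the curve.
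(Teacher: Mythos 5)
Your pairing strategy — inject fixed points of $f^n$ into the Ruelle preimage sum via the at-most-one-per-tile and at-least-one-per-diagonal-tile lemmas, then use uniform distortion — is plausible, but the proposal has several gaps, of which the first is the most serious. For the lower bound you write that the diagonal-tile preimage subsum ``remains a positive uniform fraction of $\RR_\phi^n\mathbbm{1}(x_0)$ by Lemma~\ref{lmSigmaExpSnPhiBound}.'' That lemma compares \emph{full} preimage sums at two different base points and says nothing about the share of the Ruelle sum carried by preimages that return to their own $0$-tile. You need to show that $\sum_{z\in f^{-n}(x_0^\c)\cap X^0_\c}\exp(S_n\phi(z))$ has the same exponential growth rate as the total; that requires an additional ingredient such as the spectral-gap convergence of Theorem~\ref{thmRR^nConv} together with positivity of the eigenmeasure $m_\phi$ on each $0$-tile, or a primitivity argument for the tile-subdivision matrix — neither of which you supply. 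As written, this step does not follow from the lemma you cite.

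Two further problems concern the upper bound and the set-up. You assert that periodic points ``lying on the $n$-skeleton necessarily satisfy $y\in f^{-n}(\post f)\cap P_{1,f^n}\subseteq\post f$,'' of cardinality at most $m=\card\post f$. This is false: Lemma~\ref{lmPeriodicPtsLocationCases}, applied to $F=f^n$ with the $F$-invariant curve $\CC_n$, shows that fixed points of $f^n$ not in a tile interior may also lie in the interior of an edge $e\subseteq\CC_n$, and those are not postcritical. They carry local degree $1$ and obey the at-most-one-per-tile bound of Lemma~\ref{lmAtMost1}, so they could be folded into the tile pairing, but your proposal only pairs interior periodic points and then mislabels the rest as postcritical. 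Finally, the uniformity of $C_*$ and $C_2$ across the varying family $\{\CC_n\}$ is what makes the whole device viable, yet you support it only by noting the $\CC_n$ are isotopic rel $\post f$ to a fixed $\wt\CC$. Isotopy rel $\post f$ is a topological fact; the constant $C_0$ of Lemma~\ref{lmMetricDistortion} is a geometric quantity depending explicitly on $\CC$ and $n_\CC$, and you have not shown it admits a bound independent of $n$ along this family. Without that, the distortion and pooling constants may degrade with $n$ and the sandwich gives no information after taking $\frac{1}{n}\log$.
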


\smallskip

The potentials that satisfy the following property are of special interest in the considerations of Prime Orbit Theorems (compare Definition~\ref{defEventuallyPositive}) and in the analytic study of dynamical zeta functions (compare Corollary~\ref{corS0unique}).

\begin{definition}[Eventually positive functions]  \label{defEventuallyPositive}
Let $g\: X\rightarrow X$ be a map on a set $X$, and $\varphi\:X\rightarrow\C$ be a complex-valued functions on $X$. Then $\varphi$ is \defn{eventually positive} if there exists $N\in\N$ such that $S_n\varphi(x)>0$ for each $x\in X$ and each $n\in\N$ with $n\geq N$.
\end{definition}

\begin{lemma}  \label{lmSnPhiHolder}
Let $f\: S^2\rightarrow S^2$ be an expanding Thurston map and $d$ be a visual metric on $S^2$ for $f$. If $\psi \in \Holder{\alpha}((S^2,d),\C)$ is a complex-valued H\"{o}lder continuous function with an exponent $\alpha\in(0,1]$, then $S_n\psi$ also satisfies $S_n\psi \in \Holder{\alpha}((S^2,d),\C)$ for each $n\in\N$.
\end{lemma}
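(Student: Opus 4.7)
The plan is to exploit the fact that $S_n\psi$ is a finite sum of compositions of the form $\psi \circ f^j$, and that each such composition is H\"older continuous with exponent $\alpha$ because $f$ is Lipschitz. No real difficulty is expected; this is essentially a bookkeeping argument.

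First, I would invoke Theorem~\ref{thmETMBasicProperties}~(i) to conclude that $f \: (S^2, d) \to (S^2, d)$ is Lipschitz; set $L \coloneqq \LIP_d(f) < +\infty$. Then by a straightforward induction on $j$, each iterate $f^j$ is Lipschitz with $\LIP_d(f^j) \leq L^j$ for every $j \in \N_0$ (where $\LIP_d(f^0) = 1$).

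Next, for each $j \in \{0, 1, \ldots, n-1\}$ and all $x, y \in S^2$, the H\"older estimate for $\psi$ combined with the Lipschitz bound for $f^j$ yields
\begin{equation*}
\Abs{\psi(f^j(x)) - \psi(f^j(y))} \leq \Hseminorm{\alpha,\,(S^2,d)}{\psi} \, d(f^j(x), f^j(y))^\alpha \leq \Hseminorm{\alpha,\,(S^2,d)}{\psi} \, L^{j\alpha} \, d(x,y)^\alpha,
\end{equation*}
so $\psi \circ f^j \in \Holder{\alpha}((S^2,d),\C)$ with $\Hseminorm{\alpha,\,(S^2,d)}{\psi \circ f^j} \leq L^{j\alpha} \Hseminorm{\alpha,\,(S^2,d)}{\psi}$, and trivially $\Norm{\psi \circ f^j}_{\CCC^0(S^2)} \leq \Norm{\psi}_{\CCC^0(S^2)}$ since $f^j$ maps $S^2$ to $S^2$.

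Finally, summing over $j \in \{0, 1, \ldots, n-1\}$ and applying the triangle inequality both to the seminorm and to the uniform norm gives
\begin{equation*}
\Hnorm{\alpha}{S_n\psi}{(S^2,d)} \leq \biggl( \sum_{j=0}^{n-1} L^{j\alpha} \biggr) \Hseminorm{\alpha,\,(S^2,d)}{\psi} + n \Norm{\psi}_{\CCC^0(S^2)} < +\infty,
\end{equation*}
proving $S_n\psi \in \Holder{\alpha}((S^2,d),\C)$. The only mildly substantive input is Theorem~\ref{thmETMBasicProperties}~(i); everything else is elementary.
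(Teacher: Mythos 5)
Your proof is correct and follows essentially the same route as the paper: invoke Theorem~\ref{thmETMBasicProperties}~(i) to get that $f$ (hence each $f^j$) is Lipschitz, conclude $\psi \circ f^j \in \Holder{\alpha}((S^2,d),\C)$, and sum. You simply make the constants explicit, which the paper leaves implicit.
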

\begin{proof}
Since $f$ is Lipschitz with respect to $d$ by Theorem~\ref{thmETMBasicProperties}~(i), so is $f^i$ for each $i\in\N$. Then $\psi\circ f^i \in \Holder{\alpha}((S^2,d),\C)$ for each $i\in\N$. Thus by (\ref{eqDefSnPt}), $S_n\psi  \in \Holder{\alpha}((S^2,d),\C)$.
\end{proof}

Theorem~\ref{thmEquilibriumState}~(ii) leads to the following corollary that we frequently use, often implicitly, throughout this paper.

\begin{cor}   \label{corS0unique}
Let $f\:S^2 \rightarrow S^2$ be an expanding Thurston map, and $d$ be a visual metric on $S^2$ for $f$. Let $\phi \in \Holder{\alpha}(S^2,d)$ be an eventually positive real-valued H\"{o}lder continuous function with an exponent $\alpha\in(0,1]$. Then the function $t\mapsto P(f,-t\phi)$, $t\in\R$, is strictly decreasing and there exists a unique number $s_0 \in\R$ such that $P(f,-s_0\phi)=0$. Moreover, $s_0>0$.
\end{cor}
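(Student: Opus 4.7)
The approach is to differentiate $t\mapsto P(f,-t\phi)$ in $t$ using Theorem~\ref{thmEquilibriumState}~(ii), and then show that the derivative is strictly negative as a direct consequence of the eventual positivity of $\phi$ combined with the fact that the equilibrium state is $f$-invariant. Existence and positivity of the unique zero $s_0$ will then follow from a standard continuity-plus-boundary-values argument.

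First, I would apply Theorem~\ref{thmEquilibriumState}~(ii) with base potential $0\in \Holder{\alpha}(S^2,d)$ and direction $-\phi\in \Holder{\alpha}(S^2,d)$, which yields
\begin{equation*}
\frac{\mathrm{d}}{\mathrm{d}t} P(f,-t\phi) = -\int\! \phi \,\mathrm{d}\mu_{-t\phi}, \qquad t\in\R,
\end{equation*}
where $\mu_{-t\phi}$ is the unique equilibrium state for $f$ and the potential $-t\phi$ provided by Theorem~\ref{thmEquilibriumState}~(i). Strict monotonicity therefore reduces to showing that $\int\!\phi\,\mathrm{d}\mu > 0$ for every $\mu\in\MMM(S^2,f)$.

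The key step is precisely this inequality, and it follows from eventual positivity. By Definition~\ref{defEventuallyPositive}, there exists $N\in\N$ such that $S_N\phi(x) > 0$ for every $x\in S^2$. Since $S_N\phi\in \Holder{\alpha}(S^2,d)\subseteq \CCC(S^2)$ by Lemma~\ref{lmSnPhiHolder} and $S^2$ is compact, $c\coloneqq \min_{x\in S^2} S_N\phi(x) > 0$. By the $f$-invariance of $\mu$,
\begin{equation*}
N \int\! \phi \,\mathrm{d}\mu = \int\! S_N\phi\, \mathrm{d}\mu \geq c > 0,
\end{equation*}
so $\int\! \phi\,\mathrm{d}\mu \geq c/N > 0$. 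Applying this to $\mu = \mu_{-t\phi}$ gives $\frac{\mathrm{d}}{\mathrm{d}t} P(f,-t\phi) \leq -c/N < 0$ for all $t\in\R$, so $t\mapsto P(f,-t\phi)$ is strictly decreasing on $\R$ and, being differentiable, continuous.

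Finally, I would check the two boundary asymptotics to pin down $s_0$ and its sign. At $t=0$ we have $P(f,0) = h_{\operatorname{top}}(f) \geq \log \deg f > 0$, since $\deg f \geq 2$ for any Thurston map (a standard consequence of the variational principle together with the growth of the number of preimages). At the other end, by the variational principle (\ref{eqVPPressure}) combined with the uniform lower bound $\int\!\phi\,\mathrm{d}\mu \geq c/N$ established above,
\begin{equation*}
P(f,-t\phi) \leq h_{\operatorname{top}}(f) - \frac{tc}{N} \xrightarrow[t\to +\infty]{} -\infty.
\end{equation*}
By the intermediate value theorem and strict monotonicity, there is a unique $s_0\in\R$ with $P(f,-s_0\phi)=0$, and $s_0 > 0$ because $P(f,0) > 0$. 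The only nontrivial ingredient is Step~2; everything else is formal once negativity of the derivative has been extracted from eventual positivity.
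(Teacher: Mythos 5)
Your proof is correct and follows essentially the same strategy as the paper: differentiate $t\mapsto P(f,-t\phi)$ via Theorem~\ref{thmEquilibriumState}~(ii), use $f$-invariance of $\mu_{-t\phi}$ and eventual positivity to show the derivative is uniformly negative, note $P(f,0)=h_{\operatorname{top}}(f)>0$, and conclude by monotonicity, continuity, and a decay estimate as $t\to+\infty$. The only cosmetic difference is that you obtain the decay $P(f,-t\phi)\to-\infty$ from the variational principle together with the uniform lower bound $\int\phi\,\mathrm{d}\mu\geq c/N$, whereas the paper obtains it from the Ruelle-operator characterization of pressure (Corollary~\ref{corRR^nConvToTopPressureUniform}); both are valid and the two bounds are comparable.
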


\begin{proof}
By Definition~\ref{defEventuallyPositive}, we can choose $m\in\N$ such that $\Phi \coloneqq S_m \phi$ is strictly positive. Denote $A \coloneqq \inf \{ \Phi(x) \,|\, x\in S^2 \} > 0$. Then by Theorem~\ref{thmEquilibriumState}~(ii) and the fact that the equilibrium state $\mu_{\minus t\phi}$ for $f$ and $-t\phi$ is an $f$-invariant probability measure (see Theorem~\ref{thmEquilibriumState}~(i) and Subsection~\ref{subsctThermodynFormalism}), we have that for each $t\in\R$,
\begin{equation}   \label{eqPfcorS0unique}
    \frac{\mathrm{d}}{\mathrm{d}t} P(f,- t\phi) 
= - \int \!\phi \,\mathrm{d}\mu_{\minus t\phi} 
= - \frac{1}{m} \int \! S_m \phi \,\mathrm{d}\mu_{\minus t\phi} 
< 0.
\end{equation}   
By Corollary~\ref{corRR^nConvToTopPressureUniform}, (\ref{eqDeg=SumLocalDegree}), and (\ref{eqLocalDegreeProduct}), for each $t\in\R$ sufficiently large, we have
\begin{align*}
P(f, - t \phi) & =   \lim\limits_{n\to+\infty} \frac{1}{mn} \log \sum\limits_{y\in f^{mn}(x)} \deg_{f^{mn}}(y)
                                \exp \biggl( -t \sum\limits_{i=0}^{n-1} \bigl(\Phi\circ f^{mi} \bigr) (y) \biggr) \\
               & \leq \lim\limits_{n\to+\infty} \frac{1}{mn} \log  ((\deg f)^{mn} \exp  ( -t n A ) ) \\
               & =    \frac{1}{m} \log ( (\deg f )^m \exp ( -t A ) )
               <0.
\end{align*}
Since the topological entropy $h_{\operatorname{top}} (f) = P(f,0) = \log (\deg f)>0$ (see \cite[Corollary~17.2]{BM17}), the corollary follows immediately from (\ref{eqPfcorS0unique}) and the fact that $P(f,\cdot) \: \CCC(S^2)\rightarrow \R$ is continuous (see for example, \cite[Theorem~3.6.1]{PrU10}).
\end{proof}

\subsection{Subshifts of finite type}   \label{subsctSFT}

We give a brief review on the dynamics of one-sided subshifts of finite type in this subsection. We refer the readers to \cite{Ki98} for a beautiful introduction to symbolic dynamics. For a discussion on results on subshifts of finite type in our context, see \cite{PP90, Bal00}.

Let $S$ be a finite nonempty set, and $A \: S\times S \rightarrow \{0,1\}$ be a matrix whose entries are either $0$ or $1$. For $n\in\N_0$, we denote by $A^n$ the usual matrix product of $n$ copies of $A$. We denote the \defn{set of admissible sequences defined by $A$} by
\begin{equation*}
\Sigma_A^+ = \{ \{x_i\}_{i\in\N_0} \,|\, x_i \in S, \, A(x_i,x_{i+1})=1, \, \text{for each } i\in\N_0\}.
\end{equation*}
Given $\theta\in(0,1)$, we equip the set $\Sigma_A^+$ with a metric $d_\theta$ given by $d_\theta(\{x_i\}_{i\in\N_0},\{y_i\}_{i\in\N_0})=\theta^N$ for $\{x_i\}_{i\in\N_0} \neq \{y_i\}_{i\in\N_0}$, where $N$ is the smallest integer with $x_N \neq y_N$. The metric space $\left(\Sigma_A^+,d_\theta \right)$ is equipped with the topology induced from the product topology, and is therefore compact.

The \defn{left-shift operator} $\sigma_A \: \Sigma_A^+ \rightarrow \Sigma_A^+$ (defined by $A$) is given by
\begin{equation*}
\sigma_A ( \{x_i\}_{i\in\N_0} ) = \{x_{i+1}\}_{i\in\N_0}  \qquad \text{for } \{x_i\}_{i\in\N_0} \in \Sigma_A^+.
\end{equation*}

The pair $\left(\Sigma_A^+, \sigma_A\right)$ is called the \defn{one-sided subshift of finite type} defined by $A$. The set $S$ is called the \defn{set of states} and the matrix $A\: S\times S \rightarrow \{0,1\}$ is called the \defn{transition matrix}.

We say that a one-sided subshift of finite type $\bigl(\Sigma_A^+, \sigma_A \bigr)$ is \defn{topologically mixing} if there exists $N\in\N$ such that $A^n(x,y)>0$ for each $n\geq N$ and each pair of $x,y\in S$.

Let $X$ and $Y$ be topological spaces, and $f\:X\rightarrow X$ and $g\:Y\rightarrow Y$ be continuous maps. We say that the topological dynamical system $(X,f)$ is a \defn{factor} of the topological dynamical system $(Y,g)$ if there is a surjective continuous map $\pi\:Y\rightarrow X$ such that $\pi\circ g=f\circ\pi$. We call the map $\pi\: Y\rightarrow X$ a \emph{factor map}. We get the following commutative diagram:
\begin{equation*}
\xymatrix{  Y  \ar[d]_\pi \ar[r]^{g  }  & Y   \ar[d]^\pi \\ 
            X  \ar[r]_{f}               & X.}
\end{equation*}
It follows immediately that $\pi\circ g^n = f^n \circ \pi$ for each $n\in\N$.

We collect some basic facts about subshifts of finite type.

\begin{prop}  \label{propSFT}
Given a finite set of states $S$ and a transition matrix $A \: S\times S \rightarrow \{0,1\}$. Let $\left( \Sigma_A^+ , \sigma_A \right)$ be the one-sided subshift of finite type defined by $A$, and $\phi\in\Holder{1}\bigl(\Sigma_A^+, d_\theta\bigr)$ be a real-valued Lipschitz continuous function with $\theta \in (0,1)$. Then the following statements are satisfied:
\begin{enumerate}
\smallskip
\item[(i)] $\card P_{1,\sigma_A^n} \leq (\card S)^n$ for all $n\in\N$.

\smallskip
\item[(ii)] $P(\sigma_A,\phi) \geq \limsup\limits_{n\to +\infty} \frac{1}{n} \log  \sum\limits_{\underline{x}\in P_{1,\sigma_A^n}}  \exp (S_n \phi(\underline{x}))$. 

\smallskip
\item[(iii)] $P(\sigma_A,\phi) = \lim\limits_{n\to +\infty} \sup\limits_{\underline{x}\in\Sigma_A^+} 
                 \frac{1}{n} \log \sum\limits_{\underline{y}\in \sigma_A^{-n}(\underline{x})}  \exp (S_n \phi(\underline{y}))$.
\end{enumerate}

If, in addition, $(\Sigma_A^+,\sigma_A)$ is topologically mixing, then
\begin{enumerate}
\smallskip
\item[(iv)] $P(\sigma_A,\phi) = \lim\limits_{n\to +\infty} \frac{1}{n} \log \sum\limits_{\underline{y}\in \sigma_A^{-n}(\underline{x})}  \exp (S_n \phi(\underline{y}))$ for each $\underline{x}\in \Sigma_A^+$.
\end{enumerate}
\end{prop}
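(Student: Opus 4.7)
The plan is to treat the four claims in order, with (i) and (ii) elementary and (iii)--(iv) resting on a bounded-distortion estimate for Lipschitz potentials on shift spaces. For (i), a fixed point of $\sigma_A^n$ is completely determined by its first $n$ coordinates $(x_0,\dots,x_{n-1})\in S^n$, giving at most $(\card S)^n$ such sequences. For (ii), the key observation is that $P_{1,\sigma_A^n}$ is $(n,1)$-separated with respect to the metric $d^n_{\sigma_A}$: if $\underline{x},\underline{y}\in P_{1,\sigma_A^n}$ are distinct, then by their periodicity under $\sigma_A^n$ they must differ at some coordinate $i\in\{0,\dots,n-1\}$, so that $d_\theta(\sigma_A^i(\underline{x}),\sigma_A^i(\underline{y}))=1$. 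Extending this finite set via Zorn's lemma to a maximal $(n,\epsilon)$-separated set $F_n(\epsilon)$ for any $\epsilon\in(0,1]$ and plugging into the definition (\ref{defTopPressure}) delivers the bound.

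For (iii), the lower bound follows from the same separation idea: for each $\underline{x}\in\Sigma_A^+$, the finite set $\sigma_A^{-n}(\underline{x})$ is $(n,1)$-separated, so (\ref{defTopPressure}) gives $P(\sigma_A,\phi)\geq \limsup_n \tfrac{1}{n}\log\sum_{\underline{y}\in\sigma_A^{-n}(\underline{x})}e^{S_n\phi(\underline{y})}$, and one takes the supremum over $\underline{x}$. For the upper bound I would invoke the cover characterization (\ref{eqEquivDefByCoverForTopPressure}) using the clopen partition $\xi_n$ of $\Sigma_A^+$ into non-empty $n$-cylinders, so that $\bigvee_{i=0}^k \sigma_A^{-i}(\xi_n)$ is the partition into $(n+k)$-cylinders of $d_\theta$-diameter $\theta^{n+k}$. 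The crucial technical ingredient is a bounded-distortion estimate: Lipschitz continuity of $\phi$ together with $d_\theta(\sigma_A^i(\underline{z}),\sigma_A^i(\underline{z}'))\leq \theta^{n+k-i}$ for $\underline{z},\underline{z}'$ in a common $(n+k)$-cylinder gives, after summing a geometric series,
\[
\sup_{\underline{z},\underline{z}'\in C}\Abs{S_k\phi(\underline{z})-S_k\phi(\underline{z}')} \leq \frac{\Hseminorm{1,\,(\Sigma_A^+,d_\theta)}{\phi}\,\theta^{n+1}}{1-\theta}
\]
uniformly in $k$. Grouping the non-empty $(n+k)$-cylinders by their last $n$ coordinates (of which there are at most $(\card S)^n$ admissible blocks) and, for each block, picking a reference $\underline{x}^*\in\Sigma_A^+$ extending it, the cylinders in each group are in bijection with $\sigma_A^{-k}(\underline{x}^*)$. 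Summing yields an upper bound of the form $e^{\Hseminorm{1,\,(\Sigma_A^+,d_\theta)}{\phi}\theta^{n+1}/(1-\theta)}(\card S)^n \sup_{\underline{x}^*}\sum_{\underline{y}\in\sigma_A^{-k}(\underline{x}^*)}e^{S_k\phi(\underline{y})}$. Taking $\tfrac{1}{k}\log$ and letting $k\to\infty$, then $n\to\infty$, eliminates both the $(\card S)^n$ factor and the distortion factor; a standard submultiplicativity argument for the sequence $\sup_{\underline{x}}\RR_\phi^n\mathbbm{1}(\underline{x})$ upgrades the $\limsup$ to a genuine limit.

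For (iv), the mixing hypothesis provides $N\in\N$ with $A^N(s,s')\geq 1$ for all $s,s'\in S$. Given any $\underline{x}^*,\underline{x}^{**}\in\Sigma_A^+$ and any preimage $\underline{y}=y_0\dots y_{k-1}x^{**}_0 x^{**}_1\dots \in \sigma_A^{-k}(\underline{x}^{**})$ with $k\geq N+1$, one produces an element of $\sigma_A^{-k}(\underline{x}^*)$ by replacing the last $N$ coordinates of $\underline{y}$ with an admissible bridge from $y_{k-N-1}$ to $x^*_0$. The resulting map $\sigma_A^{-k}(\underline{x}^{**})\to \sigma_A^{-k}(\underline{x}^*)$ is at most $(\card S)^N$-to-one and alters $S_k\phi$ by at most $2N\Norm{\phi}_{\CCC^0(\Sigma_A^+)}$, so together with the symmetric construction this yields two-sided comparisons
\[
c^{-1}\leq \frac{\sum_{\underline{y}\in \sigma_A^{-k}(\underline{x}^*)}e^{S_k\phi(\underline{y})}}{\sum_{\underline{y}\in \sigma_A^{-k}(\underline{x}^{**})}e^{S_k\phi(\underline{y})}}\leq c
\]
with $c>0$ independent of $k\geq N+1$, $\underline{x}^*$, and $\underline{x}^{**}$. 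Combining with (iii) removes the supremum in the statement. The main obstacle I foresee is the uniform-in-$k$ bounded-distortion estimate in (iii): one must ensure that the error summed from Lipschitz deviations of $\phi$ along the $k$-orbit depends only on $n$ (not $k$), which is precisely what the geometric contraction of cylinders under $\sigma_A$ secures; the rest is careful bookkeeping with cylinders, admissible blocks, and preimages.
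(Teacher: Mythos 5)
Your parts~(i) and (ii) coincide with the paper's proof: a fixed point of $\sigma_A^n$ is determined by its first $n$ coordinates, distinct periodic points are $(n,1)$-separated, and (\ref{defTopPressure}) then yields~(ii). For~(iii) and~(iv) the paper does not give an argument at all but simply cites \cite{Bal00}, \cite{Rue89}, and \cite{PrU10}, so what you are doing is unpacking the classical proofs rather than taking a genuinely different route. Your bounded-distortion estimate for~(iii) --- Lipschitz continuity together with the geometric contraction of cylinders, giving a bound $\Hseminorm{1,\,(\Sigma_A^+,d_\theta)}{\phi}\,\theta^{n+1}/(1-\theta)$ that is uniform in $k$ --- is precisely the mechanism in \cite{Rue89}, and the bridge construction for~(iv) via topological mixing is the one behind \cite[Proposition~4.4.3]{PrU90}\cite{PrU10}. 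The payoff of your approach is a self-contained exposition where the paper relies on external references.

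There is one small logical slip and some constant bookkeeping worth noting. In the lower bound of~(iii) you fix $\underline{x}$, take $\limsup_n$, and then take $\sup_{\underline{x}}$; this gives $P(\sigma_A,\phi)\geq \sup_{\underline{x}}\limsup_n(\cdot)$, whereas the statement requires $P(\sigma_A,\phi)\geq\lim_n\sup_{\underline{x}}(\cdot)$, and in general $\sup\limsup\leq\limsup\sup$, so the quantifiers are in the wrong order. The fix is either to choose $\underline{x}_n$ (nearly) attaining the $\sup$ for each $n$ and observe that the sets $\sigma_A^{-n}(\underline{x}_n)$ are still $(n,1)$-separated, or to invoke the standard characterization of $P(\sigma_A,\phi)$ as $\lim_{\epsilon\to 0}\limsup_n\frac{1}{n}\log\sup\bigl\{\sum_{z\in E}e^{S_n\phi(z)}\,\big|\, E\ \text{is}\ (n,\epsilon)\text{-separated}\bigr\}$ rather than (\ref{defTopPressure}) as literally written with a fixed maximal set; the same point is implicit in~(ii). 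In~(iv), an admissible bridge from $y_{k-N-1}$ to $x^*_0$ contributing the coordinates $z_{k-N},\dots,z_{k-1}$ must have length $N+1$ (so that $A(z_{k-1},x^*_0)=1$), and hence one should use $A^{N+1}>0$; the multiplicity and distortion constants then become $(\card S)^{N}$ and $2(N+1)\Norm{\phi}_{\CCC^0(\Sigma_A^+)}+2\Hseminorm{1,\,(\Sigma_A^+,d_\theta)}{\phi}/(1-\theta)$ or similar. These adjustments are cosmetic and do not affect the conclusion.
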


\begin{proof}
(i) Fix $n\in\N$. The inequality follows trivially from the observation that each $\{x_i\}_{i\in\N_0} \in P_{1,\sigma_A^n}$ is uniquely determined by the first $n$ entries in the sequence $\{x_i\}_{i\in\N_0}$.

\smallskip

(ii) Fix $n\in\N$. Since each $\{x_i\}_{i\in\N_0} \in P_{1,\sigma_A^n}$ is uniquely determined by the first $n$ entries in the sequence $\{x_i\}_{i\in\N_0}$,
it is clear that each pair of distinct $\{x_i\}_{i\in\N_0}, \{x'_i\}_{i\in\N_0}\in P_{1,\sigma_A^n}$ are $(n,1)$-separated (see Subsection~\ref{subsctThermodynFormalism}). The inequality now follows from (\ref{defTopPressure}) and the observation that an $(n,1)$-separated set is also $(n,\epsilon)$-separated for all $\epsilon\in(0,1)$.

\smallskip

(iii) As remarked in \cite[Remark~1.3]{Bal00}, the proof of statement~(iii) follows from \cite[Lemma~4.5]{Rue89} and the beginning of the proof of Theorem~3.1 in \cite{Rue89}.

\smallskip

(iv) One can find a proof of this well-known fact in \cite[Proposition~4.4.3]{PrU10} (see the first page of Chapter~4 in \cite{PrU10} for relevant definitions).
\end{proof}

\begin{lemma}   \label{lmUnifBddToOneFactorPressure}
For each $i\in\{1,2\}$, given a finite set of states $S_i$ and a transition matrix $A_i \: S_i \times S_i \rightarrow \{0,1\}$, we denote by $\bigl( \Sigma_{A_i}^+ , \sigma_{A_i} \bigr)$ the one-sided subshift of finite type defined by $A_i$. Let $\phi\in\Holder{1}\bigl(\Sigma_{A_2}^+, d_\theta\bigr)$ be a real-valued Lipschitz continuous function on $\Sigma_{A_2}^+$ with $\theta \in (0,1)$. Suppose that there exists a uniformly bounded-to-one H\"{o}lder continuous factor map $\pi\: \Sigma_{A_1}^+ \rightarrow \Sigma_{A_2}^+$, i.e., $\pi$ is a H\"{o}lder continuous surjective map with $\sigma_{A_2} \circ \pi = \pi \circ \sigma_{A_1}$ and $\sup \bigl\{ \card \bigl(\pi^{-1}(\underline{x}) \bigr) \,\big|\, \underline{x}\in \Sigma_{A_2}^+ \bigr\} < +\infty$. Then
\begin{equation*}
P(\sigma_{A_1}, \phi\circ \pi ) = P(\sigma_{A_2}, \phi).
\end{equation*}
\end{lemma}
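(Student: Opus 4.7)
The plan is to establish the equality by comparing, via $\pi$, the weighted sums over preimages that appear in Proposition~\ref{propSFT}~(iii). Set $M\coloneqq\sup\bigl\{\card\bigl(\pi^{-1}(\underline{x}')\bigr)\,\big|\,\underline{x}'\in\Sigma_{A_2}^+\bigr\}<+\infty$, and observe first that the intertwining $\pi\circ\sigma_{A_1}=\sigma_{A_2}\circ\pi$ iterates to $\pi\circ\sigma_{A_1}^n=\sigma_{A_2}^n\circ\pi$, so that $S_n(\phi\circ\pi)=(S_n\phi)\circ\pi$ on $\Sigma_{A_1}^+$; in particular, the value of $\exp(S_n(\phi\circ\pi)(\underline{y}))$ depends only on $\pi(\underline{y})$.

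For the upper bound $P(\sigma_{A_1},\phi\circ\pi)\leq P(\sigma_{A_2},\phi)$, I would fix $\underline{x}\in\Sigma_{A_1}^+$, set $\underline{x}'\coloneqq\pi(\underline{x})$, and group the elements of $\sigma_{A_1}^{-n}(\underline{x})$ according to their image under $\pi$. Because $\pi\bigl(\sigma_{A_1}^{-n}(\underline{x})\bigr)\subseteq\sigma_{A_2}^{-n}(\underline{x}')$ and each $\pi$-fiber has cardinality at most $M$, this gives
\begin{equation*}
\sum_{\underline{y}\in\sigma_{A_1}^{-n}(\underline{x})}\exp(S_n(\phi\circ\pi)(\underline{y}))\leq M\sum_{\underline{z}\in\sigma_{A_2}^{-n}(\underline{x}')}\exp(S_n\phi(\underline{z})).
\end{equation*}
Taking $\tfrac{1}{n}\log$, then the supremum over $\underline{x}\in\Sigma_{A_1}^+$, and finally the limit $n\to+\infty$, the term $\tfrac{1}{n}\log M$ vanishes, and Proposition~\ref{propSFT}~(iii) applied on each side delivers the upper bound.

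For the matching lower bound $P(\sigma_{A_2},\phi)\leq P(\sigma_{A_1},\phi\circ\pi)$, I would fix $\underline{x}'\in\Sigma_{A_2}^+$; for each $\underline{z}\in\sigma_{A_2}^{-n}(\underline{x}')$, surjectivity of $\pi$ lets me pick some $\underline{y}_{\underline{z}}\in\pi^{-1}(\underline{z})$, and the identity $\pi\circ\sigma_{A_1}^n=\sigma_{A_2}^n\circ\pi$ forces $\sigma_{A_1}^n(\underline{y}_{\underline{z}})\in\pi^{-1}(\underline{x}')$, i.e., $\underline{y}_{\underline{z}}\in\bigcup_{\underline{x}\in\pi^{-1}(\underline{x}')}\sigma_{A_1}^{-n}(\underline{x})$. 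Since distinct $\underline{z}$'s yield distinct $\underline{y}_{\underline{z}}$'s, and since $\sigma_{A_1}^{-n}(\underline{x}_1)$ and $\sigma_{A_1}^{-n}(\underline{x}_2)$ are disjoint for $\underline{x}_1\neq \underline{x}_2$, together with $\card\pi^{-1}(\underline{x}')\leq M$ this yields
\begin{equation*}
\sum_{\underline{z}\in\sigma_{A_2}^{-n}(\underline{x}')}\exp(S_n\phi(\underline{z}))\leq\sum_{\underline{x}\in\pi^{-1}(\underline{x}')}\sum_{\underline{y}\in\sigma_{A_1}^{-n}(\underline{x})}\exp(S_n(\phi\circ\pi)(\underline{y}))\leq M\max_{\underline{x}\in\pi^{-1}(\underline{x}')}\sum_{\underline{y}\in\sigma_{A_1}^{-n}(\underline{x})}\exp(S_n(\phi\circ\pi)(\underline{y})).
\end{equation*}
Applying $\tfrac{1}{n}\log$, the supremum over $\underline{x}'\in\Sigma_{A_2}^+$, and the limit $n\to+\infty$, together with Proposition~\ref{propSFT}~(iii), yields the reverse inequality.

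The main technical nuance (rather than a genuine obstacle) is the asymmetry between preimage sets on the two sides of $\pi$: the map $\underline{y}\mapsto\pi(\underline{y})$ need not carry $\sigma_{A_1}^{-n}(\underline{x})$ \emph{onto} $\sigma_{A_2}^{-n}(\pi(\underline{x}))$, which is why the lower-bound direction has to route through the union $\bigcup_{\underline{x}\in\pi^{-1}(\underline{x}')}\sigma_{A_1}^{-n}(\underline{x})$ and exploit $\card\pi^{-1}(\underline{x}')\leq M$ a second time. Note that the H\"older regularity of $\pi$ beyond continuity is not used, and neither is the variational principle; the entire argument is a clean two-sided $M$-to-$1$ preimage count combined with Proposition~\ref{propSFT}~(iii).
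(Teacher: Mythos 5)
Your proof is correct, and your upper‐bound argument ($P(\sigma_{A_1},\phi\circ\pi)\leq P(\sigma_{A_2},\phi)$) is essentially the same as the paper's: push preimage sets forward through $\pi$, use $\pi\bigl(\sigma_{A_1}^{-n}(\underline{x})\bigr)\subseteq\sigma_{A_2}^{-n}(\pi(\underline{x}))$ together with the $M$-to-$1$ bound, and invoke Proposition~\ref{propSFT}~(iii). Where you diverge is the reverse inequality. The paper obtains $P(\sigma_{A_1},\phi\circ\pi)\geq P(\sigma_{A_2},\phi)$ by simply citing \cite[Lemma~3.2.8]{PrU10}, the general monotonicity of pressure under factor maps (the factor system has smaller pressure). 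You instead give a self-contained preimage-counting argument that mirrors the upper bound: lift each $\underline{z}\in\sigma_{A_2}^{-n}(\underline{x}')$ to some $\underline{y}_{\underline{z}}\in\pi^{-1}(\underline{z})$, observe that these lifts land in the disjoint union $\bigcup_{\underline{x}\in\pi^{-1}(\underline{x}')}\sigma_{A_1}^{-n}(\underline{x})$, and then pay a second factor of $M$ for the size of $\pi^{-1}(\underline{x}')$. This makes the whole lemma a symmetric two-sided count, which is more transparent and avoids an external reference; the paper's route is shorter and emphasizes that the substantive direction is really the upper bound (the lower one is free for any factor map). Both are fine, and your version arguably makes the role of the uniform bound $M$ clearer.

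One small inaccuracy in your closing remark: you say the H\"older regularity of $\pi$ beyond continuity is not used, but in fact you apply Proposition~\ref{propSFT}~(iii) on the $\Sigma_{A_1}^+$ side with the potential $\phi\circ\pi$, and Proposition~\ref{propSFT}~(iii) as stated requires the potential to be Lipschitz in some metric $d_{\theta'}$. It is precisely the H\"older continuity of $\pi$ (composed with the Lipschitz continuity of $\phi$) that guarantees $\phi\circ\pi\in\Holder{1}\bigl(\Sigma_{A_1}^+,d_{\theta'}\bigr)$ for some $\theta'\in(0,1)$ --- the paper's proof notes this explicitly. So the regularity of $\pi$ is used, just tacitly, in legitimizing the application of Proposition~\ref{propSFT}~(iii) on the covering system.
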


\begin{proof}
We observe that since $\bigl(\Sigma_{A_2}^+, \sigma_{A_2}\bigr)$ is a factor of $\bigl(\Sigma_{A_1}^+, \sigma_{A_1}\bigr)$ with the factor map $\pi$, it follows from \cite[Lemma~3.2.8]{PrU10} that $P  ( \sigma_{A_1},  \phi \circ \pi  ) \geq P  ( \sigma_{A_2},  \phi  )$. It remains to show $P  ( \sigma_{A_1},  \phi \circ \pi  ) \leq P  ( \sigma_{A_2},  \phi  )$.

Denote $M \coloneqq \sup \bigl\{ \card \bigl(\pi^{-1}(\underline{x}) \bigr) \,\big|\, \underline{x}\in \Sigma_{A_2}^+ \bigr\}$.

Note that $\phi \circ \pi \in\Holder{1}\bigl(\Sigma_{A_1}^+, d_{\theta'}\bigr)$ for some $\theta' \in (0,1)$. By Lemma~\ref{propSFT}~(iii), for each $\epsilon>0$, we can choose a sequence $\{\underline{x}^n\}_{n\in\N_0}$ in $\Sigma_{A_1}^+$ such that
\begin{equation}   \label{eqPflmUnifBddToOneFactorPressure}
     \liminf\limits_{n\to +\infty}  \frac{1}{n} \log  \sum\limits_{\underline{y}\in \sigma_{A_1}^{-n}(\underline{x}^n)}
          \exp \biggl( \sum\limits_{i=0}^{n-1} \bigl( \phi \circ \pi \circ \sigma_{A_1}^i \bigr)(\underline{y}) \biggr)
\geq P(\sigma_{A_1}, \phi \circ \pi) - \epsilon.
\end{equation}
Observe that for all $\underline{x}, \underline{y} \in \Sigma_{A_1}^+$, and $n\in \N_0$, if $\sigma_{A_1}^n(\underline{y})=\underline{x}$, then 
$\pi(\underline{x}) = \bigl( \pi\circ \sigma_{A_1}^n \bigr) (\underline{y}) =  \bigl( \sigma_{A_2}^n \circ \pi \bigr) (\underline{y})$. Thus by (\ref{eqPflmUnifBddToOneFactorPressure}) and Proposition~\ref{propSFT}~(iii),
\begin{align*}
        P(\sigma_{A_1}, \phi \circ \pi) - \epsilon
\leq  & \liminf\limits_{n\to +\infty}  \frac{1}{n} \log  \sum\limits_{\underline{y}\in \sigma_{A_1}^{-n}(\underline{x}^n)}
          \exp \biggl( \sum\limits_{i=0}^{n-1} \bigl( \phi \circ \sigma_{A_2}^i \bigr)( \pi(\underline{y} ) ) \biggr) \\
\leq  & \liminf\limits_{n\to +\infty}  \frac{1}{n} \log  \sum\limits_{\underline{y}\in \pi^{-1} \left( \sigma_{A_2}^{-n}(\pi(\underline{x}^n)) \right)}
          \exp \biggl( \sum\limits_{i=0}^{n-1} \bigl( \phi \circ \sigma_{A_2}^i \bigr)( \pi(\underline{y} ) ) \biggr) \\
\leq  & \limsup\limits_{n\to +\infty}  \frac{1}{n} \log \Biggl( M \sum\limits_{\underline{z}\in  \sigma_{A_2}^{-n}(\pi(\underline{x}^n))  }
          \exp \biggl( \sum\limits_{i=0}^{n-1} \bigl( \phi \circ \sigma_{A_2}^i \bigr)(  \underline{z}  ) \biggr)  \Biggr) \\
\leq  & P(\sigma_{A_2}, \phi ) .
\end{align*}
Since $\epsilon>0$ is arbitrary, we get $P(\sigma_{A_1}, \phi \circ \pi) \leq P(\sigma_{A_2}, \phi )$. The proof is complete.
\end{proof}

\smallskip

We will now consider a one-sided subshift of finite type associated to an expanding Thurston map and an invariant Jordan curve on $S^2$ containing $\post f$. The construction of other related symbolic systems will be postponed to Section~\ref{sctDynOnC}. We will need the following technical lemma in the construction of these symbolic systems. 

\begin{lemma}   \label{lmCylinderIsTile}
Let $f\: S^2 \rightarrow S^2$ be an expanding Thurston map with a Jordan curve $\CC\subseteq S^2$ satisfying $f(\CC)\subseteq \CC$ and $\post f\subseteq \CC$. Let $\{X_i\}_{i\in\N_0}$ be a sequence of $1$-tiles in $\X^1(f,\CC)$ satisfying $f(X_i)\supseteq X_{i+1}$ for all $i\in\N_0$. Let $\{e_j\}_{j\in\N_0}$ be a sequence of $1$-edges in $\E^1(f,\CC)$ satisfying $f(e_j)\supseteq e_{j+1}$ for all $j\in\N_0$. Then for each $n\in\N$, we have
\begin{align}  
\bigl( (f|_{X_0})^{-1} \circ (f|_{X_1})^{-1} \circ \cdots \circ (f|_{X_{n-2}})^{-1} \bigr) (X_{n-1}) =  &\bigcap\limits_{i=0}^{n-1}  f^{-i} (X_i) \in \X^n(f,\CC), \label{eqCylinderIsTile}\\
\bigl( (f|_{e_0})^{-1} \circ (f|_{e_1})^{-1} \circ \cdots \circ (f|_{e_{n-2}})^{-1} \bigr) (e_{n-1}) =&\bigcap\limits_{j=0}^{n-1}  f^{-j} (e_j) \in \E^n(f,\CC). \label{eqCylinderIsEdge}
\end{align}
Moreover, both $\bigcap_{i\in\N_0} f^{-i}(X_i)$ and $\bigcap_{j\in\N_0} f^{-j}(e_j)$ are singleton sets.
\end{lemma}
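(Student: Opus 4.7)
The plan is to prove both set identities \eqref{eqCylinderIsTile} and \eqref{eqCylinderIsEdge} simultaneously by induction on $n$, exploiting the cellular Markov partition structure, and then to deduce the singleton statement from the expansion property. The first preparatory observation is that $f(\CC) \subseteq \CC$ is equivalent to $\CC \subseteq f^{-1}(\CC)$, so the $1$-skeleton of $\DD^1(f,\CC)$ contains $\CC$; combining this with Proposition~\ref{propCellDecomp}~(i) makes $(\DD^1(f,\CC), \DD^0(f,\CC))$ a cellular Markov partition for $f$. Consequently, for each $X \in \X^1(f,\CC)$ (resp.\ $e \in \E^1(f,\CC)$), the restriction $f|_X$ (resp.\ $f|_e$) is a homeomorphism of $X$ onto the $0$-tile $f(X)$ (resp.\ of $e$ onto the $0$-edge $f(e)$). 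The hypotheses $f(X_i) \supseteq X_{i+1}$ and $f(e_j) \supseteq e_{j+1}$ are then precisely the compatibility needed for the iterated inverse compositions to be well-defined.

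For the inductive step, denote $Y_n \coloneqq \bigl((f|_{X_0})^{-1} \circ \cdots \circ (f|_{X_{n-2}})^{-1}\bigr)(X_{n-1})$. The base case $n=1$ gives $Y_1 = X_0 = \bigcap_{i=0}^{0} f^{-i}(X_i) \in \X^1(f,\CC)$. For the step, I would apply the inductive hypothesis to the shifted sequence $\{X_{i+1}\}_{i\in\N_0}$ to write $\bigl((f|_{X_1})^{-1} \circ \cdots \circ (f|_{X_{n-2}})^{-1}\bigr)(X_{n-1}) = \bigcap_{i=0}^{n-2} f^{-i}(X_{i+1}) =: Y'_{n-1}$, an $(n-1)$-tile contained in $X_1 \subseteq f(X_0)$. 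Since $Y'_{n-1} \subseteq f(X_0)$ and $f|_{X_0}$ is a homeomorphism, $Y_n = (f|_{X_0})^{-1}(Y'_{n-1}) = X_0 \cap f^{-1}(Y'_{n-1})$, and reindexing gives $Y_n = \bigcap_{i=0}^{n-1} f^{-i}(X_i)$. To upgrade this to $Y_n \in \X^n(f,\CC)$, I would invoke Proposition~\ref{propCellDecomp}~(i) with $k = 1$, which tells us that $f$ is cellular for $(\DD^n(f,\CC), \DD^{n-1}(f,\CC))$; in particular, the homeomorphism $f|_{X_0}$ carries the $n$-cells of $\DD^n(f,\CC)$ inside $X_0$ bijectively onto the $(n-1)$-cells of $\DD^{n-1}(f,\CC)$ inside $f(X_0)$ of equal dimension, so the preimage of the $(n-1)$-tile $Y'_{n-1}$ is an $n$-tile. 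The argument for \eqref{eqCylinderIsEdge} is entirely analogous, with $1$-tiles and $0$-tiles replaced by $1$-edges and $0$-edges.

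For the singleton claim, observe that $K_n \coloneqq \bigcap_{i=0}^{n-1} f^{-i}(X_i)$ forms a decreasing sequence of non-empty closed subsets of $S^2$ (each is an $n$-tile), so $\bigcap_{i\in\N_0} f^{-i}(X_i) = \bigcap_{n\in\N} K_n$ is non-empty by the finite intersection property. By Definition~\ref{defExpanding} together with Remark~\ref{rmExpanding}, $\max\{\diam_d(X) \,|\, X \in \X^n(f,\CC)\} \to 0$ as $n\to +\infty$ for any visual metric $d$, so $\diam_d(K_n)\to 0$ and the intersection is a single point. An identical argument applies to $\bigcap_{j\in\N_0} f^{-j}(e_j)$, using that diameters of $n$-edges also vanish uniformly. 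The main technical point, I expect, is verifying carefully that the preimage of an $(n-1)$-tile under the cellular homeomorphism $f|_{X_0}$ is again a single $n$-tile rather than a union of lower-dimensional $n$-cells; this is where the cellular structure from Proposition~\ref{propCellDecomp}~(i) does the real bookkeeping work, and analogously for the restrictions to $1$-edges in establishing \eqref{eqCylinderIsEdge}.
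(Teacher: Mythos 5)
Your proof is correct and follows essentially the same route as the paper: induction on $n$, applying the inductive hypothesis to the shifted sequence $\{X_{i+1}\}_{i\in\N_0}$, and using the cellular structure from Proposition~\ref{propCellDecomp}~(i) (together with the fact that $f$ is injective on the $1$-tile $X_0$) to conclude that the preimage under $f|_{X_0}$ of the $(n-1)$-tile is a single $n$-tile, then deducing the singleton statement from the vanishing of diameters of $n$-tiles. The only cosmetic difference is that the paper invokes Proposition~\ref{propCellDecomp}~(ii) (which expresses $f^{-1}$ of an $(n-1)$-cell as a union of $n$-cells) to identify $X_0\cap f^{-1}(Y'_{n-1})$ as an $n$-tile, whereas you argue via the cellular homeomorphism $f|_{X_0}$ inducing a dimension-preserving bijection of cells; these two phrasings encode the same fact.
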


\begin{proof}
Let $d$ be a visual metric on $S^2$ for $f$.

We call a sequence $\{c_i\}_{i\in\N_0}$ of subsets of $S^2$ admissible if $f(c_i)\supseteq c_{i+1}$ for all $i\in\N_0$.
  
We are going to prove (\ref{eqCylinderIsTile}) by induction. The proof of the case of edges in (\ref{eqCylinderIsEdge}) is verbatim the same.

For $n=1$, (\ref{eqCylinderIsTile}) holds trivially for each admissible sequence of $1$-tiles $\{X_i\}_{i\in\N_0}$ in $\X^1$.

Assume that (\ref{eqCylinderIsTile}) holds for each admissible sequence of $1$-tiles $\{X_i\}_{i\in\N_0}$ in $\X^1$ and for $n=m$ for some $m\in\N$. We fix such a sequence $\{X_i\}_{i\in\N_0}$. Then $\{X_{i+1}\}_{i\in\N_0}$ is also admissible. By the induction hypothesis, we denote
\begin{equation*}
X^m \coloneqq \bigl( (f|_{X_1})^{-1} \circ (f|_{X_2})^{-1} \circ \cdots \circ (f|_{X_{m-1}})^{-1} \bigr) (X_m) = \bigcap\limits_{i=0}^{m-1} f^{-i}(X_{i+1}) \in \X^m. 
\end{equation*}
Since $f(X_0) \supseteq X_1$ and $X^m\subseteq X_1$, we get from Proposition~\ref{propCellDecomp}~(i) and (ii) that $f$ is injective on $X_1$, and thus $\bigcap\limits_{i=0}^{m}  f^{-i} (X_i) = X_0 \cap f^{-1}(X^m) \in \X^{m+1}$, and $(f|_{X_0})^{-1} (X^m) = X_0 \cap f^{-1}(X^m) \in \X^{m+1}$.

The induction is complete. We have established (\ref{eqCylinderIsTile}).

Note that $\bigcap\limits_{i=0}^{n-1}  f^{-i} (X_i)  \supseteq \bigcap\limits_{i=0}^{n}  f^{-i} (X_i) \in \X^{n+1}$ for each $n\in\N$. By Lemma~\ref{lmCellBoundsBM}~(ii), $\bigcap_{i\in\N_0} f^{-i}(X_i)$ is the intersection of a nested sequence of closed sets with radii convergent to zero, thus it contains exactly one point in $S^2$. Similarly, $\card \bigcap_{j\in\N_0} f^{-i}(e_j) = 1$.
\end{proof}

\begin{prop}   \label{propTileSFT}
Let $f\: S^2 \rightarrow S^2$ be an expanding Thurston map with a Jordan curve $\CC\subseteq S^2$ satisfying $f(\CC)\subseteq \CC$ and $\post f\subseteq \CC$. Let $d$ be a visual metric on $S^2$ for $f$ with expansion factor $\Lambda>1$. Fix $\theta\in(0,1)$. We set $S_{\ti} \coloneqq \X^1(f,\CC)$, and define a transition matrix $A_{\ti}\: S_{\ti}\times S_{\ti} \rightarrow \{0,1\}$ by
\begin{equation*}
A_{\ti}(X,X') = \begin{cases} 1 & \text{if } f(X)\supseteq X', \\ 0  & \text{otherwise}  \end{cases}
\end{equation*}
for $X,X'\in \X^1(f,\CC)$. Then $f$ is a factor of the one-sided subshift of finite type $\bigl(\Sigma_{A_{\ti}}^+, \sigma_{A_{\ti}}\bigr)$ defined by the transition matrix $A_{\ti}$. More precisely, the following diagram commutes
\begin{equation*}
\xymatrix{ \Sigma_{A_{\ti}}^+ \ar[d]_{\pi_{\ti}} \ar[r]^{\sigma_{A_{\ti}}}  & \Sigma_{A_{\ti}}^+ \ar[d]^{\pi_{\ti}} \\ 
            S^2                                  \ar[r]_{f}                 & S^2,}
\end{equation*}
where the factor map $\pi_{\ti}\: \Sigma_{A_{\ti}}^+ \rightarrow S^2$ is a surjective H\"{o}lder continuous map defined by 
\begin{equation}   \label{eqDefTileSFTFactorMap}
\pi_{\ti} \left( \{X_i\}_{i\in\N_0} \right)= x,  \text{ where } \{x\} = \bigcap\limits_{i \in \N_0} f^{-i} (X_i).
\end{equation}
Here $\Sigma_{A_{\ti}}^+$ is equipped with the metric $d_\theta$ defined in Subsection~\ref{subsctSFT}, and $S^2$ is equipped with the visual metric $d$.

Moreover, $\bigl(\Sigma_{A_{\ti}}^+, \sigma_{A_{\ti}} \bigr)$ is topologically mixing and $\pi_{\ti}$ is injective on $\pi_{\ti}^{-1} \Bigl( S^2 \setminus \bigcup\limits_{i\in\N_0} f^{-i}(\CC) \Bigr)$.
\end{prop}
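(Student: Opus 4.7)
The plan is to verify each of the claims in the proposition in turn: well-definedness of $\pi_{\ti}$, the intertwining $\pi_{\ti} \circ \sigma_{A_{\ti}} = f \circ \pi_{\ti}$, surjectivity and H\"older continuity of $\pi_{\ti}$, injectivity of $\pi_{\ti}$ on $\pi_{\ti}^{-1} \bigl( S^2 \setminus \bigcup_i f^{-i}(\CC) \bigr)$, and topological mixing of $\bigl(\Sigma_{A_{\ti}}^+, \sigma_{A_{\ti}} \bigr)$. The key inputs will be Lemma~\ref{lmCylinderIsTile}, which identifies nested intersections of preimages of $1$-tiles with tiles of the refined cell decompositions, and Lemma~\ref{lmCellBoundsBM}, which controls the $d$-diameters of $n$-cells.

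Well-definedness of $\pi_{\ti}$ is immediate from Lemma~\ref{lmCylinderIsTile}. The factor identity is a one-line check: if $f^i(x) \in X_i$ for every $i$, then $f^{j+1}(x) \in X_{j+1}$ for every $j$, so $f(x) = \pi_{\ti}(\sigma_{A_{\ti}}(\{X_i\}))$. For surjectivity, given $x \in S^2$ I would inductively build an admissible sequence by choosing $X_0 \in \X^1(f,\CC)$ containing $x$, and once $X_i$ is chosen, selecting $X_{i+1} \in \X^1(f,\CC)$ containing $f^{i+1}(x)$ with $X_{i+1} \subseteq f(X_i)$ (possible because $f(X_i)$ is a $0$-tile and $\DD^1(f,\CC)$ refines $\DD^0(f,\CC)$). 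For H\"older continuity, if $\{X_i\}$ and $\{X'_i\}$ agree on the first $N$ coordinates, their images both lie in the common $N$-tile $\bigcap_{i=0}^{N-1} f^{-i}(X_i)$, whose $d$-diameter is at most $C \Lambda^{-N}$ by Lemma~\ref{lmCellBoundsBM}~(ii); setting $\alpha \coloneqq (\log \Lambda)/\log(\theta^{-1})$ then yields H\"older continuity with exponent $\alpha$. Injectivity on the stated preimage set is equally short: Proposition~\ref{propCellDecomp}~(iii) identifies $\bigcup_i f^{-i}(\CC)$ with the union of the $1$-skeletons of all $\DD^n(f,\CC)$, so a point $x$ outside this set has $f^i(x)$ in the interior of a unique $1$-tile at every level $i$, which uniquely determines the admissible sequence representing $x$.

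The main technical issue is topological mixing. Applying Lemma~\ref{lmCylinderIsTile} to an admissible sequence $X = X_0, X_1, \dots, X_n$ in $\Sigma_{A_{\ti}}^+$ produces the $(n+1)$-tile $Y \coloneqq \bigcap_{i=0}^{n} f^{-i}(X_i) \subseteq X$ with $f^n(Y) = X_n$; this bijection reduces the statement $A_{\ti}^n(X, X') > 0$ to the existence of an $(n+1)$-tile $Y \subseteq X$ with $f^n(Y) = X'$. The subcollection $\bigl\{ f^n(Y) \,\big|\, Y \in \X^{n+1}(f,\CC),\ Y \subseteq X \bigr\} \subseteq \X^1(f,\CC)$ covers $f^n(X)$, and because distinct $1$-tiles have disjoint nonempty interiors, this subcollection contains $X'$ if and only if $X' \subseteq f^n(X)$. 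So it suffices to exhibit $N \in \N$ such that $f^n(X) = S^2$ for all $n \geq N$ and all $X \in \X^1(f,\CC)$. My plan for this step is to exploit the expansion of $f$: fix $x \in \inte X$ and $r > 0$ with $B_d(x, r) \subseteq X$; by Lemma~\ref{lmCellBoundsBM}, for each sufficiently large $m$ there is an $m$-tile $Z$ with $x \in Z \subseteq B_d(x,r) \subseteq X$, so $f^{m-1}(Z) \in \X^1(f,\CC)$ by Proposition~\ref{propCellDecomp}~(i) and hence $f^{m-1}(X)$ contains an entire $1$-tile; two further iterations should propagate this to all of $S^2$. The delicate point will be carrying out this last spreading step, which amounts to ruling out any hidden periodicity in how $f$ permutes the cells of $\DD^0(f,\CC)$, and will require careful use of expansion together with the constraint $\post f \subseteq \CC$ and the fact that $\deg f \geq 2$.
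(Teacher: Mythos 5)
Your plans for well-definedness, the factor identity, surjectivity, H\"older continuity, and injectivity off $\bigcup_{i\in\N_0} f^{-i}(\CC)$ are all correct and follow essentially the same route as the paper's proof (the paper builds the admissible sequence for surjectivity from a nested chain of $j$-tiles rather than inductively, and proves injectivity by contradiction, but these are cosmetic differences; your H\"older exponent $(\log\Lambda)/\log(\theta^{-1})$ is the right one).

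Topological mixing is the one incomplete step, as you acknowledge. Your reduction to showing $f^n(X) = S^2$ for all $X \in \X^1(f,\CC)$ and $n$ large is correct, and placing a small $m$-tile $Z$ inside $X$ does give $f^m(X) \supseteq f\bigl(f^{m-1}(Z)\bigr) \in \X^0(f,\CC)$. But the ``two further iterations'' step is not automatic: your argument does not rule out the possibility that a $0$-tile is itself a single $1$-tile, in which case one application of $f$ produces only the other $0$-tile rather than all of $S^2$. Handling this requires a separate argument; for instance, such a $0$-tile must have the \emph{opposite} color (otherwise $f$ fixes it homeomorphically and it remains an $n$-tile for every $n$, contradicting expansion), so $f$ maps it onto the other $0$-tile, which has at least $2\deg f - 1 \geq 3$ $1$-tiles and hence both colors by edge-adjacency, giving $f^2$ of any $0$-tile equal to $S^2$. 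The paper sidesteps this case analysis entirely by strengthening the shrinking-tiles step: by Lemma~\ref{lmCellBoundsBM}~(iii), there exists $M$ such that for $m\geq M$ each $0$-tile contains $m$-tiles of \emph{both} colors, since an $m$-tile through a fixed interior point $p$ of $X^0_\c$ and one of its edge-neighbors (which has the opposite color) both lie in $U^m(p) \subseteq \inte\bigl(X^0_\c\bigr)$ once $m$ is large. Then for $n \geq M+1$ and any $X\in\X^1(f,\CC)$, writing $f^n(X)=f^{n-1}(f(X))$ with $f(X)\in\X^0(f,\CC)$ and applying Proposition~\ref{propCellDecomp}~(i) to those $(n-1)$-tiles gives $f^n(X)\supseteq X^0_\b\cup X^0_\w = S^2$ at once, with no case distinction. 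You should incorporate that refinement in place of the two-step spreading.
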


\begin{rem}
We can show that if $f$ has no periodic critical points, then $\pi$ is uniformly bounded-to-one (i.e., there exists $N\in\N_0$ depending only on $f$ such that $\card \left(\pi_{\ti}^{-1}(x)\right) \leq N$ for each $x\in S^2$); if $f$ has at least one periodic critical point, then $\pi_{\ti}$ is uncountable-to-one on a dense set. We will not use this fact in this paper.
\end{rem}

\begin{proof}
We denote by $\{X_i\}_{i\in\N_0} \in \Sigma_{A_{\ti}}^+$ an arbitrary admissible sequence.

Since $f(X_i)\supseteq X_{i+1}$ for each $i\in\N_0$, by Lemma~\ref{lmCylinderIsTile}, the map $\pi_{\ti}$ is well-defined.

Note that for each $m\in \N_0$ and each $\{X'_i\}_{i\in\N_0} \in \Sigma_{A_{\ti}}^+$ with $X_{m+1} \neq X'_{m+1}$ and $X_j=X'_j$ for each integer $j\in[0,m]$, we have $\{ \pi_{\ti}  (\{X_i\}_{i\in\N_0}  ),  \pi_{\ti} ( \{ X'_i \}_{i\in\N_0} ) \} \subseteq \bigcap\limits_{i=0}^{m} f^{-i}(X_i) \in \X^{m+1}$ by Lemma~\ref{lmCylinderIsTile}. Thus it follows from Lemma~\ref{lmCellBoundsBM}~(ii) that $\pi_{\ti}$ is H\"{o}lder continuous.

To see that $\pi_{\ti}$ is surjective, we observe that for each $x\in S^2$, we can find a sequence $\bigl\{ X^j(x) \bigr\}_{j\in\N}$ of tiles such that $X^j(x) \in \X^j$, $x\in X^j(x)$, and $X^j(x) \supseteq X^{j+1}(x)$ for each $j\in\N$. Then it is clear that $\bigl\{  f^i\bigl( X^{i+1}(x) \bigr) \bigr\}_{i\in\N_0} \in \Sigma_{A_{\ti}}^+$ and $\pi_{\ti} \Bigl(  \bigl\{ f^i\bigl( X^{i+1}(x) \bigr) \bigr\}_{i\in\N_0} \Bigr) = x$.

To check that $\pi_{\ti} \circ \sigma_{A_{\ti}} = f \circ \pi_{\ti}$, it suffices to observe that
\begin{align*}
            \{ (f\circ \pi_{\ti}) (\{X_i\}_{i\in\N_0}) \}
  =       & f \biggl(  \bigcap\limits_{j\in\N_0} f^{-j} (X_j) \biggr)
\subseteq  \bigcap\limits_{j\in\N} f^{-(j-1)} (X_j) \\
  =       & \bigcap\limits_{i\in\N_0} f^{-i} (X_{i+1})  
  =         \{(\pi_{\ti} \circ \sigma_{A_{\ti}} ) (\{X_i\}_{i\in\N_0})  \}.
\end{align*}

To show that $\pi_{\ti}$ is injective on $\pi_{\ti}^{-1}(S^2\setminus \E)$, where we denote $\E \coloneqq \bigcup\limits_{i\in\N_0} f^{-i}(\CC)$, we fix another arbitrary $\{Y_i\}_{i\in\N_0} \in \Sigma_{A_{\ti}}^+$ with $\{X_i\}_{i\in\N_0} \neq \{Y_i\}_{i\in\N_0}$. Suppose that $x = \pi_{\ti} (\{X_i\}_{i\in\N_0}) = \pi_{\ti} (\{Y_i\}_{i\in\N_0}) \notin \E$. Choose $n\in\N_0$ with $X_n\neq Y_n$. Then by Lemma~\ref{lmCylinderIsTile}, $x\in \bigcap\limits_{i=0}^n f^{-i} (X_i) \in \X^{n+1}$ and $x\in \bigcap\limits_{i=0}^n f^{-i} (Y_i) \in \X^{n+1}$. Thus $f^n(x) \in X_n \cap Y_n \subseteq f^{-1}(\CC)$ by Proposition~\ref{propCellDecomp}~(v). This is a contradiction to the assumption that $x\notin \E$. 

We finally demonstrate that $\bigl( \Sigma_{A_{\ti}}^+, \sigma_{A_{\ti}} \bigr)$ is topologically mixing. By Lemma~\ref{lmCellBoundsBM}~(iii), there exists a number $M\in\N$ such that for each $m\geq M$, there exist white $m$-tiles $X^m_\w, Y^m_\w \in \X^m_\w$ and black $m$-tiles $X^m_\b, Y^m_\b \in \X^m_\b$ satisfying $X^m_\w \cup X^m_\b \subseteq X^0_\b$ and $Y^m_\w \cup Y^m_\b \subseteq X^0_\w$, where $X^0_\b$ and $X^0_\w$ are the black $0$-tile and the white $0$-tile, respectively. Thus for all $X,X'\in\X^1$, and all $n\geq M+1$, we have $f^n(X) = f^{n-1}(f(X)) \supseteq X^0_\b \cup X^0_\w = S^2 \supseteq X'$.
\end{proof}

\subsection{Dynamical zeta functions and Dirichlet series}   \label{subsctDynZetaFn}
Let $g\: X\rightarrow X$ be a continuous map on a compact metric space $(X,d)$. Let $\psi\in\CCC(X,\C)$ be a complex-valued continuous function on $X$. We write
\begin{equation}  \label{eqDefZn}
Z_{g,\,\minus\psi}^{(n)} (s) \coloneqq  \sum\limits_{x\in P_{1,g^n}} e^{-s S_n \psi(x)}  , \qquad n\in\N \text{ and }  s\in\C.
\end{equation}
Recall that $P_{1,g^n}$ defined in (\ref{eqDefSetPeriodicPts}) is the set of fixed points of $g^n$, and $S_n\psi$ is defined in (\ref{eqDefSnPt}). We denote by the formal infinite product
\begin{equation}  \label{eqDefZetaFn}
\zeta_{g,\,\minus\psi} (s) \coloneqq \exp \Biggl( \sum\limits_{n=1}^{+\infty} \frac{Z_{g,\,\minus\psi}^{(n)} (s)}{n}  \Biggr) 
 = \exp \Biggl( \sum\limits_{n=1}^{+\infty} \frac{1}{n} \sum\limits_{x\in P_{1,g^n}} e^{-s S_n \psi(x)} \Biggr), \qquad s\in\C
\end{equation}
the \defn{dynamical zeta function} for the map $g$ and the potential $\psi$. 

More generally, we can define dynamical Dirichlet series as analogues of Dirichlet series in analytic number theory.

\begin{definition}   \label{defDynDirichletSeries}
Let $g\: X\rightarrow X$ be a continuous map on a compact metric space $(X,d)$. Let $\psi\in\CCC(X,\C)$ be a complex-valued continuous function on $X$, and $w\: X\rightarrow \C$ be a complex-valued function on $X$. We denote by the formal infinite product
\begin{equation}  \label{eqDefDynDirichletSeries}
\DS_{g,\,\minus\psi,\,w} (s) \coloneqq \exp \Biggl( \sum\limits_{n=1}^{+\infty} \frac{1}{n} \sum\limits_{x\in P_{1,g^n}} e^{-s S_n \psi(x)} \prod\limits_{i=0}^{n-1} w \bigl( g^i(x) \bigr) \Biggr), \qquad s\in\C
\end{equation}
the \defn{dynamical Dirichlet series} with coefficient $w$ for the map $g$ and the potential $\psi$.

\end{definition}

\begin{rem}
Dynamical zeta functions are special cases of dynamical Dirichlet series, more precisely, $\zeta_{g,\,\minus\psi}  =  \DS_{g,\,\minus\psi,\,\mathbbm{1}_X}$. Dynamical Dirichlet series defined above can be considered as analogues of Dirichlet series equipped with a strongly multiplicative arithmetic function in analytic number theory. We can define more general dynamical Dirichlet series by replacing $w$ by $w_n$, where $w_n\: X \rightarrow \C$ is a complex-valued function on $X$ for each $n\in\N$. We will not need such generality in this paper.
\end{rem}

\begin{lemma}   \label{lmDynDirichletSeriesConv_general}
Let $g\:X\rightarrow X$ be a continuous map on a compact metric space $(X,d)$. Let $\varphi\in \CCC(X)$ be a real-valued continuous function on $X$, and $w\: X\rightarrow \C$ be a complex-valued function on $X$. Given $a\in\R$. Suppose that the following conditions are satisfied:
\begin{enumerate}
\smallskip
\item[(i)] $\card  P_{1,g^n}  < +\infty$ for all $n\in\N$.

\smallskip
\item[(ii)] $\limsup\limits_{n\to+\infty} \frac{1}{n} \log  \sum\limits_{x\in P_{1,g^n}} \exp ( - a S_n \varphi(x)) \prod\limits_{i=0}^{n-1} \Absbig{w \bigl( g^i(x) \bigr)} < 0$.
\end{enumerate}

Then for each $s\in\C$ with $\Re(s)=a$, the dynamical Dirichlet series $\DS_{g,\,\minus\varphi,\,w}(s)$ as an infinite product converges uniformly and absolutely, and
\begin{equation}   \label{eqZetaFnOrbForm}
\DS_{g,\,\minus\varphi,\,w} (s) = \prod\limits_{\tau\in \Orb(g)} \biggl( 1- e^{ - s l_\varphi (\tau)} \prod\limits_{x\in\tau} w(x) \biggr)^{-1},
\end{equation} 
where $l_\varphi (\tau) \coloneqq \sum\limits_{x\in\tau} \varphi(x)$.

If, in addition, we assume that $\varphi$ is eventually positive, then $\DS_{g,\,\minus\varphi,\,w}(s)$ converges uniformly and absolutely to a non-vanishing bounded holomorphic function on the closed half-plane $\overline{\H}_a = \{s\in\C \,|\, \Re(s) \geq a \}$, and (\ref{eqZetaFnOrbForm}) holds on $\overline{\H}_a$.
\end{lemma}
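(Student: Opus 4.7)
The approach is to reorganise $\DS_{g,\,\minus\varphi,\,w}(s)$ as an Euler-type product indexed by primitive orbits. Every $x\in P_{1,g^n}$ belongs to a unique $\tau\in\Orb(k,g)$ with $k\mid n$, and for such $x$ the quantities $S_n\varphi(x)=(n/k)l_\varphi(\tau)$ and $\prod_{i=0}^{n-1}w(g^i(x))=\bigl(\prod_{y\in\tau}w(y)\bigr)^{n/k}$ depend only on $\tau$. Setting $a_\tau(s)\coloneqq e^{-sl_\varphi(\tau)}\prod_{y\in\tau}w(y)$, the inner sum in \eqref{eqDefDynDirichletSeries} becomes $\sum_{k\mid n}\sum_{\tau\in\Orb(k,g)}k\,a_\tau(s)^{n/k}$. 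Substituting $n=km$ and exchanging the two outer summations formally produces
\[
\sum_{n=1}^{+\infty}\frac{1}{n}\sum_{x\in P_{1,g^n}} e^{-s S_n\varphi(x)}\prod_{i=0}^{n-1}w(g^i(x))
=\sum_{\tau\in\Orb(g)}\sum_{m=1}^{+\infty}\frac{a_\tau(s)^m}{m}
=-\sum_{\tau\in\Orb(g)}\log(1-a_\tau(s)),
\]
and exponentiating gives the product identity \eqref{eqZetaFnOrbForm}.

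To justify the rearrangement on the line $\Re(s)=a$, I would use hypothesis (ii) to produce $C>0$ and $\rho\in(0,1)$ with $\sum_{x\in P_{1,g^n}}e^{-aS_n\varphi(x)}\prod_i |w(g^i(x))|\leq C\rho^n$ for every $n\geq 1$. Since $|e^{-sS_n\varphi(x)}|=e^{-aS_n\varphi(x)}$ when $\Re(s)=a$, the double series converges absolutely by comparison with $\sum_n C\rho^n/n$. Isolating the diagonal contribution $k=n$ in the orbit decomposition yields $\sum_{\tau\in\Orb(k,g)}k|a_\tau(a)|\leq C\rho^k$; moreover, considering the contribution of a single orbit $\tau$ at $n=km$ (which alone contributes at least $k|a_\tau(a)|^m$) shows that hypothesis (ii) forces $|a_\tau(a)|<1$ for every primitive orbit $\tau$. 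Summing gives $\sum_{\tau\in\Orb(g)}|a_\tau(a)|\leq C\sum_{k\geq 1}\rho^k/k<+\infty$, so Fubini validates the manipulation above, and both the additive and the product expressions for $\DS_{g,\,\minus\varphi,\,w}(s)$ converge absolutely and uniformly on the line.

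For the strengthened conclusion I first need $l_\varphi(\tau)>0$ for every primitive orbit $\tau\in\Orb(k,g)$: choose $m$ with $mk\geq N$, where $N$ is such that $S_n\varphi>0$ on $X$ for $n\geq N$; then for any $x\in\tau$, $m\,l_\varphi(\tau)=S_{mk}\varphi(x)>0$. Consequently, for $\Re(s)\geq a$,
\[
|a_\tau(s)|=e^{-(\Re(s)-a)l_\varphi(\tau)}|a_\tau(a)|\leq |a_\tau(a)|,
\]
so the estimates of the previous paragraph extend verbatim to the entire closed half-plane $\overline{\H}_a$. This delivers uniform absolute convergence of both forms of $\DS_{g,\,\minus\varphi,\,w}(s)$ on $\overline{\H}_a$, with each factor $(1-a_\tau(s))^{-1}$ non-vanishing and bounded by $(1-|a_\tau(a)|)^{-1}$. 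Separating off the finitely many orbits with $|a_\tau(a)|>1/2$ (each a bounded non-vanishing holomorphic factor) and applying $|\log(1-z)|\leq 2|z|$ for $|z|\leq 1/2$ to the rest gives a uniform bound $\bigl|\log\DS_{g,\,\minus\varphi,\,w}(s)\bigr|\leq 2\sum_\tau|a_\tau(a)|+O(1)$ on $\overline{\H}_a$. To promote this to holomorphicity on an open neighbourhood of $\overline{\H}_a$ (as required by the paper's convention on holomorphy on closed sets), continuity of $\varphi$ on the compact set $X$ gives $l_\varphi(\tau)\leq k\Norm{\varphi}_{\CCC^0(X)}$, so the same comparison dominates $\sum_\tau|a_\tau(s)|$ on any strip $\Re(s)>a-\delta$ with $\delta<\Norm{\varphi}_{\CCC^0(X)}^{-1}\log(1/\rho)$.

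The manipulation is essentially formal once the orbit decomposition is in place; the only mildly delicate points are the deduction of $l_\varphi(\tau)>0$ from eventual positivity (which requires passing to a suitable multiple of the period) and the accounting that links hypothesis (ii) to the per-orbit bound $|a_\tau(a)|<1$ and to the summability of $\sum_\tau|a_\tau(a)|$.
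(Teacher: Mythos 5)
Your proof is correct and follows essentially the same route as the paper's: the rearrangement of the additive form of $\DS_{g,\,\minus\varphi,\,w}$ into an Euler product over primitive periodic orbits, with condition~(ii) supplying the geometric decay $\leq C\rho^n$ that justifies the manipulation, and with eventual positivity giving $S_n\varphi>0$ on $P_{1,g^n}$ (hence $l_\varphi(\tau)>0$) so that the line estimate dominates the whole half-plane. You also supply some details the paper leaves implicit, notably that (ii) forces $|a_\tau(a)|<1$ for every primitive orbit, that $\sum_\tau|a_\tau(a)|<+\infty$, and the open-strip argument via $l_\varphi(\tau)\leq k\Norm{\varphi}_{\CCC^0(X)}$ which matches the paper's convention of holomorphy on a closed set.
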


Recall $\Orb(g)$ denotes the set of all primitive periodic orbits of $g$ (see (\ref{eqDefSetAllPeriodicOrbits})). We recall that an infinite product of the form $\exp \sum a_i$, $a_i\in\C$, converges uniformly (resp.\ absolutely) if $\sum a_i$ converges uniformly (resp.\ absolutely).

\begin{rem}   \label{rmDynDirichletSeriesZetaFn}
It is often possible to verify condition~(ii) by showing $P(g, - a\varphi) <0$ and $P(g, - a\varphi) \geq \limsup\limits_{n\to+\infty} \frac{1}{n} \log  \sum\limits_{x\in P_{1,g^n}} \exp ( - a S_n \varphi(x)) \prod\limits_{i=0}^{n-1} \Absbig{w \bigl( g^i(x) \bigr)}$. This is how we are going to use Lemma~\ref{lmDynDirichletSeriesConv_general} in this paper. In particular, if $\card X < +\infty$, then it follows immediately from (\ref{defTopPressure}) that
\begin{equation*}
    P(g, - a\varphi) 
= \lim\limits_{n\to+\infty} \frac{1}{n} \log  \sum\limits_{x\in X} \exp ( - a S_n \varphi(x))  
\geq \limsup\limits_{n\to+\infty} \frac{1}{n} \log  \sum\limits_{x\in P_{1,g^n}} \exp ( - a S_n \varphi(x)) .
\end{equation*}
\end{rem}

\begin{proof}
Fix $s\in\C$ with $\Re(s)=a$.

By condition~(ii), we can choose constants $N\in\N$ and $\beta\in (0,1)$ such that 
\begin{equation*}
\sum\limits_{x\in P_{1,g^n}} \exp( - a S_n\varphi(x)) \prod\limits_{i=0}^{n-1} \Absbig{w \bigl( g^i(x) \bigr)}  \leq \beta^n
\end{equation*}
for each integer $n\geq N$. Thus
\begin{align*}
&    \sum\limits_{n=N}^{+\infty} \frac{1}{n} \sum\limits_{x\in P_{1,g^n}}  \Absbigg{\exp( - s S_n\varphi(x))  \prod\limits_{i=0}^{n-1}w \bigl(g^i(x)\bigr)}\\
&\qquad =  \sum\limits_{n=N}^{+\infty} \frac{1}{n} \sum\limits_{x\in P_{1,g^n}}       \exp( - a S_n\varphi(x)) 
                                   \prod\limits_{i=0}^{n-1} \Absbig{w \bigl( g^i(x) \bigr)}  
 \leq  \sum\limits_{n=N}^{+\infty}  \beta^n.
\end{align*}
Combining the above inequalities with condition~(i), we can conclude that $\DS_{g,\,\minus\varphi,\,w} (s)$ converges absolutely. Moreover,
\begin{align*}
      \DS_{g,\,\minus\varphi,\,w} (s)
 = &  \exp \Biggl( \sum\limits_{n=1}^{+\infty} \frac{1}{n} \sum\limits_{x\in P_{1,g^n}} e^{ - s S_n \varphi(x)} 
                                                           \prod\limits_{i=0}^{n-1} w \bigl( g^i(x) \bigr)   \Biggr)  \\ 
 = &  \exp \Biggl( \sum\limits_{m=1}^{+\infty} \frac{1}{m} \sum\limits_{x\in P_{m,g}} \sum\limits_{k=1}^{+\infty} \frac{1}{k} e^{-sk S_m \varphi(x)}
                                                           \prod\limits_{i=0}^{kn-1} w \bigl( g^i(x) \bigr)  \Biggr) \\
 = &  \exp \Biggl( \sum\limits_{\tau\in\Orb(g)} \sum\limits_{k=1}^{+\infty} \frac{1}{k} e^{ - sk l_\varphi(\tau)} 
                                                           \prod\limits_{y\in\tau}  w^k  (y) \Biggr) \\
 = &  \exp \Biggl(-\sum\limits_{\tau\in\Orb(g)} \log \biggl( 1- e^{ - s l_\varphi(\tau)} \prod\limits_{y\in\tau} w(y)  \biggr) \Biggr) \\
 = &  \prod\limits_{\tau\in \Orb(g)} \biggl( 1- e^{ - s l_\varphi (\tau)} \prod\limits_{y\in\tau} w(y) \biggr)^{-1}.
\end{align*}

Now we assume, in addition, that $\varphi$ is eventually positive. Then it is clear from the definition that $S_n\varphi(x) > 0$ for all $n\in\N$ and $x\in P_{1,g^n}$. For each $z \in \overline{\H}_a$ and each $m\in\N$,
\begin{equation*}
  \sum\limits_{n=m}^{+\infty} \frac{1}{n} \sum\limits_{x\in P_{1,g^n}}  \Absbigg{\exp( - z S_n\varphi(x))  \prod\limits_{i=0}^{n-1}w \bigl(g^i(x)\bigr)}  
  \leq \sum\limits_{n=m}^{+\infty} \frac{1}{n} \sum\limits_{x\in P_{1,g^n}}       \exp( - a S_n\varphi(x))
                                                                                         \prod\limits_{i=0}^{n-1} \Absbig{w \bigl( g^i(x) \bigr)}.
\end{equation*}
Hence $\DS_{g,\,\minus\varphi,\,w}(z)$ converges uniformly and absolutely to a non-vanishing bounded holomorphic function on the closed half-plane $\overline{\H}_a$.

Finally, to verify (\ref{eqZetaFnOrbForm}) for $z\in \overline{\H}_a$ when $\varphi$ is eventually positive, it suffices to apply (\ref{eqZetaFnOrbForm}) to $a\coloneqq \Re(z)$ with the observation that
\begin{align*}
&            \limsup\limits_{n\to+\infty} \frac{1}{n} \log  \sum\limits_{x\in P_{1,g^n}} \exp ( - \Re(z) S_n \varphi(x)) 
                                                                                         \prod\limits_{i=0}^{n-1} \Absbig{w \bigl( g^i(x) \bigr)}  \\
&\qquad \leq \limsup\limits_{n\to+\infty} \frac{1}{n} \log  \sum\limits_{x\in P_{1,g^n}} \exp ( - a      S_n \varphi(x))
                                                                                         \prod\limits_{i=0}^{n-1} \Absbig{w \bigl( g^i(x) \bigr)}
 < 0,
\end{align*}
i.e., condition~(ii) holds with $a = \Re(z)$.
\end{proof}

\smallskip

We now consider dynamical zeta functions and Dirichlet series associated to expanding Thurston maps.

\begin{prop}   \label{propZetaFnConv_s0}
Let $f\: S^2 \rightarrow S^2$ be an expanding Thurston map with a Jordan curve $\CC\subseteq S^2$ satisfying $f(\CC) \subseteq \CC$ and $\post f \subseteq \CC$. Let $d$ be a visual metric on $S^2$ for $f$ with expansion factor $\Lambda>1$. Let $\phi \in \Holder{\alpha}(S^2,d)$ be an eventually positive real-valued H\"{o}lder continuous function with an exponent $\alpha\in (0,1]$. Denote by $s_0$ the unique positive number with $P(f,-s_0 \phi) = 0$. Let $\bigl(\Sigma_{A_{\ti}}^+,\sigma_{A_{\ti}}\bigr)$ be the one-sided subshift of finite type associated to $f$ and $\CC$ defined in Proposition~\ref{propTileSFT}, and let $\pi_{\ti}\: \Sigma_{A_{\ti}}^+\rightarrow S^2$ be the factor map defined in (\ref{eqDefTileSFTFactorMap}). Denote by $\deg_f(\cdot)$ the local degree of $f$. Then the following statements are satisfied:
\begin{enumerate}
\smallskip
\item[(i)] $P(\sigma_{A_{\ti}}, \varphi\circ\pi_{\ti}) = P(f,\varphi)$ for each $\varphi \in \Holder{\alpha}(S^2,d)$ . In particular, for an arbitrary number $t\in\R$, we have $P(\sigma_{A_{\ti}}, - t \phi \circ \pi_{\ti}) = 0$ if and only if $t=s_0$.

\smallskip
\item[(ii)] All three infinite products $\zeta_{f,\,\minus \phi}$, $\zeta_{\sigma_{A_{\ti}},\,\minus\phi\circsmall\pi_{\ti}}$, and $\DS_{f,\,\minus\phi,\,\deg_f}$ converge uniformly and absolutely to respective non-vanishing holomorphic functions on each closed half-plane $\overline{\H}_a=\{ s\in\C \,|\, \Re(s) \geq a \}$, where $a\in\R$ satisfies $a>s_0$.

\smallskip
\item[(iii)] For all $s\in \C$ with $\Re(s)>s_0$, we have
\begin{align}
\zeta_{f,\,\minus\phi} (s)           & = \prod\limits_{\tau\in \Orb(f)} \biggl( 1- \exp\biggl( - s \sum\limits_{y\in\tau} \phi(y) \biggr) \biggr)^{-1},  \label{eqZetaFnOrbitForm_ThurstonMap}\\
\DS_{f,\,\minus\phi,\,\deg_f}(s)  & = \prod\limits_{\tau\in \Orb(f)} \biggl( 1- \exp\biggl( - s \sum\limits_{y\in\tau} \phi(y) \biggr) \prod\limits_{z\in\tau} \deg_f(z) \biggr)^{-1}, \label{eqZetaFnOrbitForm_ThurstonMapDegree}\\
\zeta_{\sigma_{A_{\ti}},\,\minus\phi\circsmall\pi_{\ti}} (s)    & = \prod\limits_{\tau\in \Orb(\sigma_{A_{\ti}}) } \biggl( 1- \exp\biggl( - s \sum\limits_{y\in\tau} \phi\circ\pi_{\ti}(y) \biggr) \biggr)^{-1}  \label{eqZetaFnOrbitForm_Subshift}.
\end{align}
\end{enumerate}
\end{prop}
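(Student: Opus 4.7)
The proof proceeds by establishing (i) first, then deriving (ii) and (iii) from (i) via Lemma~\ref{lmDynDirichletSeriesConv_general}. The main content is in (i): one needs $P(\sigma_{A_{\ti}}, \varphi \circ \pi_{\ti}) = P(f, \varphi)$ for each $\varphi\in\Holder{\alpha}(S^2,d)$. The inequality $\geq$ is the standard factor-map inequality from \cite[Lemma~3.2.8]{PrU10}, but the reverse is non-trivial because $\pi_{\ti}$ is generally not uniformly bounded-to-one (this fails precisely when $f$ has periodic critical points), so Lemma~\ref{lmUnifBddToOneFactorPressure} does not apply directly.

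The plan for (i) is to compare the pressure characterizations from Corollary~\ref{corRR^nConvToTopPressureUniform} and Proposition~\ref{propSFT}~(iv), the latter of which applies since $\bigl(\Sigma_{A_{\ti}}^+, \sigma_{A_{\ti}}\bigr)$ is topologically mixing by Proposition~\ref{propTileSFT}. The strategy is to prove the exact identity
\begin{equation*}
\sum_{\underline{y} \in \sigma_{A_{\ti}}^{-n}(\underline{x})} \exp \bigl( S_n (\varphi \circ \pi_{\ti})(\underline{y}) \bigr)
= \sum_{y \in f^{-n}(x)} \deg_{f^n}(y) \exp ( S_n \varphi(y) )
\end{equation*}
for each $n\in\N$, each $x \in S^2 \setminus \CC$, and each $\underline{x} = \{X_i\}_{i\in\N_0}\in\pi_{\ti}^{-1}(x)$. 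By Lemma~\ref{lmCylinderIsTile}, the admissible prefixes of $\underline{y} \in \sigma_{A_{\ti}}^{-n}(\underline{x})$ are in bijection with those $n$-tiles $Y^n\in\X^n(f,\CC)$ whose image under $f^n$ is the unique $0$-tile containing $X_0$ (this encodes the constraint $A_{\ti}(Y_{n-1}, X_0)=1$ and automatic admissibility of the remaining transitions, since $f^i(Y^n)$ lies in a unique $0$-tile for each $0\leq i\leq n-1$). Since $f^n|_{Y^n}$ is a homeomorphism onto that $0$-tile (Proposition~\ref{propCellDecomp}~(i)) and $x$ lies in its interior, each such $Y^n$ contributes a unique $y = (f^n|_{Y^n})^{-1}(x) \in Y^n \cap f^{-n}(x)$ with $S_n(\varphi \circ \pi_{\ti})(\underline{y}) = S_n \varphi(y)$. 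Regrouping by $y$, the fiber size equals $\deg_{f^n}(y)$ in all cases: for $y \in \V^n(f,\CC)$, Remark~\ref{rmFlower} implies the $2\deg_{f^n}(y)$ $n$-tiles incident to $y$ split evenly between those mapped by $f^n$ to $X^0_\b$ and to $X^0_\w$, so exactly $\deg_{f^n}(y)$ of them map to the chosen $0$-tile; for $y \notin \V^n(f,\CC)$ the fiber has size $1$ and $\deg_{f^n}(y) = 1$, since $\crit(f^n) = \bigcup_{i=0}^{n-1} f^{-i}(\crit f) \subseteq \bigcup_{i=0}^{n-1} f^{-i}(\V^1) \subseteq \V^n$ by Proposition~\ref{propCellDecomp}~(iii). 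Taking $\frac{1}{n}\log$ and passing to the limit then yields $P(f,\varphi) = P(\sigma_{A_{\ti}}, \varphi\circ\pi_{\ti})$, and the assertion about $s_0$ follows from Corollary~\ref{corS0unique}.

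For (ii) and (iii), I will apply Lemma~\ref{lmDynDirichletSeriesConv_general} to each of the three series with $a > s_0$. Hypothesis (i) of that lemma is immediate from Theorem~\ref{thmETMBasicProperties}~(ii) and Proposition~\ref{propSFT}~(i). For hypothesis (ii), in each case one identifies the relevant limsup with (or bounds it by) $P(f, -a\phi)$, which is strictly negative by Corollary~\ref{corS0unique}: for $\zeta_{f,\,\minus\phi}$ use Proposition~\ref{propTopPressureDefPeriodicPts} with $w_n \equiv 1$; for $\DS_{f,\,\minus\phi,\,\deg_f}$ use (\ref{eqLocalDegreeProduct}) to rewrite $\prod_{i=0}^{n-1} \deg_f(f^i(x)) = \deg_{f^n}(x)$ and then apply Proposition~\ref{propTopPressureDefPeriodicPts} with $w_n = \deg_{f^n}$; for $\zeta_{\sigma_{A_{\ti}},\,\minus\phi\circsmall\pi_{\ti}}$ combine Proposition~\ref{propSFT}~(ii) with part~(i), which gives the limsup bounded by $P(\sigma_{A_{\ti}}, -a\phi\circ\pi_{\ti}) = P(f,-a\phi) < 0$. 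Since $\phi$ is eventually positive (and hence so is $\phi\circ\pi_{\ti}$), the eventual-positivity clause of Lemma~\ref{lmDynDirichletSeriesConv_general} yields the non-vanishing holomorphic extensions on $\overline{\H}_a$ together with the product formulas (\ref{eqZetaFnOrbitForm_ThurstonMap}), (\ref{eqZetaFnOrbitForm_ThurstonMapDegree}), and (\ref{eqZetaFnOrbitForm_Subshift}).

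The hardest point is the fiber-counting step in (i): one must argue that for every preimage $y$, whether on the $n$-skeleton or not, the number of admissible prefixes mapping to $y$ coincides with $\deg_{f^n}(y)$. The key device is the chain of inclusions $\crit f \subseteq f^{-1}(\post f) = \V^1 \subseteq \V^n$, which propagates to $\crit(f^n) \subseteq \V^n$ and forces the two counts to agree uniformly. Provided this geometric bookkeeping is handled carefully, the remainder of the proof is a routine application of the thermodynamic machinery already developed in the excerpt.
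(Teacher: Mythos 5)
Your proof of part (i) takes a genuinely different route from the paper's. The paper chooses $x\in S^2\setminus\bigcup_{i\in\N_0}f^{-i}(\CC)$ (invoking Theorem~\ref{thmEquilibriumState}~(iii) for existence), notes that $\pi_{\ti}$ is a bijection between $\pi_{\ti}^{-1}(B)$ and the backward orbit $B$ of $x$, and then transports the preimage sum across this bijection directly. You instead choose $x\in S^2\setminus\CC$ and prove an exact counting identity between admissible prefixes and preimages. Both are correct; yours is slightly more elementary in that it avoids Theorem~\ref{thmEquilibriumState}~(iii) and the full backward-orbit injectivity. Parts~(ii) and (iii) follow the same route as the paper's proof (Lemma~\ref{lmDynDirichletSeriesConv_general} with the pressure bounds from Proposition~\ref{propTopPressureDefPeriodicPts} and Proposition~\ref{propSFT}).

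However, you seem not to have noticed that your own choice of basepoint already collapses the ``hardest point'' you identify. For $x\in S^2\setminus\CC$ and any $y\in f^{-n}(x)$, the case $y\in\V^n(f,\CC)$ is vacuous: by Proposition~\ref{propCellDecomp}~(iii), $\V^n=f^{-n}(\post f)$ and $\post f\subseteq\CC$, so $y\in\V^n$ would force $x=f^n(y)\in\CC$. Consequently $\deg_{f^n}(y)=1$ for every such $y$ (since $\crit f^n\subseteq\V^n$, as you correctly derive), the map $Y^n\mapsto(f^n|_{Y^n})^{-1}(x)$ is a bijection onto $f^{-n}(x)$ whose fibers are all singletons, and the identity you set out to prove is the degree-free identity in disguise. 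The invocation of Remark~\ref{rmFlower} and the alternation of tile colors around a vertex is correct mathematics but never gets used; the ``geometric bookkeeping'' is trivial, not delicate. Streamlining the argument accordingly (or pointing out explicitly that the $y\in\V^n$ case is empty) would make the proof both shorter and more transparent about why it works.
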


\begin{proof}
We first claim that for each $\varphi \in \Holder{\alpha}(S^2,d)$, $P(\sigma_{A_{\ti}}, \varphi\circ\pi_{\ti}) = P(f,\varphi)$. Statement~(i) follows from this claim and Corollary~\ref{corS0unique} immediately.

Indeed, by Theorem~\ref{thmEquilibriumState}~(iii), we can choose $x\in S^2 \setminus \bigcup\limits_{i\in\N_0} f^{-i}(\CC)$. By Proposition~\ref{propTileSFT}, the map $\pi_{\ti}$ is H\"{o}lder continuous on $\Sigma_{A_{\ti}}^+$ and injective on $\pi_{\ti}^{-1}(B)$, where $B= \{x\} \cup \bigcup\limits_{i\in\N} f^{-i}(x)$. So we can consider $\pi_{\ti}^{-1}$ as a function from $B$ to $\pi_{\ti}^{-1}(B)$ in the calculation below. By Corollary~\ref{corRR^nConvToTopPressureUniform}, Proposition~\ref{propSFT}~(iv), and the fact that $\bigl( \Sigma_{A_{\ti}}^+, \sigma_{A_{\ti}} \bigr)$ is topologically mixing (see Proposition~\ref{propTileSFT}),
\begin{align*}
    P(\sigma_{A_{\ti}} , \varphi\circ\pi_{\ti}) 
= & \lim\limits_{n\to+\infty} \frac{1}{n} \log \sum\limits_{\underline{y}\in \sigma_{A_{\ti}}^{-n}\left(\pi_{\ti}^{-1}(x)\right)} \exp \bigl(S_n^{\sigma_{A_{\ti}}}  (\varphi\circ\pi_{\ti})(\underline{y})\bigr) \\
= & \lim\limits_{n\to+\infty} \frac{1}{n} \log \sum\limits_{\underline{y}\in  \pi_{\ti}^{-1}( f^{-n} (x))  }      \exp \bigl(S_n^{\sigma_{A_{\ti}}}  (\varphi\circ\pi_{\ti}) (\underline{y})\bigr) \\
= & \lim\limits_{n\to+\infty} \frac{1}{n} \log \sum\limits_{z\in    f^{-n} (x)   }              \exp \bigl(S_n^f  \varphi           (z) \bigr)  
=   P(f,\varphi).
\end{align*}
The claim is now established.

\smallskip
Next, we observe that by Corollary~\ref{corS0unique}, for each $a>s_0$,
\begin{equation}   \label{eqPfpropZetaFnConv_s0}
 P(\sigma_{A_{\ti}}, -a\phi\circ \pi_{\ti})  = P(f,-a\phi) < 0.
\end{equation}

By Theorem~\ref{thmETMBasicProperties}~(ii), Proposition~\ref{propTopPressureDefPeriodicPts}, and Proposition~\ref{propSFT}~(i) and (ii), 
we can apply Lemma~\ref{lmDynDirichletSeriesConv_general} and Remark~\ref{rmDynDirichletSeriesZetaFn} to establish statements~(ii) and (iii). 
\end{proof}

\section{The Assumptions}      \label{sctAssumptions}
We state below the hypotheses under which we will develop our theory in most parts of this paper. We will repeatedly refer to such assumptions in the later sections. We emphasize again that not all assumptions are assumed in all the statements in this paper.

\begin{assumptions}
\quad

\begin{enumerate}

\smallskip

\item $f\:S^2 \rightarrow S^2$ is an expanding Thurston map.

\smallskip

\item $\CC\subseteq S^2$ is a Jordan curve containing $\post f$ with the property that there exists $n_\CC\in\N$ such that $f^{n_\CC} (\CC)\subseteq \CC$ and $f^m(\CC)\nsubseteq \CC$ for each $m\in\{1,2,\dots,n_\CC-1\}$.

\smallskip

\item $d$ is a visual metric on $S^2$ for $f$ with expansion factor $\Lambda>1$ and a linear local connectivity constant $L\geq 1$.

\smallskip

\item $\alpha\in(0,1]$.

\smallskip

\item $\psi\in \Holder{\alpha}((S^2,d),\C)$ is a complex-valued H\"{o}lder continuous function with an exponent $\alpha$.

\smallskip

\item $\phi\in \Holder{\alpha}(S^2,d)$ is an eventually positive real-valued H\"{o}lder continuous function with an exponent $\alpha$, and $s_0\in\R$ is the unique positive real number satisfying $P(f, -s_0\phi)=0$.

\smallskip

\item $\mu_\phi$ is the unique equilibrium state for the  map $f$ and the potential $\phi$.

\end{enumerate}

\end{assumptions}

Note that the uniqueness of $s_0$ in (5) is not always true, but it is guaranteed, for example, if $\phi$ is strictly positive or if $\phi$ is eventually positive (see Corollary~\ref{corS0unique}). For a pair of $f$ in (1) and $\phi$ in (6), we will say that a quantity depends on $f$ and $\phi$ if it depends on $s_0$.

Observe that by Lemma~\ref{lmCexistsL}, for each $f$ in (1), there exists at least one Jordan curve $\CC$ that satisfies (2). Since for a fixed $f$, the number $n_\CC$ is uniquely determined by $\CC$ in (2), in the remaining part of the paper we will say that a quantity depends on $\CC$ even if it also depends on $n_\CC$.

Recall that the expansion factor $\Lambda$ of a visual metric $d$ on $S^2$ for $f$ is uniquely determined by $d$ and $f$. We will say that a quantity depends on $f$ and $d$ if it depends on $\Lambda$.

Note that even though the value of $L$ is not uniquely determined by the metric $d$, in the remainder of this paper, for each visual metric $d$ on $S^2$ for $f$, we will fix a choice of linear local connectivity constant $L$. We will say that a quantity depends on the visual metric $d$ without mentioning the dependence on $L$, even though if we had not fixed a choice of $L$, it would have depended on $L$ as well.

In the discussion below, depending on the conditions we will need, we will sometimes say ``Let $f$, $\CC$, $d$, $\psi$, $\alpha$ satisfy the Assumptions.'', and sometimes say ``Let $f$ and $d$ satisfy the Assumptions.'', etc.

\section{Dynamics on the invariant Jordan curve}  \label{sctDynOnC}

The main goal in Sections~\ref{sctSplitRuelleOp} thorough \ref{sctDolgopyat} is to establish Theorem~\ref{thmZetaAnalExt_SFT}, namely, a holomorphic extention of the dynamical zeta function with quantitative bounds for the one-sided subshift of finite type $\sigma_{A_{\ti}} \: \Sigma_{A_{\ti}}^+ \rightarrow \Sigma_{A_{\ti}}^+$ associated to some expanding Thurston map $f$ with some invariant Jordan curve $\CC\subseteq S^2$. One hopes to derive from Theorem~\ref{thmZetaAnalExt_SFT} similar results for the dynamical zeta function for $f$ itself (stated in Theorem~\ref{thmZetaAnalExt_InvC}). However, there is no one-to-one correspondance between the periodic points of $\sigma_{A_{\ti}}$ and those of $f$ through the factor map $\pi_{\ti} \: \Sigma_{A_{\ti}}^+ \rightarrow S^2$. A relation between the two dynamical zeta functions $\zeta_{f,\,\minus\phi}$ and $\zeta_{\sigma_{A_{\ti}},\,\minus\phi\circsmall\pi_{\ti}}$ can nevertheless be established through a careful investigation on the dynamics induced by $f$ on the Jordan curve $\CC$.

\subsection{Constructions}   \label{subsctDynOnC_Construction}

Let $f\: S^2\rightarrow S^2$ be an expanding Thurston map with a Jordan curve $\CC\subseteq S^2$ satisfying $f(\CC)\subseteq \CC$ and $\post f \subseteq \CC$.

Let $\bigl( \Sigma_{A_{\ti}}^+,\sigma_{A_{\ti}} \bigr)$ be the one-sided subshift of finite type associated to $f$ and $\CC$ defined in Proposition~\ref{propTileSFT}, and let $\pi_{\ti}\: \Sigma_{A_{\ti}}^+\rightarrow S^2$ be the factor map defined in (\ref{eqDefTileSFTFactorMap}).

We first construct two more one-sided subshifts of finite type that are related to the dynamics induced by $f$ on $\CC$.

Define the set of states $S_{\e} \coloneqq \bigl\{ e \in \E^1(f,\CC)   \,\big|\, e \subseteq \CC \bigr\}$, and the transition matrix $A_{\e}\: S_{\e} \times S_{\e}\rightarrow \{0,1\}$ by
\begin{equation}    \label{eqDefA|}
A_{\e} \left(  e_1, e_2 \right)  
= \begin{cases} 1 & \text{if } f\left(e_1\right)\supseteq e_2, \\ 0  & \text{otherwise}  \end{cases}
\end{equation}
for $e_1, e_2 \in S_{\e}$.

Define the set of states $S_{\ee} \coloneqq \bigl\{ (e,\c) \in \E^1(f,\CC) \times \{\b,\w\} \,\big|\, e \subseteq \CC \bigr\}$. For each $(e,\c)\in S_{\ee}$, we denote by $X^1(e,\c) \in \X^1(f,\CC)$ the unique $1$-tile satisfying 
\begin{equation}   \label{eqDefXec}
e \subseteq X^1(e,\c) \subseteq X^0_\c. 
\end{equation}
The existence and uniqueness of $X^1(f,\c)$ defined by (\ref{eqDefXec}) follows immediately from Proposition~\ref{propCellDecomp}~(iii), (v), and (vi) and the assumptions that $f(\CC)\subseteq \CC$ and $e\subseteq\CC$. We define the transition matrix $A_{\ee}\: S_{\ee} \times S_{\ee}\rightarrow \{0,1\}$ by
\begin{equation}   \label{eqDefA||}
A_{\ee} \left( \left(e_1,\c_1 \right), \left(e_2,\c_2\right)\right)  
= \begin{cases} 1 & \text{if } f\left(e_1\right)\supseteq e_2 \text{ and } f\left(X^1\left(e_1,\c_1\right) \right) \supseteq X^1(e_2, \c_2), \\ 0  & \text{otherwise}  \end{cases}
\end{equation}
for $\left(e_1,\c_1 \right), \left(e_2,\c_2\right) \in S_{\ee}$.

We will consider the one-sided subshift of finite type $\bigl( \Sigma_{A_{\e}}^+, \sigma_{A_{\e}} \bigr)$ defined by the transition matrix $A_{\e}$ and $\bigl( \Sigma_{A_{\ee}}^+, \sigma_{A_{\ee}} \bigr)$ defined by the transition matrix $A_{\ee}$, where  
\begin{align*}
\Sigma_{A_{\e}}^+ & =  \{  \{   e_i  \}_{i\in\N_0}  \,|\,  
             e_i \in S_{\e},\, A_{\e} (  e_i, e_{i+1} ) = 1,\, \text{for each } i\in\N_0  \},  \\
\Sigma_{A_{\ee}}^+& =  \{  \{  (e_i,\c_i )  \}_{i\in\N_0}  \,|\,  
            (e_i,\c_i)\in S_{\ee},\, A_{\ee} ( (e_i,\c_i ), (e_{i+1},\c_{i+1})) = 1,\, \text{for each } i\in\N_0  \},
\end{align*}
$\sigma_{A_{\e}}$ and $\sigma_{A_{\ee}}$ are the left-shift operators on $\Sigma_{A_{\e}}^+$ and $\Sigma_{A_{\ee}}^+$, respectively (see Subsection~\ref{subsctSFT}).

See Figure~\ref{figDynOnC23} for the sets of states $S_{\ee}$ and $S_{\e}$ associated to an expanding Thurston map $f$ and an invariant Jordan curve $\CC$ whose cell decomposition $\DD^1(f,\CC)$ of $S^2$ is sketched in Figure~\ref{figDynOnC1}. Note that $S_{\ti} = \X^1(f,\CC)$. In this example, $f$ has three postcritical points $A$, $B$, and $C$.

\begin{figure}
    \centering
    \begin{overpic}
    [width=6cm, 
    tics=20]{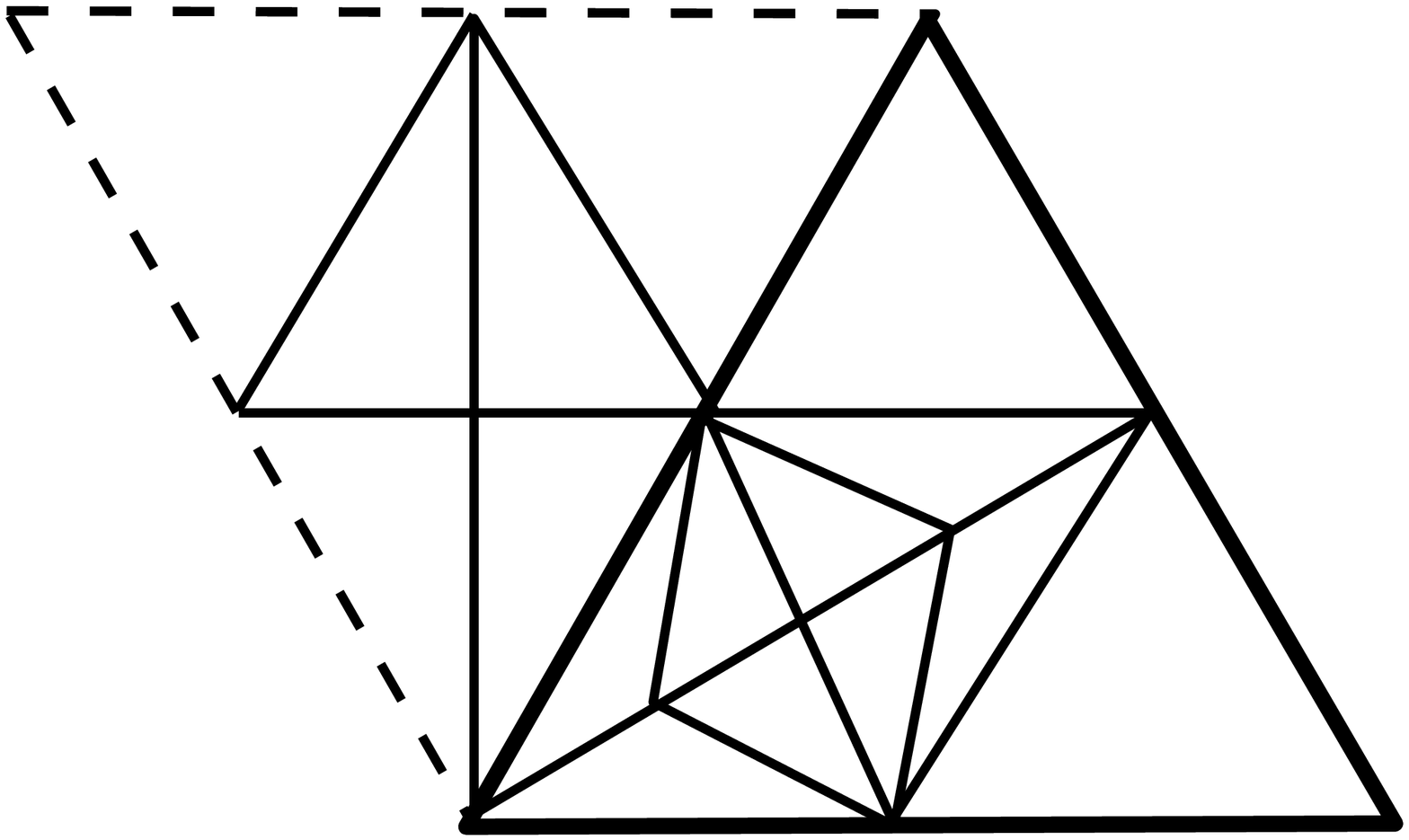}
    \put(109,101){$A$}
    \put(46,-8){$B$}
    \put(171,-8){$C$}
    \put(-9,100){$C$}
    \put(79,-10){$e^1_1$}
    \put(127,-10){$e^1_2$}
    \put(157,29){$e^1_3$}
    \put(130,78){$e^1_4$}
    \put(88,78){$e^1_5$}
    \put(64,36){$e^1_6$}
    \end{overpic}
    \caption{The cell decomposition $\DD^1(f,\CC)$. $S_{\ti} = \X^1(f,\CC)$.}
    \label{figDynOnC1}

    \centering
    \begin{overpic}
    [width=11cm, 
    tics=20]{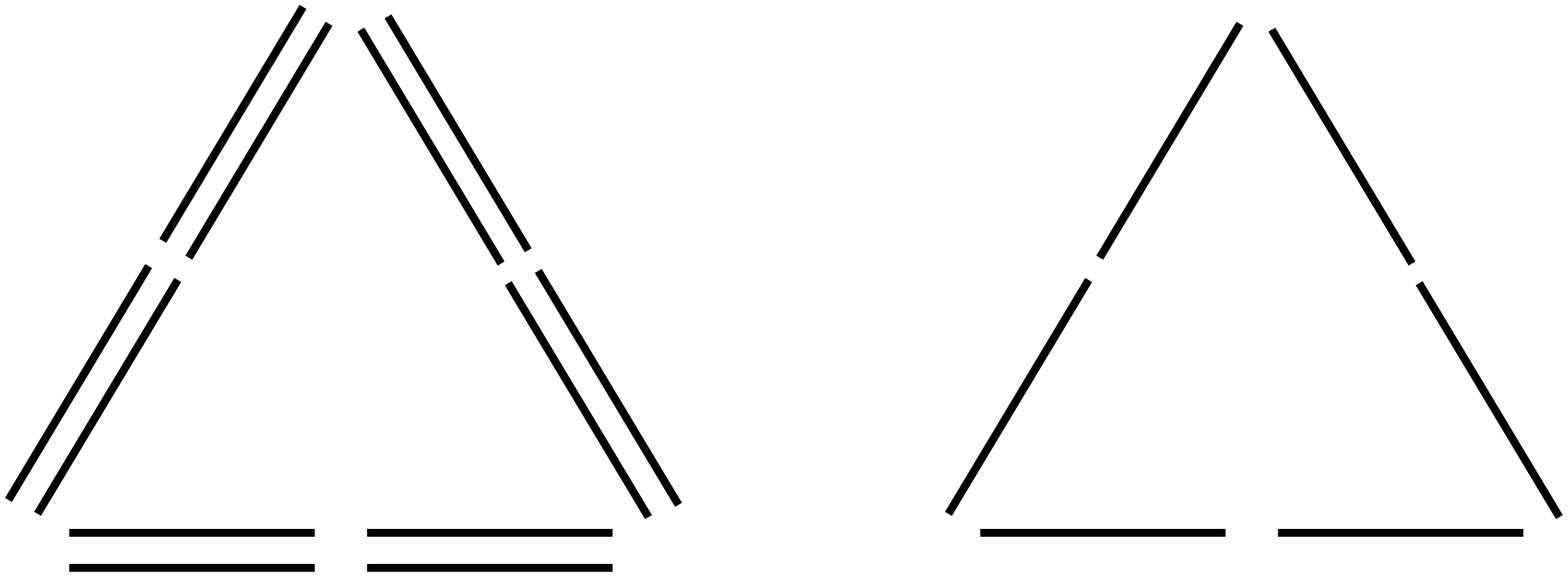}
    \put(24,-11){$(e^1_1,\b)$}
    \put(23, 14){$(e^1_1,\w)$}
    \put(83,-11){$(e^1_2,\b)$}
    \put(82, 14){$(e^1_2,\w)$}
    \put(125,36){$(e^1_3,\b)$}
    \put(77, 36){$(e^1_3,\w)$}
    \put(92,91){$(e^1_4,\b)$}
    \put(15,91){$(e^1_5,\b)$}
    \put(-18,36){$(e^1_6,\b)$}
    
    \put(215,-5){$e^1_1$}
    \put(273,-5){$e^1_2$}
    \put(299, 36){$e^1_3$}
    \put(268,90){$e^1_4$}
    \put(222,90){$e^1_5$}
    \put(190,36){$e^1_6$}
    \end{overpic}
    \caption{Elements in $S_{\ee}$ (left) and elements in $S_{\e}$ (right).}
    \label{figDynOnC23}

\end{figure}

\begin{prop}   \label{propSFTs_C}
Let $f$, $\CC$, $d$ satisfy the Assumptions. We assume in addition that $f(\CC)\subseteq\CC$. Let $\bigl( \Sigma_{A_{\ti}}^+,\sigma_{A_{\ti}} \bigr)$ be the one-sided subshift of finite type associated to $f$ and $\CC$ defined in Proposition~\ref{propTileSFT}, and let $\pi_{\ti}\: \Sigma_{A_{\ti}}^+\rightarrow S^2$ be the factor map defined in (\ref{eqDefTileSFTFactorMap}). Fix $\theta \in(0,1)$. Recall the one-sided subshifts of finite type $\bigl( \Sigma_{A_{\e}}^+, \sigma_{A_{\e}} \bigr)$ and $\bigl( \Sigma_{A_{\ee}}^+, \sigma_{A_{\ee}} \bigr)$, with the spaces $\Sigma_{A_{\e}}^+$ and $\Sigma_{A_{\ee}}^+$ equipped with the corresponding metrics $d_\theta$ constructed in Subsection~\ref{subsctSFT}. We write $\V(f,\CC) \coloneqq \bigcup\limits_{i\in\N_0} \V^i(f,\CC)$. 

Then the following statements are satisfied:

\begin{enumerate}
\smallskip
\item[(i)] $\bigl( \Sigma_{A_{\e}}^+, \sigma_{A_{\e}} \bigr)$ is a factor of $\bigl( \Sigma_{A_{\ee}}^+, \sigma_{A_{\ee}} \bigr)$ with a Lipschitz continuous factor map $\pi_{\ee} \:  \Sigma_{A_{\ee}}^+ \rightarrow  \Sigma_{A_{\e}}^+$ defined by
\begin{equation}  \label{eqDefPi||}
 \pi_{\ee} ( \{ (e_i,\c_i) \}_{i\in\N_0} ) = \{ e_i \}_{i\in\N_0}
\end{equation}
for $\{ (e_i,\c_i) \}_{i\in\N_0} \in \Sigma_{A_{\ee}}^+$. Moreover, for each $\{  e_i  \}_{i\in\N_0} \in \Sigma_{A_{\e}}^+$, we have 
\begin{equation*}
\card \left(\pi_{\ee}^{-1} ( \{  e_i  \}_{i\in\N_0} )\right) = 2.
\end{equation*}

\smallskip
\item[(ii)] $( \CC, f|_\CC)$ is a factor of $\bigl( \Sigma_{A_{\e}}^+, \sigma_{A_{\e}} \bigr)$ with a H\"{o}lder continuous factor map $\pi_{\e} \:  \Sigma_{A_{\e}}^+ \rightarrow  \CC$ defined by
\begin{equation}  \label{eqDefPi|}
 \pi_{\e} ( \{  e_i  \}_{i\in\N_0} ) = x, \text{ where } \{x\} = \bigcap\limits_{i \in \N_0} f^{-i}(e_i)
\end{equation}
for $\{  e_i  \}_{i\in\N_0} \in \Sigma_{A_{\e}}^+$. Moreover, for each $x\in\CC$, we have
\begin{equation}  \label{eqPi|Card}
\card \left(\pi_{\e}^{-1} (x)\right) = \begin{cases} 1 & \text{if } x\in \CC \setminus \V(f,\CC), \\ 2  & \text{if } x\in \CC\cap\V(f,\CC). \end{cases}
\end{equation}

\end{enumerate}
\end{prop}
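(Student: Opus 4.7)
The plan is to handle parts (i) and (ii) separately, each via three subtasks: verifying that the candidate map is a well-defined factor map with the claimed regularity, establishing surjectivity, and computing the fiber cardinality. The regularity statements will follow from the visual metric estimates in Lemma~\ref{lmCellBoundsBM} together with Lemma~\ref{lmCylinderIsTile}, while the cardinality counts exploit the combinatorial consequence of $f(\CC) \subseteq \CC$ that $\CC$ lies in the $1$-skeleton of $\DD^1(f,\CC)$, so each $1$-tile is contained in exactly one $0$-tile.

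For part (i), well-definedness of $\pi_{\ee}$ and the intertwining $\pi_{\ee} \circ \sigma_{A_{\ee}} = \sigma_{A_{\e}} \circ \pi_{\ee}$ are immediate from (\ref{eqDefA|}) and (\ref{eqDefA||}), and Lipschitz continuity (with constant $1$) in the metrics $d_\theta$ follows since forgetting the color coordinate can only push forward (or preserve) the first index at which two sequences differ. The cardinality count rests on the observation that, given an admissible $\{e_i\}_{i\in\N_0} \in \Sigma_{A_{\e}}^+$ and a choice $\c_0 \in \{\b,\w\}$, the color $\c_{i+1}$ is forced to equal the unique $\c \in \{\b,\w\}$ with $X^0_\c = f(X^1(e_i,\c_i))$: indeed $f(X^1(e_i,\c_i))$ is a $0$-tile by Proposition~\ref{propCellDecomp}~(i), and since $X^1(e_{i+1},\c_{i+1}) \subseteq X^0_{\c_{i+1}}$ and each $1$-tile is contained in exactly one $0$-tile, the required inclusion $X^1(e_{i+1},\c_{i+1}) \subseteq f(X^1(e_i,\c_i))$ forces $X^0_{\c_{i+1}} = f(X^1(e_i,\c_i))$. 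A simple induction shows both values of $\c_0$ yield admissible sequences, so the fiber has cardinality exactly two; surjectivity of $\pi_{\ee}$ is a consequence.

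For part (ii), well-definedness of $\pi_{\e}$ and the identity (\ref{eqDefPi|}) follow from the edge version of Lemma~\ref{lmCylinderIsTile}, and $\pi_{\e} \circ \sigma_{A_{\e}} = f \circ \pi_{\e}$ is a direct computation paralleling Proposition~\ref{propTileSFT}. H\"older continuity is obtained by noting that two admissible sequences agreeing in their first $n$ terms project into a common $n$-edge by Lemma~\ref{lmCylinderIsTile}, and this edge has diameter $\leq C\Lambda^{-n}$ by Lemma~\ref{lmCellBoundsBM}~(ii); this yields the H\"older exponent $-\log\Lambda/\log\theta$. Surjectivity is established by constructing, for each $x \in \CC$, a nested sequence of $(i+1)$-edges $c_i \subseteq \CC$ with $c_0 \supseteq c_1 \supseteq \cdots \ni x$ (possible since $\CC \subseteq f^{-n}(\CC)$ for all $n$ implies $\CC$ decomposes into $(i+1)$-edges) and setting $e_i \coloneqq f^i(c_i)$. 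For the cardinality statement I would stratify $\CC$ by $k_0(x) \coloneqq \min\{k \geq 0 : f^k(x) \in \post f\}$, with $k_0 = +\infty$ iff $x \notin \V(f,\CC)$. When $k_0 = +\infty$, no $f^i(x)$ lies in $\V^1$, so each $e_i$ is uniquely determined and the fiber is a singleton. When $k_0 < +\infty$, exactly one index---namely $i_0 \coloneqq \max\{0, k_0 - 1\}$---offers two choices for $e_{i_0}$ (the two $1$-edges of $\CC$ containing $f^{i_0}(x)$), and every other $e_i$ is pinned down by the requirement that it be the unique $1$-edge inside the $0$-edge $f(e_{i-1})$ having $f^i(x)$ as an endpoint, yielding exactly two admissible sequences.

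The main obstacle is the cardinality count in part (ii). The delicate point is that at every index $i \geq k_0$ the point $f^i(x)$ lies in $\post f$ and is a priori contained in two distinct $1$-edges of $\CC$, which could threaten to double the count at each such step. What rescues the argument is the structural fact that the two $1$-edges of $\CC$ adjacent to a $0$-vertex $v$ lie in distinct $0$-edges; consequently, once $f(e_{i-1})$ has been specified as one of the two $0$-edges adjacent to $f^i(x)$, there is a unique $1$-edge of $f(e_{i-1})$ having $f^i(x)$ as an endpoint. Verifying this forward-determinism cleanly along both branches of the initial binary choice, and in both cases $k_0 = 0$ and $k_0 \geq 1$, is the technical heart of the computation.
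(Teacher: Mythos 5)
Your proposal follows essentially the same route as the paper's proof: part~(i) is the paper's color-forcing argument verbatim, and in part~(ii) your parameter $k_0$ relates to the paper's $n = \min\{i\in\N : x\in\V^i\}$ by $n-1 = \max\{0,k_0-1\} = i_0$, so the two stratifications and the resulting cardinality count coincide. One small imprecision worth noting: the uniqueness rule you state for indices $i\neq i_0$ --- ``the unique $1$-edge inside the $0$-edge $f(e_{i-1})$ having $f^i(x)$ as an endpoint'' --- is the correct description only for $i>i_0$, where $f^i(x)\in\post f$ is a $0$-vertex; for $0<i<i_0$ one has $f^i(x)\notin\V^1$, so $f^i(x)$ lies in the \emph{interior} of the unique $1$-edge $e_i\subseteq\CC$ containing it (it is not an endpoint of any $1$-edge), and that interior-uniqueness is what pins down $e_i$. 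The paper's proof treats these two regimes separately for exactly this reason; with that adjustment your argument agrees with it.
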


Thus we have the following commutative diagram:
\begin{equation*}
\xymatrix{ \Sigma_{A_{\ee}}^+ \ar[d]_{\sigma_{A_{\ee}}} \ar[r]^{\pi_{\ee}}  & \Sigma_{A_{\e}}^+ \ar[d]^{\sigma_{A_{\e}}} \ar[r]^{\pi_{\e}}  & \CC \ar[d]^{f|_\CC}\\ 
           \Sigma_{A_{\ee}}^+                           \ar[r]_{\pi_{\ee}}  & \Sigma_{A_{\e}}^+                          \ar[r]_{\pi_{\e}}  & \CC.}
\end{equation*}

\begin{proof}
(i) It follows immediately from (\ref{eqDefPi||}), (\ref{eqDefA||}), (\ref{eqDefA|}), and the definitions of $\Sigma_{A_{\ee}}^+$ and $\Sigma_{A_{\e}}^+$ that $\pi_{\ee} \bigl( \Sigma_{A_{\ee}}^+ \bigr) \subseteq \Sigma_{A_{\e}}^+$. By (\ref{eqDefPi||}), it is clear that $\pi_{\ee}$ is Lipschitz continuous.

Next, we show that $\card \bigl(\pi_{\ee}^{-1}  (  \{ e_i \}_{i\in\N_0} ) \bigr) = 2$ for each $\{ e_i \}_{i\in\N_0} \in \Sigma_{A_{\e}}^+$. The fact that $\pi_{\ee}$ is surjective then follows for free.

Fix arbitrary $\c\in\{\b,\w\}$ and $\{ e_i \}_{i\in\N_0} \in \Sigma_{A_{\e}}^+$.

We recursively construct $\c_i \in \{\b,\w\}$ for each $i\in\N_0$ such that $\c_0 = \c$ and  $\{ (e_i,\c_i) \}_{i\in\N_0} \in \Sigma_{A_{\ee}}^+$, and prove that such a sequence $\{ \c_i \}_{i\in\N_0}$ is unique. Let $\c_0 \coloneqq \c$. Assume that for some $k\in\N_0$, $\c_j$ is determined and is unique for all integer $j\in\N_0$ with $j\leq k$, in the sense that any other choice of $\c_j$ for any $j\in\N_0$ with $j\leq k$ would result in $\{ (e_i,\c_i) \}_{i\in\N_0} \notin \Sigma_{A_{\ee}}^+$ regardless of choices of $\c_j$ for $j>k$. Recall $X^1( e_k, \c_k )$ defined in (\ref{eqDefXec}). Since $f(e_k)\supseteq e_{k+1}$ and $f\bigl(X^1( e_k, \c_k )\bigr)$ is the black $0$-tile $X^0_\b$ or the white $0$-tile $X^0_\w$ by Proposition~\ref{propCellDecomp}~(i), we will have to choose $\c_{k+1} \coloneqq \b$ in the former case and $\c_{k+1} \coloneqq \w$ in the latter case due to (\ref{eqDefA||}). Hence $\{ (e_i,\c_i) \}_{i\in\N_0} \in \pi_{\ee}^{-1}  (  \{  e_i \}_{i\in\N_0}  )$ is uniquely determined by $\{  e_i \}_{i\in\N_0}$ and $\c\in\{\b,\w\}$. This proves $\card \bigl(\pi_{\ee}^{-1}  (  \{ e_i \}_{i\in\N_0} ) \bigr) = 2$ for each $\{ e_i \}_{i\in\N_0} \in \Sigma_{A_{\e}}^+$.

Finally, it follows immediately from (\ref{eqDefPi||}) that $\pi_{\ee} \circ \sigma_{A_{\ee}} = \sigma_{A_{\e}} \circ \pi_{\ee}$.

\smallskip

(ii) Fix an arbitrary $\{e_i\}_{i\in\N_0} \in \Sigma_{A_{\e}}^+$.

Since $f(e_i)\supseteq e_{i+1}$ for each $i\in\N_0$, by Lemma~\ref{lmCylinderIsTile}, the map $\pi_{\e}$ is well-defined.

Note that for each $m\in \N_0$ and each $\{e'_i\}_{i\in\N_0} \in \Sigma_{A_{\e}}^+$ with $e_{m+1} \neq e'_{m+1}$ and $e_j=e'_j$ for each integer $j\in[0,m]$, we have $\{ \pi_{\e}  (\{e_i\}_{i\in\N_0}  ),  \pi_{\e}  ( \{ e'_i \}_{i\in\N_0}  ) \} \subseteq \bigcap\limits_{i=0}^{m} f^{-i}(e_i) \in \E^{m+1}$ by Lemma~\ref{lmCylinderIsTile}. Thus it follows from Lemma~\ref{lmCellBoundsBM}~(ii) that $\pi_{\e}$ is H\"{o}lder continuous.

To see that $\pi_{\e}$ is surjective, we observe that for each $x\in \CC$, we can find a sequence $\bigl\{ e^j(x) \bigr\}_{j\in\N}$ of edges such that $e^j(x) \in \E^j$, $e^j(x) \subseteq \CC$, $x\in e^j(x)$, and $e^j(x) \supseteq e^{j+1}(x)$ for each $j\in\N$. Then it is clear from Proposition~\ref{propCellDecomp}~(i) that $\bigl\{  f^i \bigl( e^{i+1}(x) \bigr) \bigr\}_{i\in\N_0} \in \Sigma_{A_{\e}}^+$ and $\pi_{\e} \Bigl(  \bigl\{ f^i \bigl( e^{i+1}(x) \bigr) \bigr\}_{i\in\N_0} \Bigr) = x$.

Next, to check that $\pi_{\e} \circ \sigma_{A_{\e}} = f \circ \pi_{\e}$, it suffices to observe that 
\begin{align*}
            \{ (f\circ \pi_{\e}) (\{e_i\}_{i\in\N_0}) \}
  =       & f \biggl(  \bigcap\limits_{j\in\N_0} f^{-j} (e_j) \biggr)
\subseteq        \bigcap\limits_{j\in\N} f^{-(j-1)} (e_j) \\
  =      &   \bigcap\limits_{i\in\N_0} f^{-i} (e_{i+1})  
  =         \{(\pi_{\e} \circ \sigma_{A_{\e}} ) (\{e_i\}_{i\in\N_0})  \}.
\end{align*}

Finally, we are going to establish (\ref{eqPi|Card}). Fix an arbitrary point $x\in\CC$.

\smallskip

\emph{Case~1.} $x\in \CC\setminus \V(f,\CC)$.

\smallskip
We argue by contradiction and assume that there exist distinct $\{e_i\}_{i\in\N_0}, \{e'_i\}_{i\in\N_0} \in \pi_{\e}^{-1}(x)$. Choose $m\in\N_0$ to be the smallest non-negative integer with $e_m\neq e'_m$. Then by Lemma~\ref{lmCylinderIsTile}, $x\in \bigcap\limits_{i=0}^m f^{-i} (e_i) \in \E^{m+1}$ and $x\in \bigcap\limits_{i=0}^m f^{-i} (e'_i) \in \E^{m+1}$. Thus $f^m(x) \in e_m \cap e'_m \subseteq \V^1$ by Proposition~\ref{propCellDecomp}~(v). This is a contradiction to the assumption that $x\in \CC\setminus \V(f,\CC)$. Hence $\card \bigl( \pi_{\e}^{-1} (x) \bigr) = 1$.

\smallskip

\emph{Case~2.} $x\in \CC\cap \V(f,\CC)$.

\smallskip

Denote $n \coloneqq \min \bigl\{ i\in\N \,\big|\, x\in\V^i \bigr\} \in \N$.

For each $j\in\N$ with $j<n$, we define $e^j_1,e^j_2 \in \E^j$ to be the unique $j$-edge with $x\in e^j_1=e^j_2 \subseteq \CC$. For each $i\in\N$ with $i\geq n$, we choose the unique pair $e^i_i,e^i_2 \in \E^i$ of $i$-edges satisfying (1) $e^i_1 \cup e^i_2 \subseteq \CC$, (2) $e^i_1 \cap e^i_2 = \{x\}$, and (3) if $i \geq 2$, then $e^i_1 \subseteq e^{i-1}_1$ and $e^i_2 \subseteq e^{i-1}_2$. Then it is clear from Proposition~\ref{propCellDecomp}~(i) that for each $k\in\{1,2\}$, $\bigl\{ f^i \bigl(e^{i+1}_k\bigr) \bigr\}_{i\in\N_0} \in \Sigma_{A_{\e}}^+$ and $\pi_{\e} \Bigl(  \bigl\{ f^i \bigl(e^{i+1}_k\bigr) \bigr\}_{i\in\N_0} \Bigr) = x$.

Note that if $n=1$, then $e^1_1 \neq e^1_2$. If $n\geq 2$, then $f^i(x) \notin \V^1$ and thus $f^i(x) \notin \crit f|_\CC$, for each $i\in\{0,1,\dots,n-2\}$. So $f^{n-1}$ is injective on a neighborhood of $x$ and consequently by Proposition~\ref{propCellDecomp}~(i), $f^{n-1} \bigl(e^n_1\bigr) \neq f^{n-1} \bigl(e^n_2\bigr)$. Hence $\card \bigl( \pi_{\e}^{-1} (x) \bigr) \geq 2$.

On the other hand, for each $\{ e_i \}_{i\in\N_0} \in \pi_{\e}^{-1} (x)$ and each $j\in\N_0$, (1) if $j<n-1$, then $e_j = f^j \bigl(e^{j+1}_1 \bigr) = f^j \bigl(e^{j+1}_2 \bigr)$ (since $e^{j+1}_1=e^{j+1}_2$) and $f^j(x)\in \inte \bigl( f^j  \bigl(e^{j+1}_1 \bigr) \bigr)$ (by the definition of $n$); (2) if $j=n-1$, then either $e_j= f^j  \bigl(e^{j+1}_1 \bigr)$ or $e_j= f^j  \bigl(e^{j+1}_2 \bigr)$ since $f^{j}(x) \in f^j  \bigl(e^{j+1}_1 \bigr) \cap f^j  \bigl(e^{j+1}_2 \bigr)$; (3) if $j\geq n$, then  $f^j(x) \in \V^0$ and consequently by Proposition~\ref{propCellDecomp}~(i) and (v), there exists exactly one $1$-edge $e_j\in\E^1$ such that $f^j(x) \in e_j$ and $f(e_{j-1}) \supseteq e_j$. Hence $\card \bigl( \pi_{\e}^{-1} (x) \bigr) \leq 2$. 

The identity (\ref{eqPi|Card}) is now established.
\end{proof}

\subsection{Combinatorics}   \label{subsctDynOnC_Combinatorics}

\begin{lemma}   \label{lmPeriodicPtsLocationCases}
Let $f$ and $\CC$ satisfy the Assumptions. We assume in addition that $f(\CC)\subseteq\CC$. For each $n\in\N$ and each $x\in S^2$ with $f^n(x)=x$, exactly one of the following statements holds:
\begin{enumerate}
\smallskip
\item[(i)] $x\in\inte (X^n)$ for some $n$-tile $X^n\in\X^n(f,\CC)$, where $X^n$ is either a black $n$-tile contained in the black $0$-tile $X^0_\b$ or a white $n$-tile contained in the white $0$-tile $X^0_\w$. Moreover, $x\notin \bigcup\limits_{i\in\N_0} \left( \bigcup \E^i(f,\CC) \right)$.

\smallskip
\item[(ii)] $x\in \inte (e^n)$ for some $n$-edge $e^n\in\E^n(f,\CC)$ satisfying $e^n \subseteq \CC$. Moreover, $x\notin \bigcup\limits_{i\in\N_0}  \V^i(f,\CC)$.

\smallskip
\item[(iii)] $x\in\post f$.
\end{enumerate} 
\end{lemma}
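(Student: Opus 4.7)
The natural starting point is the cell decomposition $\DD^n(f,\CC)$, which is well-defined since $\post f\subseteq\CC$, and whose open cells (interiors of $n$-tiles, interiors of $n$-edges, and $n$-vertices) partition $S^2$ by Definition~\ref{defcelldecomp}. The plan is to split into three cases according to which open cell contains the fixed point $x$ of $f^n$, and to verify the properties claimed in each case. Mutual exclusivity of (i), (ii), (iii) will follow at the end from the observations that $f(\CC)\subseteq\CC$ forces $\CC\subseteq f^{-i}(\CC) = \bigcup \E^i(f,\CC)$ for every $i\in\N_0$ (so $\post f\subseteq\CC\subseteq \bigcup\E^i(f,\CC)$), together with $\V^0\subseteq\V^i(f,\CC)$ for all $i\in\N_0$.

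\medskip

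\textbf{Case (i).} Suppose $x\in\inte(X^n)$ for some $X^n\in\X^n(f,\CC)$. Since $\CC$ is contained in the $1$-skeleton $f^{-n}(\CC)$ of $\DD^n(f,\CC)$ (by $f^n(\CC)\subseteq\CC$) and $X^n$ is a $2$-cell of this decomposition, the interior of $X^n$ is disjoint from $\CC$, so $X^n$ lies inside exactly one of the two $0$-tiles, say $X^0_\c$. Now $f^n|_{X^n}$ is a homeomorphism onto $f^n(X^n)=X^0_{\c'}$ where $\c'$ denotes the color of $X^n$, and since $x\in\inte(X^n)$, the image $f^n(x)=x$ lies in $\inte(X^0_{\c'})$; as $\inte(X^0_\b)$ and $\inte(X^0_\w)$ are disjoint and $x\in X^0_\c$, we obtain $\c=\c'$. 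For the ``moreover'' claim, argue by contradiction: if $x\in f^{-i}(\CC)$ for some $i\in\N_0$, then $f^i(x)\in\CC$, and iterating using $f^n(x)=x$ together with $f(\CC)\subseteq\CC$ gives $x=f^{nk-i}(f^i(x))\in\CC\subseteq f^{-n}(\CC)$ for any $k\in\N$ with $nk\geq i$, contradicting $x\in\inte(X^n)$.

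\medskip

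\textbf{Case (ii).} Suppose $x\in\inte(e^n)$ for some $e^n\in\E^n(f,\CC)$. Then $f^n|_{e^n}$ is a homeomorphism onto some $0$-edge $e^0\subseteq\CC$, so $x=f^n(x)\in\inte(e^0)\subseteq\CC$. Since $\CC\subseteq f^{-n}(\CC)$ and every $0$-edge decomposes as a union of $n$-edges (with endpoints in $\V^n$), the set $\CC$ is a union of $n$-edges together with $n$-vertices; as $x\in\inte(e^n)$ but $x\notin\V^n$, the disjointness of interiors of distinct cells in $\DD^n(f,\CC)$ forces $e^n\subseteq\CC$. For the ``moreover'' claim, suppose $x\in\V^i(f,\CC)$ for some $i\in\N_0$, so $f^i(x)\in\post f$. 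Since $f(\post f)\subseteq\post f$, we get $f^j(x)\in\post f$ for every $j\geq i$; taking $j=nk\geq i$ yields $x=f^{nk}(x)\in\post f\subseteq\V^0(f,\CC)\subseteq\V^n(f,\CC)$, contradicting $x\in\inte(e^n)$.

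\medskip

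\textbf{Case (iii).} If $x$ belongs to the $0$-skeleton $\V^n(f,\CC)=f^{-n}(\post f)$, then $x=f^n(x)\in\post f$, and (iii) holds. Together with the opening observation that every point of $S^2$ lies in the interior of a unique cell, these three cases exhaust $S^2$; the ``moreover'' clauses in (i) and (ii) together with $\post f\subseteq\CC\cap\V^0$ yield mutual exclusivity. No step is substantial; the only mild subtlety is keeping track of how $\CC$ decomposes simultaneously as a subcomplex of $\DD^n(f,\CC)$ for every $n$, which is what allows the case analysis to be complete and the three classes to be disjoint.
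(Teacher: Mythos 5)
Your proof is correct and follows essentially the same structure as the paper: split into the three cell types and handle the ``moreover'' clauses by iterating $f^n$ and using forward invariance of $\CC$ and $\post f$. The one genuine deviation is in case~(i): the paper establishes that $X^n$ has matching colour and container by invoking Lemma~\ref{lmAtLeast1} (which implicitly requires replacing $f$ by $f^n$ via Remark~\ref{rmExpanding} and Proposition~\ref{propCellDecomp}~(vii) to turn the $n$-tile into a $1$-tile), whereas you argue directly: $f^n$ maps $\inte(X^n)$ homeomorphically onto $\inte(X^0_{\c'})$, so the fixed point $x$ lies in $\inte(X^0_{\c'})$, and since $\inte(X^n)\subseteq\inte(X^0_{\c})$ the two $0$-tiles must agree. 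Your route is slightly more self-contained and avoids the bookkeeping of passing to the iterate; the paper's choice is natural given that Lemma~\ref{lmAtLeast1} is already available and will be reused (together with Lemma~\ref{lmAtMost1}) in the periodic-point counting arguments nearby. One cosmetic note: where you write ``$x\in X^0_\c$'' in case~(i), the cleaner statement is $x\in\inte(X^0_\c)$ (which you have already established via disjointness of $\inte(X^n)$ from $\CC$); this makes the clash with $x\in\inte(X^0_{\c'})$ for $\c'\neq\c$ immediate rather than requiring the extra observation that $X^0_\c\cap\inte(X^0_{\c'})=\emptyset$.
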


\begin{proof}
Fix $x\in S^2$ and $n\in\N$ with $f^n(x)=x$. It is easy to see that at most one of Cases~(i), (ii) and (iii) holds. By Proposition~\ref{propCellDecomp}~(iii) and (v), it is clear that exactly one of the following cases holds:
\begin{enumerate}
\smallskip
\item[(1)] $x\in\inte (X^n)$ for some $n$-tile $X^n\in\X^n$.

\smallskip
\item[(2)] $x\in \inte (e^n)$ for some $n$-edge $e^n\in\E^n$.

\smallskip
\item[(3)] $x\in \V^n$.
\end{enumerate}

Assume that case~(1) holds. We argue by contradiction and assume that there exist $j\in\N_0$ and $e\in\E^j$ such that $x\in e$. Then for $k \coloneqq \bigl\lceil\frac{j+1}{n}\bigr\rceil \in\N$, $x=f^{kn}(x)\in f^{kn}(e)\subseteq \CC$, contradicting with $x\in\inte(X^n)$. So $x\notin \bigcup\limits_{i\in\N_0} \left( \bigcup \E^i \right)$. By Lemma~\ref{lmAtLeast1}, the rest of statement~(i) holds. Hence statement~(i) holds in case~(1).

Assume that case~(2) holds. By Proposition~\ref{propCellDecomp}~(i), $x=f^n(x) \in \inte (e^0) \subseteq\CC$ where $e^0=f^n(e^n)\in \E^0$. Since $f(\CC)\subseteq \CC$, $\DD^n$ is a refinement of $\DD^0$ (see Definition~\ref{defrefine}). So we can choose an arbitrary $n$-edge $e^n_*\in\E^n$ contained in $e^0$ with $x\in e^n_*$. Since $x\notin \V^n$, we have $x\in \inte (e^n_*)$. By Definition~\ref{defcelldecomp}, $e^n = e^n_* \subseteq e^0\subseteq \CC$. To verify that $x\notin \bigcup\limits_{i\in\N_0}  \V^i$, we argue by contradiction and assume that there exists $j\in\N_0$ such that $x\in\V^j$. Then for $k \coloneqq \bigl\lceil\frac{j+1}{n}\bigr\rceil \in\N$, $x=f^{kn}(x)\in \V^0$, contradicting with $x\in\inte(e^n)$. Thus $x\notin \bigcup\limits_{i\in\N_0}  \V^i$. Hence statement~(ii) holds in case~(2).

Assume that case~(3) holds. By Proposition~\ref{propCellDecomp}~(i), $x = f^n(x) \subseteq \V^0 = \post f$. Hence statement~(iii) holds in case~(3).
\end{proof}

Let $f$ be an expanding Thurston map with an $f$-invariant Jordan curve $\CC$ containing $\post f$. We orient $\CC$ in such a way that the white $0$-tile lies on the left of $\CC$. Let $p\in \CC$ be a fixed point of $f$. We say that $f|_\CC$ \defn{preserves the orientation at $p$} (resp.\ \defn{reverses the orientation at $p$}) if there exists an open arc $l\subseteq\CC$ with $p\in l$ such that $f$ maps $l$ homeomorphically to $f(l)$ and $f|_\CC$ preserves (resp.\ reverses) the orientation on $l$. Note that it may happen that $f|_\CC$ neither preserves nor reverses the orientation at $p$, because $f|_\CC$ need not be a local homeomorphism near $p$, where it may behave like a ``folding map''.

\begin{theorem}   \label{thmNoPeriodPtsIdentity}
Let $f$ and $\CC$ satisfy the Assumptions. We assume in addition that $f(\CC)\subseteq\CC$. Let $\bigl(\Sigma_{A_{\ti}}^+,\sigma_{A_{\ti}}\bigr)$ be the one-sided subshift of finite type associated to $f$ and $\CC$ defined in Proposition~\ref{propTileSFT}, and let $\pi_{\ti}\: \Sigma_{A_{\ti}}^+\rightarrow S^2$ be the factor map defined in (\ref{eqDefTileSFTFactorMap}). Recall the one-sided subshifts of finite type $\bigl( \Sigma_{A_{\e}}^+, \sigma_{A_{\e}} \bigr)$ and $\bigl( \Sigma_{A_{\ee}}^+, \sigma_{A_{\ee}} \bigr)$ constructed in Subsection~\ref{subsctDynOnC_Construction}, and the factor maps $\pi_{\e}\: \Sigma_{A_{\e}}^+\rightarrow S^2$, $\pi_{\ee}\: \Sigma_{A_{\ee}}^+\rightarrow \Sigma_{A_{\e}}^+$ defined in Proposition~\ref{propSFTs_C}. We denote by $\left(\V^0, f|_{\V^0}\right)$ the dynamical system on $\V^0=\V^0(f,\CC) = \post f$ induced by $f|_{\V^0} \: \V^0\rightarrow \V^0$.

For each $y\in S^2$ and each $i\in\N$, we write
\begin{align*}
 M_{\po} (y,i)  \coloneqq & \card \bigl( P_{1, (f|_{\V^0})^i }       \cap                                  \{y\} \bigr),  \\
 M_{\e}  (y,i)  \coloneqq & \card \Bigl( P_{1, \sigma_{A_{\e}}^i }   \cap \pi_{\e}^{-1}                   (y)\Bigr),  \\
 M_{\ee} (y,i)  \coloneqq & \card \Bigl( P_{1, \sigma_{A_{\ee}}^i }  \cap (\pi_{\e} \circ \pi_{\ee})^{-1} (y)\Bigr),  \\
 M_{\ti} (y,i)  \coloneqq & \card \Bigl( P_{1, \sigma_{A_{\ti}}^i }  \cap \pi_{\ti}^{-1}                  (y)\Bigr). 
\end{align*}

Then for each $n\in\N$ and each $x\in P_{1,f^n}$, we have
\begin{equation}   \label{eqNoPeriodPtsIdentity}
M_{\ti} (x,n)  - M_{\ee} (x,n) + M_{\e} (x,n) + M_{\po} (x,n) = \deg_{f^n} (x).
\end{equation}
\end{theorem}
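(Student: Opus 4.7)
The plan is to prove the identity by case analysis on the location of the periodic point $x \in P_{1,f^n}$, following the trichotomy in Lemma~\ref{lmPeriodicPtsLocationCases}: (i) $x$ lies in the interior of an $n$-tile $X^n$; (ii) $x$ lies in the interior of an $n$-edge $e^n \subseteq \CC$, with $x \notin \bigcup_i \V^i$; or (iii) $x \in \post f = \V^0$. In each case, I would reinterpret each of the four quantities $M_\bullet(x,n)$ via the concrete description of the factor maps, using Lemma~\ref{lmCylinderIsTile} and Proposition~\ref{propTileSFT}, \ref{propSFTs_C}: a period-$n$ sequence in $\Sigma^+_{A_\ti}$ with $\pi_\ti$-image $x$ corresponds bijectively to an $n$-tile $Y\in \X^n$ with $x\in Y$ satisfying the cyclic admissibility $X_0 \subseteq f^n(Y)$, where $X_0$ is the unique $1$-tile containing $Y$; this admissibility is equivalent to ``matching color'', meaning the $0$-tile containing $Y$ equals $f^n(Y)$. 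Analogous descriptions handle $M_\e$ and $M_\ee$.

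Case~(i) is immediate: since $x \notin \CC$, the quantities $M_\po, M_\e, M_\ee$ all vanish; since $x \notin \V^n \supseteq \crit f^n$, we get $\deg_{f^n}(x) = 1$; and by Lemma~\ref{lmAtLeast1} the unique $n$-tile $X^n$ with $x\in\inte(X^n)$ has matching color, so $M_\ti(x,n)=1$. Case~(ii) requires two subcases depending on whether $f^n|_\CC$ preserves or reverses orientation at $x$ (both notions are well-defined because the orbit $\{x,f(x),\dots,f^{n-1}(x)\}$ avoids $\V$, forcing each $f^j$ to be a local homeomorphism at $f^j(x)$). In the orientation-preserving subcase the two $n$-tiles sharing $e^n$ both have matching colors and both color-lifts in $\Sigma^+_{A_\ee}$ have period $n$, giving $M_\ti=2$, $M_\e=1$, $M_\ee=2$; in the orientation-reversing subcase the color composition $T_{e_{n-1}}\circ\cdots\circ T_{e_0}$ is the swap, giving $M_\ti=0$, $M_\e=1$, $M_\ee=0$. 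Since $\deg_{f^n}(x)=1$ in case~(ii), the identity reduces to $0=0$ in both subcases.

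Case~(iii) is the heart of the proof. Write $k=\deg_{f^n}(x)$ and invoke Remark~\ref{rmFlower} to obtain orientation-preserving charts $\varphi\:W^n(x)\to\D$ and $\eta\:W^0(x)\to\D$ with $\eta\circ f^n\circ\varphi^{-1}(z)=z^k$. In these charts the $2k$ $n$-edges emanating from $x$ become rays at angles $j\pi/k$, the two $n$-edges of $\CC$ at $x$ are antipodal at angles $j_0\pi/k$ and $(j_0+k)\pi/k$, and the two $0$-edges of $\CC$ at $x$ may be normalized to angles $0$ and $\pi$. Under $z\mapsto z^k$, the $2k$ sectors of $W^n(x)$ map alternately to the two halves of $W^0(x)$, and the two $\CC$-rays at source map to rays at angles $j_0\pi$ and $(j_0+k)\pi\pmod{2\pi}$; these coincide when $k$ is even (the ``fold'') and are distinct when $k$ is odd. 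A direct combinatorial count using this model yields the value of each $M_\bullet$, with the vertex contribution $M_\po=1$ coming from the single orbit of $x$ under $f|_{\V^0}$. The subcases split according to the parity of $k$ and the orientation of the color-transition composition, and in each of them the sum $M_\ti - M_\ee + M_\e + 1$ equals $k$.

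The main obstacle is the case~(iii) bookkeeping: the ``folding'' regime ($k$ even), where both $\CC$-rays at $x$ map to a single $\CC$-ray in the target, forces asymmetric counts in $M_\ti$, $M_\e$, and $M_\ee$ that depend on the position of $j_0$, and only the alternating combination is invariant. The key conceptual point is that individually the quantities $M_\ti$, $M_\e$, $M_\ee$ are sensitive to orientation and parity data, but their signed sum is not; this rigidity is what lets the identity hold uniformly across all subcases. Once case~(iii) is settled, the three cases together exhaust $P_{1,f^n}$ by Lemma~\ref{lmPeriodicPtsLocationCases}, completing the proof of (\ref{eqNoPeriodPtsIdentity}).
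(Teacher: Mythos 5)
Your treatment of Cases~(i) and (ii) is essentially sound. In Case~(ii) you take a slightly different route from the paper: you compute $M_\ti$ and $M_\ee$ explicitly (both $2$ if $f^n|_\CC$ preserves orientation at $x$, both $0$ if it reverses), whereas the paper sidesteps the orientation dichotomy and proves $M_\ti(x,n)=M_\ee(x,n)$ directly by constructing a bijection $h\:\pi_\ti^{-1}(x)\to(\pi_\e\circ\pi_\ee)^{-1}(x)$ that preserves period-$n$-ness. Both work; yours is marginally more explicit, the paper's avoids having to argue that orientation-type is constant along the return.

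The gap is in Case~(iii), which is the heart of the theorem, and it is twofold. First, your model is wrong: you assert that the two $n$-edges of $\CC$ at $x$ are antipodal at angles $j_0\pi/k$ and $(j_0+k)\pi/k$ in the $z\mapsto z^k$ chart, with $k=\deg_{f^n}(x)$. This is false in general. The angular separation of the two $\CC$-edges is a free parameter: the paper's subcase~(2)(a), for instance, has the $2\deg_{f^n}(x)$ $n$-tiles at $x$ split as $2k'-1$ in the white $0$-tile and $2l'-1$ in the black $0$-tile with $k'+l'-1=\deg_{f^n}(x)$ and $k',l'$ arbitrary (Figure~\ref{figPlota} shows $k'=2$, $l'=3$, giving a $3/5$ split of the $8$ sectors, not the $4/4$ split antipodality would require). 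So the separation of the $\CC$-rays is not determined by $\deg_{f^n}(x)$ alone, and a proof built on assuming it is will not go through. Second, even granting a correct local model, you never actually compute the quantities $M_\ti$, $M_\ee$, $M_\e$ in the various subcases; you write that ``a direct combinatorial count \dots yields the value of each $M_\bullet$'' and that ``the subcases split according to the parity of $k$ and the orientation \dots and in each of them the sum \dots equals $k$'', but this is precisely what has to be verified. The paper's proof enumerates six subcases (1)(a), (1)(b), (2)(a)--(d) distinguished by whether $x$ is critical for $f^n$ and by the action of $f^n$ on the pair $\{e_1,e_2\}$ of $\CC$-edges at $x$, and in each subcase records the five quantities explicitly (with the two free parameters $k',l'$ entering $M_\ti$ and $\deg_{f^n}(x)$ but cancelling in the signed sum). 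That cancellation is the content of the theorem in Case~(iii), and your proposal assumes it rather than establishing it.
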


\begin{proof}
Fix an arbitrary integer $n\in\N$ and an arbitrary fixed point $x\in P_{1,f^n}$ of $f^n$.

We establish (\ref{eqNoPeriodPtsIdentity}) by verifying it in each of the three cases of Lemma~\ref{lmPeriodicPtsLocationCases} depending on the location of $x$.

\smallskip

\emph{Case~(i) of Lemma~\ref{lmPeriodicPtsLocationCases}}: $x\in \inte(X^n)$ for some $n$-tile $X^n \in \X^n$, where $X^n$ is either a black $n$-tile contained in the black $0$-tile $X^0_\b$ or a white $n$-tile contained in the white $0$-tile $X^0_\w$. Moreover, $x\notin \bigcup\limits_{i\in\N_0} \bigl( \bigcup \E^i \bigr) = \bigcup\limits_{i\in\N_0} f^{-i} (\CC)$ (see Proposition~\ref{propCellDecomp}~(iii)). 

Thus by Proposition~\ref{propTileSFT}, $\card \bigl(\pi_{\ti}^{-1}(x) \bigr) = 1$. For each $i\in\N_0$, we denote by $X^i(x)\in\X^i$ the unique $i$-tile containing $x$. Fix an arbitrary integer $j\in \N_0$. Then $f^j\bigl(X^{j+1}(x)\bigr) \in \X^1$ (see Propostion~\ref{propCellDecomp}~(i)) and $X^{j+1}(x)\subseteq X^j(x)$. Thus $f \bigl( f^j\bigl(X^{j+1}(x)\bigr) \bigr) \supseteq f^{j+1}\bigl(X^{j+2}(x)\bigr)$. It follows from Lemma~\ref{lmCylinderIsTile} and (\ref{eqDefTileSFTFactorMap}) that $\pi_{\ti}^{-1} (x) = \Bigl\{  \bigl\{ f^i\bigl(X^{i+1}(x)\bigr) \bigr\}_{i\in\N_0} \Bigr\} \subseteq \Sigma_{A_{\ti}}^+$. Observe that $f^j\bigl(X^{j+1}(x)\bigr)$ is the unique $1$-tile containing $f^j(x)$, and that $f^{j+n}\bigl(X^{j+n+1}(x)\bigr)$ is the unique $1$-tile containing $f^{j+n}(x)$. Since $f^n(x)=x$, we can conclude from Definition~\ref{defcelldecomp} that $f^{j+n}\bigl(X^{j+n+1}(x)\bigr) = f^j\bigl(X^{j+1}(x)\bigr)$. Hence $\bigl\{ f^i \bigl( X^{i+1}(x) \bigr) \bigr\}_{i\in\N_0} \in P_{1,\sigma_{A_{\ti}}^n}$ and $M_{\ti} = 1$. On the other hand, since $x\notin\CC$, we have $M_{\ee} (x,n) = M_{\e} (x,n) = M_{\po} (x,n) = 0$ by Proposition~\ref{propSFTs_C}. Since $x\in \inte (X^n)$, we have $\deg_{f^n} (x) = 1$. This establishes the identity (\ref{eqNoPeriodPtsIdentity}) in Case~(i) of Lemma~\ref{lmPeriodicPtsLocationCases}.

\smallskip

\emph{Case~(ii) of Lemma~\ref{lmPeriodicPtsLocationCases}}: $x \in \inte(e^n)$ for some $n$-edge $e^n\in\E^n$ with $e^n \subseteq \CC$. Moreover, $x\notin \bigcup\limits_{i\in\N_0} \V^i$. So $\deg_{f^n}(x) = 1$ and $M_{\po} (x,n) = 0$.

We will establish (\ref{eqNoPeriodPtsIdentity}) in this case by proving the following two claims.

\smallskip

\emph{Claim~1.} $M_{\e}(x,n) = 1$.

\smallskip

Since $\card \bigl( \pi_{\e}^{-1} (x) \bigr) = 1$ by Proposition~\ref{propSFTs_C}~(ii), it suffices to show that $\sigma_{A_{\e}}^n \bigl( \pi_{\e}^{-1} (x) \bigr) =  \pi_{\e}^{-1} (x)$. For each $y\in \CC \setminus \bigcup\limits_{i\in\N_0} \V^i$ and $i\in\N_0$, we denote by $e^i(y) \in \E^i$ to be the unique $i$-edge containing $y$. Fix an arbitrary integer $j\in \N_0$. Then $f^j\bigl(e^{j+1}(x)\bigr) \in\E^1$ (see Proposition~\ref{propCellDecomp}~(i)) and $e^{j+1}(x) \subseteq e^j(x)$. Thus $f \bigl( f^j \bigl( e^{j+1}(x) \bigr) \bigr) \supseteq f^{j+1} \bigl( e^{j+2}(x) \bigr)$. It follows from Lemma~\ref{lmCylinderIsTile} and (\ref{eqDefPi|}) that $\pi_{\e}^{-1} (x) = \Bigl\{ \bigl\{ f^i \bigl( e^{i+1}(x) \bigr) \bigr\}_{i\in\N_0} \Bigr\} \subseteq \Sigma_{A_{\e}}^+$. Observe that $f^j\bigl(e^{j+1}(x)\bigr)$ is the unique $1$-edge containing $f^j(x)$, and that $f^{j+n}\bigl(e^{j+n+1}(x)\bigr)$ is the unique $1$-edge containing $f^{j+n}(x)$. Since $f^n(x)=x$, we can conclude from Definition~\ref{defcelldecomp} that $f^{j+n}\bigl(e^{j+n+1}(x)\bigr) = f^j\bigl(e^{j+1}(x)\bigr)$. Hence $\bigl\{ f^i \bigl( e^{i+1}(x) \bigr) \bigr\}_{i\in\N_0} \in P_{1,\sigma_{A_{\e}}^n}$ and $M_{\e}(x,n) = 1$, proving Claim~1.

\smallskip

\emph{Claim~2.} $M_{\ee}(x,n) = M_{\ti}(x,n)$.

\smallskip

We prove this claim by constructing a bijection $h\: \pi_{\ti}^{-1} (x) \rightarrow (\pi_{\e}\circ\pi_{\ee})^{-1} (x)$ explicitly and show that $h(\underline{z})\in P_{1,\sigma_{A_{\ee}}^n}$ if and only if $\underline{z}\in P_{1,\sigma_{A_{\ti}}^n}$.

For each $y\in \CC \setminus \bigcup\limits_{i\in\N_0} \V^i$, each $\c\in\{\b,\w\}$, and each $i\in\N_0$, we denote by $X^{\c,i}(y) \in \X^i$ the unique $i$-tile satisfying $y\in X^{\c,i}(y)$ and $X^{\c,i}(y) \subseteq X^0_\c$. Here $X^0_\b$ (resp.\ $X^0_\w$) is the unique black (resp.\ white) $0$-tile. Recall that as defined above, $e^i(x) \in \E^i$ is the unique $i$-edge containing $x$, for $i\in\N_0$. Then for each $\c\in\{\b,\w\}$ and each $i\in\N_0$, we have $e^i(x)\subseteq X^{\c,i}(x)$ (see Definition~\ref{defcelldecomp}), $f^i\bigl( X^{\c,i+1}(x) \bigr) \in \X^1$ (see Proposition~\ref{propCellDecomp}~(i)), and $X^{\c,i+1}(x) \subseteq X^{\c,i}(x)$. Thus 
\begin{equation}   \label{eqPfthmNoPeriodPtsIdentity_X}
f \bigl( f^i \bigl( X^{\c,i+1}(x) \bigr) \bigr) \supseteq f^{i+1} \bigl( X^{\c,i+2}(x) \bigr).
\end{equation}
It follows from Lemma~\ref{lmCylinderIsTile} and (\ref{eqDefTileSFTFactorMap}) that $\bigl\{ f^i \bigl( X^{\c,i+1}(x) \bigr) \bigr\}_{i\in\N_0} \in \Sigma_{A_{\ti}}^+$ and 
\begin{equation*} 
\pi_{\ti} \Bigl( \bigl\{ f^i \bigl( X^{\c,i+1}(x) \bigr) \bigr\}_{i\in\N_0} \Bigr) = x.
\end{equation*}

Next, we show that $\card \bigl( \pi_{\ti}^{-1}(x) \bigr) = 2$. We argue by contradiction and assume that $\card \bigl( \pi_{\ti}^{-1}(x) \bigr) \geq 3$. We choose $\{X_i\}_{i\in\N_0} \in \pi_{\ti}^{-1}(x)$ different from $\bigl\{ f^i \bigl( X^{\b,i+1}(x) \bigr) \bigr\}_{i\in\N_0}$ and $\bigl\{ f^i \bigl( X^{\w,i+1}(x) \bigr) \bigr\}_{i\in\N_0}$. Since $x\in \CC \setminus \bigcup\limits_{i\in\N_0} \V^i$, for each $j\in\N_0$, there exist exactly two $1$-tiles containing $f^j(x)$, namely, $X^{\b,1}\bigl(f^j(x)\bigr)$ and $X^{\w,1}\bigl(f^j(x)\bigr)$. Since $f^j (x) \in X_j$ for each $j\in\N_0$ (see (\ref{eqDefTileSFTFactorMap})), we get that there exists an integer $k\in\N_0$ and distinct $\c_1,\c_2\in\{\b,\w\}$ such that $X_k = f^k\bigl( X^{\c_1,k+1}(x) \bigr)$ and $X_{k+1} = f^{k+1}\bigl( X^{\c_2,k+2}(x) \bigr)$. Since $X^{\c_2,k+2}(x) \subseteq X^{\c_2,k+1}(x)$, $X^{\c_2,k+2}(x) \nsubseteq X^{\c_1,k+1}(x)$, and $f^{k+1}$ is injective on $\inte \bigl(  X^{\c_1,k+1}(x) \bigr) \cup \inte \bigl(  X^{\c_2,k+1}(x) \bigr)$, we get 
\begin{equation*}
f(X_k) = f^{k+1} \bigl(  X^{\c_1,k+1}(x) \bigr) \nsupseteq f^{k+1} \bigl(  X^{\c_2,k+2}(x) \bigr) = X_{k+1}.
\end{equation*}
This is a contradiction. Hence $\card \bigl( \pi_{\ti}^{-1}(x) \bigr) = 2$.

We define $h\: \pi_{\ti}^{-1}(x) \rightarrow (\pi_{\e} \circ \pi_{\ee})^{-1}(x)$ by
\begin{equation}   \label{eqPfthmNoPeriodPtsIdentity_Def_h}
   h \Bigl( \bigl\{ f^i \bigl( X^{\c,i+1}(x) \bigr) \bigr\}_{i\in\N_0} \Bigr) 
= \bigl\{ \bigl( f^i \bigl( e^{i+1} (x) \bigr), \c_i (\c)   \bigr)  \bigr\}_{i\in\N_0},   \qquad \c\in\{\b,\w\},
\end{equation}
where $\c_i (\c) \in \{\b,\w\}$ is the unique element in $\{\b,\w\}$ with the property that
\begin{equation}   \label{eqPfthmNoPeriodPtsIdentity_h2}
f^i \bigl(  X^{\c,i+1} (x) \bigr)  \subseteq X_{\c_i (\c)}^0
\end{equation}
for $i\in\N_0$. 

We first verify that $\bigl\{ \bigl( f^i \bigl( e^{i+1} (x) \bigr), \c_i (\c)   \bigr)  \bigr\}_{i\in\N_0} \in \Sigma_{A_{\ee}}^+$ for each $\c\in\{\b,\w\}$. Fix arbitrary $\c\in\{\b,\w\}$ and $j\in\N_0$. Since $e^{j+2}(x) \subseteq e^{j+1}(x) \subseteq \CC$, we get $f^j\bigl( e^{j+1}(x) \bigr) \subseteq \CC$ and 
\begin{equation}   \label{eqPfthmNoPeriodPtsIdentity_h1}
f \bigl( f^j \bigl( e^{j+1}(x) \bigr) \bigr) \supseteq f^{j+1} \bigl( e^{j+2}(x) \bigr).
\end{equation}
Recall that $X^1 \bigl(  f^j \bigl( e^{j+1} (x) \bigr), \c_j (\c) \bigr) \in \X^1$ denotes the unique $1$-tile satisfying 
\begin{equation*}
f^j \bigl( e^{j+1} (x) \bigr)    \subseteq     X^1 \bigl(  f^j \bigl( e^{j+1} (x) \bigr), \c_j (\c) \bigr)    \subseteq     X_{\c_j (\c)}^0
\end{equation*}
(see Proposition~\ref{propCellDecomp}~(iii), (v), and (vi) for its existence and uniqueness). Then by (\ref{eqPfthmNoPeriodPtsIdentity_h2}) and the fact that $e^{j+1}(x) \subseteq X^{\c,j+1} (x)$ (see Definition~\ref{defcelldecomp}), we get
\begin{equation}   \label{eqPfthmNoPeriodPtsIdentity_X=X}
   X^1 \bigl(  f^j \bigl( e^{j+1} (x) \bigr), \c_j (\c) \bigr)
=  f^j \bigl( X^{\c,j+1} (x) \bigr).
\end{equation}
Then by (\ref{eqDefA||}), (\ref{eqPfthmNoPeriodPtsIdentity_X=X}), (\ref{eqPfthmNoPeriodPtsIdentity_X}), and (\ref{eqPfthmNoPeriodPtsIdentity_h1}), $\bigl\{ \bigl( f^i \bigl( e^{i+1} (x) \bigr), \c_i (\c)   \bigr)  \bigr\}_{i\in\N_0} \in \Sigma_{A_{\ee}}^+$.

Note that since $X^{\c,1} (x) \subseteq X^0_\c$, we get $\c_1 (\c) = \c$ for $\c\in\{\b,\w\}$ from (\ref{eqPfthmNoPeriodPtsIdentity_h2}). Thus
\begin{equation*}
     \bigl\{ \bigl( f^i \bigl( e^{i+1} (x) \bigr), \c_i (\b)   \bigr)  \bigr\}_{i\in\N_0}
\neq \bigl\{ \bigl( f^i \bigl( e^{i+1} (x) \bigr), \c_i (\w)   \bigr)  \bigr\}_{i\in\N_0},
\end{equation*}
i.e., $h$ is injective. By Proposition~\ref{propSFTs_C}~(i) and (ii), $\card \bigl( (\pi_{\e} \circ \pi_{\ee} )^{-1} ( x ) \bigr) = 2$. Thus $h$ is a bijection.

It suffices now to show that for each $\underline{z} \in \pi_{\ti}^{-1}(x)$,  $h(\underline{z}) \in P_{1,\sigma_{A_{\ee}}^n }$ if and only if $\underline{z} \in P_{1,\sigma_{A_{\ti}}^n }$. Note that $e^i(x) \subseteq \CC$ for all $i\in\N_0$. Fix arbitrary $\c\in\{\b,\w\}$ and $i\in\N$. Note that since $f^i(x) \in f^i \bigl( e^{i+1}(x) \bigr)$, $f^i(x) = f^{i+n}(x) \in f^{i+n} \bigl( e^{i+n+1}(x) \bigr)$, and $f^i(x) \notin \bigcup\limits_{i\in\N_0} \V^i$, we have
\begin{equation}  \label{eqPfthmNoPeriodPtsIdentity_Periodic}
\bigl( f^i \bigl( e^{i+1} (x) \bigr), \c_i (\c) \bigr) = \bigl( f^{i+n} \bigl( e^{i+n+1} (x) \bigr), \c_{i+n} (\c) \bigr)
\end{equation}
if and only if $X^1 \bigl(  f^i \bigl( e^{i+1} (x) \bigr), \c_i (\c) \bigr) = X^1  \bigl( f^{i+n} \bigl( e^{i+n+1} (x) \bigr), \c_{i+n} (\c) \bigr)$. Thus by (\ref{eqPfthmNoPeriodPtsIdentity_X=X}), we get that (\ref{eqPfthmNoPeriodPtsIdentity_Periodic}) holds if and only if $f^i \bigl( X^{\c,i+1} (x) \bigr) = f^{i+n} \bigl( X^{\c,i+n+1} (x) \bigr)$. Hence by (\ref{eqPfthmNoPeriodPtsIdentity_Def_h}), $h(\underline{z}) \in P_{1,\sigma_{A_{\ee}}^n }$ if and only if $\underline{z} \in P_{1,\sigma_{A_{\ti}}^n }$, for each $\underline{z} \in \pi_{\ti}^{-1}(x)$.

Claim~2 is now established. Therefore (\ref{eqNoPeriodPtsIdentity}) holds in Case~(ii) of Lemma~\ref{lmPeriodicPtsLocationCases}.

\smallskip

\emph{Case~(iii) of Lemma~\ref{lmPeriodicPtsLocationCases}}: $x\in\post f$.

We will establish (\ref{eqNoPeriodPtsIdentity}) in this case by verifying it in each of the following subcases.

\begin{figure}
    \centering
    \begin{overpic}
    [width=6cm, 
    tics=20]{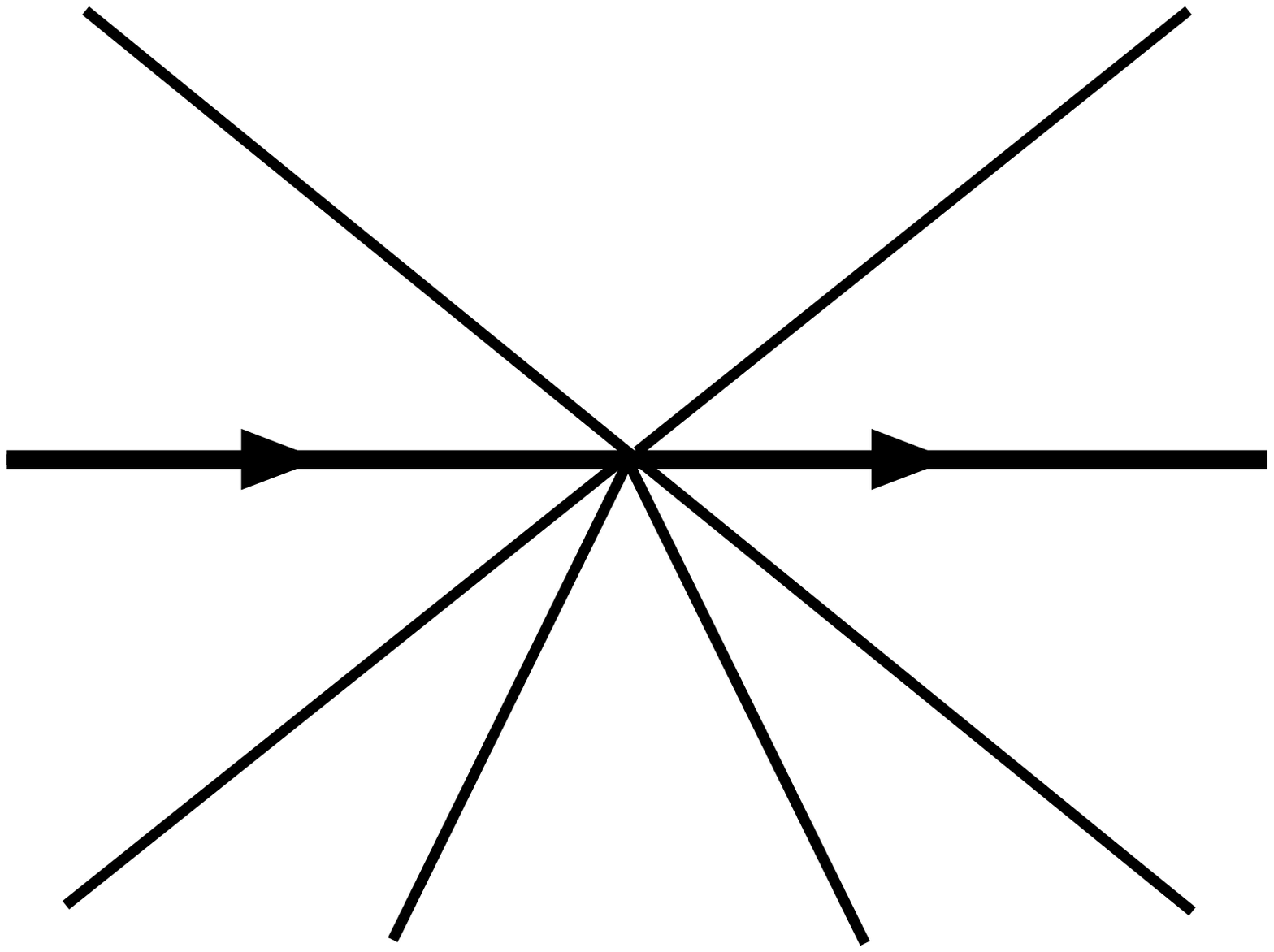}
    \put(75,100){$\b\w$}
    \put(25,80){$\w\w$}
    \put(130,80){$\w\w$}
    \put(25,33){$\b\b$}
    \put(40,20){$\w\b$}
    \put(79,12){$\b\b$}
    \put(112,20){$\w\b$}
    \put(135,33){$\b\b$}
    \put(20,57){$e_1$}
    \put(140,57){$e_2$}
    \put(80,50){$x$}
    \put(173,60){$\CC$}
    \put(163,80){$X^0_\w$}
    \put(163,40){$X^0_\b$}
    \end{overpic}
    \caption{Subcase (2)(a) where $f^n(e_1)\supseteq e_1$ and $f^n(e_2)\supseteq e_2$. $k=2$, $l=3$.}
    \label{figPlota}

    \centering
    \begin{overpic}
    [width=6cm, 
    tics=20]{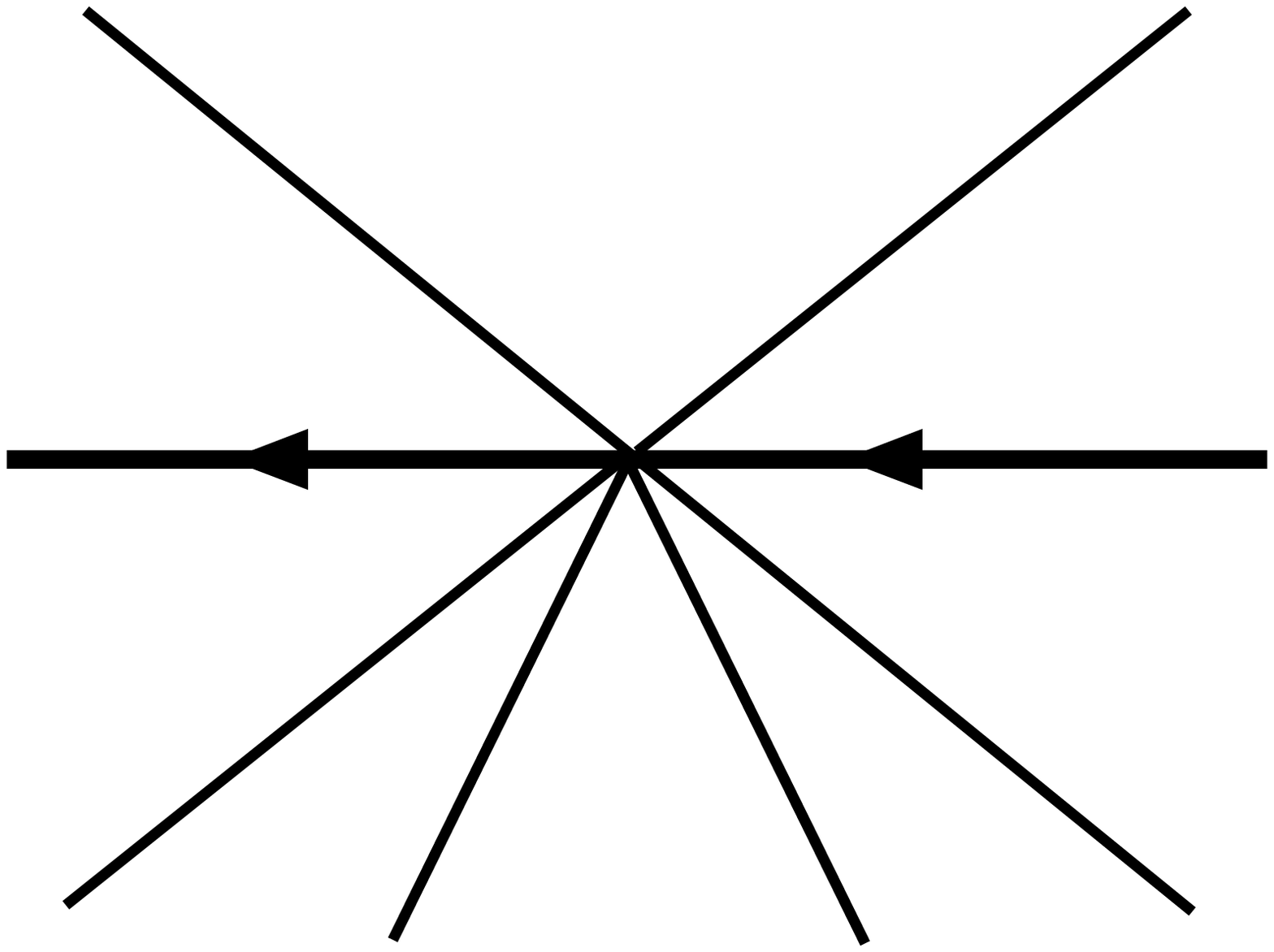}
    \put(75,100){$\w\w$}
    \put(25,80){$\b\w$}
    \put(130,80){$\b\w$}
    \put(25,33){$\w\b$}
    \put(40,20){$\b\b$}
    \put(79,12){$\w\b$}
    \put(112,20){$\b\b$}
    \put(135,33){$\w\b$}
    \put(20,57){$e_1$}
    \put(140,57){$e_2$}
    \put(80,50){$x$}
    \put(173,60){$\CC$}
    \put(163,80){$X^0_\w$}
    \put(163,40){$X^0_\b$}
    \end{overpic}
    \caption{Subcase (2)(b) where $f^n(e_1)\supseteq e_2$ and $f^n(e_2)\supseteq e_1$. $k=2$, $l=3$.}
    \label{figPlotb}

    \centering
    \begin{overpic}
    [width=6cm, 
    tics=20]{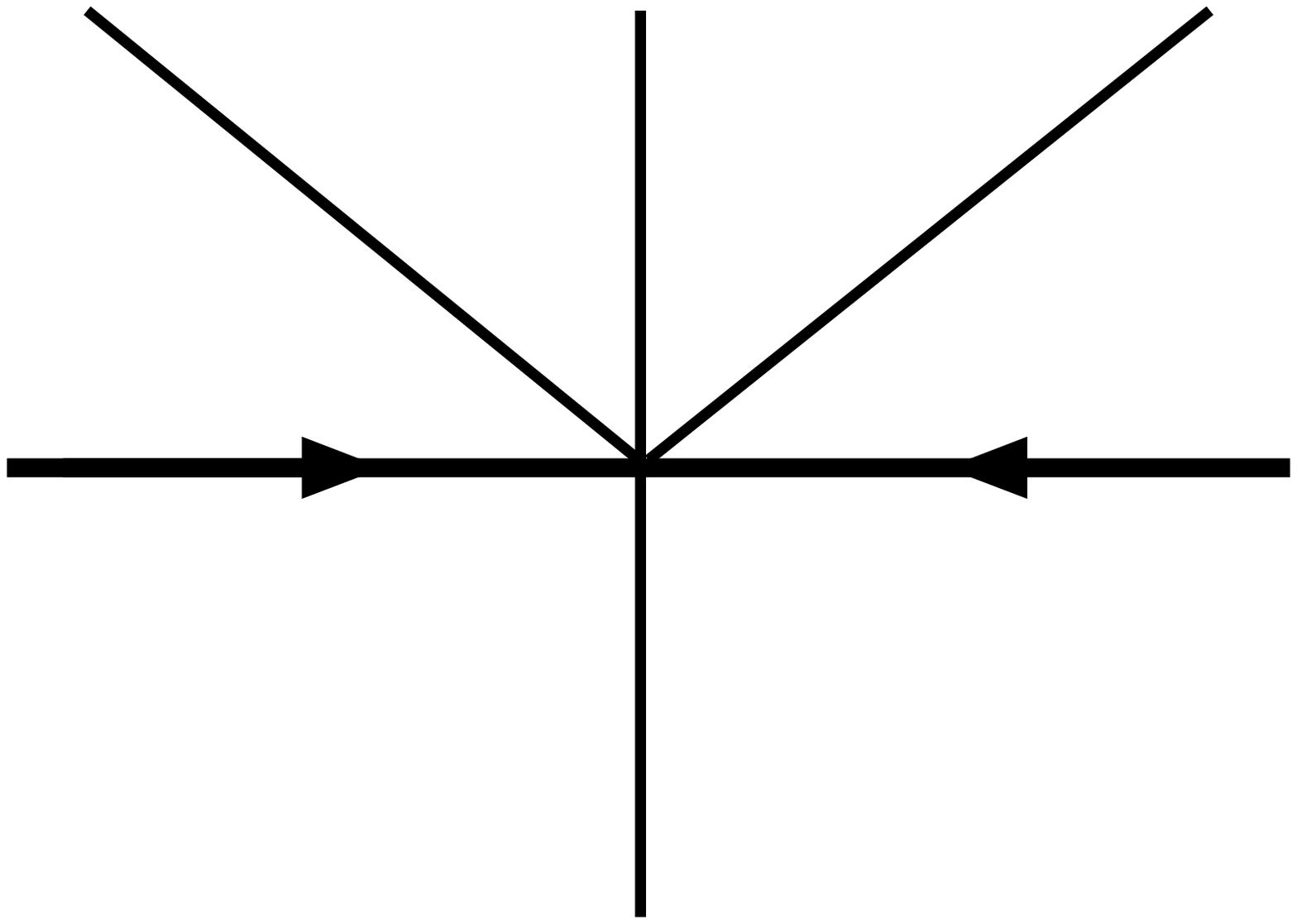}
    \put(55,100){$\b\w$}
    \put(95,100){$\w\w$}
    \put(25,70){$\w\w$}
    \put(130,70){$\b\w$}
    \put(37,22){$\b\b$}
    \put(117,22){$\w\b$}
    \put(25,48){$e_1$}
    \put(140,48){$e_2$}
    \put(75,50){$x$}
    \put(173,57){$\CC$}
    \put(163,77){$X^0_\w$}
    \put(163,37){$X^0_\b$}
    \end{overpic}
    \caption{Subcase (2)(c) where $f^n(e_1)=f^n(e_2)\supseteq e_1$. $k=2$, $l=1$.}
    \label{figPlotc}

    \centering
    \begin{overpic}
    [width=6cm, 
    tics=20]{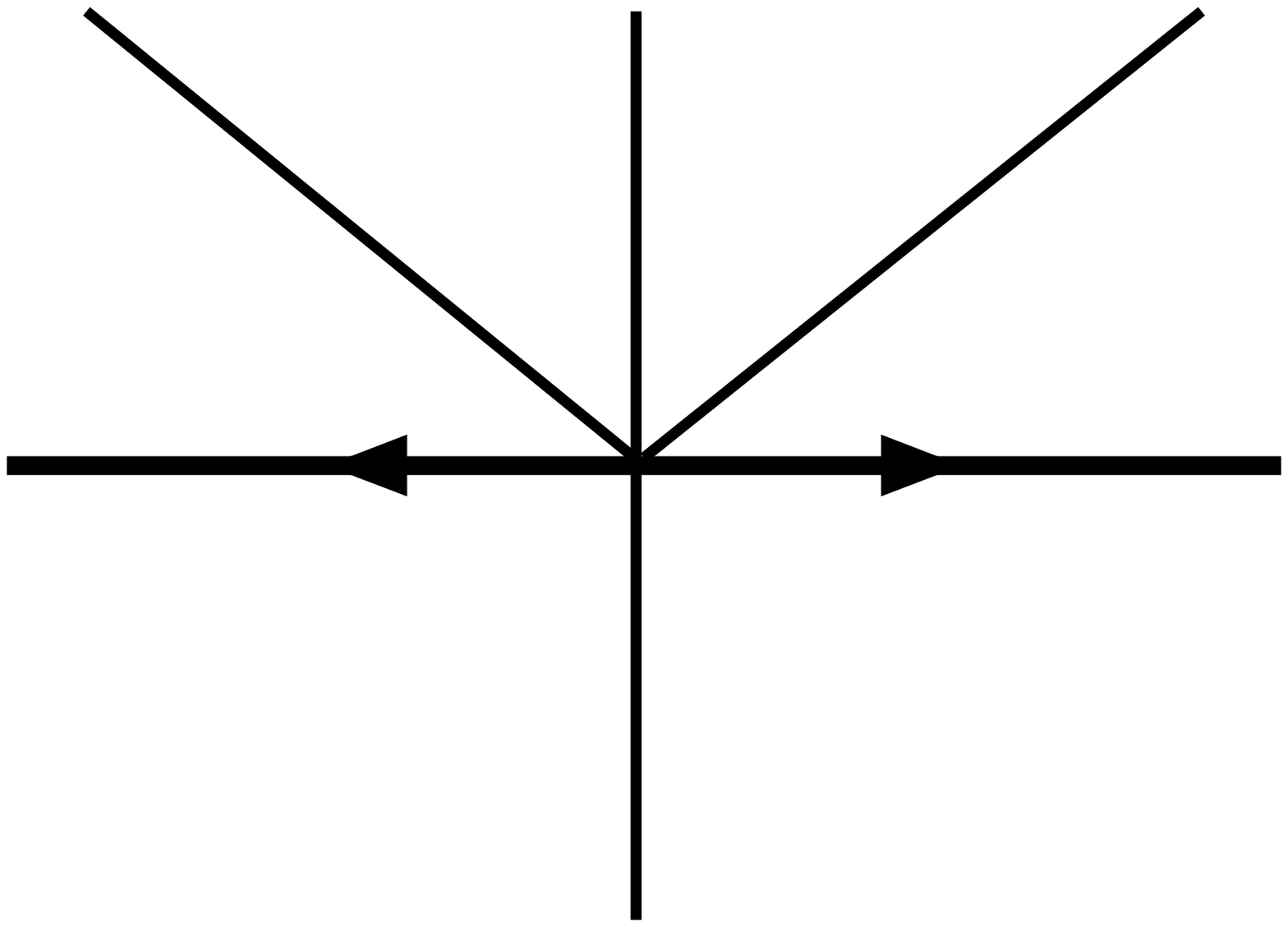}
    \put(55,100){$\w\w$}
    \put(95,100){$\b\w$}
    \put(25,70){$\b\w$}
    \put(130,70){$\w\w$}
    \put(37,22){$\w\b$}
    \put(117,22){$\b\b$}
    \put(25,48){$e_1$}
    \put(140,48){$e_2$}
    \put(75,50){$x$}
    \put(173,57){$\CC$}
    \put(163,77){$X^0_\w$}
    \put(163,37){$X^0_\b$}
    \end{overpic}
    \caption{Subcase (2)(d) where $f^n(e_1)=f^n(e_2)\supseteq e_2$. $k=2$, $l=1$.}
    \label{figPlotd}
\end{figure}

\begin{enumerate}

\smallskip
\item[(1)] If $x \notin \crit f^n$, then $(f^n)|_\CC$ either preserves or reverses the orientation at $x$ and the point $x$ is contained in exactly one white $n$-tile $X^n_\w$ and one black $n$-tile $X^n_\b$.
\begin{enumerate}

\smallskip
\item[(a)] If $(f^n)|_\CC$ preserves the orientation at $x$, then $X^n_\w \subseteq X^0_\w$ and $X^n_\b \subseteq X^0_\b$.

In this subcase, $M_{\ti} = 2$, $M_{\ee} = 4$, $M_{\e} = 2$, $M_{\po} = 1$, and $\deg_{f^n}(x) = 1$.

\smallskip
\item[(b)] If $(f^n)|_\CC$ reverses the orientation at $x$, then $X^n_\w \subseteq X^0_\b$ and $X^n_\b \subseteq X^0_\w$.

In this subcase, $M_{\ti} = 0$, $M_{\ee} = 0$, $M_{\e} = 0$, $M_{\po} = 1$, and $\deg_{f^n}(x) = 1$.
\end{enumerate}

\smallskip
\item[(2)] If $x \in \crit f^n$, then $x=f^n(x)\in\post f$ and so there are two distinct $n$-edges $e_1,e_2\subseteq\CC$ such that $\{x\}=e_1\cap e_2$. We refer to Figures \ref{figPlota} to \ref{figPlotd}.
\begin{enumerate}

\smallskip
\item[(a)] If $e_1\subseteq f^n(e_1)$ and $e_2\subseteq f^n(e_2)$, then $x$ is contained in exactly $k$ white and $k-1$ black $n$-tiles that are contained in the white $0$-tile, as well as in exactly $l-1$ white and $l$ black $n$-tiles that are contained in the black $0$-tile, for some $k,l\in\N$ with $k+l-1=\deg_{f^n}(x)$. Note that in this case $(f^n)|_\CC$ preserves the orientation at $x$.

In this subcase, $M_{\ti} = k+l$, $M_{\ee} = 4$, $M_{\e} = 2$, $M_{\po} = 1$, and $\deg_{f^n}(x) = k+l-1$.

\smallskip
\item[(b)] If $e_2\subseteq f^n(e_1)$ and $e_1\subseteq f^n(e_2)$, then $x$ is contained in exactly $k-1$ white and $k$ black $n$-tiles that are contained in the white $0$-tile, as well as in exactly $l$ white and $l-1$ black $n$-tiles that are contained in the black $0$-tile, for some $k,l\in\N$ with $k+l-1=\deg_{f^n}(x)$. Note that in this case $(f^n)|_\CC$ reverses the orientation at $x$.

In this subcase, $M_{\ti} = k+l-2$, $M_{\ee} = 0$, $M_{\e} = 0$, $M_{\po} = 1$, and $\deg_{f^n}(x) = k+l-1$.

\smallskip
\item[(c)] If $e_1\subseteq f^n(e_1)= f^n(e_2)$, then $x$ is contained in exactly $k$ white and $k$ black $n$-tiles that are contained in the white $0$-tile, as well as in exactly $l$ white and $l$ black $n$-tiles that are contained in the black $0$-tile, for some $k,l\in\N$ with $k+l=\deg_{f^n}(x)$. Note that in this case $(f^n)|_\CC$ neither preserves nor reverses the orientation at $x$.

In this subcase, $M_{\ti} = k+l$, $M_{\ee} = 2$, $M_{\e} = 1$, $M_{\po} = 1$, and $\deg_{f^n}(x) = k+l$.

\smallskip
\item[(d)] If $e_2\subseteq f^n(e_1)= f^n(e_2)$, then $x$ is contained in exactly $k$ white and $k$ black $n$-tiles that are contained in the white $0$-tile, as well as in exactly $l$ white and $l$ black $n$-tiles that are contained in the black $0$-tile, for some $k,l\in\N$ with $k+l=\deg_{f^n}(x)$. Note that in this case $(f^n)|_\CC$ neither preserves nor reverses the orientation at $x$.

In this subcase, $M_{\ti} = k+l$, $M_{\ee} = 2$, $M_{\e} = 1$, $M_{\po} = 1$, and $\deg_{f^n}(x) = k+l$.
\end{enumerate}
\end{enumerate}

This finishes the verification of (\ref{eqNoPeriodPtsIdentity}) in Case~(iii) of Lemma~\ref{lmPeriodicPtsLocationCases}.

\smallskip 

The proof of the theorem is now complete.
\end{proof}

Since all periodic points of $\bigl( \Sigma_{A_{\e}}^+, \sigma_{A_{\e}} \bigr)$ and $\bigl( \Sigma_{A_{\ee}}^+, \sigma_{A_{\ee}} \bigr)$ are mapped to periodic points of $f$ by the corresponding factor maps, we can write the dynamical Dirichlet series $\DS_{f,\,\minus\phi,\,\deg_f} (s)$ formally as a combination of products and quotient of the dynamical zeta functions for $\bigl( \Sigma_{A_{\ti}}^+, \sigma_{A_{\ti}} \bigr)$, $\bigl( \Sigma_{A_{\ee}}^+, \sigma_{A_{\ee}} \bigr)$, $\bigl( \Sigma_{A_{\e}}^+, \sigma_{A_{\e}} \bigr)$, and $\left(\V^0, f|_{\V^0}\right)$. In order to deduce Theorem~\ref{thmZetaAnalExt_InvC} from Theorem~\ref{thmZetaAnalExt_SFT}, we will need to verify that the zeta functions for the last three systems converge on an open half-plane on $\C$ containing $\{ s\in\C \,|\, \Re(s) \geq s_0 \}$.

\subsection{Calculation of topological pressure}  \label{subsctDynOnC_TopPressure}

Let $f\:S^2\rightarrow S^2$ be an expanding Thurston map with a Jordan curve $\CC\subseteq S^2$ satisfying $f(\CC)\subseteq \CC$ and $\post f\subseteq \CC$. We define for $m\in \N_0$ and $p \in\CC \cap \V^m(f,\CC)$,
\begin{equation}   \label{eqDefEdgePair}
\ae^m(p) \coloneqq \inte (e_1) \cup \{p\} \cup \inte (e_2) \quad \qquad  \text{and} \quad  \qquad
\overline\ae^m(p) \coloneqq e_1  \cup e_2,
\end{equation} 
where $e_1,e_2\in\E^m(f,\CC)$ are the unique pair of $m$-edges with $e_1\cup e_2 \subseteq \CC$ and $e_1\cap e_2 = \{p\}$. We denote for $m\in \N_0$, $n\in\N$, $q\in\CC$, and $q_j\in\CC \cap \V^m(f,\CC)$ for $j\in\{1,2,\dots,n\}$,
\begin{align}  \label{eqDefEm}
 &          E_m(q_n,\,q_{n-1},\,\dots,\,q_1;\,q)  \notag \\
 & \qquad = \bigl\{ x\in (f|_\CC)^{-n} (q)  \,\big|\,  (f|_\CC)^i(x) \in \overline\ae^m(q_{n-i}), \, i\in\{0,1,\dots,n-1\}  \bigr\} \\
 & \qquad = (f|_\CC)^{-n} (q)  \cap \biggl( \bigcap\limits_{i=0}^{n-1}  (f|_\CC)^{-i} (\overline\ae^m (q_{n-i}) ) \biggr)  \notag \\
 & \qquad \subseteq \CC \cap \V^{m+n}(f,\CC) \notag.
\end{align}

\begin{lemma}   \label{lmEmInductive}
Let $f$ and $\CC$ satisfy the Assumptions. We assume in addition that $f(\CC)\subseteq \CC$. Then
\begin{equation*}
\bigcup\limits_{x\in E_m(p_n,\,p_{n-1},\,\dots,\,p_1;\,p_0)} E_m(p_{n+1};\,x) =  E_m(p_{n+1},\,p_n,\,\dots,\,p_1;\,p_0).
\end{equation*}
for $m\in\N_0$, $n\in\N$, and $p_i \in\CC\cap \V^m(f,\CC)$ for $i\in\{1,2,\dots,n+1\}$. Here $E_m$ is defined in (\ref{eqDefEm}).
\end{lemma}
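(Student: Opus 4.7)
The plan is to establish the claimed set equality by two direct inclusions, each obtained by unfolding the definition (\ref{eqDefEm}) of $E_m$ and relabeling the orbit indices. No auxiliary results beyond the definition itself appear to be necessary, so the whole argument will amount to careful bookkeeping.

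For the forward inclusion, I would take an arbitrary $y$ in the left-hand side, so that $y\in E_m(p_{n+1};\,x)$ for some $x\in E_m(p_n,\,p_{n-1},\,\dots,\,p_1;\,p_0)$. Unfolding the definitions, this gives $(f|_\CC)(y)=x$, $y\in\overline\ae^m(p_{n+1})$, $(f|_\CC)^n(x)=p_0$, and $(f|_\CC)^i(x)\in\overline\ae^m(p_{n-i})$ for $i\in\{0,\dots,n-1\}$. Substituting $x=(f|_\CC)(y)$ yields $(f|_\CC)^{n+1}(y)=p_0$ and $(f|_\CC)^{i+1}(y)\in\overline\ae^m(p_{(n+1)-(i+1)})$ for $i\in\{0,\dots,n-1\}$. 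Combined with $y\in\overline\ae^m(p_{n+1})$ (the condition corresponding to index $0$), this is precisely the defining condition of $E_m(p_{n+1},\,p_n,\,\dots,\,p_1;\,p_0)$.

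For the reverse inclusion, given $y\in E_m(p_{n+1},\,p_n,\,\dots,\,p_1;\,p_0)$, I would set $x\coloneqq (f|_\CC)(y)$ and verify both $y\in E_m(p_{n+1};\,x)$ and $x\in E_m(p_n,\,\dots,\,p_1;\,p_0)$. The first assertion is immediate from $(f|_\CC)(y)=x$ together with the condition at index $0$, namely $y\in\overline\ae^m(p_{n+1})$. For the second, the conditions $(f|_\CC)^i(y)\in\overline\ae^m(p_{(n+1)-i})$ for $i\in\{1,\dots,n\}$ together with $(f|_\CC)^{n+1}(y)=p_0$ translate, via the index shift $j=i-1$, exactly to $(f|_\CC)^j(x)\in\overline\ae^m(p_{n-j})$ for $j\in\{0,\dots,n-1\}$ and $(f|_\CC)^n(x)=p_0$, which is the defining condition of $E_m(p_n,\,\dots,\,p_1;\,p_0)$.

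The only potential obstacle is purely notational: keeping the index shift $i\mapsto i-1$ straight between the $(n+1)$-fold and the $n$-fold versions of $E_m$. Since there is no analytic or geometric content, I expect the proof to be completed in a few lines once the indexing is written out carefully.
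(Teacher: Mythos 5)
Your proposal is correct and follows essentially the same route as the paper's proof: both arguments amount to unfolding the definition (\ref{eqDefEm}) and shifting indices by one, the only difference being that you present it as two inclusions whereas the paper writes a single chain of set equalities.
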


\begin{proof}
By (\ref{eqDefEm}), we get
\begin{align*}
&            \bigcup\limits_{x\in E_m(p_n,\,p_{n-1},\,\dots,\,p_1;\,p_0)} E_m(p_{n+1};\,x)     \\
&\qquad =    \biggl\{  y\in (f|_\CC)^{-1} (x) \,\bigg|\,  y\in \overline{\ae}^m(p_{n+1}), \, 
                           x\in (f|_\CC)^{-n}(p_0) \cap \biggl(  \bigcap\limits_{i=0}^{n-1} (f|_\CC)^{-i} (\overline{\ae}^m(p_{n-i}) ) \biggr)\biggr\}\\
&\qquad =    \biggl\{  y\in (f|_\CC)^{-n-1} (p_0) \,\bigg|\,  y\in \overline{\ae}^m(p_{n+1}), \, 
                           f(y) \in  \bigcap\limits_{i=0}^{n-1} (f|_\CC)^{-i} (\overline{\ae}^m(p_{n-i}) ) \biggr\}\\
&\qquad =    E_m(p_{n+1},\,p_n,\,\dots,\,p_1;\,p_0).                       
\end{align*}
The lemma is now established.
\end{proof}

\begin{lemma}   \label{lmEdgeInPair}
Let $f$ and $\CC$ satisfy the Assumptions. Fix $m,n\in\N_0$ with $m\leq n$. If $f(\CC)\subseteq\CC$, then the following statements hold:
\begin{enumerate}
\smallskip
\item[(i)] For each $n$-edge $e^n\in\E^n(f,\CC)$ and each $m$-edge $e^m\in\E^m(f,\CC)$, if $e^m \cap \inte(e^n) \neq \emptyset$, then $e^n\subseteq e^m$.

\smallskip
\item[(ii)] For each $n$-vertex $v\in\CC\cap \V^n(f,\CC)$ and each $m$-vertex $w\in\CC\cap\V^m(f,\CC)$ on the curve $\CC$, if $v\notin \overline{\ae}^m(w)$, then $\overline{\ae}^m(w) \cap \ae^n(v) = \emptyset$.

\smallskip
\item[(iii)] Assume that $m\geq 1$ and no $1$-tile in $\X^1(f,\CC)$ joins opposite sides of $\CC$. For each pair $v_0,v_1 \in \CC\cap\V^m(f,\CC)$ of $m$-vertices on $\CC$, we denote by $e^1_i,e^2_i \in \E^m(f,\CC)$ the unique pair of $m$-edges with $e^1_i \cup e^2_i = \overline{\ae}^m(v_i)$ and $e^1_i \cap e^2_i = \{v_i\}$, for each $i\in\{0,1\}$. Then $f$ is injective on $e^j_1$ for each $j\in\{1,2\}$, and exactly one of the following cases is satisfied:

\begin{enumerate}
\smallskip
\item[(1)] $f( \ae^m(v_1) ) \cap \ae^m (v_0) = \emptyset$. In this case, 
$\card\{ x\in \overline{\ae}^m(v_1) \,|\, f(x) \in \overline{\ae}^m(v_0)\} \leq 2$.

\smallskip
\item[(2)] There exist $j,k\in\{1,2\}$ such that 
\begin{itemize}
\smallskip
\item $f\bigl(e^j_1 \bigr) \supseteq e^k_0$, 

\smallskip
\item $f\bigl(e^j_1\bigr) \cap e^{k'}_0 \setminus \{v_0\} = \emptyset$ for $k'\in \{1,2\}\setminus \{k\}$, 

\smallskip
\item $f\bigl(e^{j'}_1\bigr) \cap \overline{\ae}^m(v_0) = \emptyset$ for $j'\in \{1,2\}\setminus\{j\}$, and

\smallskip
\item $v_1\notin\crit f|_\CC$.
\end{itemize}

\smallskip
\item[(3)] There exists $j\in\{1,2\}$ such that
\begin{itemize}
\smallskip
\item $f\bigl(e^j_1 \bigr) \supseteq \overline{\ae}^m(v_0) = e^1_0 \cup e^2_0$, 

\smallskip
\item $f\bigl(e^{j'}_1 \setminus \{v_1\} \bigr) \cap \overline{\ae}^m(v_0) = \emptyset$ for $j'\in \{1,2\}\setminus \{j\}$,

\smallskip
\item $v_1\notin\crit f|_\CC$, and $f(v_1)\neq v_0$.
\end{itemize}

\smallskip
\item[(4)] There exists $k\in\{1,2\}$ such that
\begin{itemize}
\smallskip
\item $f\bigl(e^1_1 \bigr) \supseteq e^k_0$, $f\bigl(e^2_1 \bigr) \supseteq e^{k'}_0$, 

\smallskip
\item $f\bigl(e^1_1 \setminus \{v_1\} \bigr) \cap e^{k'}_0 = \emptyset$, $f\bigl(e^2_1 \setminus \{v_1\} \bigr) \cap e^k_0 = \emptyset$,

\smallskip
\item $v_1\notin\crit f|_\CC$, and $f(v_1)=v_0$,
\end{itemize}
\smallskip
where $k'\in \{1,2\} \setminus \{k\}$.

\smallskip
\item[(5)] There exists $k\in\{1,2\}$ such that 
\begin{itemize}
\smallskip
\item $f\bigl(e^1_1 \bigr) \cap f\bigl(e^2_1 \bigr) \supseteq e^k_0$, 

\smallskip
\item $f\bigl(e^1_1\bigr) \cap e^{k'}_0  \setminus \{v_0\} = \emptyset$ for $k'\in \{1,2\}\setminus \{k\}$,

\smallskip
\item $f(e^1_1)=f(e^2_1)$, and $v_1\in\crit f|_\CC$.
\end{itemize}

\smallskip
\item[(6)] For each $j\in\{1,2\}$, $f\bigl(e^j_1\bigr) \supseteq \overline{\ae}^m(v_0)$. In this case, we have $f(e^1_1)=f(e^2_1)$ and $v_1\in\crit f|_\CC$.
\end{enumerate}
 
\end{enumerate}
\end{lemma}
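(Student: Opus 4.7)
The plan is to prove the three parts in order: parts~(i) and~(ii) are direct consequences of the cell-decomposition structure, while part~(iii) is a combinatorial case analysis driven by part~(i).

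For part~(i), the hypothesis $f(\CC)\subseteq\CC$ forces $\CC$ to lie in the $1$-skeleton of every $\DD^n(f,\CC)$, so $\DD^n(f,\CC)$ refines $\DD^m(f,\CC)$ whenever $m\leq n$ (Definition~\ref{defrefine}). Thus the $n$-edge $e^n$ is contained in a unique $m$-cell $c$, with $\inte(e^n)\subseteq\inte(c)$; a point of $e^m\cap\inte(e^n)$ then lies in $\inte(c)\cap e^m$, and Definition~\ref{defcelldecomp}(iii) forces this point into $\inte(e^m)$ and hence $c=e^m$, so $e^n\subseteq e^m$. For part~(ii), suppose toward a contradiction that there exists $x\in\overline{\ae}^m(w)\cap\ae^n(v)$; then $x$ lies in some $m$-edge $e\subseteq\overline{\ae}^m(w)$ incident to $w$, and is either equal to $v$ (giving $v\in e\subseteq\overline{\ae}^m(w)$) or lies in the interior of an $n$-edge $e^n\subseteq\CC$ incident to $v$ (in which case part~(i) yields $e^n\subseteq e$ and the same contradiction).

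Part~(iii) is where the work lies. By Proposition~\ref{propCellDecomp}(i) the map $f$ is cellular for $(\DD^m,\DD^{m-1})$, so each $f|_{e^j_1}$ is a homeomorphism onto an $(m-1)$-edge $E_j\coloneqq f(e^j_1)\in\E^{m-1}(f,\CC)$, and both $E_1,E_2$ have $f(v_1)$ as a common endpoint, lying in $\CC$ since $f(\CC)\subseteq\CC$. The classification is then governed by two binary choices: whether $v_1\in\crit f|_\CC$---equivalently whether $E_1=E_2$, which separates cases~(5)--(6) from~(1)--(4)---and, in the non-folded case, whether $f(v_1)=v_0$, which isolates case~(4) from cases~(1)--(3). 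Within each class the finer distinction of how many of $e^1_0,e^2_0$ lie in $E_1$ versus $E_2$ is controlled by part~(i): the inclusion $e^k_0\subseteq E_j$ holds precisely when $e^k_0\cap\inte(E_j)\neq\emptyset$, with the shared endpoint $f(v_1)$ treated separately. Enumerating these alternatives produces exactly the six configurations listed.

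The main obstacle, and the step that uses the hypothesis that no $1$-tile in $\X^1(f,\CC)$ joins opposite sides of $\CC$, is to exclude the ``stray'' residual intersections asserted in cases~(2), (3), and~(4)---for example, the claim in case~(2) that $f(e^{j'}_1)\cap\overline{\ae}^m(v_0)=\emptyset$ when only $e^j_1$ already covers an $m$-edge at $v_0$. Without the no-opposite-sides hypothesis the $(m-1)$-edge $E_{j'}$ could in principle wrap around $\CC$ and meet $\overline{\ae}^m(v_0)$ again at a further $m$-vertex. Combining the no-opposite-sides hypothesis with the local control at level $m$ supplied by part~(i) applied to $E_{j'}$ rules out such wrappings, thereby confirming that the six listed cases are exhaustive and mutually exclusive.
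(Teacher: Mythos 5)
Your proofs of parts~(i) and~(ii) are correct, though for~(i) you route through the refinement structure and the minimal $m$-cell containing $e^n$ rather than through the paper's decomposition $e^m = \bigcup \{e\in\E^n \,|\, e\subseteq e^m\}$; both work and the difference is minor. Your observation that $v_1\in\crit f|_\CC$ is equivalent to $E_1=E_2$ (via Proposition~\ref{propCellDecomp}~(i) and part~(i)) is also a valid and usable dichotomy, and the three-level stratification---criticality, then $f(v_1)=v_0$ or not, then counting which of $e^1_0,e^2_0$ lie in which $E_j$---is a legitimate alternative to the paper's scheme, which stratifies primarily by $\card\{(j,k) : f(e^j_1)\supseteq e^k_0\}$.

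The real gap is in part~(iii). You correctly identify that the difficulty is ruling out ``stray'' intersections (the emptiness conditions in cases~(2), (3), (4) and the cardinality bound in case~(1)), and that the no-opposite-sides hypothesis must enter here, but you never actually give the argument. The sentence ``Combining the no-opposite-sides hypothesis with the local control at level $m$ supplied by part~(i) applied to $E_{j'}$ rules out such wrappings'' is a placeholder, not a proof. In the paper each such exclusion requires its own Jordan-curve argument, and crucially you never invoke $\card(\post f)\geq 3$, which is indispensable. For instance, to see in case~(2) that $f(e^1_1)\cap e^2_0\setminus\{v_0\}=\emptyset$: if this set were nonempty, then since $f(e^1_1)\cap\inte(e^2_0)=\emptyset$ by part~(i), the connected arc $f(e^1_1)$ would contain $v_0$ and the far endpoint of $e^2_0$ without passing through $\inte(e^2_0)$, forcing $f(e^1_1)\cup e^2_0 = \CC$---impossible because $\CC$ is divided by $\post f$ into at least three $0$-edges and a union of two edges cannot cover it. A similar ``two arcs cover $\CC$'' contradiction underlies the other exclusions, and the cardinality bound in case~(1) needs the observation that if both endpoints of $e^j_1$ mapped into $\overline{\ae}^m(v_0)$ then $f(e^j_1)\cup\overline{\ae}^m(v_0)=\CC$. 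Without these arguments the proof of~(iii) is not complete. You also implicitly need to rule out $\card\{(j,k) : f(e^j_1)\supseteq e^k_0\}=3$ (to make the enumeration exhaustive), which again comes from part~(i) applied to $f(e^1_1),f(e^2_1)\in\E^{m-1}$; that step should be made explicit.
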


We say that a point $x\in\CC$ is a \defn{critical point of $f|_\CC$}, denoted by $x\in\crit f|_\CC$, if there is no neighborhood $U\subseteq \CC$ of $x$ on which $f$ is injective. Clearly, $\crit f|_\CC \subseteq \crit f$. Our proof below relies crucially on the fact that $\CC$ is a Jordan curve.

\begin{proof}
(i) Fix arbitrary $e^n\in\E^n$ and $e^m\in\E^m$. Since $f(\CC)\subseteq \CC$, $e^m = \bigcup \{e\in\E^n \,|\, e\subseteq e^m\}$. If $e^m \cap \inte(e^n) \neq \emptyset$, then there exists $e\in\E^n$ with $e\subseteq e^m$ and $\inte(e) \cap \inte(e^n) \neq \emptyset$. Then $e=e^n$ (see Definition~\ref{defcelldecomp}). Hence $e^n\subseteq e^m$.

\smallskip

(ii) Fix $v\in\CC\cap\V^n$ and $w\in\CC\cap\V^m$ with $v\notin \overline{\ae}^m(w)$. Suppose $\overline{\ae}^m(w) \cap \ae^n(v) \neq \emptyset$. Then there exist $e^n\in\E^n$ and $e^m\in\E^m$ such that $e^n\subseteq \overline{\ae}^n(v)$, $e^m\subseteq \overline{\ae}^m(w)$, and $\inte(e^n)\cap e^m \neq\emptyset$. Then by Lemma~\ref{lmEdgeInPair}~(i), $e^n\subseteq e^m$. Hence $v\in e^n \subseteq \overline{\ae}^m(w)$, a contradiction.

\smallskip

(iii) Fix arbitrary $v_0,v_1\in\CC\cap\V^m$. Recall $m\geq 1$.

By Proposition~\ref{propCellDecomp}~(i), the map $f$ is injective on $e^j_1$ for each $j\in\{1,2\}$.

Recall that $\card (\post f) \geq 3$ (see \cite[Lemma~6.1]{BM17}).

We denote $I \coloneqq \bigl\{ (j,k) \in \{1,2\}\times\{1,2\} \,\big|\, f\bigl( e^j_1 \bigr) \supseteq e^k_0 \bigr\}$. Then $\card I \in \{0,1,2,3,4\}$. 

We will establish Lemma~\ref{lmEdgeInPair}~(iii) by proving the following statements:

\begin{enumerate}
\smallskip
\item[(a)] $\card I \neq 3$.

\smallskip
\item[(b)] Case~(1), (2), or (6) holds if and only if $\card I = 0$, $1$, or $4$, respectively.

\smallskip
\item[(c)] Case~(3) holds if and only if $\card I = 2$, $v_1 \notin \crit f|_\CC$, and $f(v_1) \neq v_0$.

\smallskip
\item[(d)] Case~(4) holds if and only if $\card I = 2$, $v_1 \notin \crit f|_\CC$, and $f(v_1) = v_0$.

\smallskip
\item[(e)] Case~(5) holds if and only if $\card I = 2$, $v_1 \in \crit f|_\CC$.
\end{enumerate}

\smallskip

(a) Suppose $\card I = 3$. Without loss of generality, we assume that $f\bigl( e^1_1 \bigr) \supseteq e^1_0 \cup e^2_0$, $f\bigl( e^2_1 \bigr) \supseteq e^1_0$, and $f\bigl( e^2_1 \bigr) \nsubseteq e^2_0$. Since $f\bigl( e^1_1 \bigr), f\bigl( e^2_1 \bigr) \in \E^{m-1}$ (see Proposition~\ref{propCellDecomp}~(i)), $f\bigl( e^1_1 \bigr) \cap f\bigl( e^2_1 \bigr) \supseteq e^1_0$, we get $f\bigl( e^1_1 \bigr) \cap \inte \bigl( f\bigl( e^2_1 \bigr) \bigr) \neq \emptyset$, thus by Lemma~\ref{lmEdgeInPair}~(i), $f\bigl( e^1_1 \bigr) = f\bigl( e^2_1 \bigr)$. Hence $\card I = 4$. This is a contradiction.

\smallskip

(b) If Case~(1) holds, then clearly $\card I = 0$. Conversely, we assume that $\card I = 0$. Fix arbitrary $j,k\in\{1,2\}$. Since $f\bigl( e^j_1 \bigr) \in \E^{m-1}$ and $f$ is injective on $e^j_1$ (see Proposition~\ref{propCellDecomp}~(i)), by Lemma~\ref{lmEdgeInPair}~(i), $f\bigl( \inte \bigl(  e^j_1 \bigr) \bigr) \cap   e^k_0  =  f\bigl( e^j_1 \bigr) \cap \inte \bigl(  e^k_0 \bigr) = \emptyset$. Observe that it follows from $\card I = 0$ and Lemma~\ref{lmEdgeInPair}~(i) that $f(v_1)\neq v_0$. Thus $f( \ae^m(v_1) ) \cap \ae^m (v_0) = \emptyset$. In order to show $\card\{ x\in \overline{\ae}^m(v_1) \,|\, f(x) \in \overline{\ae}^m(v_0)\} \leq 2$, it suffices to prove $\card \bigl\{ x\in e^j_1 \setminus \inte\bigl(e^j_1\bigr) \,\big|\, f(x)\in \overline{\ae}^m(v_0) \bigr\} \leq 1$. Suppose not, then since $f\bigl( \inte \bigl(  e^j_1 \bigr) \bigr) \cap   \overline{\ae}^m(v_0) = \emptyset$, $f$ is injective on $e^j_1$, and $\CC$ is a Jordan curve, we get $f\bigl(  e^j_1 \bigr) \cup \overline{\ae}^m(v_0) = \CC$. This contradicts the fact that $\card (\post f) \geq 3$ and the condition that no $1$-tile in $\X^1$ joins opposite sides of $\CC$. Therefore, Case~(1) holds.

\smallskip

If Case~(2) holds, then clearly $\card I = 1$. Conversely, we assume that $\card I = 1$. Without loss of generality, we assume that $f\bigl( e^1_1 \bigr) \supseteq e^1_0$. We observe that $v_1\notin \crit f|_\CC$. For otherwise, $f\bigl( e^1_1 \bigr) = f\bigl( e^2_1 \bigr) \in\E^{m-1}$ (see Proposition~\ref{propCellDecomp}~(i)), thus $\card I \neq 1$, which is a contradiction. 

To show $f\bigl( e^1_1 \bigr) \cap e^2_0 \setminus \{v_0\} = \emptyset$, we argue by contradiction and assume that $f\bigl( e^1_1 \bigr) \cap e^2_0 \setminus \{v_0\} \neq \emptyset$. Since $e^2_0 \nsubseteq f\bigl( e^1_1 \bigr) \in \E^{m-1}$ (see Proposition~\ref{propCellDecomp}~(i)), by Lemma~\ref{lmEdgeInPair}~(i), $f\bigl( e^1_1 \bigr) \cap \inte \bigl(  e^2_0 \bigr) = \emptyset$. Note that $v_0\in e_0^1 \subseteq f \bigl( e_1^1 \bigr)$. Since $f\bigl( e^1_1 \bigr)$ is connected and $\CC$ is a Jordan curve, we get $f\bigl( e^1_1 \bigr) \cup e^2_0 = \CC$. This contradicts the fact that $\card (\post f) \geq 3$.

Next, we verify that $f(v_1) \notin e^1_0$ as follows. We argue by contradiction and assume that $f(v_1)\in e^1_0$. Since $f(v_1) \in \V^{m-1}$ (see Proposition~\ref{propCellDecomp}~(i)), we get $f(v_1) \in e^1_0 \setminus \inte\bigl(e^1_0\bigr)$. Since $\card I = 1$, it is clear that $f(v_1) \neq v_0$. Thus $f(v_1) \in e^1_0 \setminus \bigl( \inte\bigl(e^1_0 \bigr) \cup \{v_0\} \bigr)$. Since $v_1\notin \crit f|_\CC$ and no $1$-tile in $\X^1$ joins opposite sides of $\CC$, we get from Proposition~\ref{propCellDecomp}~(i) that either $f\bigl(e^1_1 \bigr) \supseteq e^1_0 \cup e^2_0$ or $f\bigl(e^2_1 \bigr) \supseteq e^1_0 \cup e^2_0$. This contradicts the assumption that $\card I = 1$. Hence $f(v_1) \notin e^1_0$.

Finally we show that $f\bigl(e^2_1\bigr) \cap \overline{\ae}^m(v_0) = \emptyset$. To see this, we argue by contradiction and assume that $f\bigl(e^2_1\bigr) \cap \overline{\ae}^m(v_0) \neq \emptyset$. Since $\CC$ is a Jordan curve, $v_1\notin \crit f|_\CC$, $f(v_1) \notin e^1_0$, $f\bigl( e^1_1 \bigr) \supseteq e^1_0$, and $f \bigl( e_1^1 \bigr) \nsupseteq e_0^2$, we get that $f\bigl( e^1_1 \bigr) \cup f\bigl( e^2_1 \bigr) \cup e^2_0 = \CC$. This contradicts the fact that $\card (\post f) \geq 3$ and the condition that no $1$-tile in $\X^1$ joins opposite sides of $\CC$.

Therefore, Case~(2) holds.

\smallskip

If Case~(6) holds, then clearly $\card I = 4$. Conversely, we assume that $\card = 4$. Then $f\bigl( e^j_1\bigr) \supseteq e^1_0 \cup e^2_0 = \overline{\ae}^m(v_0)$ for each $j\in\{1,2\}$. We show that $v_1\in\crit f|_\CC$ as follows. We argue by contradiction and assume that $v_1\notin\crit f|_\CC$. Then Since $\CC$ is a Jordan curve and $f\bigl( e^1_1\bigr) \cap f\bigl( e^2_1\bigr) \supseteq e^1_0$, we get that $f\bigl( e^1_1\bigr) \cup f\bigl( e^2_1\bigr) = \CC$. This contradicts the fact that $\card (\post f) \geq 3$. Hence $v_1\in\crit f|_\CC$, and consequently $f\bigl( e^1_1\bigr) = f\bigl( e^2_1\bigr) \in \E^{m-1}$ by Proposition~\ref{propCellDecomp}~(i). Therefore, Case~(6) holds.

\smallskip

(c) If Case~(3) holds, then clearly $\card I = 2$, $v_1 \notin \crit f|_\CC$, and $f(v_1) \neq v_0$. Conversely, we assume that $\card I = 2$, $v_1 \notin \crit f|_\CC$, and $f(v_1) \neq v_0$. Suppose that $f \bigl( e_1^1 \bigr) \supseteq e_0^k$ and $f \bigl( e_1^2 \bigr) \supseteq e_0^{k'}$ for some $k,k'\in\{1,2\}$ with $k\neq k'$, then since $v_1 \notin  \crit f|_\CC$ and $f(v_1) \neq v_0$, we get from Proposition~\ref{propCellDecomp}~(i) that $f \bigl( e_1^1 \bigr) \cup f \bigl( e_1^2 \bigr) = \CC$. This contradicts the fact that $\card (\post f) \geq 3$. Thus, without loss of generality, we can assume that $f\bigl(e^1_1\bigr) \supseteq e^1_0 \cup e^2_0$. In order to show that Case~(3) holds, it suffices now to verify that $f\bigl( e^2_1 \setminus \{v_1\} \bigr) \cap \overline{\ae}^m(v_0) = \emptyset$. We argue by contradiction and assume that $f\bigl( e^2_1 \setminus \{v_1\} \bigr) \cap \overline{\ae}^m(v_0) \neq \emptyset$. Then $f\bigl( e^2_1 \setminus \{v_1\} \bigr) \cap f\bigl( e^1_1  \bigr) \neq \emptyset$. Since $\CC$ is a Jordan curve and $v_1\notin\crit f|_\CC$, we get from Proposition~\ref{propCellDecomp}~(i) that $f\bigl( e^2_1   \bigr) \cup f\bigl( e^1_1  \bigr) = \CC$. This contradicts the fact that $\card (\post f) \geq 3$. Therefore, Case~(3) holds.

\smallskip

(d) If Case~(4) holds, then clearly $\card I = 2$, $v_1 \notin \crit f|_\CC$, and $f(v_1) = v_0$. Conversely, we assume that $\card I = 2$, $v_1 \notin \crit f|_\CC$, and $f(v_1) = v_0$. Without loss of generality, we assume that $f\bigl(e^1_1\bigr) \supseteq e^1_0$ and $f\bigl(e^2_1\bigr) \supseteq e^2_0$. In order to show that Case~(4) holds, by symmetry, it suffices to show that $f\bigl( e^1_1 \setminus \{v_1\} \bigr) \cap e^2_0 = \emptyset$. We argue by contradiction and assume that $f\bigl( e^1_1 \setminus \{v_1\} \bigr) \cap e^2_0 \neq \emptyset$. Then $f\bigl( e^1_1 \setminus \{v_1\} \bigr) \cap f\bigl( e^2_1  \bigr) \neq \emptyset$. Since $\CC$ is a Jordan curve and $v_1\notin\crit f|_\CC$, we get from Proposition~\ref{propCellDecomp}~(i) that $f\bigl( e^2_1   \bigr) \cup f\bigl( e^1_1  \bigr) = \CC$. This contradicts the fact that $\card (\post f) \geq 3$. Therefore, Case~(4) holds.

\smallskip

(e) If Case~(5) holds, then clearly $\card I = 2$ and $v_1 \in \crit f|_\CC$. Conversely, we assume that $\card I = 2$ and $v_1 \in \crit f|_\CC$. Since $v_1 \in \crit f|_\CC$, $f\bigl(e^1_1\bigr) = f\bigl(e^2_1\bigr) \in \E^{m-1}$ by Proposition~\ref{propCellDecomp}~(i). Without loss of generality, we assume that $f\bigl(e^1_1\bigr) \cap f\bigl(e^2_1\bigr) \supseteq e^1_0$. In order to show that Case~(5) holds, it suffices now to show that $f\bigl( e^1_1  \bigr) \cap e^2_0 \setminus \{v_0\}= \emptyset$. We argue by contradiction and assume that $f\bigl( e^1_1 \bigr) \cap e^2_0 \setminus \{v_0\} \neq \emptyset$.  Since $e^2_0 \nsubseteq f\bigl( e^1_1 \bigr) \in \E^{m-1}$ (see Proposition~\ref{propCellDecomp}~(i)), by Lemma~\ref{lmEdgeInPair}~(i), $f\bigl( e^1_1 \bigr) \cap \inte \bigl(  e^2_0 \bigr) = \emptyset$. Thus $e^2_0\setminus \inte\bigl(e^2_0\bigr) \subseteq f\bigl( e^1_1 \bigr)$ as we already know $v_0\in e^1_0 \subseteq f\bigl( e^1_1 \bigr)$. Since $f\bigl( e^1_1 \bigr)$ is connected and $\CC$ is a Jordan curve, we get $f\bigl( e^1_1 \bigr) \cup e^2_0 = \CC$. This contradicts the fact that $\card (\post f) \geq 3$. Therefore, Case~(5) holds.
\end{proof}

\begin{lemma}   \label{lmCoverByEdgePair}
Let $f$ and $\CC$ satisfy the Assumptions. We assume in addition that $f(\CC)\subseteq \CC$. Then
\begin{equation*}
\bigcap\limits_{i=0}^n (f|_\CC)^{-i} (\ae^m(p_{n-i}))  \subseteq \bigcup\limits_{x\in E_m(p_n,\,p_{n-1},\,\dots,\,p_1;\,p_0)}  \ae^{m+n} (x),
\end{equation*}
for all $m\in\N_0$, $n\in\N$, and $p_i \in \CC \cap \V^m(f,\CC)$ for each $i\in \{0,1,\dots,n\}$. Here $\ae^m$ is defined in (\ref{eqDefEdgePair}) and $E_m$ in (\ref{eqDefEm}).
\end{lemma}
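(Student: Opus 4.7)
The plan is to fix a point $y \in \bigcap_{i=0}^n (f|_\CC)^{-i}(\ae^m(p_{n-i}))$ and to produce an $(m+n)$-vertex $x \in E_m(p_n,\dots,p_1;p_0)$ on $\CC$ with $y \in \ae^{m+n}(x)$. I would split into two cases according to whether or not $y$ itself is an $(m+n)$-vertex on $\CC$.

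Suppose first that $y \in \V^{m+n}(f,\CC) \cap \CC$. I would simply set $x \coloneqq y$. Since $\V^{m+n}(f,\CC) = f^{-(m+n)}(\post f)$, one has $(f|_\CC)^n(y) \in \V^m(f,\CC)$, and the hypothesis $(f|_\CC)^n(y) \in \ae^m(p_0)$ together with the elementary observation $\V^m(f,\CC) \cap \ae^m(p_0) = \{p_0\}$ forces $(f|_\CC)^n(y) = p_0$. The remaining conditions $(f|_\CC)^i(y) \in \overline{\ae}^m(p_{n-i})$ defining membership in $E_m(p_n,\dots,p_1;p_0)$ are immediate from $\ae^m \subseteq \overline{\ae}^m$, and $y \in \{y\} \subseteq \ae^{m+n}(y)$ is automatic.

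Suppose instead that $y \notin \V^{m+n}(f,\CC)$. Let $e \in \E^{m+n}(f,\CC)$ be the unique $(m+n)$-edge on $\CC$ with $y \in \inte(e)$. By Proposition~\ref{propCellDecomp}~(i), $(f|_\CC)^n$ is a homeomorphism from $e$ onto the $m$-edge $e^{(n)} \coloneqq (f|_\CC)^n(e)$, mapping interior to interior. Since $(f|_\CC)^n(y) \in \inte(e^{(n)})$ is a non-vertex point lying in $\ae^m(p_0)$, it must lie in $\inte(e_1^0) \cup \inte(e_2^0)$, where $e_1^0, e_2^0$ are the two $m$-edges on $\CC$ meeting at $p_0$; Lemma~\ref{lmEdgeInPair}~(i) then forces $e^{(n)} \in \{e_1^0, e_2^0\}$. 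Consequently $p_0$ is the image of exactly one endpoint $x \in e \cap \V^{m+n}(f,\CC)$, and then $y \in \inte(e) \subseteq \ae^{m+n}(x)$ and $(f|_\CC)^n(x) = p_0$.

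The main work is then an auxiliary claim: for each $i \in \{0,1,\dots,n-1\}$, the $(m+n-i)$-edge $e^{(i)} \coloneqq (f|_\CC)^i(e)$ satisfies $e^{(i)} \subseteq \overline{\ae}^m(p_{n-i})$. Granting this and recalling $(f|_\CC)^i(x) \in e^{(i)}$ completes the verification that $x \in E_m(p_n,\dots,p_1;p_0)$. To prove the claim I would pick $z \coloneqq (f|_\CC)^i(y) \in e^{(i)} \cap \ae^m(p_{n-i})$ and distinguish the sub-cases $z = p_{n-i}$, $z \in \inte(e_1)$, and $z \in \inte(e_2)$, where $\{e_1, e_2\}$ are the two $m$-edges on $\CC$ meeting at $p_{n-i}$. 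In each sub-case Lemma~\ref{lmEdgeInPair}~(i), combined with the facts that $\DD^{m+n-i}(f,\CC)$ refines $\DD^m(f,\CC)$ and that the only $m$-edges of $\CC$ containing $p_{n-i}$ are $e_1$ and $e_2$, implies that $e^{(i)}$ is contained in $e_1$ or $e_2$. No single step is particularly delicate; the main obstacle is only the bookkeeping needed to handle whether $z$ lies in the interior of $e^{(i)}$ or coincides with an endpoint, and to ensure that the inequality $m \leq m+n-i$ required by Lemma~\ref{lmEdgeInPair}~(i) holds throughout.
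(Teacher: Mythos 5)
Your proof is correct and takes a genuinely different route from the paper's. The paper argues by induction on $n$: it verifies the base case $n=1$ directly using Proposition~\ref{propCellDecomp}~(ii) and Lemma~\ref{lmEdgeInPair}~(ii), then peels off one preimage at a time via the recursive identity in Lemma~\ref{lmEmInductive}, which encodes $E_m$ as a tree of single-step preimages. You instead fix a point $y$ in the left-hand side and construct the required $x$ explicitly: either $y$ itself (when $y\in\V^{m+n}(f,\CC)$) or the unique endpoint of the $(m+n)$-edge $e$ containing $y$ in its interior for which $f^n(x)=p_0$. Your key tool is Lemma~\ref{lmEdgeInPair}~(i) (edge containment), rather than the paper's Lemma~\ref{lmEdgeInPair}~(ii) (edge-pair disjointness). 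What the paper's approach buys is modularity---the induction reduces the combinatorics to the $n=1$ case plus the clean recursion of $E_m$. What yours buys is concreteness: the argument produces the witness $x$ and makes the geometric picture visible (iterate homeomorphisms along the single $(m+n)$-edge through $y$). One small tidying remark: since $f^i|_e:e\to e^{(i)}$ is a homeomorphism mapping $\inte(e)$ onto $\inte\bigl(e^{(i)}\bigr)$ (Proposition~\ref{propCellDecomp}~(i)), the sub-case $z=p_{n-i}$ in your auxiliary claim can never occur, so the ``bookkeeping'' you worry about reduces immediately to the two non-vacuous sub-cases $z\in\inte(e_1)$, $z\in\inte(e_2)$.
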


\begin{proof}
We fix $m\in\N_0$ and an arbitrary sequence $\{p_i\}_{i\in\N_0}$ in $\CC\cap\V^m$. We prove the lemma by induction on $n\in\N$.

For $n=1$, we get
\begin{align*}
               \ae^m(p_1) \cap (f|_\CC)^{-1} (\ae^m(p_0)) 
\subseteq     & \bigcup \bigl\{ \ae^{m+1}(x) \,\big|\, x\in (f|_\CC)^{-1} (p_0),\, x\in \overline{\ae}^m(p_1)  \bigr\}  \\
   =          & \bigcup\limits_{x\in E_m(p_1;\,p_0)} \ae^{m+1}(x)
\end{align*}
by (\ref{eqDefEm}), Proposition~\ref{propCellDecomp}~(ii), and the fact that $\ae^{m+1}(x) \cap \ae^m(p_1) = \emptyset$ if both $x\in\CC \cap \V^{m+1}$ and $x\notin \overline{\ae}^m(p_1)$ are satisfied (see Lemma~\ref{lmEdgeInPair}~(ii)).

We now assume that the lemma holds for $n=l$ for some integer $l\in\N$. Then by the induction hypothesis, we have
\begin{align*}
&             \bigcap\limits_{i=0}^{l+1}  (f|_\CC)^{-i} (\ae^m(p_{l+1-i})) \\
&\qquad =          \ae^m(p_{l+1}) \cap (f|_\CC)^{-1} \biggl( \bigcap\limits_{i=1}^{l+1}  (f|_\CC)^{-(i-1)} (\ae^m(p_{l+1-i})) \biggr) \\
&\qquad\subseteq   \ae^m(p_{l+1}) \cap (f|_\CC)^{-1} \biggl( \bigcup\limits_{x\in E_m(p_l,\,p_{l-1},\,\dots,\,p_1;\,p_0)}  \ae^{m+l}(x) \biggr) \\
&\qquad =          \bigcup\limits_{x\in E_m(p_l,\,p_{l-1},\,\dots,\,p_1;\,p_0)}  \bigl(  \ae^m(p_{l+1}) \cap (f|_\CC)^{-1} \bigl(\ae^{m+l}(x)\bigr)\bigr) \\
&\qquad\subseteq   \bigcup\limits_{x\in E_m(p_l,\,p_{l-1},\,\dots,\,p_1;\,p_0)}  
                           \Bigl(  \bigcup \bigl\{ \ae^{m+l+1}(y) \,\big|\, y\in(f|_\CC)^{-1} (x),\, y\in \overline{\ae}^m(p_{l+1}) \bigr\} \Bigr)  \\
&\qquad =          \bigcup\limits_{x\in E_m(p_l,\,p_{l-1},\,\dots,\,p_1;\,p_0)}  \bigcup\limits_{y\in E_m(p_{l+1};\,x)} \ae^{m+l+1}(y),
\end{align*}
where the last two lines are due to (\ref{eqDefEm}), Proposition~\ref{propCellDecomp}~(ii), and the fact that $\ae^{m+l+1}(y) \cap \ae^m(p_{l+1}) = \emptyset$ if both $y\in\CC \cap \V^{m+l+1}$ and $y\notin \overline{\ae}^m(p_{l+1})$ are satisfied (see Lemma~\ref{lmEdgeInPair}~(ii)).

By Lemma~\ref{lmEmInductive}, we get
\begin{equation*}
\bigcap\limits_{i=0}^{l+1} (f|_\CC)^{-i} (\ae^m(p_{l+1-i}))  \subseteq \bigcup\limits_{x\in E_m(p_{l+1},\,p_l,\,\dots,\,p_1;\,p_0)}  \ae^{m+l+1} (x).
\end{equation*}

The induction step is now complete.
\end{proof}

\begin{prop}    \label{propEmBound}
Let $f$ and $\CC$ satisfy the Assumptions. We assume in addition that $f(\CC)\subseteq \CC$ and no $1$-tile in $\X^1(f,\CC)$ joins opposite sides of $\CC$. Then
\begin{equation}   \label{eqEmBound}
\card (  E_m(p_n,\,p_{n-1},\,\dots,\,p_1;\,p_0)  )  \leq m 2^{\frac{n}{m}}
\end{equation}
for all $m, n\in\N$ with $m\geq 14$, and $p_i \in \CC \cap \V^m(f,\CC)$ for each $i\in \{0,1,\dots,n\}$. Here $E_m$ is defined in (\ref{eqDefEm}).
\end{prop}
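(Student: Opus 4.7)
\medskip

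\noindent\textbf{Plan of proof.} My plan is to prove the bound by a careful induction on $n$, using the inductive relation
\[
E_m(p_{n+1},p_n,\dots,p_1;p_0)=\bigcup_{x\in E_m(p_n,\dots,p_1;p_0)}E_m(p_{n+1};x)
\]
from Lemma~\ref{lmEmInductive}. A naive first step is to note that $\card E_m(v;x)\leq 2$ for all $v\in\CC\cap\V^m$ and $x\in\CC$: indeed, by Proposition~\ref{propCellDecomp}~(i) the map $f$ is injective on each of the two $m$-edges $e_1^1,e_1^2$ whose union is $\overline{\ae}^m(v)$, so each contributes at most one preimage of $x$. This gives only the crude bound $\card E_m(\dots)\leq 2^n$, which is much too weak when $n$ is large compared to $m$.

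The key improvement comes from amortizing the branching. I would show that ``true doubling'' from step $k$ to step $k+1$ (namely $\card E_m(p_{k+1};x)=2$ for a given $x$) forces the subsequent steps to be in one of the non-branching cases of Lemma~\ref{lmEdgeInPair}~(iii) for at least $m-1$ iterations. Concretely, Lemma~\ref{lmEdgeInPair}~(iii) enumerates six cases, and doubling can only occur in Case~(4) (when $f(v_1)=v_0$ and $v_1\notin\crit f|_\CC$, with two disjoint branches in $e_1^1$ and $e_1^2$ mapping to $e_0^1$ and $e_0^2$ respectively), or in Cases~(5) and (6) at a critical point. In all other cases, each $x$ in the interior of an $m$-edge has exactly one preimage in $\overline{\ae}^m(v_1)$. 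My plan is to exploit the topology of $\CC$ (specifically that $\CC$ is a Jordan curve, the hypothesis that no $1$-tile joins opposite sides of $\CC$, and the finiteness of $\post f$) together with Lemma~\ref{lmCoverByEdgePair} to show that consecutive doublings require the orbit $(p_0,p_1,\dots)$ to revisit a small, geometrically constrained set of configurations; one then checks that the structure of $\DD^m(f,\CC)$ forbids this from happening more than once per window of $m$ steps.

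Combining the amortization with the observation that the ``level~$0$'' set $E_m(p_n;\text{\{initial point\}})$ has cardinality at most $m$ (reflecting the number of $m$-edges one can reach inside $\overline{\ae}^m(p_n)$, which is controlled by $\card\post f$ and the combinatorial structure of $\DD^m(f,\CC)$; this is where the condition $m\geq 14$ enters to give enough room for the topological argument), an induction on the number of ``windows'' $\lfloor n/m\rfloor$ yields $\card E_m(p_n,\dots,p_1;p_0)\leq m\cdot 2^{n/m}$.

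The main obstacle, which I expect to be the hardest part, is the rigorous verification of the amortized branching claim. One has to rule out configurations in which Case~(4) or Case~(6) of Lemma~\ref{lmEdgeInPair}~(iii) recurs too frequently along the prescribed orbit $(p_0,\dots,p_n)$. This is essentially a combinatorial-topological statement about how critical points and preimages of vertices can be arranged on the Jordan curve $\CC$ relative to the cell decomposition $\DD^m(f,\CC)$; the threshold $m\geq 14$ presumably arises from a careful case analysis involving the small number of postcritical points (recall $\card\post f\geq 3$) and the requirement that $\overline{\ae}^m(v)$ be strictly smaller than any half of $\CC$, which needs $m$ large enough. I expect the proof to resemble the asymptotic $h$-expansiveness argument of \cite{Li15} for $E_m$ bounds but adapted so as to permit periodic critical points, leveraging the one-dimensionality of $\CC$ rather than non-recurrence of critical points.
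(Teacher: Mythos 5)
Your high-level strategy is the right one — induction via Lemma~\ref{lmEmInductive}, amortizing the branching using Lemma~\ref{lmEdgeInPair}~(iii), and exploiting the Jordan curve topology — but the proposal has several concrete errors and, more seriously, leaves the central step as a conjecture rather than a proof.

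First, your identification of the doubling cases is partially wrong. In Case~(4) of Lemma~\ref{lmEdgeInPair}~(iii), $v_1\notin\crit f|_\CC$ and $f$ is injective on $\overline{\ae}^m(v_1)$; the two branches $e^1_1\to e^k_0$ and $e^2_1\to e^{k'}_0$ send disjoint edges to \emph{distinct} $m$-edges meeting only at $v_0$, so each point of $\overline{\ae}^m(v_0)$ has at most one preimage. Case~(4) therefore preserves cardinality; it does not double. Growth of $\card E_{m,n}$ can only occur in Cases~(5) and (6), both of which require $p_{n+1}\in\crit f|_\CC$, and this is the load-bearing fact. Second, your base-case assertion that $E_m(p_n;\text{initial})$ has cardinality at most $m$ is off; the correct base observation is simply $\card E_{m,1}\le 2$, coming from injectivity of $f$ on each of the two $m$-edges in $\overline{\ae}^m(p_1)$.

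Third, and most importantly, the statement ``doubling can occur at most once per window of $m$ steps'' is not what you need, nor is it quite what the structure of $\DD^m(f,\CC)$ gives directly. What the proof actually requires is the following: if $\card E_{m,l+1}>\card E_{m,l}\ge 8$ and $k$ is the last index $<l$ where the cardinality differed from $\card E_{m,l}$, and if moreover $\card E_{m,k}<\card E_{m,l}$ (so the last change was a growth), then letting $k'$ be the last index $\le l$ with $p_{k'}\in\crit f|_\CC$, one has $l-k'\ge m-1$. The mechanism is: growth at step $k'$ places $E_{m,l}$ inside an $(m-1)$-edge $e^{m-1}$ (via Claim~2(iii) of the paper) with the two halves balanced around $p_{k'}$; pushing forward by $f^{l-k'}$ (injective on $e^{m-1}$) lands $E_{m,k'}$ inside an $(m-1-(l-k'))$-edge $f^{l-k'}(e^{m-1})$ whose interior contains the $1$-vertex $p_{k'}$, forcing $m-1-(l-k')\le 0$. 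This is a backward-looking argument tying the growth at time $l+1$ to the \emph{preceding} critical encounter, not a forward-looking ``non-branching for $m-1$ steps afterwards.'' You also need the intermediate claims that whenever $L_{n,1}L_{n,2}\ne 0$ the two halves are balanced ($L_{n,1}=L_{n,2}$), that decreases are at least halvings ($\card E_{m,n+1}\le\lceil\frac12\card E_{m,n}\rceil$), and an inductive bookkeeping step that absorbs a $+1$ error term coming from the ceiling; the $m$ prefactor in $m\,2^{n/m}$ exists precisely to absorb that term (via the observation that $x(2^{2/x}-1)>1$). None of this is derived in your proposal, only anticipated as ``the hardest part,'' so as written there is no proof — only a plausible plan with a couple of wrong signposts along the way.
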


\begin{proof}
We fix $m\in\N$ with $m\geq 14$, and fix an arbitrary sequence $\{p_i\}_{i\in\N_0}$ of $m$-vertices in $\CC\cap\V^m$.

For each $n\in\N$, we write $E_{m,n} \coloneqq E_m (p_n,\,p_{n-1},\,\dots,\,p_1;\,p_0)$. Note that for each $n\in\N$, by (\ref{eqDefEm}),
\begin{equation}  \label{eqPfpropEmBound_EmnInAEpn}
E_{m,n} = E_m (p_n,\,p_{n-1},\,\dots,\,p_1;\,p_0) \subseteq \overline{\ae}^m(p_n),
\end{equation}
where $\overline{\ae}^m$ is defined in (\ref{eqDefEdgePair}). We denote by $e_{n,1}, e_{n,2}\in\E^m$ the unique pair of $m$-edges with $e_{n,1} \cup e_{n,2} = \overline{\ae}^m(p_n)$ and $e_{n,1} \cap e_{n,2} = \{p_n\}$. For each $i\in\{1,2\}$, we define
\begin{equation} \label{eqPfpropEmBound_Lni}
L_{n,i} \coloneqq \begin{cases} \card (E_{m,n} \cap e_{n,i}) & \text{if } E_{m,n} \cap e_{n,i} \setminus \{p_n\} \neq \emptyset, \\ 0  & \text{otherwise}.  \end{cases}
\end{equation}

\smallskip

We first observe that by (\ref{eqDefEm}), (\ref{eqPfpropEmBound_EmnInAEpn}), Lemma~\ref{lmEmInductive}, and the fact that $f$ is injective on each $m$-edge (see Proposition~\ref{propCellDecomp}~(i)), we get that
\begin{equation}   \label{eqPfpropEmBound_EmDouble}
\card E_{m,1} \leq 2  \qquad \text{ and } \qquad  \card E_{m,n+1} \leq 2 \card E_{m,n},
\end{equation}
for each $n\in\N$.

Next, we need to establish two claims.

\smallskip

\emph{Claim~1.} For each $n\in\N$, if $L_{n,1}L_{n,2}\neq 0$ then $L_{n,1}=L_{n,2}$.

\smallskip

We will establish Claim~1 by induction on $n\in\N$.

For $n=1$, we apply Lemma~\ref{lmEdgeInPair}~(iii) with $v_0=p_0$ and $v_1=p_1$. By (\ref{eqDefEm}), it is easy to verify that in Cases~(1) through (4) discussed in Lemma~\ref{lmEdgeInPair}~(iii), we have $L_{1,1}L_{1,2} = 0$, and in Cases~(5) and (6), we have $L_{1,1}=L_{1,2}$.

We now assume that Claim~1 holds for $n=l$ for some integer $l\in\N$. We apply Lemma~\ref{lmEdgeInPair}~(iii) with $v_0=p_l$ and $v_1=p_{l+1}$. Then by (\ref{eqPfpropEmBound_EmnInAEpn}) and Lemma~\ref{lmEmInductive} with $n=l$, it is easy to verify that in Case~(1) discussed in Lemma~\ref{lmEdgeInPair}~(iii), we have either $L_{l+1,1}L_{l+1,2} = 0$ or $L_{l+1,1}=L_{l+1,2}=1$; in Cases~(2) and (3), we have $L_{l+1,1}L_{l+1,2} = 0$; in Case~(4), we have $L_{l+1,1}=L_{l,k}$ and $L_{l+1,2}=L_{l,k'}$, where $k,k'\in\{1,2\}$ satisfy $f(e_{l+1,1})\supseteq e_{l,k}$, $f(e_{l+1,2})\supseteq e_{l,k'}$, and $k\neq k'$; and in Cases~(5) and (6), we have $L_{l+1,1}=L_{l+1,2}$.

The induction step is now complete. Claim~1 follows.

\smallskip

\emph{Claim~2.} For each $n\in\N$ with $\card E_{m,n} \geq 4$, the following statements hold:
\begin{enumerate}
\smallskip
\item[(i)] If $\card E_{m,n+1} < \card E_{m,n}$, then 
\begin{equation*}
\card E_{m,n+1} \leq \Bigl\lceil \frac{1}{2} \card E_{m,n} \Bigr\rceil.
\end{equation*}

\smallskip
\item[(ii)] If $\card E_{m,n+1} = \card E_{m,n}$ and $p_{n+1} \in \crit f|_\CC$, then
\begin{equation*}
\card (e_{n+1,1} \cap E_{m,n+1}) = \card (e_{n+1,2} \cap E_{m,n+1}).
\end{equation*}

\smallskip
\item[(iii)] If $\card E_{m,n+1} > \card E_{m,n}$, then
\begin{enumerate}
\smallskip
\item[(a)] $\card (e_{n+1,1} \cap E_{m,n+1}) = \card (e_{n+1,2} \cap E_{m,n+1})$,

\smallskip
\item[(b)] $p_{n+1} \in \crit f|_\CC$, and

\smallskip
\item[(c)] $E_{m,n} \subseteq e^{m-1} \in \E^{m-1}$, where $e^{m-1} \coloneqq f(e_{n+1,1}) = f(e_{n+1,2})$.
\end{enumerate}
\end{enumerate}

\smallskip
To prove Claim~2, we first note that by (\ref{eqPfpropEmBound_EmDouble}), $\card E_{m,1} \leq 2$, so it suffices to consider $n\geq 2$. We fix an integer $n\geq 2$ with $\card E_{m,n} \geq 4$. If such $n$ does not exist, then Claim~2 holds trivially.

We will verify statements~(i) through (iii) according to the cases discussed in Lemma~\ref{lmEdgeInPair}~(iii) with $v_0 = p_n$, $v_1 = p_{n+1}$, $e^i_0=e_{n,i}$, and $e^i_1=e_{n+1,i}$ for each $i\in\{1,2\}$.

\smallskip

\emph{Case~(1).} It is easy to see that $\card E_{m,n+1} \leq 2 \leq \bigl\lceil \frac{1}{2} \card E_{m,n} \bigr\rceil$.

\smallskip

\emph{Case~(2).} We have $p_{n+1}\notin \crit f|_\CC$. Without loss of generality, we assume that $f(e_{n+1,1}) \supseteq e_{n,1}$, $f(e_{n+1,1}) \cap e_{n,2} \setminus \{p_n\} = \emptyset$, and $f(e_{n+1,2}) \cap \overline{\ae}^m(p_n) = \emptyset$. Since $f$ is injective on each $m$-edge (see Proposition~\ref{propCellDecomp}~(i)), $E_{m,n} \subseteq \overline{\ae}^m(p_{n})$ (see (\ref{eqDefEm})), and either $L_{n,1}  L_{n,2} = 0$ or $L_{n,1} = L_{n,2}$ by Claim~1, it is easy to verify from Lemma~\ref{lmEmInductive} that either $\card E_{m,n+1} = \card E_{m,n}$ or $\card E_{m,n+1} \leq \bigl\lceil \frac{1}{2} \card E_{m,n} \bigr\rceil$.

\smallskip

\emph{Case~(3).} We have $p_{n+1}\notin \crit f|_\CC$. Without loss of generality, we assume that $f(e_{n+1,1}) \supseteq \overline{\ae}^m (p_n)$ and $f(e_{n+1,2} \setminus \{ p_{n+1} \} ) \cap \overline{\ae}^m(p_n) = \emptyset$. Since $f$ is injective on each $m$-edge (see Proposition~\ref{propCellDecomp}~(i)), $E_{m,n} \subseteq \overline{\ae}^m(p_{n})$ (see (\ref{eqDefEm})), and either $L_{n,1}  L_{n,2} = 0$ or $L_{n,1} = L_{n,2}$ by Claim~1, it is easy to verify from Lemma~\ref{lmEmInductive} that $\card E_{m,n+1} = \card E_{m,n}$.

\smallskip

\emph{Case~(4).} We have $p_{n+1} \notin \crit f|_\CC$. By Proposition~\ref{propCellDecomp}~(i), $f$ maps $\overline{\ae}^m(p_{n+1})$ bijectively onto $f( \overline{\ae}^m(p_{n+1}) )$. Since $f( \overline{\ae}^m(p_{n+1}) ) \supseteq \overline{\ae}^m(p_{n})$ and $E_{m,n} \subseteq \overline{\ae}^m(p_{n})$ (see (\ref{eqDefEm})), we get $\card E_{m,n+1} = \card E_{m,n}$ by Lemma~\ref{lmEmInductive}.

\smallskip

\emph{Case~(5).} We have $p_{n+1}\in \crit f|_\CC  \subseteq \crit f$. Without loss of generality, we assume that $f(e_{n+1,j}) \supseteq e_{n,1}$ and $f(e_{n+1,j}) \cap e_{n,2} \setminus \{p_n\} = \emptyset$ for each $j\in\{1,2\}$. Since $E_{m,n} \subseteq \overline{\ae}^m(p_{n})$ (see (\ref{eqDefEm})) and either $L_{n,1}  L_{n,2} = 0$ or $L_{n,1} = L_{n,2}$ by Claim~1, it is easy to verify from Lemma~\ref{lmEmInductive} that either $\card E_{m,n+1} \leq 2 \leq \bigl\lceil \frac{1}{2} \card E_{m,n} \bigr\rceil$ or $\card E_{m,n+1} \geq \card E_{m,n}$. Note that in either case, we have $\card (e_{n+1,1} \cap E_{m,n+1}) = \card (e_{n+1,2} \cap E_{m,n+1})$. Moreover, if $\card E_{m,n+1} > \card E_{m,n}$, then it follows that $L_{n,2} = 0$, and thus $E_{m,n} \subseteq e_{n,1} \subseteq f(e_{n+1,1}) = f(e_{n+1,2}) \in \E^{m-1}$ (see Proposition~\ref{propCellDecomp}~(i)). 

\smallskip

\emph{Case~(6).} We have $p_{n+1}\in \crit f|_\CC  \subseteq \crit f$. Since $f$ is injective on each $m$-edge (see Proposition~\ref{propCellDecomp}~(i)) and $E_{m,n} \subseteq \overline{\ae}^m(p_{n})$ (see (\ref{eqDefEm})), it is easy to verify that $\card E_{m,n+1} > \card E_{m,n}$ and $E_{m,n} \subseteq \overline{\ae}^m(p_n)  \subseteq f(e_{n+1,1}) = f(e_{n+1,2}) \in \E^{m-1}$ (see Proposition~\ref{propCellDecomp}~(i)).

\smallskip

Claim~2 now follows.

\smallskip

Finally, we will establish (\ref{eqEmBound}) by induction on $n\in\N$. Recall that we assume $m\geq 14$.

For $n=1$, by (\ref{eqPfpropEmBound_EmDouble}), $\card E_{m,1} \leq 2 < m 2^{\frac{n}{m}}$. 

We now assume that (\ref{eqEmBound}) holds for all $n\leq l$ for some integer $l\in\N$. If $\card E_{m,l} < 8$, then (\ref{eqEmBound}) holds for $n=l+1$ by (\ref{eqPfpropEmBound_EmDouble}) and the assumption that $m\geq 14$. So we can assume that $\card E_{m,l} \geq 8$. Moreover, if $\card E_{m,l+1} \leq \card E_{m,l}$, then (\ref{eqEmBound}) holds for $n=l+1$ trivially from the induction hypothesis. Thus we can also assume that $\card E_{m,l+1} > \card E_{m,l}$.

Since $\card E_{m,1} \leq 2$ (see (\ref{eqPfpropEmBound_EmDouble})) and $\card E_{m,l} \geq 8$, we can define a number
\begin{equation}  \label{eqPfpropEmBound_Defk}
k \coloneqq \max \{ i\in\N \,|\, i<l, \, \card E_{m,i} \neq \card E_{m,l} \} \leq l-1.
\end{equation}
Note that $l\geq 3$ and $\card E_{m,k} \geq \frac{1}{2} \card E_{m,k+1} = \frac{1}{2} \card E_{m,l} \geq 4$ by (\ref{eqPfpropEmBound_EmDouble}).

We will establish (\ref{eqEmBound}) for $n=l+1$ by considering the following two cases:

\smallskip

\emph{Case I.} $\card E_{m,k} > \card E_{m,l}=\card E_{m,k+1}$. Then by (\ref{eqPfpropEmBound_EmDouble}) and Claim~2(i), we have
\begin{align*}
 \card E_{m,l+1} & \leq 2 \card E_{m,l}
                    =   2 \card E_{m,k+1}
                   \leq 2 \Bigl\lceil \frac{1}{2} \card E_{m,k} \Bigr\rceil \\
                 & \leq 1 + \card E_{m,k} 
                   \leq 1 + m 2^{\frac{k}{m}}
                   \leq m 2^{\frac{k+2}{m}}
                   \leq m 2^{\frac{l+1}{m}},
\end{align*}
where the second-to-last inequality follows from the fact that the function $h(x) \coloneqq x\bigl(2^{\frac{2}{x}} - 1 \bigr)$, $x>1$, satisfies $\lim\limits_{x\to+\infty} h(x) = \log 4 >1$, $\lim\limits_{x\to+\infty} \frac{\mathrm{d}}{\mathrm{d} x} h(x) = 0$, and $\frac{\mathrm{d}^2}{\mathrm{d} x^2} h(x) > 0$ for $x>1$.

\smallskip

\emph{Case II.} $\card E_{m,k} < \card E_{m,l}=\card E_{m,k+1}$. Then by Claim~2(iii), we have $p_{k+1} \in \crit f|_\CC$. We define
\begin{equation}  \label{eqPfpropEmBound_Defk'}
k' \coloneqq \max \{ i\in \N \,|\, i\leq l, \, p_i\in\crit f|_\CC \}  \in [k+1, l].
\end{equation}
Note that by (\ref{eqPfpropEmBound_Defk}), (\ref{eqPfpropEmBound_Defk'}), and (\ref{eqPfpropEmBound_EmDouble}), we get $\card E_{m,k'-1} \geq \frac{1}{2} \card E_{m,l} \geq 4$. By Claim~2(ii) and (iii), regardless of whether $k'=k+1$ or not, we have
\begin{equation}  \label{eqPfpropEmBound_k'TwoSides}
\card (e_{k',1} \cap E_{m,k'} ) = \card (e_{k',2} \cap E_{m,k'} ) \geq 2.
\end{equation}
By Claim~2(iii), we have $p_{l+1} \in \crit f|_\CC \subseteq \V^1$, $E_{m,l} \subseteq e^{m-1} \in \E^{m-1}$ where $e^{m-1} \coloneqq f(e_{l+1,1}) = f(e_{l+1,2})$. Note that $f(p_{l+1}) \in \V^0 \cap e^{m-1}$. We now show that $k'< l$. We argue by contradiction and assume that $k'\geq l$. By (\ref{eqPfpropEmBound_Defk'}), $k'=l$. Then $p_l = p_{k'} \in \crit f|_\CC \subseteq \V^1$ and $p_l = p_{k'} \in \inte \bigl( e^{m-1} \bigr)$ (see (\ref{eqPfpropEmBound_k'TwoSides})). This contradicts the fact that no $(m-1)$-edge can contain a $1$-vertex in its interior. Thus $k'<l$.

We now show that
\begin{equation}  \label{eqPfpropEmBound_jump}
l-k'\geq m-1.
\end{equation}
Fix an arbitrary integer $i\in[k'+1,l]$. Since $\card E_{m,i} = \card E_{m,i-1} \in [8,+\infty)$ (see (\ref{eqPfpropEmBound_Defk'}) and (\ref{eqPfpropEmBound_Defk})), $f(E_{m,i}) \subseteq E_{m,i-1}$ (see (\ref{eqDefEm})), and $f^{l-k'}$ is injective on $e^{m-1} \supseteq E_{m,l}$ (see Proposition~\ref{propCellDecomp}~(i)), we get $f(E_{m,i}) = E_{m,i-1}$. By Proposition~\ref{propCellDecomp}~(i), $f^{l-k'}\bigl( e^{m-1} \bigr) \in \E^{m-1-l+k'}$. Since $f^{l-k'}\bigl( e^{m-1} \bigr) \supseteq f^{l-k'}(E_{m,l}) = E_{m,k'}$ and $f^{l-k'}$ is injective and continuous on $e^{m-1}$ (see Proposition~\ref{propCellDecomp}~(i)), it follows from (\ref{eqPfpropEmBound_k'TwoSides}) that $p_{k'} \in \inte \bigl( f^{l-k'}\bigl( e^{m-1} \bigr) \bigr)$. Since $p_{k'} \in \crit f|_\CC \subseteq \V^1$, we get $m-1-l+k' \leq 0$, proving (\ref{eqPfpropEmBound_jump}).

Hence by (\ref{eqPfpropEmBound_EmDouble}), (\ref{eqPfpropEmBound_Defk}), (\ref{eqPfpropEmBound_Defk'}), (\ref{eqPfpropEmBound_jump}), and the induction hypothesis, we get 
\begin{equation*}
\card E_{m,l+1} \leq 2 \card E_{m,l} = 2 \card E_{m,k+1} \leq 2 m 2^{\frac{k+1}{m}}  \leq 2 m 2^{\frac{k'}{m}} \leq m 2^{\frac{l+1}{m}}.
\end{equation*}
The induction step is now complete, establishing Proposition~\ref{propEmBound}.
\end{proof}

\begin{theorem}  \label{thmPressureOnC}
Let $f$, $\CC$, $d$, $\alpha$ satisfy the Assumptions. We assume in addition that $f(\CC)\subseteq \CC$. Let $\varphi \in \Holder{\alpha}(S^2,d)$ be a real-valued H\"{o}lder continuous function with an exponent $\alpha$. Recall the one-sided subshifts of finite type $\bigl( \Sigma_{A_{\e}}^+, \sigma_{A_{\e}} \bigr)$ and $\bigl( \Sigma_{A_{\ee}}^+, \sigma_{A_{\ee}} \bigr)$ constructed in Subsection~\ref{subsctDynOnC_Construction}. We denote by $\left(\V^0, f|_{\V^0}\right)$ the dynamical system on $\V^0=\V^0(f,\CC) = \post f$ induced by $f|_{\V^0} \: \V^0\rightarrow \V^0$. Then the following relations between the topological pressure of these systems hold:
\begin{equation*}
     P  (   \sigma_{A_{\ee}}, \varphi \circ \pi_{\e} \circ \pi_{\ee}  )
 =   P  (   \sigma_{A_{\e}},  \varphi \circ \pi_{\e}                  )
 =   P  (   f|_\CC,           \varphi|_\CC                            )
<  P(f,\varphi)
>    P  (   f|_{\V^0} ,       \varphi|_{\V^0}                         ) .
\end{equation*}
\end{theorem}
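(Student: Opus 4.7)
The two equalities on the left follow directly from Lemma~\ref{lmUnifBddToOneFactorPressure}. Indeed, by Proposition~\ref{propSFTs_C}(i) the map $\pi_{\ee}\colon \Sigma_{A_{\ee}}^+ \rightarrow \Sigma_{A_{\e}}^+$ is a Lipschitz continuous factor map that is exactly $2$-to-$1$, and by Proposition~\ref{propSFTs_C}(ii) the map $\pi_{\e}\colon \Sigma_{A_{\e}}^+ \rightarrow \CC$ is a H\"older continuous factor map that is at most $2$-to-$1$. Both are uniformly bounded-to-one H\"older factor maps, so two applications of Lemma~\ref{lmUnifBddToOneFactorPressure} yield the two leftmost equalities.

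For the strict inequality $P(f|_{\V^0}, \varphi|_{\V^0}) < P(f, \varphi)$, the plan is to combine the variational principle with uniqueness of the equilibrium state. Since $\V^0 = \post f$ is finite, every $f|_{\V^0}$-invariant probability measure has vanishing measure-theoretic entropy, and the simplex of such measures has only finitely many ergodic elements, one for each periodic orbit of $f|_{\V^0}$. Hence $P(f|_{\V^0}, \varphi|_{\V^0}) = \max_\mu \int \varphi \, \mathrm{d}\mu$ is attained by some $\mu^*$. Viewing $\mu^*$ as an $f$-invariant probability measure on $S^2$ supported in $\V^0 \subseteq \CC$, Theorem~\ref{thmEquilibriumState}(iii) forces $\mu^* \neq \mu_\varphi$ (since $\mu_\varphi(\CC) = 0$), and the uniqueness assertion of Theorem~\ref{thmEquilibriumState}(i) sharpens the variational inequality $h_{\mu^*}(f) + \int \varphi \, \mathrm{d}\mu^* \leq P(f,\varphi)$ to a strict one. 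Since $h_{\mu^*}(f|_{\V^0}) = 0 \leq h_{\mu^*}(f)$, the claim follows.

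The main technical step is the strict inequality $P(f|_\CC, \varphi|_\CC) < P(f, \varphi)$. I would first reduce to the case where no $1$-tile in $\X^1(f, \CC)$ joins opposite sides of $\CC$: by Lemma~\ref{lmCellBoundsBM}(ii) the diameters of $N$-tiles of $f$ tend to zero, so for $N$ sufficiently large no $N$-tile of $f$ can join opposite sides of $\CC$; setting $F \coloneqq f^N$, the identity $\X^1(F, \CC) = \X^N(f, \CC)$ from Proposition~\ref{propCellDecomp}(vii) transfers this property to $F$, and the standard scaling $P(F, S_N^f \varphi) = N P(f, \varphi)$, together with the analogous scaling for the restriction to $\CC$, shows that the strict inequality for $f$ is equivalent to that for $F$. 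Under this extra hypothesis Proposition~\ref{propEmBound} applies. I would then invoke the open-cover characterization (\ref{eqEquivDefByCoverForTopPressure}) of $P(f|_\CC, \varphi|_\CC)$ with the refining sequence of finite open covers $\xi_m = \{\ae^m(p) : p \in \CC \cap \V^m(f, \CC)\}$ of $\CC$, whose mesh tends to zero as $m \to \infty$ by Lemma~\ref{lmCellBoundsBM}. By Lemma~\ref{lmCoverByEdgePair} combined with Proposition~\ref{propEmBound}, each element $V(p_0, \ldots, p_n) = \bigcap_{i=0}^n (f|_\CC)^{-i}(\ae^m(p_{n-i}))$ of the refinement $\bigvee_{i=0}^n (f|_\CC)^{-i}(\xi_m)$ is contained in a union of at most $m\, 2^{n/m}$ edge pairs at level $m+n$, and a variant of Lemma~\ref{lmSnPhiBound} adapted to edges on $\CC$ bounds the oscillation of $S_n \varphi$ on each $(m+n)$-edge pair by $C_1 (C \Lambda^{-m})^\alpha$, independently of $n$. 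After careful bookkeeping of the multiplicities with which $(m+n)$-vertices appear across different $E_m$ sets, these estimates give an upper bound on $\frac{1}{n} \log \inf_\mathcal{V} \sum_{V \in \mathcal{V}} \exp(\sup_V S_n \varphi)$ whose $n \to \infty$ limit may be compared to the exponential growth rate of the full preimage sum $\sum_{y \in f^{-n}(q)} \deg_{f^n}(y) \exp(S_n \varphi(y))$ characterizing $P(f, \varphi)$ via Corollary~\ref{corRR^nConvToTopPressureUniform}. The sub-exponential factor $m\, 2^{n/m}$, which encodes the one-dimensional topology of $\CC$, keeps the cover sum strictly below the corresponding two-dimensional preimage sum; sending $m \to \infty$ then yields the desired strict inequality. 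The main obstacle I anticipate is precisely this vertex-multiplicity accounting, which I expect to control by exploiting the Jordan-curve topology of $\CC$ as in the proof of Proposition~\ref{propEmBound}.
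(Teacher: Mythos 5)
The gap is in your first paragraph. Lemma~\ref{lmUnifBddToOneFactorPressure} is stated only for factor maps between one-sided subshifts of finite type, and its proof relies on Proposition~\ref{propSFT}~(iii), a preimage-sum characterization of pressure that holds for subshifts precisely because of expansivity. The target of $\pi_{\e}$ is $(\CC, f|_\CC)$, which is not a subshift of finite type and is not expansive: $f|_\CC$ can have critical points, since $\post f \subseteq \CC$. There is therefore no a priori analogue of Proposition~\ref{propSFT}~(iii) for $(\CC, f|_\CC)$, and the lemma does not apply to the second equality. The inequality $P(\sigma_{A_{\e}}, \varphi\circ\pi_{\e}) \geq P(f|_\CC, \varphi|_\CC)$ is immediate from the factor property, but establishing the reverse inequality $\leq$ is the core technical difficulty of the theorem, and is exactly where the paper deploys the open-cover characterization (\ref{eqEquivDefByCoverForTopPressure}), Lemma~\ref{lmCoverByEdgePair}, and the sub-exponential bound from Proposition~\ref{propEmBound}.

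Your third paragraph thus misallocates this machinery to the strict inequality $P(f|_\CC,\varphi|_\CC) < P(f,\varphi)$, which is actually the easy part: as with your argument for $\V^0$, it follows from the variational principle, uniqueness of the equilibrium state, and $\mu_\varphi(\CC)=0$, since any $f|_\CC$-invariant probability measure $\nu$ satisfies $\nu(\CC)=1 \neq 0 = \mu_\varphi(\CC)$ and hence $\nu \neq \mu_\varphi$. The proposed direct comparison with the full preimage sum via Corollary~\ref{corRR^nConvToTopPressureUniform}, together with the claim that the subexponential factor ``keeps the cover sum strictly below'' the pressure of $f$, is also not a complete argument: the weights $\exp(S_n\varphi)$ could in principle concentrate near $\CC$, so a pure counting bound on the number of tiles covering $\CC$ cannot by itself produce the strict inequality --- one needs to invoke uniqueness of the equilibrium state for that.
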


\begin{proof}
The identity $P  (   \sigma_{A_{\ee}}, \varphi \circ \pi_{\e} \circ \pi_{\ee}  )  =  P  (   \sigma_{A_{\e}},  \varphi \circ \pi_{\e}  )$ follows directly from Lemma~\ref{lmUnifBddToOneFactorPressure} and H\"{o}lder continuity of $\pi_{\e}$ and $\pi_{\ee}$ (see Proposition~\ref{propSFTs_C}).

The strict inequalities $P (f|_\CC, \varphi|_\CC) < P(f,\varphi)$ and $P ( f|_{\V^0}), \varphi|_{\V^0} ) < P(f,\varphi)$ follow from the uniqueness of the equilibrium state $\mu_\phi$ for the map $f$ and the potential $\varphi$ (see Theorem~\ref{thmEquilibriumState}~(i)), the fact that $\mu_\phi (\CC) = 0$ (see Theorem~\ref{thmEquilibriumState}~(iii)), and the Variational Principle (\ref{eqVPPressure}) (see for example, \cite[Theorem~3.4.1]{PrU10}).

We observe that since $(\CC,f|_\CC)$ is a factor of $\bigl(\Sigma_{A_{\e}}^+, \sigma_{A_{\e}}\bigr)$ with a factor map $\pi_{\e} \: \Sigma_{A_{\e}}^+ \rightarrow \CC$ (Proposition~\ref{propSFTs_C}~(ii)), it follows from \cite[Lemma~3.2.8]{PrU10} that $P  ( \sigma_{A_{\e}},  \varphi \circ \pi_{\e} ) \geq P  (  f|_\CC,  \varphi|_\CC )$. It remains to show $P  ( \sigma_{A_{\e}},  \varphi \circ \pi_{\e} ) \leq P  (  f|_\CC,  \varphi|_\CC )$. 

By Lemma~\ref{lmCellBoundsBM}~(ii) and Proposition~\ref{propCellDecomp}~(vii), no $1$-tile in $\X^1(f^n,\CC)$ joins opposite sides of $\CC$ for all sufficiently large $n\in\N$. Note that for all $m\in\N$, $P\bigl( f^m|_\CC, S^f_m \varphi|_\CC \bigr) = m P ( f|_\CC,  \varphi|_\CC  )$ and $P  \bigl( \sigma_{A_{\e}}^m,  S^{\sigma_{A_{\e}}}_m ( \varphi \circ \pi_{\e}) \bigr)  = mP  ( \sigma_{A_{\e}},  \varphi \circ \pi_{\e} )$ (see for example, \cite[Theorem~9.8]{Wal82}). It is clear that, without loss of generality, we can assume that no $1$-tile in $\X^1(f,\CC)$ joins opposite sides of $\CC$.

We define a sequence of finite open covers $\{\eta_i\}_{i\in\N_0}$ of $\CC$ by 
\begin{equation*}
\eta_i \coloneqq \bigl\{  \ae^i(v)  \,\big|\, v\in \CC \cap \V^i  \bigr\}
\end{equation*}
for $i\in\N_0$. We note that since we are considering the metric space $(\CC,d)$, $\ae^i(v)$ is indeed an open set for each $i\in\N_0$ and each $v \in \CC \cap \V^i$. By Lemma~\ref{lmCellBoundsBM}~(ii), 
\begin{equation*}
\lim\limits_{i\to+\infty} \max \{ \diam_d(V) \,|\, V\in\eta_i \} = 0.
\end{equation*}

Fix arbitrary integers $l,m,n\in\N$ with $l\geq m\geq 14$. Choose $U\in \bigvee\limits_{i=0}^n (f|_\CC)^{-i} (\eta_m)$ arbitrarily, say
\begin{equation*}
U = \bigcap\limits_{i=0}^n (f|_\CC)^{-i} (\ae^m(p_{n-i})),
\end{equation*}
where $p_0,p_1,\dots,p_n \in \CC\cap\V^m$. By Lemma~\ref{lmCoverByEdgePair},
\begin{equation*}
U \subseteq \bigcup\limits_{x\in E_m(p_n,\,p_{n-1},\,\dots,\,p_1;\,p_0)}  \ae^{m+n} (x),
\end{equation*}
where $E_m$ is defined in (\ref{eqDefEm}). It follows immediately from (\ref{eqDeg=SumLocalDegree}) and Proposition~\ref{propCellDecomp}~(i) and (v) that
\begin{equation*}
\card \bigl\{ e^{l+n} \in \E^{l+n} \,\big|\, e^{l+n} \subseteq e \bigr\}  \leq (\deg f)^{l-m} \card(\post f)
\end{equation*}
for each $(m+n)$-edge $e\in\E^{m+n}$. Thus we can construct a collection $\mathcal{E}^{l+n}(U) \subseteq \E^{l+n}$ of $(l+n)$-edges such that $U\subseteq \bigcup \mathcal{E}^{l+n}(U)$ by setting
\begin{equation*}
\mathcal{E}^{l+n}(U)  \coloneqq \bigcup\limits_{x\in E_m(p_n,\,p_{n-1},\,\dots,\,p_1;\,p_0)}   \bigl\{ e^{l+n} \in \E^{l+n} 
                                                                                   \,\big|\, e^{l+n} \subseteq \overline{\ae}^{m+n}(x)  \bigr\}
\end{equation*}
Then 
\begin{align}   \label{eqPfthmPressureOnC_EdgeNoBound}
       \card \bigl(  \mathcal{E}^{l+n}(U) \bigr) 
& \leq 2 (\deg f)^{l-m} \card(\post f) \card (E_m(p_n,\,p_{n-1},\,\dots,\,p_1;\,p_0))  \notag \\
& \leq 2 m 2^{\frac{n}{m}} (\deg f)^{l-m} \card(\post f),
\end{align}
where the last inequality follows from Proposition~\ref{propEmBound}.

Hence by (\ref{eqEquivDefByCoverForTopPressure}), Lemma~\ref{lmSnPhiBound}, and (\ref{eqPfthmPressureOnC_EdgeNoBound}), we get
\begin{align*}
&            P(f|_\CC, \varphi|_\CC)  \\
&  \qquad  = \lim\limits_{m\to+\infty}   \lim\limits_{l\to+\infty}   \lim\limits_{n\to+\infty}  \frac{1}{n} \log
                 \inf_\xi \Biggl\{ \sum\limits_{U\in\xi} \exp  \bigl( \sup  \bigl\{S_n^f\varphi(x) \,\big|\, x\in U \bigr\}  \bigr)    \Biggr\}     \\
&\qquad\geq  \lim\limits_{m\to+\infty}   \lim\limits_{l\to+\infty}   \lim\limits_{n\to+\infty}  \frac{1}{n} \log
                 \inf_\xi \Biggl\{ \sum\limits_{U\in\xi} \sum\limits_{e\in\mathcal{E}^{l+n}(U)}
                   \frac{  \exp  \bigl( \sup \bigl\{S_n^f\varphi(x) \,\big|\, x\in e\cap U \bigr\}  \bigr)}
                        {  \card  (  \mathcal{E}^{l+n}(U)   ) }  \Biggr\}  \\        
&\qquad\geq  \lim\limits_{m\to+\infty}   \lim\limits_{l\to+\infty}   \lim\limits_{n\to+\infty}  \frac{1}{n} \log
                 \inf_\xi \Biggl\{ \sum\limits_{U\in\xi} \sum\limits_{e\in\mathcal{E}^{l+n}(U)}
                   \frac{  \exp  \bigl( \sup    \bigl\{S_n^f\varphi(x) \,\big|\, x\in e \bigr\}  \bigr)}
                        { 2 m 2^{\frac{n}{m}} (\deg f)^{l-m}   D } \Biggr\}     \\ 
&\qquad\geq  \lim\limits_{m\to+\infty}   \lim\limits_{l\to+\infty}   \lim\limits_{n\to+\infty}  \frac{1}{n} \log
                   \sum\limits_{\substack{e\in\E^{l+n} \\ e\subseteq \CC  }}
                   \frac{  \exp  \bigl( \sup    \bigl\{S_n^f\varphi(x) \,\big|\, x\in e \bigr\}  \bigr)}
                        { 2 m 2^{\frac{n}{m}} (\deg f)^{l-m}   D }     \\  
&\qquad  =   \lim\limits_{m\to+\infty}   \lim\limits_{l\to+\infty}   \lim\limits_{n\to+\infty}  \frac{1}{n} 
              \Biggl( \log   \sum\limits_{\substack{e\in\E^{l+n} \\ e\subseteq \CC  }}
                              \exp \Bigl( \sup_{x\in e}   S_n^f\varphi(x)   \Bigr)
                      - \log \bigl( 2 m 2^{\frac{n}{m}} (\deg f)^{l-m}   D \bigr)   \Biggr)\\
&\qquad  =   \lim\limits_{l\to+\infty}   \lim\limits_{n\to+\infty}  \frac{1}{n} \log
                \sum\limits_{\substack{e\in\E^{l+n} \\ e\subseteq \CC  }}
                              \exp  \bigl( \sup \bigl\{ S_n^f\varphi(x) \,\big|\, x\in e  \bigr\}  \bigr)  ,                   
\end{align*}
where the constant $D>1$ is defined to be $D \coloneqq \card(\post f)  \exp (C_1 (\diam_d(S^2) )^\alpha  )$ with $C_1 = C_1(f,\CC,d,\varphi,\alpha)>0$ depending only on $f$, $\CC$, $d$, $\varphi$, and $\alpha$ defined in (\ref{eqC1Expression}), and the infima are taken over all finite open subcovers $\xi$ of $\bigvee\limits_{i=0}^n (f|_\CC)^{-i}(\eta_m)$, i.e., $\xi \in \Bigl\{ \varsigma \,\Big|\, \varsigma \subseteq \bigvee\limits_{i=0}^n (f|_\CC)^{-i}(\eta_m), \, \bigcup \varsigma = \CC \Bigr\}$. The second inequality follows from Lemma~\ref{lmSnPhiBound}, and the last inequality follows from the fact that 
\begin{equation*}
\CC\supseteq \bigcup \Bigl( \bigcup\limits_{U\in\xi}  \mathcal{E}^{l+n}(U) \Bigr) =  \bigcup\limits_{U\in\xi} \Bigl( \bigcup  \mathcal{E}^{l+n}(U)\Bigr) \supseteq  \bigcup \xi = \CC
\end{equation*}
and thus $\bigcup\limits_{U\in\xi}  \mathcal{E}^{l+n}(U) = \bigl\{ e\in\E^{l+n} \,\big|\, e\subseteq \CC \}$.

Finally, we will show that 
\begin{equation*}
 P  ( \sigma_{A_{\e}},  \varphi \circ \pi_{\e} ) 
 =  \lim\limits_{l\to+\infty}   \lim\limits_{n\to+\infty}  \frac{1}{n} \log
                \sum\limits_{\substack{e\in\E^{l+n} \\ e\subseteq \CC  }}
                              \exp  \bigl( \sup \bigl\{ S_n^f\varphi(x) \,\big|\, x\in e \bigr\}  \bigr).
\end{equation*}
We denote by $C_n(e_0,e_1,\dots,e_n)$ the $n$-cylinder set 
\begin{equation*}
C_n(e_0,e_1,\dots,e_n) \coloneqq \bigl\{ \{e'_i\}_{i\in\N_0} \in \Sigma_{A_{\e}}^+ \,\big|\, e'_i=e_i \text{ for all } i\in\N_0 \text{ with } i\leq n  \bigr\}
\end{equation*}  
in $\Sigma_{A_{\e}}^+$ containing $\{e_i\}_{i\in\N_0}$, for each $n\in \N_0$ and each $\{e_i\}_{i\in\N_0} \in \Sigma_{A_{\e}}^+$. For each $n\in\N_0$, we denote by $\mathfrak{C}_n$ the set all $n$-cylinder sets in $\Sigma_{A_{\e}}^+$, i.e., $\mathfrak{C}_n = \bigl\{ C_n(e_0,e_1,\dots,e_n) \,\big|\, \{e_i\}_{i\in\N_0} \in \Sigma_{A_{\e}}^+ \bigr\}$. Then it is easy to verify that for all $n,l\in\N_0$, $\mathfrak{C}_n$ is a finite open cover of $\Sigma_{A_{\e}}^+$, and $\bigvee\limits_{i=0}^n \sigma_{A_{\e}}^{-i} (\mathfrak{C}_l)  = \mathfrak{C}_{l+n}$. Hence by (\ref{eqEquivDefByCoverForTopPressure}), Lemma~\ref{lmCylinderIsTile}, and Proposition~\ref{propSFTs_C}~(i), we have
\begin{align*}
&          P  ( \sigma_{A_{\e}},  \varphi \circ \pi_{\e} ) \\
&\qquad =  \lim\limits_{l\to +\infty}  \lim\limits_{n\to +\infty}  \frac{1}{n} \log 
               \inf\Biggl\{ \sum\limits_{V\in\mathcal{V}}  \exp 
                      \biggl( \sup\limits_{\underline{z}\in V} S_n^{\sigma_{A_{\e}}} (\varphi \circ \pi_{\e})(\underline{z})   \biggr)
                             \,\Bigg|\,  \mathcal{V} \subseteq \bigvee\limits_{i=0}^{n} \sigma_{A_{\e}}^{-i}(\mathfrak{C}_l),\,
                                                        \bigcup \mathcal{V} = \Sigma_{A_{\e}}^+ \Biggr\}  \\
&\qquad =  \lim\limits_{l\to +\infty}  \lim\limits_{n\to +\infty}  \frac{1}{n} \log 
              \sum\limits_{V\in\mathfrak{C}_{l+n}}  \exp 
                      \Bigl( \sup  \Bigl\{ S_n^{\sigma_{A_{\e}}} (\varphi \circ \pi_{\e})(\underline{z}) \,\Big|\, \underline{z}\in V \Bigr\}   \Bigr)  \\
&\qquad =  \lim\limits_{l\to+\infty}   \lim\limits_{n\to+\infty}  \frac{1}{n} \log
                \sum\limits_{\substack{e\in\E^{l+n+1} \\ e\subseteq \CC  }}
                              \exp  \bigl( \sup \bigl\{ S_n^f \varphi(x) \,\big|\, x\in e \bigr\}  \bigr) .                               
\end{align*}
Hence, $P(f|_\CC, \varphi|_\CC) \geq P  ( \sigma_{A_{\e}},  \varphi \circ \pi_{\e} )$. The proof is therefore complete.
\end{proof}

\subsection{Deduction of Theorem~\ref{thmZetaAnalExt_InvC} from Theorem~\ref{thmZetaAnalExt_SFT}}  \label{subsctDynOnC_Reduction}

In this subsection, we give a proof of Theorem~\ref{thmZetaAnalExt_InvC} assuming Theorem~\ref{thmZetaAnalExt_SFT}.

\begin{proof}[Proof of Theorem~\ref{thmZetaAnalExt_InvC}]
We choose $N_f\in\N$ as in Remark~\ref{rmNf}. Note that $P \bigl( f^i, - s_0 S_i^f \phi \bigr) = i P(f, - s_0 \phi) = 0$ for each $i\in\N$ (see for example, \cite[Theorem~9.8]{Wal82}). We observe that by Lemma~\ref{lmCexistsL}, it suffices to prove the case $n=N_f = 1$. In this case, $F=f$, $\Phi=\phi$, and there exists an Jordan curve $\CC\subseteq S^2$ satisfying $f(\CC)\subseteq \CC$, $\post f\subseteq \CC$, and no $1$-tile in $\X^1(f,\CC)$ joins opposite sides of $\CC$.

In this proof, we write $l_\phi(\tau) \coloneqq \sum\limits_{y\in\tau} \phi(y)$ and $\deg_f(\tau) \coloneqq \prod\limits_{y\in\tau} \deg_f(y)$ for each primitive periodic orbit $\tau\in\Orb(f)$ and each $y\in\tau$. 

\smallskip

\emph{Claim~1.} The dynamical Dirichlet series $\DS_{f,\,\minus\phi,\,\deg_f}$ converges on the open half-plane $H_{s_0} = \{ s\in\C \,|\, \Re(s) > s_0 \}$ and extends to a non-vanishing holomorphic function on the closed half-plane $\overline{\H}_{s_0} = \{ s\in\C \,|\, \Re(s) \geq s_0 \}$ except for the simple pole at $s=s_0$. 

\smallskip

We first observe that by the continuity of the topological pressure (see for example, \cite[Theorem~3.6.1]{PrU10}) and Theorem~\ref{thmPressureOnC}, there exists a real number $\epsilon'_0 \in (0, \min \{\wt{\epsilon}_0, \, s_0 \} )$ such that $P (f|_{\V^0}, -(s_0 - \epsilon'_0) \phi|_{\V^0} ) < 0 $ and
\begin{equation*}
     P  (   \sigma_{A_{\ee}}, -(s_0 - \epsilon'_0) \varphi \circ \pi_{\e} \circ \pi_{\ee}  )
 =   P  (   \sigma_{A_{\e}},  -(s_0 - \epsilon'_0) \varphi \circ \pi_{\e}                  )
 <   0.
\end{equation*}
Here $\wt{\epsilon}_0 > 0$ is a constant from Theorem~\ref{thmZetaAnalExt_SFT} depending only on $f$, $\CC$, $d$, and $\phi$.

By Lemma~\ref{lmDynDirichletSeriesConv_general}, Remark~\ref{rmDynDirichletSeriesZetaFn}, Proposition~\ref{propSFT}~(ii), and the fact that $\phi$ is eventually positive, each of the zeta functions $\zeta_{ f|_{\V^0}, \, \minus \phi|_{\V^0} }$, $\zeta_{ \sigma_{A_{\e}}, \, \minus \phi \circsmall \pi_{\e} }$, and $\zeta_{ \sigma_{A_{\ee}}, \, \minus \phi \circsmall \pi_{\e} \circsmall \pi_{\ee} }$ converges uniformly and absolutely to a non-vanishing bounded holomorphic function on the closed half-plane $\overline{\H}_{s_0 - \epsilon'_0} = \{ s\in\C \,|\, \Re(s) \geq s_0 - \epsilon'_0 \}$.

On the other hand, for each $n\in\N$, we have $P_{1, (f|_{\V^0})^n} \subseteq P_{1, f^n}$, and by Proposition~\ref{propSFTs_C},
\begin{equation*}
             (\pi_{\e} \circ \pi_{\ee} )  \bigl(   P_{1, \sigma_{A_{\ee}}^n } \bigr)
\subseteq     \pi_{\e}                    \bigl(  P_{1, \sigma_{A_{\e }}^n } \bigr)
\subseteq                                         P_{1, (f|_\CC)^n }
\subseteq                                         P_{1, f^n }.
\end{equation*} 
Thus by (\ref{eqDefDynDirichletSeries}), (\ref{eqDefZetaFn}), and Theorem~\ref{thmNoPeriodPtsIdentity}, we get that for each $s\in \H_{s_0}$,
\begin{equation}    \label{eqDirichletSeriesIsCombZetaFns}
   \DS_{f,\,\minus\phi,\,\deg_f} (s) 
=  \zeta_{ \sigma_{A_{\ti}}, \, \minus \phi \circsmall \pi_{\ti} } (s) 
   \frac{   \zeta_{ \sigma_{A_{\e}}, \, \minus \phi \circsmall \pi_{\e} } (s)   \zeta_{ f|_{\V^0}, \, \minus \phi|_{\V^0} } (s)   }
        {   \zeta_{ \sigma_{A_{\ee}}, \, \minus \phi \circsmall \pi_{\e} \circsmall \pi_{\ee} } (s)  }.
\end{equation}

Claim~1 now follows from statement~(i) in Theorem~\ref{thmZetaAnalExt_SFT} and the discussion above.

\smallskip

Next, we observe that by (\ref{eqZetaFnOrbitForm_ThurstonMap}) and  (\ref{eqZetaFnOrbitForm_ThurstonMapDegree}) in Proposition~\ref{propZetaFnConv_s0},
\begin{equation}   \label{eqPfthmZetaAnalExt_InvC_zeta}
   \zeta_{f,\,\minus\phi} (s) \prod\limits_{\tau\in\Orb^>(f|_{\V^0})}  \Bigl(  1- e^{ - s l_\phi(\tau)  } \Bigr)   
=   \DS_{f,\,\minus\phi,\,\deg_f} (s)  \prod\limits_{\tau\in\Orb^>(f|_{\V^0})}  \Bigl(  1- \deg_f(\tau) e^{ - s l_\phi(\tau)  } \Bigr)                  
\end{equation}
for all $s\in\C$ with $\Re(s)> s_0$, where 
\begin{equation}  \label{eqPfthmZetaAnalExt_InvC_Orb>}
\Orb^>(f|_{\V^0}) \coloneqq \bigl\{\tau\in \Orb(f|_{\V^0}) \,\big|\,  \deg_f(\tau) >1 \bigr\}
\end{equation}
is a finite set since $\V^0=\post f$ is a finite set.

We denote, for each $\tau\in\Orb^>(f|_{\V^0})$,
\begin{equation*}
\beta_\tau \coloneqq \deg_f(\tau) e^{-s_0 l_\phi(\tau)}.
\end{equation*}

Fix an arbitrary $\tau \in \Orb^>(f|_{\V^0})$. We show now that $1-\beta_\tau \geq 0$. We argue by contradiction and assume that $\beta_\tau >1$. Let $k \coloneqq \card \tau$, and fix an arbitrary $y\in\tau$. Then $y\in P_{1,f^{km}}$ for each $m\in\N$. Thus by Proposition~\ref{propTopPressureDefPeriodicPts},
\begin{align*}
0 =     P(f,-s_0\phi)
 \geq & \lim\limits_{m\to+\infty} \frac{1}{km} \log \bigl(  \deg_{f^{km}} (y)  \exp \bigl(-s_0 S^f_{km} \phi (y) \bigr) \bigr)  \\
  =   & \lim\limits_{m\to+\infty} \frac{1}{km} \log \bigl( \beta_\tau^m \bigr)
  =     \frac{\log \beta_\tau }{k}
  >     0.
\end{align*}
This is a contradiction, proving $1-\beta_\tau \geq 0$ for each $\tau\in\Orb^>(f|_{\V^0})$.

\smallskip

\emph{Claim~2.} We have $1-\beta_\tau > 0$ for each $\tau\in\Orb^>(f|_{\V^0})$.

\smallskip

We argue by contradiction and assume that there exists $\eta \in \Orb^>(f|_{\V^0})$ with $1-\beta_\eta = 0$. We define a function $w\: S^2\rightarrow \C$ by
\begin{equation*}
w(x) \coloneqq \begin{cases} \deg_f(x) & \text{if } x\in S^2 \setminus \eta, \\ 0  & \text{otherwise}. \end{cases}
\end{equation*}

Fix an arbitrary real number $a>s_0$. By (\ref{eqLocalDegreeProduct}), Proposition~\ref{propTopPressureDefPeriodicPts}, and Corollary~\ref{corS0unique}, for each $n\in\N$,
\begin{align*}
&             \limsup\limits_{n\to+\infty} \frac{1}{n} \log \sum\limits_{y\in P_{1,f^n}} \exp(-a S_n\phi(y)) \prod\limits_{i=0}^{n-1} w\bigl(f^i(y)\bigr) \\
&\qquad  \leq \limsup\limits_{n\to+\infty} \frac{1}{n} \log \sum\limits_{y\in P_{1,f^n}} \deg_{f^n}(y) \exp(-a S_n\phi(y))    \\
&\qquad  = P(f, -a\phi) <0.
\end{align*}
Hence by Lemma~\ref{lmDynDirichletSeriesConv_general} and Theorem~\ref{thmETMBasicProperties}~(ii), $\DS_{f,\,\minus\phi,\,w}(s)$ converges uniformly and absolutely on the closed half-plane $\overline{\H}_a$, and 
\begin{align}  \label{eqPfthmZetaAnalExt_InvC_DynDSeriesIdenw}
\DS_{f,\,\minus\phi,\,w}(s) & = \prod\limits_{\tau\in\Orb(f)\setminus\{\eta\}}  \Bigl(1 - \deg_f(\tau) e^{-s l_\phi(\tau)   } \Bigr)^{-1}  \\
                            & = \DS_{f,\,\minus\phi,\,\deg_f}(s) \Bigl(1 - \deg_f(\eta) e^{-s l_\phi(\eta)  } \Bigr)  \notag
\end{align}
for $s\in \overline{\H}_a$. Note that by our assumption that $1-\beta_\tau = 0$, we know that $1 - \deg_f(\eta) e^{-s l_\phi(\eta)  }$ is an entire function with simple zeros at $s=s_0 +\I j h_0 $, $j\in\Z$, where $h_0 \coloneqq \frac{2\pi}{l_\phi(\eta)}$. Note that $l_\phi(\eta) > 0$ since $\phi$ is eventually positive (see Definition~\ref{defEventuallyPositive}). Since $\DS_{f,\,\minus\phi,\,\deg_f}$ has a non-vanishing holomorphic extension to $\overline{\H}_{s_0}$ except a simple pole at $s=s_0$, we get from (\ref{eqPfthmZetaAnalExt_InvC_DynDSeriesIdenw}) that $\DS_{f,\,\minus\phi,\,w}$ has a holomorphic extension to $\overline{\H}_{s_0}$ with $\DS_{f,\,\minus\phi,\,w}(s_0) \neq 0$ and $\DS_{f,\,\minus\phi,\,w}(s_0 + \I jh_0) = 0$ for each $j\in\Z \setminus \{0\}$.

On the other hand, for each $s\in \overline{\H}_a$,
\begin{equation} \label{eqPfthmZetaAnalExt_InvC_DynDSeriesIdenwBound}
      \AbsBigg{ \sum\limits_{n=1}^{+\infty}  \frac{1}{n} \sum\limits_{x\in P_{1,f^n}} e^{-s S_n\phi(x)} \prod\limits_{i=0}^{n-1} w\bigl(f^i(x)\bigr) }
\leq  \sum\limits_{n=1}^{+\infty}  \frac{1}{n} \sum\limits_{x\in P_{1,f^n}} e^{-\Re(s) S_n\phi(x)} \prod\limits_{i=0}^{n-1} w\bigl(f^i(x)\bigr).
\end{equation}
Since $a>s_0$ is arbitrary, it follows from (\ref{eqDefDynDirichletSeries}), (\ref{eqPfthmZetaAnalExt_InvC_DynDSeriesIdenwBound}), and $\DS_{f,\,\minus\phi,\,w}(s_0) \neq 0$ that
\begin{equation*}
\limsup_{a\to s_0^+}   \AbsBigg{ \sum\limits_{n=1}^{+\infty}  \frac{1}{n} \sum\limits_{x\in P_{1,f^n}} e^{- (a + \I b) S_n\phi(x)} \prod\limits_{i=0}^{n-1} w\bigl(f^i(x)\bigr) }  < +\infty
\end{equation*}
for each $b\in \R$. By (\ref{eqDefDynDirichletSeries}), this is a contradiction to the fact that $\DS_{f,\,\minus\phi,\,w}$ has a holomorphic extension to $\overline{\H}_{s_0}$ with $\DS_{f,\,\minus\phi,\,w}(s_0 + \I jh_0) = 0$ for each $j\in\Z \setminus \{0\}$. Claim~2 is now established.

\smallskip

Hence $\prod\limits_{\tau \in \Orb^>(f|_{\V^0})}  \frac{  1 - \deg_f(\tau) \exp(-s l_\phi(\tau))  }{  1 -  \exp(-s l_\phi(\tau))   }$ is uniformly bounded away from $0$ and $+\infty$ on the closed half-plane $\overline{\H}_{s_0-\epsilon_0}$ for some $\epsilon_0 \in ( 0, \epsilon'_0 )$.

Statement~(i) in Theorem~\ref{thmZetaAnalExt_InvC} now follows from Claim~1 and (\ref{eqPfthmZetaAnalExt_InvC_zeta}).

\smallskip

To verify statement~(ii) in Theorem~\ref{thmZetaAnalExt_InvC}, we assume that $\phi$ satisfies the $\alpha$-strong non-integrability condition. By statement~(ii) in Theorem~\ref{thmZetaAnalExt_SFT} and the proof of Claim~1, $\DS_{f,\,\minus\phi,\,\deg_f}$ extends to a non-vanishing holomorphic function on $\overline{\H}_{s_0 - \epsilon'_0 }$ except for the simple pole at $s=s_0$. Moreover, for each $\epsilon >0$, there exists a constant $C'_\epsilon >0$ such that
\begin{equation*}  
     \exp\left( - C'_\epsilon \abs{\Im(s)}^{2+\epsilon} \right) 
\leq \Absbig{ \DS_{ f,\,\minus\phi,\,\deg_f} (s) }
\leq \exp\left(   C'_\epsilon \abs{\Im(s)}^{2+\epsilon} \right) 
\end{equation*}
for all $s\in\C$ with $\abs{\Re(s) - s_0} < \epsilon'_0$ and $\abs{\Im(s)}  \geq b_\epsilon$, where $b_\epsilon \coloneqq \wt{b}_\epsilon>0$ is a constant from Theorem~\ref{thmZetaAnalExt_SFT} depending only on $f$, $\CC$, $d$, $\phi$, and $\epsilon$.

Therefore, statement~(ii) in Theorem~\ref{thmZetaAnalExt_InvC} holds for  $a_\epsilon \coloneqq \min \{ \epsilon_0, \, \wt{a}_\epsilon \}>0$, $b_\epsilon = \wt{b}_\epsilon>0$, and some constant $C_{\epsilon} > C'_{\epsilon}>0$ depending only on $f$, $\CC$, $d$, $\phi$, and $\epsilon$. 
\end{proof}

\section{Non-local integrability}  \label{sctNLI}
This section is devoted to characterizations of a necessary condition, called \emph{non-local integrability condition}, on the potential $\phi\: S^2 \rightarrow \R$ for the Prime Orbit Theorems for expanding Thurston maps. The characterizations are summarized in Theorem~\ref{thmNLI}. In particular, a real-valued H\"{o}lder continuous $\phi$ on $S^2$ is non-locally integrable if and only if $\phi$ is co-homologous to a constant in the set of real-valued continuous functions on $S^2$. As we will eventually show, such a condition is actually equivalent in our context to the Prime Orbit Theorem without an error term (see Theorem~\ref{thmPrimeOrbitTheorem}). In our proof of Theorem~\ref{thmNLI}, we use the notion of orbifolds introduced in general by W.~P.~Thurston in 1970s in his study of geometry of $3$-manifolds (see \cite[Chapter~13]{Th80}).

\subsection{Definition and charactizations}   \label{subsctNLI_Def}

Let $f\: S^2\rightarrow S^2$ be an expanding Thurston map, $d$ be a visual metric on $S^2$ for $f$, and $\CC\subseteq S^2$ be a Jordan curve satisfying $f(\CC)\subseteq \CC$ and $\post f\subseteq \CC$. Recall the one-sided subshift of finite type $\bigl(\Sigma_{A_{\ti}}^+,\sigma_{A_{\ti}} \bigr)$ associated to $f$ and $\CC$ defined in Proposition~\ref{propTileSFT}. In this section, we write $\Sigma_{f,\, \CC}^+ \coloneqq \Sigma_{A_{\ti}}^+$ and $\sigma \coloneqq \sigma_{A_{\ti}}$, i.e.,
\begin{equation}    \label{eqDefSigma+}
\Sigma_{f,\,\CC}^+ = \bigl\{\{X_i\}_{i\in\N_0} \,\big|\, X_i\in \X^1(f,\CC) \text{ and } f(X_i) \supseteq X_{i+1},\text{ for } i\in\N_0 \bigr\},
\end{equation}
and $\sigma$ is the left-shift operator defined by $\sigma(\{X_i\}_{i\in\N_0} ) = \{X_{i+1}\}_{i\in\N_0}$ for $\{X_i\}_{i\in\N_0} \in \Sigma_{f,\,\CC}^+$.

Similarly, we define
\begin{equation}   \label{eqDefSigma-}
\Sigma_{f,\,\CC}^- \coloneqq \bigl\{\{X_{\minus i}\}_{i\in\N_0} \,\big|\, X_{\minus i}\in \X^1(f,\CC) \text{ and } f \bigl(X_{\minus (i+1)}\bigr) \supseteq X_{\minus i},\text{ for } i\in\N_0 \bigr\}.
\end{equation}
For each $X\in\X^1(f,\CC)$, since $f$ is injective on $X$ (see Proposition~\ref{propCellDecomp}~(i)), we denote the inverse branch of $f$ restricted on $X$ by $f_X^{-1}\: f(X) \rightarrow X$, i.e., $f_X^{-1} \coloneqq (f|_X)^{-1}$.

Let $\psi\in \Holder{\alpha}((S^2,d),\C)$ be a complex-valued H\"{o}lder continuous function with an exponent $\alpha\in (0,1]$. For each $\xi=\{ \xi_{\minus i} \}_{i\in\N_0} \in  \Sigma_{f,\,\CC}^-$, we define the function
\begin{equation}   \label{eqDelta}
\Delta^{f,\,\CC}_{\psi,\,\xi} (x,y) \coloneqq \sum\limits_{i=0}^{+\infty} \bigl( \bigl(\psi \circ f^{-1}_{\xi_{\minus i}} \circ \cdots \circ f^{-1}_{\xi_{0}}\bigr) (x) -  \bigl( \psi \circ f^{-1}_{\xi_{\minus i}} \circ \cdots \circ f^{-1}_{\xi_{0}}\bigr) (y)   \bigr)
\end{equation}
for $(x,y)\in \bigcup\limits_{\substack{X\in\X^1(f,\CC) \\ X\subseteq f(\xi_0)}}X \times X$.  

We will see in the following lemma that the series in (\ref{eqDelta}) converges.

\begin{lemma}  \label{lmDeltaHolder}
Let $f$, $\CC$, $d$, $\psi$, $\alpha$ satisfy the Assumptions. We assume in addition that $f(\CC)\subseteq \CC$. Let $\xi=\{ \xi_{\minus i} \}_{i\in\N_0} \in  \Sigma_{f,\,\CC}^-$. Then for each $X\in\X^1(f,\CC)$ with $X\subseteq f(\xi_0)$ and each triple of $x,y,z\in X$, we get that $\Delta^{f,\,\CC}_{\psi,\,\xi} (x,y)$ as a series defined in (\ref{eqDelta}) converges absolutely and uniformly in $x,y\in X$, and moreover, the identity 
\begin{equation}   \label{eqDeltaDifference}
\Delta^{f,\,\CC}_{\psi,\,\xi} (x,y)   = \Delta^{f,\,\CC}_{\psi,\,\xi} (z,y) - \Delta^{f,\,\CC}_{\psi,\,\xi} (z,x)
\end{equation}
holds with
\begin{equation*}
\AbsBig{ \Delta^{f,\,\CC}_{\psi,\,\xi} (x,y) }   \leq C_1 d(x,y)^\alpha,
\end{equation*} 
where $C_1=C_1(f,\CC,d,\psi,\alpha)$ is a constant depending on $f$, $\CC$, $d$, $\psi$, and $\alpha$ from Lemma~\ref{lmSnPhiBound}.
\end{lemma}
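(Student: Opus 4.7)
The key observation is that the compositions appearing in the sum have a nice cellular structure. If I set $g_k \coloneqq f^{-1}_{\xi_{\minus k}} \circ \cdots \circ f^{-1}_{\xi_0}$ for $k\in\N_0$, then inductively from $f(\xi_{\minus (i+1)}) \supseteq \xi_{\minus i}$ and the cellular property of $f$ on each $1$-tile (Proposition~\ref{propCellDecomp}~(i),~(ii)), the image $g_k(X)$ is a $(k+2)$-tile contained in $\xi_{\minus k}$, and $f^j \circ g_k = g_{k-j}$ for $0 \leq j \leq k$. In particular, for $x,y\in X$ the partial sums telescope as
\begin{equation*}
\sum\limits_{k=0}^{i} \bigl( \psi(g_k(x)) - \psi(g_k(y)) \bigr) = S_{i+1}^f \psi(g_i(x)) - S_{i+1}^f \psi(g_i(y)),
\end{equation*}
with $g_i(x),g_i(y)$ both lying in the $(i+2)$-tile $g_i(X)$ and $f^{i+1}(g_i(x))=x$, $f^{i+1}(g_i(y))=y$.

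First, I will use Lemma~\ref{lmMetricDistortion} applied to the tile $g_k(X)\in\X^{k+2}(f,\CC)$ with $n=k+1$ to obtain
\begin{equation*}
d(g_k(x),g_k(y)) \leq C_0 \Lambda^{-(k+1)} d(x,y),
\end{equation*}
which yields $\Absbig{\psi(g_k(x))-\psi(g_k(y))} \leq \Hseminorm{\alpha,\,(S^2,d)}{\psi} C_0^\alpha \Lambda^{-(k+1)\alpha} d(x,y)^\alpha$. Summing the geometric series in $k$ gives the absolute and uniform convergence of (\ref{eqDelta}) for $x,y$ ranging over any single $X\in\X^1(f,\CC)$ with $X\subseteq f(\xi_0)$.

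Having secured convergence, the identity $\Delta^{f,\,\CC}_{\psi,\,\xi}(x,y) = \Delta^{f,\,\CC}_{\psi,\,\xi}(z,y) - \Delta^{f,\,\CC}_{\psi,\,\xi}(z,x)$ for $x,y,z\in X$ is immediate by termwise rearrangement of the absolutely convergent series, since each summand is itself a difference $\psi(g_k(\cdot))-\psi(g_k(\cdot))$ satisfying this cocycle relation exactly. Finally, to get the sharp bound $C_1 d(x,y)^\alpha$ with $C_1$ as in Lemma~\ref{lmSnPhiBound}, I will pass to the limit $i\to +\infty$ in the telescoped partial sums and apply Lemma~\ref{lmSnPhiBound} with $n=i+1$, $m=i+2$, and $X^m = g_i(X)$, which gives
\begin{equation*}
\Absbig{S_{i+1}^f \psi(g_i(x)) - S_{i+1}^f \psi(g_i(y))} \leq C_1 d(f^{i+1}(g_i(x)), f^{i+1}(g_i(y)))^\alpha = C_1 d(x,y)^\alpha
\end{equation*}
uniformly in $i$, so the limit $\Delta^{f,\,\CC}_{\psi,\,\xi}(x,y)$ inherits the same bound.

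No step looks genuinely difficult: the whole argument is a streamlined application of the inverse-branch distortion lemmas already proved for expanding Thurston maps. The only mildly subtle point is verifying that $g_k(X)$ is an actual $(k+2)$-tile (rather than a more complicated preimage set), which is where one invokes that $f$ restricted to each $1$-tile is cellular onto a $0$-tile together with the Markov property $f(\xi_{\minus (k+1)})\supseteq \xi_{\minus k}$; this is where the hypothesis $f(\CC)\subseteq \CC$ is used.
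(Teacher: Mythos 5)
Your proof is correct and takes essentially the same approach as the paper: establish absolute and uniform convergence by a per-term decay estimate, note that the cocycle identity (\ref{eqDeltaDifference}) holds termwise, then telescope the partial sums to $S_{i+1}^f\psi(g_i(x)) - S_{i+1}^f\psi(g_i(y))$ and apply Lemma~\ref{lmSnPhiBound} (with $n=i+1$, $m=i+2$, $X^m=g_i(X)$) to transfer the uniform bound $C_1 d(x,y)^\alpha$ to the limit. The only cosmetic difference is that you derive the per-term decay from Lemma~\ref{lmMetricDistortion}, obtaining a bound proportional to $\Lambda^{-(k+1)\alpha}d(x,y)^\alpha$, whereas the paper bounds it by the diameter of the $(k+2)$-tile via Lemma~\ref{lmCellBoundsBM}~(ii); both yield the needed convergent geometric series.
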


\begin{proof}
We fix $X\in\X^1(f,\CC)$ with $X\subseteq f(\xi_0)$. By Proposition~\ref{propCellDecomp}~(i) and Lemma~\ref{lmCellBoundsBM}~(ii), for each $i\in\N_0$,
\begin{align} \label{eqPflmDeltaHolder}
&         \Absbig{    \bigl( \psi \circ f^{-1}_{\xi_{\minus i}} \circ \cdots \circ f^{-1}_{\xi_{0}}\bigr) (x) 
                   -  \bigl( \psi \circ f^{-1}_{\xi_{\minus i}} \circ \cdots \circ f^{-1}_{\xi_{0}}\bigr) (y)   } \notag \\
&\qquad   \leq \Hseminorm{\alpha,\, (S^2,d)}{\psi} \diam_d\bigl( \bigl(  f^{-1}_{\xi_{\minus i}} \circ \cdots \circ f^{-1}_{\xi_{0}} \bigr) (X) \bigr)\\
&\qquad   \leq \Hseminorm{\alpha,\, (S^2,d)}{\psi}  C^\alpha \Lambda^{-i\alpha-\alpha}  \notag  
\end{align}
for $x,y\in X$, where $C\geq 1$ is a constant from Lemma~\ref{lmCellBoundsBM}. Thus the series on the right-hand side of (\ref{eqDelta}) converges absolutely. Hence by (\ref{eqDelta}),  $\Delta^{f,\,\CC}_{\psi,\,\xi} (x,y) =  \Delta^{f,\,\CC}_{\psi,\,\xi} (z,y) - \Delta^{f,\,\CC}_{\psi,\,\xi} (z,x)$. Moreover, by Proposition~\ref{propCellDecomp}~(i), Lemma~\ref{lmSnPhiBound}, and (\ref{eqPflmDeltaHolder}), for each pair of $x,y\in X$, and each $j\in\N$,
\begin{align*}
\AbsBig{ \Delta^{f,\,\CC}_{\psi,\,\xi} (x,y)}   
=     &   \Absbigg{\sum\limits_{i=0}^{+\infty} \bigl(   \bigl( \psi \circ f^{-1}_{\xi_{\minus i}} \circ \cdots \circ f^{-1}_{\xi_{0}}\bigr) (x) 
                                                                          -  \bigl( \psi \circ f^{-1}_{\xi_{\minus i}} \circ \cdots \circ f^{-1}_{\xi_{0}}\bigr) (y) \bigr) }\\
\leq& \Absbigg{\sum\limits_{i=0}^{j-1}     \bigl(   \bigl( \psi \circ f^{-1}_{\xi_{\minus i}} \circ \cdots \circ f^{-1}_{\xi_{0}}\bigr) (x) 
                                                        -  \bigl( \psi \circ f^{-1}_{\xi_{\minus i}} \circ \cdots \circ f^{-1}_{\xi_{0}}\bigr) (y) \bigr) }\\
       &   + \sum\limits_{i=j}^{+\infty} \Hseminorm{\alpha,\, (S^2,d)}{\psi} C^\alpha \Lambda^{-i\alpha-\alpha} \\
\leq& C_1 d(x,y)^\alpha + \Hseminorm{\alpha,\, (S^2,d)}{\psi} C^\alpha (1-\Lambda^\alpha)^{-1}  \Lambda^{-j\alpha-\alpha}.
\end{align*}
We complete our proof by taking $j$ to infinity.
\end{proof}

\begin{definition}[Temporal distance]  \label{defTemporalDist}
Let $f$, $\CC$, $d$, $\psi$, $\alpha$ satisfy the Assumptions. We assume in addition that $f(\CC)\subseteq \CC$. For $\xi=\{ \xi_{\minus i} \}_{i\in\N_0} \in  \Sigma_{f,\,\CC}^-$ and $\eta=\{ \eta_{\minus i} \}_{i\in\N_0} \in  \Sigma_{f,\,\CC}^-$ with $f(\xi_0) = f(\eta_0)$, we define the \defn{temporal distance} $\psi^{f,\,\CC}_{\xi,\,\eta}$ as 
\begin{equation}  \label{eqDefTemporalDist}
\psi^{f,\,\CC}_{\xi,\,\eta}(x,y) \coloneqq \Delta^{f,\,\CC}_{\psi,\,\xi} (x,y) - \Delta^{f,\,\CC}_{\psi,\,\eta} (x,y)
\end{equation}
for 
\begin{equation*}
(x,y)\in \bigcup\limits_{\substack{X\in\X^1(f,\CC) \\ X\subseteq f(\xi_0)}}X \times X.
\end{equation*}
\end{definition}

Recall that $f^n$ is an expanding Thurston map with $\post f^n = \post f$ for each expanding Thurston map $f\: S^2\rightarrow S^2$ and each $n\in\N$ (see Remark~\ref{rmExpanding}).

\begin{definition}[Local integrability]   \label{defLI}
Let $f\: S^2\rightarrow S^2$ be an expanding Thurston map and $d$ a visual metric on $S^2$ for $f$. A complex-valued H\"{o}lder continuous function $\psi \in \Holder{\alpha}((S^2,d),\C)$ is \defn{locally integrable} (with respect to $f$ and $d$) if for each natural number $n\in\N$, and each Jordan curve $\CC\subseteq S^2$ satisfying $f^n(\CC)\subseteq \CC$ and $\post f \subseteq \CC$, we have 
\begin{equation}  \label{eqDefLI}
\bigl(S_n^f \psi\bigr)^{f^n,\,\CC}_{\xi,\,\eta}(x,y)=0
\end{equation}
for all $\xi=\{ \xi_{\minus i} \}_{i\in\N_0} \in  \Sigma_{f^n,\,\CC}^-$ and $\eta=\{ \eta_{\minus i} \}_{i\in\N_0} \in  \Sigma_{f^n,\,\CC}^-$ satisfying $f^n(\xi_0) = f^n(\eta_0)$, and all
\begin{equation*}
(x,y)\in \bigcup\limits_{\substack{X\in\X^1(f^n,\CC) \\ X\subseteq f^n(\xi_0)}}X \times X.
\end{equation*}

The function $\psi$ is \defn{non-locally integrable} if it is not locally integrable.
\end{definition}

The main theorem of this section is the following characterization of the local integrability condition.

\begin{theorem}[Characterization of the local integrability condition]  \label{thmNLI}
Let $f\: S^2\rightarrow S^2$ be an expanding Thurston map and $d$ a visual metric on $S^2$ for $f$. Let $\psi \in \Holder{\alpha}((S^2,d),\C)$ be a complex-valued H\"{o}lder continuous function with an exponent $\alpha\in (0,1]$. Then the following statements are equivalent.

\begin{enumerate}
\smallskip
\item[(i)] The function $\psi$ is locally integrable (in the sense of Definition~\ref{defLI}).

\smallskip
\item[(ii)] There exists $n\in\N$ and a Jordan curve $\CC\subseteq S^2$ with $f^n(\CC)\subseteq \CC$ and $\post f\subseteq \CC$ such that
\begin{equation*}
\bigl(S_n^f \psi\bigr)^{f^n,\,\CC}_{\xi,\,\eta}(x,y) = 0
\end{equation*}
for all $\xi=\{ \xi_{\minus i} \}_{i\in\N_0} \in  \Sigma_{f^n,\,\CC}^-$ and $\eta=\{ \eta_{\minus i} \}_{i\in\N_0} \in  \Sigma_{f^n,\,\CC}^-$ with $f^n(\xi_0) = f^n(\eta_0)$, and all $(x,y)\in \bigcup\limits_{\substack{X\in\X^1(f^n,\CC) \\ X\subseteq f^n(\xi_0)}}X \times X$.

\smallskip
\item[(iii)] The function $\psi$ is co-homologous to a constant in $\CCC(S^2,\C)$, i.e.,
\begin{equation*}
\psi= K+ \beta \circ f - \beta
\end{equation*}
for some $K\in\C$ and $\beta \in \CCC(S^2,\C)$.

\smallskip
\item[(iv)] The function $\psi$ is co-homologous to a constant in $\Holder{\alpha}((S^2,d),\C)$, i.e.,
\begin{equation*}
\psi= K+ \tau\circ f -  \tau
\end{equation*}
for some $K\in\C$ and $\tau\in \Holder{\alpha}((S^2,d),\C)$.

\smallskip
\item[(v)] There exists $n\in\N$ and a Jordan curve $\CC \subseteq S^2$ with $f^n(\CC) \subseteq \CC$ and $\post f \subseteq \CC$ such that the following statement holds for $F\coloneqq f^n$, $\Psi\coloneqq S_n^f \psi$, the one-sided subshift of finite type $\bigl( \Sigma_{A_{\ti}}^+, \sigma_{A_{\ti}} \bigr)$ associated to $F$ and $\CC$ defined in Proposition~\ref{propTileSFT}, and the factor map $\pi_\ti \: \Sigma_{A_{\ti}}^+ \rightarrow S^2$ defined in (\ref{eqDefTileSFTFactorMap}):

\smallskip

The function $\Psi \circ \pi_\ti$ is co-homologous to a constant multiple of an integer-valued continuous function in $\CCC\bigl( \Sigma_{A_{\ti}}^+ , \C \bigr)$, i.e., 
\begin{equation*}
\Psi \circ \pi_\ti = K M + \varpi \circ \sigma_{A_{\ti}} - \varpi
\end{equation*}
for some $K\in\C$, $M \in \CCC\bigl( \Sigma_{A_{\ti}}^+ , \Z \bigr)$, and $\varpi \in \CCC\bigl( \Sigma_{A_{\ti}}^+ , \C \bigr)$.
\end{enumerate}

\smallskip

If, in addition, $\psi$ is real-valued, then the above statements are equivalent to
\begin{enumerate}
\smallskip
\item[(vi)] The equilibrium state $\mu_\psi$ for $f$ and $\psi$ is equal to the measure of maximal entropy $\mu_0$ of $f$.
\end{enumerate}
\end{theorem}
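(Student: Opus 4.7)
The plan is to establish the equivalences via the cycle $(i) \Rightarrow (ii) \Rightarrow (iv) \Rightarrow (iii) \Rightarrow (v) \Rightarrow (ii)$ together with $(iv) \Rightarrow (i)$, with the real-valued case $(vi)$ appended at the end. Four of the implications are direct: $(i)\Rightarrow(ii)$ is a restriction (one invariant Jordan curve exists by Lemma~\ref{lmCexistsL}); $(iv)\Rightarrow(iii)$ is just $\Holder{\alpha}((S^2,d),\C) \subseteq \CCC(S^2,\C)$; $(iii)\Rightarrow(v)$ is the substitution $\Psi\circ\pi_\ti = nK\mathbbm{1} + (\beta\circ\pi_\ti)\circ\sigma_{A_\ti} - \beta\circ\pi_\ti$; and $(iv)\Rightarrow(i)$ is a telescoping computation. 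For the last, if $\psi = K + \tau\circ f - \tau$, then $S_n^f\psi = nK + \tau\circ f^n - \tau$, so each summand of (\ref{eqDelta}) equals $nK + \tau(h_{i-1}(x)) - \tau(h_i(x))$ with $h_i \coloneqq f^{-1}_{\xi_{\minus i}}\circ\cdots\circ f^{-1}_{\xi_0}$ and $h_{-1} \coloneqq \mathrm{id}$, telescoping to $\tau(x) - \tau(y) - \lim_{N\to\infty}[\tau(h_N(x)) - \tau(h_N(y))]$. The limit vanishes because $\diam_d(h_N(X)) \leq C\Lambda^{-(N+1)}$ by Proposition~\ref{propCellDecomp}(i) and Lemma~\ref{lmCellBoundsBM}(ii), and $\tau$ is (uniformly) continuous. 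Hence $\Delta^{f^n,\CC}_{S_n^f\psi,\xi}(x,y) = \tau(x) - \tau(y)$ is independent of $\xi$, so the temporal distance in (\ref{eqDefTemporalDist}) vanishes identically.

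The heart of the proof is $(ii) \Rightarrow (iv)$. Fix $n$ and $\CC$ as in $(ii)$ and write $F = f^n$, $\Psi = S_n^f\psi$. Since $f$ may have periodic critical points, inverse branches of $F$ on $S^2$ need not have fixed points, so the classical Livsic construction of $\tau$ fails directly. The strategy is to pass to the universal orbifold cover $\wt X$ of $f$ introduced in Subsection~\ref{subsctNLI_Orbifold} and lift $\Psi$ to $\wt\Psi$. The forthcoming Lemma~\ref{lmLiftLI} translates $(ii)$ into the analogous vanishing of temporal distances along lifted inverse branches of the lifted map $\wt F$. Proposition~\ref{propInvBranchFixPt} then supplies, for each lifted inverse branch $\wt g$, a unique attracting fixed point $\wt p \in \wt X$ (possibly at infinity) around which $\wt g^N$ contracts exponentially. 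We define the candidate
\begin{equation*}
\wt\tau(\wt x) \coloneqq \sum_{i=0}^{+\infty} \bigl[\wt\Psi(\wt g^{\,i}(\wt x)) - \wt\Psi(\wt g^{\,i}(\wt p))\bigr],
\end{equation*}
which converges absolutely and Hölder-continuously in the lifted metric by standard arguments (compare Lemma~\ref{lmDeltaHolder}), yielding $\wt\Psi = \wt K + \wt\tau\circ\wt F - \wt\tau$ for a constant $\wt K \in \C$. The vanishing of lifted temporal distances guarantees that $\wt\tau$ is independent of the choice of branch on overlapping tiles, and a further argument shows it is invariant under the deck transformation group, so it descends to a Hölder $\tau' \in \Holder{\alpha}((S^2,d),\C)$ satisfying $\Psi = n K' + \tau'\circ F - \tau'$ with $K' = \wt K/n$. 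An averaging over the iterates $\psi, \psi\circ f,\ldots,\psi\circ f^{n-1}$ against the cohomology identity finally upgrades this to $\psi = K' + \tau\circ f - \tau$ for some $\tau \in \Holder{\alpha}((S^2,d),\C)$.

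For $(v) \Rightarrow (ii)$, assume $\Psi\circ\pi_\ti = KM + \varpi\circ\sigma_{A_\ti} - \varpi$. The coboundary part $\varpi\circ\sigma_{A_\ti} - \varpi$ telescopes in exactly the same way as in $(iv)\Rightarrow(i)$, so that modulo the $KM$-contribution the temporal distance between two branches $\xi,\eta$ with $F(\xi_0) = F(\eta_0)$ reduces to a sum of differences of $M$ evaluated on sequences in $\pi_\ti^{-1}(x)$ and $\pi_\ti^{-1}(y)$ that agree in the relevant infinite tail. Because $M$ is $\Z$-valued and continuous on the totally disconnected space $\Sigma_{A_\ti}^+$ (and integer-valued continuous functions are locally constant on small cylinders), these differences vanish whenever the tails agree sufficiently far out, giving $(S_n^f\psi)^{f^n,\CC}_{\xi,\eta} \equiv 0$, which is $(ii)$. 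Finally, in the real-valued case $(iv)\Leftrightarrow(vi)$ follows from the uniqueness of the equilibrium state in Theorem~\ref{thmEquilibriumState}(i): $\mu_\psi = \mu_0$ precisely when $\psi$ and $0$ share a pressure-maximizer, which by the classical cohomology criterion (consequent to Theorem~\ref{thmEquilibriumState}(ii) differentiated along $t\mapsto P(f,0 + t(\psi - K))$) holds iff $\psi$ is co-homologous to a constant in $\Holder{\alpha}(S^2,d)$.

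The principal obstacle will be the implementation of $(ii) \Rightarrow (iv)$ in the presence of periodic critical points of $f$. Three issues arise and must be handled inside Subsection~\ref{subsctNLI_Orbifold} and Subsection~\ref{subsctNLI_Proof}: first, verifying (via Lemma~\ref{lmLiftLI}) that the lift $\wt\Psi$ preserves the vanishing temporal distance across the ramification locus where $\wt X \to S^2$ fails to be a covering map; second, accommodating the case in Proposition~\ref{propInvBranchFixPt} where the attracting fixed point $\wt p$ lies at infinity on an incomplete orbifold cover, which requires either compactifying $\wt X$ or working with affine models; and third, checking deck-invariance of $\wt\tau$, which ultimately reduces to the fact that any two preimages of the same $x\in S^2$ correspond to a choice of inverse-branch history and the cohomology witness is independent of that choice by the hypothesis $(ii)$. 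These subtleties all collapse for hyperbolic rational maps (no critical points in the Julia set) and account for the need to work on the orbifold cover in our setting.
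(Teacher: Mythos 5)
Your overall plan — lift to the universal orbifold cover, build a Livsic-type cohomology witness there using inverse branches, then descend via deck-invariance — is exactly the paper's approach, and your treatments of $(iv)\Rightarrow(i)$, $(iii)\Rightarrow(v)$ (the paper writes it as $(iv)\Rightarrow(v)$ but the computation is the same), and $(v)\Rightarrow(ii)$ are all essentially correct. However, there are two genuine gaps.

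First, your construction
\begin{equation*}
\wt\tau(\wt x) \coloneqq \sum_{i=0}^{+\infty} \bigl[\wt\Psi(\wt g^{\,i}(\wt x)) - \wt\Psi(\wt g^{\,i}(\wt p))\bigr]
\end{equation*}
presupposes an attracting fixed point $\wt p\in\XX$ for $\wt g$; by Proposition~\ref{propInvBranchFixPt}, no such $\wt p$ exists precisely when $\wt g$ contracts towards a fixed critical point of $f$ (the case of periodic critical points), which is the hardest case and the motivation for passing to the orbifold cover in the first place. You flag this and suggest compactifying $\XX$ or using affine models, but neither is carried out and neither is straightforward (the orbifold cover of an expanding Thurston map with periodic critical points need not admit a natural affine or conformal compactification that $\Theta$ extends to). The paper sidesteps the issue entirely with a different centering: it sets $\Psi_{\wt g} := \lim_{j\to\infty}(\Psi\circ\Theta)(\wt g^{\,j}(u))$ — a scalar, well-defined by Proposition~\ref{propInvBranchFixPt} whether or not $\wt g$ has a fixed point in $\XX$ — and defines $\wt\beta_{\wt g}(u):=\sum_{i\geq 1}\bigl[(\Psi\circ\Theta)(\wt g^{\,i}(u)) - \Psi_{\wt g}\bigr]$. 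In your normal case this agrees with your $\wt\tau$, but the choice is not cosmetic: it is exactly what makes the construction survive the infinite-fixed-point case. The deck-invariance then turns on showing $\Psi_{\wt\sigma\circ\wt g} = \Psi_{\wt g}$ for all deck transformations $\wt\sigma$, which the paper proves by a finiteness argument ($k(\Psi_{\wt\sigma\circ\wt g}-\Psi_{\wt g})=\Psi_{\wt\sigma^k\circ\wt g}-\Psi_{\wt g}$ for all $k\in\N$, while the set $\{\Psi_{\wt h}\,|\,\wt h\in\Inv(F)\}$ is finite, bounded by $\card\{\Psi(x)\,|\,F(x)=x\}$); this step has no analogue in your sketch.

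Second, your cycle asserts $(ii)\Rightarrow(iv)$ directly, claiming the descended $\tau$ is Hölder on all of $S^2$, but the orbifold-cover argument only yields Hölder regularity on each $X^0_\c\setminus\post F$ (uniformly, via Lemma~\ref{lmSnPhiBound}), after which one extends continuously across $\post F$. This gives a continuous $\beta$ on $S^2$ — i.e., statement~(iii), not (iv). The Hölder upgrade $(iii)\Rightarrow(iv)$ is a separate step (the paper cites Proposition~5.52 of \cite{Li17}, a Livsic-regularity result using periodic-point data), and it would need to be supplied. So the correct chain is $(ii)\Rightarrow(iii)\Rightarrow(iv)$, not $(ii)\Rightarrow(iv)$ directly. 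Your ``averaging over iterates'' to descend from $F=f^n$ to $f$ also needs to be replaced by (or shown equivalent to) the cited Lemma~5.53 of \cite{Li17}, since a naive average $\frac1n\sum_{j=0}^{n-1}\psi\circ f^j$ of cohomology witnesses does not obviously produce a coboundary for $f$.
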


We will prove Theorem~\ref{thmNLI} in Subsection~\ref{subsctNLI_Proof} after we introduce orbifolds associated to Thurston maps in the next subsection.

It follows from Theorem~\ref{thmNLI} and Theorem~\ref{thmETMBasicProperties}~(ii) that the non-local integrability condition is a necessary condition for the Prime Orbit Theorem.

\begin{cor}  \label{corNecessary}
Let $f$, $d$, $\phi$, $s_0$ satisfy the Assumptions. Let $\pi_{f,\phi}(T)$, for $T>0$, denote the number of primitive periodic orbits $\tau\in \Orb(f)$ with weighted length $l_\phi(\tau)$ no larger than $T$ (as defined in (\ref{eqDefPiT}) and (\ref{eqDefComplexLength})). Then $\pi_{f,\phi}(T) \sim \operatorname{Li} \bigl( e^{s_0 T} \bigr)$ as $T\to+\infty$ implies that $\phi$ is non-locally integrable (in the sense of Definition~\ref{defLI}).
\end{cor}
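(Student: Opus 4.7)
The plan is to prove the contrapositive. Assume $\phi$ is locally integrable. The implication (i)$\Rightarrow$(iii) in Theorem~\ref{thmNLI}, together with reality of $\phi$ (pass to real parts), yields $\phi = K + \beta\circ f - \beta$ with $K\in\R$ and $\beta\in\CCC(S^2)$. Evaluating $S_n\phi$ at any $x\in P_{1,f^n}$ gives $S_n\phi(x) = nK$, and eventual positivity of $\phi$ forces $K>0$. Cohomology-invariance of the topological pressure and the well-known identity $h_{\operatorname{top}}(f) = \log\deg f$ then collapse $P(f,-s_0\phi)=0$ to $h_{\operatorname{top}}(f) - s_0K = 0$, so $s_0 = (\log\deg f)/K$ and $e^{s_0 nK} = (\deg f)^n$ for every $n\in\N$. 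Since the coboundary telescopes on each closed orbit, $l_\phi(\tau) = K\cdot\card\tau$ for every $\tau\in\Orb(f)$, and hence
\begin{equation*}
\pi_{f,\phi}(T) \,=\, \sum_{n\,\leq\, T/K} \card\Orb(n,f), \qquad T>0,
\end{equation*}
is a non-decreasing step function with jumps only at the discrete set $\{nK:n\in\N\}$, whereas $T\mapsto \operatorname{Li}(e^{s_0 T})$ is real-analytic and strictly increasing. This is the source of the contradiction.

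The quantitative input I need is the two-sided estimate $\card\Orb(n,f)\asymp (\deg f)^n/n$ along a subsequence of $n$ of positive density. The upper bound follows at once from Theorem~\ref{thmETMBasicProperties}~(ii): $\card\Orb(n,f)\leq n^{-1}\card P_{1,f^n}\leq n^{-1}(1+\deg f^n)$. For a matching lower bound, pass to an iterate $F\coloneqq f^{n_\CC}$ admitting an invariant Jordan curve $\CC$ with no $1$-tile joining opposite sides, furnished by Lemma~\ref{lmCexistsL}. Applying Lemma~\ref{lmAtLeast1} to $F^m$ places a fixed point of $F^m$ in every $m$-tile of $F$ that lies in the $0$-tile of matching colour, while Lemma~\ref{lmAtMost1} ensures distinctness; a Möbius-type subtraction of periodic points of proper period divisors, whose total count is $\operatorname{O}((\deg F)^{m/2})$, yields $\card\Orb(mn_\CC,f)\gtrsim (\deg f)^{mn_\CC}/(mn_\CC)$ for all large $m$. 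Alternatively, the precise counting formula of \cite[Theorem~1.1]{Li16} recalled just above this corollary supplies the full asymptotic in one stroke.

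Finally, fix $\epsilon\in(0,K)$ small and let $n\to+\infty$ along the subsequence above. Using $\operatorname{Li}(e^{s_0 T})\sim e^{s_0 T}/(s_0 T)$, the identity $e^{s_0 nK} = (\deg f)^n$, and the bounds on $\card\Orb(\cdot,f)$, one computes
\begin{equation*}
\limsup_{n\to+\infty}\frac{\pi_{f,\phi}(nK-\epsilon)}{\operatorname{Li}(e^{s_0(nK-\epsilon)})} \,\leq\, \frac{C\,e^{s_0\epsilon}\log(\deg f)}{\deg f}, \qquad \liminf_{n\to+\infty}\frac{\pi_{f,\phi}(nK)}{\operatorname{Li}(e^{s_0 nK})} \,\geq\, c\log(\deg f),
\end{equation*}
where $0<c\leq C$ are the constants from the $\card\Orb$-estimate. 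Since $\deg f\geq 2$, taking $\epsilon$ small enough makes the upper bound strictly less than the lower bound, so the ratio $\pi_{f,\phi}(T)/\operatorname{Li}(e^{s_0 T})$ has at least two distinct cluster values and cannot tend to $1$. This contradicts $\pi_{f,\phi}(T)\sim\operatorname{Li}(e^{s_0 T})$, establishing the contrapositive. The only delicate step is the lower bound on $\card\Orb(n,f)$ along a subsequence; after the reduction via Lemma~\ref{lmCexistsL}, this reduces to a routine combinatorial exercise with the tiles, or is bypassed entirely by invoking \cite[Theorem~1.1]{Li16}.
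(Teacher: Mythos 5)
Your first half matches the paper's proof: from local integrability and Theorem~\ref{thmNLI} you obtain $\phi = K + \beta\circ f - \beta$ with $K>0$, $S_n\phi\equiv nK$ on $P_{1,f^n}$, and $s_0 = (\log\deg f)/K$, so that $e^{s_0 nK} = (\deg f)^n$. From there you diverge: the paper argues simply that $\limsup_{n} \pi_{f,\phi}(nK)\big/\frac{e^{s_0 nK}}{s_0 nK} > 1$ (with a small case split for $\deg f=2$ using $P_{1,f^{n-1}}$ and coprimality), whereas you propose to exhibit two distinct cluster values of the ratio by comparing $T=nK$ and $T=nK-\epsilon$. Your variant is fine in principle and in fact avoids the $\deg f=2$ case split — but only if the two-sided estimate $\card\Orb(n,f)\asymp (\deg f)^n/n$ actually holds, and that is where the proposal breaks down.

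The lower bound on $\card\Orb(n,f)$ is not routine, and the two routes you offer for it do not work. The claim ``Lemma~\ref{lmAtMost1} ensures distinctness'' is false: that lemma says each $m$-tile of $F=f^{n_\CC}$ contains at most one fixed point of $F^m$; it does \emph{not} say that fixed points supplied by different tiles are different. A fixed point lying on the $1$-skeleton — in particular a periodic critical/postcritical point $v\in\post f$ — is contained in $2\deg_{F^m}(v)$ many $m$-tiles (Remark~\ref{rmFlower}), so one such point can be the ``unique fixed point'' of exponentially many of the tiles furnished by Lemma~\ref{lmAtLeast1}. Your fallback — ``invoking [Li16, Theorem~1.1] supplies the full asymptotic in one stroke'' — has the same problem: Theorem~\ref{thmETMBasicProperties}~(ii) gives only the weighted count $\sum_{x\in P_{1,f^n}}\deg_{f^n}(x)=1+(\deg f)^n$, which yields no lower bound on the unweighted $\card P_{1,f^n}$ unless one controls $\deg_{f^n}$ at the (finitely many) periodic postcritical points. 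Controlling exactly that contribution is the technical heart of the paper's proof: it introduces the set $M=\post f\cap\bigcup_i P_{1,f^i}$, observes $\deg_{f^i}(x)=1$ for periodic $x\notin M$, and invokes \cite[Lemma~5.11]{Li16} to show $\sum_{x\in M}\deg_{f^n}(x)\leq Cm^{-\epsilon}(\deg f)^n$ for $n\geq N_m$. Without that (or an equivalent) estimate your lower bound, and with it the whole two-cluster-value comparison, has no support. If you patch the lower bound with the $M$-device and Lemma~5.11 of \cite{Li16}, your argument closes; as written, the step you called ``delicate'' is a genuine gap rather than a routine exercise.
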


\begin{proof}
We assume that $\pi_{f,\phi}(T) \sim \operatorname{Li} \bigl( e^{s_0 T} \bigr)$ as $T\to+\infty$. Thus
\begin{equation}   \label{eqPfcorNecessary_alt}
\pi_{f,\phi}(T) \sim \frac{ e^{s_0 T} }{s_0 T} \qquad\qquad \text{as }  T\to+\infty.
\end{equation}

We argue by contradiction and assume that $\phi$ is locally integrable. Then by Theorem~\ref{thmNLI},
\begin{equation}  \label{eqPfcorNecessary_cohom}
\phi = K + \beta \circ f - \beta
\end{equation}
for some $K\in\C$ and $\beta \in \CCC((S^2,d),\C)$. By taking the real part on both sides of (\ref{eqPfcorNecessary_cohom}) and due to the fact that $\phi$ is eventually positive (see Definition~\ref{defEventuallyPositive}), we can assume without loss of generality that $K\in\R$ with $K>0$. By (\ref{eqPfcorNecessary_cohom}), $S_n\phi (x) = nK$ for each $n\in\N$ and each $x\in P_{1,f^n}$.

We note that by Proposition~\ref{propTopPressureDefPeriodicPts} and Theorem~\ref{thmETMBasicProperties}~(ii),
\begin{align*}
          P \biggl( f, - \frac{\log(\deg f)}{K} \phi \biggr) 
=   & \lim\limits_{n\to+\infty} \frac{1}{n} \log \sum\limits_{y\in P_{1,f^n}} \deg_{f^n} (y) \exp \biggl(  - \frac{\log(\deg f)}{K} nK \biggr)  \\
=   & \lim\limits_{n\to+\infty} \frac{1}{n} \log  \biggl(   \frac{ (\deg f)^n + 1 }{ (\deg f)^n } \biggr)  
=      0.
\end{align*}
Thus by Corollary~\ref{corS0unique}, $s_0 = \frac{\log(\deg f)}{K}$.

Denote a finite set $M\coloneqq \post f \cap \bigcup_{i=1}^{+\infty} P_{1,f^i}$ consisting periodic postcritical points. Observe that we can choose positive integers $N_1 \leq N_2 \leq \cdots \leq N_m \leq \cdots$ such that for each $m\in\N$ and each integer $n\geq N_m$,
\begin{equation}   \label{eqPfcorNecessary_M}
\card M < \frac{1}{m} (\deg f)^n.
\end{equation}
Then by \cite[Lemma~5.11]{Li16}, there exist constants $C>0$ and $\epsilon\in(0,1]$ depending only on $f$ such that for each $m\in\N$,
\begin{equation}   \label{eqPfcorNecessary_MdegSum}
\frac{1}{ (\deg f)^n }   \sum\limits_{x\in M} \deg_{f^n}(x) \leq C m^{-\epsilon} \qquad\qquad \text{for } n\geq N_m.
\end{equation}

Note that if $x\in S^2 \setminus M$ is periodic, 
\begin{equation}   \label{eqPfcorNecessary_LocalDeg1}
\deg_{f^i}(x) = 1 \qquad \text{for all } i\in\N.
\end{equation} 

\smallskip

\emph{Case~1.} $\deg f \geq 3$. Fix arbitrary $m\in\N$ and $n\geq N_m$. Then
\begin{equation}  \label{eqPfcorNecessary_Case1}
\frac{ e^{ s_0 n K} }{ s_0 n K }    =  \frac{ (\deg f)^n }{ n \log ( \deg f ) }  \leq  \frac{ (\deg f)^n }{ n } \cdot \frac{1}{\log 3}.
\end{equation}
On the other hand, by Theorem~\ref{thmETMBasicProperties}~(ii), (\ref{eqPfcorNecessary_LocalDeg1}), and (\ref{eqPfcorNecessary_MdegSum}),
\begin{align*}
\pi_{f,\phi}(nK)
\geq &\sum\limits_{i \mid n} \card \Orb(i,f) 
=     \sum\limits_{i \mid n} \frac{ \card  P_{i,f}     }{i}  
\geq  \sum\limits_{i \mid n} \frac{ \card  P_{i,f}     }{n}  \\
\geq& \frac{ \card (P_{1,f^n}   \setminus M )  }{n}
\geq \frac{ (1-Cm^{-\epsilon})  (\deg f)^n}{n}.
\end{align*}
Combining the above with (\ref{eqPfcorNecessary_Case1}) we get
\begin{equation*}
         \limsup\limits_{n\to+\infty} \frac{\pi_{f,\phi}( nK )}{ \frac{\exp (s_0 n K)}{s_0 n K} }
\geq \limsup\limits_{m\to+\infty} \limsup\limits_{n\to+\infty}  ( 1-Cm^{-\epsilon} ) \log 3 >1.
\end{equation*}
This contradicts (\ref{eqPfcorNecessary_alt}).

\smallskip

\emph{Case~2.} $\deg f = 2$. Fix arbitrary $m\in\N$ and $n> N_m$. Since $n$ and $n-1$ are coprime, by Theorem~\ref{thmETMBasicProperties}~(ii), (\ref{eqPfcorNecessary_LocalDeg1}) and (\ref{eqPfcorNecessary_MdegSum}),
\begin{align*}
              \pi_{f,\phi}(nK)
\geq &  \sum\limits_{i \mid n} \card \Orb(i,f)   +   \sum\limits_{j \mid n-1} \card \Orb(j,f)  - \card \Orb(1,f)  \\
\geq &  \sum\limits_{i \mid n} \frac{ \card  P_{i,f}     }{i}  +   \sum\limits_{j \mid n-1} \frac{ \card  P_{j,f}     }{j}  -  (\deg f +1) \\
\geq &  \frac{ \card (P_{1,f^n}   \setminus M )  }{n}  +  \frac{ \card (P_{1,f^{n-1}}   \setminus M )  }{n-1}    -   3  \\
\geq &  \frac{ (1-Cm^{-\epsilon})  2^n + 1}{n}   +  \frac{ (1-Cm^{-\epsilon})  2^{n-1} + 1}{n-1}  - 3 \\
\geq & \frac{ (1-Cm^{-\epsilon})  2^n }{n}  \cdot \frac{3}{2}  - 3.
\end{align*}
So
\begin{equation*}
             \limsup\limits_{n\to+\infty} \frac{\pi_{f,\phi}( nK )}{ \frac{e^{s_0 n K}}{s_0 n K} } 
\geq     \limsup\limits_{m\to+\infty} \limsup\limits_{n\to+\infty}  \biggl( \frac{ (1-Cm^{-\epsilon})  2^n }{n}  \cdot \frac{3}{2}  - 3 \biggr) \frac{ n \log 2}{ 2^n }
  =         \frac{3}{2} \log 2>1.
\end{equation*}
This contradicts (\ref{eqPfcorNecessary_alt}).
\end{proof}

\subsection{Orbifolds and universal orbifold covers}   \label{subsctNLI_Orbifold}

In order to establish Theorem~\ref{thmNLI}, we need to consider orbifolds associated to Thurston maps. An orbifold is a space that is locally represented as a quotient of a model space by a group action (see \cite[Chapter~13]{Th80}). For the purpose of this work, we restrict ourselves to orbifolds on $S^2$. In this context, only cyclic groups can occur, so a simpler defintion (than that of W.~P.~Thurston) will be used. We follow closely the setup from \cite{BM17}.

An \defn{orbifold} is a pair $\mathcal{O} = (S, \alpha)$, where $S$ is a surface and $\alpha\: S \rightarrow \widehat{\N} = \N \cup \{+\infty\}$ is a map such that the set of points $p\in S$ with $\alpha(p) \neq 1$ is a discrete set in $S$, i.e., it has no limit points in $S$. We call such a function $\alpha$ a \defn{ramification function} on $S$. The set $\supp(\alpha) \coloneqq \{p\in S \,|\, \alpha(p)\geq 2 \}$ is the \defn{support} of $\alpha$. We will only consider orbifolds with $S=S^2$, an oriented $2$-sphere, in this paper.

The \defn{Euler characteristic} of an orbifold $\mathcal{O}= (S^2,\alpha)$ is defined as
\begin{equation*}
\chi(\mathcal{O}) \coloneqq 2 - \sum_{x\in S^2} \biggl(1- \frac{1}{\alpha(x)} \biggr),
\end{equation*}
where we use the convention $\frac{1}{+\infty} = 0$, and the terms in the summation are nonzero on a finite set of points. The orbifold $\mathcal{O}$ is \defn{parabolic} if $\chi(\mathcal{O}) = 0$ and \defn{hyperbolic} if $\chi(\mathcal{O}) < 0$.

Every Thurston map $f$ has an associated orbifold $\mathcal{O}_f = (S^2,\alpha_f)$, which plays an important role in this section.

\begin{definition}  \label{defRamificationFn}
Let $f\: S^2\rightarrow S^2$ be a Thurston map. The \defn{ramification function} of $f$ is the map $\alpha_f\: S^2\rightarrow\widehat{\N}$ defined as
\begin{equation} \label{eqDefRamificationFn}
\alpha_f(x) \coloneqq \lcm \bigl\{\deg_{f^n}(y) \,\big|\, y\in S^2, n\in \N, \text{ and } f^n(y)=x \bigr\}
\end{equation}
for $x\in S^2$.
\end{definition}
Here $\widehat{\N} = \N\cup \{+\infty\}$ with the order relations $<$, $\leq$, $>$, $\geq$ extended in the obvious way, and $\lcm$ denotes the least common multiple on $\widehat{\N}$ defined by $\lcm(A)=+\infty$ if $A\subseteq \widehat{\N}$ is not a bounded set of natural numbers, and otherwise $\lcm(A)$ is calculated in the usually way. 

Note that different Thurston maps can share the same ramification function, in particular, we have the following fact from \cite[Proposition~2.16]{BM17}.

\begin{prop}   \label{propSameRamificationFn}
Let $f\: S^2\rightarrow S^2$ be a Thurston map. Then $\alpha_f = \alpha_{f^n}$ for each $n\in \N$.
\end{prop}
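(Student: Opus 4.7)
The plan is to establish $\alpha_f(x) = \alpha_{f^n}(x)$ by showing divisibility in both directions, which then forces equality in $\widehat{\N}$. Fix $x\in S^2$ and $n\in\N$, and denote
\begin{align*}
A_f(x)       & \coloneqq \bigl\{ \deg_{f^m}(y)    \,\big|\, m\in\N,\, y\in S^2,\, f^m(y)=x \bigr\}, \\
A_{f^n}(x)   & \coloneqq \bigl\{ \deg_{f^{nm}}(y) \,\big|\, m\in\N,\, y\in S^2,\, f^{nm}(y)=x \bigr\},
\end{align*}
so that $\alpha_f(x) = \lcm A_f(x)$ and $\alpha_{f^n}(x) = \lcm A_{f^n}(x)$, where for the second equality we used that $(f^n)^m = f^{nm}$ together with the multiplicativity of local degrees under composition stated in (\ref{eqLocalDegreeProduct}).

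The easy direction is $\alpha_{f^n}(x) \mid \alpha_f(x)$: since $A_{f^n}(x) \subseteq A_f(x)$ (we are just restricting $m$ to multiples of $n$), every integer in $A_{f^n}(x)$ divides $\alpha_f(x)$, hence so does their least common multiple.

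For the reverse direction I would show that every element $d = \deg_{f^m}(y) \in A_f(x)$ divides some element of $A_{f^n}(x)$. Given $y$ with $f^m(y)=x$, choose $k\in\{0,1,\dots,n-1\}$ so that $m+k$ is a positive multiple of $n$, say $m+k = nm'$. Since $f$ is a branched covering of degree $\geq 2$, it is surjective, so $f^k$ is surjective and we can pick $y' \in S^2$ with $f^k(y')=y$ (when $k=0$, set $y'=y$). Then $f^{nm'}(y') = f^m(f^k(y')) = f^m(y) = x$, and the identity (\ref{eqLocalDegreeProduct}) applied to $f^{m+k} = f^m \circ f^k$ gives
\begin{equation*}
\deg_{f^{nm'}}(y') = \deg_{f^{m+k}}(y') = \deg_{f^m}(f^k(y')) \cdot \deg_{f^k}(y') = d \cdot \deg_{f^k}(y').
\end{equation*}
Hence $d \mid \deg_{f^{nm'}}(y') \in A_{f^n}(x)$, so $d \mid \alpha_{f^n}(x)$. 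Taking the lcm over all $d \in A_f(x)$ yields $\alpha_f(x) \mid \alpha_{f^n}(x)$.

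Combining the two divisibilities gives $\alpha_f(x) = \alpha_{f^n}(x)$ in $\widehat{\N}$, with the caveat of handling $+\infty$: if either side is $+\infty$, then by the subset inclusion $A_{f^n}(x)\subseteq A_f(x)$ and the divisibility argument above, both sides must equal $+\infty$. I do not anticipate any serious obstacle here; the only mildly delicate point is the bookkeeping with the value $+\infty$ in $\widehat{\N}$ and the trivial case $k=0$ in the surjectivity step, both of which are routine.
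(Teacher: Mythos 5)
Your argument is correct and complete. The paper does not reproduce a proof for this proposition but simply cites \cite[Proposition~2.16]{BM17}, so there is no in-text proof to compare against; your direct two-sided divisibility argument via (\ref{eqLocalDegreeProduct}) and the surjectivity of branched covers is a clean, self-contained way to establish it, and your handling of the $+\infty$ case and the $k=0$ degenerate case is fine.
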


\begin{definition}[Orbifolds associated to Thurston maps]  \label{defOrbifoldThurstonMaps}
Let $f\: S^2\rightarrow S^2$ be a Thurston map. The \defn{orbifold associated to $f$} is a pair $\mathcal{O}_f \coloneqq (S^2,\alpha_f)$, where $S^2$ is an oriented $2$-sphere and $\alpha_f\: S^2\rightarrow \widehat{\N}$ is the ramification function of $f$.
\end{definition}

Orbifolds associated to Thurston maps are either parabolic or hyperbolic (see \cite[Proposition~2.12]{BM17}).

\smallskip

For an orbifold $\mathcal{O} = (S^2,\alpha)$, we set
\begin{equation}  \label{eqDefS0}
S_0^2 \coloneqq S^2 \setminus \bigl\{x\in S^2 \,\big|\, \alpha(x) = +\infty \bigr\}.
\end{equation}

We record the following facts from \cite{BM17}, whose proofs can be found in \cite{BM17} and references therein.

\begin{theorem}  \label{thmUniOrbCoverBM}
Let $\mathcal{O}=(S^2,\alpha)$ be an orbifold that is parabolic or hyperbolic. Then the following statements are satisfied:
\begin{enumerate}
\smallskip
\item[(i)] There exists a simply connected surface $\XX$ and a branched covering map $\Theta\: \XX\rightarrow S_0^2$ such that
\begin{equation*} 
 \deg_\Theta(x) = \alpha(\Theta(x))
\end{equation*}
for each $x\in \XX$.

\smallskip
\item[(ii)] The branched covering map $\Theta$ in $\operatorname{(i)}$ is unique. More precisely, if $\wt \XX$ is a simply connected surface and $\wt\Theta \: \wt \XX \rightarrow S_0^2$ satisfies $\deg_{\wt\Theta}(y) = \alpha\bigl(\wt\Theta(x)\bigr)$ for each $y\in \wt \XX$, then for all points $x_0\in \XX$ and $\wt{x}_0 \in \wt \XX$ with $\Theta(x_0)=\wt\Theta(\wt{x}_0)$ there exists orientation-preserving homeomorphism $A\: \XX\rightarrow \wt \XX$ with $A(x_0)= \wt{x}_0$ and $\Theta=\wt\Theta\circ A$. Moreover, if $\alpha(\Theta(x_0)) = 1$, then $A$ is unique.
%
\end{enumerate}
\end{theorem}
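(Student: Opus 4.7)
The plan is to split the argument into existence in (i) and uniqueness in (ii). For existence, first note that $P \coloneqq \{x\in S^2 \,|\, \alpha(x)\geq 2\}$ is discrete in $S^2$ by the definition of an orbifold, so $S^\ast \coloneqq S_0^2 \setminus P$ is an open connected surface obtained from $S^2$ by removing a discrete set. I would consider the fundamental group $\pi_1(S^\ast)$, choose for each $p\in P$ with $2 \leq \alpha(p) < +\infty$ a small positively-oriented loop $\gamma_p$ around $p$, let $N$ be the smallest normal subgroup of $\pi_1(S^\ast)$ containing $\gamma_p^{\alpha(p)}$ for all such $p$, and form the covering $q\: Y \rightarrow S^\ast$ corresponding to $N$. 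Over a deleted neighborhood of each such $p$ the cover $q$ restricts, by construction, to a disjoint union of punctured disks each of which maps to the punctured neighborhood by a power map of degree $\alpha(p)$; one then fills in a single point in each such punctured disk to produce a surface $\XX \supseteq Y$ and a continuous extension $\Theta \: \XX \rightarrow S_0^2$. The local model is exactly (\ref{eqBranchCoverMapLocalPowerMap}), so $\Theta$ is a branched covering map in the sense of Definition~\ref{defBranchedCover} and $\deg_\Theta(x) = \alpha(\Theta(x))$ at every $x\in \XX$.

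The critical step is then to show that $\XX$ is simply connected, and this is where the parabolic/hyperbolic hypothesis enters. My preferred route is via uniformization: pulling back a conformal structure from $S^2$, and defining an orbifold Riemannian metric on $\mathcal{O}$ of constant curvature (flat when $\chi(\mathcal{O})=0$, hyperbolic when $\chi(\mathcal{O})<0$) on a neighborhood of each $x\in S_0^2$ using the local model $z\mapsto z^{\alpha(x)}$, one obtains on $\XX$ a complete smooth Riemannian metric of constant non-positive curvature; by Cartan--Hadamard $\XX$ is diffeomorphic to $\R^2$ and in particular simply connected, and by the uniformization theorem it is conformally $\C$ or $\D$. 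An alternative, more combinatorial route is to compute the Euler characteristic of a suitable finite cover and argue that any nontrivial cover of $\XX$ would have Euler characteristic incompatible with the Gauss--Bonnet constraint imposed by $\chi(\mathcal{O}) \leq 0$; but uniformization is cleaner and fits the remainder of the paper.

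For uniqueness in (ii), suppose $\wt\Theta \: \wt\XX \rightarrow S_0^2$ is another such branched covering with $\wt\XX$ simply connected. By Lemma~\ref{lmBranchCoverToCover}, removing $P$ from $S_0^2$ and its preimages from $\XX$ and $\wt\XX$ turns $\Theta$ and $\wt\Theta$ into honest covering maps onto $S^\ast$, and both total spaces are simply connected (deleting a discrete set from a simply connected surface does not affect this since the punctures all contribute to $\pi_1$ in a controlled way through $N$; here one uses that $\Theta^{-1}(P)$ and $\wt\Theta^{-1}(P)$ are discrete in $\XX$ and $\wt\XX$). Hence both restricted maps are universal covers of $S^\ast$, and by the standard universal-cover uniqueness (Lemma~\ref{lmLiftCoveringMap}~(ii) applied in both directions, using Lemma~\ref{lmLiftCoveringMap}~(i) to see that the two lifts are mutually inverse) there is a unique orientation-preserving homeomorphism $A_0 \: \XX \setminus \Theta^{-1}(P) \rightarrow \wt\XX \setminus \wt\Theta^{-1}(P)$ with $A_0(x_0) = \wt x_0$ and $\wt\Theta \circ A_0 = \Theta$ (for the specified basepoints, which by Lemma~\ref{lmLiftPathBM} can always be arranged). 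Because the local degrees of $\Theta$ and $\wt\Theta$ at matched branch points agree, the local normal form (\ref{eqBranchCoverMapLocalPowerMap}) lets one extend $A_0$ continuously across the discrete sets $\Theta^{-1}(P)$ and $\wt\Theta^{-1}(P)$ to an orientation-preserving homeomorphism $A\: \XX \rightarrow \wt\XX$ with $\Theta = \wt\Theta \circ A$. The last assertion, that $A$ is unique when $\alpha(\Theta(x_0))=1$, follows because in that case $x_0 \notin \Theta^{-1}(P)$, so the basepoint lies in the unramified part where Lemma~\ref{lmLiftCoveringMap}~(i) forces uniqueness of $A_0$, and $A$ is determined by $A_0$ on the dense subset $\XX \setminus \Theta^{-1}(P)$.

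The main obstacle will be the simple-connectivity step for $\XX$: the combinatorial cover construction produces a space that matches $\alpha$ by design, but verifying that $\pi_1(\XX) = 0$ rather than some nontrivial group requires an input beyond pure covering theory. The parabolic/hyperbolic hypothesis $\chi(\mathcal{O}) \leq 0$ is essential at precisely this point, and I expect the cleanest implementation to proceed by installing an orbifold metric of constant non-positive curvature and invoking Cartan--Hadamard together with uniformization, as indicated above; care is needed to ensure that the orbifold metric glues smoothly near the branch points where $\alpha(p) < +\infty$ and that completeness holds in the presence of punctures where $\alpha(p) = +\infty$ (those punctures become cusps, still yielding a complete Riemannian metric on $\XX$).
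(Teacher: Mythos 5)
The paper does not actually prove this theorem; it records it as Theorem~A.26 and Corollary~A.29 of \cite{BM17}, so there is no internal proof to compare against. Still, your blind attempt has a genuine gap that is worth naming precisely, because it sits exactly where the parabolic/hyperbolic hypothesis is used, and you have placed that hypothesis in the wrong location.

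The gap is in the sentence claiming that over each $p$ the cover $q$ ``restricts, by construction, to a disjoint union of punctured disks each of which maps to the punctured neighborhood by a power map of degree $\alpha(p)$.'' By construction you only know $\gamma_p^{\alpha(p)}\in N$, so the local degree of each such punctured disk is the smallest $k>0$ with a conjugate of $\gamma_p^k$ lying in $N$, which a priori is only a \emph{divisor} of $\alpha(p)$. Equality is equivalent to $\gamma_p$ having order exactly $\alpha(p)$ in $\pi_1(S^\ast)/N$, and this is false in general: for the teardrop ($S^2$ with a single cone point of order $n$) one has $\pi_1(S^\ast)=1$, hence $N=1$, $Y=S^\ast$, and the filled-in map has local degree $1\neq n$; for the $(m,n)$-football with $m\neq n$ one gets degree $\gcd(m,n)$. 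Those are exactly the elliptic ($\chi>0$) cases excluded by the hypothesis. By contrast, the simple connectivity of $\XX$, which you treat as the hard step, is actually automatic from your construction: $\pi_1(\XX)\cong N/K$ where $K$ is the normal closure in $N\cong\pi_1(Y)$ of the puncture loops, and since $N$ is generated (as a plain subgroup, not just normally) by the $G$-conjugates of $\gamma_p^{\alpha(p)}$ and each such conjugate is a power of a puncture loop, one gets $K=N$ with no hypothesis on $\chi(\mathcal{O})$ at all. So the real work is the local degree identity, not $\pi_1(\XX)=1$. Moreover your fallback is circular: pulling back a constant-curvature orbifold metric to $\XX$ and invoking Cartan--Hadamard gives a \emph{smooth} metric on $\XX$ only when the cone angles unwind exactly, i.e.\ only when the local degree already equals $\alpha(p)$. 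You cannot use the metric to prove the degree. The clean route (and, as far as I can tell, the route of \cite[Theorem~A.26]{BM17}) is to realize $\mathcal{O}$ directly as a quotient $X/\Gamma$ with $X=\C$ or $X=\D$ and $\Gamma$ a discrete group of conformal automorphisms, using the existence of a Euclidean or hyperbolic cone structure on a parabolic/hyperbolic orbifold; then $\Theta\colon X\to X/\Gamma$ is manifestly a branched cover with the right local degrees and $X$ is simply connected, and your combinatorial $N$-cover is recovered after the fact rather than built from scratch.

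The uniqueness argument also has a wrong step. You assert that after removing $\Theta^{-1}(P)$ and $\wt\Theta^{-1}(P)$ the total spaces are still simply connected because ``deleting a discrete set from a simply connected surface does not affect this,'' but that is false ($\C\setminus\{0\}$ has $\pi_1\cong\Z$). Consequently the restricted maps are \emph{not} universal covers of $S^\ast$ and you cannot invoke uniqueness of the universal cover directly. The argument is salvageable: one shows that $\Theta_\ast\pi_1\bigl(\XX\setminus\Theta^{-1}(P)\bigr)$ and $\wt\Theta_\ast\pi_1\bigl(\wt\XX\setminus\wt\Theta^{-1}(P)\bigr)$ are both equal to $N$ up to conjugation (here one uses simple connectivity of $\XX$, $\wt\XX$ together with the local degree identity), and then classification of covering spaces gives the basepoint-preserving equivalence $A_0$, which extends over the removed points exactly as you describe. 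Your closing argument for uniqueness of $A$ when $\alpha(\Theta(x_0))=1$ is correct once the preceding steps are repaired.
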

See Theorem~A.26 and Corollary~A.29 in \cite{BM17}. 

\begin{definition}[Universal orbifold covering maps]  \label{defUniOrbCover}
Let $\mathcal{O}=(S^2,\alpha)$ be an orbifold that is parabolic or hyperbolic. The map $\Theta\: \XX \rightarrow S_0^2$ from Theorem~\ref{thmUniOrbCoverBM} is called the \defn{universal orbifold covering map} of $\mathcal{O}$.
\end{definition}

We now discuss the deck transformations of the universal orbifold covering map. 

\begin{definition}[Deck transformations]  \label{defDeckTransf}
Let $\mathcal{O}=(S^2,\alpha)$ be an orbifold that is parabolic or hyperbolic, and $\Theta\: \XX \rightarrow S_0^2$ be the universal orbifold covering map of $\mathcal{O}$. A homeomorphism $\wt\sigma\: \XX\rightarrow\XX$ is called a \defn{deck transformation} of $\Theta$ if $\Theta\circ \wt\sigma = \Theta$. The group of deck transformations with composition as the group operation, denoted by $\pi_1(\mathcal{O})$, is called the \defn{fundamental group} of the orbifold $\mathcal{O}$. 
\end{definition}

Note that deck transformations are orientation-preserving. We record the following proposition from \cite[Proposition~A.31]{BM17}. 

\begin{prop}  \label{propDeckTransf}
Let $\mathcal{O}=(S^2,\alpha)$ be an orbifold that is parabolic or hyperbolic, and $\Theta\: \XX \rightarrow S_0^2$ be the universal orbifold covering map of $\mathcal{O}$. Then for all $u,v\in \XX$, the following statements are equivalent:
\begin{enumerate}
\smallskip
\item[(i)] There exists a deck transformation $\wt\sigma\in \pi_1(\mathcal{O})$ with $v=\wt\sigma(u)$.

\smallskip
\item[(ii)] $\Theta(u)=\Theta(v)$.
\end{enumerate}
%
\end{prop}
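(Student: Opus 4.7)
The implication (i) $\Rightarrow$ (ii) is immediate from the definition of a deck transformation: if $\wt\sigma\in\pi_1(\mathcal{O})$ satisfies $\Theta\circ\wt\sigma=\Theta$ and $v=\wt\sigma(u)$, then $\Theta(v)=\Theta(\wt\sigma(u))=\Theta(u)$. So the substance of the proposition lies entirely in the reverse direction (ii) $\Rightarrow$ (i).

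To prove (ii) $\Rightarrow$ (i), the plan is to invoke the uniqueness clause of Theorem~\ref{thmUniOrbCoverBM}~(ii) with the universal orbifold cover played on both sides by $\Theta\: \XX\rightarrow S_0^2$ itself. Concretely, assume $u,v\in\XX$ satisfy $\Theta(u)=\Theta(v)$. Apply Theorem~\ref{thmUniOrbCoverBM}~(ii) with $\wt\XX\coloneqq\XX$ and $\wt\Theta\coloneqq\Theta$; the hypothesis $\deg_{\wt\Theta}(y)=\alpha(\wt\Theta(y))$ for $y\in\wt\XX$ is supplied automatically by Theorem~\ref{thmUniOrbCoverBM}~(i) applied to $\Theta$, while the base-point compatibility $\Theta(x_0)=\wt\Theta(\wt x_0)$ is exactly our assumption $\Theta(u)=\Theta(v)$ upon setting $x_0\coloneqq u$ and $\wt x_0\coloneqq v$. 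The conclusion of Theorem~\ref{thmUniOrbCoverBM}~(ii) then produces an orientation-preserving homeomorphism $A\:\XX\rightarrow\XX$ with $A(u)=v$ and $\Theta=\Theta\circ A$. Setting $\wt\sigma\coloneqq A$, we have $\wt\sigma\in\pi_1(\mathcal{O})$ by Definition~\ref{defDeckTransf} and $v=\wt\sigma(u)$, which is (i).

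There is essentially no obstacle in this argument beyond checking that the uniqueness part of Theorem~\ref{thmUniOrbCoverBM}~(ii) is genuinely applicable in the symmetric situation where both source and target are the same universal orbifold cover. The only subtle bookkeeping is that Theorem~\ref{thmUniOrbCoverBM}~(ii) asserts further that $A$ is unique when $\alpha(\Theta(x_0))=1$; we do not need this refinement for the present statement, but it confirms that the assignment $(u,v)\mapsto\wt\sigma$ is well-defined away from the ramification locus and therefore that $\pi_1(\mathcal{O})$ acts on the fiber $\Theta^{-1}(\Theta(u))$ transitively, which is the content of the proposition. The rest of the verification---that $A$ is indeed an element of the group $\pi_1(\mathcal{O})$, i.e., a homeomorphism of $\XX$ satisfying $\Theta\circ A=\Theta$---is read off directly from the statement produced by Theorem~\ref{thmUniOrbCoverBM}~(ii).
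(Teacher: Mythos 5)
Your argument is correct. The paper itself does not reprove this proposition (it cites \cite[Proposition~A.31]{BM17}), but your derivation of (ii)\,$\Rightarrow$\,(i) from the uniqueness clause of Theorem~\ref{thmUniOrbCoverBM}~(ii) applied with $\wt\XX=\XX$, $\wt\Theta=\Theta$, $x_0=u$, $\wt x_0=v$ is exactly the standard argument, and the resulting $A$ satisfies $\Theta\circ A=\Theta$, hence lies in $\pi_1(\mathcal{O})$ by Definition~\ref{defDeckTransf}.
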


\smallskip

We now focus on the orbifold $\mathcal{O}_f = (S^2,\alpha_f)$ associated to a Thurston map $f\: S^2 \rightarrow S^2$.

One of the advantages of introducing orbifolds is the ability to lift branches of inverse map $f^{-1}$ by the universal orbifold covering map.

\begin{lemma} \label{lmLiftInverse}
Let $f\: S^2\rightarrow S^2$ be a Thurston map, $\mathcal{O}_f=(S^2,\alpha_f)$ be the orbifold associated to $f$, and $\Theta\: \XX \rightarrow S_0^2$ be the universal orbifold covering map of $\mathcal{O}_f$. Given $u_0,v_0\in \XX$ with $(f\circ \Theta)(v_0) = \Theta(u_0)$.

Then there exists a branched covering map $\wt{g} \: \XX\rightarrow\XX$ with $\wt{g}(u_0)=v_0$ and 
\begin{equation*}
f\circ \Theta \circ \wt{g} = \Theta.
\end{equation*}
If $u_0\notin \crit \Theta$, then the map $\wt{g}$ is unique.
%
\end{lemma}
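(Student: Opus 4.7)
The plan is to apply the uniqueness clause of Theorem~\ref{thmUniOrbCoverBM}(ii) to the composition $f\circ\Theta\:\XX\rightarrow S_0^2$. More specifically, I would show that $f\circ\Theta$ is itself a universal orbifold covering map for the same orbifold $\mathcal{O}_f$, and then the desired map $\wt{g}$ will arise as the orientation-preserving homeomorphism conjugating the two universal orbifold covering maps $\Theta$ and $f\circ\Theta$.

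The key intermediate fact is the multiplicativity relation
\begin{equation*}
\alpha_f(f(p)) = \deg_f(p)\cdot\alpha_f(p), \qquad p\in S^2.
\end{equation*}
This follows directly from Definition~\ref{defRamificationFn} together with (\ref{eqLocalDegreeProduct}), by splitting the preimage set defining $\alpha_f(f(p))$ according to whether or not the orbit passes through $p$. In particular, if $\alpha_f(p)<+\infty$ then $\alpha_f(f(p))<+\infty$, so $f(S_0^2)\subseteq S_0^2$ and hence $(f\circ\Theta)(\XX)\subseteq S_0^2$. Combining the multiplicativity relation with Lemma~\ref{lmBranchedCoverCompositionBM} and the characterization $\deg_\Theta(x)=\alpha_f(\Theta(x))$ from Theorem~\ref{thmUniOrbCoverBM}(i), I would deduce that for every $y\in\XX$,
\begin{equation*}
\deg_{f\circ\Theta}(y) \;=\; \deg_f(\Theta(y))\cdot\deg_\Theta(y) \;=\; \deg_f(\Theta(y))\cdot\alpha_f(\Theta(y)) \;=\; \alpha_f((f\circ\Theta)(y)).
\end{equation*}

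With $f\circ\Theta$ thus identified as a branched covering map from the simply connected surface $\XX$ onto $S_0^2$ satisfying the defining ramification condition for the universal orbifold covering of $\mathcal{O}_f$, Theorem~\ref{thmUniOrbCoverBM}(ii) applied with $\wt\XX=\XX$, $\wt\Theta=f\circ\Theta$, $x_0=u_0$, and $\wt{x}_0=v_0$ (the equality $\Theta(u_0)=(f\circ\Theta)(v_0)=\wt\Theta(v_0)$ being precisely the hypothesis of the lemma) yields an orientation-preserving homeomorphism $\wt{g}\:\XX\rightarrow\XX$ with $\wt{g}(u_0)=v_0$ and $\Theta=(f\circ\Theta)\circ\wt{g}$. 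Since any homeomorphism between surfaces is a branched covering map with local degree identically $1$, this $\wt{g}$ satisfies the requirements of the lemma. For the uniqueness claim, observe that $u_0\notin\crit\Theta$ is equivalent to $\deg_\Theta(u_0)=1$, i.e., $\alpha_f(\Theta(u_0))=1$, which is exactly the hypothesis of the uniqueness clause in Theorem~\ref{thmUniOrbCoverBM}(ii).

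The main obstacle I anticipate is verifying that $f\circ\Theta$ is a branched covering map in the sense of Definition~\ref{defBranchedCover} cleanly, since $\XX$ and $S_0^2$ need not be compact and so Lemma~\ref{lmBranchedCoverCompositionBM}(i) does not apply verbatim. This however is a purely local matter: given $q\in S_0^2$, one can pull an evenly covered neighborhood of $q$ under $f$ back through $\Theta$, using that $\Theta$ satisfies the local power-map model and that the ramification exponents compose multiplicatively, to produce the required local chart around each preimage in $\XX$. The remainder of the argument is a direct invocation of the uniqueness portion of Theorem~\ref{thmUniOrbCoverBM}.
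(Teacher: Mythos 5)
The paper does not prove this lemma; it cites Lemma~A.32 of \cite{BM17}. So your argument is being evaluated on its own merits, and it has a genuine gap.

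Your whole construction hinges on the ``multiplicativity relation'' $\alpha_f(f(p)) = \deg_f(p)\,\alpha_f(p)$, from which you conclude that $f\circ\Theta$ realizes exactly the ramification $\alpha_f$ and is therefore itself a universal orbifold covering map of $\mathcal{O}_f$. That relation is false in general; only the divisibility $\deg_f(p)\,\alpha_f(p) \mid \alpha_f(f(p))$ holds. Unwinding Definition~\ref{defRamificationFn} along preimage branches gives
\begin{equation*}
\alpha_f(q) \;=\; \lcm\bigl\{\deg_f(p)\,\alpha_f(p) : p\in f^{-1}(q)\bigr\},
\end{equation*}
so equality at a fixed $p_0\in f^{-1}(q)$ would force $\deg_f(p)\,\alpha_f(p)$ to take the same value at \emph{every} preimage $p$ of $q$, which need not happen. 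A concrete failure occurs already for $f(z)=z^2-1$: here $p=1$ has $\deg_f(1)\,\alpha_f(1)=1$, while $\alpha_f(f(1))=\alpha_f(0)=+\infty$ since $0$ lies in the period-two critical cycle $\{0,-1\}$. The same example destroys the auxiliary claim $f(S_0^2)\subseteq S_0^2$: one has $1\in S_0^2$ yet $f(1)=0\notin S_0^2$, so $f\circ\Theta$ need not even map $\XX$ into $S_0^2$.

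Because of this, $f\circ\Theta$ is in general \emph{not} a universal orbifold covering map of $\mathcal{O}_f$: its local degree at $y\in\XX$ is $\deg_f(\Theta(y))\,\alpha_f(\Theta(y))$, which merely divides $\alpha_f((f\circ\Theta)(y))$, possibly properly. The invocation of the uniqueness clause of Theorem~\ref{thmUniOrbCoverBM}~(ii) therefore does not apply. Your conclusion that $\wt{g}$ is an orientation-preserving \emph{homeomorphism} is itself a signal that something has gone wrong: the lemma asserts only that $\wt{g}$ is a branched covering map, and indeed $\wt{g}$ in general acquires critical points precisely where the divisibility above is strict (with local degree $\alpha_f(q)/(\deg_f(p)\alpha_f(p))$ at the corresponding preimages). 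The intended mechanism is a lifting argument, not a uniqueness argument: after removing the appropriate discrete sets, $\Theta$ and $f\circ\Theta$ become genuine covering maps (Lemma~\ref{lmBranchCoverToCover}), the divisibility of local degrees and simple connectedness of $\XX$ (Lemma~\ref{lmLiftCoveringMap}) permit lifting $\Theta$ through $f\circ\Theta$, and the lift is then extended across the omitted points using the power-map normal form from Definition~\ref{defBranchedCover}.
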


See Lemma~A.32 in \cite{BM17} for a proof of Lemma~\ref{lmLiftInverse}.

\begin{definition}  \label{defInverseBranch}
Let $f\: S^2\rightarrow S^2$ be a Thurston map, $\mathcal{O}_f=(S^2,\alpha_f)$ be the orbifold associated to $f$, and $\Theta\: \XX \rightarrow S_0^2$ be the universal orbifold covering map of $\mathcal{O}_f$.  A branched covering map $\wt{g}\:\XX\rightarrow\XX$ is called an \defn{inverse branch of $f$ on $\XX$} if $f\circ \Theta\circ\wt{g}=\Theta$.

We denote the set of inverse branches of $f$ on $\XX$ by $\Inv(f)$.
\end{definition}

Note that by the definition of branched covering maps, $\wt{g} \: \XX\rightarrow \XX$ is surjective for each $\wt{g}\in \Inv(f)$.

\begin{lemma}   \label{lmHomotopyCurveOnX}
Let $f$ and $\CC$ satisfy the Assumptions. Let $\mathcal{O}_f=(S^2,\alpha_f)$ be the orbifold associated to $f$, and $\Theta\: \XX \rightarrow S_0^2$ be the universal orbifold covering map of $\mathcal{O}_f$. Then there exists $N\in\N$ such that for each $n\in\N$ with $n\geq N$ and each continuous path $\wt\gamma\: [0,1]\rightarrow \XX\setminus \Theta^{-1}(\post f)$, there exists a continuous path $\gamma\: [0,1]\rightarrow \XX\setminus \Theta^{-1}(\post f)$ with the following properties:
\begin{enumerate}
\smallskip
\item[(i)] $\gamma$ is homotopic to $\wt\gamma$ relative to $\{0,1\}$ in $\XX\setminus \Theta^{-1}(\post f)$.

\smallskip
\item[(ii)] There exists a number $k\in\N$, a strictly increasing sequence of numbers $0\eqqcolon a_0<a_1<\cdots<a_{k-1}<a_k\coloneqq 1$, and a sequence $\{X^n_i\}_{i\in \{1,2,\dots,k\}}$ of $n$-tiles in $\X^n(f,\CC)$ such that for each $i\in\{1,2,\dots,k\}$,
\begin{equation*}
(\Theta \circ \gamma)((a_{i-1},a_i)) \subseteq \inte(X^n_i).
\end{equation*}
\end{enumerate} 
\end{lemma}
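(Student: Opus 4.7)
The plan is to transfer the problem to a homotopy in $S^2 \setminus \post f$ and lift it via the covering-space structure of $\Theta$. The critical values of $\Theta$ lie in $\supp(\alpha_f) \cap S_0^2 \subseteq \post f$, so Lemma~\ref{lmBranchCoverToCover} applied with the discrete set $P \coloneqq \post f$ shows that the restriction
\[
\Theta_0 \coloneqq \Theta|_{\XX \setminus \Theta^{-1}(\post f)} \colon \XX \setminus \Theta^{-1}(\post f) \longrightarrow S^2 \setminus \post f
\]
is an unbranched covering map, so the homotopy lifting property is at our disposal.

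I choose $N \in \N$ using Lemma~\ref{lmCellBoundsBM}~(ii) so large that for every $n \geq N$ each $n$-cell has diameter less than one third of the minimum pairwise distance among distinct points of $\post f$. For such $n$, each flower satisfies $W^n(v) \cap \post f \subseteq \{v\}$, and for any $x \in S^2 \setminus \post f$, letting $c \in \DD^n(f,\CC)$ be the $n$-cell with $x \in \inte(c)$, the cell $c$ has at least two vertices but at most one of them lies in $\post f$ by the diameter bound, so we may pick $v \in \V^n(f,\CC) \setminus \post f$ with $v \in c$, and then $x \in \inte(c) \subseteq W^n(v)$. Hence $\W^n_\ast \coloneqq \{W^n(v) : v \in \V^n(f,\CC) \setminus \post f\}$ is an open cover of $S^2 \setminus \post f$ by simply connected sets (see Remark~\ref{rmFlower}).

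Fix $n \geq N$ and set $\eta \coloneqq \Theta \circ \wt\gamma$. The Lebesgue number lemma applied to the cover $\{\eta^{-1}(W) : W \in \W^n_\ast\}$ of $[0,1]$ yields a partition $0 = t_0 < t_1 < \cdots < t_m = 1$ and vertices $v_j \in \V^n \setminus \post f$ with $\eta([t_{j-1},t_j]) \subseteq W^n(v_j)$. By Remark~\ref{rmFlower} each $W^n(v_j)$ is homeomorphic, via a flower chart, to an open disk in which the $n$-tiles containing $v_j$ appear as sectors meeting along the $n$-edges emanating from $v_j$. Within each such disk I replace $\eta|_{[t_{j-1},t_j]}$ by an elementary piecewise path with the same endpoints whose image meets $f^{-n}(\CC)$ at only finitely many points, each in the interior of an $n$-edge and crossing transversely from one sector into an adjacent sector. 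Concatenating these local homotopies, each carried out inside an open sub-disk of $S^2 \setminus \post f$, yields a homotopy $H \colon [0,1]^2 \to S^2 \setminus \post f$ relative to $\{0,1\}$ from $\eta$ to a path $\eta'$ of the required piecewise form.

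Lifting $H$ through the covering map $\Theta_0$ with initial lift $\wt\gamma$ produces a homotopy $\wt H$ in $\XX \setminus \Theta^{-1}(\post f)$ with $\wt H(\cdot,0) = \wt\gamma$; since $\Theta_0$ has discrete fibers and $H$ is constant on $\{0,1\} \times [0,1]$, the endpoint lifts $\wt H(0,\cdot)$ and $\wt H(1,\cdot)$ are constant, so $\gamma \coloneqq \wt H(\cdot,1)$ satisfies~(i), while $\Theta \circ \gamma = \eta'$ gives~(ii) after relabelling partition points to consolidate adjacent subintervals mapping into the same $n$-tile interior. The main technical obstacle is the local disk-homotopy inside each flower, which must accommodate endpoints possibly lying on the $1$-skeleton $f^{-n}(\CC)$; this is handled by first nudging the endpoints slightly inward into tile interiors through the edges on which they sit, performing the sectorial rearrangement, and reversing the nudges---all inside the simply connected flower, hence automatically inside $S^2 \setminus \post f$.
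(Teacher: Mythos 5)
Your proof is correct and takes essentially the same route as the paper's: choose $N$ so that $n$-cells are small enough to isolate points of $\post f$, cover $S^2\setminus\post f$ by simply connected flowers $W^n(v)$ with $v\in\V^n\setminus\post f$, partition $[0,1]$ by compactness, homotope the projected path piecewise inside each flower to a path meeting the $1$-skeleton in the required finite, transversal fashion, and lift. The one place you diverge mildly is the lift: you invoke the homotopy lifting property of the unbranched covering $\Theta_0$ once, for the whole concatenated homotopy $H$, whereas the paper lifts each replacement path $\gamma_i$ separately via Lemma~\ref{lmLiftCoveringMap} and then observes that the lifted pieces patch continuously because $(\Theta\circ\wt\gamma)|_{[b_{i-1},b_i]}$ and $\gamma_i$ are homotopic rel endpoints inside the flower, a simply connected subset of $S^2\setminus\post f$. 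Both arguments rest on the same fact, which you could have stated more directly: since $\post f\subseteq\V^n$ and the only $n$-vertex lying in $W^n(v)$ is $v$ itself, one always has $W^n(v)\cap\post f=\emptyset$ when $v\notin\post f$, for every $n$; the diameter bound is needed only so that every $n$-cell possesses a vertex outside $\post f$, i.e.\ so that these flowers cover $S^2\setminus\post f$. (A very minor slip: "$c$ has at least two vertices'' is false when $c$ is a vertex cell $\{u\}$, but in that case $x=u\notin\post f$ and you may take $v=u$, so the conclusion you want still holds.)
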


Let $Z$ and $X$ be two topological spaces and $Y\subseteq Z$ be a subset of $Z$. A continuous function $f\: Z\rightarrow X$ is \defn{homotopic to} a continuous function $g\: Z\rightarrow X$ \defn{relative to} $Y$ (in $X$) if there exists a continuous function $H\: Z\times[0,1] \rightarrow X$ such that for each $z\in Z$, each $y\in Y$, and each $t\in[0,1]$, $H(z,0)= f(z)$, $H(z,1)=g(z)$, and $H(y,t)=f(y)=g(y)$.

\begin{remark}
We can choose $N$ to be the smallest number satisfying no $n$-tile joins opposite sides of $\CC$ for all $n\geq N$.
\end{remark}

\begin{proof}
Since $\post f$ is a finite set, by Lemma~\ref{lmCellBoundsBM}~(ii), we can choose $N\in\N$ large enough such that for each $n\in\N$ with $n\geq N$ and each $n$-tile $X^n\in\X^n$ such that 
\begin{equation}   \label{eqPflmHomotopyCurveOnX_AtMost1}
\card(X^n\cap \post f) \leq 1.
\end{equation}

Fix $n\geq N$ and a continuous path $\wt\gamma\:[0,1]\rightarrow \XX\setminus \Theta^{-1}(\post f)$.

We first claim that for each $x\in[0,1]$, there exists an $n$-vertex $v^n_x\in \V^n\setminus \post f$ and an open interval $I_x\subseteq \R$ such that $x\in I_x$ and $(\Theta\circ\wt\gamma)(I_x) \subseteq W^n(v^n_x) \subseteq S^2_0$.

We establish the claim by explicit construction in the following three cases:
\begin{enumerate}
\smallskip
\item[(1)] Assume $(\Theta\circ\wt\gamma)(x) \in \V^n$. Then we let $v^n_x\coloneqq (\Theta\circ\wt\gamma)(x)$. Since $(\Theta\circ\wt\gamma)(x)$ is contained in the open set $W^n(v^n_x)$, we can choose an open interval $I_x\subseteq \R$ containing $x$ with $(\Theta\circ\wt\gamma)(I_x) \subseteq W^n(v^n_x) \subseteq S^2_0$.

\smallskip
\item[(2)] Assume $(\Theta\circ\wt\gamma)(x) \in \inte(e^n)$ for some $n$-edge $e^n\in\E^n$. Since $\card(e^n\cap \post f) \leq 1$ by (\ref{eqPflmHomotopyCurveOnX_AtMost1}), we can choose $v^n_x \in e^n \cap \V^n \setminus \post f \subseteq S^2_0$. Then $(\Theta\circ\wt\gamma)(x) \in \inte(e^n) \subseteq W^n(v^n_x) \subseteq S^2_0$. Thus we can choose an open interval $I_x\subseteq \R$ containing $x$ with $(\Theta\circ\wt\gamma)(I_x) \subseteq W^n(v^n_x) \subseteq S^2_0$.

\smallskip
\item[(3)] Assume $(\Theta\circ\wt\gamma)(x) \in \inte(X^n)$ for some $n$-edge $X^n\in\X^n$. By (\ref{eqPflmHomotopyCurveOnX_AtMost1}), we can choose $v^n_x \in X^n \cap \V^n \setminus \post f \subseteq S^2_0$. Then $(\Theta\circ\wt\gamma)(x) \subseteq \inte(X^n) \subseteq W^n(v^n_x) \subseteq S^2_0$. Thus we can choose an open interval $I_x\subseteq \R$ containing $x$ with $(\Theta\circ\wt\gamma)(I_x) \subseteq W^n(v^n_x) \subseteq S^2_0$. 
\end{enumerate}

The claim is now established.

\smallskip

Since $[0,1]$ is compact, we can choose finitely many numbers $0\eqqcolon x_0 < x_1 < \cdots < x_{m'-1} < x_m' \coloneqq 1$ for some $m'\in\N$ such that $\bigcup\limits_{i=1}^m I_{x_i} \supseteq [0,1]$. Then it is clear that we can choose $m\leq m'$ and $0\eqqcolon b_0 < b_1 < \cdots < b_{m-1} < b_m \coloneqq 1$ such that for each $i\in\{1,2,\dots,m\}$, $[b_{i-1},b_i]\subseteq I_{x_{j(i)}}$ for some $j(i) \in \{1,2,\dots, m'\}$.

Fix an arbitrary $i\in\{1,2,\dots,m\}$. From the discussion above, we have
\begin{equation*}   \label{eqPflmHomotopyCurveOnX_InFlower}
(\Theta\circ\wt\gamma) ([b_{i-1},b_i]) \subseteq (\Theta\circ\wt\gamma) \bigl( I_{x_{j(i)}} \bigr)  \subseteq W^n\bigl( v^n_{x_{j(i)}} \bigr)  \subseteq S^2_0.
\end{equation*}

It follows from Remark~\ref{rmFlower} that we can choose a continuous path $\gamma_i\: [b_{i-1},b_i]\rightarrow W^n\bigl(v^n_{x_{j(i)}}\bigr)$ such that $\gamma_i$ is injective, $\gamma_i(b_{i-1}) = (\Theta \circ \wt\gamma) (b_{i-1})$, $\gamma_i(b_i) = (\Theta \circ \wt\gamma) (b_i)$, and that for each $n$-tile $X^n\in\X^n$ with $X^n\subseteq \overline{W}^n\bigl(v^n_{x_{j(i)}}\bigr)$, $\gamma_i^{-1}( \inte(X^n))$ is connected and $\card \bigl( \gamma_i^{-1}( \partial X^n ) \bigr) \leq 2$. See Figure~\ref{figHomotopy}. Since $W^n\bigl(v^n_{x_{j(i)}}\bigr)$ is simply connected (see Remark~\ref{rmFlower}), $(\Theta\circ\wt\gamma)|_{[b_{i-1},b_i]}$ is homotopic to $\gamma_i$ relative to $\{b_{i-1},b_i\}$ in $W^n\bigl(v^n_{x_{j(i)}}\bigr)$. It follows from Definition~\ref{defRamificationFn}, Definition~\ref{defUniOrbCover}, Lemma~\ref{lmBranchCoverToCover}, and Lemma~\ref{lmLiftCoveringMap} that there exists a unique continuous path $\wt\gamma_i\: [b_{i-1},b_i] \rightarrow \XX\setminus\Theta^{-1}(\post f)$ such that $\Theta\circ \wt\gamma_i = \gamma_i$ and $\wt\gamma_i$ is homotopic to $\wt\gamma|_{[b_{i-1},b_i]}$ relative to $\{b_{i-1},b_i\}$ in $\XX\setminus\Theta^{-1}(\post f)$.

\begin{figure}
    \centering
    \begin{overpic}
    [width=6cm, 
    tics=20]{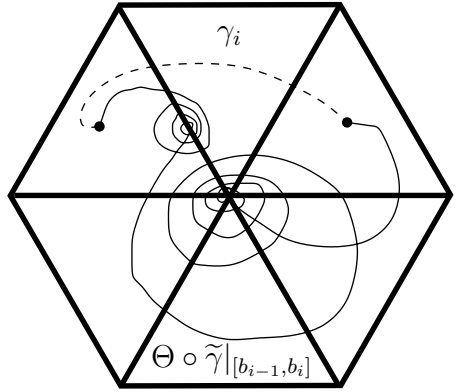}
    \put(80,133){$\gamma_i$}
    \put(55,9){$\Theta \circ \wt\gamma|_{[b_{i-1},b_i]}$}
    \end{overpic}
    \caption{Homotopic curves in $W^n\bigl(v^n_{x_{j(i)}}\bigr)$.}
    \label{figHomotopy}
\end{figure}

We define $\gamma\:[0,1]\rightarrow \XX\setminus\Theta^{-1}(\post f)$ by setting $\gamma|_{[b_{i-1},b_i]} = \wt\gamma_i$. Then it is clear that $\gamma$ is continuous and homotopic to $\wt\gamma$ relative to $\{0,1\}$ in $\XX\setminus\Theta^{-1}(\post f)$. It also follows immediately from our construction that Property~(ii) is satisfied.
\end{proof}

\begin{cor} \label{corGoodPath}
Let $f$ and $\CC$ satisfy the Assumptions. Let $\mathcal{O}_f=(S^2,\alpha_f)$ be the orbifold associated to $f$, and $\Theta\: \XX \rightarrow S_0^2$ be the universal orbifold covering map of $\mathcal{O}_f$. For each pair of points $x,y\in\XX$, there exists a continuous path $\wt\gamma\:[0,1] \rightarrow \XX$, numbers $k,n\in\N$, a strictly increasing sequence of numbers $0\eqqcolon a_0<a_1<\cdots<a_{k-1}<a_k\coloneqq 1$, and a sequence $\{X^n_i\}_{i\in \{1,2,\dots,k\}}$ of $n$-tiles in $\X^n(f,\CC)$ such that $\wt\gamma(0)=x$, $\wt\gamma(1)=y$, and
\begin{equation}   \label{eqGoodPathInTileInte}
(\Theta \circ \wt\gamma)((a_{i-1},a_i)) \subseteq \inte(X^n_i)
\end{equation}
for each $i\in\{1,2,\dots,k\}$. 

Moreover, if $\{ \wt{g}_j \}_{j\in\N}$ is a sequence in $\Inv(f)$ of inverse branches of $f$ on $\XX$, (i.e., $f\circ \Theta \circ \wt{g}_j = \Theta$ for each $j\in\N$,) then for each $m\in\N$, there exists a sequence  $\{X^{n+m}_i\}_{i\in \{1,2,\dots,k\}}$ of $(n+m)$-tiles in $\X^{n+m} (f,\CC)$ such that
\begin{equation}    \label{eqGoodPathInTileInteSmaller}
(\Theta \circ \wt{g}_m \circ \cdots \circ \wt{g}_1 \circ \wt\gamma )((a_{i-1},a_i)) \subseteq \inte ( X^{n+m}_i )
\end{equation}
for each $i\in\{1,2,\dots,k\}$. If $d$ is a visual metric on $S^2$ for $f$ with expansion factor $\Lambda >1$, then
\begin{equation}   \label{eqGoodPathLimitLength0}
 \diam_d  (  ( \Theta  \circ \wt{g}_m \circ \cdots \circ \wt{g}_1 \circ \wt\gamma  ) ([0,1])  )  \leq kC \Lambda^{-(n+m)}
\end{equation}
for $m\in\N$, where $C\geq 1$ is a constant from Lemma~\ref{lmCellBoundsBM} depending only on $f$, $\CC$, and $d$.
\end{cor}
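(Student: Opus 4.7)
The plan is to reduce Corollary~\ref{corGoodPath} to Lemma~\ref{lmHomotopyCurveOnX} by first producing an appropriate continuous path from $x$ to $y$ in $\XX$, then invoking the lemma on its interior. Since $\XX$ is a simply connected surface it is path-connected, and since $\post f$ is finite and $\Theta$ is a branched covering map, $\Theta^{-1}(\post f)$ is a discrete subset of $\XX$. Hence a small perturbation relative to $\{0,1\}$ yields a continuous path $\wt\gamma_0\:[0,1]\rightarrow\XX$ with $\wt\gamma_0(0)=x$, $\wt\gamma_0(1)=y$, and $\wt\gamma_0((0,1))\cap\Theta^{-1}(\post f)=\emptyset$.

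If $x\in\Theta^{-1}(\post f)$, then $\Theta(x)\in\post f\cap S^2_0$ with $\deg_\Theta(x)=\alpha_f(\Theta(x))<+\infty$, so $\Theta$ restricts to a branched covering map of a neighborhood of $x$ onto a neighborhood of $\Theta(x)$ in $S^2_0$. Fixing $n\geq N$ (with $N$ from Lemma~\ref{lmHomotopyCurveOnX}) sufficiently large, I choose any $n$-tile $X^n_1$ with $\Theta(x)\in X^n_1$ and a continuous path $\eta\:[0,1]\rightarrow X^n_1$ with $\eta(0)=\Theta(x)$ and $\eta((0,1])\subseteq\inte(X^n_1)$, then lift $\eta$ via $\Theta$ using Lemma~\ref{lmLiftPathBM} to obtain a lift starting at $x$. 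Prepending this lift to $\wt\gamma_0$ (and doing the symmetric modification at $y$ if $y\in\Theta^{-1}(\post f)$) produces, after reparametrization, a path whose middle segment lies in $\XX\setminus\Theta^{-1}(\post f)$ and whose initial and final segments each sit in the interior of a single $n$-tile apart from the endpoints themselves. Applying Lemma~\ref{lmHomotopyCurveOnX} to the middle segment and concatenating yields a continuous path $\wt\gamma\:[0,1]\to\XX$, numbers $0=a_0<a_1<\cdots<a_k=1$, and $n$-tiles $X^n_1,\dots,X^n_k$ satisfying (\ref{eqGoodPathInTileInte}).

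For the pullback statement, I induct on $m\in\N$. The base case $m=0$ is (\ref{eqGoodPathInTileInte}). For the inductive step, fix $i\in\{1,\dots,k\}$; the defining identity $f\circ\Theta\circ\wt{g}_{m+1}=\Theta$ together with the induction hypothesis gives
\begin{equation*}
f\bigl((\Theta\circ\wt{g}_{m+1}\circ\cdots\circ\wt{g}_1\circ\wt\gamma)((a_{i-1},a_i))\bigr)\subseteq\inte\bigl(X^{n+m}_i\bigr).
\end{equation*}
By Proposition~\ref{propCellDecomp}~(ii), $f^{-1}\bigl(\inte\bigl(X^{n+m}_i\bigr)\bigr)$ is a disjoint union of interiors of $(n+m+1)$-tiles. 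Since $(a_{i-1},a_i)$ is connected and $\Theta\circ\wt{g}_{m+1}\circ\cdots\circ\wt{g}_1\circ\wt\gamma$ is continuous, the image on this interval lies in a single such $\inte\bigl(X^{n+m+1}_i\bigr)$, completing the induction. The diameter bound (\ref{eqGoodPathLimitLength0}) then follows by writing the image of $[0,1]$ as a union of at most $k$ pieces each contained in a single $(n+m)$-tile (together with at most $k+1$ boundary points) and applying Lemma~\ref{lmCellBoundsBM}~(ii) to bound each diameter by $C\Lambda^{-(n+m)}$.

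The main technical obstacle is the endpoint handling in the second paragraph: Lemma~\ref{lmHomotopyCurveOnX} requires the path to lie entirely in $\XX\setminus\Theta^{-1}(\post f)$, so when $x$ or $y$ is itself a preimage of a postcritical point, one cannot apply the lemma directly. The branched cover structure of $\Theta$ near such ramification points, combined with path-lifting from Lemma~\ref{lmLiftPathBM}, is what allows the construction of an initial segment that enters a single $n$-tile interior before being glued to the middle portion handled by the lemma.
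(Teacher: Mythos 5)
Your proposal follows essentially the same approach as the paper's: handle an endpoint in $\Theta^{-1}(\post f)$ by lifting a short path that immediately enters the interior of a single $n$-tile, reduce the middle portion to Lemma~\ref{lmHomotopyCurveOnX}, and pull the tile decomposition back through the inverse branches via connectedness and the fact that $f^{-1}(\inte(X^{n+m}_i))$ is a disjoint union of interiors of $(n+m+1)$-tiles (Proposition~\ref{propCellDecomp}). The one imprecision is the phrase ``prepending this lift to $\wt\gamma_0$'': since $\wt\eta(1)\neq\wt\gamma_0(0)=x$, literal concatenation fails, and you should instead (as the paper does) build a fresh middle path from $\wt\eta(1)$ to the terminal point of the analogous lift at $y$ (or to $y$ itself) using the path-connectedness of $\XX\setminus\Theta^{-1}(\post f)$, then concatenate the three pieces.
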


\begin{proof}
Let $\CC \subseteq S^2$ be a Jordan curve on $S^2$ with $\post f \subseteq \CC$. Fix an arbitrary number $n\geq N$, where $N\in\N$ is a constant depending only on $f$ and $\CC$ from Lemma~\ref{lmHomotopyCurveOnX}.

Choose $n$-tiles $X^n_1,X^n_{\minus 1} \in \X^n(f,\CC)$ with $\Theta(x)\in X^n_1$ and $\Theta(y)\in X^n_{\minus 1}$. Since $n$-tiles are cells of dimension $2$ as discussed in Subsection~\ref{subsctThurstonMap}, we can choose continuous paths $\gamma_x \: \bigl[0,\frac14\bigr] \rightarrow S^2_0$ and $\gamma_y \: \bigl[\frac34, 1\bigr] \rightarrow S^2_0$ with $\gamma_x(0) = \Theta(x)$, $\gamma_y(1) = \Theta(y)$, $\gamma_x\bigl( \bigl(0, \frac14 \bigr] \bigr) \subseteq \inte (X^n_1)$, and $\gamma_y\bigl( \bigl(\frac34, 1 \bigr] \bigr) \subseteq \inte (X^n_{\minus 1})$. Since $\Theta$ is a branched covering map (see Theorem~\ref{thmUniOrbCoverBM}), by Lemma~\ref{lmLiftPathBM} we can lift $\gamma_x$ (resp.\ $\gamma_y$) to $\wt\gamma_x \: \bigl[ 0, \frac14 \bigr] \rightarrow \XX$ (resp.\ $\wt\gamma_y \: \bigl[ \frac34,  1 \bigr] \rightarrow \XX$) such that $\wt\gamma_x(0) = x$ and $\Theta \circ \wt\gamma_x = \gamma_x$ (resp.\ $\wt\gamma_y(1) = y$ and $\Theta \circ \wt\gamma_y = \gamma_y$).

Since $u\coloneqq \wt\gamma_x \bigl( \frac14 \bigr) \in \Theta^{-1} (\inte(X^n_1))$ and $v\coloneqq \wt\gamma_y \bigl( \frac34 \bigr) \in \Theta^{-1} (\inte(X^n_{\minus 1}))$, we have $\{u,v\} \subseteq \XX \setminus \Theta^{-1}(\post f)$. Since $\post f$ is a finite set and $\Theta$ is discrete, we can choose a continuous path $\widehat\gamma\: \bigl[\frac14, \frac34\bigr] \rightarrow \XX \setminus \Theta^{-1}(\post f)$ with $\widehat\gamma \bigl(\frac14\bigr) = u$ and $\widehat\gamma \bigl(\frac34\bigr) = v$. By Lemma~\ref{lmHomotopyCurveOnX}, there exists a number $k\in\N$, a continuous path $\gamma \: \bigl[ \frac14, \frac34 \bigr] \rightarrow \XX \setminus \Theta^{-1}(\post f)$, a sequence of numbers $\frac14 \eqqcolon a_1 < a_2 < \cdots < a_{k-2} < a_{k-1} \coloneqq \frac34$, and a sequence $\{X^n_i\}_{i\in\{2,3,\dots, k-1\}}$ of $n$-tiles in $\X^n(f,\CC)$ such that $\gamma \bigl( \frac14 \bigr) = u$, $\gamma \bigl( \frac34 \bigr) = v$, and
\begin{equation*}
(\Theta \circ \gamma) ( (a_{i-1},a_i) ) \subseteq \inte(X^n_i)
\end{equation*}
for each $i\in \{2,3,\dots, k-1\}$.

We define a continuous path $\wt\gamma \: [0,1] \rightarrow \XX$ by
\begin{equation*}
\wt\gamma(t) \coloneqq \begin{cases} 
\wt\gamma_x(t) & \text{if } t\in \bigl[ 0, \frac14 \bigr), \\ 
   \gamma (t)  & \text{if } t\in \bigl[ \frac14, \frac34 \bigr], \\
\wt\gamma_y(t) & \text{if } t\in \bigl( \frac34, 1 \bigr].   \end{cases}
\end{equation*}
Let $X^n_k \coloneqq X^n_{\minus 1}$, $a_0 \coloneqq 0$, and $a_k\coloneqq 1$. By our construction, we have $\wt\gamma(0) = x$, $\wt\gamma(1) = y$, and
\begin{equation*}
(\Theta \circ \wt\gamma) ( (a_{i-1},a_i) ) \subseteq \inte(X^n_i)
\end{equation*}
for each $i\in \{1,2,\dots, k\}$, establishing (\ref{eqGoodPathInTileInte}).

\smallskip

Fix a  sequence $\{ \wt{g}_j \}_{j\in\N}$ of inverse branches of $f$ on $\XX$ in $\Inv(f)$. Fix arbitrary integers $m\in\N$ and  $i\in \{1,2,\dots, k\}$. Denote $I_i \coloneqq (a_{i-1}, a_i)$.

By (\ref{eqGoodPathInTileInte}), each connected component of $f^{-m}((\Theta \circ \wt\gamma)(I_i))$ is contained in some connected component of $f^{-m}(\inte(X^n_i))$. Since both $( \Theta \circ \wt{g}_m \circ \cdots \circ \wt{g}_1 \circ \wt \gamma ) (I_i)$ and $f^m ( ( \Theta \circ \wt{g}_m \circ \cdots \circ \wt{g}_1\circ \wt\gamma ) (I_i) ) = (\Theta \circ \wt\gamma)(I_i)$ are connected, by Proposition~\ref{propCellDecomp}~(i), (ii), and (v), there exists an $(n+m)$-tile $X^{n+m}_i \in \X^{n+m}(f,\CC)$ such that 
\begin{equation*}  
( \Theta \circ \wt{g}_m \circ \cdots \circ \wt{g}_1 \circ \wt\gamma ) (I_i) \subseteq \inte ( X^{n+m}_i ). 
\end{equation*}
Since $m\in\N$ and  $i\in \{1,2,\dots, k\}$ are arbitrary, (\ref{eqGoodPathInTileInteSmaller}) is established. Finally, it is clear that (\ref{eqGoodPathLimitLength0}) follows immediately from (\ref{eqGoodPathInTileInteSmaller}) and Lemma~\ref{lmCellBoundsBM}~(ii).
\end{proof}

If we assume that $f$ is expanding, then roughly speaking, each inverse branch on the universal orbifold cover has a unique attracting fixed point (possibly at infinity). The precise statement is formulated in the following proposition.

\begin{prop}  \label{propInvBranchFixPt}
Let $f$, $d$, $\Lambda$ satisfy the Assumptions. Let $\mathcal{O}_f=(S^2,\alpha_f)$ be the orbifold associated to $f$, and $\Theta\: \XX \rightarrow S_0^2$ be the universal orbifold covering map of $\mathcal{O}_f$. Given a branched covering map $\wt{g}\: \XX\rightarrow \XX$ satisfying $f\circ \Theta \circ \wt{g} = \Theta$. Then the map $\wt{g}$ has at most one fixed point. Moreover, 
\begin{enumerate}
\smallskip
\item[(i)] if $w\in \XX$ is a fixed point of $\wt{g}$, then $\lim\limits_{i\to +\infty} \wt{g}^i(u) = w$ for all $u\in\XX$;

\smallskip
\item[(ii)] if $\wt{g}$ has no fixed point in $\XX$, then $f$ has a fixed critical point $z\in S^2$ such that $\lim\limits_{i\to +\infty} \Theta\bigl(\wt{g}^i(u)\bigr) = z$ for all $u\in \XX$.
\end{enumerate}
\end{prop}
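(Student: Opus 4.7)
The plan is to exploit the diameter-shrinking estimate of Corollary~\ref{corGoodPath} as the main technical device. Applying that corollary with the constant sequence $\wt{g}_j\equiv\wt{g}$, for every pair $u,v\in\XX$ one obtains a continuous path $\wt\gamma\:[0,1]\to\XX$ from $u$ to $v$, together with integers $k=k(u,v)$ and $n\in\N$, and the estimate
\begin{equation*}
\diam_d\bigl(\bigl(\Theta\circ\wt{g}^m\circ\wt\gamma\bigr)([0,1])\bigr)\leq kC\Lambda^{-(n+m)}\qquad\text{for all }m\in\N,
\end{equation*}
where $C\geq 1$ comes from Lemma~\ref{lmCellBoundsBM}. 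In particular $d\bigl(\Theta(\wt{g}^m(u)),\Theta(\wt{g}^m(v))\bigr)\to 0$ exponentially fast.

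For part~(i), I would apply the above estimate with $v=w$, a fixed point, yielding $d\bigl(\Theta(\wt{g}^m(u)),\Theta(w)\bigr)\to 0$. To upgrade this to convergence in $\XX$, choose a small simply connected neighborhood $V\subseteq S^2_0$ of $\Theta(w)$ whose preimage under $\Theta$ decomposes as a disjoint union of open sets $\{U_i\}$, each containing exactly one preimage of $\Theta(w)$, with $w\in U_0$ (this uses Theorem~\ref{thmUniOrbCoverBM} and Lemma~\ref{lmBranchCoverToCover}). For $m$ large the projected path $\Theta\circ\wt{g}^m\circ\wt\gamma$ is contained in $V$, so by connectedness the lifted path $\wt{g}^m\circ\wt\gamma$ lies in one of the $U_i$; since its endpoint $\wt{g}^m(w)=w\in U_0$, the path must sit in $U_0$, and in particular $\wt{g}^m(u)\in U_0$. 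Shrinking $V$ forces $\wt{g}^m(u)\to w$. Uniqueness of fixed points is then immediate: two distinct fixed points $w_1,w_2$ would yield $w_1=\wt{g}^m(w_1)\to w_2$.

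For part~(ii), fix $u\in\XX$ and write $x_m\coloneqq\Theta(\wt{g}^m(u))$. Applying the estimate to the pair $(u,\wt{g}(u))$ gives $d(x_m,x_{m+1})\leq k'C\Lambda^{-(n+m)}$, so $\{x_m\}$ is Cauchy in $(S^2,d)$ and converges to some $z\in S^2$. The identity $f\circ\Theta\circ\wt{g}=\Theta$ gives $f(x_{m+1})=x_m$, and passing to the limit using continuity of $f$ yields $f(z)=z$. Moreover, by the uniform diameter estimate of the first step, $\Theta(\wt{g}^m(v))\to z$ for \emph{every} $v\in\XX$.

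The main obstacle, and the final step, is to rule out $z\in S^2_0$. Arguing by contradiction, suppose $\alpha_f(z)<+\infty$. Then $z$ is a fixed point of $f$ lying outside the forward orbit of every periodic critical point, so in particular $z$ is not itself critical. Picking a small simply connected neighborhood $V\subseteq S^2_0$ of $z$ on which $\Theta$ is evenly covered in the branched sense (each component $U_i$ of $\Theta^{-1}(V)$ containing a unique preimage $\wt{z}_i$ of $z$, with $\Theta(U_i)=V$), the connectedness argument of part~(i) applied to the path $\wt{g}^m\circ\wt\gamma$ joining $\wt{g}^m(u)$ and $\wt{g}^{m+1}(u)$ forces these two points to lie in the same component $U_{i_0}$ for all $m\geq m_0$ sufficiently large. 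Since $\Theta(\wt{g}^m(u))\to z$ while each $U_{i_0}\setminus\{\wt{z}_{i_0}\}$ projects to a set whose compact subsets stay away from $z$, it follows that $\wt{g}^m(u)\to\wt{z}_{i_0}$ in $\XX$. Continuity of $\wt{g}$ then gives $\wt{g}(\wt{z}_{i_0})=\lim_m\wt{g}^{m+1}(u)=\wt{z}_{i_0}$, contradicting the hypothesis that $\wt{g}$ has no fixed point. Hence $\alpha_f(z)=+\infty$, and since $f(z)=z$, an elementary period calculation using the definition of $\alpha_f$ shows that $z$ must itself be a periodic critical point of period one, i.e., a fixed critical point of $f$.
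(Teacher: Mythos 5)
Your proof is correct and rests on the same key tool as the paper, namely the diameter-shrinking estimate of Corollary~\ref{corGoodPath} together with the local branched-covering structure of $\Theta$. The only difference is presentational: the paper argues parts~(i) and~(ii) by contradiction using explicit radius/flower lower bounds on the projected path's diameter, whereas you trap the iterates in a single sheet of $\Theta^{-1}(V)$ and derive convergence (and hence a fixed point) directly, which amounts to the same argument in contrapositive form.
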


\begin{proof}
Fix an arbitrary Jordan curve $\CC\subseteq S^2$ on $S^2$ with $\post f\subseteq\CC$.

We observe that it  follows immediately from statement~(i) that $\wt{g}$ has at most one fixed point.

\smallskip

(i) We assume that $w\in \XX$ is a fixed point of $\wt{g}$. We argue by contradiction and assume that $\wt{g}^i(u)$ does not converge to $w$  as $i\to+\infty$ for some $u\in\XX$. By Corollary~\ref{corGoodPath} (with $\wt{g}_j\coloneqq \wt{g}$ for each $j\in\N$), we choose a continuous path $\wt\gamma \: [0,1]\rightarrow \XX$ with $\wt\gamma(0)=u$, $\wt\gamma(1)=w$, and
\begin{equation}   \label{eqPfpropInvBranchFixPt_CurveDiamLimit}
\lim\limits_{i\to+\infty} \diam_d \bigl( \bigl(\Theta\circ\wt{g}^i\circ\wt\gamma \bigr) ([0,1]) \bigr) = 0.
\end{equation}
Denote $q\coloneqq  \Theta(w)$. Since $\Theta$ is a branched covering map (see Theorem~\ref{thmUniOrbCoverBM}), we can choose open sets $V\subseteq S^2_0$, $U_i\subseteq\XX$, and homeomorphisms $\varphi_i\: U_i \rightarrow \D$, $\psi_i\: V\rightarrow \D$, for $i\in I$, as in Definition~\ref{defBranchedCover} (with $X\coloneqq \XX$, $Y \coloneqq S^2_0$, and $f \coloneqq \Theta$). We choose $i_0 \in I$ such that $w \in U_{i_0}$. Then by our assumption there exists $r \in (0,1)$ and a strictly increasing sequence $\{k_j\}_{j\in\N}$ of positive integers such that 
\begin{equation*}
\wt{g}^{k_j} (u) \notin \varphi_{i_0}^{-1} ( \{z\in\C \,|\, \abs{z} < r \} )
\end{equation*}
for each $j\in\N$.

For each $j\in\N$, since $\bigl( \wt{g}^{k_j} \circ \wt\gamma \bigr) ([0,1])$ is a path on $\XX$ connecting $\wt{g}^{k_j}(u)$ and $\wt{g}^{k_j}(w)=w$, we have
\begin{equation*}
\bigl( \wt{g}^{k_j} \circ \wt\gamma \bigr) ([0,1])    \cap \varphi_{i_0}^{-1} ( \{z\in\C \,|\, \abs{z} = r \} )  \neq \emptyset.
\end{equation*}
Combining the above with (\ref{eqBranchCoverMapLocalPowerMap}) in Definition~\ref{defBranchedCover}, we get
\begin{equation*}
\diam_{\rho}   \bigl(  \bigl( \psi_{i_0} \circ \Theta \circ \wt{g}^{k_j} \circ \wt\gamma \bigr) ([0,1]) \bigr) 
\geq  \rho \bigl(0, \bigl( \psi_{i_0} \circ \Theta \circ \varphi_{i_0}^{-1} \bigr) ( \{z\in\C \,|\, \abs{z} = r \} ) \bigr) =  r^{d_{i_0}} >0
\end{equation*}
for $j\in\N$, where $d_{i_0} \coloneqq \deg_\Theta (w)$ as in Definition~\ref{defBranchedCover} and $\rho$ is the Euclidean metric on $\C$. This immediately leads to a contradiction with the fact that $\wt\gamma$ satisfies (\ref{eqPfpropInvBranchFixPt_CurveDiamLimit}), proving statement~(i).

\smallskip
(ii) We assume that $\wt{g}$ has no fixed point in $\XX$. Fix an arbitrary point $v\in\XX$. Let $x \coloneqq u$ and $y \coloneqq \wt{g}(u)$.

By Corollary~\ref{corGoodPath} (with $\wt{g}_j \coloneqq \wt{g}$ for each $j\in\N$), there exists a continuous path $\wt\gamma\:[0,1] \rightarrow \XX$, numbers $k,n\in\N$, a strictly increasing sequence of numbers $0\eqqcolon a_0<a_1<\cdots<a_{k-1}<a_k\coloneqq 1$, and for each $m\in\N_0$ there exists a sequence $\{X^{n+m}_i\}_{i\in \{0,1,\dots,k\}}$ of $(n+m)$-tiles in $\X^{(n+m)}$ such that $\wt\gamma(0)=x$, $\wt\gamma(1)=y$, and
\begin{equation}   \label{eqPfpropInvBranchFixPt_InTileInte}
(\Theta \circ \wt{g}^m \circ \wt\gamma)((a_{i-1},a_i)) \subseteq \inte(X^{n+m}_i)
\end{equation}
for each $i\in\{1,2,\dots,k\}$ and each $m\in\N_0$.  Moreover, for each $m\in\N_0$,
\begin{equation}   \label{eqPfpropInvBranchFixPt_GoodPathLimitLength0}
 \diam_d  ( ( \Theta  \circ \wt{g}^m \circ \wt\gamma  ) ([0,1]) ) \leq kC \Lambda^{-(n+m)}.
\end{equation}
where $C\geq 1$ is a constant from Lemma~\ref{lmCellBoundsBM} depending only on $f$, $\CC$, and $d$.

Since $\wt\gamma(0)=x$ and $\wt\gamma(1)=\wt{g}(x)$, by (\ref{eqPfpropInvBranchFixPt_GoodPathLimitLength0}), for each $m\in\N$ we have  
\begin{equation}  \label{eqPfpropInvBranchFixPt_gmxIterateDistance}
 d  \bigl( \Theta  ( \wt{g}^m(x)  ), \Theta \bigl( \wt{g}^{m+1}(x) \bigr) \bigr)  \leq kC \Lambda^{-(n+m)}.
\end{equation}
Since $S^2$ is compact, we get $\lim\limits_{m\to+\infty}  \Theta  ( \wt{g}^m(x)  ) = z$ for some $z\in S^2$. Since $\bigl( f \circ \Theta \circ \wt{g}^{m+1} \bigr) = \Theta \circ \wt{g}^m$ for each $m\in\N$, we have $f(z) = z$. To see that $z$ is independent of $x$, we choose arbitrary points $x'\in\XX$ and $z'\in S^2$ with $\lim\limits_{m\to+\infty} \Theta  ( \wt{g}^im(x')) = z'$. Then by the same argument as above, we get $f(z')=z'$. Applying Corollary~\ref{corGoodPath} (with $y\coloneqq x'$), we get $z=z'$.

It suffices to show $z\in \crit f$ now. We observe that it follows from (\ref{eqDefS0}), (\ref{eqDefRamificationFn}), and $f(z)=z$ that it suffices to prove $z\notin S^2_0$. We argue by contradiction and assume that $z\in S^2_0$. Since $\Theta$ is a branched covering map (see Theorem~\ref{thmUniOrbCoverBM}), we can choose open sets $V\subseteq S^2_0$ and $U_i\subseteq \XX$, $i\in I$, as in Definition~\ref{defBranchedCover} (with $X \coloneqq \XX$, $Y \coloneqq S^2_0$, $q \coloneqq z$, and $f \coloneqq \Theta$). By Lemma~\ref{lmCellBoundsBM}~(ii) and the fact that flowers are open sets (see Remark~\ref{rmFlower}), it is clear that there exist numbers $l,L\in\N$, an $l$-vertex $v^l\in \V^l$, and an $L$-vertex $v^L\in\V^L$ such that $l<L$ and
\begin{equation} \label{eqPfpropInvBranchFixPt_zInVInV}
z \in W^L \bigl( v^L \bigr) \subseteq \overline{W}^L \bigl( v^L \bigr) \subseteq W^l \bigl( v^l \bigr) \subseteq V.
\end{equation}  
By (\ref{eqPfpropInvBranchFixPt_gmxIterateDistance}) there exists $N\in\N$ large enough so that for each $m\in\N$ with $m\geq N$, we have
\begin{equation*}
\Theta  ( \wt{g}^m(x)  ) \subseteq W^L \bigl( v^L \bigr) \subseteq V.
\end{equation*}
Since $\wt{g}$ has no fixed points in $\XX$, $\wt{g}^m(x)$ does not converge to any point in $\Theta^{-1}(z)$ as $m\to+\infty$, for otherwise, suppose $\lim\limits_{m\to+\infty} \wt{g}^m(x) \eqqcolon p \in \XX$, then $\wt{g}(p)=p$, a contradiction. Hence there exists a strictly increasing sequence $\{m_j\}_{j\in\N}$ of positive integers such that $\wt{g}^{m_j}(x)$ and $\wt{g}^{m_j + 1}(x)$ are contained in different connected components of $\Theta^{-1}(V)$. Since $\wt{g}^{m_j} ( \wt\gamma(0) ) = \wt{g}^{m_j} (x)$, $\wt{g}^{m_j} ( \wt\gamma(1) ) = \wt{g}^{m_j + 1} (x)$, and the set $\wt{g}^{m_j} (\wt\gamma ( [0,1] ))$ is connected, we get from (\ref{eqPfpropInvBranchFixPt_zInVInV} that
\begin{equation*}
 ( \Theta \circ \wt{g}^{m_j} \circ \wt\gamma  ) ( [0,1] )  \cap \partial W^L \bigl( v^L \bigr)     \neq  \emptyset \neq ( \Theta \circ \wt{g}^{m_j} \circ \wt\gamma  ) ( [0,1] )  \cap \partial W^l \bigl( v^l \bigr)    .
\end{equation*}
This contradicts with (\ref{eqPfpropInvBranchFixPt_GoodPathLimitLength0}). Therefore $z\in S^2_0$ and $z\notin \crit f$.
\end{proof}

\subsection{Proof of the characterization Theorem~\ref{thmNLI}}     \label{subsctNLI_Proof}

We first lift the local integrability condition by the universal orbifold covering map.

\begin{lemma}   \label{lmLiftLI}
Let $f$,  $\CC$, $d$, $\psi$ satisfy the Assumptions. We assume in addition that $f(\CC)\subseteq \CC$. Let $\mathcal{O}_f=(S^2, \alpha_f)$ be the orbifold associated to $f$, and $\Theta\: \XX\rightarrow S^2_0$ the universal orbifold covering map of $\mathcal{O}_f$. Assume that 
\begin{equation}  \label{eqLiftLI_LIAssumption}
\psi^{f,\,\CC}_{\xi,\,\eta}(x,y) = 0
\end{equation}
for all $\xi \coloneqq \{ \xi_{\minus i} \}_{i\in\N_0} \in  \Sigma_{f,\,\CC}^-$ and $\eta \coloneqq \{ \eta_{\minus i} \}_{i\in\N_0} \in  \Sigma_{f,\,\CC}^-$ with $f(\xi_0) = f(\eta_0)$, and all $(x,y)\in \bigcup\limits_{\substack{X\in\X^1(f,\CC) \\ X\subseteq f(\xi_0)}}X \times X$.

Then for each pair of sequences $\{ \wt{g}_i \}_{i\in\N}$ and $\{ \wt{h}_i \}_{i\in\N}$ of inverse branches of $f$ on $\XX$, (i.e., $f\circ\Theta\circ\wt{g}_i = \Theta$ and $f\circ\Theta\circ\wt{h}_i = \Theta$ for $i\in\N$,) we have
\begin{align}  \label{eqLiftLI}
 &\sum\limits_{i=1}^{+\infty} \Bigl(  \Bigl(   \wt{\psi} \circ \wt{g}_i \circ\cdots\circ \wt{g}_1 \Bigr)(u)  
                                    - \Bigl(   \wt{\psi} \circ \wt{g}_i \circ\cdots\circ \wt{g}_1 \Bigr)(v) \Bigr) \\ 
=& \sum\limits_{i=1}^{+\infty}\Bigl(  \Bigl(   \wt{\psi} \circ \wt{h}_i \circ\cdots\circ \wt{h}_1 \Bigr)(u) 
                                    - \Bigl(   \wt{\psi} \circ \wt{h}_i \circ\cdots\circ \wt{h}_1 \Bigr)(v) \Bigr)   \notag
\end{align}
for $u,v\in\XX$, where $\wt{\psi} \coloneqq \psi\circ \Theta$.
\end{lemma}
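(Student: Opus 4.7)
The plan is to first establish absolute convergence of both series in (\ref{eqLiftLI}), then telescope along a carefully chosen path on $\XX$ to reduce the identity to a local case that follows directly from the hypothesis (\ref{eqLiftLI_LIAssumption}) via Definition~\ref{defTemporalDist}.

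Given $u,v\in\XX$, Corollary~\ref{corGoodPath} furnishes a continuous path $\wt\gamma\:[0,1]\to\XX$ joining $u$ to $v$, together with integers $k,n\in\N$, a partition $0=a_0<a_1<\cdots<a_k=1$, and $n$-tiles $X^n_j\in\X^n(f,\CC)$ such that $(\Theta\circ\wt\gamma)((a_{j-1},a_j))\subseteq\inte(X^n_j)$ for each $j\in\{1,\dots,k\}$. Setting $\wt G_i\coloneqq \wt g_i\circ\cdots\circ\wt g_1$ and $\wt H_i\coloneqq \wt h_i\circ\cdots\circ\wt h_1$, the same corollary applied to $\wt G_i\circ\wt\gamma$ (resp.\ $\wt H_i\circ\wt\gamma$) produces $(n+i)$-tiles of diameter at most $C\Lambda^{-(n+i)}$ containing the respective images of each segment $\wt\gamma([a_{j-1},a_j])$; by the H\"{o}lder continuity of $\psi$, the summands
\begin{equation*}
(\wt\psi\circ\wt G_i)(w_{j-1})-(\wt\psi\circ\wt G_i)(w_j),\qquad w_l\coloneqq\wt\gamma(a_l),
\end{equation*}
and their $\wt H_i$-analogues are bounded in modulus by $\Hseminorm{\alpha,\,(S^2,d)}{\psi}C^\alpha\Lambda^{-\alpha(n+i)}$. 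This yields absolute convergence of both series in (\ref{eqLiftLI}), and moreover the telescoping
\begin{equation*}
\sum_{i=1}^{+\infty}\bigl((\wt\psi\circ\wt G_i)(u)-(\wt\psi\circ\wt G_i)(v)\bigr)=\sum_{j=1}^k\sum_{i=1}^{+\infty}\bigl((\wt\psi\circ\wt G_i)(w_{j-1})-(\wt\psi\circ\wt G_i)(w_j)\bigr)
\end{equation*}
(and the corresponding identity for $\wt H_i$) reduces the proof to the case where $u$ and $v$ are joined by a continuous curve on $\XX$ whose $\Theta$-image lies in the interior of a single $n$-tile $X^n\in\X^n(f,\CC)$, and hence in a single $1$-tile $X^1\in\X^1(f,\CC)$ by the refinement property.

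In this local case, I construct inductively an admissible sequence $\xi=\{\xi_{-i}\}_{i\in\N_0}\in\Sigma_{f,\,\CC}^-$ matching $\{\wt g_j\}$ as follows. The second half of Corollary~\ref{corGoodPath} applied to $\wt g_1\circ\wt\gamma$ places $(\Theta\circ\wt g_1\circ\wt\gamma)$ restricted to the relevant open interval inside the interior of a single $(n+1)$-tile $X^{n+1}\subseteq f^{-1}(X^n)$; let $\xi_0\in\X^1(f,\CC)$ be the unique $1$-tile containing $X^{n+1}$. Then $X^1\subseteq f(\xi_0)$ is the $0$-tile containing $X^1$, and $\Theta(\wt g_1(w))=f^{-1}_{\xi_0}(\Theta(w))$ for $w\in\{u,v\}$. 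Iterating yields $1$-tiles $\xi_{-i}\in\X^1(f,\CC)$ with $f(\xi_{-i})\supseteq\xi_{-(i-1)}$ and, simultaneously for $w=u$ and $w=v$,
\begin{equation*}
\Theta(\wt G_i(w))=\bigl(f^{-1}_{\xi_{-(i-1)}}\circ\cdots\circ f^{-1}_{\xi_0}\bigr)(\Theta(w)).
\end{equation*}
By (\ref{eqDelta}), the left-hand side of (\ref{eqLiftLI}) equals $\Delta^{f,\,\CC}_{\psi,\,\xi}(\Theta(u),\Theta(v))$. The identical construction applied to $\{\wt h_i\}$ yields $\eta\in\Sigma_{f,\,\CC}^-$ with $X^1\subseteq f(\eta_0)$ and the right-hand side of (\ref{eqLiftLI}) equal to $\Delta^{f,\,\CC}_{\psi,\,\eta}(\Theta(u),\Theta(v))$. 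Since $f(\xi_0)$ and $f(\eta_0)$ are both the unique $0$-tile containing $X^1$, they coincide, and Definition~\ref{defTemporalDist} together with (\ref{eqLiftLI_LIAssumption}) gives the desired equality.

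The main obstacle is the inductive construction of a single sequence $\xi$ valid simultaneously for $u$ and $v$: this requires the iterates $\Theta(\wt G_i(u))$ and $\Theta(\wt G_i(v))$ to lie in a common $1$-tile at every stage, which is guaranteed precisely by the condition (\ref{eqGoodPathInTileInteSmaller}) of Corollary~\ref{corGoodPath} applied inductively along the path $\wt\gamma$. A minor technical point concerning the possibility that $\Theta(u)$ or $\Theta(v)$ lies on a cell of lower dimension is harmless since tiles are closed sets.
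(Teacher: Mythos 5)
Your proof is correct and follows essentially the same strategy as the paper's: use Corollary~\ref{corGoodPath} to produce a path with controlled tile data, telescope to reduce to the single-tile case, then build the sequences $\xi,\eta\in\Sigma_{f,\,\CC}^-$ as the $1$-tiles containing the $(n+m)$-tiles from the corollary and identify the resulting sums with $\Delta^{f,\,\CC}_{\psi,\,\xi}(\Theta(u),\Theta(v))$ and $\Delta^{f,\,\CC}_{\psi,\,\eta}(\Theta(u),\Theta(v))$. The paper spells out the admissibility $f(\xi_{-(m+1)})\supseteq\xi_{-m}$ and the identity $\Theta(\wt G_m(w))=(f^{-1}_{\xi_{-(m-1)}}\circ\cdots\circ f^{-1}_{\xi_0})(\Theta(w))$ via its Properties~(1)--(3) and an induction that you compress into "iterating yields"; these do require the intersection-of-interiors argument the paper gives, so they are not purely formal, but the ideas you invoke are exactly the right ones.
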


\begin{proof}
Fix sequences $\{\wt{g}_i\}_{i\in\N}$ and $\{\wt{h}_i\}_{i\in\N}$ of inverse branches of $f$ on $\XX$. Fix arbitrary points $u,v\in\XX$.

By Corollary~\ref{corGoodPath}, there exists a continuous path $\wt\gamma \: [0,1] \rightarrow \XX$, integers $k,n\in\N$, a strictly increasing sequence of numbers $0 \eqqcolon a_0 < a_1 < \cdots < a_{k-1} < a_k \coloneqq 1$, and a sequence $\{X^n_i\}_{i\in\{1,2,\dots, k\}}$ of $n$-tiles in $\X^n(f,\CC)$ such that $\wt\gamma(0) = u$, $\wt\gamma(1) = v$, and 
\begin{equation} \label{eqPflmLiftLI_InTileInte}
( \Theta \circ \wt\gamma ) ( (a_{i-1},a_i) ) \subseteq \inte (X^n_i).
\end{equation}
Moreover, for each $m\in\N$, there exist two sequences  $\{X^{n+m}_i\}_{i\in \{1,2,\dots,k\}}$ and $\{Y^{n+m}_i\}_{i\in \{1,2,\dots,k\}}$ of $(n+m)$-tiles in $\X^{n+m} (f,\CC)$ such that
\begin{equation}    \label{eqPflmLiftLI_InTileInteSmallerX}
( \Theta \circ \wt{g}_m \circ \cdots \circ \wt{g}_1 \circ \wt\gamma ) ( (a_{i-1}, a_i) )   \subseteq \inte ( X^{n+m}_i )
\end{equation}
and 
\begin{equation}    \label{eqPflmLiftLI_InTileInteSmallerY}
\bigl( \Theta \circ \wt{h}_m \circ \cdots \circ \wt{h}_1 \circ \wt\gamma \bigr) ( (a_{i-1}, a_i) )   \subseteq \inte ( Y^{n+m}_i )
\end{equation}
for each $i\in\{1,2,\dots,k\}$.

We denote $u_0 \coloneqq \wt{\gamma} (a_0) =  u$, $u_i \coloneqq \wt\gamma (a_i)$, and $I_i \coloneqq (a_{i-1},a_i)$ for $i\in\{1, 2, \dots, k\}$.

Observe that it suffices to show that (\ref{eqLiftLI}) holds with $u$ and $v$ replaced by $u_{i-1}$ and $u_i$, respectively, for each $i\in\{1,2,\dots,k\}$.

Fix an arbitrary integer $i\in\{1,2,\dots,k\}$.

For each $j\in\N_0$,  we denote by $\xi_{\minus j}$ the unique $1$-tile in $\X^1$ containing $X^{n+j+1}_i$, and denote by $\eta_{\minus j}$ the unique $1$-tile in $\X^1$ containing $Y^{n+j+1}_i$.

We will show that $\xi \coloneqq \{\xi_{\minus j}\}_{j\in\N_0}$ and $\eta \coloneqq \{ \eta_{\minus j} \}_{j\in\N_0}$ satisfy the following properties:
\begin{enumerate}
\smallskip
\item[(1)]  $\xi,\eta \in  \Sigma_{f,\,\CC}^-$.

\smallskip
\item[(2)] $f(\xi_0) = f(\eta_0) \eqqcolon X^0 \supseteq X^n_i \supseteq (\Theta \circ \wt\gamma) (I_i)$.

\smallskip
\item[(3)] $\bigl( \Theta \circ \wt{g}_{j+1} \circ \cdots \circ \wt{g}_1 \circ \wt\gamma \bigr) (I_i)
             \subseteq \bigl( f_{\xi_{\minus j}}^{-1} \circ  \cdots \circ f_{\xi_0}^{-1} \bigr)  (X^0)$ and $\bigl( \Theta \circ \wt{h}_{j+1} \circ \cdots \circ \wt{h}_1 \circ \wt\gamma \bigr) (I_i)
             \subseteq \bigl( f_{\eta_{\minus j}}^{-1} \circ  \cdots \circ f_{\eta_0}^{-1} \bigr)  (X^0)$ for each $j\in\N_0$.
\end{enumerate}

\smallskip

(1) Fix an arbitrary integer $m\in\N_0$. We note that by (\ref{eqPflmLiftLI_InTileInteSmallerX}),
\begin{align}   \label{eqPflmLiftLI_Prop1}
&        f \bigl(\xi_{\minus(m+1)} \bigr) \cap \inte(\xi_{\minus m}) \notag\\
&\qquad \supseteq f \bigl( X^{n+m+2}_i \bigr) \cap \inte \bigl( X^{n+m+1}_i \bigr)   \\
&\qquad \supseteq ( f \circ \Theta \circ \wt{g}_{m+2} \circ \cdots \circ \wt{g}_1 \circ \wt\gamma ) (I_i) 
               \cap ( \Theta \circ \wt{g}_{m+1}           \circ \cdots \circ \wt{g}_1 \circ \wt\gamma ) (I_i)  \notag\\
&\qquad    =              ( \Theta \circ \wt{g}_{m+1}            \circ \cdots \circ \wt{g}_1 \circ \wt\gamma ) (I_i) 
   \neq \emptyset. \notag                    
\end{align}
Since $f \bigl( \xi_{\minus(m+1)} \bigr) \in \X^0$ (see Proposition~\ref{propCellDecomp}~(i)), we get from (\ref{eqPflmLiftLI_Prop1}) that  $f \bigl( \xi_{\minus(m+1)} \bigr) \supseteq \xi_{\minus m}$. Since $m\in\N_0$ is arbitrary, we get $\xi \in \Sigma_{f,\,\CC}^-$. Similarly, we have $\eta \in \Sigma_{f,\,\CC}^-$.

\smallskip

(2) We note that by (\ref{eqPflmLiftLI_InTileInteSmallerX}), (\ref{eqPflmLiftLI_InTileInteSmallerY}), and (\ref{eqPflmLiftLI_InTileInte}),
\begin{align}   \label{eqPflmLiftLI_Prop2}
&f ( \inte (\xi_0) ) \cap f (\inte (\eta_0) )   \cap  \inte (X^n_i)  \notag\\
&\qquad \supseteq ( f \circ \Theta \circ \wt{g}_1 \circ \wt\gamma ) (I_i)  \cap \bigl( f \circ \Theta \circ \wt{h}_1 \circ \wt\gamma \bigr) (I_i)   \cap  \inte (X^n_i)  \\
&\qquad    =     (  \Theta \circ \wt\gamma ) (I_i)  
  \neq      \emptyset.   \notag
\end{align}
It follows from (\ref{eqPflmLiftLI_Prop2}) and Proposition~\ref{propCellDecomp}~(i) that $f(\xi_0) = f(\eta_0) \eqqcolon X^0 \supseteq  X^n_i \supseteq (\Theta \circ \wt\gamma) (I_i)$. This verifies Property~(2).

\smallskip

(3) We will establish the first relation in Property~(3) since the proof of the second one is the same. We note that by (\ref{eqPflmLiftLI_InTileInteSmallerX}), it suffices to show that
\begin{equation}   \label{eqPflmLiftLI_Prop3Equiv}
X^{n+j+1}_i \subseteq  \bigl( f_{\xi_{\minus j}}^{-1} \circ  \cdots \circ f_{\xi_0}^{-1} \bigr)  (X^0)
\end{equation}
for each $j\in\N_0$.

We prove (\ref{eqPflmLiftLI_Prop3Equiv}) by induction on $j\in\N_0$.

For $j=0$, we have $f_{\xi_0}^{-1} (X^0) = \xi_0 \supseteq X^{n+1}_i$ by Property~(2) and our construction above.

We now assume that (\ref{eqPflmLiftLI_Prop3Equiv}) holds for some $j\in\N_0$. Then by the induction hypothesis, (\ref{eqPflmLiftLI_InTileInteSmallerX}), and the fact that $f$ is injective on $\xi_{\minus (j+1)} \supseteq X^{n+j+2}_i$ (see Proposition~\ref{propCellDecomp}~(i)), we get
\begin{align*}
&                  \bigl( f_{\xi_{\minus (j+1)}}^{-1} \circ  \cdots \circ f_{\xi_0}^{-1} \bigr)  (X^0)  \cap   \inte \bigl( X^{n+j+2}_i \bigr)  \\
&\qquad \supseteq  f_{\xi_{\minus (j+1)}}^{-1} \bigl( X^{n+j+1}_i \bigr)   \cap   \inte \bigl( X^{n+j+2}_i \bigr)  
           =       f_{\xi_{\minus (j+1)}}^{-1} \bigl( X^{n+j+1}_i    \cap  f \bigl(  \inte \bigl( X^{n+j+2}_i \bigr) \bigr) \bigr) \\
&\qquad \supseteq  f_{\xi_{\minus (j+1)}}^{-1} \bigl( \bigl( \Theta \circ \wt{g}_{j+1} \circ \cdots \circ \wt{g}_1 \circ \wt\gamma \bigr) (I_i)
                                         \cap \bigl( f \circ \Theta \circ \wt{g}_{j+2} \circ \cdots \circ \wt{g}_1 \circ \wt\gamma \bigr) (I_i)  \bigr) \\
&\qquad    =       f_{\xi_{\minus (j+1)}}^{-1} \bigl( \bigl( \Theta \circ \wt{g}_{j+1} \circ \cdots \circ \wt{g}_1 \circ \wt\gamma \bigr) (I_i)     \bigr). 
\end{align*}
The set on the right-hand side of the last line above is nonempty, since by (\ref{eqPflmLiftLI_InTileInteSmallerX}) and our construction,
\begin{align*}
        \bigl( \Theta \circ \wt{g}_{j+1} \circ \cdots \circ \wt{g}_1 \circ \wt\gamma \bigr) (I_i) 
   =  & \bigl( f \circ \Theta \circ \wt{g}_{j+2} \circ \cdots \circ \wt{g}_1 \circ \wt\gamma \bigr) (I_i) \\
\subseteq &  f \bigl( X^{n+j+2}_i \bigr)
\subseteq    f \bigl( \xi_{\minus (j+1)} \bigr).
\end{align*}
On the other hand, since $\xi \in \Sigma_{f,\,\CC}^-$, it follows immediately from Lemma~\ref{lmCylinderIsTile} that
\begin{equation}  \label{eqPflmLiftLI_ImageIsTile}
\bigl( f_{\xi_{\minus (j+1)}}^{-1} \circ  \cdots \circ f_{\xi_0}^{-1} \bigr)  (X^0)  \in \X^{j+2}.
\end{equation}
Hence $X^{n+j+2}_i \subseteq \bigl( f_{\xi_{\minus (j+1)}}^{-1} \circ  \cdots \circ f_{\xi_0}^{-1} \bigr)  (X^0)$.

The induction step is now complete, establishing Property~(3).

\smallskip

Finally, by Property~(3), for each $m\in\N_0$ and each $w\in\{ u_{i-1}, u_i \}$ we have
\begin{equation*}
 ( \Theta \circ \wt{g}_{m+1} \circ \cdots \circ \wt{g}_1 ) (w)
             \subseteq \bigl( f_{\xi_{\minus m}}^{-1} \circ  \cdots \circ f_{\xi_0}^{-1} \bigr)  (X^0).
\end{equation*}
Since $f^{m+1} ( ( \Theta \circ \wt{g}_{m+1} \circ \cdots \circ \wt{g}_1 ) (w) ) = \Theta (w)$ and $f^{m+1}$ is injective on $\bigl( f_{\xi_{\minus m}}^{-1} \circ  \cdots \circ f_{\xi_0}^{-1} \bigr)  (X^0)$ (by (\ref{eqPflmLiftLI_ImageIsTile}) and Proposition~\ref{propCellDecomp}~(i)) with $\bigl( f^{m+1} \circ f_{\xi_{\minus m}}^{-1} \circ  \cdots \circ f_{\xi_0}^{-1} \bigr)  (x) = x$ for each $x\in X^0$, we get
\begin{equation*}
 ( \Theta \circ \wt{g}_{m+1} \circ \cdots \circ \wt{g}_1 ) (w)   =  \bigl( f_{\xi_{\minus m}}^{-1} \circ  \cdots \circ f_{\xi_0}^{-1} \bigr)  ( \Theta(w) ). 
\end{equation*}
Hence
\begin{align*}
&                       \sum\limits_{j=0}^{+\infty}    \Bigl(  \Bigl(   \wt{\psi} \circ \wt{g}_{j+1} \circ\cdots\circ \wt{g}_1 \Bigr)(u_{i-1})  
                                                                                      - \Bigl(   \wt{\psi} \circ \wt{g}_{j+1} \circ\cdots\circ \wt{g}_1 \Bigr)(u_i) \Bigr) \\ 
&\qquad  =      \sum\limits_{j=0}^{+\infty}    \Bigl(  \Bigl(   \psi \circ f_{\xi_{\minus j}}^{-1} \circ \cdots \circ f_{\xi_{0}}^{-1} \Bigr)  (   \Theta (u_{i-1})  )  
                                                                                     - \Bigl(   \psi \circ f_{\xi_{\minus j}}^{-1} \circ \cdots \circ f_{\xi_{0}}^{-1} \Bigr)  (   \Theta (u_i)  )   \Bigr)  \\
&\qquad  =     \Delta^{f,\,\CC}_{\psi,\,\xi} (  \Theta(u_{i-1}),   \Theta(u_i)  )                                                                                     
\end{align*}
where $\Delta^{f,\,\CC}_{\psi,\,\xi} $ is defined in (\ref{eqDelta}).

Similarly, the right-hand side of (\ref{eqLiftLI}) with $u$ and $v$ replaced by $u_{i-1}$ and $u_i$, respectively, is equal to $\Delta^{f,\,\CC}_{\psi,\,\eta} (  \Theta(u_{i-1}),   \Theta(u_i)  )$.

Since $\{ \Theta(u_{i-1}), \Theta(u_i) \} \subseteq X^n_i  \subseteq f ( \xi_0 )  = f ( \eta_0 )$ by Property~(2), we get from (\ref{eqLiftLI_LIAssumption}) and Definition~\ref{defTemporalDist} that
\begin{equation*}
0 = \psi^{f,\,\CC}_{\xi,\,\eta}( \Theta(u_{i-1}),   \Theta(u_i) ) = \Delta^{f,\,\CC}_{\psi,\,\xi} ( \Theta(u_{i-1}),   \Theta(u_i) ) - \Delta^{f,\,\CC}_{\psi,\,\eta} ( \Theta(u_{i-1}),   \Theta(u_i) ).
\end{equation*}
Therefore  (\ref{eqLiftLI}) holds with $u$ and $v$ replaced by $u_{i-1}$ and $u_i$, respectively. This establishes the lemma.
\end{proof}

\begin{proof}[Proof of Theorem~\ref{thmNLI}]
In the case when $\psi\in\Holder{\alpha}(S^2,d)$ is real-valued, the implication (vi)$\implies$(iii) follows immediately from Theorem~5.45 in \cite{Li17}. Conversely, statement~(iii) implies that $S_n\psi(x)=nK=S_n(K\mathbbm{1}_{S^2})(x)$ for all $x\in S^2$ and $n\in\N$ satisfying $f^n(x)=x$. So $K\in\R$. By Proposition~5.52 in \cite{Li17}, the function $\beta$ in statement~(iii) can be assumed to be real-valued. Then (vi) follows from Theorem~5.45 in \cite{Li17}.

\smallskip 
We now focus on the general case when $\psi\in\Holder{\alpha}((S^2,d),\C)$ is complex-valued. The implication (i)$\implies$(ii) is trivial.

\smallskip

To verify the implication (iii)$\implies$(iv), we note that statement~(iii) implies that $S_n\psi(x)=nK=S_n(K\mathbbm{1}_{S^2})(x)$ for all $x\in S^2$ and $n\in\N$ satisfying $f^n(x)=x$. Now statement~(iv) follows from Proposition~5.52 in \cite{Li17}.

\smallskip

To verify the implication (iv)$\implies$(i), we fix a Jordan curve $\CC\subseteq S^2$ satisfying $\post f\subseteq\CC$ and $f^n(\CC)\subseteq \CC$ for some $n\in\N$ (see Lemma~\ref{lmCexistsL}). We assume that statement~(iv) holds. Denote $F \coloneqq f^n$ and $\Psi \coloneqq S_n^f\psi=nK+\tau\circ F - \tau \in \Holder{\alpha} ((S^2,d),\C)$ (by Lemma~\ref{lmSnPhiHolder}). Fix any $\xi=\{ \xi_{\minus i} \}_{i\in\N_0} \in  \Sigma_{F,\,\CC}^-$ and $\eta=\{ \eta_{\minus i} \}_{i\in\N_0} \in  \Sigma_{F,\,\CC}^-$ with $F(\xi_0) = F(\eta_0)$. By (\ref{eqDelta}), we get that for all $(x,y)\in \bigcup\limits_{\substack{X\in\X^1(F,\CC) \\ X\subseteq F(\xi_0)}}X \times X$,
\begin{align*}
    \Delta^{F,\,\CC}_{\Psi,\,\xi}(x,y) 
= & \lim_{i\to+\infty}  \Bigl(  \tau(x)  -   \tau\bigl(F^{-1}_{\xi_{\minus i}} \circ\cdots\circ F^{-1}_{\xi_0}(x)\bigr)   
                                                - \tau(y) +   \tau\bigl(F^{-1}_{\xi_{\minus i}} \circ\cdots\circ F^{-1}_{\xi_0}(y)\bigr)  \Bigr) \\
= & \tau(x)-\tau(y).
\end{align*}
The second equality here follows from the H\"{o}lder continuity of $\tau$ and Lemma~\ref{lmCellBoundsBM}~(ii). Similarly, we have $\Delta^{F,\,\CC}_{\Psi,\,\eta}(x,y)= \tau(x)-\tau(y)$. Therefore by Definition~\ref{defTemporalDist}, $\Psi_{\xi,\,\eta}^{F,\,\CC}(x,y)=0$ for all $(x,y)\in \bigcup\limits_{\substack{X\in\X^1(F,\CC) \\ X\subseteq F(\xi_0)}}X \times X$, establishing (i).

\smallskip

To verify the implication (iv)$\implies$(v), we define $M\coloneqq \mathbbm{1}_{\Sigma_{A_{\ti}}^+}$ and $\varpi \coloneqq \tau \circ \pi_{\ti}$. Then (v) follows immediately from Proposition~\ref{propTileSFT}.

\smallskip

To verify the implication (v)$\implies$(ii), we argue by contradiction and assume that (ii) does not hold but (v) holds. Fix $n\in\N$, $\CC\subseteq S^2$, $K\in\C$, $M\in\CCC \bigl( \Sigma_{A_{\ti}}^+ , \Z \bigr)$, and $\varpi\in\CCC \bigl( \Sigma_{A_{\ti}}^+ , \C \bigr)$ as in (v). Recall $F\coloneqq f^n$, $\Psi \coloneqq S_n^f \psi$, and
\begin{equation}   \label{eqPfthmNLI_DefPhiPi_PsiCohomology}
\Psi \circ \pi_{\ti} = KM + \varpi \circ \sigma_{A_{\ti}} - \varpi.
\end{equation}
Since $ \Sigma_{A_{\ti}}^+$ is compact, we know that $\card \bigl( M \bigl(  \Sigma_{A_{\ti}}^+ \bigr) \bigr)$ is finite. Thus considering that the topology on $\Sigma_{A_{\ti}}^+$ is induced from the product topology, we can choose $m\in\N$ such that $M( \underline{u} ) = M ( \underline{v} )$ for all $\underline{u} = \{u_i\}_{i\in\N_0}\in  \Sigma_{A_{\ti}}^+$ and $\underline{v} = \{v_i\}_{i\in\N_0} \in  \Sigma_{A_{\ti}}^+$ with $u_i=v_i$ for each $i\in\{0,1,\dots, m\}$. Fix $\xi=\{ \xi_{\minus i} \}_{i\in\N_0} \in  \Sigma_{F,\,\CC}^-$, $\eta=\{ \eta_{\minus i} \}_{i\in\N_0} \in  \Sigma_{F,\,\CC}^-$, $X \in \X^1(F,\CC)$, and $x,y\in X$ with $X \subseteq F(\xi_0) = F(\eta_0)$ and
\begin{equation*}
\Psi_{\xi,\,\eta}^{F,\,\CC}(x,y) \neq 0
\end{equation*}
as in (ii). Since $D\coloneqq S^2 \setminus \bigcup_{i\in\N_0} F^{-i} (\CC)$ is dense in $S^2$, by (\ref{eqDeltaDifference}) in Lemma~\ref{lmDeltaHolder} and Definition~\ref{defTemporalDist}, we can assume without loss of generality that there exists $X^m \in \X^m(F,\CC)$ with $x,y\in X^m \setminus D \subseteq X$. Thus by Proposition~\ref{propTileSFT}, $\pi_{\ti}$ is injective on $\pi_{\ti}^{-1} (P)$ where $P\coloneqq \bigcup_{i\in\N_0} F^{-i} (\{x,y\})$. With abuse of notation, we denote by $\pi_{\ti}^{-1} \: P \rightarrow \pi_{\ti}^{-1}(P)$ the inverse of $\pi_{\ti}$ on $P$. Then by (\ref{eqDelta}), Proposition~\ref{propTileSFT}, and (\ref{eqPfthmNLI_DefPhiPi_PsiCohomology}),
\begin{align*}
        \Delta^{F,\,\CC}_{\Psi,\,\xi}(x,y) 
= &  \sum\limits_{i=0}^{+\infty} \Bigl( 
                     \bigl( ( \Psi \circ \pi_{\ti} )  \circ \bigl( \pi_{\ti}^{-1} \circ F^{-1}_{\xi_{\minus i}} \circ \pi_{\ti} \bigr) \circ \cdots \circ \bigl( \pi_{\ti}^{-1} \circ F^{-1}_{\xi_0}  \circ \pi_{\ti} \bigr) \bigr) \bigl( \pi_{\ti}^{-1} (x) \bigr) \\
   & \quad - \bigl( ( \Psi \circ \pi_{\ti} )  \circ \bigl( \pi_{\ti}^{-1} \circ F^{-1}_{\xi_{\minus i}} \circ \pi_{\ti} \bigr) \circ \cdots \circ \bigl( \pi_{\ti}^{-1} \circ F^{-1}_{\xi_0}  \circ \pi_{\ti} \bigr) \bigr) \bigl( \pi_{\ti}^{-1} (y) \bigr)
                                      \Bigr) \\
= & \lim_{i\to+\infty}  \Bigl( \varpi \bigl( \pi_{\ti}^{-1}(x) \bigr)  -\varpi \bigl( \pi_{\ti}^{-1}(y) \bigr) \\
    &\qquad\quad                -\bigl( \varpi \circ \bigl( \pi_{\ti}^{-1} \circ F^{-1}_{\xi_{\minus i}} \circ \pi_{\ti} \bigr) \circ \cdots \circ \bigl( \pi_{\ti}^{-1} \circ F^{-1}_{\xi_0}  \circ \pi_{\ti} \bigr) \bigr) \bigl( \pi_{\ti}^{-1} (x) \bigr) \\
    &\qquad\quad               +\bigl( \varpi \circ \bigl( \pi_{\ti}^{-1} \circ F^{-1}_{\xi_{\minus i}} \circ \pi_{\ti} \bigr) \circ \cdots \circ \bigl( \pi_{\ti}^{-1} \circ F^{-1}_{\xi_0}  \circ \pi_{\ti} \bigr) \bigr) \bigl( \pi_{\ti}^{-1} (y) \bigr)  \Bigr) \\
= & \varpi \bigl( \pi_{\ti}^{-1}(x) \bigr)  -\varpi \bigl( \pi_{\ti}^{-1}(y) \bigr).
\end{align*}
The last identity follows from the uniform continuity of $\varpi$. Similarly, $ \Delta^{F,\,\CC}_{\Psi,\,\eta}(x,y) = \varpi \bigl( \pi_{\ti}^{-1}(x) \bigr)  -\varpi \bigl( \pi_{\ti}^{-1}(y) \bigr) $. Thus by Definition~\ref{defTemporalDist}, $\Psi_{\xi,\,\eta}^{F,\,\CC}(x,y)=0$, a contradiction. The implication (v)$\implies$(ii) is now established.

\smallskip

It remains to show the implication (ii)$\implies$(iii).

We assume that statement~(ii) holds. Fix $n\in\N$ and a Jordan curve $\CC\subseteq S^2$ with $\post f \subseteq \CC$ and $f^n(\CC)\subseteq \CC$. We denote $F \coloneqq f^n$, and $\Psi \coloneqq S_n^f \psi\in\Holder{\alpha}((S^2,d),\C)$ (see Lemma~\ref{lmSnPhiHolder}).

Let $\mathcal{O}_F=(S^2,\alpha_F)$ be the orbifold associated to $F$. By Proposition~\ref{propSameRamificationFn}, we have $\mathcal{O}_F=\mathcal{O}_f$. Let $\Theta\:\XX\rightarrow S^2_0$ be the universal orbifold covering map of $\mathcal{O}_F=\mathcal{O}_f$, which depends only on $f$, and in particular, is independent of $n$.

For each branched covering map $\wt{h}\in \Inv(F)$, i.e., $\wt{h}\: \XX\rightarrow\XX$ satisfying $F\circ \Theta \circ \wt{h} = \Theta$, we define a function $\wt{\beta}_{\wt{h}} \: \XX\rightarrow\C$ by
\begin{equation}  \label{eqDef_b_h}
\wt{\beta}_{\wt{h}} (u)   \coloneqq   \sum\limits_{i=1}^{+\infty}  \bigl( (\Psi\circ\Theta)\bigl(\wt{h}^i (u)\bigr)  -  \Psi_{\wt{h}}   \bigr)
\end{equation}
for $u\in\XX$, where
\begin{equation}  \label{eqPfthmNLI_DefPhi_wt_h}
\Psi_{\wt{h}} \coloneqq \lim\limits_{j\to+\infty} (\Psi\circ\Theta)\bigl(\wt{h}^j (u)\bigr)
\end{equation}
converges and the limit in (\ref{eqPfthmNLI_DefPhi_wt_h}) is independent of $u\in\XX$ by Proposition~\ref{propInvBranchFixPt}. 

Fix an arbitrary point $u\in\XX$. We will show that the series in (\ref{eqDef_b_h}) converges absolutely. Note that it follows immediately from (\ref{eqGoodPathLimitLength0}) in Corollary~\ref{corGoodPath} (applied to $x\coloneqq u$ and $y\coloneqq \wt{h}(u)$) that there exists an integer $k\in\N$ such that for each $i\in\N$,
\begin{align*}
&                        \AbsBig{  (\Psi\circ\Theta)\bigl(\wt{h}^i (u)\bigr) - \lim\limits_{j\to+\infty} (\Psi\circ\Theta)\bigl(\wt{h}^{i+j} (u)\bigr)  } \\
&\qquad  \leq  \sum\limits_{j=1}^{+\infty}  \AbsBig{  (\Psi\circ\Theta) \bigl(\wt{h}^{i+j-1} (u)\bigr) - (\Psi\circ\Theta) \bigl(\wt{h}^{i+j} (u)\bigr)  }  \\
&\qquad \leq \sum\limits_{j=1}^{+\infty} k^\alpha C^\alpha \Lambda^{-(i+j-1)\alpha} \Hnorm{\alpha}{\Psi}{(S^2,d)}   \\
&\qquad \leq \frac{ k C}{ 1-\Lambda^{-\alpha}}      \Lambda^{-i\alpha}  \Hnorm{\alpha}{\Psi}{(S^2,d)},
\end{align*} 
where $C\geq 1$ is a constant from Lemma~\ref{lmCellBoundsBM} depending only on $f$, $\CC$, and $d$. Thus the series in (\ref{eqDef_b_h}) converges absolutely, and $\wt{\beta}_{\wt{h}}$ is well-defined and continuous on $\XX$.

Hence, for arbitrary $l\in\N_0$ and $u\in\XX$, it follows from (\ref{eqGoodPathLimitLength0}) in Corollary~\ref{corGoodPath} (applied to $x\coloneqq u$ and $y\coloneqq \wt{h}(u)$) that
\begin{align*}
&                      \Absbigg{ \wt{\beta}_{\wt{h}} (u) - \sum\limits_{i=1}^{+\infty}  \bigl( (\Psi\circ\Theta)\bigl(\wt{h}^i (u)\bigr)- (\Psi\circ\Theta)\bigl(\wt{h}^{i} \bigl(\wt{h}^{l}(u)\bigr)\bigr)  \bigr) } \\
&\qquad  \leq \sum\limits_{i=1}^{+\infty} \AbsBig{ (\Psi\circ\Theta)\bigl(\wt{h}^{i+l}  (u) \bigr)   -   \lim\limits_{j\to+\infty} (\Psi\circ\Theta)\bigl(\wt{h}^{i+j} (u)\bigr)  \bigr) } \\
&\qquad  \leq \sum\limits_{i=1}^{+\infty}   \sum\limits_{j=l+1}^{+\infty}    \AbsBig{  (\Psi\circ\Theta) \bigl(\wt{h}^{i+j-1} (u)\bigr) - (\Psi\circ\Theta) \bigl(\wt{h}^{i+j} (u)\bigr)  }  \\
&\qquad  \leq \sum\limits_{i=1}^{+\infty}   \sum\limits_{j=l+1}^{+\infty}    k^\alpha C^\alpha \Lambda^{-(i+j-1)\alpha} \Hnorm{\alpha}{\Psi}{(S^2,d)}   \\
&\qquad  \leq \frac{k C}{ (1-\Lambda^{-\alpha})^2 }      \Lambda^{-l\alpha}  \Hnorm{\alpha}{\Psi}{(S^2,d)}.
\end{align*}
Hence for each $u\in\XX$,
\begin{equation}  \label{eqSwitchLimitSum}
\wt{\beta}_{\wt{h}} (u)  = \lim\limits_{j\to+\infty} \sum\limits_{i=1}^{+\infty}  \bigl( (\Psi\circ\Theta)\bigl(\wt{h}^i (u)\bigr)- (\Psi\circ\Theta)\bigl(\wt{h}^{i} \bigl(\wt{h}^{j}(u)\bigr)\bigr)  \bigr).
\end{equation}

We now fix an inverse branch $\wt{g}\in\Inv(F)$ of $F$ on $\XX$ and consider the map $\wt{\beta}_{\wt{g}}$. Note that by the absolute convergence of the series in (\ref{eqDef_b_h}), for each $u\in \XX$,
\begin{equation}  \label{eqPreCohomology}
\wt{\beta}_{\wt{g}}(u) - \wt{\beta}_{\wt{g}}  ( \wt{g}(u)  ) = \Psi (\Theta  (\wt{g}(u)  )  ) - \Psi_{\wt{g}} .
\end{equation}

We claim that $\wt{\beta}_{\wt{g}}(u) = \wt{\beta}_{\wt{g}}(\wt{\sigma}(u))$ for each $u\in\XX$ and each deck transformation $\wt{\sigma}\in \pi_1(\mathcal{O}_f)$.

By Proposition~\ref{propDeckTransf} and the fact that $\mathcal{O}_f$ is either parabolic or hyperbolic (see \cite[Proposition~2.12]{BM17}), the claim is equivalent to 
\begin{equation}  \label{eqBeta_hInvFiberTheta}
\wt{\beta}_{\wt{g}}(u)= \wt{\beta}_{\wt{g}}(v), \qquad\qquad \text{for all }y\in S^2, u,v\in\Theta^{-1}(y).
\end{equation}

We assume that the claim holds for now and postpone its proof to the end of this discussion. Then by (\ref{eqBeta_hInvFiberTheta}), the function $\beta \: S^2_0 \rightarrow \C$, defined by
\begin{equation}  \label{eqDefBeta}
\beta(y) \coloneqq \wt{\beta}_{\wt{g}} (v)
\end{equation}
with $v\in \Theta^{-1}(y)$ independent of the choice of $v$, for $y\in S^2_0$, is well-defined.

For each $x\in S^2_0$, by the surjectivity of $\wt{g}$, we can choose $u_0\in\XX$ such that $\Theta (\wt{g}(u_0) ) = x$. Note that $\Theta(u_0)=( F\circ\Theta\circ\wt{g} ) (u_0) = F(x)$. So $\wt{g}(u_0) \in \Theta^{-1}(x)$ and $u_0\in\Theta^{-1}(F(x))$. Then by (\ref{eqPreCohomology}), 
\begin{equation*}
\beta(F(x)) - \beta(x)  
= \wt{\beta}_{\wt{g}} (u_0) - \wt{\beta}_{\wt{g}}  (\wt{g}(u_0) ) 
= \Psi (\Theta  (\wt{g}(u_0)  )  ) - \Psi_{\wt{g}}
= \Psi(x) -  \Psi_{\wt{g}}.
\end{equation*}


We will show that $\beta \in \Holder{\alpha} \bigl( \bigl( X^0_\c \setminus \post F, d \bigr), \C \bigr)$ for each $\c\in\{\b,\w\}$. Here $X^0_\b$ (resp.\ $X^0_\w$) is the black (resp.\ white) $0$-tile in $\X^0(F,\CC)$.

Fix arbitrary $\c\in\{\b,\w\}$ and $x_0,y_0 \in X^0_\c \setminus \post F$. Let $\gamma \: [0,1] \rightarrow X^0_\c \setminus \post F$ be an arbitrary continuous path with $\gamma(0) = x_0$, $\gamma(1) = y_0$, and $\gamma((0,1)) \subseteq \inte \bigl(X^0_\c \bigr)$.

By Lemma~\ref{lmLiftPathBM}, we can lift $\gamma$ to $\wt\gamma \: [0,1] \rightarrow \XX$ such that $\Theta \circ \wt\gamma = \gamma$. Denote $u \coloneqq \wt\gamma(0)$, $v \coloneqq \wt\gamma(1)$, and $I \coloneqq (0,1)$. Thus
\begin{equation}   \label{eqPfthmNLI_InTileInte}
(\Theta \circ \wt\gamma) (I) \subseteq \inte \bigl( X^0_\c \bigr).
\end{equation}

Fix an arbitrary integers $m\in\N$.

By (\ref{eqPfthmNLI_InTileInte}), each connected component of $F^{-m}((\Theta \circ \wt\gamma)(I))$ is contained in some connected component of $F^{-m}\bigl(\inte \bigl(X^0_\c\bigr) \bigr)$. Since both $\bigl( \Theta \circ \wt{g}^m \circ \wt \gamma \bigr) (I)$ and $F^m \bigl( \bigl( \Theta \circ \wt{g}^m \circ \wt\gamma \bigr) (I) \bigr) = (\Theta \circ \wt\gamma)(I)$ are connected, by Proposition~\ref{propCellDecomp}~(v), there exists an $m$-tile $X^m \in \X^m(F,\CC)$ such that 
\begin{equation*}  
\bigl( \Theta \circ \wt{g}^m \circ \wt\gamma \bigr) (I)   \subseteq   \inte ( X^m ). 
\end{equation*}
We denote $x_m \coloneqq (\Theta \circ \wt{g}^m ) (u)$ and $y_m \coloneqq (\Theta \circ \wt{g}^m ) (v)$. Then $F^m(x_m) =  ( F^m \circ \Theta \circ \wt{g}^m ) (u) = x_0$, $F^m(y_m) =  ( F^m \circ \Theta \circ \wt{g}^m ) (v) = y_0$, and $x_m, y_m \in X^m$. Hence by (\ref{eqDefBeta}), the absolute convergence of the series defining $\wt{\beta}_{\wt{g}}$ in (\ref{eqDef_b_h}), and Lemma~\ref{lmSnPhiBound},
\begin{align*}
            \abs{ \beta(x_0) - \beta(y_0)} 
&=       \AbsBig{ \wt\beta_{\wt{g}}(u) -\wt\beta_{\wt{g}}(v) }  \\
&=       \lim\limits_{m\to+\infty}  \Absbigg{  \sum\limits_{i=1}^m \bigl( \bigl( \Psi \circ \Theta \circ \wt{g}^i \bigr) (u) - \bigl( \Psi \circ \Theta \circ \wt{g}^i \bigr) (v)  \bigr) }  \\
&=       \lim\limits_{m\to+\infty}  \Absbigg{  \sum\limits_{j=0}^{m-1} \bigl( \bigl( \Psi \circ F^j \circ \Theta \circ \wt{g}^m \bigr) (u) - \bigl( \Psi \circ F^j \circ \Theta \circ \wt{g}^m \bigr) (v)  \bigr) }  \\
&=       \lim\limits_{m\to+\infty} \Absbig{ S^F_m \Psi (x_m) - S^F_m \Psi (y_m) }  \\
&\leq  \limsup\limits_{m\to+\infty} C_1 d ( F^m (x_m), F^m (y_m) )^\alpha \\
&= C_1 d(x_0, y_0)^\alpha,
\end{align*}
where $C_1 = C_1(F,\CC,d,\Psi,\alpha)>0$ is a constant depending only on $F$, $\CC$, $d$, $\Psi$, and $\alpha$ from Lemma~\ref{lmSnPhiBound}.

Hence $\beta \in \Holder{\alpha} \bigl( \bigl( X^0_\c \setminus \post F, d \bigr), \C \bigr)$.

We can now extend $\beta$ continuously to $X^0_\c$, denoted by $\beta_\c$. Since $\c\in\{\b,\w\}$ is arbitrary, $\bigl( X^0_\b \setminus \post F \bigr) \cap \bigl( X^0_\b \setminus \post F \bigr) = \CC \setminus \post F$, and $\post F$ is a finite set, we get $\beta_\b|_\CC = \beta_\w|_\CC$. Thus $\beta$ can be extended to $S^2$, and this shows that $\Psi=S_n^f\psi$ is co-homologous to a constant $K_1$ in $\CCC(S^2,\C)$. Therefore, by Lemma~5.53 in \cite{Li17},
\begin{equation*}
\psi = K - \beta + \beta\circ f
\end{equation*}
for some constant $K\in\C$, establishing statement~(iii).

Thus it suffices to prove the claim. The verification of the claim occupies the remaining part of this proof.

We denote $\wt\Psi \coloneqq \Psi \circ \Theta$.

Let $\wt\sigma \in \pi_1(\mathcal{O}_f)$ be a deck transformation on $X$. Then by Definition~\ref{defInverseBranch} and Definition~\ref{defDeckTransf} it is clear that $\wt\sigma \circ \wt{g} \in \Inv(F)$. Thus by the absolute convergence of the series defining $\wt{\beta}_{\wt{g}}$ in (\ref{eqDef_b_h}), for each $u\in\XX$,
\begin{align}    \label{eqPfthmNLI_PfClaim1}
& \wt{\beta}_{\wt{g}} ( (\wt\sigma \circ \wt{g})(u) )  - \wt{\beta}_{\wt{g}} (  \wt{g}(u) ) \notag\\
&\qquad =  \bigl(\wt{\beta}_{\wt{g}} ( (\wt\sigma \circ \wt{g})(u) ) - \wt{\beta}_{\wt{g}}  (u) \bigr)
    -\bigl(\wt{\beta}_{\wt{g}} ( \wt{g}(u) ) - \wt{\beta}_{\wt{g}}  (u) \bigr)  \\
&\qquad =  \sum\limits_{i=1}^{+\infty} \Bigl( \wt\Psi \bigl(\wt{g}^i ((\wt\sigma \circ \wt{g}) (u) ) \bigr)   -  \wt\Psi \bigl(\wt{g}^i (u) \bigr) \Bigr)
                                                                 + \Bigl( \wt\Psi(\wt{g}(u)) - \Psi_{\wt{g}} \Bigr).  \notag
\end{align}
Fix arbitrary $i_0,j_0\in\N$. It follows immediately from (\ref{eqGoodPathLimitLength0}) in Corollary~\ref{corGoodPath} that all three series
\begin{align*}
&   \sum\limits_{i=1}^{+\infty} \AbsBig{ \wt\Psi \bigl(\wt{g}^i ((\wt\sigma \circ \wt{g}) (u) ) \bigr) 
                                           -\wt\Psi \bigl( \wt{g}^{i} \bigl( (\wt\sigma \circ \wt{g})^{j_0} (u) \bigr) \bigr)   },   \\
&  \sum\limits_{i=1}^{+\infty}  \AbsBig{ \wt\Psi \bigl(\wt{g}^i (u) \bigr) 
                                          -\wt\Psi \bigl( \wt{g}^{i} \bigl( (\wt\sigma \circ \wt{g})^{j_0} (u) \bigr) \bigr)   } ,   \text{ and}\\                                       
&   \sum\limits_{j=1}^{+\infty} \AbsBig{ \wt\Psi \bigl(\wt{g}^{i_0} ((\wt\sigma \circ \wt{g})^j (u) ) \bigr) 
                                           -\wt\Psi \bigl( \wt{g}^{i_0} \bigl( (\wt\sigma \circ \wt{g})^{j+1} (u) \bigr) \bigr)   }
\end{align*}
are majorized by convergent geometric series. Hence the right-hand side of (\ref{eqPfthmNLI_PfClaim1}) is equal to
\begin{align*}
&\qquad =   \sum\limits_{i=1}^{+\infty} \Bigl(\wt\Psi \bigl(\wt{g}^i ((\wt\sigma \circ \wt{g}) (u) ) \bigr) 
                                           -\wt\Psi \bigl( \wt{g}^{i} \bigl( (\wt\sigma \circ \wt{g})^{j_0} (u) \bigr) \bigr)   \Bigr)   \\
&\qquad\quad      - \sum\limits_{i=1}^{+\infty} \Bigl(\wt\Psi \bigl(\wt{g}^i (u) \bigr) 
                                          -\wt\Psi \bigl( \wt{g}^{i} \bigl( (\wt\sigma \circ \wt{g})^{j_0} (u) \bigr) \bigr)   \Bigr)
   + \Bigl( \wt\Psi(\wt{g}(u)) -  \Psi_{\wt{g}} \Bigr) \\
&\qquad =   \lim\limits_{j\to+\infty} \sum\limits_{i=1}^{+\infty} \Bigl(\wt\Psi \bigl(\wt{g}^i ((\wt\sigma \circ \wt{g}) (u) ) \bigr) 
                                           -\wt\Psi \bigl( \wt{g}^{i} \bigl( (\wt\sigma \circ \wt{g})^j (u) \bigr) \bigr)   \Bigr)   \\
&\qquad\quad      - \lim\limits_{j\to+\infty} \sum\limits_{i=1}^{+\infty} \Bigl(\wt\Psi \bigl(\wt{g}^i (u) \bigr) 
                                          -\wt\Psi \bigl( \wt{g}^{i} \bigl( (\wt\sigma \circ \wt{g})^j (u) \bigr) \bigr)   \Bigr)
   + \Bigl( \wt\Psi(\wt{g}(u)) -  \Psi_{\wt{g}} \Bigr).
\end{align*}
Then by Lemma~\ref{lmLiftLI}, (\ref{eqSwitchLimitSum}), and (\ref{eqPreCohomology}), the right-hand side of the above equation is equal to
\begin{align*}
 &     \lim\limits_{j\to+\infty} \sum\limits_{i=1}^{+\infty} \Bigl(\wt\Psi \bigl((\wt\sigma\circ\wt{g})^i ((\wt\sigma \circ \wt{g}) (u) ) \bigr) 
                                           -\wt\Psi \bigl( (\wt\sigma\circ\wt{g})^{i} \bigl( (\wt\sigma \circ \wt{g})^j (u) \bigr) \bigr)   \Bigr)   \\
&\quad     - \lim\limits_{j\to+\infty} \sum\limits_{i=1}^{+\infty} \Bigl(\wt\Psi \bigl((\wt\sigma\circ\wt{g})^i (u) \bigr)  
                                          -\wt\Psi \bigl( (\wt\sigma\circ\wt{g})^{i} \bigl( (\wt\sigma \circ \wt{g})^j (u) \bigr) \bigr)   \Bigr) 
                           + \Bigl( \wt\Psi(\wt{g}(u)) - \Psi_{\wt{g}} \Bigr)\\
&\qquad=    \wt{\beta}_{ \wt{\sigma} \circsmall \wt{g} }  ( ( \wt{\sigma} \circ \wt{g}  ) (u) )  -   \wt{\beta}_{ \wt{\sigma} \circsmall \wt{g} }  ( u ) +  \wt\Psi(\wt{g}(u)) - \Psi_{\wt{g}} \\
&\qquad=   - \wt\Psi((\wt\sigma \circ \wt{g})(u)) + \Psi_{ \wt{\sigma} \circsmall \wt{g} }    +  \wt\Psi(\wt{g}(u)) -  \Psi_{\wt{g}} \\
&\qquad=    \Psi_{ \wt{\sigma} \circsmall \wt{g} }     -  \Psi_{\wt{g}} .
\end{align*}
The last equality follows from $\wt{\Psi}\circ\wt{\sigma}= \Psi\circ(\Theta\circ\wt{\sigma}) = \Psi\circ\Theta = \wt{\Psi}$. Since $\wt{g} \: \XX \rightarrow \XX$ is surjective, we can conclude that for each $v\in\XX$ and each $\wt{\sigma} \in \pi_1( \mathcal{O}_f )$,
\begin{equation}  \label{eqPfthmNLI_PfClaim2}
\wt{\beta}_{\wt{g}} ( \wt{\sigma} (v) ) - \wt{\beta}_{\wt{g}} ( v ) =  \Psi_{ \wt{\sigma} \circsmall \wt{g} }     -  \Psi_{\wt{g}} .
\end{equation}

The claim follows once we show that $ \Psi_{ \wt{\sigma} \circsmall \wt{g} }    =  \Psi_{\wt{g}}$ for each $\wt{\sigma} \in \pi_1( \mathcal{O}_f )$. We argue by contradiction and assume that $\Psi_{ \wt{\sigma} \circsmall \wt{g} }    -  \Psi_{\wt{g}} \neq 0$ for some $\wt{\sigma} \in \pi_1( \mathcal{O}_f )$. Then by (\ref{eqPfthmNLI_PfClaim2}), for each $k\in \N$,
\begin{equation*}
        \Psi_{ \wt{\sigma}^k \circsmall \wt{g} }    -  \Psi_{\wt{g}}  
    = \wt{\beta}_{\wt{g}} \bigl( \wt{\sigma}^k (v) \bigr) - \wt{\beta}_{\wt{g}} ( v ) 
    = \sum\limits_{i=0}^{k-1}  \Bigl(   \wt{\beta}_{\wt{g}} \bigl( \wt{\sigma} \bigl( \bigl( \wt{\sigma}^i (v) \bigr) \bigr) \bigr) -  \wt{\beta}_{\wt{g}} \bigl( \wt{\sigma}^i (v) \bigr)   \Bigr)
    = k \bigl( \Psi_{ \wt{\sigma} \circsmall \wt{g} }    -  \Psi_{\wt{g}}   \bigr).
\end{equation*}
However, by (\ref{eqPfthmNLI_DefPhi_wt_h}), Proposition~\ref{propInvBranchFixPt}, and Theorem~\ref{thmETMBasicProperties}~(ii), 
\begin{align*}
\card \bigl\{  \Psi_{ \wt{\sigma}^k \circsmall \wt{g} }  \,\big|\, k\in\N \bigr\} 
\leq &\card \bigl\{  \Psi_{ \wt{h} }  \,\big|\, \wt{h}\in \Inv(F) \bigr\} \\
\leq &\card \{ \Psi(x) \,|\, x\in S^2, \, F(x)=x \}
< +\infty. 
\end{align*}
This is a contradiction.

The claim is now proved, establishing the implication  (ii)$\implies$(iii).
\end{proof}

\section{Ruelle operators and split Ruelle operators}    \label{sctSplitRuelleOp}

In this section, we define appropriate variations of the Ruelle operator on the suitable function spaces in our context and establish some important inequalities that will be used later. More precisely, in Subsection~\ref{subsctSplitRuelleOp_Construction}, for an expanding Thurston map $f$ with a forward invariant Jordan curve $\CC\subseteq S^2$ and a complex-valued H\"{o}lder continuous function $\psi$, we ``split'' the Ruelle operator $\RR_{\psi} \: \CCC(S^2,\C) \rightarrow \CCC(S^2,\C)$ into pieces $\RR_{\psi,X^0_\c,E}^{(n)} \: \CCC(E,\C) \rightarrow C\bigl(X^0_\c,\C\bigr)$, for $\c\in\{\b,\w\}$, $n\in\N_0$, and a union $E$ of $n$-tiles in the cell decomposition $\X^n(f,\CC)$ of $S^2$ induced by $f$ and $\CC$. Such construction is crucial to the proof of Proposition~\ref{propTelescoping} where the image of characteristic functions supported on $n$-tiles under $\RR_{\psi,X^0_\c,E}^{(n)}$ are used to relate periodic points and preimage points of $f$. We then define the \emph{split Ruelle operators} $\RRR_\psi$ on the product space $\CCC\bigl(X^0_\b,\C\bigr) \times \CCC\bigl(X^0_\w, \C\bigr)$ by piecing together $\RR_{\psi,X^0_{\c_1},X^0_{\c_2}}^{(1)}$, $\c_1,\c_2\in\{\b,\w\}$. Subsection~\ref{subsctSplitRuelleOp_BasicIneq} is devoted to establishing various inequalities, among them the \emph{basic inequalities} in Lemma~\ref{lmBasicIneq}, that are indispensable in the arguments in Section~\ref{sctDolgopyat}. In Subsection~\ref{subsctSplitRuelleOp_SpectralGap}, we verify the spectral gap for $\RRR_\psi$ that are essential in the proof of the basic inequalities.

\subsection{Construction}   \label{subsctSplitRuelleOp_Construction}

\begin{lemma}   \label{lmExpToLiniear}
Let $f$, $\CC$, $d$, $\Lambda$, $\alpha$ satisfy the Assumptions. Fix a constant $T>0$. Then for all $n\in\N$, $X^n\in\X^n(f,\CC)$, $x,x'\in X^n$, and $\psi\in\Holder{\alpha}((S^2,d),\C)$ with $\Hseminorm{\alpha,\, (S^2,d)}{\Re(\psi)} \leq T$, we have
\begin{equation}  \label{eqExpToLinear}
\abs{1-\exp(S_n\psi(x)-S_n\psi(x'))} \leq C_{10} \Hseminorm{\alpha,\, (S^2,d)}{\psi} d(f^n(x),f^n(x'))^\alpha,
\end{equation}
where the constant
\begin{equation}     \label{eqDefC10}
C_{10} = C_{10}(f,\CC,d,\alpha,T) \coloneqq \frac{2 C_0 }{1-\Lambda^{-\alpha}} \exp\biggl(\frac{C_0 T }{1-\Lambda^{-\alpha}}\bigl(\diam_d(S^2)\bigr)^\alpha\biggr)  >1
\end{equation}
depends only on $f$, $\CC$, $d$, $\alpha$, and $T$. Here $C_0>1$ is a constant from Lemma~\ref{lmMetricDistortion} depending only on $f$, $\CC$, and $d$.
\end{lemma}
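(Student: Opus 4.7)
The plan is to reduce the statement to the elementary complex-analytic inequality
\begin{equation*}
\abs{1 - e^z} \leq \abs{z}\, e^{\abs{\Re(z)}}  \qquad \text{for all } z \in \C,
\end{equation*}
which follows from the representation $e^z - 1 = z \int_0^1 e^{tz}\,\mathrm{d}t$ and the crude bound $\abs{e^{tz}} = e^{t\Re(z)} \leq e^{\abs{\Re(z)}}$ for $t\in[0,1]$. Setting $z \coloneqq S_n\psi(x) - S_n\psi(x')$, the task then splits into controlling $\abs{z}$ linearly in $d(f^n(x),f^n(x'))^\alpha$ and controlling $\abs{\Re(z)}$ by the (universal) constant $\frac{C_0 T}{1-\Lambda^{-\alpha}}(\diam_d(S^2))^\alpha$.

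First I would establish a complex-valued analogue of Lemma~\ref{lmSnPhiBound}, namely
\begin{equation*}
\abs{S_n\psi(x) - S_n\psi(x')} \leq \frac{C_0 \Hseminorm{\alpha,\,(S^2,d)}{\psi}}{1-\Lambda^{-\alpha}}\, d(f^n(x),f^n(x'))^\alpha,
\end{equation*}
for any $\psi\in\Holder{\alpha}((S^2,d),\C)$ and any $x,x'\in X^n$. The argument is identical to the one given for real-valued potentials in Lemma~\ref{lmSnPhiBound}: decompose
\begin{equation*}
S_n\psi(x) - S_n\psi(x') = \sum_{j=0}^{n-1} \bigl(\psi(f^j(x)) - \psi(f^j(x'))\bigr),
\end{equation*}
note that $f^j(x),f^j(x')\in f^j(X^n)\in\X^{n-j}(f,\CC)$ by Proposition~\ref{propCellDecomp}~(i), and apply Lemma~\ref{lmMetricDistortion} (with ``$n$'' and ``$k$'' chosen as $n-j$ and $0$) to obtain $d(f^j(x),f^j(x')) \leq C_0 \Lambda^{-(n-j)} d(f^n(x),f^n(x'))$. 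The H\"older estimate on $\psi$ and summation of the resulting geometric series in $\Lambda^{-\alpha}$ yield the bound. Applying the identical chain of inequalities to the real-valued H\"older function $\Re(\psi)$ and then invoking $d(f^n(x),f^n(x'))\leq \diam_d(S^2)$ together with the hypothesis $\Hseminorm{\alpha,\,(S^2,d)}{\Re(\psi)}\leq T$ gives
\begin{equation*}
\abs{\Re(z)} \leq \frac{C_0 T}{1-\Lambda^{-\alpha}}\bigl(\diam_d(S^2)\bigr)^\alpha.
\end{equation*}

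Finally I would combine the three displayed inequalities above to conclude
\begin{equation*}
\abs{1 - \exp(z)} \leq \frac{C_0 \Hseminorm{\alpha,\,(S^2,d)}{\psi}}{1-\Lambda^{-\alpha}} \exp\biggl( \frac{C_0 T}{1-\Lambda^{-\alpha}}\bigl(\diam_d(S^2)\bigr)^\alpha \biggr) d(f^n(x),f^n(x'))^\alpha,
\end{equation*}
which is exactly the required bound (\ref{eqExpToLinear}) with a comfortable factor of $2$ to spare in the definition of $C_{10}$ in (\ref{eqDefC10}). There is no serious obstacle here: everything reduces to the distortion lemma already established, and the role of the assumption $\Hseminorm{\alpha,\,(S^2,d)}{\Re(\psi)}\leq T$ is precisely to absorb the exponential factor $e^{\abs{\Re(z)}}$ into the universal constant $C_{10}$, preventing the bound from blowing up when $\Im(\psi)$ has large H\"older seminorm.
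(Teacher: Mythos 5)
Your proof is correct, and it takes a mildly different route from the paper's.  The paper estimates $\abs{1 - e^z}$ by first writing $1 - e^{a+\I b} = (1 - e^a) + e^a\bigl(1 - e^{\I b}\bigr)$ and then applying the two real-variable inequalities $\abs{1-e^y}\leq\abs{y}e^{\abs{y}}$ and $\abs{1-e^{\I y}}\leq\abs{y}$; this produces separate contributions proportional to $\Hseminorm{\alpha,\,(S^2,d)}{\Re(\psi)}$ and $\Hseminorm{\alpha,\,(S^2,d)}{\Im(\psi)}$, whose sum is bounded by $2\Hseminorm{\alpha,\,(S^2,d)}{\psi}$, and that is where the factor $2$ in $C_{10}$ comes from.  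You instead use the single complex inequality $\abs{1-e^z}\leq\abs{z}\,e^{\abs{\Re(z)}}$ (cleanly justified by $e^z-1 = z\int_0^1 e^{tz}\,\mathrm{d}t$), which avoids the real/imaginary split, only ever invokes the full seminorm $\Hseminorm{\alpha,\,(S^2,d)}{\psi}$, and — as you note — produces a bound without the factor of $2$, so your estimate is in fact slightly sharper than the stated one.  The reduction to the distortion lemma is the same in both proofs: the complex-valued version of Lemma~\ref{lmSnPhiBound} is an immediate transcription of the real-valued proof (triangle inequality plus Lemma~\ref{lmMetricDistortion}), and you use it to bound $\abs{z}$ in the $d(f^n(x),f^n(x'))^\alpha$ scale while using the hypothesis $\Hseminorm{\alpha,\,(S^2,d)}{\Re(\psi)}\leq T$ and $d\leq\diam_d(S^2)$ to keep $e^{\abs{\Re(z)}}$ uniformly bounded, which is precisely what the hypothesis is for.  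No gaps.
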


\begin{proof}
Fix $T>0$, $n\in\N$, $X^n\in\X^n(f,\CC)$, $x,x'\in X^n$, and $\psi\in\Holder{\alpha}((S^2,d),\C)$ with $\Hseminorm{\alpha,\, (S^2,d)}{\Re(\psi)} \leq T$. By Lemma~\ref{lmSnPhiBound}, for each $\phi\in\Holder{\alpha}(S^2,d)$,
\begin{equation}   \label{eqPflmExpToLinear1}
\abs{S_n \phi(x)-S_n \phi(x')}  \leq \frac{C_0\Hseminorm{\alpha,\, (S^2,d)}{\phi} }{1-\Lambda^{-\alpha}}  d(f^n(x),f^n(x') )^\alpha.
\end{equation}

Then by (\ref{eqPflmExpToLinear1}) and the fact that $\abs{1-e^y} \leq  \abs{y}e^{\abs{y}}$ and $\abs{1-e^{\I y}} \leq \abs{y}$ for $y\in\R$, we get
\begin{align*}
&             \abs{1-\exp(S_n\psi(x)-S_n\psi(x'))}\\
&\qquad\leq   \abs{1-\exp(S_n \Re(\psi)(x)-S_n\Re(\psi)(x'))} \\
&\qquad\quad +\exp(S_n \Re(\psi)(x)-S_n\Re(\psi)(x'))  \abs{1-\exp(\I S_n \Im(\psi)(x) - \I S_n\Im(\psi)(x'))} \\
&\qquad\leq   \frac{C_0\Hseminorm{\alpha,\, (S^2,d)}{\Re(\psi)} }{1-\Lambda^{-\alpha}}  d(f^n(x),f^n(x'))^\alpha  \exp\biggl(\frac{ C_0 T}{1-\Lambda^{-\alpha}}\bigl(\diam_d(S^2)\bigr)^\alpha\biggr)  \\
&\qquad\quad +\exp\biggl(\frac{C_0 T }{1-\Lambda^{-\alpha}}\bigl(\diam_d(S^2)\bigr)^\alpha\biggr) \frac{C_0\Hseminorm{\alpha,\, (S^2,d)}{\Im(\psi)} }{1-\Lambda^{-\alpha}}  d(f^n(x),f^n(x'))^\alpha\\
&\qquad\leq   C_{10}  \Hseminorm{\alpha,\, (S^2,d)}{\psi} d(f^n(x),f^n(x'))^\alpha.
\end{align*}
Here the constant $C_{10} =C_{10} (f, \CC, d, \alpha, T )$ is defined in (\ref{eqDefC10}).
\end{proof}

Fix an expanding Thurston map $f\: S^2 \rightarrow S^2 $ with a Jordan curve $\CC\subseteq S^2$ satisfying $\post f\subseteq \CC$. Let $d$ be a visual metric for $f$ on $S^2$, and $\psi\in\Holder{\alpha}((S^2,d),\C)$ a complex-valued H\"{o}lder continuous function.

Let $n\in\N$, $\c \in\{\b,\w\}$, and $x \in\inte (X^0_\c)$, where $X^0_\b$ (resp.\ $X^0_\w$) is the black (resp.\ white) $0$-tile. If $E\subseteq S^2$ is a union of $n$-tiles in $\X^n(f,\CC)$, $u\in\CCC((E,d),\C)$ a complex-valued continuous function defined on $E$, and if we define a function $v\in B(S^2,\C)$ by
\begin{equation}   \label{eqExtendToS2}
v(y) =  \begin{cases} u(y) & \text{if } y\in E, \\ 0  & \text{otherwise}, \end{cases}
\end{equation}
then
\begin{equation}  \label{eqDefLcInte}
\RR_\psi^n (v) (x) = \sum\limits_{\substack{X^n\in\X^n_\c\\ X^n\subseteq E}} u\bigl((f^n|_{X^n})^{-1}(x)\bigr) \exp \bigl(S_n\psi\bigl((f^n|_{X^n})^{-1}(x)\bigr)\bigr).
\end{equation}

Note that by default, a summation over an empty set is equal to $0$. We will always use this convention in this paper. Inspired by (\ref{eqDefLcInte}), we give the following definition.

\begin{definition}   \label{defSplitRuelle}
Let $f\: S^2 \rightarrow S^2 $ be an expanding Thurston map with a Jordan curve $\CC\subseteq S^2$ satisfying $\post f\subseteq \CC$. Let $d$ be a visual metric for $f$ on $S^2$, and $\psi\in\Holder{\alpha}((S^2,d),\C)$ a complex-valued H\"{o}lder continuous function. Let $n\in\N_0$, and $E\subseteq S^2$ be a union of $n$-tiles in $\X^n(f,\CC)$. We define a map $\RR_{\psi,X^0_\c,E}^{(n)} \: \CCC(E,\C) \rightarrow \CCC\bigl(X^0_\c,\C\bigr)$, for each $\c\in\{\b,\w\}$, by
\begin{equation}  \label{eqDefLc}
\RR_{\psi,X^0_\c,E}^{(n)} (u) (y) = \sum\limits_{\substack{X^n\in\X^n_\c\\ X^n\subseteq E}} u\bigl((f^n|_{X^n})^{-1}(y)\bigr) \exp \bigl(S_n\psi\bigl((f^n|_{X^n})^{-1}(y)\bigr)\bigr),
\end{equation}
for each complex-valued continuous function $u\in\CCC(E,\C)$ defined on $E$, and each point $y \in X^0_\c$. 
\end{definition}

Note that $\RR_{\psi,X^0_\c,E}^{(0)} (u) = \begin{cases} u & \text{if } X^0_\c \subseteq E \\ 0  & \text{otherwise}  \end{cases}$, for $\c\in\{\b,\w\}$, whenever the expression on the left-hand side of the equation makes sense.

Note that as seen in (\ref{eqDefLcInte}), for each $\c\in\{\b,\w\}$, we have
\begin{equation}    \label{eqLbwInteMatchLbw}
\bigl(\RR_{\psi,X^0_\c,E}^{(n)} (u)\bigr)\big|_{\inte (X^0_\c)} = \bigl( \RR_{\psi}^n (v)\bigr) \big|_{\inte (X^0_\c)},
\end{equation}
where $v\in B(S^2,\C)$ is defined in (\ref{eqExtendToS2}).

\begin{lemma}  \label{lmLDiscontProperties}
Let $f$, $\CC$, $d$, $\psi$, $\alpha$ satisfy the Assumptions. Fix numbers $n, m\in\N_0$ and a union $E\subseteq S^2$ of $n$-tiles in $\X^n(f,\CC)$, i.e.,
\begin{equation*}
E= \bigcup \{  X^n  \in \X^n(f,\CC)  \,|\,   X^n\subseteq E\}.
\end{equation*}  
Then for each $X^0\in \X^0(f,\CC)$ and each complex-valued continuous function $u\in \CCC(E,\C)$, we have
\begin{equation}    \label{eqLDiscontProperties_Continuity}
\RR_{\psi, X^0, E}^{(n)} (u) \in \CCC(X^0,\C),
\end{equation}   
and
\begin{equation}  \label{eqLDiscontProperties_Split}
\RR_{\psi,X^0,E}^{(n+m)} (u)  = \sum\limits_{Y^0\in\X^0(f,\CC)}   \RR_{\psi,X^0,Y^0}^{(m)} \Bigl( \RR_{\psi,Y^0,E}^{(n)} (u)  \Bigr). 
\end{equation}

If, in addition, we assume that $u\in\Holder{\alpha}((E,d),\C)$ is H\"{o}lder continuous, then
\begin{equation}    \label{eqLDiscontProperties_Holder}
\RR_{\psi, X^0, E}^{(n)} (u) \in \Holder{\alpha}((X^0,d),\C).
\end{equation}
\end{lemma}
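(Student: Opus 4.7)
\smallskip

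\noindent\textbf{Proof plan for Lemma~\ref{lmLDiscontProperties}.} The plan is to verify the three conclusions by unpacking Definition~\ref{defSplitRuelle} term by term and exploiting the combinatorial compatibility between the cell decompositions $\DD^n(f,\CC)$ and iterates of $f$ from Proposition~\ref{propCellDecomp}. Write $X^0 = X^0_\c$ with $\c\in\{\b,\w\}$. The case $n=0$ is immediate from the convention stated after Definition~\ref{defSplitRuelle}, so assume $n\geq 1$.

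For \eqref{eqLDiscontProperties_Continuity}, I would first observe that the sum in \eqref{eqDefLc} is finite, since $\card(\X^n(f,\CC))=2(\deg f)^n$ by Proposition~\ref{propCellDecomp}~(iv). For each summand, Proposition~\ref{propCellDecomp}~(i) ensures that $f^n|_{X^n}\:X^n\to X^0_\c$ is a homeomorphism, so its inverse branch $(f^n|_{X^n})^{-1}\:X^0_\c\to X^n$ is continuous. Composing with the continuous functions $u$ and $\exp\circ S_n\psi$ yields a continuous function on $X^0_\c$, and a finite sum of continuous functions is continuous. The proof of \eqref{eqLDiscontProperties_Holder} follows the same outline but also uses the quantitative distortion provided by Lemma~\ref{lmMetricDistortion}: for $y,y'\in X^0_\c$ we have $d\bigl((f^n|_{X^n})^{-1}(y),(f^n|_{X^n})^{-1}(y')\bigr)\leq C_0\Lambda^{-n}d(y,y')$, so $u\circ(f^n|_{X^n})^{-1}$ is H\"older with seminorm bounded by $(C_0\Lambda^{-n})^\alpha\Hseminorm{\alpha,\,(E,d)}{u}$. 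Splitting $\psi$ into real and imaginary parts and applying Lemma~\ref{lmSnPhiBound} to each, the function $S_n\psi\circ(f^n|_{X^n})^{-1}$ is $\alpha$-H\"older on $X^0_\c$ with an explicit seminorm bound, and since $\Re(S_n\psi)$ is uniformly bounded on $X^n$ (the seminorm bound together with compactness gives this), $\exp\circ S_n\psi\circ(f^n|_{X^n})^{-1}$ is H\"older as well. Therefore the product $(u\cdot\exp\circ S_n\psi)\circ(f^n|_{X^n})^{-1}$ is H\"older on $X^0_\c$, and a finite sum of H\"older functions is H\"older.

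For \eqref{eqLDiscontProperties_Split}, the key is the natural bijection between $(n+m)$-tiles and compatible pairs of tiles. Given $Z\in\X^{n+m}_\c$ with $Z\subseteq E$, set $X^n$ to be the unique $n$-tile containing $Z$ (using that $\DD^{n+m}$ refines $\DD^n$ by Proposition~\ref{propCellDecomp}), and set $Y^m\coloneqq f^n(Z)\in\X^m(f,\CC)$. Since $f^m(Y^m)=f^{n+m}(Z)=X^0_\c$ we have $Y^m\in\X^m_\c$, and $Y^m$ is contained in the unique $0$-tile $Y^0=f^n(X^n)$, which forces $X^n$ to be an $n$-tile of color matching $Y^0$. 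Conversely, given a compatible pair $(X^n,Y^m)$, the set $X^n\cap f^{-n}(Y^m)$ is an $(n+m)$-tile in $\X^{n+m}_\c$ by Proposition~\ref{propCellDecomp}~(i) and (ii). Under this bijection, the inverse branches compose as $(f^{n+m}|_Z)^{-1}=(f^n|_{X^n})^{-1}\circ(f^m|_{Y^m})^{-1}$ on $X^0_\c$, and the cocycle identity $S_{n+m}\psi=S_n\psi+S_m\psi\circ f^n$ (see \eqref{eqDefSnPt}) gives
\begin{equation*}
S_{n+m}\psi\bigl((f^{n+m}|_Z)^{-1}(y)\bigr)=S_n\psi\bigl((f^n|_{X^n})^{-1}(z)\bigr)+S_m\psi\bigl((f^m|_{Y^m})^{-1}(y)\bigr),
\end{equation*}
where $z=(f^m|_{Y^m})^{-1}(y)\in Y^0$. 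Substituting into \eqref{eqDefLc} for $\RR_{\psi,X^0,E}^{(n+m)}(u)(y)$ and regrouping the double sum exactly produces the right-hand side of \eqref{eqLDiscontProperties_Split}, where the outer sum over $Y^0\in\X^0(f,\CC)$ ranges over the two possible colors of $Y^0$.

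The main obstacle will be the indexing bookkeeping in the split property: one must carefully keep track of which tiles have which colors (the color of an $m$-tile is determined by its image under $f^m$, not by the $0$-tile containing it), and verify that the map $Z\mapsto(X^n,Y^m)$ is indeed a bijection onto the correct indexing set appearing after inserting the definition of $\RR_{\psi,X^0,Y^0}^{(m)}\circ\RR_{\psi,Y^0,E}^{(n)}$. A secondary technical point is that Lemma~\ref{lmSnPhiBound} as stated is for real-valued potentials; decomposing $\psi=\Re(\psi)+\I\Im(\psi)$ with both parts in $\Holder{\alpha}(S^2,d)$ resolves this routinely and only costs a factor of $2$ in the constants.
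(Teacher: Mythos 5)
Your proposal is correct. For \eqref{eqLDiscontProperties_Continuity} and \eqref{eqLDiscontProperties_Holder} you argue essentially as the paper does, using Proposition~\ref{propCellDecomp}~(i) for continuity and Lemmas~\ref{lmMetricDistortion} and \ref{lmSnPhiBound} (plus the boundedness-then-exponentiate step, which is precisely Lemma~\ref{lmExpToLiniear}) for the H\"older estimate; the paper keeps the sum over $n$-tiles together and bounds the seminorm by $\Norm{\RR_{\Re(\psi)}^n(\mathbbm 1)}_{\CCC^0(S^2)}$ and $\Norm{\RR^{(n)}_{\Re(\psi),X^0,E}(\abs{u})}_{\CCC^0(X^0)}$ to get an $n$-uniform bound, whereas your term-by-term argument gives a constant that degrades with $\card\X^n$, but since only qualitative membership in $\Holder{\alpha}$ is claimed, both suffice. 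For \eqref{eqLDiscontProperties_Split} your approach is genuinely different from the paper's: you build the explicit bijection $Z\leftrightarrow(\c',Y^m,X^n)$ between $(n+m)$-tiles $Z\in\X^{n+m}_\c$ with $Z\subseteq E$ and admissible triples, then use the composition of inverse branches and the cocycle identity $S_{n+m}\psi = S_n\psi + S_m\psi\circ f^n$ to match the double sum term by term; your bijection and the identity $(f^{n+m}|_Z)^{-1} = (f^n|_{X^n})^{-1}\circ(f^m|_{Y^m})^{-1}$ are both correct, and your argument holds pointwise on all of $X^0$. The paper instead extends $u$ and $\RR_{\psi,Y^0,E}^{(n)}(u)$ by zero to functions $v, w_{Y^0}\in B(S^2,\C)$, invokes \eqref{eqLbwInteMatchLbw} to identify the split operators with the ordinary $\RR_\psi^m$ on $\inte(X^0)$, verifies the identity there by manipulating preimage sums, and then extends it to $\partial X^0$ by continuity (using \eqref{eqLDiscontProperties_Continuity}). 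Your direct combinatorial route avoids the interior-then-density detour at the cost of the tile-indexing bookkeeping you yourself flagged; either proof is complete and correct.
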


\begin{rem}    \label{rmSplitRuelleCoincide}
Note that in the above context $\RR_\psi^n(v) \in B(S^2,\C)$ may not be continuous on $S^2$ if $E\neq S^2$, where $v$ is defined in (\ref{eqExtendToS2}) extending $u$ to $S^2$. If $E=S^2$, then it follows immediately from (\ref{eqLbwInteMatchLbw}) and (\ref{eqLDiscontProperties_Continuity}) that for each $X^0\in\X^0$,
\begin{equation*}
  \RR_{\psi,X^0,E}^{(n)} (u)   = \bigl( \RR_{\psi}^n (u) \bigr) \big|_{X^0}.
\end{equation*}
Hence by (\ref{eqLDiscontProperties_Holder}) and the linear local connectivity of $(S^2,d)$, it can be shown that
\begin{equation*}
\RR_\psi^n  ( \Holder{\alpha}((S^2,d),\C)  ) \subseteq \Holder{\alpha}((S^2,d),\C).
\end{equation*} 
We will not use the last fact in this paper (except in Remark~\ref{rmHolderOpNormCompare}).
\end{rem}

\begin{proof}
Without loss of generality, we assume in this proof that $X^0=X^0_\b$ is the black $0$-tile.

The case of Lemma~\ref{lmLDiscontProperties} when either $m=0$ or $n=0$ follows immediately from Definition~\ref{defSplitRuelle}. Thus without loss of generality, we can assume $m,n\in\N$.

The continuity of $\RR_{\psi, X^0_\b, E}^{(n)} (u)$ follows trivially from (\ref{eqDefLc}) and Proposition~\ref{propCellDecomp}~(i).

To establish (\ref{eqLDiscontProperties_Split}), we first fix an arbitrary point $x\in\inte (X^0_\b)$, and define a function $v\in B(S^2,\C)$ by
\begin{equation*}
v(z)  \coloneqq  \begin{cases} u(z) & \text{if } z\in E, \\ 0  & \text{otherwise}. \end{cases}
\end{equation*}
Similarly, we define functions $w_{Y^0}\in B(S^2,d)$, for $Y^0\in \X^0$, by
\begin{equation*}
w_{Y^0}(y)  \coloneqq  \begin{cases} \RR_{\psi,Y^0,E}^{(n)}(u)(y) & \text{if } y\in Y^0, \\ 0  & \text{otherwise}. \end{cases}
\end{equation*}
Then by (\ref{eqLbwInteMatchLbw}) and the fact that $f^{-m}(x) \cap \CC \neq \emptyset$, we get
\begin{align*}
     \sum\limits_{Y^0\in\X^0}   \RR_{\psi,X^0,Y^0}^{(m)} \Bigl( \RR_{\psi,Y^0,E}^{(n)} (u)  \Bigr) (x)  
= &  \sum\limits_{Y^0\in\X^0}    \RR_\psi^m (w_{Y^0}) (x)   
=    \sum\limits_{Y^0\in\X^0}  \sum\limits_{\substack{ y\in f^{-m}(x)\\y\in Y^0} } e^{S_m\psi(y)}    w_{Y^0}  (y)\\
= &  \sum\limits_{Y^0\in\X^0}  \sum\limits_{\substack{ y\in f^{-m}(x)\\y\in Y^0} } e^{S_m\psi(y)}   \bigl( \RR_\psi^n(v) \bigr) (y) \\
= &  \sum\limits_{Y^0\in\X^0}  \sum\limits_{\substack{ y\in f^{-m}(x)\\y\in Y^0} } e^{S_m\psi(y)}  \sum\limits_{\substack{ z\in f^{-n}(y)\\z\in E}}  e^{S_n\psi(z)}   u(z)\\
= &  \sum\limits_{  y\in f^{-m}(x)  }  \sum\limits_{\substack{ z\in f^{-n}(y)\\z\in E}} e^{S_m\psi(y) + S_n\psi(z)}   u(z)  \\
= &  \sum\limits_{\substack{ z\in f^{-(n+m)}(x)\\z\in E}} e^{S_{n+m}\psi(z)}   u(z) 
=    \RR_\psi^{n+m}(v)(x)\\
= &  \RR_{\psi,X^0_\b,E}^{(n+m)} (u)(x).
\end{align*}
The identity (\ref{eqLDiscontProperties_Split}) is now established by the continuity of two sides of the equation above.

\smallskip

Finally, to prove (\ref{eqLDiscontProperties_Holder}), we first fix two distinct points $x,x'\in X^0_\b$. Denote $y_{X^n} \coloneqq (f^n|_{X^n})^{-1}(x)$ and $y'_{X^n} \coloneqq (f^n|_{X^n})^{-1}(x')$ for each $X^n\in\X^n_\b$.

By Lemma~\ref{lmMetricDistortion}, Lemma~\ref{lmSnPhiBound}, and Lemma~\ref{lmExpToLiniear}, we have
\begin{align*}
     & \frac{1}{d(x,x')^\alpha} \AbsBig{   \RR_{\psi,X^0_\b,E}^{(n)}(u) (x) -  \RR_{\psi,X^0_\b,E}^{(n)}(u) (x')  } \\
\leq & \frac{1}{d(x,x')^\alpha} \sum\limits_{\substack{X^n\in\X^n_\b \\ X^n\subseteq E} }  \Absbig{  e^{S_n\psi(y_{X^n})} u(y_{X^n}) - e^{S_n\psi(y'_{X^n})} u(y'_{X^n}) } \\
\leq & \frac{1}{d(x,x')^\alpha} \sum\limits_{\substack{X^n\in\X^n_\b \\ X^n\subseteq E} }   \Bigl(   \Absbig{e^{S_n\psi(y_{X^n})}}  \abs{ u(y_{X^n}) - u(y'_{X^n})}
        +   \Absbig{e^{S_n\psi(y_{X^n})}  -  e^{S_n\psi(y'_{X^n})}}    \abs{  u(y'_{X^n})} \Bigr) \\
\leq & \frac{1}{d(x,x')^\alpha} \sum\limits_{\substack{X^n\in\X^n_\b \\ X^n\subseteq E} }   e^{S_n \Re(\psi)(y_{X^n})} \Hseminorm{\alpha,\, (E,d)}{u} d(y_{X^n},y'_{X^n})^\alpha \\
     & + \frac{1}{d(x,x')^\alpha} \sum\limits_{\substack{X^n\in\X^n_\b \\ X^n\subseteq E} } \Absbig{1- e^{ S_n\psi(y_{X^n}) - S_n\psi(y'_{X^n}) } }  e^{S_n \Re(\psi)(y'_{X^n})}  \abs{  u(y'_{X^n})}  \\
\leq & \Hseminorm{\alpha,\, (E,d)}{u} \frac{ C_0^\alpha  }{ \Lambda^{\alpha n}  } \sum\limits_{X^n\in\X^n} e^{S_n\Re(\psi)(y_{X^n})} 
      + C_{10} \Hseminorm{\alpha,\, (S^2,d)}{\psi}   \sum\limits_{\substack{X^n\in\X^n_\b \\ X^n\subseteq E} }  e^{S_n\Re(\psi)(y'_{X^n})} \abs{u(y'_{X^n})} \\
\leq &  \frac{ C_0  }{ \Lambda^{\alpha n}  }  \Hseminorm{\alpha,\, (E,d)}{u}  \Normbig{\RR_{\Re(\psi)}^n (\mathbbm{1}_{S^2}) }_{\CCC^0(S^2)}  
      + C_{10} \Hseminorm{\alpha,\, (S^2,d)}{\psi}  \NormBig{\RR_{\Re(\psi),X^0_\b,E}^{(n)} (\abs{u}) }_{\CCC^0(S^2)}  ,      
\end{align*}
where $C_0 > 1$ is a constant depending only on $f$, $\CC$, and $d$ from Lemma~\ref{lmMetricDistortion}, and $C_{10}>1$ is a constant depending only on $f$, $\CC$, $d$, $\alpha$, and $\psi$ from Lemma~\ref{lmExpToLiniear}. Therefore (\ref{eqLDiscontProperties_Holder}) holds.
\end{proof}

\begin{definition}[Split Ruelle operators]   \label{defSplitRuelleOnProductSpace}
Let $f\: S^2 \rightarrow S^2$ be an expanding Thurston map with a Jordan curve $\CC\subseteq S^2$ satisfying $f(\CC)\subseteq\CC$ and $\post f\subseteq \CC$. Let $d$ be a visual metric for $f$ on $S^2$, and $\psi\in\Holder{\alpha}((S^2,d),\C)$ a complex-valued H\"{o}lder continuous function with an exponent $\alpha\in(0,1]$. Let $X^0_\b,X^0_\w\in\X^0(f,\CC)$ be the black $0$-tile and the while $0$-tile, respectively. The \defn{split Ruelle operator} 
\begin{equation*}
\RRR_\psi \: \CCC\bigl(X^0_\b,\C\bigr) \times \CCC\bigl(X^0_\w, \C\bigr) \rightarrow \CCC \bigl(X^0_\b, \C\bigr) \times \CCC \bigl(X^0_\w,\C\bigr)
\end{equation*}
on the product space $\CCC \bigl(X^0_\b, \C\bigr) \times \CCC \bigl(X^0_\w, \C\bigr)$ is given by
\begin{equation*}    \label{eqDefSplitRuelleOnProductSpace}
\RRR_\psi(u_\b,u_\w) = \Bigl(   \RR_{\psi,X^0_\b,X^0_\b}^{(1)} (u_\b) + \RR_{\psi,X^0_\b,X^0_\w}^{(1)} (u_\w)   ,
                              \RR_{\psi,X^0_\w,X^0_\b}^{(1)} (u_\b) + \RR_{\psi,X^0_\w,X^0_\w}^{(1)} (u_\w)           \Bigr)
\end{equation*}
for $u_\b \in \CCC\bigl(X^0_\b, \C\bigr)$ and $u_\w \in \CCC \bigl(X^0_\w, \C\bigr)$.
\end{definition}

Note that by (\ref{eqLDiscontProperties_Continuity}) in Lemma~\ref{lmLDiscontProperties}, the operator $\RRR_\psi$ is well-defined. Moreover, by (\ref{eqLDiscontProperties_Holder}) in Lemma~\ref{lmLDiscontProperties}, we have
\begin{equation}    \label{eqSplitRuelleRestrictHolder}
 \RRR_\psi \bigl(  \Holder{\alpha}\bigl(\bigl(X^0_\b,d\bigr),\C\bigr) \times \Holder{\alpha}\bigl(\bigl(X^0_\w,d\bigr),\C\bigr) \bigr) 
  \subseteq        \Holder{\alpha}\bigl(\bigl(X^0_\b,d\bigr),\C\bigr) \times \Holder{\alpha}\bigl(\bigl(X^0_\w,d\bigr),\C\bigr).
\end{equation}

Note that it follows immediately from Definition~\ref{defSplitRuelle} that $\RRR_\psi^0$ is the identity map, and $\RRR_\psi$ is a linear operator on the Banach $\Holder{\alpha}\bigl(\bigl(X^0_\b,d\bigr),\C\bigr) \times \Holder{\alpha}\bigl(\bigl(X^0_\w,d\bigr),\C\bigr)$ equipped with a norm given by $\norm{(u_\b,u_\w)} \coloneqq \max\Bigl\{  \NHnorm{\alpha}{b}{ u_\b }{(X^0_\b,d)}, \, \NHnorm{\alpha}{b}{ u_\w }{(X^0_\w,d)} \Bigr\}$, for each $b\in\R \setminus \{0\}$. See (\ref{eqDefNormalizedHolderNorm}) for the definition of the normalized H\"{o}lder norm $\NHnorm{\alpha}{b}{u}{(E,d)}$.

For each $\c\in\{\b,\w\}$, we define the projection $\pi_\c \: \CCC(X^0_\b,\C) \times \CCC(X^0_\w,\C) \rightarrow \CCC(X^0_\c,\C)$ by
\begin{equation}   \label{eqDefProjections}
\pi_\c(u_\b,u_\w) = u_\c, \qquad \mbox{ for } (u_\b,u_\w)\in \CCC(X^0_\b,\C) \times \CCC(X^0_\w,\C).
\end{equation}

\begin{definition}   \label{defOpHNormDiscont}
Let $f\: S^2 \rightarrow S^2$ be an expanding Thurston map with a Jordan curve $\CC\subseteq S^2$ satisfying $f(\CC)\subseteq\CC$ and $\post f\subseteq \CC$. Let $d$ be a visual metric for $f$ on $S^2$, and $\psi\in\Holder{\alpha}((S^2,d),\C)$ a complex-valued H\"{o}lder continuous function with an exponent $\alpha\in(0,1]$. For all $n\in\N_0$ and $b\in \R\setminus\{0\}$, we write the operator norm
\begin{align}  \label{eqDefNormalizedOpHNormSplitRuelle}
 \NOpHnormD{\alpha}{b}{\RRR_\psi^n}   
\coloneqq  & \sup \bigg\{     \NHnormbig{\alpha}{b}{  \pi_\c \bigl( \RRR_\psi^n (u_\b, u_\w)\bigr) }{(X^0_\c,d)}  \notag  \\
                   & \qquad \quad \quad \,\bigg|\, \begin{array}{ll}  \c\in\{\b,\w\}, \, u_\b\in\Holder{\alpha} ((X^0_\b,d),\C), \, u_\w\in\Holder{\alpha}((X^0_\w,d),\C) \\ 
                     \text{with } \NHnorm{\alpha}{b}{ u_\b}{(X^0_\b,d)} \leq 1 \text{ and } \NHnorm{\alpha}{b}{ u_\w}{(X^0_\w,d)} \leq 1\end{array} \bigg\}.   
\end{align}
For typographical reasons, we write
\begin{equation}  \label{eqDefOpHNormSplitRuelle}
\OpHnormD{\alpha}{\RRR_\psi^n}  \coloneqq  \NOpHnormD{\alpha}{1}{\RRR_\psi^n}   =  \NOpHnormD{\alpha}{-1}{\RRR_\psi^n} .
\end{equation}
\end{definition}

\begin{lemma}   \label{lmSplitRuelleCoordinateFormula}
Let $f$, $\CC$, $d$, $\alpha$, $\psi$ satisfy the Assumptions. We assume in addition that $f(\CC) \subseteq \CC$. Let $X^0_\b,X^0_\w\in\X^0(f,\CC)$ be the black $0$-tile and the while $0$-tile, respectively. 

Then for all $n\in\N_0$, $u_\b \in \CCC \bigl(X^0_\b, \C\bigr)$, and $u_\w \in \CCC \bigl(X^0_\w, \C\bigr)$, 
\begin{equation}    \label{eqSplitRuelleCoordinateFormula}
\RRR_\psi^n(u_\b,u_\w) = \Bigl(   \RR_{\psi,X^0_\b,X^0_\b}^{(n)} (u_\b) + \RR_{\psi,X^0_\b,X^0_\w}^{(n)} (u_\w)   ,
                                \RR_{\psi,X^0_\w,X^0_\b}^{(n)} (u_\b) + \RR_{\psi,X^0_\w,X^0_\w}^{(n)} (u_\w)           \Bigr).
\end{equation}
Consequently,
\begin{align}   \label{eqSplitRuelleOpNormQuot}
    \NOpHnormD{\alpha}{b}{\RRR_\psi^n}  
 =  \sup \Bigg\{ & \frac{   \NHnormbig{\alpha}{b}{   \RR_{\psi, X^0, X^0_\b}^{(n)} (u_\b)  +  \RR_{\psi, X^0, X^0_\w}^{(n)} (u_\w)    }{(X^0,d)}      }{  \max\bigl\{     \NHnorm{\alpha}{b}{u_\b}{(X^0_\b,d)}, \, \NHnorm{\alpha}{b}{u_\w}{(X^0_\w,d)}  \bigr\}   } \\
                &\qquad \,\Bigg|\, \begin{array}{ll}  u_\b\in\Holder{\alpha}((X^0_\b,d),\C), \, u_\w\in\Holder{\alpha}((X^0_\w,d),\C) \\ \text{with } \norm{u_\b}_{\CCC^0(X^0_\b)} \norm{u_\w}_{\CCC^0(X^0_\w)} \neq 0, \, X^0\in \X^0(f,\CC)  \end{array} \Bigg\}.  \notag
\end{align}
\end{lemma}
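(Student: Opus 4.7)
The plan is to prove (\ref{eqSplitRuelleCoordinateFormula}) by induction on $n\in\N_0$, and then to derive (\ref{eqSplitRuelleOpNormQuot}) as an immediate consequence of (\ref{eqSplitRuelleCoordinateFormula}) together with Definition~\ref{defOpHNormDiscont}.

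For the base case $n=0$, the right-hand side of (\ref{eqSplitRuelleCoordinateFormula}) reduces to $(u_\b, u_\w)$ by the convention following Definition~\ref{defSplitRuelle} that $\RR_{\psi,X^0_\c,E}^{(0)}(u)$ equals $u$ if $X^0_\c \subseteq E$ and $0$ otherwise (applied with $E = X^0_\b$ or $E = X^0_\w$). On the other hand, $\RRR_\psi^0$ is the identity by Definition~\ref{defSplitRuelleOnProductSpace}, so the identity holds. The case $n=1$ is exactly the definition of $\RRR_\psi$ in Definition~\ref{defSplitRuelleOnProductSpace}.

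For the induction step, assume (\ref{eqSplitRuelleCoordinateFormula}) holds for some $n\in\N$. Applying $\RRR_\psi$ to both sides of (\ref{eqSplitRuelleCoordinateFormula}) and invoking Definition~\ref{defSplitRuelleOnProductSpace}, the $\c$-th component of $\RRR_\psi^{n+1}(u_\b,u_\w)$ equals
\begin{equation*}
\sum_{\c' \in \{\b,\w\}} \RR_{\psi, X^0_\c, X^0_{\c'}}^{(1)} \biggl( \sum_{\c'' \in \{\b,\w\}} \RR_{\psi, X^0_{\c'}, X^0_{\c''}}^{(n)} (u_{\c''}) \biggr)
\end{equation*}
for each $\c\in\{\b,\w\}$. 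By linearity of each $\RR_{\psi,X^0_\c,X^0_{\c'}}^{(1)}$ (clear from (\ref{eqDefLc})), we may swap the two sums, obtaining
\begin{equation*}
\sum_{\c'' \in \{\b,\w\}} \sum_{\c' \in \{\b,\w\}} \RR_{\psi, X^0_\c, X^0_{\c'}}^{(1)} \Bigl( \RR_{\psi, X^0_{\c'}, X^0_{\c''}}^{(n)} (u_{\c''}) \Bigr).
\end{equation*}
The inner sum over $\c'$ matches exactly the right-hand side of the splitting identity (\ref{eqLDiscontProperties_Split}) from Lemma~\ref{lmLDiscontProperties} (applied with $m=1$ and $E = X^0_{\c''}$), and therefore equals $\RR_{\psi,X^0_\c,X^0_{\c''}}^{(n+1)}(u_{\c''})$. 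This yields precisely (\ref{eqSplitRuelleCoordinateFormula}) with $n$ replaced by $n+1$, completing the induction.

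For (\ref{eqSplitRuelleOpNormQuot}), substitute (\ref{eqSplitRuelleCoordinateFormula}) into (\ref{eqDefNormalizedOpHNormSplitRuelle}). The projection $\pi_\c (\RRR_\psi^n(u_\b,u_\w)) = \RR_{\psi,X^0_\c,X^0_\b}^{(n)}(u_\b) + \RR_{\psi,X^0_\c,X^0_\w}^{(n)}(u_\w)$ by (\ref{eqDefProjections}), and replacing the constraint $\NHnorm{\alpha}{b}{u_\c}{(X^0_\c,d)} \leq 1$ for $\c\in\{\b,\w\}$ by the equivalent ratio formulation (using homogeneity of the normalized H\"older norm under scaling of $(u_\b,u_\w)$) delivers (\ref{eqSplitRuelleOpNormQuot}). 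The main (mild) obstacle is essentially bookkeeping: verifying that taking the supremum over pairs $(u_\b,u_\w)$ with $\max\{\NHnorm{\alpha}{b}{u_\b}{(X^0_\b,d)},\NHnorm{\alpha}{b}{u_\w}{(X^0_\w,d)}\}\leq 1$ is equivalent, after normalization, to the supremum in (\ref{eqSplitRuelleOpNormQuot}); this is standard and requires only that we may assume $\norm{u_\b}_{\CCC^0(X^0_\b)} \norm{u_\w}_{\CCC^0(X^0_\w)} \neq 0$ (the cases where one factor vanishes are recovered by continuity, since we can perturb $u_\b$ or $u_\w$ by an arbitrarily small nonzero constant).
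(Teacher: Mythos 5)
Your proof is correct and follows essentially the same route as the paper: the same induction on $n$, with base cases from the definitions, and the inductive step resolved by combining linearity of the $\RR_{\psi,X^0_\c,X^0_{\c'}}^{(1)}$ with the splitting identity (\ref{eqLDiscontProperties_Split}) from Lemma~\ref{lmLDiscontProperties}; the paper groups the sums in a slightly different order but the argument is identical. For (\ref{eqSplitRuelleOpNormQuot}) the paper states it follows immediately from Definition~\ref{defOpHNormDiscont} and (\ref{eqSplitRuelleCoordinateFormula}), and your elaboration via homogeneity and the continuity approximation of the degenerate case is the standard unpacking of that step.
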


\begin{proof}
We prove (\ref{eqSplitRuelleCoordinateFormula}) by induction. The case when $n=0$ and the case when $n=1$ both hold by definition. Assume now (\ref{eqSplitRuelleCoordinateFormula}) holds when $n=m$ for some $m\in\N$. Then by (\ref{eqLDiscontProperties_Split}) in Lemma~\ref{lmLDiscontProperties}, for each $\c\in\{\b,\w\}$, we have
\begin{align*}
&                    \pi_\c \bigl( \RRR_\psi^{m+1}(u_\b,u_\w)  \bigr) \\
&\qquad  =   \pi_\c \Bigl(  \RRR_\psi \Bigl(   \RR_{\psi,X^0_\b,X^0_\b}^{(m)} (u_\b) + \RR_{\psi,X^0_\b,X^0_\w}^{(m)} (u_\w)   ,
                                                                             \RR_{\psi,X^0_\w,X^0_\b}^{(m)} (u_\b) + \RR_{\psi,X^0_\w,X^0_\w}^{(m)} (u_\w)           \Bigr)    \Bigr)\\
&\qquad  =       \sum\limits_{X^0\in\X^0}  \RR_{\psi,X^0_\c,X^0}^{(1)} \Bigl(   \RR_{\psi,X^0,X^0_\b}^{(m)} (u_\b) + \RR_{\psi,X^0,X^0_\w}^{(m)} (u_\w) \Bigr) \\
&\qquad  =      \sum\limits_{X^0\in\X^0}  \RR_{\psi,X^0_\c,X^0}^{(1)}  \Bigl(\RR_{\psi,X^0,X^0_\b}^{(m)} (u_\b) \Bigr)
                      +  \sum\limits_{X^0\in\X^0}  \RR_{\psi,X^0_\c,X^0}^{(1)}  \Bigl(\RR_{\psi,X^0,X^0_\w}^{(m)} (u_\w) \Bigr)  \\  
&\qquad  =        \RR_{\psi,X^0_\c,X^0_\b}^{(m+1)} (u_\b) + \RR_{\psi,X^0_\c,X^0_\w}^{(m+1)} (u_\w) ,                               
\end{align*}
for $u_\b \in \CCC \bigl(X^0_\b, \C\bigr)$ and $u_\w \in \CCC \bigl(X^0_\w, \C\bigr)$. This completes the inductive step, establishing (\ref{eqSplitRuelleCoordinateFormula}).

The identity (\ref{eqSplitRuelleOpNormQuot}) follows immediately from Definition~\ref{defOpHNormDiscont} and the identity (\ref{eqSplitRuelleCoordinateFormula}).
\end{proof}

\begin{rem}  \label{rmHolderOpNormCompare}
One can show that $\NOpHnormD{\alpha}{b}{\RRR_\psi^n} \geq \NHnormbig{\alpha}{b}{\RR_\psi^n}{(S^2,d)}$, $n\in\N_0$, where $\NHnormbig{\alpha}{b}{\RR_\psi^n}{(S^2,d)}$ is the normalized operator norm of $\RR_\psi^n \: \Holder{\alpha}((S^2,d),\C) \rightarrow \Holder{\alpha}((S^2,d),\C)$ defined in (\ref{eqDefNormalizedOpHNorm}). We will not use this fact in this paper.
\end{rem}

\subsection{Basic inequalities}   \label{subsctSplitRuelleOp_BasicIneq}

Let $f\: S^2 \rightarrow S^2$ be an expanding Thurston map, and $d$ be a visual metric on $S^2$ for $f$ with expansion factor $\Lambda>1$. Let $\psi\in \Holder{\alpha}((S^2,d),\C)$ be a complex-valued H\"{o}lder continuous function with an exponent $\alpha\in(0,1]$. We define
\begin{equation}   \label{eqDefWideTildePsiComplex}
\wt{\psi} \coloneqq \wt{\Re(\psi)} +  \I \Im(\psi) = \psi - P(f,\Re(\psi)) + \log u_{\Re(\psi)} - \log \bigl(u_{\Re(\psi)} \circ f \bigr),
\end{equation}
where $u_{\Re(\psi)}$ is the continuous function given by Theorem~\ref{thmMuExist} with $\phi \coloneqq \Re(\psi)$. Then for each $u\in \CCC(S^2,\C)$ and each $x\in S^2$, we have
\begin{align}   \label{eqRuelleTilde_NoTilde}
     \RR_{\wt{\psi}} (u)(x)
& =  \sum\limits_{y\in f^{-1}(x)} \deg_f(y)u(y) e^{ \psi(y) - P(f,\Re(\psi)) + \log u_{\Re(\psi)}(y) - \log  ( u_{\Re(\psi)} (f(y)) ) }   \notag \\
& =  \frac{\exp(-P(f,\Re(\psi))}{u_{\Re(\psi)} (x) }  \sum\limits_{y\in f^{-1}(x)} \deg_f(y)u(y) u_{\Re(\psi)}(y) \exp ( \psi(y))\\
& =  \frac{\exp(-P(f,\Re(\psi))}{u_{\Re(\psi)} (x) }  \RR_{ \psi } \bigl(u_{\Re(\psi)} u \bigr)(x). \notag
\end{align}
If, in addition, given a Jordan curve $\CC\subseteq S^2$ with $\post f\subseteq\CC$, then for each $n\in\N_0$, each union $E$ of $n$-tiles in $\X^n(f,\CC)$, each continuous function $v\in\CCC(E,\C)$, each $0$-tile $X^0\in\X^0(f,\CC)$, and each $z\in X^0$,
\begin{align}   \label{eqSplitRuelleTilde_NoTilde}
   \RR_{\wt{\psi},X^0,E}^{(n)} (v)(z)  
=&   \sum\limits_{\substack{X^n\in\X^n(f,\CC)\\ X^n\subseteq E}} \Bigl( v  e^{ S_n  ( \psi - P(f,\Re(\psi)) + \log u_{\Re(\psi)}  - \log  ( u_{\Re(\psi)}\circ f )  )} \Bigr) \bigl(  (f^n|_{X^n} )^{-1}(z) \bigr)  \notag \\
=&   \frac{\exp(-n P(f,\Re(\psi))}{u_{\Re(\psi)} (z) }  \sum\limits_{\substack{X^n\in\X^n(f,\CC)\\ X^n\subseteq E}} \bigl(v  u_{\Re(\psi)}  \exp (S_n \psi )\bigr)   \bigl(  (f^n|_{X^n} )^{-1}(z) \bigr) \\
=&   \frac{\exp(-n P(f,\Re(\psi))}{u_{\Re(\psi)} (z) }  \RR_{ \psi,X^0,E }^{(n)} \bigl(u_{\Re(\psi)} v \bigr)(z). \notag
\end{align}

\begin{definition}  \label{defCone}
Let $f\: S^2 \rightarrow S^2$ be an expanding Thurston map, and $d$ be a visual metric on $S^2$ for $f$ with expansion factor $\Lambda>1$. Fix a constant $\alpha\in(0,1]$.

For each subset $E\subseteq S^2$ and each constant $B\in \R$ with $B>0$, we define the \defn{$B$-cone inside $\Holder{\alpha}(E,d)$} as 
\begin{equation}  \label{eqDefCone}
  K_B(E,d)
 =   \bigl\{ u\in \Holder{\alpha} (E,d)  \,\big|\, u(x)> 0, \,\abs{u(x)-u(y)} \leq B (u(x)+u(y)) d(x,y)^\alpha \text{ for } x,y\in E\bigr\}.  
\end{equation}
\end{definition}

It is important to define the $B$-cones inside $\Holder{\alpha}(E,d)$ in the form above in order to establish the following lemma, which will be used in the proof of Proposition~\ref{propDolgopyatOperator}.

\begin{lemma}   \label{lmConePower}
Let $(X,d)$ be a metric space and $\alpha\in (0,1]$. Then for each $B>0$ and each $u\in K_B(X,d)$, we have $u^2 \in K_{2B}(X,d)$.
\end{lemma}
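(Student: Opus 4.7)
The plan is to establish Lemma~\ref{lmConePower} by a short direct computation, since the only substantive content is an algebraic manipulation comparing the cone condition for $u$ with that for $u^2$. I will verify in turn the three requirements of membership in $K_{2B}(X,d)$: strict positivity, membership in $\Holder{\alpha}(X,d)$, and the defining cone inequality.

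First, strict positivity of $u^2$ is immediate from $u(x) > 0$ for all $x \in X$. Next, membership of $u^2$ in $\Holder{\alpha}(X,d)$ follows from the fact that $u \in \Holder{\alpha}(X,d)$ implies $u$ is bounded (by the definition of the H\"older norm $\Hnorm{\alpha}{\cdot}{(X,d)}$ in (\ref{eqDefHolderNorm})), so one can bound $\Hseminorm{\alpha,(X,d)}{u^2}$ by $2\Norm{u}_{\CCC^0(X)} \Hseminorm{\alpha,(X,d)}{u}$ via the identity
\begin{equation*}
u(x)^2 - u(y)^2 = (u(x) - u(y))(u(x) + u(y)).
\end{equation*}

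The heart of the argument, and the cone inequality itself, will come from this same factorization. For $x, y \in X$, I would write
\begin{equation*}
\Abs{u(x)^2 - u(y)^2} = \Abs{u(x) - u(y)} \cdot (u(x) + u(y)) \leq B (u(x) + u(y))^2 d(x,y)^\alpha,
\end{equation*}
using the cone condition $u \in K_B(X,d)$ from (\ref{eqDefCone}) in the last step (and the fact that $u(x) + u(y) > 0$ to drop the absolute value around the second factor). Then I apply the elementary inequality $(a+b)^2 \leq 2(a^2 + b^2)$, valid for $a, b \geq 0$, with $a = u(x)$ and $b = u(y)$, to conclude
\begin{equation*}
\Abs{u(x)^2 - u(y)^2} \leq 2B \bigl(u(x)^2 + u(y)^2\bigr) d(x,y)^\alpha,
\end{equation*}
which is precisely the cone condition defining $K_{2B}(X,d)$.

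There is no genuine obstacle here; the only mild subtlety worth noting is that the factor of $2$ in the conclusion $u^2 \in K_{2B}(X,d)$ is optimal and arises from the sharp constant in $(a+b)^2 \leq 2(a^2+b^2)$, which is why the lemma doubles the cone parameter rather than preserving it. This doubling is consistent with the form of the cone (\ref{eqDefCone}), in which the H\"older constant is normalized by the \emph{sum} $u(x) + u(y)$ rather than, say, the minimum, making the inequality well-behaved under products of positive functions.
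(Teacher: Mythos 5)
Your proof is correct and follows the same route as the paper: factor $u(x)^2 - u(y)^2 = (u(x)-u(y))(u(x)+u(y))$, apply the cone bound on $u(x)-u(y)$, then use $(a+b)^2 \le 2(a^2+b^2)$. The extra verifications of positivity and H\"older membership are fine but not strictly necessary since they are implicit once the cone inequality is established.
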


\begin{proof}
Fix arbitrary $B>0$ and $u\in K_B(X,d)$. For any $x,y\in X$,
\begin{align*}
      \Absbig{u^2(x)-u^2(y)} = & \abs{u(x)+u(y)}\abs{u(x)-u(y)} \leq B\abs{u(x)+u(y)}^2 d(x,y)^\alpha \\
                       \leq &  2B \bigl(u^2(x)+u^2(y)\bigr) d(x,y)^\alpha.
\end{align*}
Therefore $u^2\in K_{2B}(X,d)$.
\end{proof}

\begin{lemma}   \label{lmRtildeNorm=1}
Let $f$, $d$, $\alpha$, $\psi$ satisfy the Assumptions. Let $\phi\in \Holder{\alpha}(S^2,d)$ be a real-valued H\"{o}lder continuous function with an exponent $\alpha$. Then the operator norm of $\RR_{\wt\phi}$ acting on $\CCC(S^2)$ is given by $\Normbig{\RR_{\wt\phi}}_{C^0(S^2)}=1$. In addition, $\RR_{\wt\phi}(\mathbbm{1}_{S^2})=\mathbbm{1}_{S^2}$.

Moreover, given a Jordan curve $\CC\subseteq S^2$ satisfying $\post f\subseteq \CC$. Assume in addition that $f(\CC)\subseteq\CC$. Then for all $n\in\N_0$, $\c\in\{\b,\w\}$, $u_\b\in\CCC(X^0_\b,\C)$, and $u_\w\in\CCC(X^0_\w,\C)$, we have
\begin{equation}   \label{eqRDiscontTildeSupDecreasing}
\Norm{  \RR_{\wt{\psi}, X^0_\c, X^0_\b}^{(n)}  (u_\b)  }_{C^0(X^0_\c)} \leq \norm{u_\b}_{C^0(X^0_\b)}, \qquad  \Norm{  \RR_{\wt{\psi}, X^0_\c, X^0_\w} ^{(n)} (u_\w)  }_{C^0(X^0_\c)} \leq \norm{u_\w}_{C^0(X^0_\w)},
\end{equation}
and
\begin{equation}   \label{eqSplitRuelleTildeSupDecreasing}
      \Norm{  \RR_{\wt{\psi}, X^0_\c, X^0_\b}^{(n)}  (u_\b) + \RR_{\wt{\psi}, X^0_\c, X^0_\w} ^{(n)} (u_\w)  }_{C^0(X^0_\c)}  
 \leq \max\bigl\{\norm{u_\b}_{C^0(X^0_\b)}, \, \norm{u_\w}_{C^0(X^0_\w)}\bigr\}.
\end{equation}
\end{lemma}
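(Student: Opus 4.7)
The plan is to first establish the scalar case $\mathcal{R}_{\wt{\phi}}(\mathbbm{1}_{S^2})=\mathbbm{1}_{S^2}$ by direct computation, then bootstrap to obtain every other assertion. Specifically, apply (\ref{eqRuelleTilde_NoTilde}) with $u=\mathbbm{1}_{S^2}$ and $\psi=\phi$ real-valued; this gives
\[
\mathcal{R}_{\wt{\phi}}(\mathbbm{1}_{S^2})(x)=\frac{e^{-P(f,\phi)}}{u_\phi(x)}\mathcal{R}_\phi(u_\phi)(x)=\frac{1}{u_\phi(x)}\mathcal{R}_{\overline{\phi}}(u_\phi)(x),
\]
which equals $1$ by the eigenfunction identity (\ref{eqRu=u}) in Theorem~\ref{thmMuExist}. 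Since $\wt{\phi}$ is real, the kernel $\deg_f(y)e^{\wt{\phi}(y)}$ is non-negative, so for any $u\in\CCC(S^2,\C)$ one has the pointwise bound $\bigl|\mathcal{R}_{\wt{\phi}}(u)(x)\bigr|\leq\norm{u}_{\CCC^0(S^2)}\,\mathcal{R}_{\wt{\phi}}(\mathbbm{1}_{S^2})(x)=\norm{u}_{\CCC^0(S^2)}$; together with $\mathcal{R}_{\wt{\phi}}(\mathbbm{1}_{S^2})=\mathbbm{1}_{S^2}$ this yields $\normbig{\mathcal{R}_{\wt{\phi}}}_{\CCC^0(S^2)}=1$.

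For the split-operator bounds, the first step will be to iterate: induction on $n$ combined with $\mathcal{R}_{\wt{\Re(\psi)}}(\mathbbm{1}_{S^2})=\mathbbm{1}_{S^2}$ gives $\mathcal{R}^n_{\wt{\Re(\psi)}}(\mathbbm{1}_{S^2})=\mathbbm{1}_{S^2}$ for every $n\in\N_0$. I then want to translate this into the split-operator language. The bridge is (\ref{eqLbwInteMatchLbw}) with $E=S^2$ and $u=\mathbbm{1}_{S^2}$, which says that on $\inte(X^0_\c)$ the discontinuous operator $\mathcal{R}^{(n)}_{\wt{\Re(\psi)},X^0_\c,S^2}(\mathbbm{1}_{S^2})$ agrees with $\mathcal{R}^n_{\wt{\Re(\psi)}}(\mathbbm{1}_{S^2})\equiv 1$. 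On the other hand, since $\X^n(f,\CC)=\X^n_\b\cup\X^n_\w$ and every $n$-tile lies in exactly one $0$-tile, applying (\ref{eqLDiscontProperties_Split}) (or simply splitting the summation in Definition~\ref{defSplitRuelle}) produces
\[
\mathcal{R}^{(n)}_{\wt{\Re(\psi)},X^0_\c,X^0_\b}(\mathbbm{1}_{X^0_\b})+\mathcal{R}^{(n)}_{\wt{\Re(\psi)},X^0_\c,X^0_\w}(\mathbbm{1}_{X^0_\w})=\mathcal{R}^{(n)}_{\wt{\Re(\psi)},X^0_\c,S^2}(\mathbbm{1}_{S^2}).
\]
The left-hand side is continuous on $X^0_\c$ by (\ref{eqLDiscontProperties_Continuity}), so by density of $\inte(X^0_\c)$ the identity $\mathcal{R}^{(n)}_{\wt{\Re(\psi)},X^0_\c,X^0_\b}(\mathbbm{1}_{X^0_\b})+\mathcal{R}^{(n)}_{\wt{\Re(\psi)},X^0_\c,X^0_\w}(\mathbbm{1}_{X^0_\w})=\mathbbm{1}_{X^0_\c}$ holds on all of $X^0_\c$. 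Because each summand is a sum of exponentials and is therefore non-negative, each of them is individually bounded by $\mathbbm{1}_{X^0_\c}$.

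With this in hand, the two desired inequalities follow from pointwise estimates: for $z\in X^0_\c$ and $u_\b\in\CCC(X^0_\b,\C)$, observing that $\abs{e^{S_n\wt{\psi}}}=e^{S_n\wt{\Re(\psi)}}$ (since $\Im(\wt{\psi})=\Im(\psi)$ is real) yields
\[
\bigl|\mathcal{R}^{(n)}_{\wt{\psi},X^0_\c,X^0_\b}(u_\b)(z)\bigr|\leq\norm{u_\b}_{\CCC^0(X^0_\b)}\,\mathcal{R}^{(n)}_{\wt{\Re(\psi)},X^0_\c,X^0_\b}(\mathbbm{1}_{X^0_\b})(z)\leq\norm{u_\b}_{\CCC^0(X^0_\b)},
\]
proving (\ref{eqRDiscontTildeSupDecreasing}); the corresponding bound for $u_\w$ is identical. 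For (\ref{eqSplitRuelleTildeSupDecreasing}), applying the triangle inequality and the same real-kernel majorization gives
\[
\bigl|\mathcal{R}^{(n)}_{\wt{\psi},X^0_\c,X^0_\b}(u_\b)(z)+\mathcal{R}^{(n)}_{\wt{\psi},X^0_\c,X^0_\w}(u_\w)(z)\bigr|\leq\max\!\bigl\{\norm{u_\b}_{\CCC^0(X^0_\b)},\,\norm{u_\w}_{\CCC^0(X^0_\w)}\bigr\}
\]
after factoring out the maximum and using the summed identity just derived.

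There is no serious obstacle here; the only slightly delicate point is the passage between the ``discontinuous'' split operator and the usual Ruelle operator via (\ref{eqLbwInteMatchLbw}), which requires attention because the former need not match the latter on the $1$-skeleton $f^{-n}(\CC)$. This is precisely why the identity $\mathcal{R}^{(n)}_{\wt{\Re(\psi)},X^0_\c,X^0_\b}(\mathbbm{1}_{X^0_\b})+\mathcal{R}^{(n)}_{\wt{\Re(\psi)},X^0_\c,X^0_\w}(\mathbbm{1}_{X^0_\w})\equiv 1$ is first established on $\inte(X^0_\c)$ and then extended by the continuity guaranteed by (\ref{eqLDiscontProperties_Continuity}).
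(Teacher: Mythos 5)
Your proof is correct and follows essentially the same route as the paper. The paper dispatches the scalar statements $\RR_{\wt\phi}(\mathbbm{1}_{S^2})=\mathbbm{1}_{S^2}$ and $\Normbig{\RR_{\wt\phi}}_{C^0(S^2)}=1$ by citation to an earlier work of the first author, whereas you prove them in place from (\ref{eqRuelleTilde_NoTilde}) and the eigenfunction equation (\ref{eqRu=u}); this is a legitimate substitute and is the standard normalization argument. For the split-operator bounds, the paper estimates the sum directly on $\inte(X^0_\c)$ via $\bigl|\exp(S_n\wt\psi)\bigr|=\exp(S_n\wt{\Re(\psi)})$ and $\RR^n_{\wt{\Re(\psi)}}(\mathbbm{1}_{S^2})=\mathbbm{1}_{S^2}$ to get (\ref{eqSplitRuelleTildeSupDecreasing}) first, then obtains (\ref{eqRDiscontTildeSupDecreasing}) by specializing one of $u_\b,u_\w$ to zero; you reverse the order, first isolating the identity $\RR^{(n)}_{\wt{\Re(\psi)},X^0_\c,X^0_\b}(\mathbbm{1}_{X^0_\b})+\RR^{(n)}_{\wt{\Re(\psi)},X^0_\c,X^0_\w}(\mathbbm{1}_{X^0_\w})=\mathbbm{1}_{X^0_\c}$ (via the bridge (\ref{eqLbwInteMatchLbw}) and the continuity extension from (\ref{eqLDiscontProperties_Continuity})), then deriving (\ref{eqRDiscontTildeSupDecreasing}) and (\ref{eqSplitRuelleTildeSupDecreasing}) as consequences. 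The extra intermediate identity and the explicit density-plus-continuity step make the boundary case on the 1-skeleton transparent — the paper leaves this implicit — but the underlying mechanism (non-negative kernel, normalization, majorize by $\RR_{\wt{\Re(\psi)}}$) is identical.
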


\begin{proof}
The fact that $\Normbig{\RR_{\wt\phi}}_{C^0(S^2)}=1$ and $\RR_{\wt\phi}(\mathbbm{1}_{S^2})=\mathbbm{1}_{S^2}$ is established in \cite[Lemma~5.25]{Li17}.

To prove (\ref{eqSplitRuelleTildeSupDecreasing}), we first fix arbitrary $n\in\N_0$, $\c\in\{\b,\w\}$, $u_\b\in\CCC(X^0_\b)$, and $u_\w\in\CCC(X^0_\w)$. Denote $M \coloneqq \max\bigl\{\norm{u_\b}_{C^0(X^0_\b)},\,\norm{u_\w}_{C^0(X^0_\w)}\bigr\}$. Then by Definition~\ref{defSplitRuelle}, (\ref{eqDefWideTildePsiComplex}), and the fact that $\RR_{\wt{\Re(\psi)}}(\mathbbm{1}_{S^2})=\mathbbm{1}_{S^2}$, for each $y\in\inte(X^0_\c)$,
\begin{align*}
              \Norm{  \RR_{\wt{\psi}, X^0_c, X^0_\b}^{(n)}  (u_\b) + \RR_{\wt{\psi}, X^0_\c, X^0_\w} ^{(n)} (u_\w)  }_{C^0(X^0_\c)}  
  \leq & M  \sum\limits_{X^n\in\X^n_\c} \Absbig{  \exp \bigl(S_n \wt{\psi} \bigl((f^n|_{X^n})^{-1}(y)\bigr)\bigr)     }  \\
  =      & M \RR_{\wt{\Re(\psi)}}^n (\mathbbm{1}_{S^2})  (y)
  =          M.
\end{align*}
This establishes (\ref{eqSplitRuelleTildeSupDecreasing}). Finally, (\ref{eqRDiscontTildeSupDecreasing}) follows immediately from (\ref{eqSplitRuelleTildeSupDecreasing}) and Definition~\ref{defSplitRuelle} by setting one of the functions $u_\b$ and $u_\w$ to be $0$.
\end{proof}

\begin{lemma}   \label{lmBound_aPhi}
Let $f$, $\CC$, $d$, $L$, $\alpha$, $\Lambda$ satisfy the Assumptions. Then there exist constants $C_{13}>1$ and $C_{14}>0$ depending only on $f$, $\CC$, $d$, and $\alpha$ such that the following is satisfied:

\smallskip 

For all $K,M,T,a\in\R$ with $K>0$, $M>0$, $T>0$, and $\abs{a}\leq T$, and all real-valued H\"{o}lder continuous function $\phi\in\Holder{\alpha}(S^2,d)$ with $\Hseminorm{\alpha,\, (S^2,d)}{\phi}  \leq K$ and $\Norm{\phi}_{\CCC^0(S^2)} \leq M$, we have
\begin{equation}\label{eqBound_aPhi_Sup}
\Normbig{\wt{a\phi}}_{\CCC^0(S^2)}  \leq C_{13} (K+M)T + \abs{\log(\deg f)},
\end{equation}
\begin{equation}    \label{eqBound_aPhi_Hseminorm}
\Hseminormbig{\alpha,\, (S^2,d)}{\wt{a\phi}}  \leq C_{13} KT   e^{C_{14}KT},
\end{equation}
\begin{equation}   \label{eqBound_Uaphi_Hnorm}
\Hnorm{\alpha}{u_{a\phi}}{(S^2,d)} \leq \biggl( 4\frac{TK C_0}{1-\Lambda^{-\alpha}}  L +1 \biggr) e^{2C_{15}},
\end{equation}
\begin{equation}  \label{eqBound_Uaphi_UpperLower}
\exp(-C_{15}) \leq u_{a\phi}(x) \leq \exp(C_{15})
\end{equation}
for $x\in S^2$, where the constant $C_0>1$ depending only on $f$, $d$, and $\CC$ is from Lemma~\ref{lmMetricDistortion}, and the constant
\begin{equation}     \label{eqConst_lmBound_aPhi1}
C_{15} = C_{15}(f,\CC,d,\alpha,T,K) \coloneqq 4 \frac{TKC_0}{1-\Lambda^{-\alpha}} L \bigl(\diam_d(S^2)\bigr)^\alpha  >  0
\end{equation}
depends only on $f$, $\CC$, $d$, $\alpha$, $T$, and $K$.
\end{lemma}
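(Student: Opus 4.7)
The plan is to establish the four bounds in the order (\ref{eqBound_Uaphi_UpperLower}), (\ref{eqBound_Uaphi_Hnorm}), (\ref{eqBound_aPhi_Sup}), (\ref{eqBound_aPhi_Hseminorm}), since each successive estimate builds on the preceding ones. The starting observation is that for the potential $a\phi$ we have $\Hseminorm{\alpha,\,(S^2,d)}{a\phi}\leq |a|K\leq TK$, so when the constant $C_1$ from Lemma~\ref{lmSnPhiBound} and the constant $C_2$ from Lemma~\ref{lmSigmaExpSnPhiBound} are evaluated at $a\phi$ they satisfy $C_1(a\phi)\leq TKC_0/(1-\Lambda^{-\alpha})$ and $C_2(a\phi)\leq \exp(C_{15})$, with $C_{15}$ as in (\ref{eqConst_lmBound_aPhi1}). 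Applying (\ref{eqU_phiBounds}) in Theorem~\ref{thmMuExist} then yields (\ref{eqBound_Uaphi_UpperLower}) immediately.

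For (\ref{eqBound_Uaphi_Hnorm}), I will apply (\ref{eqR1Diff}) from Lemma~\ref{lmR1properties} to $a\phi$, combined with the elementary inequality $e^t-1\leq te^t$, to obtain
\[
\bigAbs{\RR_{\overline{a\phi}}^n(\mathbbm{1})(x)-\RR_{\overline{a\phi}}^n(\mathbbm{1})(y)}
\leq 4C_1(a\phi) L\,e^{2C_{15}}\,d(x,y)^\alpha
\leq \frac{4TKC_0L}{1-\Lambda^{-\alpha}}\,e^{2C_{15}}\,d(x,y)^\alpha.
\]
Passing to the Ces\`aro limit from Theorem~\ref{thmMuExist} preserves the H\"older bound, giving $\Hseminorm{\alpha,\,(S^2,d)}{u_{a\phi}}\leq \frac{4TKC_0L}{1-\Lambda^{-\alpha}}e^{2C_{15}}$. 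Combined with the sup-norm bound $\norm{u_{a\phi}}_{\CCC^0(S^2)}\leq e^{C_{15}}$ from step one, this yields (\ref{eqBound_Uaphi_Hnorm}).

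For (\ref{eqBound_aPhi_Sup}), I unpack the definition $\wt{a\phi}=a\phi-P(f,a\phi)+\log u_{a\phi}-\log(u_{a\phi}\circ f)$. The bounds $|a\phi|\leq TM$ and $|\log u_{a\phi}|\leq C_{15}$ are immediate; for the pressure, I use the Lipschitz continuity of $\psi\mapsto P(f,\psi)$ (for example \cite[Theorem~3.6.1]{PrU10}) together with $P(f,0)=\log(\deg f)$ to get $|P(f,a\phi)|\leq TM+|\log(\deg f)|$. Summing yields a bound of the form $2TM+2C_{15}+|\log(\deg f)|$; since $C_{15}$ depends linearly on $TK$ with a constant depending only on $f,\CC,d,\alpha$, this is absorbed into $C_{13}(K+M)T+|\log(\deg f)|$ for a suitable $C_{13}$.

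Finally, for the H\"older seminorm bound (\ref{eqBound_aPhi_Hseminorm}), the pressure term contributes nothing and $\Hseminorm{\alpha}{a\phi}\leq TK$. For $\log u_{a\phi}$, using (\ref{eqBound_Uaphi_UpperLower}) to control the Lipschitz constant of $\log$ on the image of $u_{a\phi}$ gives
\[
\Hseminorm{\alpha,\,(S^2,d)}{\log u_{a\phi}}\leq e^{C_{15}}\Hseminorm{\alpha,\,(S^2,d)}{u_{a\phi}}\leq \frac{4TKC_0L}{1-\Lambda^{-\alpha}}e^{3C_{15}}.
\]
For $\log u_{a\phi}\circ f$, the Lipschitz continuity of $f$ on $(S^2,d)$ from Theorem~\ref{thmETMBasicProperties}(i) multiplies this seminorm by $\LIP_d(f)^\alpha$, a constant depending only on $f$ and $d$. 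Adding the four contributions and using that $C_{15}$ is linear in $TK$ produces a bound of the shape $C' TK\,e^{C''TK}$, which is (\ref{eqBound_aPhi_Hseminorm}) after renaming constants; one then enlarges the $C_{13}$ already chosen to accommodate (\ref{eqBound_aPhi_Sup}) so that the same constant serves both inequalities. The main bookkeeping challenge throughout is tracking how each intermediate constant depends on $T$ and $K$ so that the final exponent in (\ref{eqBound_aPhi_Hseminorm}) has exactly the linear form $C_{14}KT$; everything else is a direct application of results already recalled in Subsection~\ref{subsctThurstonMap}.
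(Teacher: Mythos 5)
Your proposal is correct and follows essentially the same route as the paper's own proof: bound the constants $C_1$, $C_2$ from Lemmas~\ref{lmSnPhiBound} and \ref{lmSigmaExpSnPhiBound} evaluated at $a\phi$, apply Theorem~\ref{thmMuExist} and Lemma~\ref{lmR1properties} to control $u_{a\phi}$, then unpack the definition of $\wt{a\phi}$ term by term. The only variation is your derivation of $\abs{P(f,a\phi)} \leq TM + \abs{\log(\deg f)}$ from the $1$-Lipschitz property of $\psi\mapsto P(f,\psi)$ and $P(f,0)=\log(\deg f)$; the paper instead re-derives this from the preimage characterization of pressure (Corollary~\ref{corRR^nConvToTopPressureUniform}), but both are standard and give identical bounds.
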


\begin{proof}
Fix $K$, $M$, $T$, $a$, $\phi$ satisfying the conditions in this lemma.

Recall
\begin{equation}  \label{eqPflmBound_TildeDef}
\wt{a\phi} = a\phi - P(f,a\phi) + \log u_{a\phi} - \log(u_{a\phi} \circ f),
\end{equation}
where the function $u_{a\phi}$ is defined as $u_{\phi}$ in Theorem~\ref{thmMuExist}.

By (\ref{eqU_phiBounds}) in Theorem~\ref{thmMuExist} and (\ref{eqC2Bound}) in Lemma~\ref{lmSigmaExpSnPhiBound}, we immediately get (\ref{eqBound_Uaphi_UpperLower}).

By Corollary~\ref{corRR^nConvToTopPressureUniform}, (\ref{eqDeg=SumLocalDegree}), and (\ref{eqLocalDegreeProduct}), for each $x\in S^2$,
\begin{align*}
P(f,a\phi) & = \lim\limits_{n\to+\infty} \frac{1}{n} \log \sum\limits_{y\in f^{-n}(x)} \deg_{f^n}(y)\exp(aS_n\phi(y))  \\
          & \leq  \lim\limits_{n\to+\infty} \frac{1}{n} \log \sum\limits_{y\in f^{-n}(x)} \deg_{f^n}(y)\exp(nTM)  \\
          & = TM + \lim\limits_{n\to+\infty}\frac{1}{n}  \log \sum\limits_{y\in f^{-n}(x)} \deg_{f^n}(y) 
            = TM + \log(\deg f).
\end{align*}
Similarly, $P(f,a\phi)\geq -TM + \log(\deg f)$. So $\abs{P(f,a\phi)}  \leq TM + \abs{\log(\deg f)}$.

Thus by combining the above with (\ref{eqBound_Uaphi_UpperLower}) and (\ref{eqConst_lmBound_aPhi1}), we get 
\begin{equation}  \label{eqPflmBound_aPhi3}
\Normbig{\wt{a\phi}}_{\CCC^0(S^2)} \leq TM +  TM + \abs{\log(\deg f)} + 2C_{15} = C_{16} T(K+M) + \abs{\log(\deg f)},
\end{equation}
where $C_{16} \coloneqq 2 + 8 \frac{C_0}{1-\Lambda^{-\alpha}} L \bigl(\diam_d(S^2)\bigr)^\alpha$ is a constant depending only on $f$, $\CC$, $d$, and $\alpha$.

By (\ref{eqPflmBound_TildeDef}), Theorem~\ref{thmETMBasicProperties}~(i), and (\ref{eqBound_Uaphi_UpperLower}), and the fact that $\abs{\log t_1 - \log t_2} \leq \frac{\abs{t_1-t_2}}{\min\{t_1, \, t_2\}}$ for all $t_1,t_2>0$, we get 
\begin{align}  \label{eqPflmBound_aPhi2}
\Hseminormbig{\alpha,\, (S^2,d)}{\wt{a\phi}}  
\leq & \Hseminorm{\alpha,\, (S^2,d)}{a\phi}  + \Hseminorm{\alpha,\, (S^2,d)}{\log u_{a\phi}}  +\Hseminorm{\alpha,\, (S^2,d)}{\log(u_{a\phi}\circ f)} \notag \\
\leq & TK + e^{C_{15}} (1+ \LIP_d(f)  ) \Hseminorm{\alpha,\, (S^2,d)}{ u_{a\phi}}   .                                            
\end{align}
Here $\LIP_d(f)$ denotes the Lipschitz constant of $f$ with respect to the visual metric $d$ (see (\ref{eqDefLipConst})). 

By Theorem~\ref{thmMuExist}, (\ref{eqR1Diff}) in Lemma~\ref{lmR1properties}, (\ref{eqC2Bound}) in Lemma~\ref{lmSigmaExpSnPhiBound}, (\ref{eqConst_lmBound_aPhi1}), and the fact that $\abs{1-e^{-t}} \leq t$ for $t>0$, we get 
\begin{align*}
        \abs{u_{a\phi}(x) - u_{a\phi}(y) }  
=    & \AbsBigg{ \lim\limits_{n\to+\infty} \frac{1}{n} \sum\limits_{j=0}^{n-1}  \Bigl( \RR_{\overline{a\phi}}^j \bigl(\mathbbm{1}_{S^2}\bigr)(x) - \RR_{\overline{a\phi}}^j \bigl(\mathbbm{1}_{S^2}\bigr)(y)  \Bigr)}\\
\leq & \limsup\limits_{n\to+\infty} \frac{1}{n} \sum\limits_{j=0}^{n-1} \AbsBig{ \RR_{\overline{a\phi}}^j \bigl(\mathbbm{1}_{S^2}\bigr)(x) - \RR_{\overline{a\phi}}^j \bigl(\mathbbm{1}_{S^2}\bigr)(y)  } \\
\leq & e^{2C_{15}} \Bigl( 1 - \exp\Bigl( - 4 \frac{TKC_0}{1-\Lambda^{-\alpha}} L d(x,y)^\alpha   \Bigr)  \Bigr)  
\leq  e^{2C_{15}}  \frac{4TKC_0}{1-\Lambda^{-\alpha}} L d(x,y)^\alpha,
\end{align*}
for all $x,y\in S^2$. So 
\begin{equation}    \label{eqBound_Uaphi_Hseminorm}
\Hseminorm{\alpha,\, (S^2,d)}{u_{a\phi}} \leq 4\frac{TKC_0}{1-\Lambda^{-\alpha}} L e^{2C_{15}} . 
\end{equation}

Thus by (\ref{eqPflmBound_aPhi2}), (\ref{eqBound_Uaphi_Hseminorm}), and (\ref{eqConst_lmBound_aPhi1}), we get
\begin{equation*}
\Hseminormbig{\alpha,\, (S^2,d)}{\wt{a\phi}} \leq TK  C_{13} e^{C_{14}TK} ,
\end{equation*}
where the constants
\begin{equation}
C_{13}  \coloneqq \max\biggl\{ C_{16}, \, 1+ \bigl(1+ \LIP_d(f) \bigr) \frac{4C_0}{1-\Lambda^{-\alpha}} L\biggr\}
\end{equation}
and 
\begin{equation}
C_{14} \coloneqq 12 \frac{C_0}{1-\Lambda^{-\alpha}} L \bigl(\diam_d(S^2)\bigr)^\alpha
\end{equation}
depend only on $f$, $\CC$, $d$, and $\alpha$. Since $C_{13}\geq C_{16}$, (\ref{eqBound_aPhi_Sup}) follows from (\ref{eqPflmBound_aPhi3}). 

Finally, (\ref{eqBound_Uaphi_Hnorm}) follows from (\ref{eqBound_Uaphi_UpperLower}) and (\ref{eqBound_Uaphi_Hseminorm}).
\end{proof}

\begin{lemma}[Basic inequalities]   \label{lmBasicIneq}
Let $f$, $\CC$, $d$, $\alpha$, $\phi$, $s_0$ satisfy the Assumptions. Then there exists a constant $A_0=A_0\bigl(f,\CC,d,\Hseminorm{\alpha,\, (S^2,d)}{\phi},\alpha\bigr)\geq 2C_0>2$ depending only on $f$, $\CC$, $d$, $\Hseminorm{\alpha,\, (S^2,d)}{\phi}$, and $\alpha$ such that $A_0$ increases as $\Hseminorm{\alpha,\, (S^2,d)}{\phi}$ increases, and that for all $X^0\in \X^0(f,\CC)$, $x,x'\in X^0$, $n\in\N$, union $E\subseteq S^2$ of $n$-tiles in $\X^n(f,\CC)$, $B\in\R$ with $B>0$, and $a,b\in\R$ with $\abs{a}\leq 2 s_0$ and $\abs{b} \in\{0\}\cup [1,+\infty)$, the following statements are satisfied:

\begin{enumerate}
\smallskip
\item[(i)] For each $u\in K_B(E,d)$, we have
\begin{equation}   \label{eqBasicIneqR}
\frac{\Abs{\RR_{\wt{a\phi},X^0,E}^{(n)} (u)(x) - \RR_{\wt{a\phi},X^0,E}^{(n)} (u)(x')}}
     {  \RR_{\wt{a\phi},X^0,E}^{(n)} (u)(x)  + \RR_{\wt{a\phi},X^0,E}^{(n)} (u)(x') }  
\leq A_0\biggl( \frac{B}{\Lambda^{\alpha n}} + \frac{\Hseminormbig{\alpha,\, (E,d)}{\wt{a\phi}}}{1-\Lambda^{-\alpha}}  \biggr) d(x,x')^\alpha.
\end{equation}

\smallskip
\item[(ii)] Denote $s \coloneqq a+\I b$. Let $v\in \Holder{\alpha}((E,d),\C)$ be a complex-valued H\"{o}lder continuous function, then we have
\begin{align}  \label{eqBasicIneqN1}
     & \AbsBig{\RR_{\wt{s\phi}, X^0, E}^{(n)} (v)(x) - \RR_{\wt{s\phi}, X^0, E}^{(n)} (v)(x')} \notag  \\
\leq &  \biggl( C_0  \frac{ \Hseminorm{\alpha,\, (E,d)}{v} }{\Lambda^{\alpha n}}  + A_0 \max\{1, \, \abs{b}\} \RR_{\wt{a\phi}, X^0, E}^{(n)} (\abs{v})(x)    \biggr) d(x,x')^\alpha,
\end{align}
where $C_0>1$ is a constant from Lemma~\ref{lmMetricDistortion} depending only on $f$, $d$, and $\CC$.

If, in addition, there exists a non-negative real-valued H\"{o}lder continuous function $h\in\Holder{\alpha}(E,d)$ such that 
\begin{equation*}
\abs{v(y)-v(y')} \leq B (h(y)+h(y')) d(y,y')^\alpha
\end{equation*}
when $y,y'\in E$, then
\begin{align}  \label{eqBasicIneqC}
     & \AbsBig{\RR_{\wt{s\phi}, X^0, E}^{(n)} (v)(x) - \RR_{\wt{s\phi}, X^0, E}^{(n)} (v)(x')}   \\
\leq &  A_0 \biggl(  \frac{B}{\Lambda^{\alpha n}}  \Bigl( \RR_{\wt{a\phi}, X^0, E}^{(n)} (h)(x) + \RR_{\wt{a\phi}, X^0, E}^{(n)} (h)(x')  \Bigr) + \max\{1, \, \abs{b}\} \RR_{\wt{a\phi}, X^0, E}^{(n)} (\abs{v})(x)    \biggr) d(x,x')^\alpha.\notag
\end{align}
\end{enumerate}
\end{lemma}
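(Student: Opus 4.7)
The plan is to expand the difference $\RR_{\wt{a\phi},X^0,E}^{(n)}(u)(x)-\RR_{\wt{a\phi},X^0,E}^{(n)}(u)(x')$ as a sum over $n$-tiles $X^n\subseteq E$ with $f^n(X^n)=X^0$, writing each term via the standard telescoping
\begin{equation*}
u(y_{X^n})e^{S_n\wt{a\phi}(y_{X^n})}-u(y'_{X^n})e^{S_n\wt{a\phi}(y'_{X^n})}=\bigl(u(y_{X^n})-u(y'_{X^n})\bigr)e^{S_n\wt{a\phi}(y_{X^n})}+u(y'_{X^n})\bigl(e^{S_n\wt{a\phi}(y_{X^n})}-e^{S_n\wt{a\phi}(y'_{X^n})}\bigr),
\end{equation*}
where $y_{X^n}\coloneqq(f^n|_{X^n})^{-1}(x)$ and $y'_{X^n}\coloneqq(f^n|_{X^n})^{-1}(x')$. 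First I would apply Lemma~\ref{lmMetricDistortion} to obtain $d(y_{X^n},y'_{X^n})^\alpha\leq C_0^\alpha\Lambda^{-n\alpha}d(x,x')^\alpha$, and then invoke Lemma~\ref{lmSnPhiBound} with the potential $\wt{a\phi}$ to secure $|S_n\wt{a\phi}(y_{X^n})-S_n\wt{a\phi}(y'_{X^n})|\leq\Hseminorm{\alpha,\,(S^2,d)}{\wt{a\phi}}\frac{C_0}{1-\Lambda^{-\alpha}}d(x,x')^\alpha$. Combining the elementary estimate $|e^p-e^q|\leq|p-q|(e^p+e^q)$ (valid for real $p,q$) with the cone hypothesis $|u(y_{X^n})-u(y'_{X^n})|\leq B(u(y_{X^n})+u(y'_{X^n}))d(y_{X^n},y'_{X^n})^\alpha$ then dominates both summands tile by tile.

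The next step is a symmetrization: Lemma~\ref{lmSnPhiBound} implies that the ratio $e^{S_n\wt{a\phi}(y_{X^n})}/e^{S_n\wt{a\phi}(y'_{X^n})}$ is uniformly bounded by $\exp\bigl(\Hseminorm{\alpha,\,(S^2,d)}{\wt{a\phi}}C_0(1-\Lambda^{-\alpha})^{-1}(\diam_d(S^2))^\alpha\bigr)$, which Lemma~\ref{lmBound_aPhi} (applied with $K\coloneqq\Hseminorm{\alpha,\,(S^2,d)}{\phi}$ and $T\coloneqq 2s_0$) controls uniformly in $|a|\leq 2s_0$. Using this, a mixed term of the form $u(y_{X^n})e^{S_n\wt{a\phi}(y'_{X^n})}$ may be replaced by a bounded multiple of $u(y_{X^n})e^{S_n\wt{a\phi}(y_{X^n})}$, so that summing over $X^n\in\X^n_\c$ with $X^n\subseteq E$ converts the aggregate bound into a constant multiple of $\RR_{\wt{a\phi},X^0,E}^{(n)}(u)(x)+\RR_{\wt{a\phi},X^0,E}^{(n)}(u)(x')$. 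The first telescoping summand yields the $B/\Lambda^{\alpha n}$ contribution and the second yields the $\Hseminorm{\alpha,\,(S^2,d)}{\wt{a\phi}}/(1-\Lambda^{-\alpha})$ contribution in (\ref{eqBasicIneqR}), and I would then define $A_0$ as the product of all the auxiliary factors; its monotonicity in $\Hseminorm{\alpha,\,(S^2,d)}{\phi}$ is inherited from that of the bounds in Lemma~\ref{lmBound_aPhi}.

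For statement~(ii), I would factor $e^{S_n\wt{s\phi}(y)}=e^{S_n\wt{a\phi}(y)}e^{\I bS_n\phi(y)}$ and run the same telescoping, which now introduces an extra term of the form $v(y'_{X^n})e^{S_n\wt{a\phi}(y_{X^n})}(e^{\I bS_n\phi(y_{X^n})}-e^{\I bS_n\phi(y'_{X^n})})$. The bound $|e^{\I t_1}-e^{\I t_2}|\leq|t_1-t_2|$ together with Lemma~\ref{lmSnPhiBound} applied to $\phi$ itself (rather than $\wt{a\phi}$) controls this term by $|b|\Hseminorm{\alpha,\,(S^2,d)}{\phi}C_0(1-\Lambda^{-\alpha})^{-1}|v(y'_{X^n})|e^{S_n\wt{a\phi}(y_{X^n})}d(x,x')^\alpha$, and after summation this delivers the $|b|\,\RR_{\wt{a\phi},X^0,E}^{(n)}(|v|)(x)$ piece of (\ref{eqBasicIneqN1}) and (\ref{eqBasicIneqC}), while the real-part difference $|e^{S_n\wt{a\phi}(y_{X^n})}-e^{S_n\wt{a\phi}(y'_{X^n})}|$ contributes the remaining $1\cdot\RR_{\wt{a\phi},X^0,E}^{(n)}(|v|)(x)$ term (hence the $\max\{1,|b|\}$). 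For (\ref{eqBasicIneqN1}) the $v$-regularity summand is bounded by $\Hseminorm{\alpha,\,(E,d)}{v}d(y_{X^n},y'_{X^n})^\alpha\cdot e^{S_n\wt{a\phi}(y_{X^n})}$, which sums to $C_0\Hseminorm{\alpha,\,(E,d)}{v}\Lambda^{-\alpha n}d(x,x')^\alpha\cdot\RR_{\wt{a\phi},X^0,E}^{(n)}(\mathbbm{1})(x)$, and I would invoke Lemma~\ref{lmRtildeNorm=1} (or rather (\ref{eqRDiscontTildeSupDecreasing})) to bound the last $\RR$-factor by $1$; for (\ref{eqBasicIneqC}), the assumed inequality $|v(y)-v(y')|\leq B(h(y)+h(y'))d(y,y')^\alpha$ replaces the Hölder seminorm and, via the same symmetrization, produces the $\RR_{\wt{a\phi},X^0,E}^{(n)}(h)(x)+\RR_{\wt{a\phi},X^0,E}^{(n)}(h)(x')$ contribution.

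The main obstacle I anticipate is the symmetrization step that converts asymmetric sums of the form $\sum_{X^n}u(y'_{X^n})e^{S_n\wt{a\phi}(y_{X^n})}$ into symmetric expressions $\RR_{\wt{a\phi},X^0,E}^{(n)}(u)(x)+\RR_{\wt{a\phi},X^0,E}^{(n)}(u)(x')$ with a constant uniform in $a\in[-2s_0,2s_0]$; the bounded-distortion factor depends exponentially on $\Hseminorm{\alpha,\,(S^2,d)}{\wt{a\phi}}$, and ensuring that the final $A_0$ depends only on $f$, $\CC$, $d$, $\Hseminorm{\alpha,\,(S^2,d)}{\phi}$, and $\alpha$ (and is monotone in $\Hseminorm{\alpha,\,(S^2,d)}{\phi}$) hinges on a careful use of Lemma~\ref{lmBound_aPhi} with the ceiling $T=2s_0$, together with the observation that the exponential bound $e^{C_{14}KT}$ therein is monotone in $K=\Hseminorm{\alpha,\,(S^2,d)}{\phi}$.
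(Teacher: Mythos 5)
Your proposal is correct and follows essentially the same route as the paper's proof: telescoping, the cone condition combined with Lemma~\ref{lmMetricDistortion} for the function difference, Lemma~\ref{lmSnPhiBound} for the Birkhoff-sum difference, and a bounded-distortion symmetrization controlled via Lemma~\ref{lmBound_aPhi}, with $A_0$ assembled from the auxiliary constants. The only cosmetic difference is that you control the exponential difference by direct elementary inequalities ($\abs{e^p-e^q}\leq\abs{p-q}(e^p+e^q)$ and, for part~(ii), factoring $e^{S_n\wt{s\phi}}=e^{S_n\wt{a\phi}}e^{\I b S_n\phi}$ and using $\abs{e^{\I t_1}-e^{\I t_2}}\leq\abs{t_1-t_2}$), whereas the paper packages these steps into Lemma~\ref{lmExpToLiniear} applied directly to the complex potential $\wt{s\phi}$; the two are equivalent modulo constants absorbed into $A_0$.
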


\begin{proof}
Without loss of generality, we assume that $X^0=X^0_\b$ is the black $0$-tile. Fix $n$, $E$, $B$, $a$, and $b$ as in the statement of Lemma~\ref{lmBasicIneq}.

(i) Note that by Lemma~\ref{lmBound_aPhi},
\begin{equation}  \label{eqT0Bound} 
\sup\bigl\{ \Hseminormbig{\alpha,\, (S^2,d)}{\wt{\tau\phi}} \,\big|\, \tau \in\R, \abs{ \tau }\leq 2 s_0  \bigr\} \leq T_0,
\end{equation}
where the constant
\begin{equation}  \label{eqDefT0}
T_0 = T_0\bigl(f,\CC,d,\Hseminorm{\alpha,\, (S^2,d)}{\phi},\alpha\bigr) \coloneqq (2 s_0 + 1) C_{13} \Hseminorm{\alpha,\, (S^2,d)}{\phi} \exp\bigl(2 s_0C_{14} \Hseminorm{\alpha,\, (S^2,d)}{\phi}\bigr)>0
\end{equation}
depends only on $f$, $\CC$, $d$, $\Hseminorm{\alpha,\, (S^2,d)}{\phi}$, and $\alpha$. Here $C_{13}>1$ and $C_{14}>0$  are constants from Lemma~\ref{lmBound_aPhi} depending only on $f$, $\CC$, $d$, and $\alpha$.

Fix $u\in K_B(E,d)$ and $x,x'\in X^0_\b$. For each $X^n\in\X^n_\b$, denote $y_{X^n}  \coloneqq (f^n|_{X^n})^{-1}(x)$ and $y'_{X^n} \coloneqq (f^n|_{X^n})^{-1}(x')$.

Then by (\ref{eqDefCone}),
\begin{align*}
     & \frac{\AbsBig{\RR_{\wt{a\phi},X^0_\b,E}^{(n)} (u)(x) - \RR_{\wt{a\phi},X^0_\b,E}^{(n)} (u)(x')}}
            {   \RR_{\wt{a\phi},X^0_\b,E}^{(n)} (u)(x)  +  \RR_{\wt{a\phi},X^0_\b,E}^{(n)} (u)(x')  }  \\
\leq & \frac{1} { \RR_{\wt{a\phi},X^0_\b,E}^{(n)} (u)(x) + \RR_{\wt{a\phi},X^0_\b,E}^{(n)} (u)(x')  }  
       \sum\limits_{\substack{X^n\in\X^n_\b \\ X^n\subseteq E}}  \Abs{u(y_{X^n}) e^{S_n\wt{a\phi}(y_{X^n})} - u(y'_{X^n}) e^{S_n\wt{a\phi}(y'_{X^n})} } \\
\leq & \frac{1} { \RR_{\wt{a\phi},X^0_\b,E}^{(n)} (u)(x) + \RR_{\wt{a\phi},X^0_\b,E}^{(n)} (u)(x')  }  
       \sum\limits_{\substack{X^n\in\X^n_\b \\ X^n\subseteq E}} \Abs{u(y_{X^n}) -  u(y'_{X^n}) }  e^{S_n\wt{a\phi}(y'_{X^n})} \\
     & + \frac{1} { \RR_{\wt{a\phi},X^0_\b,E}^{(n)} (u)(x) + \RR_{\wt{a\phi},X^0_\b,E}^{(n)} (u)(x')  }  
         \sum\limits_{\substack{X^n\in\X^n_\b \\ X^n\subseteq E}} u(y_{X^n})  \Abs{e^{S_n\wt{a\phi}(y_{X^n})}  - e^{S_n\wt{a\phi}(y'_{X^n})}  }   \\
\leq & \frac{ \sum\limits_{\substack{X^n\in\X^n_\b \\ X^n\subseteq E}} 
            B \Bigl( u(y_{X^n}) e^{S_n\wt{a\phi}(y_{X^n})}    e^{\Absbig{S_n\wt{a\phi}(y'_{X^n}) - S_n\wt{a\phi}(y_{X^n})}}
                    +u(y'_{X^n})e^{S_n\wt{a\phi}(y'_{X^n})}  \Bigr)    d(y_{X^n}, y'_{X^n})^\alpha}
            { \RR_{\wt{a\phi},X^0_\b,E}^{(n)} (u)(x) + \RR_{\wt{a\phi},X^0_\b,E}^{(n)} (u)(x')  }   \\
     & + \frac{1} { \RR_{\wt{a\phi},X^0_\b,E}^{(n)} (u)(x) + \RR_{\wt{a\phi},X^0_\b,E}^{(n)} (u)(x')  } 
         \sum\limits_{\substack{X^n\in\X^n_\b \\ X^n\subseteq E}}  
             u(y_{X^n}) \AbsBig{1- e^{S_n\wt{a\phi}(y'_{X^n}) - S_n\wt{a\phi}(y_{X^n})} }   e^{S_n\wt{a\phi}(y_{X^n})}  .
\end{align*}

By Lemma~\ref{lmSnPhiBound}, Lemma~\ref{lmMetricDistortion}, Lemma~\ref{lmExpToLiniear}, (\ref{eqT0Bound}), and (\ref{eqDefT0}), the right-hand side of the last inequality is
\begin{align*}
\leq  & B\exp \Biggl( \frac{\Hseminormbig{\alpha,\, (S^2,d)}{\wt{a\phi}}  C_0 \bigl(\diam_d(S^2)\bigr)^\alpha}{1-\Lambda^{-\alpha}}  \Biggr) \frac{d(x,x')^\alpha C_0^\alpha}{\Lambda^{\alpha n}}      + C_{10} \Hseminormbig{\alpha,\, (S^2,d)}{\wt{a\phi}}  d(x,x')^\alpha \\
\leq  & A_1 \Biggl( \frac{B}{\Lambda^{\alpha n}}  + \frac{ \Hseminormbig{\alpha,\, (S^2,d)}{\wt{a\phi}} }{1-\Lambda^{-\alpha}} \Biggr) d(x,x')^\alpha,
\end{align*}
where 
\begin{equation}   \label{eqPflmBasicIneq_DefC10}   
C_{10}= C_{10} (f,\CC,d,\alpha,T_0) = \frac{2 C_0 }{1-\Lambda^{-\alpha}} \exp\biggl(\frac{C_0 T_0 }{1-\Lambda^{-\alpha}}\bigl(\diam_d(S^2)\bigr)^\alpha\biggr)
\end{equation}
is a constant from Lemma~\ref{lmExpToLiniear}, and
\begin{equation}    \label{eqDefA1}   
A_1 \coloneqq (1-\Lambda^{-\alpha}) C_{10}(f,\CC,d,\alpha,T_0).
\end{equation}
Both of these constants only depend on $f$, $\CC$, $d$, $\Hseminorm{\alpha,\, (S^2,d)}{\phi}$ and $\alpha$.

Define a constant
\begin{equation}   \label{eqDefA0}
A_0= A_0\bigl(f,\CC,d,\Hseminorm{\alpha,\, (S^2,d)}{\phi},\alpha\bigr) \coloneqq \frac{(1+2T_0) A_1}{1-\Lambda^{-\alpha}}    = (1+2T_0)C_{10}\bigl(f,\CC,d,\alpha,T_0\bigr)>2
\end{equation}
depending only on $f$, $\CC$, $d$, $\Hseminorm{\alpha,\, (S^2,d)}{\phi}$, and $\alpha$. By (\ref{eqDefA0}), (\ref{eqDefT0}), and (\ref{eqPflmBasicIneq_DefC10}), we see that $A_0$ increases as $\Hseminorm{\alpha,\, (S^2,d)}{\phi}$ increases. Now (\ref{eqBasicIneqR}) follows from the fact that $A_0\geq A_1$.

\smallskip

(ii) Fix $x,x'\in X^0_\b$. For each $X^n\in\X^n_\b$, denote $y_{X^n} \coloneqq (f^n|_{X^n})^{-1}(x)$ and $y'_{X^n} \coloneqq (f^n|_{X^n})^{-1}(x')$.

Note that by (\ref{eqDefPhiWidetilde}) and (\ref{eqT0Bound}), we have
\begin{equation}  \label{eqPflmBasicIneqC1}
\Hseminormbig{\alpha,\, (S^2,d)}{\wt{s\phi}}  \leq  \Hseminormbig{\alpha,\, (S^2,d)}{\wt{a\phi}} + \Hseminorm{\alpha,\, (S^2,d)}{b\phi} \leq T_0+ \abs{b} \Hseminorm{\alpha,\, (S^2,d)}{\phi} \leq 2T_0 \max\{1, \, \abs{b}\},
\end{equation}
since $T_0 \geq \Hseminorm{\alpha,\, (S^2,d)}{\phi}$ by (\ref{eqDefT0}) and the fact that $C_{13}>1$ from Lemma~\ref{lmBound_aPhi}.

Note that
\begin{align}    \label{eqPflmBasicIneqC2}
     &       \AbsBig{\RR_{\wt{s\phi},X^0_\b,E}^{(n)} (v)(x) - \RR_{\wt{s\phi},X^0_\b,E}^{(n)} (v)(x')}   \notag\\
&\qquad \leq    \sum\limits_{\substack{X^n\in\X^n_\b \\ X^n\subseteq E}} 
             \Abs{ v(y_{X^n})  e^{S_n\wt{s\phi}(y_{X^n})}- v(y'_{X^n}) e^{S_n\wt{s\phi}(y'_{X^n})} }  \\
&\qquad \leq   \sum\limits_{\substack{X^n\in\X^n_\b \\ X^n\subseteq E}}   
             \Bigl(  \Abs{v(y_{X^n}) -  v(y'_{X^n}) }  \AbsBig{ e^{S_n\wt{s\phi}(y'_{X^n})}}  
                   + \abs{v(y_{X^n})} \AbsBig{e^{S_n\wt{s\phi}(y_{X^n})}  - e^{S_n\wt{s\phi}(y'_{X^n})}  } \Bigr). \notag 
\end{align}

We bound the two terms in the last summation above separately.

By Lemma~\ref{lmSnPhiBound}, Lemma~\ref{lmExpToLiniear}, (\ref{eqDefA1}), and (\ref{eqPflmBasicIneqC1}), 
\begin{align}   \label{eqPflmBasicIneqC3}
&            \sum\limits_{\substack{X^n\in\X^n_\b \\ X^n\subseteq E}} 
                     \abs{v(y_{X^n})} \Abs{e^{S_n\wt{s\phi}(y_{X^n})}  - e^{S_n\wt{s\phi}(y'_{X^n})}  }    \notag \\
&\qquad \leq  \sum\limits_{\substack{X^n\in\X^n_\b \\ X^n\subseteq E}} 
                     \abs{v(y_{X^n})}  \AbsBig{1- e^{S_n\wt{s\phi}(y'_{X^n}) - S_n\wt{s\phi}(y_{X^n})} }  
                      e^{S_n\wt{a\phi}(y_{X^n})}    \notag \\
&\qquad \leq C_{10}(f,\CC,d,\alpha,T_0) \Hseminormbig{\alpha,\, (S^2,d)}{\wt{s\phi}}  d(x,x')^\alpha  \RR_{\wt{a\phi},X^0_\b,E}^{(n)} (\abs{v})(x)  \\
&\qquad \leq  A_1  \frac{   2T_0 \max\{1, \, \abs{b}\} \RR_{\wt{a\phi},X^0_\b,E}^{(n)} (\abs{v})(x)   }{   1-\Lambda^{-\alpha} }  d(x,x')^\alpha  \notag\\
&\qquad =     A_0  \max\{1, \, \abs{b}\} \RR_{\wt{a\phi}, X^0_\b, E}^{(n)} (\abs{v})(x)   d(x,x')^\alpha, \notag
\end{align}
where the last inequality follows from (\ref{eqDefA0}).

By (\ref{eqDefWideTildePsiComplex}), Lemma~\ref{lmMetricDistortion}, and (\ref{eqRDiscontTildeSupDecreasing}) in Lemma~\ref{lmRtildeNorm=1},
\begin{align}   \label{eqPflmBasicIneqC4}
         \sum\limits_{\substack{X^n\in\X^n_\b \\ X^n\subseteq E}}    
            \Abs{v(y_{X^n}) -  v(y'_{X^n}) }  \AbsBig{ e^{S_n\wt{s\phi}(y'_{X^n})}}  
\leq &   \sum\limits_{\substack{X^n\in\X^n_\b \\ X^n\subseteq E}} 
             \HseminormD{\alpha,\, (E,d)}{v}    d(y_{X^n}, y'_{X^n})^\alpha  e^{S_n\wt{a\phi}(y'_{X^n})}  \notag   \\
\leq &     \HseminormD{\alpha,\, (E,d)}{v}    \frac{d(x,x')^\alpha C_0^\alpha}{\Lambda^{\alpha n}}   \sum\limits_{X^n\in\X^n_\b} e^{S_n\wt{a\phi}(y'_{X^n})} \\
\leq &  C_0 \frac{  \HseminormD{\alpha,\, (E,d)}{v}  }{   \Lambda^{\alpha n}   }   d(x,x')^\alpha.  \notag 
\end{align}

Thus (\ref{eqBasicIneqN1}) follows from (\ref{eqPflmBasicIneqC2}), (\ref{eqPflmBasicIneqC3}) and (\ref{eqPflmBasicIneqC4}).

If, in addition, there exists a non-negative real-valued H\"{o}lder continuous function $h\in\Holder{\alpha}(E,d)$ such that 
\begin{equation*}
\abs{v(y)-v(y')} \leq B (h(y)+h(y')) d(y,y')^\alpha
\end{equation*}
when $y,y'\in E$, then by Lemma~\ref{lmSnPhiBound}, Lemma~\ref{lmMetricDistortion}, (\ref{eqT0Bound}), (\ref{eqDefA1}), and (\ref{eqPflmBasicIneq_DefC10}),
\begin{align}   \label{eqPflmBasicIneqC5}
&             \sum\limits_{\substack{X^n\in\X^n_\b \\ X^n\subseteq E}}   
                  \Abs{v(y_{X^n}) -  v(y'_{X^n}) }   \AbsBig{ e^{S_n\wt{s\phi}(y'_{X^n})}}  \\
&\qquad \leq  \sum\limits_{\substack{X^n\in\X^n_\b \\ X^n\subseteq E}} 
                 B \Bigl( h(y_{X^n})  e^{S_n\wt{a\phi}(y_{X^n})}   e^{\Absbig{S_n\wt{a\phi}(y'_{X^n}) - S_n\wt{a\phi}(y_{X^n})}} 
                         +h(y'_{X^n}) e^{S_n\wt{a\phi}(y'_{X^n})}  \Bigr)  d(y_{X^n}, y'_{X^n})^\alpha \notag \\
&\qquad \leq  B\exp \Biggl( \frac{ \Hseminormbig{\alpha,\, (S^2,d)}{\wt{a\phi}}  C_0 \bigl(\diam_d(S^2)\bigr)^\alpha  }
                                 {  1-\Lambda^{-\alpha}  }  \Biggr) \frac{  d(x,x')^\alpha C_0^\alpha  }{  \Lambda^{\alpha n}  }   
                     \Bigl( \RR_{\wt{a\phi},X^0_\b,E}^{(n)} (h)(x) + \RR_{\wt{a\phi},X^0_\b,E}^{(n)} (h)(x')  \Bigr)  \notag\\
&\qquad \leq   A_1  \frac{B}{\Lambda^{\alpha n}}  \Bigl( \RR_{\wt{a\phi},X^0_\b,E}^{(n)} (h)(x) + \RR_{\wt{a\phi},X^0_\b,E}^{(n)} (h)(x')  \Bigr) d(x,x')^\alpha.   \notag
\end{align}
Therefore, (\ref{eqBasicIneqC}) follows from (\ref{eqPflmBasicIneqC2}), (\ref{eqPflmBasicIneqC3}), (\ref{eqPflmBasicIneqC5}), and the fact that $A_0\geq A_1$ from (\ref{eqDefA0}).
\end{proof}

\subsection{Spectral gap}   \label{subsctSplitRuelleOp_SpectralGap}

Let $(X,d)$ be a metric space. A function $h\: [0,+\infty)\rightarrow [0,+\infty)$ is an \defn{abstract modulus of continuity} if it is continuous at $0$, non-decreasing, and $h(0)=0$. Given any constant $\tau \in [0,+\infty]$, and any abstract modulus of continuity $g$, we define the subclass $\CCC_g^\tau((X, d),\C)$ of $\CCC(X,\C)$ as
\begin{equation*}
\CCC_g^\tau((X,d),\C)=\bigl\{u\in\CCC(X,\C) \,\big|\,  \Norm{u}_{\CCC^0(X)}\leq b \text{ and for }x,y\in X, \, \Abs{u(x)-u(y)}\leq g(d(x,y))  \bigr\}.
\end{equation*}
We denote $\CCC_g^\tau(X,d) \coloneqq \CCC_g^\tau((X,d),\C) \cap \CCC(X)$.

Assume now that $(X,d)$ is compact. Then by the Arzel\`a-Ascoli Theorem, each $\CCC_g^\tau((X,d),\C)$ (resp.\ $\CCC_g^\tau(X,d)$) is precompact in $\CCC(X,\C)$ (resp.\ $\CCC(X)$) equipped with the uniform norm. It is easy to see that each $\CCC_g^\tau((X,d),\C)$ (resp.\ $\CCC_g^\tau(X,d)$) is actually compact. On the other hand, for $u\in\CCC(X,\C)$, we can define an abstract modulus of continuity by
\begin{equation}    \label{eqAbsModContForU}
g(t) \coloneqq \sup \{\abs{u(x)-u(y)} \,|\, x,y \in X, d(x,y) \leq t\}
\end{equation}
for $t\in [0,+\infty)$, so that $u\in \CCC_g^\iota ((X,d),\C)$, where $\iota \coloneqq \Norm{u}_\infty$.

The following lemma is  easy to check (see also \cite[Lemma~5.24]{Li17}).

\begin{lemma}  \label{lmChbChbSubsetChB}
Let $(X,d)$ be a metric space. For each pair of constants $\tau_1,\tau_2 \geq 0$ and each pair of abstract moduli of continuity $g_1,g_2$, we have
\begin{equation}
\bigl\{u_1 u_2 \,\big|\, u_1 \in \CCC_{g_1}^{\tau_1} ((X,d),\C), u_2 \in \CCC_{g_2}^{\tau_2} ((X,d),\C) \bigr\}  \subseteq \CCC_{\tau_1 g_2 + \tau_2 g_1}^{\tau_1 \tau_2} ((X,d),\C),
\end{equation}
and for each $c>0$,
\begin{equation}
\Big\{ \frac{1}{u} \,\Big|\,  u\in \CCC_{g_1}^{\tau_1} ((X,d),\C), u(x) \geq c \text{ for each } x\in X\Big\}  \subseteq \CCC_{c^{-2} g_1}^{c^{-1}}((X,d),\C).
\end{equation}
\end{lemma}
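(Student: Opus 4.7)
The plan is to verify each of the two inclusions by checking the two defining conditions of the classes $\CCC_{g}^{\tau}((X,d),\C)$ separately, namely the sup-norm bound and the modulus-of-continuity estimate. Both verifications are direct computations from the algebraic identities
\[
u_1(x)u_2(x) - u_1(y)u_2(y) = u_1(x)\bigl(u_2(x)-u_2(y)\bigr) + u_2(y)\bigl(u_1(x)-u_1(y)\bigr)
\]
and
\[
\frac{1}{u(x)} - \frac{1}{u(y)} = \frac{u(y) - u(x)}{u(x)\,u(y)}.
\]

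First I would treat the product statement. Given $u_i \in \CCC_{g_i}^{\tau_i}((X,d),\C)$ for $i\in\{1,2\}$, the pointwise bound $\Norm{u_1 u_2}_{\CCC^0(X)} \leq \Norm{u_1}_{\CCC^0(X)} \Norm{u_2}_{\CCC^0(X)} \leq \tau_1 \tau_2$ is immediate. For the modulus, applying the triangle inequality to the first algebraic identity above, followed by the bounds $\abs{u_1(x)} \leq \tau_1$, $\abs{u_2(y)} \leq \tau_2$, $\abs{u_2(x)-u_2(y)} \leq g_2(d(x,y))$, and $\abs{u_1(x)-u_1(y)} \leq g_1(d(x,y))$, yields
\[
\abs{u_1(x)u_2(x) - u_1(y)u_2(y)} \leq \tau_1 g_2(d(x,y)) + \tau_2 g_1(d(x,y)),
\]
which is exactly the membership condition for $\CCC_{\tau_1 g_2 + \tau_2 g_1}^{\tau_1 \tau_2}((X,d),\C)$.

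Next I would treat the reciprocal statement. For $u \in \CCC_{g_1}^{\tau_1}((X,d),\C)$ with $u \geq c > 0$ everywhere, the uniform lower bound gives $\Norm{1/u}_{\CCC^0(X)} \leq c^{-1}$. Using the second algebraic identity, together with $u(x)u(y) \geq c^2$ and $\abs{u(x)-u(y)} \leq g_1(d(x,y))$, one obtains
\[
\Absbigg{\frac{1}{u(x)} - \frac{1}{u(y)}} \leq \frac{g_1(d(x,y))}{c^2},
\]
which shows $1/u \in \CCC_{c^{-2} g_1}^{c^{-1}}((X,d),\C)$.

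There is no real obstacle here: both claims follow from elementary algebra combined with the definitions, and the function $c^{-2} g_1$ (respectively $\tau_1 g_2 + \tau_2 g_1$) is easily checked to be an abstract modulus of continuity since sums and non-negative scalar multiples of abstract moduli of continuity are again abstract moduli of continuity. The only tiny point worth noting is that one should verify that the target moduli inherit the properties of being continuous at $0$, non-decreasing, and vanishing at $0$, but this is immediate from the same properties of $g_1$ and $g_2$.
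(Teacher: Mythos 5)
Your proof is correct and is the straightforward direct computation one would expect; the paper itself supplies no proof for this lemma, merely noting that it is ``easy to check'' and referring to \cite[Lemma~5.24]{Li17}, so there is nothing in the paper to compare against. Both algebraic decompositions, the sup-norm bounds, and the observation that $\tau_1 g_2 + \tau_2 g_1$ and $c^{-2}g_1$ remain abstract moduli of continuity are all handled correctly.
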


The following corollary follows immediately from Lemma~\ref{lmChbChbSubsetChB}. We leave the proof to the readers.

\begin{cor}    \label{corProductInverseHolderNorm}
Let $(X,d)$ be a metric space, and $\alpha \in (0,1]$ a constant. Then for all H\"{o}lder continuous functions $u,v \in \Holder{\alpha}((X,d),\C)$, we have $u, v \in \Holder{\alpha}((X,d),\C)$ with
\begin{equation*}
 \Hnorm{\alpha}{u v}{(X,d)} \leq \Hnorm{\alpha}{u }{(X,d)}   \Hnorm{\alpha}{v}{(X,d)},
\end{equation*}
and if, in addition, $\abs{u(x)} \geq c$, for each $x\in X$, for some constant $c>0$, then $\frac{1}{u} \in \Holder{\alpha}((X,d),\C)$ with
\begin{equation*}
\Hnormbigg{\alpha}{\frac{1}{u}}{(X,d)} \leq \frac{1}{c} + \frac{1}{c^2} \Hnorm{\alpha}{u}{(X,d)}.
\end{equation*}
\end{cor}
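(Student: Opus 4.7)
The plan is to deduce both inequalities directly from Lemma~\ref{lmChbChbSubsetChB} by transcribing the classical H\"older seminorm of a function into the abstract-modulus-of-continuity framework. Specifically, given $u \in \Holder{\alpha}((X,d),\C)$, set $\tau_u \coloneqq \Norm{u}_{\CCC^0(X)}$ and define the abstract modulus of continuity $g_u(t) \coloneqq \Hseminorm{\alpha,\,(X,d)}{u} t^\alpha$; then $u \in \CCC_{g_u}^{\tau_u}((X,d),\C)$, and similarly for $v$.

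For the product estimate, I would apply the first inclusion of Lemma~\ref{lmChbChbSubsetChB} to the pair $(u,v)$, which gives $uv \in \CCC_{\tau_u g_v + \tau_v g_u}^{\tau_u \tau_v}((X,d),\C)$. Unpacking this yields
\begin{equation*}
\Norm{uv}_{\CCC^0(X)} \leq \Norm{u}_{\CCC^0(X)}\Norm{v}_{\CCC^0(X)}, \qquad \Hseminorm{\alpha,\,(X,d)}{uv} \leq \Norm{u}_{\CCC^0(X)}\Hseminorm{\alpha,\,(X,d)}{v} + \Norm{v}_{\CCC^0(X)}\Hseminorm{\alpha,\,(X,d)}{u}.
\end{equation*}
Adding these two bounds and factoring the result as the product $\bigl(\Norm{u}_{\CCC^0(X)} + \Hseminorm{\alpha,\,(X,d)}{u}\bigr)\bigl(\Norm{v}_{\CCC^0(X)} + \Hseminorm{\alpha,\,(X,d)}{v}\bigr)$ --- noting that the cross term $\Norm{u}_{\CCC^0(X)}\Norm{v}_{\CCC^0(X)}$ appears only once while the extra term $\Hseminorm{\alpha}{u}\Hseminorm{\alpha}{v}$ picked up from factoring is non-negative --- recovers the desired bound in terms of $\Hnorm{\alpha}{u}{(X,d)}\Hnorm{\alpha}{v}{(X,d)}$ by (\ref{eqDefHolderNorm}).

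For the reciprocal estimate, under the additional assumption $\abs{u(x)} \geq c$ for each $x\in X$, I would invoke the second inclusion of Lemma~\ref{lmChbChbSubsetChB} to obtain $1/u \in \CCC_{c^{-2} g_u}^{c^{-1}}((X,d),\C)$. This directly gives $\Norm{1/u}_{\CCC^0(X)} \leq 1/c$ and $\Hseminorm{\alpha,\,(X,d)}{1/u} \leq c^{-2}\Hseminorm{\alpha,\,(X,d)}{u}$; summing these and bounding the seminorm by the full H\"older norm yields the stated inequality.

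Since the corollary is a direct unpacking of the preceding lemma, there is no substantive obstacle; the only minor bookkeeping point is that Lemma~\ref{lmChbChbSubsetChB} is stated for real-valued functions in $\CCC_g^\tau(X,d)$ but the definition preceding it explicitly extends the class to complex-valued functions $\CCC_g^\tau((X,d),\C)$, and the product/reciprocal inclusions carry over verbatim by applying the triangle inequality on $\C$ in place of $\R$.
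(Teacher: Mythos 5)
Your proof is correct and follows exactly the route the paper has in mind: the paper explicitly states that the corollary "follows immediately from Lemma~\ref{lmChbChbSubsetChB}" and leaves the details to the reader, and your argument of instantiating the abstract modulus of continuity as $g_u(t) = \Hseminorm{\alpha,\,(X,d)}{u} t^\alpha$ with $\tau_u = \Norm{u}_{\CCC^0(X)}$ and then unpacking both inclusions is the standard way to do this. One small remark: Lemma~\ref{lmChbChbSubsetChB} is in fact already stated for the complex-valued classes $\CCC_g^\tau((X,d),\C)$, so the only genuine mismatch is that its second inclusion writes the hypothesis as $u(x) \geq c$ rather than $\abs{u(x)} \geq c$; your reading, applying the modulus throughout, is the intended one and the argument carries over unchanged.
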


\begin{lemma}   \label{lmRDiscontTildeDual}
Let $f$, $\CC$, $d$, $\alpha$ satisfy the Assumptions. Assume in addition that $f(\CC)\subseteq\CC$. Let $\phi\in \Holder{\alpha}(S^2,d)$ be a real-valued H\"{o}lder continuous function with an exponent $\alpha$, and $\mu_\phi$ denote the unique equilibrium state for $f$ and $\phi$. Given $X^0\in\X^0(f,\CC)$ and $u\in\CCC(X^0)$. Then
\begin{equation*}
    \int_{X^0}\! u\,\mathrm{d}\mu_\phi 
=   \sum\limits_{ \c\in\{\b,\w\} }\int_{X^0_\c} \!  \RR_{\wt{\phi}, X^0_\c, X^0}^{(n)} (u) \,\mathrm{d}\mu_\phi.
\end{equation*}
for each $n\in\N$.
\end{lemma}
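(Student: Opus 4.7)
The plan is to reduce the identity to the standard dual invariance $\int \RR_{\wt{\phi}}(w)\,\mathrm{d}\mu_\phi = \int w\,\mathrm{d}\mu_\phi$ (valid for bounded Borel functions $w\in B(S^2)$) by extending $u$ by zero off $X^0$ and invoking $\mu_\phi(\CC)=0$.

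First I would define $v\in B(S^2)$ by setting $v|_{X^0}\coloneqq u$ and $v\equiv 0$ on $S^2\setminus X^0$, so that $v$ is a bounded Borel extension of $u$. Since $\inte\bigl(X^0_\b\bigr)\cap\inte\bigl(X^0_\w\bigr)=\emptyset$ and $S^2\setminus\bigl(\inte\bigl(X^0_\b\bigr)\cup\inte\bigl(X^0_\w\bigr)\bigr)\subseteq\CC$, while $\mu_\phi(\CC)=0$ by Theorem~\ref{thmEquilibriumState}~(iii), the identity (\ref{eqLbwInteMatchLbw}) yields
\begin{align*}
\sum_{\c\in\{\b,\w\}}\int_{X^0_\c}\! \RR_{\wt{\phi},X^0_\c,X^0}^{(n)}(u)\,\mathrm{d}\mu_\phi
 & = \sum_{\c\in\{\b,\w\}}\int_{\inte(X^0_\c)}\! \RR_{\wt{\phi}}^n(v)\,\mathrm{d}\mu_\phi
   = \int_{S^2}\! \RR_{\wt{\phi}}^n(v)\,\mathrm{d}\mu_\phi,
\end{align*}
while $\int_{X^0}u\,\mathrm{d}\mu_\phi=\int_{S^2}v\,\mathrm{d}\mu_\phi$ because $v$ vanishes outside $X^0$. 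Hence the lemma reduces to proving $\int_{S^2}\RR_{\wt{\phi}}^n(v)\,\mathrm{d}\mu_\phi=\int_{S^2}v\,\mathrm{d}\mu_\phi$ for bounded Borel $v$, which by iteration reduces to the case $n=1$.

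For the duality at $n=1$, I would invoke the standard relation $\mathrm{d}\mu_\phi=u_\phi\,\mathrm{d}m_\phi$ from the thermodynamic formalism for expanding Thurston maps developed in \cite{Li18}, where $m_\phi$ is the eigenmeasure of $\RR_\phi^\ast$ from Theorem~\ref{thmRR^nConv} with eigenvalue $e^{P(f,\phi)}$. Combining this with (\ref{eqRuelleTilde_NoTilde}) (applied to the real-valued potential $\phi$) gives
\begin{align*}
\int_{S^2}\RR_{\wt{\phi}}(v)\,\mathrm{d}\mu_\phi
 & = \int_{S^2} u_\phi \cdot \RR_{\wt{\phi}}(v)\,\mathrm{d}m_\phi
   = e^{-P(f,\phi)}\int_{S^2}\RR_\phi(u_\phi v)\,\mathrm{d}m_\phi \\
 & = e^{-P(f,\phi)}\cdot e^{P(f,\phi)}\int_{S^2} u_\phi v\,\mathrm{d}m_\phi
   = \int_{S^2} v\,\mathrm{d}\mu_\phi,
\end{align*}
which is the required invariance.

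The one routine technical point is justifying that (\ref{eqRuelleTilde_NoTilde}) and the eigenmeasure identity $\int\RR_\phi(w)\,\mathrm{d}m_\phi=e^{P(f,\phi)}\int w\,\mathrm{d}m_\phi$ extend from $\CCC(S^2)$ to bounded Borel functions $w\in B(S^2)$; this follows immediately from the extension of $\RR_\phi$ to $B(S^2,\C)$ recorded in Subsection~\ref{subsctThurstonMap} together with a standard monotone class argument. I do not anticipate any serious obstacle, since the content of the lemma is just the well-known dual invariance of the equilibrium state under the normalized transfer operator, combined with the observation that the ``split'' operator $\RR_{\wt{\phi},X^0_\c,X^0}^{(n)}$ recovers $\RR_{\wt{\phi}}^n$ applied to the zero-extension of $u$ on a set of full $\mu_\phi$-measure.
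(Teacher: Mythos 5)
Your proof is correct and follows essentially the same strategy as the paper: extend $u$ by zero to a bounded Borel function $v$, use $\mu_\phi(\CC)=0$ and (\ref{eqLbwInteMatchLbw}) to reduce to the dual invariance $\int_{S^2}\RR_{\wt\phi}^n(v)\,\mathrm{d}\mu_\phi=\int_{S^2}v\,\mathrm{d}\mu_\phi$, and then establish that invariance. The only difference is in how the last step is packaged. The paper handles the fact that $v$ is merely Borel (so the adjoint pairing $\langle \RR_{\wt\phi}^n(\cdot),\mu_\phi\rangle=\langle \cdot,(\RR_{\wt\phi}^*)^n\mu_\phi\rangle$ is a priori only available on $\CCC(S^2)$) by building an explicit increasing sequence $\tau_i\in\CCC(S^2)$ with $v\tau_i\to v$ pointwise and invoking dominated convergence; it then cites $(\RR_{\wt\phi}^*)^n\mu_\phi=\mu_\phi$ directly. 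You instead unpack that invariance explicitly, via $\mathrm{d}\mu_\phi=u_\phi\,\mathrm{d}m_\phi$, the conjugation identity (\ref{eqRuelleTilde_NoTilde}), and the eigenmeasure relation for $m_\phi$, and appeal to a monotone class argument to pass from $\CCC(S^2)$ to $B(S^2)$. The two measurability workarounds (explicit $\tau_i$ plus DCT, versus monotone class) are equivalent, and your version has the small merit of making the invariance $\RR_{\wt\phi}^*\mu_\phi=\mu_\phi$ transparent rather than a citation. You should make sure the relation $\mathrm{d}\mu_\phi=u_\phi\,\mathrm{d}m_\phi$ and the fact that the eigenvalue of $\RR_\phi^*$ at $m_\phi$ is $e^{P(f,\phi)}$ are indeed available in the cited form from \cite{Li18}, as this paper does not state them explicitly, but they are standard facts of the theory and the reference supports them.
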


\begin{proof}
We denote a function $v\in B(S^2)$ by
\begin{equation*} 
v(x)  \coloneqq  \begin{cases} u(x) & \text{if } x\in \inte  (X^0), \\ 0  & \text{otherwise}. \end{cases}
\end{equation*}
We choose a pointwise increasing sequence of continuous non-negative functions $\tau_i\in \CCC(S^2)$, $i\in\N$, such that  $\lim\limits_{i\to+\infty} \tau_i(x) = \mathbbm{1}_{\inte (X^0)}$ for all $x\in S^2$. Then $\{v\tau_i\}_{i\in\N}$ is a bounded sequence of continuous functions on $S^2$, convergent pointwise to $v$. 

Fix $n\in\N$. Since $\mu_\phi (\CC) = 0$ by \cite[Proposition~5.39]{Li17}, then by (\ref{eqLbwInteMatchLbw}) and the Dominated Convergence Theorem, we get
\begin{align*}
      \sum\limits_{ \c\in\{\b,\w\} } \int_{X^0_\c} \!  \RR_{\wt{\phi}, X^0_\c, X^0}^{(n)} (u) \,\mathrm{d}\mu_\phi   
= &\sum\limits_{ \c\in\{\b,\w\} } \int_{X^0_\c} \! \sum\limits_{\substack{X^n\in \X^n_\c \\ X^n\subseteq X^0}  } \Bigl(e^{S_n\wt{\phi}} u\Bigr) \bigl((f^n|_{X^n})^{-1} (x)\bigr) \,\mathrm{d}\mu_\phi(x) \\ 
= &\sum\limits_{ \c\in\{\b,\w\} } \lim\limits_{i\to+\infty}   \int_{X^0_\c} \! \sum\limits_{\substack{X^n\in \X^n_\c \\ X^n\subseteq X^0}  } \Bigl(e^{S_n\wt{\phi}} v\tau_i \Bigr) \bigl((f^n|_{X^n})^{-1} (x)\bigr) \,\mathrm{d}\mu_\phi(x) \\
= &\sum\limits_{ \c\in\{\b,\w\} } \lim\limits_{i\to+\infty}   \int_{X^0_\c} \! \RR_{\wt{\phi}}^n (v \tau_i)(x) \,\mathrm{d}\mu_\phi(x)    \\
= & \lim\limits_{i\to+\infty}   \int_{S^2} \! \RR_{\wt{\phi}}^n (v \tau_i)  \,\mathrm{d}\mu_\phi 
=  \lim\limits_{i\to+\infty}   \int_{S^2} \! v \tau_i \,\mathrm{d} \bigl(\RR^*_{\wt{\phi}}\bigr)^n (\mu_\phi)  \\
= & \lim\limits_{i\to+\infty}   \int_{S^2} \! v \tau_i \,\mathrm{d}   \mu_\phi   
=     \int_{S^2} \! v   \,\mathrm{d}   \mu_\phi  
=     \int_{X^0} \! u   \,\mathrm{d}   \mu_\phi .
\end{align*}
The proof is now complete.
\end{proof}

\begin{lemma}    \label{lmLSplitTildeSupBound}
Let $f$, $\CC$, $d$ satisfy the Assumptions. Assume in addition that $f(\CC)\subseteq\CC$. Given an abstract modulus of continuity $g$. Then for each $\alpha \in (0,1]$, $K\in(0,+\infty)$, and $\delta_1\in(0,+\infty)$, there exist constants $\delta_2\in (0,+\infty)$ and $N\in\N$ with the following property:

For all $\c\in\{\b,\w\}$, $u_\b\in \CCC_g^{+\infty}(X^0_\b,d)$, $u_\w\in \CCC_g^{+\infty}(X^0_\w,d)$, and $\phi\in\Holder{\alpha}(S^2,d)$, if $\Hnorm{\alpha}{\phi}{(S^2,d)} \leq K$, $\max\bigl\{  \norm{u_\b}_{\CCC^0(X^0_\b)} , \, \norm{u_\w}_{\CCC^0(X^0_\w)}  \bigr\} \geq \delta_1$, and $\int_{X^0_\b}\! u_\b\,\mathrm{d}\mu_\phi + \int_{X^0_\w}\! u_\w\,\mathrm{d}\mu_\phi  = 0$ where $\mu_\phi$ denotes the unique equilibrium state for $f$ and $\phi$, then
\begin{equation*}
\NormBig{\RR_{\wt{\phi},X^0_\c,X^0_\b}^{(N)}(u_\b)  + \RR_{\wt{\phi},X^0_\c,X^0_\w}^{(N)}(u_\w)  }_{\CCC^0(X^0)} \leq \max\bigl\{  \norm{u_\b}_{\CCC^0(X^0_\b)} , \, \norm{u_\w}_{\CCC^0(X^0_\w)}  \bigr\} - \delta_2.
\end{equation*}
\end{lemma}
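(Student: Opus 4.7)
The plan is to adapt the argument sketched in the (commented-out) proof of Lemma~\ref{lmLDiscontTildeSupBound}, the single-tile analogue, to the present joint setting. Setting $M \coloneqq \max\bigl\{\norm{u_\b}_{\CCC^0(X^0_\b)}, \norm{u_\w}_{\CCC^0(X^0_\w)}\bigr\} \geq \delta_1$, I would introduce the combined function $v\: S^2 \setminus \CC \rightarrow \R$ defined by $v|_{\inter(X^0_{\c'})} \coloneqq u_{\c'}|_{\inter(X^0_{\c'})}$ for $\c' \in \{\b,\w\}$. Since $\mu_\phi(\CC) = 0$ by Theorem~\ref{thmEquilibriumState}~(iii), the joint mean-zero hypothesis becomes $\int v \,\mathrm{d}\mu_\phi = 0$, while $\norm{v}_{\CCC^0(S^2 \setminus \CC)} = M$. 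For each $x \in X^0_\c$, writing $y_{X^N} \coloneqq (f^N|_{X^N})^{-1}(x)$, Lemma~\ref{lmRtildeNorm=1} applied with one of $u_\b, u_\w$ equal to $\mathbbm{1}$ yields $\sum_{X^N\in\X^N_\c} e^{S_N \wt{\phi}(y_{X^N})} = 1$ on $X^0_\c$, so the quantity to bound equals $\sum_{X^N\in\X^N_\c} v(y_{X^N}) e^{S_N\wt{\phi}(y_{X^N})}$. The strategy is thus to locate a single $X^N_- \in \X^N_\c$ at which $v(y_{X^N_-})$ is appreciably smaller than $M$, and symmetrically an $X^N_+ \in \X^N_\c$ at which $v(y_{X^N_+})$ is appreciably larger than $-M$.

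First I would produce sign-witnessing points. Full support of the equilibrium state $\mu_\phi$ (a standard Gibbs-type consequence for H\"older potentials on expanding Thurston maps, cf.\ the thermodynamic formalism developed in \cite{Li18}) together with $\int v\,\mathrm{d}\mu_\phi = 0$ and $\norm{v}_{\CCC^0} = M \geq \delta_1 > 0$ forces $v$ to attain a strictly negative value at some $y_- \in \inter(X^0_{\c_-})$ and a strictly positive value at some $y_+ \in \inter(X^0_{\c_+})$: otherwise $v$ would be one-signed, hence zero $\mu_\phi$-a.e., and by full support and continuity of each $u_{\c'}$ on $X^0_{\c'}$ both $u_\b$ and $u_\w$ would vanish identically, contradicting $M > 0$. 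Choose $\epsilon > 0$ with $g(\epsilon) \leq \delta_1/4$ and small enough that $\overline{B_d}(y_\pm, \epsilon) \subseteq \inter(X^0_{\c_\pm})$. Let $n_0 \in \N$ be minimal with $f^{n_0}\bigl(X^0_\b\bigr) = f^{n_0}\bigl(X^0_\w\bigr) = S^2$, and by Lemma~\ref{lmCellBoundsBM}~(iv) pick $N \geq 2 n_0$ such that $U^{N-n_0}(z) \subseteq B_d(z, \epsilon)$ for every $z \in S^2$.

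Given $x \in X^0_\c$, I would construct $X^N_-$ by two successive pullbacks: take any $(N-n_0)$-tile $Y$ with $y_- \in Y$ (necessarily $Y \subseteq X^0_{\c_-}$ by the choice of $\epsilon$), set $X^0_{\c'} \coloneqq f^{N-n_0}(Y) \in \X^0$ (by Proposition~\ref{propCellDecomp}~(i)), pick an $n_0$-tile $Z \in \X^{n_0}_\c$ with $Z \subseteq X^0_{\c'}$ (existing since $f^{n_0}\bigl(X^0_{\c'}\bigr) = S^2$ and $\DD^{n_0}$ refines $\DD^0$), and set $X^N_- \coloneqq (f^{N-n_0}|_Y)^{-1}(Z)$. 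Then $X^N_- \in \X^N_\c$, $X^N_- \subseteq Y \subseteq B_d(y_-, \epsilon) \cap X^0_{\c_-}$, so $y_* \coloneqq (f^N|_{X^N_-})^{-1}(x)$ satisfies $v(y_*) \leq v(y_-) + g(\epsilon) \leq \delta_1/4 \leq M - 3\delta_1/4$. Using $\sum_{X^N \in \X^N_\c} e^{S_N\wt\phi(y_{X^N})} = 1$, this gives
\begin{align*}
& \RR_{\wt{\phi},X^0_\c,X^0_\b}^{(N)}(u_\b)(x) + \RR_{\wt{\phi},X^0_\c,X^0_\w}^{(N)}(u_\w)(x) \\
&\qquad \leq M \bigl(1 - e^{S_N\wt\phi(y_*)}\bigr) + (\delta_1/4) e^{S_N\wt\phi(y_*)} \leq M - (3\delta_1/4) e^{S_N\wt\phi(y_*)}.
\end{align*}
The mirror construction near $y_+$ produces the symmetric lower bound $-M + (3\delta_1/4) e^{S_N\wt\phi(y_{**})}$. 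By Lemma~\ref{lmBound_aPhi} with $T = 1$ and $a = 1$, $\Norm{\wt{\phi}}_{\CCC^0(S^2)}$ admits an upper bound $C'$ depending only on $f, \CC, d, \alpha, K$, so $\inf_{y \in S^2} e^{S_N \wt\phi(y)} \geq e^{-NC'}$ and $\delta_2 \coloneqq (3\delta_1/4) e^{-NC'}$ works.

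The main obstacle will be the combinatorial requirement that the chosen preimage lie in an $N$-tile of the prescribed color $\c$ and in the specific $0$-tile containing $y_-$, all while remaining within distance $\epsilon$ of $y_-$. The pullback-then-pullback construction resolves this using the refinement structure together with the surjectivity of $f^{n_0}$ on $0$-tiles, but some care is needed to verify that the resulting cell is a nondegenerate $N$-tile of the right color. A secondary subtlety is the use of full support of $\mu_\phi$, needed only for the sign-change step; this is standard but should be cited carefully since it must hold uniformly in $\phi$ over $\bigl\{\phi \,\big|\, \Hnorm{\alpha}{\phi}{(S^2,d)} \leq K\bigr\}$.
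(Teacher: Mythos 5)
Your proof is correct and follows essentially the same strategy as the paper's: choose $\epsilon$ small enough that the modulus $g(\epsilon)$ is a small fraction of $\delta_1$, choose $N$ via Lemma~\ref{lmCellBoundsBM}~(iv) so that $(N-n_0)$-flowers fit inside $\epsilon$-balls, locate sign-witnessing points, and deduce a sup-norm drop of size $\delta_1 \cdot \inf e^{S_N\wt\phi}$ by pulling back a preimage of $x$ near such a point and using $\RR_{\wt\phi}(\mathbbm{1})=\mathbbm{1}$. Two steps differ in presentation. For the sign witnesses the paper argues more economically: from $\int_{X^0_\b} u_\b\,\mathrm{d}\mu_\phi + \int_{X^0_\w} u_\w\,\mathrm{d}\mu_\phi = 0$ one assumes WLOG that $\int_{X^0_\b} u_\b\,\mathrm{d}\mu_\phi \leq 0$ and $\int_{X^0_\w} u_\w\,\mathrm{d}\mu_\phi \geq 0$, which already forces $u_\b(y_1)\leq 0$ for some $y_1\in X^0_\b$ and $u_\w(y_2)\geq 0$ for some $y_2\in X^0_\w$; your route through the combined function $v$ and full support of $\mu_\phi$ reaches the same conclusion but with more machinery. (Note that the paper's step also tacitly requires $\mu_\phi\bigl(X^0_\b\bigr)>0$ and $\mu_\phi\bigl(X^0_\w\bigr)>0$, which is a consequence of the Gibbs property of Lemma~\ref{lmGibbsDoubling}, so your explicit invocation of full support is not superfluous — it is just made visible.) For the preimage step, your two-stage pullback ($(N-n_0)$-tile $\mapsto$ $0$-tile under $f^{N-n_0}$, then $n_0$-tile of color $\c$ inside it) is exactly the unpacking of the paper's one-line claim that $f^N\bigl(U^{N-n_0}(y_1)\cap X^0_\b\bigr)=S^2$; both are valid, and the paper's formulation is the terser way to say the same thing. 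Your choice $g(\epsilon)\leq\delta_1/4$ versus the paper's $g(\epsilon)<\delta_1/2$, and the resulting $\delta_2$, are immaterial. One small caution: your argument that $v$ attains a strictly negative and a strictly positive value is more than is needed — nonstrict signs $u_\b(y_1)\leq 0$, $u_\w(y_2)\geq 0$ suffice for the final estimate — so the strict-sign step, while correct, can be dropped.
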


\begin{proof}
Fix arbitrary constants $\alpha \in (0,1]$, $K\in (0,+\infty)$, and $\delta_1 \in (0,+\infty)$. Choose $\epsilon > 0$ small enough such that $g(\epsilon) < \frac{\delta_1}{2}$. Fix $\c\in\{\b,\w\}$. Let $n_0\in\N$ be the smallest number such that $f^{n_0} \bigl( \inte\bigl( X^0_\b \bigr) \bigr)= S^2 = f^{n_0}  \bigl( \inte\bigl( X^0_\w \bigr) \bigr)$.

By Lemma~\ref{lmCellBoundsBM}~(iv), there exists a number $N\in \N$ depending only on $f$, $\CC$, $d$, $g$, and $\delta_1$ such that $N\geq 2n_0$ and for each $z\in S^2$, we have $U^{N-n_0}(z) \subseteq B_d(z,\epsilon)$ (see (\ref{defU^n})).

Fix an arbitrary $\phi\in \Holder{\alpha}(S^2,d)$ with $\Hnorm{\alpha}{\phi}{(S^2,d)} \leq K$, and arbitrary functions $u_\b\in \CCC_g^{+\infty}(X^0_\b,d)$ and $u_\w\in \CCC_g^{+\infty}(X^0_\w,d)$ with $\max\bigl\{  \norm{u_\b}_{\CCC^0(X^0_\b)} , \, \norm{u_\w}_{\CCC^0(X^0_\w)}  \bigr\} \geq \delta_1$ and $\int_{X^0_\b}\! u_\b\,\mathrm{d}\mu_\phi + \int_{X^0_\w}\! u_\w\,\mathrm{d}\mu_\phi  = 0$. Without loss of generality, we assume that $\int_{X^0_\b}\! u_\b\,\mathrm{d}\mu_\phi \leq 0$ and $\int_{X^0_\w}\! u_\w\,\mathrm{d}\mu_\phi \geq 0$. So we can choose points $y_1 \in X^0_\b$ and $y_2\in X^0_\w$ in such a way that $u_\b(y_1) \leq 0$ and $u_\w(y_2)\geq 0$.

We denote
\begin{equation*}
M   \coloneqq   \max\bigl\{  \norm{u_\b}_{\CCC^0(X^0_\b)} , \, \norm{u_\w}_{\CCC^0(X^0_\w)}  \bigr\}.
\end{equation*}

We fix a point $x\in X^0_\c$. Since $f^N \bigl( U^{N-n_0}(y_1) \cap X^0_\b \bigr) = S^2$, there exists $y\in f^{-N}(x)\cap X^0_\b$ such that $y\in U^{N-n_0}(y_1) \subseteq B_d(y_1,\epsilon)$. Since $M\geq \delta_1$,
\begin{equation*}
u_\b(y)\leq  u_\b(y_1) + g(\epsilon) < \frac{\delta_1}{2} \leq M - \frac{\delta_1}{2}.
\end{equation*}
Choose $X^N_y \in \X^N_\b$ such that $y\in X^N_y \subseteq X^0_\b$. Denote $w_{X^N} \coloneqq (f^N|_{X^N})^{-1}(x)$ for each $X^N\in \X^N_\c$. So by Lemma~\ref{lmRtildeNorm=1}, we have
\begin{align*}
&            \RR_{\wt{\phi},X^0_\c,X^0_\b}^{(N)} (u_\b) (x)  +  \RR_{\wt{\phi},X^0_\c,X^0_\w}^{(N)} (u_\w) (x)      \\
&\qquad =    u_\b(y) e^{S_N\wt{\phi}(y)} 
              + \sum\limits_{\substack{X^N\in \X^N_\c \setminus\{X^N_y\} \\ X^N\subseteq X^0_\b}}  u_\b(w_{X^N}) e^{S_N\wt{\phi}(w_{X^N})}
              + \sum\limits_{\substack{X^N\in \X^N_\c                   \\ X^N\subseteq X^0_\w}}  u_\w(w_{X^N}) e^{S_N\wt{\phi}(w_{X^N})}      \\
&\qquad\leq \biggl( M - \frac{\delta_1}{2} \biggr) \exp\bigl(S_N\wt{\phi}(y)\bigr)  
              +  M  \sum\limits_{ X^N\in \X^N_\c\setminus\{X^N_y\}}  \exp\bigl(S_N\wt{\phi}(w_{X^N})\bigr) \\
&\qquad =    M  \sum\limits_{ X^N\in \X^N_\c  }  \exp\bigl(S_N\wt{\phi}(w_{X^N})\bigr)  -  \frac{\delta_1}{2}   \exp\bigl(S_N\wt{\phi}(y)\bigr) \\
&\qquad =    M  -  \frac{\delta_1}{2}   \exp\bigl(S_N\wt{\phi}(y)\bigr) .
\end{align*}
Similarly, there exists $z\in f^{-N}(x)\cap X^0_\w$ such that $z\in U^{N-n_0}(y_2) \subseteq B_d(y_2, \epsilon)$ and
\begin{equation*}
\RR_{\wt{\phi},X^0_\c,X^0_\b}^{(N)} (u_\b) (x)  +  \RR_{\wt{\phi},X^0_\c,X^0_\w}^{(N)} (u_\w) (x) \geq - M  +  \frac{\delta_1}{2}   \exp\bigl(S_N\wt{\phi}(z)\bigr) .
\end{equation*}
Hence we get
\begin{equation*}
       \NormBig{ \RR_{\wt{\phi},X^0_\c,X^0_\b}^{(N)} (u_\b)  +  \RR_{\wt{\phi},X^0_\c,X^0_\w}^{(N)} (u_\w)  }_{ \CCC^0(X^0_\b) }  
\leq  M -  \frac{\delta_1}{2} \inf \bigl\{  \exp\bigl(S_N\wt{\phi}(w)\bigr)  \,\big|\, w\in S^2  \bigr\}.
\end{equation*}

By (\ref{eqBound_aPhi_Sup}) in Lemma~\ref{lmBound_aPhi} with $T\coloneqq 1$, the definition of $M$ above, and (\ref{eqDefHolderNorm}), we have
\begin{equation*}
     \NormBig{\RR_{\wt{\phi},X^0_\c,X^0_\b}^{(N)}(u_\b)  + \RR_{\wt{\phi},X^0_\c,X^0_\w}^{(N)}(u_\w)  }_{\CCC^0(X_b^0)}
 \leq \max\Bigl\{  \norm{u_\b}_{\CCC^0(X^0_\b)} , \, \norm{u_\w}_{\CCC^0(X^0_\w)}  \Bigr\} - \delta_2
\end{equation*}
with
\begin{equation*}
\delta_2   \coloneqq    \frac{\delta_1}{2} \exp ( - N (C_{13} K + \abs{\log(\deg f)})),
\end{equation*}
where $C_{13}$ is a constant from Lemma~\ref{lmBound_aPhi} depending only on $f$, $\CC$, $d$, and $\alpha$. Therefore the constant $\delta_2$ depends only on $f$, $\CC$, $d$, $\alpha$, $g$, $K$, and $\delta_1$.
\end{proof}

\begin{theorem}    \label{thmLDiscontTildeUniformConv}
Let $f\: S^2 \rightarrow S^2$ be an expanding Thurston map with a Jordan curve $\CC\subseteq S^2$ satisfying $f(\CC)\subseteq\CC$ and $\post f\subseteq \CC$. Let $d$ be a visual metric on $S^2$ for $f$ with expansion factor $\Lambda>1$, $\alpha\in(0,1]$ be a constant, and $X^0\in \X^0(f,\CC)$ be a $0$-tile. Let $H$, $H_\b$, and $H_\w$ be bounded subsets of $\Holder{\alpha}(S^2,d)$, $\Holder{\alpha} \bigl( X^0_\b,d \bigr)$, and $\Holder{\alpha} \bigl(X^0_\w,d \bigr)$, respectively. Then for each $\phi\in H$, each pair of functions $u_\b\in H_\b$ and $u_\w\in H_\w$, we have
\begin{equation}  \label{eqLDiscontTildeUniformConv}
\lim\limits_{n\to+\infty}  \NormBig{ \RR_{\wt{\phi}, X^0, X^0_\b}^{(n)}  (\overline{u}_\b )    +       \RR_{\wt{\phi}, X^0, X^0_\w}^{(n)} (\overline{u}_\w)}_{\CCC^0(X^0)} = 0,
\end{equation}
where the pair of functions $\overline{u}_\b \in \Holder{\alpha} \bigl(X^0_\b,d \bigr)$ and $\overline{u}_\w \in \Holder{\alpha} \bigl(X^0_\w,d \bigr)$ are given by
\begin{equation*}
\overline{u}_\b  \coloneqq  u_\b - \int_{X^0_\b}\! u_\b\,\mathrm{d}\mu_\phi - \int_{X^0_\w}\! u_\w\,\mathrm{d}\mu_\phi   \mbox{ and } 
\overline{u}_\w \coloneqq  u_\w - \int_{X^0_\b}\! u_\b\,\mathrm{d}\mu_\phi - \int_{X^0_\w}\! u_\w\,\mathrm{d}\mu_\phi
\end{equation*}
with $\mu_\phi$ denoting the unique equilibrium state for $f$ and $\phi$.

Moreover, the convergence in (\ref{eqLDiscontTildeUniformConv}) is uniform in $\phi\in H$, $u_\b\in H_\b$, and $u_\w\in H_\w$.
\end{theorem}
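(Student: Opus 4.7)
The plan is to iterate Lemma~\ref{lmLSplitTildeSupBound}, which provides a strict sup-norm contraction under the split Ruelle operator for pairs of functions lying in a fixed equicontinuity class and satisfying a ``split zero-mean'' condition against $\mu_\phi$. For each $n\in\N_0$ and $\c\in\{\b,\w\}$, set
\begin{equation*}
v_{n,\c}\coloneqq \RR_{\wt\phi,X^0_\c,X^0_\b}^{(n)}(\overline u_\b)+\RR_{\wt\phi,X^0_\c,X^0_\w}^{(n)}(\overline u_\w),
\end{equation*}
so that (\ref{eqLDiscontProperties_Split}) gives $(v_{n+1,\b},v_{n+1,\w})=\RRR_{\wt\phi}(v_{n,\b},v_{n,\w})$, and the left-hand side of (\ref{eqLDiscontTildeUniformConv}) equals $\|v_{n,\c}\|_{\CCC^0(X^0)}$ for $\c$ with $X^0=X^0_\c$. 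The split zero-mean condition is preserved along the orbit: applying Lemma~\ref{lmRDiscontTildeDual} separately with $u=\overline u_\b$ on $X^0_\b$ and $u=\overline u_\w$ on $X^0_\w$ and summing gives
\begin{equation*}
\int_{X^0_\b} v_{n,\b}\,\mathrm{d}\mu_\phi+\int_{X^0_\w} v_{n,\w}\,\mathrm{d}\mu_\phi=\int_{X^0_\b}\overline u_\b\,\mathrm{d}\mu_\phi+\int_{X^0_\w}\overline u_\w\,\mathrm{d}\mu_\phi=0
\end{equation*}
for every $n$, the last equality being the definition of $\overline u_\b,\overline u_\w$.

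Next I would produce a common equicontinuity modulus for the iterates. Arguing as in the proof of Lemma~\ref{lmBasicIneq}(ii) termwise for each $\c'\in\{\b,\w\}$ yields
\begin{equation*}
\Hseminorm{\alpha,(X^0_\c,d)}{\RR_{\wt\phi,X^0_\c,X^0_{\c'}}^{(n)}(\overline u_{\c'})}\leq C_0\Lambda^{-\alpha n}\Hseminorm{\alpha,(X^0_{\c'},d)}{\overline u_{\c'}}+A_0\Normbig{\RR_{\wt\phi,X^0_\c,X^0_{\c'}}^{(n)}(\abs{\overline u_{\c'}})}_{\CCC^0(X^0_\c)},
\end{equation*}
and by (\ref{eqRDiscontTildeSupDecreasing}) the last term is bounded by $A_0\|\overline u_{\c'}\|_{\CCC^0(X^0_{\c'})}$. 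Summing over $\c'$, and using the uniform H\"older bounds on $H_\b,H_\w$ together with the uniform bound on $\Hseminorm{\alpha,(S^2,d)}{\wt\phi}$ over $\phi\in H$ supplied by Lemma~\ref{lmBound_aPhi} (which in turn controls $A_0$), one extracts $C^*>0$ and $n_0\in\N$, depending only on $f$, $\CC$, $d$, $\alpha$, and the H\"older-norm bounds on $H,H_\b,H_\w$, such that $\Hseminorm{\alpha,(X^0_\c,d)}{v_{n,\c}}\leq C^*$ uniformly for $n\geq n_0$, $\phi\in H$, $u_\b\in H_\b$, $u_\w\in H_\w$, and $\c\in\{\b,\w\}$. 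This yields a common abstract modulus of continuity $g(t)=C^*t^\alpha$ for the entire family $\{v_{n,\c}\}_{n\geq n_0}$.

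Finally I would iterate Lemma~\ref{lmLSplitTildeSupBound}. Set $M_n\coloneqq\max\bigl\{\|v_{n,\b}\|_{\CCC^0(X^0_\b)},\|v_{n,\w}\|_{\CCC^0(X^0_\w)}\bigr\}$; by the sup-norm estimate (\ref{eqSplitRuelleTildeSupDecreasing}) this sequence is non-increasing. Fix $\delta_1>0$ arbitrary. Applying Lemma~\ref{lmLSplitTildeSupBound} with the modulus $g$, the uniform H\"older-norm bound on $\phi\in H$, and this $\delta_1$ produces $N\in\N$ and $\delta_2>0$ depending only on $\delta_1$ and the fixed data, such that whenever $n\geq n_0$ and $M_n\geq\delta_1$, applying the lemma to $(v_{n,\b},v_{n,\w})$ gives $M_{n+N}\leq M_n-\delta_2$. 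Since $M_{n_0}$ is a priori uniformly bounded, after at most $\lceil M_{n_0}/\delta_2\rceil$ such blocks we force $M_n<\delta_1$, and monotonicity keeps $M_n<\delta_1$ thereafter. As $\delta_1$ was arbitrary and the number of iterations required is uniform in the data, this gives the uniform convergence claimed in (\ref{eqLDiscontTildeUniformConv}). The main obstacle is the equicontinuity step: Lemma~\ref{lmBasicIneq} is stated under the Assumptions (hence with the normalization $|a|\leq 2s_0$ for a single eventually positive $\phi$), whereas here $\phi$ varies throughout $H$ with no such structure. One must verify that the constants in Lemmas~\ref{lmBound_aPhi} and~\ref{lmBasicIneq} depend only on $\Hseminorm{\alpha,(S^2,d)}{\phi}$ (uniform on $H$) and not on any normalization of $\phi$, so that a single equicontinuity constant $C^*$ works for the whole family.
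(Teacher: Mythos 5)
Your proof is correct and follows essentially the same strategy as the paper: preserve the zero-mean condition via Lemma~\ref{lmRDiscontTildeDual}, obtain a uniform H\"older/equicontinuity bound on the iterates from Lemma~\ref{lmBasicIneq}~(ii) (with $a=1$, $b=0$) together with Lemma~\ref{lmBound_aPhi}, and iterate the sup-norm contraction supplied by Lemma~\ref{lmLSplitTildeSupBound} in blocks of length $N$. The only difference is that you run a direct-descent count (at most $\lceil M_{n_0}/\delta_2\rceil$ blocks of length $N$) while the paper organizes the same estimate as a contradiction argument against a positive limit $a_*$; the concern you raise at the end about the constants in Lemmas~\ref{lmBound_aPhi} and~\ref{lmBasicIneq} depending only on $\Hseminorm{\alpha,\,(S^2,d)}{\phi}$ is a real point of care, and the paper handles it the same way you propose.
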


\begin{proof}
Without loss of generality, we assume that $H\neq\emptyset$, $H_\b\neq\emptyset$, and $H_\w\neq\emptyset$. Define constants $K  \coloneqq  \sup \bigl\{ \Hnorm{\alpha}{\phi}{(S^2,d)} \,\big|\, \phi\in H\bigr\} \in [0,+\infty)$ and $K_\c  \coloneqq  \sup \bigl\{ \Hnorm{\alpha}{u_\c}{(X^0_\b,d)} \,\big|\, u_\c\in H_\c\bigr\}  \in [0,+\infty)$ for $\c\in\{\b,\w\}$. Define for each $n\in\N_0$,
\begin{equation*}
a_n  \coloneqq  \sup \biggl\{  \NormBig{ \RR_{\wt{\phi}, X^0_1, X^0_\b}^{(n)}  (\overline{u}_\b )    +       \RR_{\wt{\phi}, X^0_1, X^0_\w}^{(n)} (\overline{u}_\w)}_{\CCC^0(X^0_1)}  \,\bigg|\,  X^0_1\in\X^0, \phi\in H, u_\b\in H_\b, u_\w\in H_\w \biggr\}.
\end{equation*}
Note that by Definition~\ref{defSplitRuelle}, $a_0\leq 2K_\b+2K_\w <+\infty$.

By (\ref{eqLDiscontProperties_Split}) in Lemma~\ref{lmLDiscontProperties} and (\ref{eqSplitRuelleTildeSupDecreasing}) in Lemma~\ref{lmRtildeNorm=1}, for each $n\in\N_0$, each $\phi\in H$, each $X^0_1\in\X^0$, each $v_\b\in\CCC(X^0_\b)$, and each $v_\w\in\CCC(X^0_\w)$, we have
\begin{align*}
&         \NormBig{  \RR_{\wt{\phi},X^0_1,X^0_\b}^{(n+1)}  (v_\b) +  \RR_{\wt{\phi},X^0_1,X^0_\w}^{(n+1)}  (v_\w)   }_{ \CCC^0(X^0_1,d)}  \\
&\qquad = \Normbigg{   \sum\limits_{Y^0\in\X^0} \  \RR_{\wt{\phi},X^0_1,Y^0}^{(1)}  \Bigl(  \RR_{\wt{\phi},Y^0,X^0_\b}^{(n)}  (v_\b) + \RR_{\wt{\phi},Y^0,X^0_\w}^{(n)}  (v_\w)\Bigr)   }_{ \CCC^0(X^0_1,d)}    \\
&\qquad\leq \max\biggl\{   \NormBig{   \RR_{\wt{\phi},Y^0,X^0_\b}^{(n)}  (v_\b) + \RR_{\wt{\phi},Y^0,X^0_\w}^{(n)}  (v_\w)   }_{ \CCC^0(Y^0,d)}  \,\bigg|\, Y^0\in \X^0    \biggr\}.
\end{align*}
So $\{a_n\}_{n\in\N_0}$ is a non-increasing sequence of non-negative real numbers. 

Suppose now that $\lim\limits_{n\to+\infty} a_n   \eqqcolon   a_* >0$. By Lemma~\ref{lmRtildeNorm=1}, (\ref{eqBasicIneqN1}) in Lemma~\ref{lmBasicIneq} with $a \coloneqq 1$ and $b  \coloneqq 0$, (\ref{eqDefA0}), (\ref{eqDefT0}), and (\ref{eqDefC10}), we get that $\RR_{\wt{\phi}, X^0_\c, X^0_\b}^{(n)} (\overline{u}_\b)  + \RR_{\wt{\phi}, X^0_\c, X^0_\w}^{(n)} (\overline{u}_\w)  \in \CCC^{2(K_\b+K_\w)}_g(X^0_\c,d)$, for each $\c\in\{\b,\w\}$ and each pair of $u_\b\in H_\b$ and $u_\w\in H_\w$, with an abstract modulus of continuity $g$ given by $g(t)  \coloneqq  2 (C_0(K_\b+K_\w)+2(K_\b+K_\w)A)t^\alpha$, $t\in[0,+\infty)$, where the constant $A>1$ is given by
\begin{equation*}
A  \coloneqq (1+2T) \frac{ 2C_0 }{1-\Lambda^{-\alpha}} \exp\biggl( \frac{C_0 T}{1-\Lambda^{-\alpha}} \bigl( \diam_d(S^2)\bigr)^\alpha   \biggr),
\end{equation*}
and $T  \coloneqq  (2 s_0 + 1)  C_{13} K \exp ( 2 s_0 C_{14} K)$. Here the constant $C_0 > 1$ depending only on $f$, $d$, and $\CC$ comes from Lemma~\ref{lmMetricDistortion}, and $C_{13}>1$, $C_{14}>0$ are constants from Lemma~\ref{lmBound_aPhi} depending only on $f$, $\CC$, $d$, and $\alpha$. So $g$ and $A$ both depend only on $f$, $\CC$, $d$, $\alpha$, $H$, $H_\b$, and $H_\w$. By Lemma~\ref{lmRDiscontTildeDual},
\begin{equation*}
\sum\limits_{Y^0 \in \X^0}  \int_{Y^0} \!  \Bigl(  \RR_{\wt{\phi},Y^0,X^0_\b}^{(n)}  (\overline{u}_\b) + \RR_{\wt{\phi},Y^0,X^0_\w}^{(n)}  (\overline{u}_\w)  \Bigr) \,\mathrm{d}\mu_\phi 
=  \int_{X^0_\b} \!  \overline{u}_\b \,\mathrm{d}\mu_\phi + \int_{X^0_\w} \!  \overline{u}_\w \,\mathrm{d}\mu_\phi = 0.
\end{equation*}
By  (\ref{eqLDiscontProperties_Split}) in Lemma~\ref{lmLDiscontProperties}, (\ref{eqSplitRuelleTildeSupDecreasing}) in Lemma~\ref{lmRtildeNorm=1}, and applying Lemma~\ref{lmLSplitTildeSupBound} with $f$, $\CC$, $d$, $g$, $\alpha$, $K$, and $\delta_1 \coloneqq  \frac{a_*}{2}>0$, we find constants $N\in\N$ and $\delta_2>0$ such that
\begin{align*}
&          \Normbigg{  \RR_{\wt{\phi}, X^0_1, X^0_\b}^{(N+n)} (\overline{u}_\b)  + \RR_{\wt{\phi}, X^0_1, X^0_\w}^{(N+n)} (\overline{u}_\w)      }_{\CCC^0(X^0_1)}   \\
&\qquad = \NormBigg{ \sum\limits_{Y^0_1\in\X^0} \RR_{\wt{\phi}, X^0_1, Y^0_1}^{(N)} \Bigl(  \RR_{\wt{\phi}, Y^0_1, X^0_\b}^{(n)} (\overline{u}_\b ) +   \RR_{\wt{\phi}, Y^0_1, X^0_\w}^{(n)} (\overline{u}_\w) \Bigr)       }_{\CCC^0(X^0_1)} \\
&\qquad\leq \max \biggl\{  \NormBig{  \RR_{\wt{\phi}, Y^0_1, X^0_\b}^{(n)} (\overline{u}_\b ) +   \RR_{\wt{\phi}, Y^0_1, X^0_\w}^{(n)} (\overline{u}_\w)  }_{\CCC^0(Y^0_1)} \,\bigg|\, Y^0_1\in\X^0       \biggr\}   - \delta_2\\
&\qquad\leq  a_n - \delta_2,
\end{align*}
for each $n\in\N_0$, each $X^0_1\in\X^0$, each $\phi\in H$, and each pair $u_\b\in H_\b$ and $u_\w\in H_\w$ satisfying
\begin{equation}   \label{eqPfthmLDiscontTildeUniformConv_Geq}
\max \biggl\{  \NormBig{  \RR_{\wt{\phi}, Y^0_1, X^0_\b}^{(n)} (\overline{u}_\b ) +   \RR_{\wt{\phi}, Y^0_1, X^0_\w}^{(n)} (\overline{u}_\w)  }_{\CCC^0(Y^0_1)} \,\bigg|\, Y^0_1\in\X^0       \biggr\}    \geq \frac{a_*}{2}.
\end{equation}
Since $\lim\limits_{n\to+\infty} a_n = a_*$, we can fix $m\geq 1$ large enough so that $a_m\leq a_* +\frac{\delta_2}{2}$. Then for each $X^0_1\in\X^0$, each $\phi\in H$, and each pair $u_\b\in H_\b$ and $u_\w\in H_\w$ satisfying (\ref{eqPfthmLDiscontTildeUniformConv_Geq}) with $n  \coloneqq  m$, we have 
\begin{equation*}
 \NormBig{  \RR_{\wt{\phi}, X^0_1, X^0_\b}^{(N+m)} (\overline{u}_\b)  + \RR_{\wt{\phi}, X^0_1, X^0_\w}^{(N+m)} (\overline{u}_\w)      }_{\CCC^0(X^0_1)} 
\leq a_m - \delta_2 \leq a_* - \frac{\delta_2}{2}.
\end{equation*}
On the other hand, by (\ref{eqSplitRuelleTildeSupDecreasing}) in Lemma~\ref{lmRtildeNorm=1}, we get that for each $\phi \in H$ and each pair $u_\b\in H_\b$ and $u_\w\in H_\w$ with 
\begin{equation*}   \label{eqPfthmLDiscontTildeUniformConv_Less}
\max \biggl\{  \NormBig{  \RR_{\wt{\phi}, Y^0_1, X^0_\b}^{(m)} (\overline{u}_\b ) +   \RR_{\wt{\phi}, Y^0_1, X^0_\w}^{(m)} (\overline{u}_\w)  }_{\CCC^0(Y^0_1)} \,\bigg|\, Y^0_1\in\X^0       \biggr\}  < \frac{a_*}{2},
\end{equation*}
the following holds for each $X^0_1\in\X^0$:
\begin{equation*}
 \NormBig{  \RR_{\wt{\phi}, X^0_1, X^0_\b}^{(N+m)} (\overline{u}_\b)  + \RR_{\wt{\phi}, X^0_1, X^0_\w}^{(N+m)} (\overline{u}_\w)      }_{\CCC^0(X^0_1)} 
< \frac{a_*}{2}.
\end{equation*}
Thus $a_{N+m} \leq \max \bigl\{ a_*- \frac{\delta_2}{2}, \, \frac{a_*}{2} \bigr\}  < a_*$, contradicting the fact that $\{a_n\}_{n\in\N_0}$ is a non-increasing sequence and the assumption that $\lim\limits_{n\to+\infty}  a_n = a_*>0$. This proves the uniform convergence in (\ref{eqLDiscontTildeUniformConv}).
\end{proof}

\begin{theorem}    \label{thmSpectralGap}
Let $f\: S^2 \rightarrow S^2$ be an expanding Thurston map with a Jordan curve $\CC\subseteq S^2$ satisfying $f(\CC)\subseteq\CC$ and $\post f\subseteq \CC$. Let $d$ be a visual metric on $S^2$ for $f$ with expansion factor $\Lambda>1$. Let $\alpha\in(0,1]$ be a constant and $H$ be a bounded subsets of $\Holder{\alpha}(S^2,d)$. Then there exists a constant $\rho_1  \in(0,1)$ depending on $f$, $\CC$, $d$, $\alpha$, and $H$ such that the following property is satisfied:

\smallskip

For each $\phi\in H$, each $n\in\N_0$, each $0$-tile $X^0\in \X^0(f,\CC)$, each pair of real-valued H\"{o}lder continuous functions $u_\b\in \Holder{\alpha} \bigl(X^0_\b,d \bigr)$ and $u_\w\in \Holder{\alpha} \bigl(X^0_\w,d \bigr)$, we have
\begin{equation}  \label{eqSpectralGap}
 \NormBig{ \RR_{\wt{\phi}, X^0, X^0_\b}^{(n)}  (\overline{u}_\b )    +       \RR_{\wt{\phi}, X^0, X^0_\w}^{(n)} (\overline{u}_\w)}_{\CCC^0(X^0)}
 \leq 6 \rho_1^n \max\Bigl\{    \Hnorm{\alpha}{u_\b}{(X^0_\b,d)}, \,  \Hnorm{\alpha}{u_\w}{(X^0_\w,d)}  \Bigr\},
\end{equation}
where the pair of functions $\overline{u}_\b \in \Holder{\alpha} \bigl(X^0_\b,d \bigr)$ and $\overline{u}_\w \in \Holder{\alpha} \bigl(X^0_\w,d \bigr)$ are given by
\begin{equation*}
\overline{u}_\b   \coloneqq   u_\b - \int_{X^0_\b}\! u_\b\,\mathrm{d}\mu_\phi - \int_{X^0_\w}\! u_\w\,\mathrm{d}\mu_\phi   \mbox{ and } 
\overline{u}_\w   \coloneqq  u_\w - \int_{X^0_\b}\! u_\b\,\mathrm{d}\mu_\phi - \int_{X^0_\w}\! u_\w\,\mathrm{d}\mu_\phi
\end{equation*}
with $\mu_\phi$ denoting the unique equilibrium state for $f$ and $\phi$. In particular,
\begin{align}   \label{eqSpectralGapAll}
      & \NormBig{  \RR_{\wt{\phi}, X^0, X^0_\b}^{(n)} (u_\b)  + \RR_{\wt{\phi}, X^0, X^0_\w}^{(n)} (u_\w)}_{\CCC^0(X^0)} \notag   \\
\leq  & \Absbigg{ \int_{X^0_\b}\! u_\b\,\mathrm{d}\mu_\phi + \int_{X^0_\w}\! u_\w\,\mathrm{d}\mu_\phi  } + 6 \rho_1^n \max\Bigl\{    \Hnorm{\alpha}{u_\b}{(X^0_\b,d)},  \, \Hnorm{\alpha}{u_\w}{(X^0_\w,d)}  \Bigr\}.
\end{align}
\end{theorem}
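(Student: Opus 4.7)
The plan is to combine a Lasota--Yorke inequality for the H\"older seminorm (furnished by the basic inequality of Lemma~\ref{lmBasicIneq}) with the non-quantitative uniform decay of Theorem~\ref{thmLDiscontTildeUniformConv}, via a Lyapunov norm, to extract the exponential rate. By linearity I normalize $M_0 := \max\{\Hnorm{\alpha}{u_\b}{(X^0_\b,d)}, \Hnorm{\alpha}{u_\w}{(X^0_\w,d)}\} = 1$; with $C := \int_{X^0_\b}\! u_\b\,\mathrm{d}\mu_\phi + \int_{X^0_\w}\! u_\w\,\mathrm{d}\mu_\phi$ one has $\abs{C} \leq 2$, $\norm{\overline u_\c}_{\CCC^0(X^0_\c)} \leq 3$, and $\Hseminorm{\alpha,(X^0_\c,d)}{\overline u_\c} \leq 1$. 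Set $v_n^{X^0} := \RR_{\wt{\phi}, X^0, X^0_\b}^{(n)}(\overline u_\b) + \RR_{\wt{\phi}, X^0, X^0_\w}^{(n)}(\overline u_\w)$, $\beta_n := \max_{X^0 \in \X^0} \norm{v_n^{X^0}}_{\CCC^0(X^0)}$, $\gamma_n := \max_{X^0 \in \X^0} \Hseminorm{\alpha,(X^0,d)}{v_n^{X^0}}$. Summing Lemma~\ref{lmRDiscontTildeDual} over $X^0 \in \X^0(f,\CC)$ shows that $(v_n^{X^0_\b}, v_n^{X^0_\w})$ inherits zero total mean from $(\overline u_\b, \overline u_\w)$, while Lemma~\ref{lmRtildeNorm=1} together with the semigroup identity~(\ref{eqLDiscontProperties_Split}) yields $\beta_n \leq \beta_0 \leq 6$.

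The first step is the Lasota--Yorke bound. Applying~(\ref{eqBasicIneqN1}) with $a = 1$, $b = 0$, $s = 1$ and using $\RR_{\wt{\phi},X^0,X^0_\c}^{(m)}(\abs{\cdot}) \leq \norm{\cdot}_{\CCC^0}$ from Lemma~\ref{lmRtildeNorm=1}, then invoking~(\ref{eqLDiscontProperties_Split}) to identify $v_{n+m}^{X^0}$ as the $m$-th iterate of $(v_n^{X^0_\b}, v_n^{X^0_\w})$, I obtain
\begin{equation*}
\gamma_{n+m} \leq 2 C_0 \Lambda^{-\alpha m} \gamma_n + 2 A_0 \beta_n, \qquad n \geq 0,\; m \geq 1.
\end{equation*}
Taking $n = 0$ gives the uniform bound $\max_\c \Hnorm{\alpha}{v_n^{X^0_\c}}{(X^0_\c,d)} \leq B_2$ for all $n \geq 0$, where $B_2 := 2 C_0 + 6 A_0 + 6$. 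The second step is to apply Theorem~\ref{thmLDiscontTildeUniformConv} with $H_\b$, $H_\w$ the closed H\"older balls of radius $B_2$ in $\Holder{\alpha}(X^0_\b,d)$, $\Holder{\alpha}(X^0_\w,d)$. Because $(v_n^{X^0_\b}, v_n^{X^0_\w})$ already has zero total mean, the $\overline{\cdot}$ subtraction in the theorem acts trivially, and rescaling in the H\"older radius converts the uniform convergence into a sequence $a_m \to 0$, depending only on $f, \CC, d, \alpha, H$, such that
\begin{equation*}
\beta_{n+m} \leq a_m \max_\c \Hnorm{\alpha}{v_n^{X^0_\c}}{(X^0_\c,d)} \leq a_m (\beta_n + \gamma_n), \qquad n, m \geq 0.
\end{equation*}

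Combining the two estimates into the Lyapunov norm $M_n := \beta_n + \eta \gamma_n$ produces
\begin{equation*}
M_{n+m} \leq (a_m + 2 A_0 \eta) \beta_n + (a_m + 2 C_0 \Lambda^{-\alpha m} \eta) \gamma_n.
\end{equation*}
Fix $\eta := 1/(8 A_0)$ and choose $N$---depending only on $f, \CC, d, \alpha, H$---large enough that $2 C_0 \Lambda^{-\alpha N} \leq 1/4$ and $a_N \leq 1/(32 A_0)$. Then $a_N + 2 A_0 \eta \leq 1/2$ and $a_N + 2 C_0 \Lambda^{-\alpha N} \eta \leq \eta/2$, giving $M_{n+N} \leq M_n/2$. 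Iterating yields $M_{kN} \leq 2^{-k} M_0$ with $M_0 \leq 3 + 1/(8 A_0) < 49/16$; writing $n = kN + r$, $r \in [0,N)$, and using $\beta_n \leq \beta_{kN} \leq M_{kN}$ (Lemma~\ref{lmRtildeNorm=1}) produces $\beta_n \leq 2 M_0 \rho_1^n$ with $\rho_1 := 2^{-1/N} \in (0,1)$. Slightly enlarging $N$ (shrinking $\rho_1$) absorbs the constant $2 M_0$ into $6$, establishing~(\ref{eqSpectralGap}) after unnormalizing. Finally,~(\ref{eqSpectralGapAll}) follows from~(\ref{eqSpectralGap}) by the decomposition $u_\c = \overline u_\c + C$ together with the identity $\RR_{\wt{\phi}, X^0, X^0_\b}^{(n)}(C \mathbbm{1}_{X^0_\b}) + \RR_{\wt{\phi}, X^0, X^0_\w}^{(n)}(C \mathbbm{1}_{X^0_\w}) = C$ on $X^0$, which is derived inductively from $\RR_{\wt{\phi}}(\mathbbm{1}_{S^2}) = \mathbbm{1}_{S^2}$ (Lemma~\ref{lmRtildeNorm=1}) via the semigroup identity~(\ref{eqLDiscontProperties_Split}).

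The main obstacle is the amplification factor $2 A_0 > 4$ that the H\"older seminorm picks up at every iteration of the Lasota--Yorke estimate, which defeats any naive iteration of the decay step from Theorem~\ref{thmLDiscontTildeUniformConv}. The Lyapunov balance $\beta_n + \eta \gamma_n$---with $\eta$ small enough to tame the $2 A_0 \eta$ loss and $N$ large enough to make $a_N/\eta$ also small---is the classical Hennion--Lasota--Yorke mechanism for producing a spectral gap from quasi-compactness, and its application in the present split--Ruelle setting reduces to checking that the two conditions on $\eta$ and $N$ are simultaneously compatible with the explicit constants $A_0, C_0, \Lambda$ furnished by our setting.
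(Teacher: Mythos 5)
Your overall strategy coincides with the paper's: both proofs combine the Lasota--Yorke inequality from Lemma~\ref{lmBasicIneq}~(ii) (applied with $s=1$) and Lemma~\ref{lmRtildeNorm=1} with the non-quantitative uniform decay of Theorem~\ref{thmLDiscontTildeUniformConv}, and iterate to extract the exponential rate. Your Lyapunov (Hennion) norm $M_n = \beta_n + \eta\gamma_n$ is a cosmetic repackaging of the paper's direct induction, which shows that after $2N_0$ steps the $\Holder{\alpha}$ norm of the mean-zero pair shrinks by a factor of $\tfrac12$; the two routes use identical ingredients and give the same quality of conclusion.

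One detail is wrong, and a second is avoidably loose. Your last sentence claims that ``slightly enlarging $N$ (shrinking $\rho_1$) absorbs the constant $2M_0$ into $6$,'' but $\rho_1 = 2^{-1/N}$ is an \emph{increasing} function of $N$: enlarging $N$ pushes $\rho_1$ toward $1$ and makes the absorption harder, not easier, so that remark does not work as a repair. In fact no repair is needed if you tighten the earlier step: with the normalization $\max_\c \Hnorm{\alpha}{u_\c}{(X^0_\c,d)}=1$, the correct bound is $\abs{C}\le 1$ (not $\le 2$), because $\mu_\phi(X^0_\b)+\mu_\phi(X^0_\w)=1$ by $\mu_\phi(\CC)=0$ (Theorem~\ref{thmEquilibriumState}~(iii)); then $\norm{\overline u_\c}_{\CCC^0}\le 2$, $\beta_0\le 2$, $M_0 \le 2+\eta < \tfrac{33}{16}$, and $2M_0 < \tfrac{33}{8} < 6$, so the exponent $\rho_1 = 2^{-1/N}$ and the constant $6$ come out directly without any further adjustment. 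With that correction the argument is complete and matches the paper's.
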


\begin{proof}
Without loss of generality, we assume that $H\neq \emptyset$. Define a constant 
\begin{equation}   \label{eqPfthmSpectralGap_K}
K   \coloneqq   \sup\bigl\{  \Hnorm{\alpha}{\phi}{(S^2,d)}  \,\big|\, \phi\in H   \bigr\} \in [0,+\infty).
\end{equation}
Denote, for $\c\in\{\b,\w\}$,
\begin{equation*}
H_\c    \coloneqq    \Bigl\{  v_\c\in \Holder{\alpha} \bigl(X^0_\c,d \bigr) \,\Big|\, \Hnorm{\alpha}{v_\c}{(X^0_\c,d)} \leq 3      \Bigr\}.
\end{equation*}

The equation (\ref{eqSpectralGapAll}) follows immediately from the equation (\ref{eqSpectralGap}), the triangle inequality, and the fact that $\RR_{\wt{\phi},X^0,X^0_\b}^{(1)} \bigl(\mathbbm{1}_{X^0_\b}\bigr) + \RR_{\wt{\phi},X^0,X^0_\w}^{(1)} \bigl(\mathbbm{1}_{X^0_\w}\bigr) = \mathbbm{1}_{X^0}$ by (\ref{eqDefLc}) and Lemma~\ref{lmRtildeNorm=1}. So it suffices to establish (\ref{eqSpectralGap}).

We first consider the special case when $u_\b \in H_\b$ and $u_\w in H_\w$.

By (\ref{eqBasicIneqN1}) in Lemma~\ref{lmBasicIneq} with $s\coloneqq 1$, (\ref{eqRDiscontTildeSupDecreasing}) in Lemma~\ref{lmRtildeNorm=1}, and (\ref{eqPfthmSpectralGap_K}), we get that for each $j\in \N$, each $Y^0\in\X^0$, each $\phi\in H$, and each pair of functions $u_\b\in H_\b$ and $u_\w\in H_\w$,
\begin{align}   \label{eqPfthmSpectralGap_HolderBound}
&              \Hseminormbigg{\alpha,\, (Y^0,d)}{   \sum\limits_{\c\in\{\b,\w\}}    \RR_{\wt{\phi},Y^0,X^0_\c}^{(j)}( \overline{u}_\c )   }   \notag  \\
&\qquad  \leq  \frac{C_0}{\Lambda^{\alpha j}}    \sum\limits_{\c\in\{\b,\w\}}  \Hseminorm{\alpha,\, (X^0_\c,d)}{ \overline{u}_\c }  
                          +  A_0    \sum\limits_{\c\in\{\b,\w\}}   \NormBig{ \RR_{\wt{\phi}, Y^0, X^0_\c}^{(j)}  (\abs{\overline{u}_\c })}_{\CCC^0(X^0_\c)}\\
&\qquad \leq  \frac{6C_0}{\Lambda^{\alpha j}}  + A_0    \sum\limits_{\c\in\{\b,\w\}}    \Norm{\overline{u}_\c}_{\CCC^0(X^0_\c)}   
                 \leq    C_{17},  \notag
\end{align}
where the constant $C_{17}$ is given by $C_{17} \coloneqq 6 C_0 + 12 A_0$, the constant $A_0 \coloneqq A_0 \bigl(f,\CC,d,K, \alpha  \bigr)>2$ defined in (\ref{eqDefA0}) from Lemma~\ref{lmBasicIneq} depends only on $f$, $\CC$, $d$, $H$, and $\alpha$,  and the constant $C_0>1$ from Lemma~\ref{lmMetricDistortion} depends only on $f$, $\CC$, and $d$. Thus $C_{17}>1$ depends only on $f$, $\CC$, $d$, and $H$.

So by (\ref{eqLDiscontProperties_Split}) in Lemma~\ref{lmLDiscontProperties}, (\ref{eqBasicIneqN1}) in Lemma~\ref{lmBasicIneq} with $s \coloneqq 1$, (\ref{eqPfthmSpectralGap_HolderBound}), and (\ref{eqRDiscontTildeSupDecreasing}) in Lemma~\ref{lmRtildeNorm=1}, we get that for all $k\in\N$,
\begin{align}   \label{eqPfthmSpectralGap_JK}
&            \HseminormBig{\alpha,\, (Y^0,d)}{    \RR_{\wt{\phi},Y^0,X^0_\b}^{(k+j)}( \overline{u}_\b )   +  \RR_{\wt{\phi},Y^0,X^0_\w}^{(k+j)}( \overline{u}_\w ) }  \notag\\
&\qquad\leq  \sum\limits_{X^0_1\in\X^0}   \HseminormBig{ \alpha,\, (Y^0,d)}{     \RR_{\wt{\phi},Y^0,X^0_1}^{(k)} \Bigl( \RR_{\wt{\phi},X^0_1,X^0_\b}^{(j)}( \overline{u}_\b )  +     \RR_{\wt{\phi},X^0_1,X^0_\w}^{(j)}( \overline{u}_\w ) \Bigr)  }  \notag\\
&\qquad\leq  \sum\limits_{X^0_1\in\X^0}       \frac{C_0}{\Lambda^{\alpha k}}   \HseminormBig{ \alpha,\, (X^0_1,d)}{    \RR_{\wt{\phi},X^0_1,X^0_\b}^{(j)}( \overline{u}_\b )  +     \RR_{\wt{\phi},X^0_1,X^0_\w}^{(j)}( \overline{u}_\w )   }     \\
&\qquad\quad  + \sum\limits_{X^0_1\in\X^0}    A_0        \RR_{\wt{\phi},Y^0,X^0_1}^{(k)} \Bigl( \AbsBig{ \RR_{\wt{\phi},X^0_1,X^0_\b}^{(j)}( \overline{u}_\b )  +     \RR_{\wt{\phi},X^0_1,X^0_\w}^{(j)}( \overline{u}_\w ) }   \Bigr) \notag \\
&\qquad\leq  \frac{2C_0 C_{17}}{\Lambda^{\alpha k}}   +  A_0  \sum\limits_{X^0_1\in\X^0}  \NormBig{ \RR_{\wt{\phi},X^0_1,X^0_\b}^{(j)}( \overline{u}_\b )  +     \RR_{\wt{\phi},X^0_1,X^0_\w}^{(j)}( \overline{u}_\w )}_{\CCC^0(X^0_1)}.  \notag
\end{align}

By Theorem~\ref{thmLDiscontTildeUniformConv}, we can choose $N_0\in \N$ with the property that for each $j\in \N$ with $j\geq N_0$, each $Y^0\in\X^0$, each $\phi\in H$, and each pair of functions $u_\b\in H_\b$ and $u_\w\in H_\w$, we have
\begin{equation}   \label{eqPfthmSpectralGap_N0_1}
\frac{2C_0 C_{17}}{\Lambda^{\alpha j}} \leq \frac18,
\end{equation}
and
\begin{equation}    \label{eqPfthmSpectralGap_N0_2}
 ( 1+ A_0 )    \NormBig{ \RR_{\wt{\phi},Y^0 ,X^0_\b}^{(j)}( \overline{u}_\b )  +     \RR_{\wt{\phi},Y^0,X^0_\w}^{(j)}( \overline{u}_\w )}_{\CCC^0(Y^0)}    \leq \frac18.
\end{equation}
We fix $N_0\in\N$ to be the smallest integer with this property. So $N_0$ depends only on $f$, $\CC$, $d$, $\alpha$, and $H$.

For each $m\in \N$, each $\c\in\{\b,\w\}$, each $\phi\in H$, and each pair of functions $u_\b\in H_\b$ and $u_\w\in H_\w$, we denote
\begin{equation}    \label{eqPfthmSpectralGap_v_mc}
v_{m,\c}   \coloneqq    \RR_{\wt{\phi}, X^0_\c,X^0_\b}^{(2N_0 m)} (\overline{u}_\b)  + \RR_{\wt{\phi}, X^0_\c,X^0_\w}^{(2N_0 m)} (\overline{u}_\w).
\end{equation}
Then by (\ref{eqPfthmSpectralGap_JK}), (\ref{eqPfthmSpectralGap_N0_1}), and (\ref{eqPfthmSpectralGap_N0_2}), the function $v_{m,\c} \in \Holder{\alpha} \bigl(X^0_\c,d \bigr)$ satisfies
\begin{equation*}
\Hnorm{\alpha}{v_{m,\c}}{(X^0_\c,d)}   \leq \frac38.
\end{equation*}
So $2 v_{m,\c} \in H_\c$. We also note that by Lemma~\ref{lmRDiscontTildeDual},
\begin{align*}  
        \sum\limits_{\c\in\{\b,\w\}}  \int_{X^0_\c} \! v_{m,\c} \,\mathrm{d}\mu_\phi 
 = & \sum\limits_{\c\in\{\b,\w\}} \biggl( \int_{X^0_\c} \! \RR_{\wt{\phi},X^0_\c,X^0_\b}^{(2N_0 m)} (\overline{u}_\b)   \,\mathrm{d}\mu_\phi 
                                     +\int_{X^0_\c} \! \RR_{\wt{\phi},X^0_\c,X^0_\w}^{(2N_0 m)} (\overline{u}_\w)   \,\mathrm{d}\mu_\phi \biggr)  \\
 = & \int_{X^0_\b}  \! \overline{u}_\b \,\mathrm{d}\mu_\phi    +     \int_{X^0_\w}  \! \overline{u}_\w \,\mathrm{d}\mu_\phi 
 =   0.                                    
\end{align*}

\smallskip

Next, we prove by induction that for each $m\in\N$, each $\phi\in H$, and each pair of functions $u_\b\in H_\b$ and $u_\w\in H_\w$, we have
\begin{equation}   \label{eqPfthmSpectralGap_Induction}
\max\Bigl\{   \Hnorm{\alpha}{v_{m,\b}}{(X^0_\b,d)} ,  \, \Hnorm{\alpha}{v_{m,\w}}{(X^0_\w,d)}   \Bigr\}  \leq 3 \Bigl(\frac12\Bigr)^m.
\end{equation}

We have already shown that (\ref{eqPfthmSpectralGap_Induction}) holds for $m=1$. 

Assume that (\ref{eqPfthmSpectralGap_Induction}) holds for $m=j$ for some $j\in\N$, then $2^j v_{j,\b} \in H_\b$ and $2^j v_{j,\w} \in H_\w$. By (\ref{eqLDiscontProperties_Split}) in Lemma~\ref{lmLDiscontProperties}, for each $\c\in\{\b,\w\}$, we have
\begin{equation*}
2^j v_{j+1,\c}  = \RR_{\wt{\phi}, X^0_\c, X^0_\b}^{(2N_0)} \bigl(2^j v_{j,\b}\bigr)  +  \RR_{\wt{\phi}, X^0_\c, X^0_\w}^{(2N_0)} \bigl(2^j v_{j,\w}\bigr).
\end{equation*}
Thus $\Hnormbig{\alpha}{   2^j v_{j+1,\c}}{(X^0_\c,d)}  \leq \frac38 < \frac12$. So  $\Hnormbig{\alpha}{  v_{j+1,\c}}{(X^0_\c,d)} \leq     \bigl(\frac12\bigr)^{j+1} < 3 \bigl(\frac12\bigr)^{j+1}$.

The induction is now complete.

\smallskip

Then by (\ref{eqLDiscontProperties_Split}) in Lemma~\ref{lmLDiscontProperties}, (\ref{eqSplitRuelleTildeSupDecreasing}) in Lemma~\ref{lmRtildeNorm=1}, (\ref{eqPfthmSpectralGap_v_mc}), and (\ref{eqPfthmSpectralGap_Induction}), we get that for each $j\in \N$, each $m\in\N_0$, each $X^0\in \X^0$, each $\phi\in H$, and each pair of functions $u_\b\in H_\b$ and $u_\w\in H_\w$, the following holds:
\begin{align*}
&           \NormBig{  \RR_{\wt{\phi}, X^0, X^0_\b}^{(j+ 2N_0 m)} (\overline{u}_\b) +  \RR_{\wt{\phi}, X^0, X^0_\w}^{(j+ 2N_0 m)} (\overline{u}_\w)  }_{\CCC^0(X^0)}      \\
&\qquad =   \NormBigg{ \sum\limits_{\c\in\{\b,\w\}} \RR_{\wt{\phi}, X^0, X^0_\c}^{(j)} \Bigl(\RR_{\wt{\phi},X^0_\c, X^0_\b}^{(2N_0 m)} (\overline{u}_\b) +    \RR_{\wt{\phi}, X^0_\c, X^0_\w}^{(2N_0 m)} (\overline{u}_\w) \Bigr)  }_{\CCC^0(X^0)}    \\
&\qquad\leq  \max \biggl\{   \NormBig{    \RR_{\wt{\phi},X^0_\c, X^0_\b}^{(2N_0 m)} (\overline{u}_\b) +    \RR_{\wt{\phi}, X^0_\c, X^0_\w}^{(2N_0 m)} (\overline{u}_\w)    }_{\CCC^0(X^0_\c)}   \,\bigg|\,  \c\in\{\b,\w\}   \biggr\}   \\
&\qquad\leq  3   \Bigl(\frac12\Bigr)^m.
\end{align*}

Hence for each $n\in\N_0$,
\begin{equation}    \label{eqPfthmSpectralGap_Half}
 \NormBig{  \RR_{\wt{\phi}, X^0, X^0_\b}^{(n)} (\overline{u}_\b) +  \RR_{\wt{\phi}, X^0, X^0_\w}^{(n)} (\overline{u}_\w)  }_{\CCC^0(X^0)}
\leq 3  \Bigl(\frac12\Bigr)^{ \lfloor \frac{n}{2N_0} \rfloor}
\leq 6 \rho_1^n,
\end{equation}
where the constant 
\begin{equation*}
\rho_1 \coloneqq 2^{-\frac{1}{2N_0}}
\end{equation*}
depends only on $f$, $\CC$, $d$, $\alpha$, and $H$.

Finally, we consider the general case. For each pair of functions $w_\b \in \Holder{\alpha} \bigl(X^0_\b,d \bigr)$ and $w_\w \in \Holder{\alpha} \bigl(X^0_\w,d \bigr)$, we denote
\begin{equation*}
M  \coloneqq \max\Bigl\{    \Hnorm{\alpha}{w_\b}{(X^0_\b,d)}, \, \Hnorm{\alpha}{w_\w}{(X^0_\w,d)}  \Bigr\},
\end{equation*}
\begin{equation*}
\overline{w}_\b \coloneqq w_\b - \int_{X^0_\b}\! w_\b\,\mathrm{d}\mu_\phi - \int_{X^0_\w}\! w_\w\,\mathrm{d}\mu_\phi, \qquad 
\overline{w}_\w \coloneqq w_\w - \int_{X^0_\b}\! w_\b\,\mathrm{d}\mu_\phi - \int_{X^0_\w}\! w_\w\,\mathrm{d}\mu_\phi.
\end{equation*}
Let $u_\b \coloneqq \frac{1}{M}w_\b$ and $u_\w \coloneqq \frac{1}{M}w_\w$. Then clearly $u_\b\in H_\b$, $u_\w\in H_\w$, $\overline{u}_\b = \frac{1}{M}\overline{w}_\b$, and $\overline{u}_\w = \frac{1}{M}\overline{w}_\w$. Therefore, by (\ref{eqPfthmSpectralGap_Half}), for each $n\in\N_0$, each $\phi\in H$, each $X^0\in \X^0$,
\begin{equation*}
 \NormBig{ \RR_{\wt{\phi}, X^0, X^0_\b}^{(n)}  \Bigl(\frac{\overline{w}_\b}{M}  \Bigr)    +       \RR_{\wt{\phi}, X^0, X^0_\w}^{(n)} \Bigl(\frac{\overline{w}_\w}{M} \Bigr)}_{\CCC^0(X^0)}
 \leq 6 \rho_1^n,
\end{equation*}
i.e.,
\begin{equation*}
 \NormBig{ \RR_{\wt{\phi}, X^0, X^0_\b}^{(n)}  (\overline{w}_\b )    +       \RR_{\wt{\phi}, X^0, X^0_\w}^{(n)} (\overline{w}_\w)}_{\CCC^0(X^0)}
 \leq 6 \rho_1^n M.
\end{equation*}
This completes the proof.
\end{proof}

\begin{rem}   \label{rmSpectralGap}
For $\phi\in \Holder{\alpha}(S^2, d)$, the existence of the spectral gap for the split Ruelle operator $\RRR_{\wt{\phi}}$  on 
$
\Holder{\alpha}\bigl(X^0_\b,d\bigr) \times \Holder{\alpha}\bigl(X^0_\w,d\bigr)
$
follows immediately from (\ref{eqSplitRuelleCoordinateFormula}) in Lemma~\ref{lmSplitRuelleCoordinateFormula}, Theorem~\ref{thmSpectralGap}, and Lemma~\ref{lmBasicIneq}~(ii).
\end{rem}

Finally, we establish the following lemma that will be used in Section~\ref{sctPreDolgopyat}.

\begin{lemma}    \label{lmSplitRuelleUnifBoundNormalizedNorm}
Let $f$, $\CC$, $d$, $\alpha$, $\phi$, $s_0$ satisfy the Assumptions. Assume in addition $f(\CC)\subseteq\CC$. Then for all  $n\in\N$ and $s\in\C$ satisfying $\abs{\Re(s)} \leq 2 s_0$ and $\abs{\Im(s)}\geq 1$, we have
\begin{equation}     \label{eqSplitRuelleUnifBoundNormalizedNorm}
\NOpHnormD{\alpha}{\Im(s)}{\RRR_{\wt{s\phi}}^n}  \leq 4 A_0,
\end{equation}
and more generally,
\begin{equation}   \label{eqSplitRuelleUnifBoundNormalizedNorm_m}
\NHnormBig{\alpha}{\Im(s)}{ \Bigl( \RR_{\wt{s\phi}, X^0,X^0_\b}^{(n)} (u_\b) + \RR_{\wt{s\phi}, X^0,X^0_\w}^{(n)} (u_\w)  \Bigr)^m }{(X^0,d)}  \leq  (3m+1)A_0
\end{equation}
for each $m\in\N$, each $X^0\in\X^0(f,\CC)$, and each pair of complex-valued H\"{o}lder continuous functions $u_\b \in \Holder{\alpha} \bigl( \bigl(X^0_\b, d \bigr),\C \bigr)$ and $u_\w \in \Holder{\alpha} \bigl( \bigl(X^0_\w, d \bigr),\C \bigr)$ satisfying 
\begin{equation*}
\NHnorm{\alpha}{\Im(s)}{u_\b}{(X^0_\b,d)} \leq 1 \quad \mbox{ and } \quad \NHnorm{\alpha}{\Im(s)}{u_\w}{(X^0_\w,d)}  \leq 1.
\end{equation*}
Here $A_0=A_0\bigl(f,\CC,d,\Hseminorm{\alpha,\, (S^2,d)}{\phi},\alpha\bigr)\geq 2C_0>2$ is a constant from Lemma~\ref{lmBasicIneq} depending only on $f$, $\CC$, $d$, $\Hseminorm{\alpha,\, (S^2,d)}{\phi}$, and $\alpha$, and $C_0 > 1$ is a constant depending only on $f$, $\CC$, and $d$ from Lemma~\ref{lmMetricDistortion}.
\end{lemma}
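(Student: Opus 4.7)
The plan is to deduce both estimates from just two ingredients already established: the sup-norm contraction recorded in Lemma~\ref{lmRtildeNorm=1} (equations (\ref{eqRDiscontTildeSupDecreasing}) and (\ref{eqSplitRuelleTildeSupDecreasing})), and the ``basic inequality'' (\ref{eqBasicIneqN1}) of Lemma~\ref{lmBasicIneq}~(ii). Throughout, write $s = a + \I b$ with $\abs{a} \leq 2s_0$ and $\abs{b} \geq 1$, so that $\max\{1, \abs{b}\} = \abs{b}$, and write $\wt{s\phi} = \wt{a\phi} + \I b\phi$ from (\ref{eqDefWideTildePsiComplex}), which makes (\ref{eqBasicIneqN1}) directly applicable with exponent pair $(a,b)$.

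To prove (\ref{eqSplitRuelleUnifBoundNormalizedNorm}), fix $X^0 \in \X^0(f,\CC)$ and functions $u_\b, u_\w$ with $\NHnorm{\alpha}{b}{u_\c}{(X^0_\c, d)} \leq 1$, so that $\norm{u_\c}_{\CCC^0(X^0_\c)} \leq 1$ and $\Hseminorm{\alpha,\,(X^0_\c, d)}{u_\c} \leq \abs{b}$ for $\c \in \{\b, \w\}$. Set $v \coloneqq \RR_{\wt{s\phi}, X^0, X^0_\b}^{(n)}(u_\b) + \RR_{\wt{s\phi}, X^0, X^0_\w}^{(n)}(u_\w)$. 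The sup norm is controlled by (\ref{eqSplitRuelleTildeSupDecreasing}), which gives $\norm{v}_{\CCC^0(X^0)} \leq 1$. For the seminorm, apply (\ref{eqBasicIneqN1}) separately to $u_\b$ on $E = X^0_\b$ and $u_\w$ on $E = X^0_\w$, sum, and then use (\ref{eqRDiscontTildeSupDecreasing}) to bound $\RR_{\wt{a\phi}, X^0, X^0_\c}^{(n)}(\abs{u_\c})(x) \leq \norm{u_\c}_{\CCC^0(X^0_\c)} \leq 1$. This yields
\begin{equation*}
\Hseminorm{\alpha,\,(X^0, d)}{v} \leq \frac{C_0}{\Lambda^{\alpha n}}\bigl(\Hseminorm{\alpha,\,(X^0_\b, d)}{u_\b} + \Hseminorm{\alpha,\,(X^0_\w, d)}{u_\w}\bigr) + A_0 \abs{b}(1+1) \leq 2 C_0 \abs{b} + 2 A_0 \abs{b}.
\end{equation*}
Dividing by $\abs{b}$ and adding the sup bound gives $\NHnorm{\alpha}{b}{v}{(X^0, d)} \leq 1 + 2C_0 + 2A_0 \leq 4A_0$, where the last inequality uses the standing bound $A_0 \geq 2C_0 > 2$ so that $2C_0 \leq A_0$ and $1 \leq A_0$.

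To upgrade this to (\ref{eqSplitRuelleUnifBoundNormalizedNorm_m}), I use the elementary Leibniz-type inequality $\abs{v^m(x) - v^m(y)} \leq m \norm{v}_{\CCC^0(X^0)}^{m-1} \abs{v(x) - v(y)}$ valid for any complex-valued function $v$ on $X^0$. Combined with $\norm{v}_{\CCC^0(X^0)} \leq 1$ and the seminorm bound $\Hseminorm{\alpha,\,(X^0, d)}{v} \leq (2C_0 + 2A_0) \abs{b} \leq 3 A_0 \abs{b}$ from the previous paragraph, this gives $\norm{v^m}_{\CCC^0(X^0)} \leq 1$ and $\Hseminorm{\alpha,\,(X^0, d)}{v^m} \leq m \cdot 3A_0 \abs{b}$. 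Hence $\NHnorm{\alpha}{b}{v^m}{(X^0, d)} \leq 3 m A_0 + 1 \leq (3m+1) A_0$, again using $A_0 > 1$. There is no real obstacle here beyond keeping track of constants; the main points are simply that choosing the normalization parameter $b = \Im(s)$ exactly cancels the factor of $\max\{1, \abs{b}\}$ appearing in (\ref{eqBasicIneqN1}), and that $A_0 \geq 2C_0$ absorbs the geometric error term $C_0/\Lambda^{\alpha n}$ cleanly into $A_0$ without any dependence on $n$.
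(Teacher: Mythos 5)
Your proof is correct and takes essentially the same route as the paper: both rely on the sup-norm contraction (\ref{eqRDiscontTildeSupDecreasing})--(\ref{eqSplitRuelleTildeSupDecreasing}), the basic inequality (\ref{eqBasicIneqN1}), and the elementary Leibniz estimate $\Hseminorm{\alpha,\,(X^0,d)}{v^m}\leq m\norm{v}_{\CCC^0(X^0)}^{m-1}\Hseminorm{\alpha,\,(X^0,d)}{v}$, with the same constant accounting via $A_0\geq 2C_0>2$. The only (cosmetic) difference is order of presentation: you prove the $m=1$ case first and bootstrap, whereas the paper carries out the general-$m$ computation in a single chain and deduces $m=1$ as a special case via (\ref{eqSplitRuelleOpNormQuot}).
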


\begin{proof}
Fix $n,m\in\N$, $\c\in\{\b,\w\}$, and $s=a+\I b$ with $a,b\in\R$ satisfying $\abs{a}\leq 2 s_0$ and $\abs{b}\geq 1$.

Choose an arbitrary pair of functions $u_\b \in \Holder{\alpha} \bigl( \bigl(X^0_\b, d \bigr),\C \bigr)$ and $u_\w \in \Holder{\alpha} \bigl( \bigl(X^0_\w, d \bigr),\C \bigr)$ satisfying 
\begin{equation*}
\NHnorm{\alpha}{b}{u_\b}{(X^0_\b,d)} \leq 1 \quad \mbox{ and } \quad \NHnorm{\alpha}{b}{u_\w}{(X^0_\w,d)} \leq 1. 
\end{equation*}
We denote $M \coloneqq \NormBig{ \RR_{\wt{s\phi}, X^0_\c,X^0_\b}^{(n)} (u_\b) + \RR_{\wt{s\phi}, X^0_\c,X^0_\w}^{(n)} (u_\w) }_{\CCC^0(X^0_\c)}$. By (\ref{eqSplitRuelleTildeSupDecreasing}) in Lemma~\ref{lmRtildeNorm=1}, we have $M\leq 1$. 

We then observe that for each H\"{o}lder continuous function $v\in \Holder{\alpha} \bigl( \bigl(X,d_0 \bigr),\C \bigr)$ on a compact metric space $(X,d_0)$, we have 
$\Hseminorm{\alpha,\, (X,d_0)}{v^m} \leq m \norm{v}_{\CCC^0(X)}^{m-1}\Hseminorm{\alpha,\, (X,d_0)}{v^m}$. Thus we get from (\ref{eqBasicIneqN1}) in Lemma~\ref{lmBasicIneq}, (\ref{eqRDiscontTildeSupDecreasing}) in Lemma~\ref{lmRtildeNorm=1}, and the above observation that
\begin{align*}
&            \NHnormBig{\alpha}{b}{ \Bigl( \RR_{\wt{s\phi}, X^0_\c,X^0_\b}^{(n)} (u_\b) + \RR_{\wt{s\phi}, X^0_\c,X^0_\w}^{(n)} (u_\w) \Bigr)^m }{(X^0_\c,d)}  \\
&\qquad=     M^m + \frac{1}{\abs{b}} \HseminormBig{\alpha,\, (X^0_\c,d)}{ \Bigl( \RR_{\wt{s\phi}, X^0_\c,X^0_\b}^{(n)} (u_\b) + \RR_{\wt{s\phi}, X^0_\c,X^0_\w}^{(n)} (u_\w) \Bigr)^m}  \\  
&\qquad\leq  1 + \frac{mM^{m-1}}{\abs{b}} \HseminormBig{\alpha,\, (X^0_\c,d)}{   \RR_{\wt{s\phi}, X^0_\c,X^0_\b}^{(n)} (u_\b) + \RR_{\wt{s\phi}, X^0_\c,X^0_\w}^{(n)} (u_\w)  } \\      
&\qquad\leq  1 +   m   \frac{C_0}{\Lambda^{\alpha n}} \Bigl(  \NHnorm{\alpha}{b}{u_\b}{(X^0_\b,d)}  +   \NHnorm{\alpha}{b}{u_\w}{(X^0_\w,d)} \Bigr) \\
&\qquad\quad   + m A_0\biggl( \NormBig{\RR_{\wt{a\phi},X^0_\c,X^0_\b}^{(n)} (\abs{u_\b})  }_{\CCC^0(X^0_\c)}                   +\NormBig{\RR_{\wt{a\phi},X^0_\c,X^0_\w}^{(n)} (\abs{u_\w})  }_{\CCC^0(X^0_\c)} \biggr)    \\
&\qquad\leq  1 +   2mC_0 +  m A_0 \bigl( \norm{u_\b}_{\CCC^0(X^0_\b)}  +   \norm{u_\w}_{\CCC^0(X^0_\w)}  \bigr) \\
&\qquad\leq  1+ 2mC_0 + 2mA_0 
                \leq  (3m+1)A_0,
\end{align*}
where $C_0 > 1$ is a constant depending only on $f$, $\CC$, and $d$ from Lemma~\ref{lmMetricDistortion}, and the last inequality follows from the fact that $A_0\geq 2C_0>2$ (see Lemma~\ref{lmBasicIneq}).

The inequality (\ref{eqSplitRuelleUnifBoundNormalizedNorm_m}) is now established, and (\ref{eqSplitRuelleUnifBoundNormalizedNorm}) follows from (\ref{eqSplitRuelleOpNormQuot}) in Lemma~\ref{lmSplitRuelleCoordinateFormula} and (\ref{eqSplitRuelleUnifBoundNormalizedNorm_m}).
\end{proof}

\section{Bound the zeta function with the operator norm}   \label{sctPreDolgopyat}

In this section, we bound the dynamical zeta function $\zeta_{\sigma_{A_{\ti}},\,\minus\phi\circsmall\pi_{\ti}}$ using some bounds of the operator norm of $\RRR_{\minus s\phi}$, for an expanding Thurston map $f$ with some forward invariant Jordan curve $\CC$ and an eventually positive real-valued H\"{o}lder continuous potential $\phi$.

Subsection~\ref{subsctRuelleEstimate} focuses on Proposition~\ref{propTelescoping}, which provides a bound of the dynamical zeta function $\zeta_{\sigma_{A_{\ti}},\,\minus\phi\circsmall\pi_{\ti}}$ for the symbolic system $\bigl(\Sigma_{A_{\ti}}^+, \sigma_{A_{\ti}}\bigr)$ asscociated to $f$ in terms of the operator norms of $\RRR_{\minus s\phi}^n$, $n\in\N$ and $s\in\C$ in some vertical strip with $\abs{\Im(s)}$ large enough. The idea of the proof originated from D.~Ruelle \cite{Rue90}. In Subsection~\ref{subsctOperatorNorm}, we establish in Theorem~\ref{thmOpHolderNorm} an exponential decay bound on the operator norm $\OpHnormD{\alpha}{\RRR_{\minus s\phi}^n}$ of $\RRR_{\minus s\phi}^n$, $n\in\N$, assuming the bound stated in Theorem~\ref{thmL2Shrinking}. Theorem~\ref{thmL2Shrinking} will be proved at the end of Subsection~\ref{subsctCancellation}. Combining the bounds in Proposition~\ref{propTelescoping} and Theorem~\ref{thmOpHolderNorm}, we give a proof of Theorem~\ref{thmZetaAnalExt_SFT} in Subsection~\ref{subsctProofthmZetaAnalExt_SFT}. Finally in Subsection~\ref{subsctProofthmLogDerivative}, we deduce Theorem~\ref{thmLogDerivative} from Theorem~\ref{thmZetaAnalExt_InvC} following the ideas from \cite{PoSh98} using basic complex analysis.

\subsection{Ruelle's estimate}     \label{subsctRuelleEstimate}

\begin{prop}  \label{propTelescoping}
Let $f$, $\CC$, $d$, $\Lambda$, $\alpha$, $\phi$, $s_0$ satisfy the Assumptions. We assume in addition that $f(\CC) \subseteq \CC$ and no $1$-tile in $\X^1(f,\CC)$ joins opposite sides of $\CC$. Let $\bigl( \Sigma_{A_{\ti}}^+,\sigma_{A_{\ti}} \bigr)$ be the one-sided subshift of finite type associated to $f$ and $\CC$ defined in Proposition~\ref{propTileSFT}, and let $\pi_{\ti}\: \Sigma_{A_{\ti}}^+\rightarrow S^2$ be defined in (\ref{eqDefTileSFTFactorMap}). 

Then for each $\delta>0$ there exists a constant $D_\delta>0$ such that for all integers $n\geq 2$ and $k\in\N$, we have
\begin{align} \label{eqSumHnormSplitRuelleOn1}
         &  \sum\limits_{X^k\in\X^k(f,\CC)} \max\limits_{X^0\in\X^0(f,\CC)} \Hnormbig{\alpha}{\RR_{\minus s\phi,X^0,X^k}^{(k)} (\mathbbm{1}_{X^k})}{(X^0,d)}    \\
 \leq &  D_\delta \abs{ \Im(s) } \Lambda^{-\alpha}  \exp(k(\delta + P(f,-\Re(s)\phi)))   \notag
\end{align}
and
\begin{align} \label{eqTelescoping}
     &  \Absbigg{ Z_{\sigma_{A_{\ti}},\,\minus \phi \circsmall \pi_{\ti}}^{(n)} (s) - \sum\limits_{X^0\in\X^0(f,\CC)}\sum\limits_{\substack{X^1\in\X^1(f,\CC)\\X^1\subseteq X^0}} \RR_{\minus s\phi,X^0,X^1}^{(n)}(\mathbbm{1}_{X^1})(x_{X^1})   }  \\
\leq & D_\delta \abs{\Im(s)} \sum\limits_{m=2}^{n} \OpHnormD{\alpha}{\RRR_{\minus s\phi}^{n-m}} \Bigl(\frac{1}{\Lambda^\alpha} \exp(\delta + P(f,-\Re(s)\phi))  \Bigr)^m     \notag
\end{align}
for any choice of a point $x_{X^1} \in \inte (X^1)$ for each $X^1\in\X^1(f,\CC)$, and for all $s\in\C$ with $\abs{\Im(s)} \geq 2 s_0 +1$ and $\abs{\Re(s)-s_0} \leq  s_0 $, where $Z_{\sigma_{A_{\ti}},\,\minus \phi \circsmall \pi_{\ti}}^{(n)} (s)$ is defined in (\ref{eqDefZn}).
\end{prop}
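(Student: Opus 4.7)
The plan is to prove the two estimates separately, with Part (1) being a direct Hölder-norm calculation that feeds into the telescoping argument of Part (2).

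For (\ref{eqSumHnormSplitRuelleOn1}), the key observation is that for any $X^k \in \X^k(f,\CC)$, the only $k$-tile contained in $X^k$ is $X^k$ itself, so Definition~\ref{defSplitRuelle} gives
\[
\RR_{\minus s\phi, X^0, X^k}^{(k)}(\mathbbm{1}_{X^k})(y) = \begin{cases} e^{-s S_k\phi((f^k|_{X^k})^{-1}(y))} & \text{if $c(X^k) = c(X^0)$,}\\ 0 & \text{otherwise.} \end{cases}
\]
I would then bound its $\CCC^0$-norm by $e^{-\Re(s)\min_{z\in X^k} S_k\phi(z)}$, and the $\alpha$-Hölder seminorm on $X^0$ by splitting $-s = -\Re(s) - \I\Im(s)$ and treating the two factors $e^{-\Re(s) S_k\phi(\cdot)}$ and $e^{-\I\Im(s) S_k\phi(\cdot)}$ separately. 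Lemma~\ref{lmSnPhiBound} applied with $f^k((f^k|_{X^k})^{-1}(y)) = y$ gives $|S_k\phi(z_y) - S_k\phi(z_{y'})| \leq C_1 d(y,y')^\alpha$ directly on $X^0$, and using $|e^{\I t} - 1| \leq |t|$ on the imaginary factor avoids any exponential blow-up in $|\Im(s)|$. Summing over $X^k \in \X^k$ and applying Corollary~\ref{corRR^nConvToTopPressureUniform} to $\sum_{X^k} e^{-\Re(s) S_k\phi(z_{X^k}^*)} \leq C e^{k(P(f,-\Re(s)\phi)+\delta)}$ (uniformly in $\Re(s) \in [0, 2s_0]$) produces the bound with $D_\delta|\Im(s)|$ and the geometric factor $\Lambda^{-\alpha}$ absorbed into $D_\delta$.

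For (\ref{eqTelescoping}), I would first use Proposition~\ref{propTileSFT}, Lemma~\ref{lmAtLeast1}, and Lemma~\ref{lmAtMost1} to establish the bijection $P_{1,\sigma_{A_{\ti}}^n} \leftrightarrow \{X^n \in \X^n(f,\CC) : c(X^n) = c(X^1_{X^n})\}$, where $X^1_{X^n}$ is the unique $1$-tile containing $X^n$; this rewrites
\[
Z^{(n)}_{\sigma_{A_{\ti}},\,\minus\phi\circ\pi_{\ti}}(s) = \sum_{X^1} \sum_{\substack{X^n \subseteq X^1 \\ c(X^n)=c(X^1)}} e^{-s S_n\phi(p_{X^n})},
\]
where $p_{X^n}$ is the unique fixed point of $f^n$ in $X^n$, while the Ruelle sum on the left of (\ref{eqTelescoping}) has the same index set with $p_{X^n}$ replaced by $q_{X^n} = (f^n|_{X^n})^{-1}(x_{X^1})$. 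The telescoping is then constructed using the composition formula (\ref{eqLDiscontProperties_Split}) from Lemma~\ref{lmLDiscontProperties}: for each $m = 2, \ldots, n$, I decompose at level $m$ to write
\[
\RR^{(n)}_{\minus s\phi, X^0, X^1}(\mathbbm{1}_{X^1}) = \sum_{Y^0} \RR^{(n-m)}_{\minus s\phi, X^0, Y^0}\bigl(\RR^{(m)}_{\minus s\phi, Y^0, X^1}(\mathbbm{1}_{X^1})\bigr),
\]
and interpret $\RR^{(m)}_{\minus s\phi, Y^0, X^1}(\mathbbm{1}_{X^1}) = \sum_{X^m \subseteq X^1,\, c=c(Y^0)} \RR^{(m)}_{\minus s\phi, Y^0, X^m}(\mathbbm{1}_{X^m})$. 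Introducing intermediate sums $\Sigma_m$ in which the fixed point $p_{X^n}$ is replaced, for each $m$-tile $X^m \supseteq X^n$, by the level-$m$ pullback $q^{(m)}_{X^n} = (f^m|_{X^m})^{-1}(x_{X^m_*})$ for a suitable representative point $x_{X^m_*}$ in $f^m(X^m)$, one obtains $\Sigma_0 = $ Ruelle sum, $\Sigma_n = Z^{(n)}(s)$, and a telescoping $Z^{(n)}(s) - \text{Ruelle sum} = \sum_{m=2}^{n}(\Sigma_m - \Sigma_{m-1})$ (the $m=1$ term vanishes because $q^{(1)}_{X^n}$ agrees with the original pullback $q_{X^n}$ on $X^1$).

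Each level-$m$ difference $\Sigma_m - \Sigma_{m-1}$ is bounded by factoring $\RR^{(n)}$ as $\RR^{(n-m)}\circ\RR^{(m)}$: the inner factor is a sum over $m$-tiles whose Hölder norm is estimated by Part~(1) (contributing $\Lambda^{-\alpha m}e^{m(\delta+P)}$, where the extra factor of $\Lambda^{-\alpha(m-1)}$ relative to (\ref{eqSumHnormSplitRuelleOn1}) reflects the further contraction from the nested $X^m \subseteq X^{m-1} \subseteq \cdots \subseteq X^1$ chain and the cancellation with the $m-1$ level), and the outer factor contributes $\OpHnormD{\alpha}{\RRR^{n-m}_{\minus s\phi}}$ through the normalized operator norm via Lemma~\ref{lmSplitRuelleCoordinateFormula} and Lemma~\ref{lmSplitRuelleUnifBoundNormalizedNorm}. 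The main obstacle is arranging the interpolating sums $\Sigma_m$ so that (i) the difference $\Sigma_m - \Sigma_{m-1}$ really factors as the outer operator $\RR^{(n-m)}$ applied to a function supported on $m$-tiles whose summed Hölder norm matches (\ref{eqSumHnormSplitRuelleOn1}) at level $m$, and (ii) the geometric decay $\Lambda^{-\alpha m}$ emerges cleanly from iterating the nested-tile estimate rather than accumulating an uncontrolled $|\Im(s)|^m$ factor; this is where the split-Ruelle framework (in particular the color-matching built into Definition~\ref{defSplitRuelle}) is crucial, as it prevents boundary contributions from entering the telescoping. Once these bounds are combined and absorbed into a single constant $D_\delta$, summing over $m$ yields (\ref{eqTelescoping}).
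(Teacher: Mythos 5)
Your proposal is correct and follows essentially the same strategy as the paper's proof: rewrite $Z^{(n)}$ as a sum of $\RR^{(n)}_{\minus s\phi,X^0,X^n}(\mathbbm{1}_{X^n})$ evaluated at fixed points (via Lemmas~\ref{lmAtLeast1}, \ref{lmAtMost1}), telescope in the level of the tile on which the characteristic function is supported, bound each level-$m$ difference by factoring $\RR^{(n)} = \RR^{(n-m)}\circ\RR^{(m)}$ and applying the Hölder seminorm times $d(x_{X^{m-1}},x_{X^m})^\alpha \lesssim \Lambda^{-\alpha(m-1)}$, and control the summed Hölder norms by (\ref{eqSumHnormSplitRuelleOn1}) which in turn uses Lemma~\ref{lmExpToLiniear} for the $|\Im(s)|$ factor and Corollary~\ref{corRR^nConvToTopPressureUniform} for the pressure factor. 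The small notational slips (e.g., the closure condition should be ``$c(X^n)$ equals the color of the $0$-tile containing $X^n$'' rather than the $1$-tile, and the level-$m$ pullback should be via $(f^n|_{X^n})^{-1}$ applied to $x_{X^m}$, not $(f^m|_{X^m})^{-1}$) do not affect the substance of the argument.
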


\begin{proof}
Fix the integer $n\geq 2$.

We first choose $x_{X^n} \in X^n$ for each $n$-tile $X^n \in \X^n$ in the following way. If $X^n\subseteq f^n(X^n)$, then let $x_{X^n}$ be the unique point in $X^n\cap P_{1,f^n}$ (see Lemma~\ref{lmAtLeast1} and Lemma~\ref{lmAtMost1}); otherwise $X^n$ must be a black $n$-tile contained in the white $0$-tile, or a white $n$-tile contained in the black $0$-tile, in which case we choose an arbitrary point $x_{X^n} \in \inte (X^n)$. Next, for each $i\in\N_0$ with $i\leq n-1$, and each $X^i\in\X^i$, we fix an arbitrary point $x_{X^i}\in \inte (X^i)$.

By (\ref{eqDefLc}) and our construction, we get that for all $s\in\C$, $X^0\in\X^0$, and $X^n\in\X^n$ with $X^n\subseteq X^0$,
\begin{equation}   \label{eqPfpropTelescoping_1}
\RR_{\minus s\phi,X^0,X^n}^{(n)} (\mathbbm{1}_{X^n})(x_{X^n}) =  \begin{cases} \exp(-sS_n\phi(x_{X^n})) & \text{if } X^n \subseteq f^n (X^n), \\ 0  & \text{otherwise}. \end{cases}
\end{equation}

It is easy to check that by (\ref{eqPfpropTelescoping_1}), the function $Z_{\sigma_{A_{\ti}},\,\minus \phi \circsmall \pi_{\ti}}^{(n)} (s)$ defined in (\ref{eqDefZn}) satisfies
\begin{equation}   \label{eqPfpropTelescoping_ZnRewrite}
Z_{\sigma_{A_{\ti}},\,\minus \phi \circsmall \pi_{\ti}}^{(n)} (s) = \sum\limits_{X^0\in\X^0} \sum\limits_{\substack{X^n\in\X^n\\X^n \subseteq X^0}}  \RR_{\minus s\phi,X^0,X^n}^{(n)} (\mathbbm{1}_{X^n}) (x_{X^n}).
\end{equation}

Thus by the triangle inequality, we get
\begin{align}   \label{eqPfpropTelescoping_SplitTileSum}
 &  \Absbigg{  Z_{\sigma_{A_{\ti}},\,\minus \phi \circsmall \pi_{\ti}}^{(n)} (s) - \sum\limits_{X^0\in\X^0} \sum\limits_{\substack{X^1\in\X^1\\X^1 \subseteq X^0}} \RR_{\minus s\phi,X^0,X^1}^{(n)} (\mathbbm{1}_{X^1}) (x_{X^1}) }\\
\leq & \sum\limits_{m=2}^{n}  \sum\limits_{X^0\in\X^0} \Absbigg{ \sum\limits_{\substack{X^{m-1}\in\X^{m-1}\\X^{m-1} \subseteq X^0}} \RR_{\minus s\phi,X^0,X^{m-1}}^{(n)} (\mathbbm{1}_{X^{m-1}}) (x_{X^{m-1}})  
  -  \sum\limits_{\substack{X^m\in\X^m\\X^m \subseteq X^0}} \RR_{\minus s\phi,X^0,X^m}^{(n)} (\mathbbm{1}_{X^m}) (x_{X^m})  } \notag \\
\leq & \sum\limits_{m=2}^{n}  \sum\limits_{X^0\in\X^0}  \sum\limits_{\substack{X^{m-1}\in\X^{m-1}\\X^{m-1} \subseteq X^0}} 
    \Absbigg{  \RR_{\minus s\phi,X^0,X^{m-1}}^{(n)} (\mathbbm{1}_{X^{m-1}}) (x_{X^{m-1}}) 
 -  \sum\limits_{\substack{X^m\in\X^m\\X^m \subseteq X^{m-1}}} \RR_{\minus s\phi,X^0,X^m}^{(n)} (\mathbbm{1}_{X^m}) (x_{X^m}) }.  \notag
\end{align}

Note that for all $s\in\C$, $2\leq m\leq n$, $\c\in\{\b,\w\}$, and $X^{m-1} \in \X^{m-1}$ with $X^{m-1}\subseteq X^0_\c$, by (\ref{eqDefLc}),
\begin{align}
    \RR_{\minus s\phi,X_\c^0,X^{m-1}}^{(n)} (\mathbbm{1}_{X^{m-1}}) (x_{X^{m-1}})  
= & \sum\limits_{\substack{X^n\in\X^n_\c\\X^n \subseteq X^{m-1}}} \exp\left( - sS_n\phi \left( (f^n|_{X^n})^{-1}(x_{X^{m-1}}) \right)\right)   \notag \\
= & \sum\limits_{\substack{X^m\in\X^m\\X^m \subseteq X^{m-1}}} \sum\limits_{\substack{X^n\in\X^n_\c\\X^n \subseteq X^m}}  
                                                                                         \exp\left( -sS_n\phi \left( (f^n|_{X^n})^{-1}(x_{X^{m-1}}) \right)\right)  \label{eqPfpropTelescoping_SplitTile} \\
= & \sum\limits_{\substack{X^m\in\X^m\\X^m \subseteq X^{m-1}}} \RR_{\minus s\phi,X^0_\c,X^m}^{(n)} (\mathbbm{1}_{X^m}) (x_{X^{m-1}}).  \notag
\end{align}

Hence by (\ref{eqPfpropTelescoping_SplitTileSum}), (\ref{eqPfpropTelescoping_SplitTile}), and (\ref{eqLDiscontProperties_Holder}), we get
\begin{align*}
&                     \Absbigg{  Z_{\sigma_{A_{\ti}},\,\minus \phi \circsmall \pi_{\ti}}^{(n)} (s) 
                                         - \sum\limits_{X^0\in\X^0} \sum\limits_{\substack{X^1\in\X^1\\X^1 \subseteq X^0}} \RR_{\minus s\phi,X^0,X^1}^{(n)} (\mathbbm{1}_{X^1}) (x_{X^1}) }\\
&\qquad \leq \sum\limits_{m=2}^{n}  \sum\limits_{X^0\in\X^0}  \sum\limits_{\substack{X^{m-1}\in\X^{m-1}\\X^{m-1} \subseteq X^0}}    \sum\limits_{\substack{X^m\in\X^m\\X^m \subseteq X^{m-1}}}   
                                       \Absbig{ \RR_{\minus s\phi,X^0,X^m}^{(n)} (\mathbbm{1}_{X^m}) (x_{X^{m-1}})    -  \RR_{\minus s\phi,X^0,X^m}^{(n)} (\mathbbm{1}_{X^m}) (x_{X^m}) }  \\
&\qquad \leq  \sum\limits_{m=2}^{n}  \sum\limits_{X^0\in\X^0}  \sum\limits_{\substack{X^{m-1}\in\X^{m-1}\\X^{m-1} \subseteq X^0}}    \sum\limits_{\substack{X^m\in\X^m\\X^m \subseteq X^{m-1}}}   
                                      \Hnormbig{\alpha}{\RR_{\minus s\phi,X^0,X^m}^{(n)} (\mathbbm{1}_{X^m})}{(X^0,d)}  d(x_{X^{m-1}},x_{X^m})^\alpha.
\end{align*}

Note that by (\ref{eqLDiscontProperties_Holder}),
\begin{equation*}
\RR_{\minus s\phi,X^0,X^m}^{(m)} (\mathbbm{1}_{X^m}) \in \Holder{\alpha} ((X^0,d),\C)
\end{equation*}
for $s\in\C$, $m\in\N$, $X^0\in\X^0$, $X^m\in\X^m$, and that by Lemma~\ref{lmCellBoundsBM}~(ii),
\begin{equation*}
d(x_{X^{m-1}},x_{X^m}) \leq \diam_d(X^{m-1}) \leq C\Lambda^{-m+1},
\end{equation*}
where $C\geq 1$ is a constant from Lemma~\ref{lmCellBoundsBM} depending only on $f$, $\CC$, and $d$. So by (\ref{eqLDiscontProperties_Split}) in Lemma~\ref{lmLDiscontProperties} and (\ref{eqSplitRuelleOpNormQuot}) in Lemma~\ref{lmSplitRuelleCoordinateFormula},
\begin{align}   \label{eqPfpropTelescoping_BoundZn-L}
&       \Absbigg{ Z_{\sigma_{A_{\ti}},\,\minus \phi \circsmall \pi_{\ti}}^{(n)} (s) 
                           - \sum\limits_{X^0\in\X^0} \sum\limits_{\substack{X^1\in\X^1\\X^1 \subseteq X^0}} \RR_{\minus s\phi,X^0,X^1}^{(n)} (\mathbbm{1}_{X^1}) (x_{X^1}) } \\
&\qquad  \leq   \sum\limits_{m=2}^{n}   \sum\limits_{ X^m\in\X^m }    \OpHnormD{\alpha}{\RRR_{\minus s\phi}^{n-m}}      
                               \Bigl(\max\limits_{Y^0\in\X^0} \Hnormbig{\alpha}{\RR_{\minus s\phi,Y^0,X^m}^{(m)} (\mathbbm{1}_{X^m})}{(Y^0,d)}  \Bigr)       C^\alpha\Lambda^{\alpha(1-m)}  .   \notag
\end{align}

\smallskip

We now give an upper bound for $\sum\limits_{X^m\in\X^m}  \max\limits_{Y^0\in\X^0} \Hnormbig{\alpha}{\RR_{\minus s\phi,Y^0,X^m}^{(m)} (\mathbbm{1}_{X^m})}{(Y^0,d)} $.

Fix an arbitrary point $y\in\CC\setminus \post f$.

Consider arbitrary $s\in\C$ with $\abs{\Re(s)-s_0}\leq s_0$, $m\in\N$, $X^m_\b\in\X^m_\b$, $X^m_\w\in\X^m_\w$, $x_\b,x'_\b \in X^0_\b$, $x_\w,x'_\w\in X^0_\w$, and $\c,\c'\in\{\b,\w\}$ with $\c\neq\c'$. By (\ref{eqDefLc}), Lemma~\ref{lmMetricDistortion}, Lemma~\ref{lmCellBoundsBM}~(ii), we have
\begin{equation}   \label{eqPfpropTelescoping_LDiscontSupNormBW}
\RR_{\minus s\phi,X^0_\c,X^m_{\c'}}^{(m)} \bigl( \mathbbm{1}_{X^m_{\c'}} \bigr)  (x_\c) = 0,
\end{equation}
and 
\begin{align}   \label{eqPfpropTelescoping_LDiscontSupNormBB}
&           \Abs{\RR_{\minus s\phi,X^0_\c,X^m_\c}^{(m)}  (\mathbbm{1}_{X^m_\c} ) (x_\c)} \notag \\
&\qquad =   \Abs{\exp\bigl( -sS_m\phi \bigl(  (f^m|_{X^m_\c} )^{-1}(x_\c)\bigr)  \bigr)}  \notag \\
&\qquad =   \exp\bigl( -\Re(s)S_m\phi \bigl(  (f^m|_{X^m_\c} )^{-1}(y)\bigr)  \bigr)   \frac{  \exp  ( -\Re(s)S_m\phi   (  (f^m|_{X^m_\c} )^{-1}(x_\c)   ) )} {\exp ( -\Re(s) S_m\phi  (  (f^m|_{X^m_\c} )^{-1}(y) ) )  }  \\
&\qquad\leq \exp\bigl( -\Re(s)S_m\phi \bigl(  (f^m|_{X^m_\c} )^{-1}(y)\bigr)  \bigr)   \exp\left( \Re(s) C_1 \left( \diam_d\left(X^0_\c\right)\right)^\alpha  \right)  \notag\\
&\qquad\leq \exp\bigl( -\Re(s)S_m\phi \bigl(  (f^m|_{X^m_\c} )^{-1}(y)\bigr)  \bigr)   \exp\left(\Re(s)C^\alpha C_1\right),   \notag
\end{align}
where $C_1>0$ is a constant from Lemma~\ref{lmSnPhiBound} depending only on $f$, $\CC$, $d$, $\phi$, and $\alpha$.

Hence by (\ref{eqPfpropTelescoping_LDiscontSupNormBW}) and (\ref{eqPfpropTelescoping_LDiscontSupNormBB}), we get 
\begin{equation}   \label{eqPfpropTelescoping_LDiscontSupNorm}
      \Norm{ \RR_{\minus s\phi,X^0,X^m }^{(m)} (\mathbbm{1}_{X^m }) }_{\CCC^0(X^0)}   
\leq  \exp \bigl( -\Re(s)S_m\phi  \bigl(  (f^m|_{X^m} )^{-1}(y)  \bigr) \bigr)  \exp\left(\Re(s)C^\alpha C_1\right)  
\end{equation}
for $s\in\C$, $m\in\N$, $X^0\in\X^0$, and $X^m\in\X^m$.

By (\ref{eqDefLc}),
\begin{equation} \label{eqPfpropTelescoping_LDiscontHseminormBW}
\RR_{\minus s\phi,X^0_\c, X^m_{\c'}}^{(m)} \bigl( \mathbbm{1}_{X^m_{\c'}} \bigr) (x_\c) - \RR_{\minus s\phi,X^0_\c,X^m_{\c'} }^{(m)} \bigl(\mathbbm{1}_{X^m_{\c'}} \bigr) (x'_\c) = 0.
\end{equation}

By (\ref{eqDefLc}) and Lemma~\ref{lmExpToLiniear} with $T\coloneqq  2 s_0  \Hseminorm{\alpha,\, (S^2,d)}{\phi}$,
\begin{align*}
&                      \Abs{1-  \frac{\RR_{\minus s\phi,X^0_\c,X^m_\c}^{(m)}  (\mathbbm{1}_{X^m_\c} ) (x_\c) }{\RR_{\minus s\phi,X^0_\c,X^m_\c}^{(m)}  (\mathbbm{1}_{X^m_\c} ) (x'_\c) }  }    \\
&\qquad   =    \Abs{1-   \exp  \bigl( -s \bigl(S_m \phi  \bigl(  (f^m|_{X^m_\c} )^{-1}(x_\c)  \bigr) \bigr) -    S_m   \phi  \bigl(  (f^m|_{X^m_\c} )^{-1}(x'_\c)  \bigr) \bigr)   \bigr) \bigr)  } \\
&\qquad \leq  C_{10} \Hseminorm{\alpha,\, (S^2,d)}{s\phi} d(x_\c,x'_\c)^\alpha 
                   =     C_{10} \abs{s} \Hseminorm{\alpha,\, (S^2,d)}{\phi} d(x_\c,x'_\c)^\alpha,
\end{align*}
where the constant $C_{10}=C_{10}(f,\CC,d,\alpha,T)>1$ depends only on $f$, $\CC$, $d$, $\alpha$, and $\phi$ in our context.

Thus by (\ref{eqPfpropTelescoping_LDiscontSupNormBB}),
\begin{align*}
&           \Absbig{ \RR_{\minus s\phi,X^0_\c,X^m_\c}^{(m)}  (\mathbbm{1}_{X^m_\c} ) (x_\c) - \RR_{\minus s\phi,X^0_\c,X^m_\c}^{(m)}  (\mathbbm{1}_{X^m_\c} ) (x'_\c)  }  \\
&\qquad\leq  \Abs{1-  \frac{\RR_{\minus s\phi,X^0_\c,X^m_\c}^{(m)}  (\mathbbm{1}_{X^m_\c} ) (x_\c) }{\RR_{\minus s\phi,X^0_\c,X^m_\c}^{(m)}  (\mathbbm{1}_{X^m_\c} ) (x'_\c) }  }
                         \Absbig{\RR_{\minus s\phi,X^0_\c,X^m_\c}^{(m)}  (\mathbbm{1}_{X^m_\c} ) (x'_\c) }  \\
&\qquad\leq  4^{-1} C_{11} \abs{s} d(x_\c,x'_\c)^\alpha  \exp\bigl( -\Re(s)S_m\phi \bigl(  (f^m|_{X^m_\c} )^{-1}(y)\bigr)  \bigr)   ,
\end{align*}
where we define the constant 
\begin{equation}  \label{eqConst_propTelescoping1}
C_{11} \coloneqq   \max \bigl\{ 2 , \, 4   C_{10}  \Hseminorm{\alpha,\, (S^2,d)}{\phi}  \bigr\}   \exp\left( 2 s_0  C^\alpha C_1\right) 
\end{equation}
depending only on $f$, $\CC$, $d$, $\alpha$, and $\phi$.

So we get
\begin{equation} \label{eqPfpropTelescoping_LDiscontHseminormBB}
      \Hseminormbig{\alpha,\, (X^0_\c,d )}{\RR_{\minus s\phi,X^0_\c,X^m_\c}^{(m)} ( \mathbbm{1}_{X^m_\c} )}  
 \leq  4^{-1} C_{11} \abs{s} \exp\bigl( -\Re(s)S_m\phi \bigl(  (f^m|_{X^m_\c} )^{-1}(y)\bigr)  \bigr)   .  
\end{equation}

Thus by (\ref{eqPfpropTelescoping_LDiscontHseminormBW}) and (\ref{eqPfpropTelescoping_LDiscontHseminormBB}), we have
\begin{equation} \label{eqPfpropTelescoping_LDiscontHseminorm}
      \Hseminormbig{\alpha,\, (X^0,d )}{\RR_{\minus s\phi,X^0,X^m}^{(m)}  ( \mathbbm{1}_{X^m} )}  
 \leq  4^{-1} C_{11} \abs{s} \exp\bigl( -\Re(s)S_m\phi \bigl(  (f^m|_{X^m} )^{-1}(y)\bigr)  \bigr)  ,  
\end{equation}
for $s\in\C$ with $\abs{\Re(s)-s_0} \leq  s_0$, $m\in\N$, $X^0\in\X^0$, and $X^m\in\X^m$.

Hence by (\ref{eqPfpropTelescoping_LDiscontSupNorm}) and (\ref{eqPfpropTelescoping_LDiscontHseminorm}), for all $m\in\N$, $X^m\in\X^m$, $s\in\C$, and $Y^0\in\X^0$ satisfying 
\begin{equation*}
\abs{\Im(s)} \geq 2 s_0 + 1  \text{ and } \abs{\Re(s)-s_0} \leq  s_0,
\end{equation*} 
we have
\begin{equation}  \label{eqPfpropTelescoping_LDiscontHNorm}
\Hnormbig{\alpha}{\RR_{\minus s\phi,Y^0,X^m}^{(m)} (\mathbbm{1}_{X^m})}{(Y^0,d)}    \leq C_{11} \abs{\Im(s)} \exp\bigl(-\Re(s) S_m\phi \bigl(  (f^m|_{X^m} )^{-1}(y)\bigr)  \bigr).
\end{equation}

So by (\ref{eqPfpropTelescoping_LDiscontHNorm}) and the fact that $y\in \CC$, we get
\begin{align}   \label{eqPfpropTelescoping_SumHnormD=L}
&         \sum\limits_{X^m\in\X^m}   \max\limits_{Y^0\in\X^0} \Hnormbig{\alpha}{\RR_{\minus s\phi,Y^0,X^m}^{(m)} (\mathbbm{1}_{X^m})}{(Y^0,d)}  \notag \\
&\qquad=   C_{11} \Im(s)   \sum\limits_{X^m\in\X^m} \exp\bigl( -\Re(s)S_m\phi \bigl(  (f^m|_{X^m} )^{-1}(y)\bigr)  \bigr)  \\
&\qquad=   2C_{11} \Im(s)  \RR_{\minus \Re(s)\phi}^m  ( \mathbbm{1}_{S^2}  ) (y)   \notag.
\end{align}

We construct a sequence of continuous functions $p_m \: \R\rightarrow\R$, $m\in\N$, as
\begin{equation}   \label{eqDefp_m}
p_m(a) \coloneqq   \bigl( \RR_{\minus a\phi}^m  ( \mathbbm{1}_{S^2}  ) (y) \bigr)^{\frac{1}{m}}.
\end{equation}
By Corollary~\ref{corRR^nConvToTopPressureUniform}, the function $a \mapsto p_m(a) - e^{P(f, -a\phi)}$ converges to $0$ as $m$ tends to $+\infty$, uniformly in $a\in \{c\in\R \,|\, \abs{c-s_0} \leq  s_0 \}$. Recall that $a\mapsto P(f, - a\phi)$ is continuous in $a\in\R$ (see for example, \cite[Theorem~3.6.1]{PrU10}). Thus by (\ref{eqPfpropTelescoping_SumHnormD=L}), there exists a constant $C_{12}>0$ depending only on $f$, $\CC$, $d$, $\alpha$, $\phi$, and $\delta$ such that for all $m\in\N$ and $s\in\C$ with $\abs{\Im(s)}\geq 2 s_0 +1$ and $\abs{\Re(s)-s_0}\leq s_0$, 
\begin{align}
&                       \sum\limits_{X^m\in\X^m} \max\limits_{Y^0\in\X^0} \Hnormbig{\alpha}{\RR_{\minus s\phi,Y^0,X^m}^{(m)} (\mathbbm{1}_{X^m})}{(Y^0,d)}    \label{eqPfpropTelescoping_SumHnormSplitRuelleOn1} \\
&\qquad  =       2C_{11} \Im(s)  (p_m(\Re(s)))^m \leq   C_{12} \abs{\Im(s)}  e^{m(\delta + P(f,-\Re(s)\phi))} .   \notag 
\end{align}

Combining (\ref{eqPfpropTelescoping_BoundZn-L}) with the above inequality, we get for all $s\in\C$ with $\abs{\Im(s)}\geq 2 s_0 +1$ and $\abs{\Re(s)-s_0}\leq  s_0$, 
\begin{align*}
&                    \Absbigg{  Z_{\sigma_{A_{\ti}},\,\minus \phi \circsmall \pi_{\ti}}^{(n)} (s)
                                         - \sum\limits_{X^0\in\X^0} \sum\limits_{\substack{X^1\in\X^1\\X^1 \subseteq X^0}} \RR_{\minus s\phi,X^0, X^1}^{(n)} (\mathbbm{1}_{X^1}) (x_{X^1}) }\\
&\qquad \leq   D_\delta \abs{\Im(s)} \sum\limits_{m=2}^{n} \OpHnormD{\alpha}{\RRR_{\minus s\phi}^{n-m}} \Bigl(\frac{1}{\Lambda^\alpha} \exp(\delta + P(f, -\Re(s)\phi))  \Bigr)^m,    \notag
\end{align*}
where $D_\delta \coloneqq C^\alpha C_{12} \Lambda^\alpha > C_{12} >0$ is a constant depending only on $f$, $\CC$, $d$, $\phi$, $\alpha$, and $\delta$.

Inequality (\ref{eqSumHnormSplitRuelleOn1}) now follows from (\ref{eqPfpropTelescoping_SumHnormSplitRuelleOn1}) and $D_\delta  \Lambda^{-\alpha} \geq C_{12}$.
\end{proof}

\subsection{Operator norm}    \label{subsctOperatorNorm}

The following theorem is one of the main estimates we need to prove in this paper. 

\begin{theorem}  \label{thmL2Shrinking}
Let $f\: S^2 \rightarrow S^2$ be an expanding Thurston map with a Jordan curve $\CC\subseteq S^2$ satisfying $f(\CC) \subseteq \CC$ and $\post f\subseteq \CC$. Let $d$ be a visual metric on $S^2$ for $f$ with expansion factor $\Lambda>1$, and $\phi\in \Holder{\alpha}(S^2,d)$ be an eventually positive real-valued H\"{o}lder continuous function with an exponent $\alpha\in(0,1]$ that satisfies the $\alpha$-strong non-integrability condition. Let $s_0\in\R$ be the unique positive real number satisfying $P(f, -s_0\phi)=0$. 

Then there exist constants $\iota \in \N$, $a_0 \in (0, s_0 ]$, $b_0 \in [ 2 s_0 +1,+\infty)$, and $\rho \in (0,1)$ such that for each $\c\in\{\b,\w\}$, each $n\in\N$, each $s\in\C$ with $\abs{\Re(s)-s_0} \leq a_0$ and $\abs{\Im(s)} \geq b_0$, and each pair of functions $u_\b \in \Holder{\alpha}\bigl(\bigl(X^0_\b,d\bigr),\C\bigr)$ and $u_\w \in \Holder{\alpha}\bigl(\bigl(X^0_\w,d\bigr),\C\bigr)$ satisfying $\NHnorm{\alpha}{\Im(s)}{u_\b}{(X^0_\b,d)} \leq 1$ and $\NHnorm{\alpha}{\Im(s)}{u_\w}{(X^0_\w,d)} \leq 1$, we have 
\begin{equation}
\int_{X^0_\c} \! \AbsBig{ \RR_{\wt{\minus s\phi}, X^0_\c, X^0_\b}^{(n \iota)} (u_\b) + \RR_{\wt{\minus s\phi}, X^0_\c, X^0_\w}^{(n\iota)} (u_\w)  }^2   \,\mathrm{d}\mu_{\minus s_0\phi} \leq \rho^n.
\end{equation}
Here $\mu_{\minus s_0\phi}$ denotes the unique equilibrium state for the map $f$ and the potential $-s_0\phi$.
\end{theorem}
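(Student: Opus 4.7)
The plan is to adapt Dolgopyat's cancellation machinery to our metric-topological setting, following the general outline of \cite{Dol98} and \cite{PoSh98} but with all arguments carried out through the split Ruelle operator $\RRR_{\wt{\minus s\phi}}$ on pairs of functions over $X^0_\b$ and $X^0_\w$, rather than through a smooth transfer operator. The iterate $\iota \in \N$ will be chosen large enough so that on $\iota$-tiles we have enough geometric room to construct suitable "Dolgopyat bump functions"; here $\iota$ will depend on the constants in the spectral gap Theorem~\ref{thmSpectralGap}, on the constants $A_0, C_0$ from Lemma~\ref{lmBasicIneq}, and on the constants coming from the $\alpha$-strong non-integrability condition. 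The constant $b_0$ will be taken sufficiently large, and $a_0 \in (0,s_0]$ sufficiently small, so that all estimates below are uniform for $s = a + \I b$ in the vertical strip $\abs{a - s_0} \leq a_0$, $\abs{b} \geq b_0$.

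The first step is the envelope/cone mechanism. Given $(u_\b, u_\w)$ with $\NHnorm{\alpha}{\Im(s)}{u_\c}{(X^0_\c,d)} \leq 1$, introduce non-negative H\"older envelopes $h_\c \geq \abs{u_\c}$ with $h_\c \in K_B(X^0_\c,d)$ for a universal $B$ depending only on the data. Using the basic inequalities (Lemma~\ref{lmBasicIneq}~(ii)), one checks inductively that the componentwise inequality
\begin{equation*}
\AbsBig{ \RR_{\wt{\minus s\phi},X^0_\c,X^0_\b}^{(n\iota)}(u_\b) + \RR_{\wt{\minus s\phi},X^0_\c,X^0_\w}^{(n\iota)}(u_\w) } \leq \RR_{\wt{\minus a\phi},X^0_\c,X^0_\b}^{(n\iota)}(h_\b) + \RR_{\wt{\minus a\phi},X^0_\c,X^0_\w}^{(n\iota)}(h_\w)
\end{equation*}
is preserved along the iteration, and that the right-hand side remains in a cone $K_{B'}$ of the same shape with $B'$ independent of $n$. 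The real operator $\RRR_{\wt{\minus a\phi}}$ is $L^1(\mu_{\minus s_0 \phi})$-contracting (from Lemma~\ref{lmRDiscontTildeDual} together with a uniform spectral radius bound derived from $P(f,-s_0\phi)=0$ and small $a_0$). So if we can replace a definite fraction of the mass of $h = (h_\b, h_\w)$ by a strictly smaller function at every step, we obtain exponential decay in $L^2$.

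The second and central step is constructing the Dolgopyat operator $\MM_{J,s,\phi}$. For each $\iota$-tile $X^\iota \in \X^\iota(f,\CC)$ one looks at pairs of inverse branches landing in $X^\iota$ from distinct $\iota$-tiles. The $\alpha$-strong non-integrability condition (Proposition~\ref{propSNI}) guarantees, after choosing $\iota$ large, that for any pair of such inverse branches there exist two test points in $X^\iota$ at definite $d$-distance whose temporal distance (the quantity $\bigl(S_\iota \phi \bigr)^{f^\iota,\,\CC}_{\xi,\eta}$ from Definition~\ref{defTemporalDist}) has absolute value at least $c\,\Lambda^{-\iota\alpha}$ for a universal $c>0$. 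Choosing $\abs{b} \geq b_0$ large enough makes the phase $b \cdot (\text{temporal distance})$ sweep through an arc of length of order $1$, so the two exponentials $e^{\I b S_\iota\phi}$ over these inverse branches cannot be simultaneously close to a common unit phase; hence on a definite portion of each $\iota$-tile there must be cancellation of size $\theta < 1$ between contributions from different branches. This defines an admissible index set $J$ and a bump function $\chi_J$ taking values in $[\theta,1]$ (and equal to $1$ outside); $\MM_{J,s,\phi}$ is then $\RRR_{\wt{\minus a\phi}}^\iota$ multiplied by $\chi_J$. The output of $\MM_{J,s,\phi}$ still lies in the cone $K_{B'}$, and $\norm{\RRR_{\wt{\minus s\phi}}^\iota u}^2 \leq \MM_{J,s,\phi}(h^2)$ pointwise (using $u^2 \in K_{2B}$ by Lemma~\ref{lmConePower}), because on cells of $J$ the triangle inequality in the cancellation direction replaces a sum $\abs{a+b}$ by $\theta(\abs{a}+\abs{b})$.

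The third step is iteration and the $L^2$-bound. Integrating against $\mu_{\minus s_0 \phi}$, using the duality $\RR_{\wt{\minus s_0\phi}}^* \mu_{\minus s_0 \phi} = \mu_{\minus s_0 \phi}$ and Lemma~\ref{lmRDiscontTildeDual}, one obtains
\begin{equation*}
\int_{X^0_\c}\! \AbsBig{\RR_{\wt{\minus s\phi},X^0_\c,X^0_\b}^{(n\iota)}(u_\b) + \RR_{\wt{\minus s\phi},X^0_\c,X^0_\w}^{(n\iota)}(u_\w)}^2 \mathrm{d}\mu_{\minus s_0 \phi} \leq \int \MM_{J,s,\phi}^n(h^2)\,\mathrm{d}\mu_{\minus s_0 \phi}.
\end{equation*}
The Dolgopyat operator satisfies a quantitative sup-norm contraction $\norm{\MM_{J,s,\phi}(h^2)}_{\infty} \leq (1 - \eta)\norm{h^2}_\infty$ for a uniform $\eta > 0$, via a Cauchy--Schwarz-type comparison between $(\chi_J)^2$ and $\chi_{J^2}$ against $\RRR_{\wt{\minus s_0\phi}}^\iota(\mathbbm{1}) = \mathbbm{1}$ (from Lemma~\ref{lmRtildeNorm=1}). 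Iterating yields $\rho = (1-\eta)^{1/2} < 1$.

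The main obstacle is Step~2: realizing the cancellation quantitatively in the visual metric on $S^2$ while controlling constants uniformly across the tiles. In the classical smooth setting one exploits Lipschitz control of inverse branches in a Riemannian chart; here the only substitute is Lemma~\ref{lmMetricDistortion} plus the combinatorial control of tiles. In particular, guaranteeing that test points of definite $d$-separation exist inside every $\iota$-tile (so that the temporal distance is lower-bounded of order $\Lambda^{-\iota\alpha}$ with the \emph{same} exponent $\alpha$ that appears in the upper bound from H\"older continuity) requires the sharp form of the $\alpha$-strong non-integrability condition (the matching exponents in \eqref{eqSNIBounds} of Proposition~\ref{propSNI}) and a careful inductive choice of the scale parameters $m_0, m_1, m_b$ used in defining $J$ and $\chi_J$.
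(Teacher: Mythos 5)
The overall shape of your proposal (cone/envelope mechanism, Dolgopyat damping operator built from the $\alpha$-strong non-integrability, Cauchy--Schwarz comparison, iteration in $L^2$) matches the paper's strategy, but there are two concrete gaps that as written would break the argument.

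First, the claimed sup-norm contraction is false. You write that $\norm{\MM_{J,s,\phi}(h^2)}_{\infty} \leq (1-\eta)\norm{h^2}_\infty$. The damping $\beta_J$ (or $\chi_J$ in your notation) is strictly less than $1$ only on the set $W_J = \bigcup_{(j,i,X)\in J} f^{M_0}(\mathfrak{X}'_i(X))$, which is a proper subset of positive measure but not all of $X^0_\c$; outside $W_J$ one has $\RRR_{\wt{-s_0\phi}}^\iota(\beta_J^2) = 1$ exactly, by normalization. So the sup norm is in general \emph{not} contracted. The whole difficulty in Dolgopyat's method is precisely that cancellation is only localized; what saves the argument is the Gibbs/doubling property of $\mu_{-s_0\phi}$ (Lemma~\ref{lmGibbsDoubling}), which lets one pass from the pointwise damping on $W_J$ to the integral estimate
$\int_{X^0_\c} \RRR^\iota(h^2)\,\mathrm{d}\mu \leq C_{18}\int_{W_J\cap X^0_\c}\RRR^\iota(h^2)\,\mathrm{d}\mu$, and thence to the $L^2$-contraction in Proposition~\ref{propDolgopyatOperator}(ii). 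You omit this step entirely, and without it there is no $L^2$-contraction, only a sup-norm claim that is simply wrong.

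Second, iterating a fixed $\MM_{J,s,\phi}^n$ does not work. The index set $J$ must be re-chosen at each iterate: $J_k \in \mathcal{E}_s$ is selected (via the dichotomy Lemma~\ref{lmUHDichotomySum}, yielding Proposition~\ref{propDolgopyatOperator}(iii)) so that the domination $|\pi_\c(\RRR_{\wt{-s\phi}}^\iota(u_{\b,k},u_{\w,k}))| \leq \pi_\c(\MM_{J_k,-s,\phi}(h_{\b,k},h_{\w,k}))$ and the associated cone-modulus inequality hold for the \emph{current} pair $(u_{\b,k},u_{\w,k})$. A fixed $J$ adapted to $(u_\b,u_\w)$ gives no domination at later steps, since the relative phases $\Arg$ of the contributions along the two distinguished inverse branches evolve under iteration. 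You therefore need the recursive construction of the sequences $\{u_{\c,k}\}$, $\{h_{\c,k}\}$, $\{J_k\}$, and the statement that each $\MM_{J_k,-s,\phi}$ contracts the $L^2$ norm of the envelope, with uniform $\rho$. Two further imprecisions worth flagging: the damping must be applied on the preimage side, i.e.\ $\MM_{J,s,\phi}(u_\b,u_\w) = \RRR_{\wt{a\phi}}^\iota(u_\b\beta_J,u_\w\beta_J)$, not $\chi_J\cdot\RRR^\iota(\cdot)$; and the non-integrability condition furnishes two specific inverse branches $\tau_1,\tau_2$ (into $X^{N_1+M_0}_{\c,1},X^{N_1+M_0}_{\c,2}$) with temporal-distance lower bound on test points, not cancellation between ``any pair'' of branches.
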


We will prove the above theorem at the end of Section~\ref{sctDolgopyat}. Assuming Theorem~\ref{thmL2Shrinking}, we can establish the following theorem.

\begin{theorem}   \label{thmOpHolderNorm}
Let $f\: S^2 \rightarrow S^2$ be an expanding Thurston map with a Jordan curve $\CC\subseteq S^2$ satisfying $f(\CC) \subseteq \CC$ and $\post f\subseteq \CC$. Let $d$ be a visual metric on $S^2$ for $f$ with expansion factor $\Lambda>1$, and $\phi\in \Holder{\alpha}(S^2,d)$ be an eventually positive real-valued H\"{o}lder continuous function with an exponent $\alpha\in(0,1]$ that satisfies the $\alpha$-strong non-integrability condition. Let $s_0\in\R$ be the unique positive real number satisfying $P(f,-s_0\phi)=0$. 

Then there exists a constant $D' = D'(f,\CC,d,\alpha,\phi) >0$ such that for each $\epsilon>0$, there exist constants $\delta_\epsilon \in (0, s_0)$, $\wt{b}_\epsilon\geq 2s_0 + 1$, and $\rho_\epsilon \in (0,1)$ with the following property:

For each $n\in\N$ and all $s\in\C$ satisfying $\abs{\Re(s) - s_0} < \delta_\epsilon$ and $\abs{\Im(s)} \geq \wt{b}_\epsilon$, we have
\begin{equation}  \label{eqOpHolderNorm}
\OpHnormD{\alpha}{\RRR_{\minus s\phi}^n} \leq D'  \abs{\Im(s)}^{1+\epsilon} \rho_\epsilon^n.
\end{equation}
\end{theorem}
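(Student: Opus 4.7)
The plan is to reduce the statement to a bound on the normalized split Ruelle operator $\RRR^N_{\wt{\minus s\phi}}$ and then upgrade the $L^2$-decay of Theorem~\ref{thmL2Shrinking} to a Hölder-norm decay via a Cauchy--Schwarz argument, the spectral gap of Theorem~\ref{thmSpectralGap}, and the basic inequality of Lemma~\ref{lmBasicIneq}. The reduction is immediate from (\ref{eqSplitRuelleTilde_NoTilde}), which gives
\[
\RR^{(n)}_{\minus s\phi, X^0, E}(u)(z) = u_{\minus\Re(s)\phi}(z)\, e^{nP(f,\minus\Re(s)\phi)}\, \RR^{(n)}_{\wt{\minus s\phi}, X^0, E}\bigl(u/u_{\minus\Re(s)\phi}\bigr)(z).
\]
By Lemma~\ref{lmBound_aPhi}, $u_{\minus\Re(s)\phi}$ is bounded above and below uniformly for $\Re(s)$ near $s_0$; since $P(f,\minus s_0\phi)=0$, continuity of the pressure lets me shrink $\delta_\epsilon$ so that $\abs{P(f,\minus\Re(s)\phi)}$ is negligible and $e^{nP(f,\minus\Re(s)\phi)}$ can be absorbed into a slightly enlarged $\rho_\epsilon$.

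The heart of the argument is the Dolgopyat-type step upgrading the $L^2$-bound to an $L^\infty$-bound. Decompose $N = n\iota + m + k$ with the three parts to be chosen as fixed proportions of $N$. For $(u_\b, u_\w)$ with $\NHnorm{\alpha}{\Im(s)}{u_\c}{(X^0_\c,d)}\leq 1$, set $F_\c \coloneqq \RR^{(n\iota)}_{\wt{\minus s\phi},X^0_\c,X^0_\b}(u_\b) + \RR^{(n\iota)}_{\wt{\minus s\phi},X^0_\c,X^0_\w}(u_\w)$, so that Theorem~\ref{thmL2Shrinking} yields $\int_{X^0_\c}\abs{F_\c}^2\,\mathrm{d}\mu_{\minus s_0\phi} \leq \rho^n$. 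Using $\RR_{\wt{\minus\Re(s)\phi}}(\mathbbm{1}_{S^2})=\mathbbm{1}_{S^2}$ from Lemma~\ref{lmRtildeNorm=1}, the Cauchy--Schwarz inequality applied to the finite sum defining the split Ruelle operator gives, for each $x\in X^0_\c$,
\[
\AbsBig{\RRR^m_{\wt{\minus s\phi}}(F_\b,F_\w)_\c(x)}^2 \leq \RRR^m_{\wt{\minus\Re(s)\phi}}\bigl(\abs{F_\b}^2,\abs{F_\w}^2\bigr)_\c(x).
\]
The spectral gap (Theorem~\ref{thmSpectralGap}) applied with potential $\minus\Re(s)\phi$ bounds the right-hand side by $\sum_{\c'}\int\abs{F_{\c'}}^2\,\mathrm{d}\mu_{\minus\Re(s)\phi} + 6\rho_1^m \max_{\c'}\Hnorm{\alpha}{\abs{F_{\c'}}^2}{(X^0_{\c'},d)}$, where by Lemma~\ref{lmSplitRuelleUnifBoundNormalizedNorm} (applied with $m=2$) the Hölder norm is at most $C_1\abs{\Im(s)}$. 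Using the uniform comparison of $\mathrm{d}\mu_{\minus\Re(s)\phi}$ with $\mathrm{d}\mu_{\minus s_0\phi}$ for $\Re(s)$ near $s_0$ (equivalently, the analogue of Theorem~\ref{thmL2Shrinking} for the measure $\mu_{\minus\Re(s)\phi}$, extracted from the Dolgopyat argument of Section~\ref{sctDolgopyat}), the integral is $\leq C_2\rho^n$, so
\[
\NormBig{\RRR^{n\iota+m}_{\wt{\minus s\phi}}(u_\b,u_\w)_\c}_{\CCC^0(X^0_\c)} \leq C_3\rho^{n/2} + C_4\abs{\Im(s)}^{1/2}\rho_1^{m/2}.
\]

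In the final step, one performs $k$ more iterations and invokes the basic inequality (\ref{eqBasicIneqN1}) of Lemma~\ref{lmBasicIneq}(ii) together with Lemma~\ref{lmSplitRuelleUnifBoundNormalizedNorm} to bound the Hölder seminorm of $\RRR^{n\iota+m}_{\wt{\minus s\phi}}(u_\b,u_\w)_\c$ by $C_5\abs{\Im(s)}$. Combining with the sup-norm bound yields
\[
\Hnorm{\alpha}{\RRR^N_{\wt{\minus s\phi}}(u_\b,u_\w)_\c}{(X^0_\c,d)} \leq C_6\biggl(\frac{\abs{\Im(s)}}{\Lambda^{\alpha k}} + \abs{\Im(s)}\rho^{n/2} + \abs{\Im(s)}^{3/2}\rho_1^{m/2}\biggr).
\]
Choosing $n, m, k$ as fixed proportions of $N$ and tuning the proportion of $m$ so that the term $\abs{\Im(s)}^{3/2}\rho_1^{m/2}$ decays as $\abs{\Im(s)}^{1+\epsilon}\rho_\epsilon^N$ once $N\geq C_\epsilon\log\abs{\Im(s)}$ gives the bound in that range (the cost being that $\rho_\epsilon\to 1$ as $\epsilon\to 0$). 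For $N<C_\epsilon\log\abs{\Im(s)}$ the trivial bound $\OpHnormD{\alpha}{\RRR^N_{\wt{\minus s\phi}}} \leq 4A_0\abs{\Im(s)}$ from Lemma~\ref{lmSplitRuelleUnifBoundNormalizedNorm} already implies the desired estimate once $\rho_\epsilon$ is chosen close enough to $1$, since $\abs{\Im(s)} \leq \abs{\Im(s)}^{1+\epsilon}\rho_\epsilon^N$ is automatic in this regime. The principal technical obstacle is the careful tuning of the three-stage decomposition that trims the $\abs{\Im(s)}^{3/2}$ arising from Cauchy--Schwarz down to $\abs{\Im(s)}^{1+\epsilon}$, together with the uniform control of the integral $\int\abs{F_\c}^2\,\mathrm{d}\mu_{\minus\Re(s)\phi}$ in terms of $\rho^n$ across the full parameter window $\abs{\Re(s)-s_0}<\delta_\epsilon$.
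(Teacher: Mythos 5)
Your decomposition strategy is genuinely different from the paper's. The paper fixes a single block length $m\iota_0 \sim \log\abs{\Im(s)}$ (chosen so that $\rho_1^{m\iota_0}\abs{\Im(s)}$ is small) and iterates that block $k$ times, writing $n = 2m\iota_0 k + r$; you instead split $N$ into three parts proportional to $N$ and tune the proportions together with $\rho_\epsilon$. Both strategies draw on the same inventory — the Cauchy--Schwarz upgrade, the $L^2$ decay (Theorem~\ref{thmL2Shrinking}), the spectral gap (Theorem~\ref{thmSpectralGap}), the operator-norm bounds (Lemma~\ref{lmSplitRuelleUnifBoundNormalizedNorm}), and the basic inequality (Lemma~\ref{lmBasicIneq}) — and your tuning to dispatch both the $N \gtrsim \log\abs{\Im(s)}$ and $N \lesssim \log\abs{\Im(s)}$ regimes is a workable alternative to the paper's bookkeeping.

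However, there is a genuine gap in the passage from the spectral gap to the $L^2$ estimate. The spectral gap (\ref{eqSpectralGapAll}), applied with potential $\minus\Re(s)\phi$, produces the integral $\sum_{\c'}\int_{X^0_{\c'}} \abs{F_{\c'}}^2\,\mathrm{d}\mu_{\minus\Re(s)\phi}$, whereas Theorem~\ref{thmL2Shrinking} controls $\int \abs{F_\c}^2\,\mathrm{d}\mu_{\minus s_0\phi}$ — a different measure. Your first suggested fix, a ``uniform comparison of $\mathrm{d}\mu_{\minus\Re(s)\phi}$ with $\mathrm{d}\mu_{\minus s_0\phi}$,'' is simply not available: for a H\"older potential $\phi$ that is not co-homologous to a constant (which is exactly the setting here, since strong non-integrability implies non-local integrability, Proposition~\ref{propSNI2NLI}, hence non-cohomology by Theorem~\ref{thmNLI}), the equilibrium states $\mu_{\minus a\phi}$ for distinct $a$ are mutually singular; there is no uniform Radon--Nikodym bound. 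Your second suggestion, redoing the whole Dolgopyat argument of Section~\ref{sctDolgopyat} to yield an analogue of Theorem~\ref{thmL2Shrinking} for $\mu_{\minus\Re(s)\phi}$, would work in principle but is a substantial re-derivation — the duality identity (Lemma~\ref{lmRDiscontTildeDual}) and the iteration in Proposition~\ref{propDolgopyatOperator}~(ii) are anchored to the single reference measure $\mu_{\minus s_0\phi}$, and porting the estimate uniformly over $a$ in a neighborhood of $s_0$ is not something one can ``extract'' without verifying each constant.

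The repair that actually works — and that is consistent with how the paper handles the analogous conversion at the end of its argument — is to perform the Cauchy--Schwarz step with the fixed reference operator $\RRR^m_{\wt{\minus s_0\phi}}$ rather than $\RRR^m_{\wt{\minus\Re(s)\phi}}$, paying a multiplicative cost $e^{m\Normbig{\wt{\minus\Re(s)\phi} - \wt{\minus s_0\phi}}_{\CCC^0(S^2)}}$ when passing between the two kernels. By Corollary~\ref{corUphiCountinuous} and the continuity of pressure, $\Normbig{\wt{\minus a\phi} - \wt{\minus s_0\phi}}_{\CCC^0(S^2)} \to 0$ as $a \to s_0$, so shrinking $\delta_\epsilon$ makes this cost $\leq e^{c m}$ with $c>0$ as small as you like; since $m$ is proportional to $N$ in your split, the factor is absorbed by the exponential decay once $c$ is chosen smaller than the relevant $-\log\rho_\epsilon$ margins. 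This repair is routine given the rest of your proposal, but as written the step does not go through.
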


\begin{proof}
Fix an arbitrary number $s=a+\I b\in\C$ with $a,b\in\R$ satisfying $\abs{a-s_0}\leq a_0$ and $\abs{b}\geq b_0$, and fix an arbitrary pair of complex-valued H\"{o}lder continuous functions $u_\b \in \Holder{\alpha} \bigl( \bigl(X^0_\b,d \bigr),\C \bigr)$ and $u_\w \in \Holder{\alpha} \bigl( \bigl(X^0_\w,d \bigr),\C \bigr)$ satisfying $\NHnorm{\alpha}{b}{u_\b}{(X^0_\b,d)} \leq 1$ and $\NHnorm{\alpha}{b}{u_\w}{(X^0_\w,d)} \leq 1$. Here $a_0 \in (0, s_0 ]$ and $b_0 \in [ 2 s_0 +1,+\infty)$ are constants from Theorem~\ref{thmL2Shrinking} depending only on $f$, $\CC$, $d$, $\alpha$, and $\phi$.

We choose $\iota_0\in\N$ to be the smallest integer satisfying $\frac{1}{2\iota_0} < \epsilon$, $\iota_0\geq 2$, and $\frac{\iota_0}{\iota}\in\N$, where $\iota\in\N$ is a constant from Theorem~\ref{thmL2Shrinking} depending only on $f$, $\CC$, $d$, $\alpha$, and $\phi$.

We also choose $m\in\N$ to be the smallest integer satisfying
\begin{equation}   \label{eqPfthmOpHolderNorm_m}
m \iota_0 \bigl(-\log \max\bigl\{ \rho^{\frac{\iota_0}{\iota}},  \, \rho_1, \, \Lambda^{-\alpha}  \bigr\}  \bigr)  
\geq 2\log\abs{b} \geq 0,
\end{equation}
where $\rho \in (0,1)$ is a constant from Theorem~\ref{thmL2Shrinking}, and 
$
\rho_1 \coloneqq \rho_1 \bigl(f,\CC,d,\alpha,  H \bigr)\in (0,1),
$
with $H \coloneqq \bigl\{ \wt{- t\phi} \,\big|\, t \in \R, \, \abs{t-s_0}\leq a_0 \bigr\}$ a bounded subset of $\Holder{\alpha}(S^2,d)$, is a constant from Theorem~\ref{thmSpectralGap} depending only on $f$, $\CC$, $d$, and $\alpha$ in our context here.

We first note that by (\ref{eqDefLc}), the Cauchy--Schwartz inequality, Lemma~\ref{lmRtildeNorm=1}, (\ref{eqSpectralGapAll}) in Theorem~\ref{thmSpectralGap}, Theorem~\ref{thmL2Shrinking}, and (\ref{eqSplitRuelleUnifBoundNormalizedNorm_m}) in Lemma~\ref{lmSplitRuelleUnifBoundNormalizedNorm}, we have for each $X^0\in\X^0$ and each $x\in X^0$,
\begin{align*}
     & \Biggl(  \sum\limits_{X^0_1\in \X^0}  \RR_{\wt{\minus a\phi}, X^0, X^0_1}^{(m\iota_0)}
         \Bigl( \AbsBig{ \RR_{\wt{\minus s\phi}, X^0_1, X^0_\b}^{(m\iota_0)} (u_\b) 
                        +\RR_{\wt{\minus s\phi}, X^0_1, X^0_\w}^{(m\iota_0)} (u_\w)}\Bigr)(x)   \Biggr)^2\\
\leq & \Biggl(   \sum\limits_{X^0_1\in\X^0}  \RR_{\wt{\minus a\phi}, X^0, X^0_1}^{(m\iota_0)} 
         \biggl( \AbsBig{ \RR_{\wt{\minus s\phi}, X^0_1, X^0_\b}^{(m\iota_0)} (u_\b) 
                        +\RR_{\wt{\minus s\phi}, X^0_1, X^0_\w}^{(m\iota_0)} (u_\w)}^2\biggr)(x)   \Biggr)
       \Biggl(   \sum\limits_{X^0_2\in\X^0}  \RR_{\wt{\minus a\phi}, X^0, X^0_2}^{(m\iota_0)} 
         \bigl(  \mathbbm{1}_{X^0_2} \bigr)(x)   \Biggr)\\
=    &     \sum\limits_{X^0_1\in\X^0}  \RR_{\wt{\minus a\phi}, X^0, X^0_1}^{(m\iota_0)} 
         \biggl( \AbsBig{ \RR_{\wt{\minus s\phi}, X^0_1, X^0_\b}^{(m\iota_0)} (u_\b) 
                        +\RR_{\wt{\minus s\phi}, X^0_1, X^0_\w}^{(m\iota_0)} (u_\w)}^2\biggr)(x)   \\
\leq & \sum\limits_{X^0_1\in\X^0}  \int_{X^0_1} \AbsBig{ 
              \RR_{\wt{\minus s\phi}, X^0_1, X^0_\b}^{(m\iota_0)} (u_\b) 
             +\RR_{\wt{\minus s\phi}, X^0_1, X^0_\w}^{(m\iota_0)} (u_\w)}^2  \,\mathrm{d}\mu_{\minus s_0 \phi} \\
     &   +  6 \rho_1^{m\iota_0} \max \biggl\{ \Hnormbigg{\alpha}{ \AbsBig{ 
              \RR_{\wt{\minus s\phi}, X^0_2, X^0_\b}^{(m\iota_0)} (u_\b) 
             +\RR_{\wt{\minus s\phi}, X^0_2, X^0_\w}^{(m\iota_0)} (u_\w)}^2 }{(X^0_2, d)}
        \,\bigg|\, X^0_2\in\X^0   \biggr\} \\
\leq & 2 \rho^{\frac{\iota_0}{\iota} m} + 42 A_0 \rho_1^{m\iota_0} \abs{b},
\end{align*}
where $A_0=A_0\bigl(f,\CC,d,\Hseminorm{\alpha,\, (S^2,d)}{\phi}, \alpha\bigr)>2$ is a constant from Lemma~\ref{lmBasicIneq} depending only on $f$, $\CC$, $d$, $\Hseminorm{\alpha,\, (S^2,d)}{\phi}$, and $\alpha$. Combining with (\ref{eqPfthmOpHolderNorm_m}) and the fact that $\iota_0\geq 2$ and $A_0>2$, we get
\begin{align}    \label{eqPfthmOpHolderNorm_CommonSup}
&               \Normbigg{  \sum\limits_{X^0_1\in \X^0}  \RR_{\wt{\minus a\phi}, X^0, X^0_1}^{(m\iota_0)}
                     \Bigl( \AbsBig{ \RR_{\wt{\minus s\phi}, X^0_1, X^0_\b}^{(m\iota_0)} (u_\b) 
                                             +\RR_{\wt{\minus s\phi}, X^0_1, X^0_\w}^{(m\iota_0)} (u_\w)}\Bigr) }_{\CCC^0(X^0)}\notag \\
&\qquad  \leq  \Bigl( 2\abs{b}^{-\frac{2}{\iota_0}}  + 42 A_0 \abs{b}^{-2+1} \Bigr)^{\frac12} 
                  \leq 7A_0 \abs{b}^{-\frac{1}{\iota_0}},                        
\end{align} 
for each $X^0\in\X^0$.

Thus by (\ref{eqDefProjections}), (\ref{eqSplitRuelleCoordinateFormula}), (\ref{eqLDiscontProperties_Split}), and (\ref{eqPfthmOpHolderNorm_CommonSup}), we get that for each $\c\in\{\b,\w\}$,
\begin{align}    \label{eqPfthmOpHolderNorm_Sup}
&            \NormBig{  \pi_\c \Bigl(  \RRR_{\wt{\minus s\phi}}^{2m\iota_0}  (u_\b,u_\w)  \Bigr) }_{\CCC^0(X^0_\c)} \notag\\
&\qquad =    \Normbigg{\sum\limits_{X^0_1\in \X^0}  \RR_{\wt{\minus s\phi}, X^0_\c, X^0_1}^{(m\iota_0)}
         \Bigl(  \RR_{\wt{\minus s\phi}, X^0_1, X^0_\b}^{(m\iota_0)} (u_\b) 
                        +\RR_{\wt{\minus s\phi}, X^0_1, X^0_\w}^{(m\iota_0)} (u_\w) \Bigr)}_{\CCC^0(X^0_\c)}  \notag \\  
&\qquad\leq \Normbigg{\sum\limits_{X^0_1\in \X^0}  \RR_{\wt{\minus a\phi}, X^0_\c, X^0_1}^{(m\iota_0)}
         \Bigl( \AbsBig{ \RR_{\wt{\minus s\phi}, X^0_1, X^0_\b}^{(m\iota_0)} (u_\b) 
                        +\RR_{\wt{\minus s\phi}, X^0_1, X^0_\w}^{(m\iota_0)} (u_\w)}\Bigr)}_{\CCC^0(X^0_\c)} \\
&\qquad\leq  7A_0 \abs{b}^{-\frac{1}{\iota_0}}.   \notag
\end{align}

By (\ref{eqDefProjections}), (\ref{eqSplitRuelleCoordinateFormula}), (\ref{eqBasicIneqN1}) in Lemma~\ref{lmBasicIneq}, Lemma~\ref{lmSplitRuelleUnifBoundNormalizedNorm}, (\ref{eqPfthmOpHolderNorm_CommonSup}), and (\ref{eqPfthmOpHolderNorm_m}), we have for each $\c\in\{\b,\w\}$,
\begin{align}   \label{eqPfthmOpHolderNorm_HolderSeminorm}
&            \frac{1}{\abs{b}}  \HseminormBig{\alpha,\, (X^0_\c,d)}{  \pi_\c  \Bigl( \RRR_{\wt{\minus s\phi}}^{2m\iota_0}  (u_\b,u_\w)  \Bigr) }   \notag\\
&\qquad =     \frac{1}{\abs{b}}  \Hseminormbigg{\alpha,\, (X^0_\c,d)}{ \sum\limits_{X^0_1\in\X^0}  \RR_{\wt{\minus s\phi},X^0_\c,X^0_1}^{(m\iota_0)}
             \Bigl( \RR_{\wt{\minus s\phi},X^0_1,X^0_\b}^{(m\iota_0)}(u_\b)
                   +\RR_{\wt{\minus s\phi},X^0_1,X^0_\w}^{(m\iota_0)}(u_\w)   \Bigr)  }  \notag \\
&\qquad\leq  \sum\limits_{X^0_1\in\X^0}   \frac{C_0}{\Lambda^{\alpha m \iota_0}} \NHnormBig{\alpha}{b}{\RR_{\wt{\minus s\phi},X^0_1,X^0_\b}^{(m \iota_0)} (u_\b)
+\RR_{\wt{\minus s\phi},X^0_1,X^0_\w}^{(m \iota_0)} (u_\w) }{(X^0_1,d)}   \\
&\qquad\quad   + \sum\limits_{X^0_1\in\X^0}    A_0\NormBig{ \RR_{\wt{\minus a\phi}, X^0_\c,X^0_1}^{(m \iota_0)}  \Bigl(   \AbsBig{ \RR_{\wt{\minus s\phi},X^0_1,X^0_\b}^{(m \iota_0)} (u_\b)
            +\RR_{\wt{\minus s\phi},X^0_1,X^0_\w}^{(m \iota_0)} (u_\w)}    \Bigr)}_{\CCC^0(X^0_\c)}   \notag\\
&\qquad\leq  8 A_0 \frac{C_0}{\Lambda^{\alpha m \iota_0}}   + A_0 \Bigl(7A_0 \abs{b}^{-\frac{1}{\iota_0}} \Bigr)   
                \leq   7A_0^2 \abs{b}^{-2} + 7 A_0^2 \abs{b}^{-\frac{1}{\iota_0}} 
                \leq  14A_0^2 \abs{b}^{-\frac{1}{\iota_0}}, \notag
\end{align}
where $C_0 > 1$ is a constant depending only on $f$, $\CC$, and $d$ from Lemma~\ref{lmMetricDistortion}, and $A_0 \geq C_0 >2$ (see Lemma~\ref{lmBasicIneq}).

Hence for each $n\in\N$, by choosing $k\in\N_0$ and $r\in\{0,1,\dots,2 m \iota_0 - 1\}$ with $n=2 m \iota_0 k +r$, we get from (\ref{eqPfthmOpHolderNorm_Sup}), (\ref{eqPfthmOpHolderNorm_HolderSeminorm}), Definition~\ref{defOpHNormDiscont}, and (\ref{eqSplitRuelleUnifBoundNormalizedNorm}) in Lemma~\ref{lmSplitRuelleUnifBoundNormalizedNorm} that if $\abs{b} \geq \wt{b}_\epsilon$, where
\begin{equation}    \label{eqDefwtB_eps}
\wt{b}_\epsilon \coloneqq \max \bigl\{  (21 A_0^2 )^{2\iota_0},  \,  2s_0 + 1 \bigr\} >1,
\end{equation}
then
\begin{align}    \label{eqSplitRuelleTildeOpHnormBound}
      \OpHnormD{\alpha}{ \RRR_{\wt{\minus s\phi}}^n}  
\leq  &  \abs{b} \NOpHnormD{\alpha}{b}{  \RRR_{\wt{\minus s\phi}}^{2 m \iota_0 k + r} } 
\leq     \abs{b} \biggl( \NOpHnormD{\alpha}{b}{  \RRR_{\wt{\minus s\phi}}^{2 m \iota_0} } \biggr)^k 
           \, \NOpHnormD{\alpha}{b}{  \RRR_{\wt{\minus s\phi}}^{r} }  \notag \\
\leq  & 4A_0 \abs{b} \Bigl(  7A_0 \abs{b}^{- \frac{1}{\iota_0}} +  14A_0^2 \abs{b}^{-\frac{1}{\iota_0}} \Bigr)^k  
\leq      4A_0  \abs{b}^{1-\frac{k}{2\iota_0}} \\
\leq  & 4A_0  \abs{b}^{1+ \frac{1}{2\iota_0} - \frac{2 m \iota_0 k + r}{2\iota_0}\frac{1}{2 m \iota_0} }  
\leq      4A_0  \abs{b}^{1+ \frac{1}{2\iota_0}} \abs{b}^{-\frac{n}{4 m\iota_0^2} } \notag \\
\leq  & 4A_0 \abs{b}^{1+\epsilon} \wt{b}_\epsilon^{-\frac{n}{4m \iota_0^2}}.  \notag
\end{align}

\smallskip

We now turn the upper bound for $\OpHnormD{\alpha}{ \RRR_{\wt{\minus s\phi}}^n}$ in (\ref{eqSplitRuelleTildeOpHnormBound}) into a bound for $\OpHnormD{\alpha}{ \RRR_{ \minus s\phi }^n}$.

Since $t\mapsto P(f,t\phi)$ is continuous in $t\in\R$ (see for example, \cite[Theorem~3.6.1]{PrU10}) and $P(f, - s_0 \phi)=0$, we can choose $\delta_\epsilon \in (0,  s_0 )$ sufficiently small so that if $a\in (s_0-\delta_\epsilon, s_0 + \delta_\epsilon)$, then
\begin{equation}  \label{eqPfthmOpHolderNorm_PressureBound}
\abs{P(f,-a\phi)} \leq \frac{1}{2m \iota_0} \log 21.
\end{equation}
By (\ref{eqSplitRuelleOpNormQuot}), (\ref{eqSplitRuelleTilde_NoTilde}), (\ref{eqBound_Uaphi_UpperLower}) in Lemma~\ref{lmBound_aPhi}, and Corollary~\ref{corProductInverseHolderNorm}, we get
\begin{align*}  
              \OpHnormD{\alpha}{\RRR_{\minus s \phi}^n}  
 =     & \sup \Biggl\{  \frac{   \Hnormbig{\alpha}{   \sum_{\c\in\{\b,\w\}} \RR_{ \minus s \phi, X^0, X^0_\c}^{(n)} (v_\c)     }{(X^0,d)}      }
                                           {  \max\bigl\{     \Hnorm{\alpha}{v_\c}{(X^0_\c,d)} \,\big|\, \c\in\{\b,\w\}  \bigr\}}   \Biggr\}  \\
\leq & e^{ n P(f, - a\phi) }     \Hnorm{\alpha}{u_{\minus a\phi}}{(S^2,d)}                          
            \sup \Biggl\{  \frac{   \Hnormbig{\alpha}{    \sum_{\c\in\{\b,\w\}}  \RR_{ \wt{\minus s \phi}, X^0, X^0_\c}^{(n)} \bigl( \frac{v_\c}{u_{\minus a\phi}} \bigr)   }{(X^0,d)}      }
                                           {  \max\bigl\{     \Hnorm{\alpha}{v_\c}{(X^0_\c,d)} \,\big|\, \c\in\{\b,\w\}  \bigr\}}   \Biggr\}  \\
\leq & e^{ n P(f, - a\phi) }     \Hnorm{\alpha}{u_{\minus a\phi}}{(S^2,d)}     \OpHnormD{\alpha}{\RRR_{\wt{\minus s \phi}}^n}  
            \sup \Biggl\{  \frac{ \max\bigl\{     \Hnormbig{\alpha}{ \frac{v_\c}{u_{\minus a\phi}}   }{(X^0_\c,d)} \,\big|\, \c\in\{\b,\w\}  \bigr\}   }
                                            { \max\bigl\{     \Hnorm{\alpha}{v_\c}{(X^0_\c,d)} \,\big|\, \c\in\{\b,\w\}  \bigr\}   }   \Biggr\}  \\    
\leq & e^{ n P(f, - a\phi) }     \Hnorm{\alpha}{u_{\minus a\phi}}{(S^2,d)}     \OpHnormD{\alpha}{\RRR_{\wt{\minus s \phi}}^n}  
                                                      \HnormBig{\alpha}{ \frac{ 1 }{u_{\minus a\phi}}   }{(S^2,d)}    \\                                              
\leq & e^{ n P(f, - a\phi) }     \Hnorm{\alpha}{u_{\minus a\phi}}{(S^2,d)}        \OpHnormD{\alpha}{\RRR_{\wt{\minus s \phi}}^n}
            \frac{1}{\exp(- 2 C_{15}) }    \bigl(   1+ \Hnorm{\alpha}{u_{\minus a\phi}}{(S^2,d)} \bigr)                  \\
\leq &  \OpHnormD{\alpha}{\RRR_{\wt{\minus s \phi}}^n}       e^{2 C_{15}}    
            \bigl(   1+ \Hnorm{\alpha}{u_{\minus a\phi}}{(S^2,d)} \bigr)^2    \exp( n P(f, - a\phi)  ),                        
\end{align*}
where the suprema are taken over all $v_\b\in\Holder{\alpha} \bigl( \bigl(X^0_\b,d \bigr),\C \bigr)$, $v_\w\in\Holder{\alpha} \bigl( \bigl(X^0_\w,d \bigr),\C \bigr)$, and $X^0\in \X^0$ with $\norm{v_\b}_{\CCC^0(X^0_\b)} \norm{v_\w}_{\CCC^0(X^0_\w)} \neq 0$. Here the constant $C_{15} = C_{15}(f,\CC,d,\alpha,T,K)$, with $T\coloneqq 2 s_0$ and $K\coloneqq \Hseminorm{\alpha,\, (S^2,d)}{\phi} > 0$, is defined in (\ref{eqConst_lmBound_aPhi1}) in Lemma~\ref{lmBound_aPhi} and depends only on $f$, $\CC$, $d$, $\alpha$, and $\Hseminorm{\alpha,\, (S^2,d)}{\phi}$ in our context.

Combining the above inequality with (\ref{eqSplitRuelleTildeOpHnormBound}), (\ref{eqDefwtB_eps}), (\ref{eqPfthmOpHolderNorm_PressureBound}), and (\ref{eqBound_Uaphi_Hnorm}) in Lemma~\ref{lmBound_aPhi}, we get that if $a\in (s_0 - \delta_\epsilon, s_0 + \delta_\epsilon)$ and $\abs{b} \geq \wt{b}_\epsilon$, then
\begin{equation*}
        \OpHnormD{\alpha}{\RRR_{\minus s\phi}^n} 
\leq   4A_0 \abs{b}^{1+\epsilon} (21 A_0^2)^{-\frac{n}{2m \iota_0}}  21^{\frac{n}{2m \iota_0}}  e^{2 C_{15}} 
      \bigl(   1+ \Hnorm{\alpha}{u_{\minus a\phi}}{(S^2,d)} \bigr)^2  
\leq D'   \abs{b}^{1+\epsilon}    \rho_\epsilon^n,
\end{equation*}
where $\rho_\epsilon \coloneqq A_0^{\frac{1}{m \iota_0}}  >1$ and
$
D' \coloneqq 4 A_0 e^{2 C_{15}}  \Bigl( 8 \frac{ s_0  \Hseminorm{\alpha,\, (S^2,d)}{\phi} C_0 }{1-\Lambda^{-\alpha}} L   +2 \Bigr)^2 \bigl(e^{2C_{15}}\bigr)^2>1.
$
Here $D'$ depends only on $f$, $\CC$, $d$, $\alpha$, and $\phi$.
\end{proof}

\subsection{Proof of Theorem~\ref{thmZetaAnalExt_SFT}}   \label{subsctProofthmZetaAnalExt_SFT}

Using Proposition~\ref{propTelescoping} and Theorem~\ref{thmOpHolderNorm}, we can get the following bound for the zeta function $\zeta_{\sigma_{A_{\ti}}, \, \minus \phi \circsmall \pi_{\ti} } $ (c.f.\ (\ref{eqDefZetaFn})).

\begin{prop}   \label{propHighFreq}
Let $f$, $\CC$, $d$, $\Lambda$, $\alpha$, $\phi$, $s_0$ satisfy the Assumptions. We assume in addition that $\phi$ satisfies the $\alpha$-strong non-integrability condition, and that $f(\CC) \subseteq \CC$ and no $1$-tile in $\X^1(f,\CC)$ joins opposite sides of $\CC$. Then for each $\epsilon>0$ there exist constants $\wt{C}_\epsilon > 0$ and $\wt{a}_\epsilon \in (0, s_0)$ such that
\begin{equation}   \label{eqLogZetaFnBdd}
     \Absbigg{  \sum\limits_{n=1}^{+\infty} \frac{1}{n}   Z_{\sigma_{A_{\ti}},\,\minus \phi \circsmall \pi_{\ti}}^{(n)} (s)  }
\leq  \wt{C}_\epsilon  \abs{ \Im(s) }^{2+\epsilon}
\end{equation}
for all $s\in\C$ with $\abs{\Re(s) - s_0}  < \wt{a}_\epsilon$ and $\abs{\Im(s)} \geq \wt{b}_\epsilon$, where $\wt{b}_\epsilon \geq 2s_0 + 1$ is a constant depending only on $f$, $\CC$, $d$, $\alpha$, $\phi$, and $\epsilon$ defined in Theorem~\ref{thmOpHolderNorm}.
\end{prop}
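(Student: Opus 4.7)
My plan is to derive Proposition~\ref{propHighFreq} by combining the telescoping estimate of Proposition~\ref{propTelescoping} with the exponential operator-norm decay of Theorem~\ref{thmOpHolderNorm}, after first choosing the auxiliary parameters so that every geometric factor lies strictly inside the unit disc.

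First I would fix $\epsilon>0$ and apply Theorem~\ref{thmOpHolderNorm} to obtain constants $D'>0$, $\delta_\epsilon\in(0,s_0)$, $\wt b_\epsilon\geq 2s_0+1$, and $\rho_\epsilon\in(0,1)$ giving $\OpHnormD{\alpha}{\RRR_{\minus s\phi}^n}\leq D'\abs{\Im(s)}^{1+\epsilon}\rho_\epsilon^n$. Because $P(f,-s_0\phi)=0$ and $t\mapsto P(f,-t\phi)$ is continuous, I would then select $\delta>0$ small and $\wt a_\epsilon\in(0,\delta_\epsilon)$ small enough that the key quantity $r(s)\coloneqq \Lambda^{-\alpha}\exp(\delta+P(f,-\Re(s)\phi))$ satisfies $r(s)\leq \theta_0$ for a fixed $\theta_0\in(\rho_\epsilon,1)$ whenever $\abs{\Re(s)-s_0}<\wt a_\epsilon$. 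Proposition~\ref{propTelescoping} with this $\delta$ then supplies a constant $D_\delta$ and the two key bounds (\ref{eqSumHnormSplitRuelleOn1}) and (\ref{eqTelescoping}) valid on the strip $\abs{\Re(s)-s_0}\leq s_0$, $\abs{\Im(s)}\geq 2s_0+1$.

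The second step is to bound $\abs{Z^{(n)}_{\sigma_{A_{\ti}},\,\minus\phi\circsmall\pi_{\ti}}(s)}$ for $n\geq 2$. For the ``main term'' in (\ref{eqTelescoping}), namely $\sum_{X^0}\sum_{X^1\subseteq X^0}\RR_{\minus s\phi,X^0,X^1}^{(n)}(\mathbbm{1}_{X^1})(x_{X^1})$, I would use that the pointwise value is dominated by the $\alpha$-H\"older norm on $X^0$, split $\RR^{(n)}=\RR^{(n-1)}\circ\RR^{(1)}$ via (\ref{eqLDiscontProperties_Split}), apply the operator-norm bound through (\ref{eqSplitRuelleOpNormQuot}) (taking the other coordinate to be zero), and then invoke (\ref{eqSumHnormSplitRuelleOn1}) with $k=1$. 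This yields
\begin{equation*}
\Absbigg{\sum_{X^0}\sum_{X^1\subseteq X^0}\RR_{\minus s\phi,X^0,X^1}^{(n)}(\mathbbm{1}_{X^1})(x_{X^1})}
\leq D'D_\delta \abs{\Im(s)}^{2+\epsilon}\, r(s)\,\rho_\epsilon^{n-1}.
\end{equation*}
For the error sum in (\ref{eqTelescoping}), I would separately treat the $m=n$ term (where $\RRR_{\minus s\phi}^0$ is the identity of norm $1$) and apply Theorem~\ref{thmOpHolderNorm} to every other $\OpHnormD{\alpha}{\RRR_{\minus s\phi}^{n-m}}$; the elementary estimate $\rho_\epsilon^{n-m}r(s)^m\leq \theta_0^n$ then turns the inner sum into something of the form $nD'\abs{\Im(s)}^{1+\epsilon}\theta_0^n + r(s)^n$. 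Combining both contributions gives $\abs{Z^{(n)}(s)}\leq C_1 n\theta_0^n \abs{\Im(s)}^{2+\epsilon}$ for a constant $C_1$ depending only on $f$, $\CC$, $d$, $\alpha$, $\phi$, and $\epsilon$.

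Finally, the $n=1$ term is uniformly bounded on the strip because $P_{1,\sigma_{A_{\ti}}}$ is finite and $\phi$ is bounded, so it can be absorbed into the constant $\wt C_\epsilon$. Summing $\sum_{n\geq 2}\frac{1}{n}\abs{Z^{(n)}(s)}\leq C_1\abs{\Im(s)}^{2+\epsilon}\sum_{n\geq 2}\theta_0^n$ then yields the desired inequality (\ref{eqLogZetaFnBdd}) with some $\wt C_\epsilon>0$. There is no serious obstacle here: the argument is essentially a careful accounting of how the two pre-established estimates interact, and the main thing to watch is that the choices of $\delta$, $\theta_0$, and $\wt a_\epsilon$ are made compatibly (so that both $r(s)<1$ and $\rho_\epsilon<1$ hold simultaneously on the strip) and that one correctly separates the $m=n$ boundary term in the telescoping sum from the terms where Theorem~\ref{thmOpHolderNorm} applies.
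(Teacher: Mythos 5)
Your proposal is correct and follows essentially the same route as the paper's own proof: both fix the geometric factor below a threshold $<1$ using continuity of $t\mapsto P(f,-t\phi)$, bound the main term by splitting $\RR^{(n)}=\RR^{(n-1)}\circ\RR^{(1)}$ and feeding (\ref{eqSumHnormSplitRuelleOn1}) with $k=1$ into (\ref{eqSplitRuelleOpNormQuot}), combine the error sum via Theorem~\ref{thmOpHolderNorm}, and sum a geometric series. The only cosmetic differences are that the paper takes the concrete choice $\delta=\tfrac13\log(\Lambda^\alpha)$ and $\wt\rho_\epsilon=\max\{\rho_\epsilon,\Lambda^{-\alpha/3}\}$ rather than your abstract $\theta_0$, and one need not ``take the other coordinate to be zero'' since both coordinates are $\RR^{(1)}_{\minus s\phi,X^0_\c,X^1}(\mathbbm{1}_{X^1})$, but neither affects the argument.
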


Recall $Z_{\sigma_{A_{\ti}},\,\minus \phi \circsmall \pi_{\ti}}^{(n)} (s)$ defined in (\ref{eqDefZn}).

\begin{proof}
Let $\delta \coloneqq \frac{1}{3} \log (\Lambda^\alpha) > 0.$ 

Since $t\mapsto P(f, -t\phi)$ is continuous on $\R$ (see for example, \cite[Theorem~3.6.1]{PrU10}), we fix $\wt{a}_\epsilon \in (0,\delta_\epsilon) \subseteq (0, s_0)$ such that $\abs{P(f, -t\phi)} < \frac{1}{3} \log(\Lambda^\alpha)$ for each $t\in\R$ with $\abs{t-s_0} < \wt{a}_\epsilon$, where $\delta_\epsilon \in (0, s_0)$ is a constant defined in Theorem~\ref{thmOpHolderNorm} depending only on $f$, $\CC$, $d$, $\alpha$, $\phi$, and $\epsilon$.

Fix an arbitrary point $x_{X^1} \in \inte (X^1)$ for each $X^1 \in \X^1$. By Lemma~\ref{lmLDiscontProperties}, Lemma~\ref{lmSplitRuelleCoordinateFormula}, and (\ref{eqSumHnormSplitRuelleOn1}) in Proposition~\ref{propTelescoping}, for each $n\geq 2$ and each $s\in \C$ with $\abs{ \Re(s) - s_0 } < \wt{a}_\epsilon$,  we have
\begin{align}   \label{eqPfpropHighFreq_SumIneq}
&                          \Absbigg{  \sum\limits_{X^0 \in \X^0}  \sum\limits_{\substack{X^1\in\X^1\\X^1 \subseteq X^0}}     \RR_{\minus s\phi,X^0,X^1}^{(n)} (\mathbbm{1}_{X^1}) (x_{X^1})  }  \notag \\
&\qquad   \leq    \sum\limits_{X^0 \in \X^0}  \sum\limits_{\substack{X^1\in\X^1\\X^1 \subseteq X^0}}    
                                 \Absbigg{   \sum\limits_{Y^0 \in \X^0}   \RR_{\minus s\phi,X^0,Y^0}^{(n-1)} \bigl( \RR_{\minus s\phi,Y^0,X^1}^{(1)}  (\mathbbm{1}_{X^1} ) \bigr) (x_{X^1})  } \\
&\qquad   \leq    \OpHnormD{\alpha}{\RRR_{\minus s \phi}^{n-1}}    
                              \sum\limits_{X^0 \in \X^0}  \sum\limits_{\substack{X^1\in\X^1\\X^1 \subseteq X^0}}  
                                       \max\limits_{Y^0\in\X^0} \Hnormbig{\alpha}{\RR_{\minus s\phi,Y^0,X^1}^{(1)} (\mathbbm{1}_{X^1})}{(Y^0,d)}         \notag      \\
&\qquad   \leq    \OpHnormD{\alpha}{\RRR_{\minus s \phi}^{n-1}}    D_\delta  \abs{\Im(s)}  \Lambda^{-\alpha} \exp( \delta + P(f, - \Re(s) \phi) ),                        \notag                            
\end{align}
where $D_\delta>0$ is a constant depending only on  $f$, $\CC$, $d$, $\alpha$, $\phi$, and $\delta$ from Proposition~\ref{propTelescoping}.

Hence by (\ref{eqDefZn}), Proposition~\ref{propTelescoping}, (\ref{eqPfpropHighFreq_SumIneq}), Theorem~\ref{thmOpHolderNorm}, and the choices of $\delta$ and $\wt{a}_\epsilon$ above, we get that for each $s\in\C$ with $\abs{ \Re(s) - s_0 } < \wt{a}_\epsilon$ and $\abs{ \Im(s) } \geq \wt{b}_\epsilon$, 
\begin{align*}
                       \sum\limits_{n=2}^{+\infty} \frac{1}{n} \AbsBig{  Z_{\sigma_{A_{\ti}},\,\minus \phi \circsmall \pi_{\ti}}^{(n)} (s)   }  
\leq & \sum\limits_{n=2}^{+\infty} \frac{1}{n} \biggl(                  
                                                             \Absbigg{               \sum\limits_{X^0 \in \X^0}  \sum\limits_{\substack{X^1\in\X^1\\X^1 \subseteq X^0}}     \RR_{\minus s\phi,X^0,X^1}^{(n)} (\mathbbm{1}_{X^1}) (x_{X^1})  }  \\
        &\qquad \quad    + \Absbigg{ Z_{\sigma_{A_{\ti}},\,\minus \phi \circsmall \pi_{\ti}}^{(n)} (s)
                                               - \sum\limits_{X^0 \in \X^0}  \sum\limits_{\substack{X^1\in\X^1\\X^1 \subseteq X^0}}     \RR_{\minus s\phi,X^0,X^1}^{(n)} (\mathbbm{1}_{X^1}) (x_{X^1})  }  \biggr) \\
\leq &  \sum\limits_{n=2}^{+\infty} \frac{1}{n} \biggl(     \OpHnormD{\alpha}{\RRR_{\minus s \phi}^{n-1}}    D_\delta \abs{ \Im(s) } \Lambda^{-\frac{\alpha}{3}} 
                                                                                                        +   D_\delta \abs{\Im(s)} \sum\limits_{m=2}^{n} \OpHnormD{\alpha}{\RRR_{\minus s\phi}^{n-m}} \Lambda^{-\frac{m\alpha}{3}}   \biggr)   \\
\leq & \abs{\Im(s)}^{2+\epsilon} \sum\limits_{n=2}^{+\infty} \frac{D'}{n}  D_\delta \sum\limits_{m=1}^{n} \rho_\epsilon^{n-m} \Lambda^{-\frac{m\alpha}{3}}     \\
\leq & D' D_\delta \abs{\Im(s)}^{2+\epsilon} \sum\limits_{n=2}^{+\infty}  \wt{\rho}_\epsilon^n 
                 \leq         \frac{    D' D_\delta  } { 1 -  \wt{\rho}_\epsilon }                 \abs{\Im(s)}^{2+\epsilon},                                                                                                                                                  
\end{align*}
where the constant $\wt{\rho}_\epsilon \coloneqq \max \bigl\{ \rho_\epsilon, \, \Lambda^{-\frac{1}{3}\alpha}  \bigr\} < 1$ depends only on $f$, $\CC$, $d$, $\alpha$, $\phi$, and $\epsilon$. Here constants $D'\in (0,s_0)$ and $\rho_\epsilon\in(0,1)$ are from Theorem~\ref{thmOpHolderNorm} depending only on $f$, $\CC$, $d$, $\alpha$, $\phi$, and $\epsilon$.

Therefore, by (\ref{eqDefZn}) and Proposition~\ref{propSFT}~(i), we have
\begin{equation*}
          \Absbigg{  \sum\limits_{n=1}^{+\infty} \frac{1}{n}   Z_{\sigma_{A_{\ti}},\,\minus \phi \circsmall \pi_{\ti}}^{(n)} (s)      }        
\leq  \AbsBig{  Z_{\sigma_{A_{\ti}},\,\minus \phi \circsmall \pi_{\ti}}^{(1)} (s)  }    +   \sum\limits_{n=2}^{+\infty} \frac{1}{n} \AbsBig{  Z_{\sigma_{A_{\ti}},\,\minus \phi \circsmall \pi_{\ti}}^{(n)} (s)   }    
\leq   \wt{C}_\epsilon  \abs{ \Im(s) }^{2+\epsilon}
\end{equation*}
for all $s\in\C$ with $\abs{\Re(s) - s_0}  < \wt{a}_\epsilon$ and $\abs{\Im(s)} \geq \wt{b}_\epsilon$, where the constant
\begin{equation*}
\wt{C}_\epsilon \coloneqq  D' D_\delta ( 1 -  \wt{\rho}_\epsilon )^{-1}    + 2 \deg f \exp \bigl(2 s_0 \norm{\phi}_{\CCC^0(S^2)} \bigr)
\end{equation*}
depends only on $f$, $\CC$, $d$, $\alpha$, $\phi$, and $\epsilon$.
\end{proof}

It follows immediately from the above proposition that $\zeta_{\sigma_{A_{\ti}}, \, \minus \phi \circsmall \pi_{\ti} } (s)$ has a non-vanishing holomorphic extension across the vertical line $\Re(s) = s_0$ for high frequency. In order to verify Theorem~\ref{thmZetaAnalExt_SFT}, we just need to establish the holomorphic extension for low frequency.


\begin{proof}[Proof of Theorem~\ref{thmZetaAnalExt_SFT}]
In this proof, for $s\in\C$ and $r\in\R$, $B(s,r)$ denotes the open ball in $\C$.

For an arbitrary number $t\in\R$, by Proposition~\ref{propZetaFnConv_s0}~(i), we have 
\begin{equation}  \label{eqPfthmZetaAnalExt_SFT_Unique0}
P (\sigma_{A_{\ti}}, - t \phi \circ \pi_{\ti} ) = 0 \quad \text{ if and only if } \quad   t = s_0.
\end{equation}

Fix an arbitrary number $\theta \in (0,1)$. By \cite[Theorem~4.5, Propositions~4.6, 4.7, and 4.8]{PP90} and the discussion preceding them in \cite{PP90}, the exponential of the topological pressure $\exp  (P( \sigma_{A_{\ti}}, \cdot ))$ as a function on $\Holder{1} \bigl( \Sigma_{A_{\ti}}^+, d_\theta \bigr)$ can be extended to a new function (still denoted by $\exp( P( \sigma_{A_{\ti}}, \cdot))$) with the following properties:
\begin{enumerate}
\smallskip
\item[(1)] The domain $\operatorname{dom} (\exp ( P ( \sigma_{A_{\ti}}, \cdot )))$ of $\exp (P ( \sigma_{A_{\ti}}, \cdot ))$ is an nonempty open subset of $\Holder{1} \bigl( \bigl( \Sigma_{A_{\ti}}^+, d_\theta \bigr), \C\bigr)$.

\smallskip
\item[(2)] The function $s \mapsto \exp( P  ( \sigma_{A_{\ti}}, - s \phi \circ \pi_{\ti}  ))$ is a holomorphic map from an open neighborhood $U \subseteq \C$ of $s$ to $\C$ if  $- s \phi \circ \pi_{\ti}  \in \operatorname{dom} (\exp (P ( \sigma_{A_{\ti}}, \cdot )))$ .

\smallskip
\item[(3)] If $\psi \in \operatorname{dom} ( \exp (P ( \sigma_{A_{\ti}}, \cdot )))$ and $\eta \coloneqq \psi + c + 2\pi\I M + u - u\circ \sigma_{A_{\ti}}$ for some $c\in\C$, $M \in \CCC \bigl( \Sigma_{A_{\ti}}^+, \Z \bigr)$, and $u\in \Holder{1} \bigl( \bigl( \Sigma_{A_{\ti}}^+, d_\theta \bigr), \C\bigr)$, then $\eta \in \operatorname{dom} (\exp ( P ( \sigma_{A_{\ti}}, \cdot )))$ and $\exp (P(  \sigma_{A_{\ti}}, \eta)) = e^c \exp (P(  \sigma_{A_{\ti}}, \psi) )$.
\end{enumerate}

\smallskip

We first show that $s_0$ is not an accumulation point of zeros of the function $s\mapsto 1-\exp ( P ( \sigma_{A_{\ti}}, -s \phi \circ \pi_{\ti} ))$. We argue by contradiction and assume otherwise. Then by Property~(2) above, $\exp ( P ( \sigma_{A_{\ti}}, -s \phi \circ \pi_{\ti} )) = 1$ for all $s$ in a neighborhood of $s_0$. This contradicts with (\ref{eqPfthmZetaAnalExt_SFT_Unique0}).

Thus by \cite[Theorem~5.5~(ii) and Theorem~5.6~(b), (c)]{PP90}, we can choose $\vartheta_0 >0$ small enough such that  $\zeta_{\sigma_{A_{\ti}}, \, \minus \phi \circsmall \pi_{\ti} } (s)$ has a non-vanishing holomorphic extension 
\begin{equation}  \label{eqPfthmZetaAnalExt_SFT_Extension}
\zeta_{\sigma_{A_{\ti}}, \, \minus \phi \circsmall \pi_{\ti} } (s) 
=   \frac{ \exp \biggl( \sum\limits_{n=1}^{+\infty}  \frac1n   \sum\limits_{ \underline{x}  \in P_{1,\sigma_{A_{\ti}}^n}}  \bigl(   \exp \bigl( -s S_n^{\sigma_{A_{\ti}}} (\phi \circ \pi_{\ti} )( \underline{x} ) \bigr) 
                                                                                                                                                                                            - \exp( n P( \sigma_{A_{\ti}}, - s \phi \circ \pi_{\ti} ) )  \bigr)    \biggr) }
             { 1 - \exp ( P( \sigma_{A_{\ti}}, - s \phi \circ \pi_{\ti} ) )}     
\end{equation}
to $B(s_0, \vartheta_0 ) \setminus \{s_0\}$, and $\zeta_{\sigma_{A_{\ti}}, \, \minus \phi \circsmall \pi_{\ti} } (s)$ has a pole at $s=s_0$. Moreover, the numerator on the right-hand side of (\ref{eqPfthmZetaAnalExt_SFT_Extension}) is a non-vanishing holomorphic function on $B(s_0, \vartheta_0 )$.

Next, we show that $\zeta_{\sigma_{A_{\ti}}, \, \minus \phi \circsmall \pi_{\ti} } (s)$ has a simple pole at $s=s_0$. It suffices to show that $1 - \exp ( P( \sigma_{A_{\ti}}, -s \phi \circ \pi_{\ti} ) )$ has a simple zero at $s=s_0$. Indeed, since $\phi$ is eventually positive, we fix $m\in\N$ such that $S_m^f \phi $ is strictly positive on $S^2$ (see Definition~\ref{defEventuallyPositive}). By Proposition~\ref{propZetaFnConv_s0}~(i), Theorem~\ref{thmEquilibriumState}~(ii), and the fact that the equilibrium state $\mu_{\minus t\phi}$ for $f$ and $-t\phi$ is an $f$-invariant probability measure (see Theorem~\ref{thmEquilibriumState}~(i) and Subsection~\ref{subsctThermodynFormalism}), we have for $t\in \R$,
\begin{align}   \label{eqPfthmZetaAnalExt_SFT_Simple0}
        \frac{\mathrm{d}}{\mathrm{d}t} ( 1 - \exp ( P( \sigma_{A_{\ti}}, -t \phi \circ \pi_{\ti} ) )   ) 
= &  \frac{\mathrm{d}}{\mathrm{d}t} \bigl( 1 - e^{ P(f, -t \phi  )  } \bigr)    
=     - e^{ P(f, -t \phi  )  }   \frac{\mathrm{d}}{\mathrm{d}t} P(f,- t\phi)  \notag \\
= &e^{ P(f, -t \phi  )  } \int \!\phi \,\mathrm{d}\mu_{\minus t\phi} 
=  \frac{e^{ P(f, -t \phi  )  }}{m} \int \! S_m^f \phi \,\mathrm{d}\mu_{\minus t\phi} 
> 0.
\end{align}  
Hence by (\ref{eqPfthmZetaAnalExt_SFT_Simple0}) and Property~(2) above, we get that $\zeta_{\sigma_{A_{\ti}}, \, \minus \phi \circsmall \pi_{\ti} } (s)$ has a simple pole at $s=s_0$.

We now show that for each $b\in \R \setminus \{ 0 \}$, there exists $\vartheta_b>0$ such that  $\zeta_{\sigma_{A_{\ti}}, \, \minus \phi \circsmall \pi_{\ti} } (s)$ has a non-vanishing holomorphic extension to $B (s_0+ \I b, \vartheta_b)$.

By \cite[Theorem~5.5~(ii) and Theorem~5.6]{PP90}, and the fact that $\operatorname{dom} ( \exp ( P ( \sigma_{A_{\ti}}, \cdot )))$ is open and $\exp ( P( \sigma_{A_{\ti}}, \cdot ))$ is continuous on $\operatorname{dom} (\exp (P ( \sigma_{A_{\ti}}, \cdot )))$ (see Properties~(2) and (3) above), we get that for each $b\in \bigl[ - \wt{b}_\epsilon, \wt{b}_\epsilon \bigr] \setminus \{ 0 \}$, we can always choose  $\vartheta_b>0$ such that $\zeta_{\sigma_{A_{\ti}}, \, \minus \phi \circsmall \pi_{\ti} } (s)$ has a non-vanishing holomorphic extension to $B (s_0+ \I b, \vartheta_b)$ unless the following two conditions are both satisfied:
\begin{enumerate}
\smallskip
\item[(i)] $- (s_0 + \I b) \phi \circ \pi_{\ti} = - s_0 \phi \circ \pi_{\ti} + \I c + 2\pi\I M + u - u\circ \sigma_{A_{\ti}} \in \operatorname{dom} ( \exp ( P ( \sigma_{A_{\ti}}, \cdot )))$ for some $c\in \C$, $M \in \CCC \bigl(\Sigma_{A_{\ti}}^+, \Z \bigr)$ and  $u\in \Holder{1} \bigl( \bigl( \Sigma_{A_{\ti}}^+, d_\theta \bigr), \C \bigr)$.

\smallskip
\item[(ii)] $1- \exp ( P( \sigma_{A_{\ti}}, - (s_0 + \I b) \phi \circ \pi_{\ti} ) ) = 0$.
\end{enumerate}

We will show that conditions~(i) and (ii) cannot be both satisfied. We argue by contradiction and assume that conditions~(i) and (ii) are both satisfied. Then by Property~(3) above, $c \equiv 0 \pmod{2\pi}$. Thus by taking the imaginary part of both sides of the identity in condition~(i), we get that $\phi\circ \pi_{\ti} = K M + \tau - \tau \circ \sigma_{A_{\ti}}$ for some $K\in\R$, $M \in \CCC \bigl( \Sigma_{A_{\ti}}^+ , \Z \bigr)$, and $\tau \in \Holder{1} \bigl( \bigl( \Sigma_{A_{\ti}}^+, d_\theta \bigr), \C \bigr)$. Then by Theorem~\ref{thmNLI}, $\phi$ is co-homologous to a constant in $\CCC( S^2, \R)$, a contradiction, establishing statement~(i) in Theorem~\ref{thmZetaAnalExt_SFT}.

To verify statement~(ii) in Theorem~\ref{thmZetaAnalExt_SFT}, we assume in addition that $\phi$ satisfies the $\alpha$-strong non-integrability condition.

Fix an arbitrary $\epsilon > 0$. Let $\wt{C}_\epsilon > 0$ and $\wt{a}_\epsilon \in (0,s_0)$ be constants from Proposition~\ref{propHighFreq}, and $\wt{b}_\epsilon \geq 2 s_0 + 1$ be a constant from Theorem~\ref{thmOpHolderNorm}, all of which depend only on $f$, $\CC$, $d$, $\alpha$, $\phi$, and $\epsilon$. The inequality (\ref{eqZetaBound_SFT}) follows immediately from (\ref{eqLogZetaFnBdd}) in Proposition~\ref{propHighFreq}.

Therefore, by the compactness of $\bigl[ - \wt{b}_\epsilon, \wt{b}_\epsilon \bigr]$, we can choose $\wt{\epsilon}_0 \in (0, \wt{a}_\epsilon) \subseteq(0, s_0)$ small enough such that $\zeta_{\sigma_{A_{\ti}}, \, \minus \phi \circsmall \pi_{\ti} } (s)$ extends to a non-vanishing holomorphic function on the closed half-plane $\{s\in \C \,|\, \Re(s) \geq s_0 - \wt{\epsilon}_0 \}$ except for a simple pole at $s= s_0$.
\end{proof}

\subsection{Proof of Theorem~\ref{thmLogDerivative}}    \label{subsctProofthmLogDerivative}
Following the ideas from \cite{PoSh98}, we convert the bounds of the zeta function for an expanding Thurston map from Theorem~\ref{thmZetaAnalExt_InvC} to a bound of its logarithmic derivative.

We first record a standard result from complex analysis (see \cite[Theorem~4.2]{EE85}) as in \cite[Section~2]{PoSh98}.

\begin{lemma}    \label{lmLogDerivFromEE}
Given $z\in\C$, $R>0$, and $\delta>0$. Let $F\: \Delta \rightarrow \C$ is a holomorphic function on the closed disk $\Delta\coloneqq \bigl\{ s\in\C \,\big|\, \abs{s-z} \leq R(1+\delta)^3 \bigr\}$. Assume that $F$ satisfies the following two conditions:
\begin{enumerate}
\smallskip
\item[(i)] $F(s)$ has no zeros on the subset 
\begin{equation*}
\bigl\{ s\in \C \,\big|\,  \abs{s-z} \leq R (1+\delta)^2, \Re(s) > \Re(z) - R(1+\delta) \bigr\} \subseteq \Delta.
\end{equation*}
\item[(ii)] There exists a constant $U\geq 0$ depending only on $z$, $R$, $\delta$, and $F$ such that
\begin{equation*}
\log \abs{F(s)} \leq U + \log \abs{F(z)}
\end{equation*}
for all $s\in\Delta$ with $\abs{s-z} \leq R (1+\delta)^3$.
\end{enumerate}
\smallskip
Then for each $s\in\Delta$ with $\abs{s-z} \leq R$, we have
\begin{equation*}
           \Absbigg{ \frac{ F'(s) }{ F(s) } }
\leq  \frac{ 2 + \delta }{ \delta }   \biggl(  \Absbigg{ \frac{ F'(z) }{ F(z) } }   +  \frac{ \bigl(2 + (1+\delta)^{-2} \bigr) (1+\delta) }{  R \delta^2 } U  \biggr).
\end{equation*}
\end{lemma}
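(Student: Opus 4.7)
The strategy is the classical one for controlling the logarithmic derivative of a holomorphic function in terms of its modulus: factor $F$ into a finite ``zero-capturing'' piece and a zero-free piece, then estimate the logarithmic derivative of each piece separately using Jensen's formula and the Borel--Carath\'eodory inequality. The only subtlety is tracking the constants through the nested radii $R$, $R(1+\delta)$, $R(1+\delta)^2$, and $R(1+\delta)^3$ so that the claimed explicit bound emerges.

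First, I would exploit hypothesis~(i) geometrically: every zero $w_k$ of $F$ lying in the disk $\{\abs{s-z}\leq R(1+\delta)^2\}$ must satisfy $\Re(w_k) \leq \Re(z) - R(1+\delta)$, which forces $\abs{w_k - z} \geq R(1+\delta)$. Combined with Jensen's formula applied to the normalized function $F(s)/F(z)$ on the disk $\Delta$, the upper bound $\log\abs{F(s)} \leq U + \log\abs{F(z)}$ from hypothesis~(ii) yields a count $N \leq c_1(\delta)\, U$ of zeros (with multiplicity) inside the intermediate disk $\{\abs{s-z}\leq R(1+\delta)^2\}$, for an explicit $c_1(\delta)$ depending only on $\delta$.

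Second, factor $F(s) = P(s)\, G(s)$, where
\[
P(s) = \prod_{k=1}^{N} \frac{R^2(1+\delta)^4 - \overline{(w_k-z)}(s-z)}{R(1+\delta)^2 (s-w_k)}
\]
(a Blaschke-type product normalized so that $\abs{P}=1$ on $\{\abs{s-z} = R(1+\delta)^2\}$) and $G$ is holomorphic and non-vanishing on $\{\abs{s-z} \leq R(1+\delta)^2\}$. Then $\log\abs{G(s)}$ inherits a bound of the form $U + O(N\log(1+\delta))$ on the boundary, and by the maximum principle the same bound holds throughout the disk. On the smaller disk $\{\abs{s-z}\leq R\}$, the distance estimate $\abs{s-w_k}\geq R\delta$ gives $\abs{P'(s)/P(s)} = O(N/(R\delta)) = O(U/(R\delta))$, with the implicit constant an explicit function of $\delta$.

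Third, for the zero-free factor $G$, the function $h(s) \coloneqq \log G(s) - \log G(z)$ (chosen with $h(z)=0$) is holomorphic on $\{\abs{s-z}\leq R(1+\delta)^2\}$ with $\Re(h) \leq U + O(N\log(1+\delta))$ there. The Borel--Carath\'eodory inequality on the nested disks of radii $R(1+\delta)$ and $R(1+\delta)^2$ then bounds $\abs{h(s) - h(z)}$, and Cauchy's integral formula applied on a circle of radius $R\delta$ around any $s$ with $\abs{s-z}\leq R$ converts this into a bound on $\abs{G'(s)/G(s) - G'(z)/G(z)}$. Summing the $P'/P$ and $G'/G$ contributions at $s$ and $z$ and collecting the prefactors recovers the claimed inequality.

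The main obstacle is bookkeeping the constants so that the precise prefactor $\frac{2+\delta}{\delta}$ and the explicit coefficient $\frac{(2+(1+\delta)^{-2})(1+\delta)}{R\delta^2}$ come out correctly; a cleaner route is to apply the Borel--Carath\'eodory inequality directly to $\log F$ (after factoring out all zeros) on a pair of concentric disks whose ratio is exactly $(1+\delta)$, which produces the factor $\frac{2+\delta}{\delta}$ automatically. The rest of the argument is then purely arithmetic bookkeeping on the three geometric scales $R$, $R(1+\delta)$, $R(1+\delta)^2$.
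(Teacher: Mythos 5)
The paper does not prove this lemma: it is recorded as a known result, cited from \cite[Theorem~4.2]{EE85} following its use in \cite[Section~2]{PoSh98}, so there is no in-house proof to compare against. Your outline is the standard route for such explicit logarithmic-derivative bounds --- hypothesis~(i) forces any zero $w_k$ in $\{\abs{s-z}\leq R(1+\delta)^2\}$ to satisfy $\abs{w_k-z}\geq R(1+\delta)$ and hence $\abs{s-w_k}\geq R\delta$ for $\abs{s-z}\leq R$; Jensen's formula applied on the outer disk together with hypothesis~(ii) caps the number of such zeros by a multiple of $U/\log(1+\delta)$; a Blaschke factorization $F=PG$ then isolates a zero-free part $G$ to which Borel--Carath\'eodory on a pair of concentric disks of ratio $1+\delta$ applies, the coefficient $\frac{R+r}{R-r}=\frac{2+\delta}{\delta}$ in that inequality being what produces the outer prefactor in the statement. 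The sketch is correct in structure; what remains is the arithmetic needed to match the explicit coefficient $\frac{(2+(1+\delta)^{-2})(1+\delta)}{R\delta^2}$, which, as you note, is pure bookkeeping across the nested radii and is handled in the cited reference.
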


We will also need a version of the well-known Phragm\'en--Lindel\"of theorem recorded below. See \cite[Section~5.65]{Ti39} for the statement and proof of this theorem.

\begin{theorem}[The Phragm\'en--Lindel\"of Theorem]    \label{thmPhragmenLindelof}
Given real numbers $\delta_1<\delta_2$. Let $h(s)$ be a holomorphic function on the strip $\{s\in\C \,|\, \delta_1\leq \Re(s) \leq \delta_2 \}$. Assume that the following conditions are satisfied:
\begin{enumerate}
\smallskip
\item[(i)] For each $\sigma > 0$, there exist real numbers $C_\sigma >0$ and $T_\sigma >0$ such that
\begin{equation*}
\abs{ h(\delta + \I t) } \leq C_{\sigma} e^{\sigma\abs{t}}
\end{equation*}
for all $\delta, t\in\R$ with $\delta_1 \leq \delta \leq \delta_2$ and $\abs{t} \geq T_\sigma$.

\smallskip
\item[(ii)] There exist real numbers $C_0>0$, $T_0 >0$, and $k_1,k_2\in \R$ such that
\begin{equation*}
\abs{ h(\delta_1 + \I t) } \leq C_0 \abs{t}^{k_1}       \qquad \text{and} \qquad 
\abs{ h(\delta_2 + \I t) } \leq C_0 \abs{t}^{k_2} 
\end{equation*}
for all $t\in\R$ with $\abs{t} \geq T_0$.
\end{enumerate}
\smallskip
Then there exist real numbers $D>0$ and $T>0$ such that
\begin{equation*}
           \abs{ h(\delta + \I t) }   \leq C\abs{t}^{k(\delta)}
\end{equation*}
for all $\delta, t\in\R$ with $\delta_1 \leq \delta \leq \delta_2$ and $\abs{t} \geq T$, where $k(\delta)$ is the linear function of $\delta$ which takes values $k_1$, $k_2$ for $\delta=\delta_1$, $\delta_2$, respectively.
\end{theorem}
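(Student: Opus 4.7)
The plan is to follow the classical Phragm\'en--Lindel\"of strategy: multiply $h$ by an auxiliary holomorphic normalizing factor that, on each boundary line, cancels the polynomial growth, producing a function that is bounded on the boundary and of sub-exponential growth inside; then suppress the sub-exponential growth using a second convexity-type multiplier to apply the maximum modulus principle. The output bound is then translated back to $h$.

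First, I would introduce the affine (holomorphic) interpolant $k(s) \coloneqq k_1 + \frac{k_2 - k_1}{\delta_2 - \delta_1}(s - \delta_1)$, which agrees with the linear $k(\delta)$ on each vertical line. Fix a branch of $\log(-\I s)$ on the open upper half-plane $\{\Im s > 0\}$, define $G_+(s) \coloneqq h(s)(-\I s)^{-k(s)}$ there, and an analogous $G_-(s) \coloneqq h(s)(\I s)^{-k(s)}$ on $\{\Im s < 0\}$. By symmetry, I will only describe the argument on $\{\Im s \geq T\}$ for large $T$; the lower half is identical. For $t \geq T_0$ large, $\abs{(-\I(\delta_j + \I t))^{-k_j}} = \abs{t - \I\delta_j}^{-k_j}$ is comparable to $t^{-k_j}$, so condition~(ii) yields $\abs{G_+(\delta_j + \I t)} \leq C'_0$ uniformly on the two boundary rays ($j = 1,2$). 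Condition~(i) and the bound $\abs{(-\I s)^{-k(s)}} \leq e^{C\abs{s}}$ in the strip, together with $\abs{k(s)}$ bounded there, imply that $G_+$ still has sub-exponential growth of type $e^{\sigma\abs{\Im s}}$ for every $\sigma > 0$ on the strip.

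Next, I would apply the standard Phragm\'en--Lindel\"of argument to $G_+$ in the truncated strip $R_T \coloneqq \{s \in \C \,|\, \delta_1 \leq \Re(s) \leq \delta_2,\, \Im(s) \geq T\}$. For each $\eta \in (0,1)$ and $\varepsilon > 0$, multiply by the convexity factor $e^{-\varepsilon(-\I s)^{1+\eta}}$ (with a fixed branch on the upper half-plane): this factor has modulus $\exp(-\varepsilon\abs{s}^{1+\eta}\cos((1+\eta)\theta))$ where $\theta = \arg(-\I s) \in (-\pi/2 + \beta, \pi/2 - \beta)$ on $R_T$ for some small $\beta > 0$ depending on $\delta_1, \delta_2, T$, so $\cos((1+\eta)\theta)$ stays uniformly positive for $\eta$ small. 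This dominates the sub-exponential growth of $G_+$ and forces the modified function to tend to zero as $\Im(s) \to +\infty$ in the strip. The maximum modulus principle applied to a large rectangle and letting $\varepsilon \to 0^+$ then gives $\abs{G_+(s)} \leq C'_0$ throughout $R_T$. Translating back, $\abs{h(s)} \leq C'_0\abs{(-\I s)^{k(s)}} \leq C\abs{t}^{k(\delta)}$ for $s = \delta + \I t$ in the strip with $t$ sufficiently large, as required.

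The main technical obstacle is the bookkeeping around the branch of $(-\I s)^{-k(s)}$ and the verification that the convexity multiplier $e^{-\varepsilon(-\I s)^{1+\eta}}$ actually dominates the hypothesized sub-exponential growth uniformly on the entire truncated strip; this requires choosing $\eta$ small enough (depending on $\delta_1, \delta_2$) so that the argument of $-\I s$ stays strictly inside $(-\pi/2, \pi/2)$ and $\cos((1+\eta)\theta)$ has a positive lower bound. Once this is set up, the rest is the standard maximum principle plus a limit, and the lower half-plane case ($t \leq -T$) follows verbatim by replacing $-\I$ with $\I$.
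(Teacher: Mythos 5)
The paper does not prove Theorem~\ref{thmPhragmenLindelof}; it is stated as a standard result and cited to \cite[Section~5.65]{Ti39}, so there is no proof in the paper to compare against. Your blind proof is a correct self-contained argument along the classical lines (and essentially the one found in Titchmarsh): normalize by the holomorphic factor $(-\I s)^{-k(s)}$ where $k(s)$ is the affine interpolant, check that this kills the polynomial growth on the two boundary rays while introducing only polynomial (and in particular sub-exponential) error in the interior, then apply the Phragm\'en--Lindel\"of maximum principle on the truncated half-strip with a convexity multiplier $e^{-\varepsilon(-\I s)^{1+\eta}}$ and let $\varepsilon\to 0^+$.

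Two small points worth making explicit when you write this up cleanly. First, since $-\I s = t - \I\delta$ with $\delta$ ranging over the fixed compact interval $[\delta_1,\delta_2]$ and $t\geq T$, the image of the truncated strip under $s\mapsto -\I s$ lies in a sector about the positive real axis of arbitrarily small opening once $T$ is large, so in fact \emph{any} $\eta\in(0,1)$ (even $\eta=1$) works; the constraint that $\eta$ be small is not really active here, unlike in the half-plane version of the theorem. Second, the crucial cancellation that makes $\abs{(-\I s)^{-k(s)}}\asymp t^{-k(\delta)}$ (rather than merely $\leq e^{C\abs{s}}$) comes from the fact that $\Im k(s)$ grows linearly in $t$ while $\arg(-\I s)=O(1/t)$, so the cross term $t\cdot\arg(-\I s)$ stays bounded; it is worth recording this computation, since it is exactly what gives the clean final bound $\abs{h(\delta+\I t)}\leq C\abs{t}^{k(\delta)}$ rather than one with an extra logarithmic loss. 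Also, the maximum principle is applied on a finite rectangle whose boundary has four sides, not three; the bottom side $\Im(s)=T$ contributes a constant (by continuity on a compact set), and the top side is pushed off to infinity where the convexity factor dominates --- you gesture at this but it should be stated.
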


Assuming Theorem~\ref{thmZetaAnalExt_InvC}, we establish Theorem~\ref{thmLogDerivative} as follows.

\begin{proof}[Proof of Theorem~\ref{thmLogDerivative}]
We choose $N_f\in\N$ as in Remark~\ref{rmNf}. Note that $P \bigl( f^i, - s_0 S_i^f \phi \bigr) = i P(f, - s_0 \phi) = 0$ for each $i\in\N$ (see for example, \cite[Theorem~9.8]{Wal82}). We observe that by Lemma~\ref{lmCexistsL}, it suffices to prove the case $n=N_f = 1$. In this case, $F=f$, $\Phi=\phi$, and there exists an Jordan curve $\CC\subseteq S^2$ satisfying $f(\CC)\subseteq \CC$, $\post f\subseteq \CC$, and no $1$-tile in $\X^1(f,\CC)$ joins opposite sides of $\CC$.

Let  $C_\epsilon$, $a_\epsilon\in(0,s_0)$, and $b_\epsilon\geq 2s_0 +1$ be constants from Theorem~\ref{thmZetaAnalExt_InvC} depending only on $f$, $\CC$, $d$, $\alpha$, $\phi$, and $\epsilon$. We fix $\epsilon\coloneqq 1$ throughout this proof.

Define $R\coloneqq \frac{ a_\epsilon }{ 3 }$,  $\beta\coloneqq b_\epsilon + \frac{ a_\epsilon }{2}$, and $\delta \coloneqq  \bigl(\frac{3}{2} \bigr)^{\frac{1}{3}} - 1$. Note that $R(1+\delta)^3 = \frac{ a_\epsilon }{2}$.

Fix an arbitrary $z\in \C$ with $\Re(z) = s_0 + \frac{a_\epsilon}{4}$ and $\abs{ \Im(z) } \geq \beta$. The closed disk 
\begin{equation*}
\Delta \coloneqq \bigl\{ s\in\C \,\big|\, \abs{s-z} \leq R(1+\delta)^3 \bigr\}  =  \Bigl\{ s\in\C \,\Big|\, \abs{s-z} \leq \frac{ a_\epsilon }{ 2 } \Bigr\}
\end{equation*}
is a subset of $\{ s\in\C \,|\, \abs{ \Re(s) - s_0 } < a_\epsilon, \, \abs{ \Im(s) } \geq b_\epsilon \}$. Thus by Theorem~\ref{thmZetaAnalExt_InvC}, inequality~(\ref{eqZetaBound}) holds for all $s\in\Delta$, and the zeta function $\zeta_{f,\,\minus\phi}$ has no zeros in $\Delta$.

For each $s\in \Delta$, by (\ref{eqZetaBound}) in Theorem~\ref{thmZetaAnalExt_InvC} and the fact that $\abs{ \Im(z) } \geq \beta = b_\epsilon + \frac{a_\epsilon}{2}$,
\begin{equation*}
           \Absbig{  \log \Absbig{ \zeta_{f,\,\minus\phi} (s) } -  \log \Absbig{ \zeta_{f,\,\minus\phi} (z) }  } 
 \leq  2 C_\epsilon \Bigl( \abs{ \Im(z) } + \frac{a_\epsilon}{2}  \Bigr)^3
\leq 2^4 C_\epsilon \abs{ \Im(z) }^3
\eqqcolon U.
\end{equation*}

\smallskip

\emph{Claim.} For each $a\in\R$ with $a> s_0$, there exists a real number $\mathcal{K}(a)>0$ depending only on $f$, $\CC$, $d$, $\phi$, and $a$ such that
\begin{equation*}
\Absbigg{  \frac{ \zeta'_{f,\,\minus \phi} ( a+ \I t ) } {    \zeta_{f,\,\minus \phi} ( a+ \I t ) } }     \leq \mathcal{K}(a)
\end{equation*}
for all $t\in\R$.

\smallskip

To establish the claim, we first fix an arbitrary $a\in \R$ with $a> s_0$. By Corollary~\ref{corS0unique}, the topological pressure $P( f, -a \phi) < 0$. It follows from Proposition~\ref{propTopPressureDefPeriodicPts} that there exist numbers $N_a\in\N$ and $\tau_a \in (0,1)$ such that for each integer $n\in\N$ with $n\geq N_a$,
\begin{equation*}
\sum\limits_{ x \in P_{1,f^n} } \exp ( - a S_n\phi (x) ) \leq \tau_a^n.
\end{equation*}
Since the zeta function $\zeta_{f,\,\minus\phi}$ converges uniformly and absolutely to a non-vanishing holomorphic function on $\bigl\{ s\in\C \,\big|\, \Re(s) \geq \frac{ a+ s_0}{2}  \bigr\}$ (see Proposition~\ref{propZetaFnConv_s0}), we get from (\ref{eqDefZetaFn}), Theorem~\ref{thmETMBasicProperties}~(ii), and (\ref{eqDegreeProduct}) that
\begin{align*}
           \Absbigg{  \frac{ \zeta'_{f,\,\minus \phi} ( a+ \I t ) } {    \zeta_{f,\,\minus \phi} ( a+ \I t ) } }   
&  =    \Absbigg{ \sum\limits_{n=1}^{+\infty}     \frac{1}{n}    \sum\limits_{ x \in P_{1,f^n} }  ( S_n\phi (x)  )    \exp ( - (a + \I t) S_n\phi (x) )  }  \\
&\leq \norm{\phi}_{\CCC^0(S^2)}    \sum\limits_{n=1}^{+\infty}     \sum\limits_{ x \in P_{1,f^n} }  \exp ( - a S_n\phi (x) )  \\
&\leq \norm{\phi}_{\CCC^0(S^2)}   \biggl(    \sum\limits_{n=N_a + 1}^{ + \infty }  \tau_a^n    +     \sum\limits_{n=1}^{N_a} \card P_{1,f^n}     \biggr)  \\
&\leq  \mathcal{K}(a),
\end{align*}
for all $t\in\R$, where $\mathcal{K}(a) \coloneqq \norm{\phi}_{\CCC^0(S^2)}   \bigl(    \frac{1}{1-\tau_a} + N_a + \sum_{n=1}^{N_a} (\deg f)^n   \bigr)$ is a constant depending only on $f$, $\CC$, $d$, $\phi$, and $a$. This establishes the claim.

\smallskip

Hence by Lemma~\ref{lmLogDerivFromEE}, the claim with $a\coloneqq s_0 + \frac{a_\epsilon}{4}$, and the choices of $U$, $R$, and $\delta$ above, we get that for all $s\in\Delta$ with $\Im(s) = \Im(z)$ and $\Absbig{ \Re(s) -  \bigl( s_0 + \frac{a_\epsilon}{4} \bigr) }   \leq R= \frac{a_\epsilon}{3}$, we have
\begin{equation}   \label{eqPfthmLogDerivative_1}
          \Absbigg{  \frac{ \zeta'_{f,\,\minus \phi} ( s ) } {    \zeta_{f,\,\minus \phi} ( s ) } } 
\leq   \frac{ 2 + \delta }{ \delta }   \biggl(   \mathcal{K}\Bigl(s_0 + \frac{a_\epsilon}{4}\Bigr)    +  \frac{ 2^4 C_\epsilon \bigl(2 + (1+\delta)^{-2} \bigr) (1+\delta) }{  R \delta^2 }  \abs{ \Im(z) }^3  \biggr)
\leq  C_{19} \abs{ \Im(s) }^3,
\end{equation}
where
\begin{equation*}
C_{19}  \coloneqq \frac{ 2 + \delta }{ \delta }   \biggl(   \mathcal{K}\Bigl(s_0 + \frac{a_\epsilon}{4}\Bigr)    +  \frac{ 2^4 C_\epsilon \bigl(2 + (1+\delta)^{-2} \bigr) (1+\delta) }{  R \delta^2 }   \biggr)
\end{equation*}
is a constant depending only on $f$, $\CC$, $d$, $\alpha$, and $\phi$. Recall that the only restriction on $\Im(z)$ is that $\abs{\Im(z)} \geq \beta$. Thus (\ref{eqPfthmLogDerivative_1}) holds for all $s\in\C$ with $\Absbig{ \Re(s) -  \bigl( s_0 + \frac{a_\epsilon}{4} \bigr) }   \leq  \frac{a_\epsilon}{3}$ and $\abs{\Im(s)} \geq \beta$.

By Theorem~\ref{thmZetaAnalExt_InvC}, $h(s)  \coloneqq   \frac{ \zeta'_{f,\,\minus \phi} ( s ) } {    \zeta_{f,\,\minus \phi} ( s ) }  + \frac{ 1 } { s - s_0 }$ is holomorphic on $\{ s\in\C \,|\, \abs{ \Re(s) - s_0 } < a_\epsilon \}$. Applying the Phragm\'en--Lindel\"of theorem (Theorem~\ref{thmPhragmenLindelof}) to $h(s)$ on the strip $\{ s\in\C \,|\, \delta_1 \leq \Re(s) \leq \delta_2  \}$ with $\delta_1 \coloneqq  s_0 - \frac{a_\epsilon}{12}$ and $\delta_2  \coloneqq  s_0 + \frac{a_\epsilon}{200}$. It follows from (\ref{eqPfthmLogDerivative_1}) that condition~(i) of Theorem~\ref{thmPhragmenLindelof} holds. On the other hand, (\ref{eqPfthmLogDerivative_1}) and the claim above guarantees condition~(ii) of Theorem~\ref{thmPhragmenLindelof} with $k_1 \coloneqq 3$ and $k_2 \coloneqq 0$. Hence by Theorem~\ref{thmPhragmenLindelof}, there exist constants $\wt{D} >0$ and $b\geq 2 s_0 +1$ depending only on $f$, $\CC$, $d$, $\alpha$, and $\phi$ such that
\begin{equation*}
\abs{h(s)}   \leq \wt{D} \abs{ \Im(s) }^{ \frac{1}{2} }
\end{equation*}
for all $s\in\C$ with $\abs{ \Re(s) - s_0 } \leq \frac{ a_\epsilon }{ 200 }$ and $\abs{ \Im(s) } \geq b$.

Therefore inequality (\ref{eqLogDerivative}) holds for all $s\in\C$ with $\abs{ \Re(s) - s_0 } \leq \frac{ a_\epsilon }{ 200 } \eqqcolon a$ and $\abs{ \Im(s) } \geq b$, where $a\in(0,s_0)$, $b\geq 2s_0 + 1$, and $D\coloneqq \wt{D} + 1$ are constants depending only on $f$, $\CC$, $d$, $\alpha$, and $\phi$.
\end{proof}

\section{The Dolgopyat cancellation estimate}   \label{sctDolgopyat}

We adapt the arguments of D.~Dolgopyat \cite{Dol98} in our metric-topological setting aiming to prove Theorem~\ref{thmL2Shrinking} at the end of this section. In Subsection~\ref{subsctSNI}, we first give a formulation of the \emph{$\alpha$-strong non-integrability condition}, $\alpha\in(0,1]$, for our setting and then show its independence on the choice of the Jordan curve $\CC$. In Subsection~\ref{subsctDolgopyatOperator}, a consequence of the $\alpha$-strong non-integrability condition that we will use in the remaining  part of this section is formulated in Proposition~\ref{propSNI}. We remark that it is crucial for the arguments in Subsection~\ref{subsctCancellation} to have the same exponent $\alpha\in(0,1]$ in both the lower bound and the upper bound in (\ref{eqSNIBounds}). The definition of the Dolgopyat operator $\MM_{J,s,\phi}$ in our context is given in Definition~\ref{defDolgopyatOperator} after important constants in the construction are carefully chosen. In Subsection~\ref{subsctCancellation}, we adapt the cancellation arguments of D.~Dolgopyat to establish the $l^2$-bound in Theorem~\ref{thmL2Shrinking}.

\subsection{Strong non-integrability}    \label{subsctSNI}

\begin{definition}[Strong non-integrability condition]   \label{defStrongNonIntegrability}
Let $f\: S^2 \rightarrow S^2$ be an expanding Thurston map and $d$ be a visual metric on $S^2$ for $f$. Given $\alpha \in (0,1]$. Let $\phi\in \Holder{\alpha}(S^2,d)$ be a real-valued H\"{o}lder continuous function with an exponent $\alpha$. 

\begin{enumerate}

\smallskip
\item[(1)] We say that $\phi$ satisfies the \defn{$(\CC, \alpha)$-strong non-integrability condition} (with respect to $f$ and $d$), for a Jordan curve $\CC\subseteq S^2$ with $\post f \subseteq \CC$, if there exist numbers $N_0, M_0\in\N$, $\varepsilon \in (0,1)$, and $M_0$-tiles $Y^{M_0}_\b\in \X^{M_0}_\b(f,\CC)$,  $Y^{M_0}_\w\in \X^{M_0}_\w(f,\CC)$ such that for each $\c\in\{\b,\w\}$, each integer $M \geq M_0$, and each $M$-tile $X\in \X^M(f,\CC)$ with $X \subseteq Y^{M_0}_\c$, there exist two points $x_1(X), x_2(X) \in X$ with the following properties:
\begin{enumerate}
\smallskip
\item[(i)] $\min \{ d(x_1(X), S^2 \setminus X), \, d(x_2(X), S^2 \setminus X), d(x_1(X), x_2(X)) \} \geq \varepsilon \diam_d(X)$, and

\smallskip
\item[(ii)] for each integer $N \geq N_0$, there exist two  $(N+M_0)$-tiles $X^{N+M_0}_{\c,1}, X^{N+M_0}_{\c,2} \in \X^{N+M_0}(f,\CC)$ such that $Y^{M_0}_\c = f^N\bigl(  X^{N+M_0}_{\c,1}  \bigr) =   f^N\bigl(  X^{N+M_0}_{\c,2}  \bigr)$, and that
\begin{equation}   \label{eqSNIBoundsDefn}
\frac{ \Abs{  S_{N }\phi ( \varsigma_1 (x_1(X)) ) -  S_{N }\phi ( \varsigma_2 (x_1(X)) )  -S_{N }\phi ( \varsigma_1 (x_2(X)) ) +  S_{N }\phi ( \varsigma_2 (x_2(X)) )   }  } {d(x_1(X),x_2(X))^\alpha}
\geq \varepsilon,
\end{equation}
where we write $\varsigma_1 \coloneqq \Bigl(f^{N}\big|_{X^{N+M_0}_{\c,1}} \Bigr)^{-1}$ and $\varsigma_2 \coloneqq \Bigl(f^{N}\big|_{X^{N+M_0}_{\c,2}} \Bigr)^{-1}$. 
\end{enumerate}

\smallskip
\item[(2)] We say that $\phi$ satisfies the \defn{$\alpha$-strong non-integrability condition} (with respect to $f$ and $d$) if $\phi$ satisfies the $(\CC, \alpha)$-strong non-integrability condition with respect to $f$ and $d$ for some  Jordan curve $\CC\subseteq S^2$ with $\post f \subseteq \CC$.

\smallskip
\item[(3)] We say that $\phi$ satisfies the \defn{strong non-integrability condition} (with respect to $f$ and $d$) if $\phi$ satisfies the $(\alpha')$-strong non-integrability condition with respect to $f$ and $d$ for some  $\alpha' \in (0, \alpha]$.
\end{enumerate}
\end{definition}

For given $f$, $d$, and $\alpha$ as in Definition~\ref{defStrongNonIntegrability}, if $\phi\in \Holder{\alpha} (S^2, d)$ satisfies the $(\CC,\alpha)$-strong non-integrability condition for some Jordan curve $\CC\subseteq S^2$ with $\post f \subseteq \CC$, then we fix the choices of $N_0$, $M_0$, $\varepsilon$, $Y^{M_0}_\b$, $Y^{M_0}_\w$, $x_1(X)$, $x_2(X)$, $X^{N+M_0}_{\b,1}$, $X^{N+M_0}_{\w,1}$ as in Definition~\ref{defStrongNonIntegrability}, and say that something depends only on $f$, $d$, $\alpha$, and $phi$ even if it also depends on some of these choices.

We will see in the next lemma that the strong non-integrability condition is independent of the Jordan curve $\CC$.

\begin{lemma}   \label{lmSNIwoC}
Let $f$, $d$, $\alpha$ satisfies the Assumptions. Let $\CC$ and $\widehat\CC$ be Jordan curves on $S^2$ with $\post f \subseteq \CC \cap \widehat\CC$. Let $\phi \in \Holder{\alpha}(S^2,d)$ be a real-valued H\"{o}lder continuous function with an exponent $\alpha$. Given arbitrary integers $n, \widehat{n} \in \N$. Let $F\coloneqq f^n$ and $\widehat{F} \coloneqq f^{\widehat{n}}$ be iterates of $f$. Then $\Phi \coloneqq S_n^f \phi$ satisfies the $(\CC, \alpha)$-strong non-integrability condition with respect to $F$ and $d$ if and only if $\widehat\Phi \coloneqq S_{\widehat{n}}^f \phi$ satisfies the $( \widehat\CC, \alpha )$-strong non-integrability condition with respect to $\widehat{F}$ and $d$.

In particular, if $\phi$ satisfies the $\alpha$-strong non-integrability condition with respect to $f$ and $d$, then it satisfies the $(\CC, \alpha)$-strong non-integrability condition with respect to $f$ and $d$.
\end{lemma}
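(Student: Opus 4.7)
The plan is to reduce the equivalence to a single direction by symmetry and then transfer the $(\CC,\alpha)$-SNI data for $(F,\Phi)$ to $(\widehat\CC,\alpha)$-SNI data for $(\widehat F,\widehat\Phi)$. Denote the given data by $N_0,M_0,\varepsilon,Y^{M_0}_\b,Y^{M_0}_\w$ and the assignment $X\mapsto(x_1(X),x_2(X))$. The two crucial tools will be the identification $\DD^M(F,\CC)=\DD^{nM}(f,\CC)$ of Proposition~\ref{propCellDecomp}~(vii) together with its $\widehat F$-analogue, the diameter estimates in Lemma~\ref{lmCellBoundsBM}, and the identities $S_{\widehat N}^{\widehat F}\widehat\Phi=S_{\widehat n\widehat N}^f\phi$ and $S_N^F\Phi=S_{nN}^f\phi$, which express both alternating sums in (\ref{eqSNIBoundsDefn}) as Birkhoff sums of $\phi$ under pure iteration of $f$.

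First I would choose $\widehat M_0$ large enough that, for each color $\widehat\c\in\{\b,\w\}$, there is an $\widehat M_0$-tile $\widehat Y^{\widehat M_0}_{\widehat\c}\in\X^{\widehat M_0}_{\widehat\c}(\widehat F,\widehat\CC)$ contained in the interior of some $Y^{M_0}_{\c(\widehat\c)}$; such tiles exist by iterating Proposition~\ref{propCellDecomp}~(i), (ii) together with Lemma~\ref{lmCellBoundsBM}~(ii), which produces $\widehat\c$-colored tiles of arbitrarily small diameter at arbitrary position. Next, given an $\widehat M$-tile $\widehat X\subseteq\widehat Y^{\widehat M_0}_{\widehat\c}$ with $\widehat M\geq\widehat M_0$, Lemma~\ref{lmCellBoundsBM}~(v) produces $\widehat p\in\widehat X$ with $B_d(\widehat p,C^{-1}\Lambda^{-\widehat n\widehat M})\subseteq\widehat X$. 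I pick the unique integer $M$ satisfying
\[\widehat n\widehat M+\log_\Lambda(2C^2)\leq nM<\widehat n\widehat M+n+\log_\Lambda(2C^2),\]
so every $M$-tile of $(F,\CC)$ has diameter at most $\tfrac{1}{2}C^{-1}\Lambda^{-\widehat n\widehat M}$, and hence the $M$-tile $X\in\X^M(F,\CC)$ containing $\widehat p$ automatically fits inside the ball of that radius around $\widehat p$, and thus inside $\widehat X$. Since $X\subseteq Y^{M_0}_{\c(\widehat\c)}$, Definition~\ref{defStrongNonIntegrability}~(1) applied to $X$ supplies $x_1(X),x_2(X)$; I set $\widehat x_i(\widehat X)\coloneqq x_i(X)$. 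Triangle-inequality bookkeeping then gives $d(x_i(X),S^2\setminus\widehat X)\geq\tfrac{1}{2C^2}\diam_d(\widehat X)$ and $d(x_1(X),x_2(X))\geq\tfrac{\varepsilon}{2C^4\Lambda^n}\diam_d(\widehat X)$, verifying condition~(i) of $(\widehat\CC,\alpha)$-SNI with a constant $\widehat\varepsilon_1$ depending only on $f,\CC,d,n,\varepsilon$.

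For condition~(ii), given $\widehat N\geq\widehat N_0$ I would set $N\coloneqq\lfloor\widehat n\widehat N/n\rfloor\geq N_0$ and $k\coloneqq\widehat n\widehat N-nN\in\{0,\dots,n-1\}$, and invoke the $(\CC,\alpha)$-SNI at $X$ and level $N$ to obtain two inverse branches $\varsigma_i$ of $f^{nN}$ mapping $Y^{M_0}_{\c(\widehat\c)}$ homeomorphically onto $X^{N+M_0}_{\c(\widehat\c),i}$. Because $\inte\bigl(X^{N+M_0}_{\c(\widehat\c),i}\bigr)$ is simply connected and avoids all post-critical points of $f$ (hence all critical values of $f^k$), a continuous inverse branch $\tilde\varsigma_i$ of $f^k$ exists on a simply connected neighborhood of $\{\varsigma_i(\widehat x_1),\varsigma_i(\widehat x_2)\}$. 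Setting $\widehat\varsigma_i\coloneqq\tilde\varsigma_i\circ\varsigma_i$ produces a continuous map with $f^{\widehat n\widehat N}\circ\widehat\varsigma_i=\id$, and by the connectedness of its image together with Proposition~\ref{propCellDecomp}~(i), (ii) applied in $\DD^{\widehat n(\widehat N+\widehat M_0)}(f,\widehat\CC)=\DD^{\widehat N+\widehat M_0}(\widehat F,\widehat\CC)$, the two points $\widehat\varsigma_i(\widehat x_1)$ and $\widehat\varsigma_i(\widehat x_2)$ both land in a single $(\widehat N+\widehat M_0)$-tile $\widehat X^{\widehat N+\widehat M_0}_{\widehat\c,i}$ of $(\widehat F,\widehat\CC)$ satisfying $\widehat F^{\widehat N}\bigl(\widehat X^{\widehat N+\widehat M_0}_{\widehat\c,i}\bigr)=\widehat Y^{\widehat M_0}_{\widehat\c}$; the two tiles are distinct since $\varsigma_1\neq\varsigma_2$. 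Using $S_{\widehat n\widehat N}^f\phi=S_k^f\phi+S_{nN}^f\phi\circ f^k$ and $f^k\circ\widehat\varsigma_i=\varsigma_i$, the alternating sum in~(\ref{eqSNIBoundsDefn}) for $\widehat\Phi$ splits as the alternating sum for $\Phi$ at level $N$ (bounded below by $\varepsilon\,d(\widehat x_1,\widehat x_2)^\alpha$ by hypothesis) plus an error made of $S_k^f\phi$-differences; Lemma~\ref{lmSnPhiBound} applied within the common $(f,\widehat\CC)$-cell $\widehat X^{\widehat N+\widehat M_0}_{\widehat\c,i}$ combined with Lemma~\ref{lmMetricDistortion} applied to the $f^{nN}$-branch $\varsigma_i$ bounds this error by $2C_1 C_0^\alpha\Lambda^{-\alpha nN}d(\widehat x_1,\widehat x_2)^\alpha$. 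Choosing $\widehat N_0$ large forces the error below $\tfrac{\varepsilon}{2}d(\widehat x_1,\widehat x_2)^\alpha$, so (\ref{eqSNIBoundsDefn}) holds with $\widehat\varepsilon\coloneqq\min\{\widehat\varepsilon_1,\varepsilon/2\}$. The final \emph{in particular} assertion is the case $n=\widehat n=1$ of the equivalence just established.

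The main obstacle is Step~3: fabricating an honest $(\widehat F,\widehat\CC)$-tile from an inverse branch that begins in the unrelated cell decomposition of $f$ by $\CC$, and verifying that the composite branch $\widehat\varsigma_i$ sends both $\widehat x_1$ and $\widehat x_2$ into the \emph{same} tile rather than across two adjacent ones; this hinges on simple connectivity of $\inte\bigl(X^{N+M_0}_{\c(\widehat\c),i}\bigr)$ and on post-critical points lying in the $0$-skeleton of $(f,\widehat\CC)$. The accompanying subtleties are the integer constraint $nM-\widehat n\widehat M\in[\log_\Lambda(2C^2),\,n+\log_\Lambda(2C^2))$ of Step~2, which forces the constant in $\diam_d(X)/\diam_d(\widehat X)$ to depend on $n$, and the uniform control of the Step~4 error by quantities whose exponent $\alpha$ matches both sides of~(\ref{eqSNIBoundsDefn}) — which in turn relies crucially on the precise distortion estimates of Lemmas~\ref{lmSnPhiBound} and~\ref{lmMetricDistortion}.
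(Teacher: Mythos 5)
Your proof is essentially correct and follows the same overall blueprint as the paper's — fix $\widehat M_0$-tiles inside the $Y^{M_0}_\c$'s, and for each $\widehat X\in\X^{\widehat M}(\widehat F,\widehat\CC)$ find a comparably sized subtile $X$ of $(F,\CC)$ so that $x_i(X)$ can serve as $\widehat x_i(\widehat X)$ — but the two proofs diverge at the key step, the verification of condition~(ii). The paper chooses $N\geq N_0$ \emph{large enough that} $Nn>\widehat N\widehat n$ and then defines $\widehat\varsigma_i\coloneqq f^{Nn-\widehat N\widehat n}\circ\varsigma_i$, a purely \emph{forward} composition, so the candidate $(\widehat N+\widehat M_0)$-tile of $(\widehat F,\widehat\CC)$ is simply $f^{Nn-\widehat N\widehat n}\bigl(\varsigma_i\bigl(\widehat Y^{\widehat M_0}_\c\bigr)\bigr)$; that it is a cell of the correct level, mapped onto $\widehat Y^{\widehat M_0}_\c$ by $\widehat F^{\widehat N}$, is then immediate from Proposition~\ref{propCellDecomp}~(i). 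You instead choose $N$ from \emph{below}, $N=\lfloor\widehat n\widehat N/n\rfloor$ and $k=\widehat n\widehat N-nN\in\{0,\dots,n-1\}$, and extend $\varsigma_i$ backward by a local inverse branch $\tilde\varsigma_i$ of $f^k$, obtained by lifting across the simply connected domain $\inte\bigl(X^{N+M_0}_{\c,i}\bigr)$ which avoids $\post f$. This is also a valid route; the connectedness argument you give for landing in a single $(\widehat F,\widehat\CC)$-tile is sound once you lift on all of $\inte\bigl(X^{N+M_0}_{\c,i}\bigr)$ rather than just a neighborhood of the two points, since $\varsigma_i\bigl(\inte\bigl(Y^{M_0}_\c\bigr)\bigr)=\inte\bigl(X^{N+M_0}_{\c,i}\bigr)$ and $\widehat\varsigma_i\bigl(\inte\bigl(\widehat Y^{\widehat M_0}_{\widehat\c}\bigr)\bigr)$ is connected and misses $f^{-\widehat n(\widehat N+\widehat M_0)}(\widehat\CC)$.

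What each approach buys: the paper's forward composition is globally well-defined with no lifting or covering-space argument needed, which keeps the tile verification trivial; the error term is $S_{Nn-\widehat N\widehat n}^f\phi$, whose length grows with $N$, but the distortion bound kills it because the image $f^{Nn-\widehat N\widehat n}\circ\varsigma_i(X)$ has diameter $\lesssim\Lambda^{-\alpha\widehat N\widehat n}$. Your approach keeps the error Birkhoff sum at the fixed short length $k<n$, which is conceptually cleaner, but you pay for it with the lifting step and the fact that your $\widehat N_0$ must simultaneously enforce $\lfloor\widehat n\widehat N_0/n\rfloor\geq N_0$ (you should state this explicitly) and make the error small. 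A small supplementary remark: your "the two tiles are distinct since $\varsigma_1\neq\varsigma_2$" deserves one more line — if they coincided, $\widehat F^{\widehat N}$ being injective on the tile would force $\widehat\varsigma_1(\widehat x_1)=\widehat\varsigma_2(\widehat x_1)$, hence $\varsigma_1(\widehat x_1)=\varsigma_2(\widehat x_1)$ after applying $f^k$, contradicting that the $\varsigma_i(\widehat x_1)$ lie in the interiors of two distinct $(N+M_0)$-tiles of $(F,\CC)$. With these two small fills, your argument is complete and correct, though somewhat more elaborate than the paper's.
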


\begin{proof}
Let $\Lambda>1$ be the expansion factor of the visual metric $d$ for $f$. Note that $\post f = \post F = \post \widehat{F}$, and that it follows immediately from Lemma~\ref{lmCellBoundsBM} that $d$ is a visual metric for both $F$ and $\widehat{F}$.

By Lemma~\ref{lmCellBoundsBM}~(ii) and (v), there exist numbers $C_{20} \in (0,1)$ and $l\in \N$ such that for each $\widehat{m}\in\N_0$, each $\widehat{X} \in \X^{\widehat{m}}  ( \widehat{F}, \widehat\CC  )$, there exists $X\in \X^{  \lceil \frac{ \widehat{m} \widehat{n}} {n} \rceil + l}  (F, \CC)$ such that $X\subseteq \widehat{X}$ and $\diam_d (X) \geq C_{20}  \diam_d  ( \widehat{X}  )$.

By symmetry, it suffices to show the forward implication in the first statement of Lemma~\ref{lmSNIwoC}.

We assume that $\Phi$ satisfies the $(\CC,\alpha)$-strong non-integrability condition with respect to $F$ and $d$. We use the choices of numbers $N_0$, $M_0$, $\varepsilon$, tiles $Y^{M_0}_\b\in \X^{M_0}_\b(F,\CC)$,  $Y^{M_0}_\w\in \X^{M_0}_\w(F,\CC)$, $X^{N+M_0}_{\c,1}, X^{N+M_0}_{\c,2} \in \X^{N+M_0}(F,\CC)$, points $x_1(X)$, $x_2(X)$, and functions $\varsigma_1$, $\varsigma_2$ as in Definition~\ref{defStrongNonIntegrability} (with $f$ and $\phi$ replaced by $F$ and $\Phi$, respectively).

It follows from Lemma~\ref{lmCellBoundsBM}~(ii) and (v) again that we can choose an integer $\widehat{M}_0 \in \N$ large enough such that the following statements hold:
\begin{enumerate}
\smallskip
\item[(1)] $\bigl\lceil \frac{ \widehat{M}_0 \widehat{n} } {n} \bigr\rceil + l \geq M_0$.

\smallskip
\item[(2)] There exist $\widehat{M}_0$-tiles $\widehat{Y}^{\widehat{M}_0}_\b\in \X^{\widehat{M}_0}_\b( \widehat{F}, \widehat\CC )$ and $\widehat{Y}^{\widehat{M}_0}_\w\in \X^{\widehat{M}_0}_\w( \widehat{F}, \widehat\CC )$ such that $\widehat{Y}^{\widehat{M}_0}_\b \subseteq \inte \bigl( Y^{M_0}_\b \bigr)$ and $\widehat{Y}^{\widehat{M}_0}_\w \subseteq \inte \bigl( Y^{M_0}_\w \bigr)$.
\end{enumerate}

We define the following constants:
\begin{align}
\widehat{N}_0           \coloneqq &   \biggl\lceil \frac{1}{ \alpha \widehat{n} }  \log_\Lambda   \frac{  2 \Hseminorm{\alpha,\, (S^2,d)}{\phi} C_0 C^{2\alpha} }{ ( 1 - \Lambda^{-\alpha} )  \varepsilon^{1+\alpha}  (1 - C_{20})  }   \biggr\rceil.  \label{eqPflmSNIwoC_wt_N0} \\
\widehat\varepsilon \coloneqq & \varepsilon C_{20}  \in (0, \varepsilon) .   \label{eqPflmSNIwoC_wt_varepsilon}
\end{align}

For each $\c\in\{\b,\w\}$, each integer $\widehat{M} \geq \widehat{M}_0$, and each $\widehat{M}$-tile $\widehat{X} \in \X^{\widehat{M}} ( \widehat{F}, \widehat\CC )$ with $\widehat{X} \subseteq \widehat{Y}^{\widehat{M}_0}_\c$, we denote $M\coloneqq \bigl\lceil \frac{ \widehat{M} \widehat{n} } {n} \bigr\rceil + l \geq M_0$, and choose an $M$-tile $X\in\X^M(F,\CC)$ with
\begin{equation}  \label{eqPflmSNIwoC_X}
X \subseteq \widehat{X}  \qquad\text{and}\qquad \diam_d(X) \geq C_{20} \diam_d ( \widehat{X} ).
\end{equation}
Define, for each $i\in\{1,2\}$,
\begin{equation}  \label{eqPflmSNIwoC_wt_x_i}
\widehat{x}_i ( \widehat{X} ) \coloneqq x_i(X).
\end{equation}

We need to verify Properties~(i) and (ii) in Definition~\ref{defStrongNonIntegrability} for the $(\widehat\CC, \alpha)$-strong non-integrability condition of $\widehat\Phi$ with respect to $\widehat{F}$ and $d$.

Fix arbitrary $\c\in\{\b,\w\}$, $\widehat{M}\in\N$, and $\widehat{X} \in \X^{\widehat{M}} ( \widehat{F}, \widehat\CC )$ with $\widehat{M} \geq \widehat{M}_0$ and $\widehat{X} \subseteq \widehat{Y}^{\widehat{M}_0}_\c$.

\smallskip

\emph{Property~(i).} By (\ref{eqPflmSNIwoC_X}), (\ref{eqPflmSNIwoC_wt_x_i}), (\ref{eqPflmSNIwoC_wt_varepsilon}), and Property~(i) for the $(\CC, \alpha)$-strong non-integrability condition of $\Phi$ with respect to $F$ and $d$, we get
\begin{equation*}
             \frac{  d( \widehat{x}_1 ( \widehat{X} ),  \widehat{x}_2 ( \widehat{X} ) )  } { \diam_d ( \widehat{X} )  }  
\geq     \frac{  d(         x_1 (         X  ),  x_2         (         X  ) )   } { C_{20}^{-1}  \diam_d ( X )  }  
\geq   \varepsilon C_{20} 
=        \widehat\varepsilon,
\end{equation*}
and for each $i\in\{1,2\}$,
\begin{equation*}
             \frac{  d( \widehat{x}_i ( \widehat{X} ),  S^2 \setminus \widehat{X} )  } { \diam_d ( \widehat{X} )  }  
\geq     \frac{  d(         x_i (         X  ),  S^2 \setminus         X )   } { C_{20}^{-1}  \diam_d ( X )  }  
\geq   \varepsilon C_{20} 
=        \widehat\varepsilon.
\end{equation*}

\smallskip

\emph{Property~(ii).}  Fix an arbitrary integer $\widehat{N} \geq \widehat{N}_0$. Choose an integer $N \geq N_0$ large enough so that 
$
Nn > \widehat{N} \widehat{n}.
$

By Proposition~\ref{propCellDecomp}~(i) and (vii), for each $i\in\{1,2\}$, since $F^N$ maps $X^{N+M_0}_{\c,i}$ injectively onto $Y^{M_0}_\c$ and $\widehat{Y}^{\widehat{M}_0}_\c \subseteq \inte \bigl( Y^{M_0}_\c \bigr)$, we have
\begin{equation*}
\varsigma_i \bigl(  \widehat{Y}^{\widehat{M}_0}_\c   \bigr) \in \X^{ \widehat{M}_0 \widehat{n} + Nn } (f, \widehat\CC),
\end{equation*}
where $\varsigma_i = \Bigl(F^{N}\big|_{X^{N+M_0}_{\c,i}} \Bigr)^{-1}$. Define, for each $i\in\{1,2\}$,
\begin{equation*}
\widehat{X}^{ \widehat{N} + \widehat{M}_0 }_{\c,i} \coloneqq f^{Nn - \widehat{N} \widehat{n} }  \bigl( \varsigma_i \bigl(  \widehat{Y}^{\widehat{M}_0}_\c   \bigr) \bigr) 
\in \X^{ \widehat{N} \widehat{n} + \widehat{M}_0 \widehat{n} } (f, \widehat\CC) 
   = \X^{ \widehat{N} + \widehat{M}_0 } (\widehat{F}, \widehat\CC),
\end{equation*}
and write $\widehat\varsigma_i = \Bigl( \widehat{F}^{\widehat{N}}\big|_{\widehat{X}^{\widehat{N}+\widehat{M}_0}_{\c,i}} \Bigr)^{-1}   =   \Bigl( f^{\widehat{N} \widehat{n} }\big|_{\widehat{X}^{\widehat{N}+\widehat{M}_0}_{\c,i}} \Bigr)^{-1} $. Note that $f^{ Nn - \widehat{N} \widehat{n} } \circ \varsigma_i = \widehat\varsigma_i$.

By (\ref{eqPflmSNIwoC_X}), (\ref{eqPflmSNIwoC_wt_x_i}), Properties~(i) and (ii) for the $(\CC, \alpha)$-strong non-integrability condition of $\Phi$ with respect to $F$ and $d$, Lemma~\ref{lmMetricDistortion}, Lemma~\ref{lmCellBoundsBM}~(ii), (\ref{eqPflmSNIwoC_wt_N0}), and (\ref{eqPflmSNIwoC_wt_varepsilon}), we have
\begin{align*}
&                        \frac{ \AbsBig{    S_{\widehat{N}}^{\widehat{F}} \widehat\Phi ( \widehat\varsigma_1 ( \widehat{x}_1( \widehat{X} )) )  -   S_{\widehat{N}}^{\widehat{F}} \widehat\Phi ( \widehat\varsigma_2 ( \widehat{x}_1( \widehat{X} )) ) 
                                                     -  S_{\widehat{N}}^{\widehat{F}} \widehat\Phi ( \widehat\varsigma_1 ( \widehat{x}_2( \widehat{X} )) ) +   S_{\widehat{N}}^{\widehat{F}} \widehat\Phi ( \widehat\varsigma_2 ( \widehat{x}_2( \widehat{X} )) )  }  }   
                                   {d(  \widehat{x}_1( \widehat{X} ), \widehat{x}_2( \widehat{X} ))^\alpha}      \\
&\qquad =        \frac{ \AbsBig{    S_{\widehat{N} \widehat{n}}^{f} \phi ( \widehat\varsigma_1 ( x_1( X )) )  -   S_{\widehat{N} \widehat{n}}^{f} \phi ( \widehat\varsigma_2 ( x_1( X )) )
                                                     -  S_{\widehat{N} \widehat{n}}^{f} \phi ( \widehat\varsigma_1 ( x_2( X )) ) +   S_{\widehat{N} \widehat{n}}^{f} \phi ( \widehat\varsigma_2 ( x_2( X )) )  }  }  
                                  {d(  x_1( X ), x_2( X ))^\alpha}    \\  
&\qquad \geq  \frac{ \AbsBig{    S_{Nn}^{f} \phi ( \varsigma_1 ( x_1( X )) )  -   S_{Nn}^{f} \phi ( \varsigma_2 ( x_1( X )) )
                                                     -  S_{Nn}^{f} \phi ( \varsigma_1 ( x_2( X )) ) +   S_{Nn}^{f} \phi ( \varsigma_2 ( x_2( X )) )  }  }  
                                  {d(  x_1( X ), x_2( X ))^\alpha}    \\
&\qquad\qquad -  \sum\limits_{i\in\{1,2\}}         \frac{ \AbsBig{    S_{ Nn - \widehat{N} \widehat{n} }^{f} \phi ( \varsigma_i ( x_1( X )) )  -   S_{ Nn - \widehat{N} \widehat{n} }^{f} \phi ( \varsigma_i ( x_2( X )) )  }  }      {d(  x_1( X ), x_2( X ))^\alpha} \\
&\qquad \geq  \frac{ \AbsBig{    S_{N}^{F} \Phi ( \varsigma_1 ( x_1( X )) )  -   S_{N}^{F} \Phi ( \varsigma_2 ( x_1( X )) )
                                                     -  S_{N}^{F} \Phi ( \varsigma_1 ( x_2( X )) ) +   S_{N}^{F} \Phi ( \varsigma_2 ( x_2( X )) )  }  }  
                                  {d(  x_1( X ), x_2( X ))^\alpha}    \\
&\qquad\qquad -  \sum\limits_{i\in\{1,2\}}       \frac{ \Hseminorm{\alpha,\, (S^2,d)}{\phi} C_0 }{ 1 - \Lambda^{-\alpha}  } 
                                                                       \cdot  \frac{    d \bigl(  \bigl( f^{ Nn - \widehat{N} \widehat{n} } \circ \varsigma_i \bigr) ( x_1( X ) )  ,  \bigl( f^{ Nn - \widehat{N} \widehat{n} } \circ \varsigma_i \bigr) ( x_2( X ) ) \bigr)    } 
                                                                                          { \varepsilon^\alpha ( \diam_d (X) )^\alpha }   \\
&\qquad \geq      \varepsilon  -   \sum\limits_{i\in\{1,2\}}       \frac{ \Hseminorm{\alpha,\, (S^2,d)}{\phi} C_0 }{ 1 - \Lambda^{-\alpha}  } 
                                                                                               \cdot  \frac{    \diam_d \bigl(  \bigl( f^{ Nn - \widehat{N} \widehat{n} } \circ \varsigma_i \bigr) ( X )   \bigr)    }   { \varepsilon^\alpha ( \diam_d (X) )^\alpha }   \\           
&\qquad \geq      \varepsilon  -    \frac{  2 \Hseminorm{\alpha,\, (S^2,d)}{\phi} C_0 }{ 1 - \Lambda^{-\alpha}  }       
                            \cdot  \frac{   C^\alpha \Lambda^{ - \alpha ( Mn+Nn - ( Nn - \widehat{N}\widehat{n} ) ) }  }   { \varepsilon^\alpha  C^{-\alpha} \Lambda^{ - \alpha  Mn } }   \\
&\qquad \geq      \varepsilon  -    \frac{  2 \Hseminorm{\alpha,\, (S^2,d)}{\phi} C_0 C^{2 \alpha} }{ ( 1 - \Lambda^{-\alpha} )  \varepsilon^\alpha  }    \Lambda^{ - \alpha\widehat{N}_0\widehat{n} }   
                \geq      \varepsilon -  \varepsilon (1 - C_{20}) 
                 =        \widehat\varepsilon,                       
\end{align*}
where $C\geq 1$ is a constant from Lemma~\ref{lmCellBoundsBM} and $C_0 > 1$ is a constant from Lemma~\ref{lmMetricDistortion}, both of which depend only on $f$, $\CC$, and $d$.

The first statement of Lemma~\ref{lmSNIwoC} is now established. The second statement is a special case of the first statement. 
\end{proof}

\begin{prop}  \label{propSNI2NLI}
Let $f$, $d$, $\alpha$ satisfy the Assumptions. Let $\phi \in \Holder{\alpha} (S^2,d)$ be a real-valued H\"older continuous function with an exponent $\alpha$. If $\phi$ satisfies the $\alpha$-strong non-integrability condition (in the sense of Definition~\ref{defStrongNonIntegrability}), then $\phi$ is non-locally integrable (in the sense of Definition~\ref{defLI}). 
\end{prop}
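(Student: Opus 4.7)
\smallskip

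The plan is to argue by contrapositive: assume $\phi$ satisfies the $\alpha$-strong non-integrability condition and produce a specific choice of data violating Definition~\ref{defLI}. First, by Lemma~\ref{lmCexistsL} choose $n\in\N$ and a Jordan curve $\CC\subseteq S^2$ with $\post f\subseteq\CC$ and $F(\CC)\subseteq\CC$, where $F\coloneqq f^n$. Set $\Phi\coloneqq S_n^f\phi\in\Holder{\alpha}(S^2,d)$ (Lemma~\ref{lmSnPhiHolder}); by Lemma~\ref{lmSNIwoC}, $\Phi$ then satisfies the $(\CC,\alpha)$-strong non-integrability condition with respect to $F$ and $d$. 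It therefore suffices to exhibit $\xi^{(1)},\xi^{(2)}\in\Sigma_{F,\CC}^-$ with $F(\xi^{(1)}_0)=F(\xi^{(2)}_0)$ and a pair $(x_1,x_2)$ in a common $1$-tile contained in that $0$-tile such that $\Phi^{F,\CC}_{\xi^{(1)},\xi^{(2)}}(x_1,x_2)\neq 0$, which directly negates local integrability.

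From the $(\CC,\alpha)$-strong non-integrability of $\Phi$, fix $\c\in\{\b,\w\}$, take $X\coloneqq Y^{M_0}_\c$ (so $M=M_0$), let $x_1,x_2\in X$ be the points furnished by Definition~\ref{defStrongNonIntegrability}, and (for an $N\ge N_0$ to be chosen) fix the two inverse branches $\varsigma_j=\bigl(F^N|_{X^{N+M_0}_{\c,j}}\bigr)^{-1}$, $j\in\{1,2\}$, for which the lower bound (\ref{eqSNIBoundsDefn}) holds. For each $j\in\{1,2\}$ and each $i\in\{0,1,\dots,N-1\}$, define $\xi^{(j)}_{-i}$ to be the unique $1$-tile in $\X^1(F,\CC)$ containing the $(M_0+i+1)$-tile $F^{N-1-i}\bigl(X^{N+M_0}_{\c,j}\bigr)$; extend $\{\xi^{(j)}_{-i}\}_{i\ge N}$ to an admissible infinite sequence arbitrarily. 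One checks from Proposition~\ref{propCellDecomp} that $F(\xi^{(j)}_{-(i+1)})\supseteq\xi^{(j)}_{-i}$ (so $\xi^{(j)}\in\Sigma_{F,\CC}^-$) and that $F(\xi^{(j)}_0)=X^0_\c\supseteq Y^{M_0}_\c$, which houses $x_1,x_2$ in a common $1$-tile. A direct unfolding of the definitions shows that on $Y^{M_0}_\c$,
\[
F^{(i)}_{\xi^{(j)}}(z)=\bigl(F^{i+1}\big|_{F^{N-1-i}(X^{N+M_0}_{\c,j})}\bigr)^{-1}(z)=F^{N-1-i}(\varsigma_j(z)),\qquad 0\le i\le N-1,
\]
where $F^{(i)}_{\xi^{(j)}}\coloneqq F^{-1}_{\xi^{(j)}_{-i}}\circ\cdots\circ F^{-1}_{\xi^{(j)}_{0}}$; summing over $i$ yields $\sum_{i=0}^{N-1}\Phi(F^{(i)}_{\xi^{(j)}}(z))=S_N^F\Phi(\varsigma_j(z))$.

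Splitting $\Phi^{F,\CC}_{\xi^{(1)},\xi^{(2)}}(x_1,x_2)=A_N+B_N$ at index $i=N$ according to (\ref{eqDelta}) and (\ref{eqDefTemporalDist}), the head $A_N$ becomes exactly the left-hand side of the strong non-integrability estimate, so $|A_N|\ge\varepsilon\,d(x_1,x_2)^\alpha$. For the tail, note that $F^{(i)}_{\xi^{(j)}}(x_1)$ and $F^{(i)}_{\xi^{(j)}}(x_2)$ both lie in the $(i+1)$-tile $\bigcap_{k=0}^{i}F^{-k}(\xi^{(j)}_{-(i-k)})$ produced by Lemma~\ref{lmCylinderIsTile}, so Lemma~\ref{lmMetricDistortion} gives $d(F^{(i)}_{\xi^{(j)}}(x_1),F^{(i)}_{\xi^{(j)}}(x_2))\le C_0\Lambda^{-(i+1)}d(x_1,x_2)$, whence H\"older continuity of $\Phi$ yields
\[
|B_N|\le\frac{2\,\Hseminorm{\alpha,\,(S^2,d)}{\Phi}\,C_0^\alpha\,\Lambda^{-(N+1)\alpha}}{1-\Lambda^{-\alpha}}\,d(x_1,x_2)^\alpha.
\]
Choosing $N$ large enough that this tail is at most $\frac{\varepsilon}{2}d(x_1,x_2)^\alpha$ gives $|\Phi^{F,\CC}_{\xi^{(1)},\xi^{(2)}}(x_1,x_2)|\ge\frac{\varepsilon}{2}d(x_1,x_2)^\alpha>0$ (positivity because $d(x_1,x_2)\ge\varepsilon\diam_d(X)>0$), contradicting Definition~\ref{defLI} for this particular $n$ and $\CC$.

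The main obstacle is bookkeeping: one must verify simultaneously that the finite initial segments of $\xi^{(1)},\xi^{(2)}$ are admissible $1$-tile sequences in $\X^1(F,\CC)$, that the associated inverse branches $F^{(i)}_{\xi^{(j)}}$ on $Y^{M_0}_\c$ coincide with the appropriate restrictions of $\varsigma_j$ so that the head $A_N$ matches the strong non-integrability estimate exactly, and that the tail bound uses the correct $(i+1)$-tile (supplied by Lemma~\ref{lmCylinderIsTile}) so that Lemma~\ref{lmMetricDistortion} is applicable with the visual-metric expansion factor. Once these matching issues are handled, the rest of the argument is a routine geometric-series comparison.
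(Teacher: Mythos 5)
Your proof is correct, but it takes a genuinely different route from the paper's. The paper argues by contradiction and goes straight through the characterization Theorem~\ref{thmNLI}: assuming local integrability, it gets the cohomology equation $\phi=K+\beta\circ f-\beta$ with $\beta\in\Holder{\alpha}((S^2,d),\C)$. Substituting this into the telescoping sums in (\ref{eqSNIBoundsDefn}) makes the Birkhoff contributions cancel, leaving only $\beta$-terms evaluated at points of the two $(N+M_0)$-tiles $X^{N+M_0}_{\c,1},X^{N+M_0}_{\c,2}$; the H\"older estimate $\abs{\beta(\varsigma_1(x_1))-\beta(\varsigma_2(x_1))-\beta(\varsigma_1(x_2))+\beta(\varsigma_2(x_2))}\le 2\Hseminorm{\alpha,\,(S^2,d)}{\beta}\bigl(\diam_d\bigl(Y^{N+M_0}\bigr)\bigr)^\alpha$ combined with $d(x_1,x_2)\ge\varepsilon\diam_d(X)$ and Lemma~\ref{lmCellBoundsBM}~(ii) yields a bound of the form $\const\cdot\Lambda^{-\alpha N}\ge\varepsilon^{1+\alpha}$, which fails for $N$ large. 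One bonus of that route is that the Jordan curve supplied by the strong non-integrability condition need not be forward-invariant, since the cohomology equation from Theorem~\ref{thmNLI} is curve-agnostic.

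Your argument instead constructs an explicit witness to non-local integrability and never invokes Theorem~\ref{thmNLI}. You first pass to an $F$-invariant Jordan curve via Lemma~\ref{lmSNIwoC} (a step the paper can skip), then build two admissible backward sequences $\xi^{(1)},\xi^{(2)}\in\Sigma_{F,\CC}^-$ whose first $N$ entries track the two inverse branches $\varsigma_1,\varsigma_2$. The key matching identity $F^{(i)}_{\xi^{(j)}}(z)=F^{N-1-i}(\varsigma_j(z))$ on $Y^{M_0}_\c$ for $0\le i\le N-1$ does hold: $F^{N-1-i}(\varsigma_j(z))$ lies in the $(M_0+i+1)$-tile $F^{N-1-i}\bigl(X^{N+M_0}_{\c,j}\bigr)$, which is contained in the cylinder $(i+1)$-tile $\bigcap_{k=0}^{i}F^{-k}\bigl(\xi^{(j)}_{-(i-k)}\bigr)$, and $F^{i+1}$ is injective there, so the two preimages of $z$ coincide. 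Hence the head $A_N$ equals exactly the strong non-integrability expression and $\abs{A_N}\ge\varepsilon\,d(x_1,x_2)^\alpha$, while the tail $B_N$ decays geometrically via Lemma~\ref{lmMetricDistortion} and the H\"older bound, so a single sufficiently large $N$ gives $\Phi^{F,\CC}_{\xi^{(1)},\xi^{(2)}}(x_1,x_2)\ne 0$. The resulting proof is longer and more delicate in its bookkeeping, but it is self-contained and does not import the orbifold-cover machinery that underlies Theorem~\ref{thmNLI}; the paper's proof is shorter but conceals that work in the cited theorem. One small terminological slip: you announce a proof ``by contrapositive,'' but what you actually write is a direct proof of the implication (assume SNI, exhibit a violation of Definition~\ref{defLI}); the logic is fine, only the label is off.
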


\begin{proof}
We argue by contradiction and assume that $\phi$ is locally integrable and satisfies the $\alpha$-strong non-integrability condition.

Let $\Lambda>1$ be the expansion factor of $d$ for $f$. We first fix a Jordan curve $\CC \subseteq S^2$ containing $\post f$. Then we fix $N_0$, $M_0$, $Y^{M_0}_\b$, and $Y^{M_0}_\w$ as in Definition~\ref{defStrongNonIntegrability}. We choose $M\coloneqq M_0$ and consider an arbitrary $M$-tile $X\in \X^m(f,\CC)$ with $X\subseteq Y^{M_0}_\b$. We fix $x_1(X), x_2(X)\in X$ satisfying Properties~(i) and (ii) in Definition~\ref{defStrongNonIntegrability}. By Theorem~\ref{thmNLI}, $\phi = K + \beta \circ f - \beta$ for some constant $K\in\C$ and some H\"older continuous function $\beta \in \Holder{\alpha}((S^2,d),\C)$.

Then by Property~(ii) in Definition~\ref{defStrongNonIntegrability}, for each $N\geq N_0$,
\begin{equation*}
\frac{ \Abs{   \beta ( \varsigma_1 (x_1(X)) ) -  \beta ( \varsigma_2 (x_1(X)) )  -  \beta ( \varsigma_1 (x_2(X)) ) +  \beta ( \varsigma_2 (x_2(X)) )   }  } {d(x_1(X),x_2(X))^\alpha}
\geq \varepsilon > 0,
\end{equation*}
where $\varsigma_1 \coloneqq \Bigl(f^{N}\big|_{X^{N+M_0}_{\c,1}} \Bigr)^{-1}$ and $\varsigma_2 \coloneqq \Bigl(f^{N}\big|_{X^{N+M_0}_{\c,2}} \Bigr)^{-1}$. Combining the above with Property~(i) in Definition~\ref{defStrongNonIntegrability} and Proposition~\ref{propCellDecomp}~(i), we get
\begin{equation*}
\frac{ 2 \Hseminorm{\alpha,\, (S^2,d)}{\beta}  \bigl( \max \bigl\{ \diam_d \bigl(Y^{N+M_0} \bigr) \,\big|\, Y^{N+M_0} \in \X^{N+M_0}(f,\CC) \bigr\} \bigr)^\alpha } { \varepsilon^\alpha (\diam_d(X))^\alpha }  \geq \varepsilon > 0.
\end{equation*}
Thus by Lemma~\ref{lmCellBoundsBM}~(ii), $2 \Hseminorm{\alpha,\, (S^2,d)}{\beta}  \frac{ C^\alpha \Lambda^{-\alpha N-\alpha M_0 } } { C^{-\alpha} \Lambda^{-\alpha N} }  \geq \varepsilon^{1+\alpha} > 0$, where $C\geq 1$ is a constant from Lemma~\ref{lmCellBoundsBM} depending only on $f$, $\CC$, and $d$. This is a contradiction.
\end{proof}

\subsection{Dolgopyat operator}    \label{subsctDolgopyatOperator}

We now fix an expanding Thurston map $f\: S^2 \rightarrow S^2$, a visual metric $d$ on $S^2$ for $f$ with expansion factor $\Lambda>1$, a Jordan curve $\CC\subseteq S^2$ with $f(\CC)\subseteq \CC$ and $\post f\subseteq \CC$, and an eventually positive real-valued H\"{o}lder continuous function $\phi\in \Holder{\alpha}(S^2,d)$ that satisfies the $(\CC, \alpha)$-strong non-integrability condition. We use the notations from Definition~\ref{defStrongNonIntegrability} below.

We set the following constants that will be repeatedly used in this section. We will see that all these constants defined below from (\ref{eqDef_m0}) to (\ref{eqDef_eta}) depend only on $f$, $\CC$, $d$, $\alpha$, and $\phi$.

\begin{align}
  m_0       & \coloneqq      \max \biggl\{    \biggl\lceil \frac{1}{\alpha}  \log_\Lambda  \bigl( 8C_1 \varepsilon^{\alpha - 1}  \bigr) \biggr\rceil   ,\,
                                                              \bigl\lceil \log_\Lambda \bigl( 10 \varepsilon^{-1} C^2 \bigr) \bigr\rceil   \biggr\}                    \geq 1.   \label{eqDef_m0}  \\
\delta_0  & \coloneqq     \min \Bigl\{ \frac{1}{2C_1},\, \frac{\varepsilon^2}{20C^2} \Bigr\} \in (0,1).   \label{eqDef_delta0}  \\  
  b_0        & \coloneqq \max \Biggl\{ 2 s_0  +1,\, 
                             \frac{C_0 T_0}{1-\Lambda^{-\alpha}},\,
                             \frac{2 A_0 \Hseminormbig{\alpha,\, (S^2,d)}{\wt{- s_0\phi}} }{1-\Lambda^{-\alpha}}   \Biggr\}.    \label{eqDef_b0} \\
   A         & \coloneqq \max \{ 3C_{10}T_0,\, 4 \}.   \label{eqDef_A} \\
\epsilon_1 & \coloneqq \min \Bigl\{  \frac{\pi}{16} \delta_0,\, \frac{1}{ 4A } \Lambda^{-M_0}  \Bigr\} \in(0,1).  \label{eqDef_epsilon1}  \\
  N_1      & \coloneqq \max \biggl\{   N_0,\,  \biggl\lceil \frac{1}{\alpha} \log_\Lambda \biggl( \max \biggl\{ 2^{10} A, \, 
                                                                \frac{1280 A \Lambda C^2}{\delta_0},\,4A_0, \, 4C_{10} \biggr\} \biggr) \biggr\rceil  \biggr\} .   \label{eqDef_N1} \\
 \eta       & \coloneqq \min \Biggl\{ 2^{-12},\, \biggl( \frac{ \delta_0 \epsilon_1 }{ 1280 \Lambda C^2 } \biggr)^2, \,
                             \frac{A\epsilon_1 \varepsilon^\alpha }{240 C_{10} C^2} \Lambda^{-2\alpha m_0 - 1} \bigl(\LIP_d(f)\bigr)^{-\alpha N_1} \Biggr\}. \label{eqDef_eta}
\end{align}   
Here the constants $M_0\in \N$, $N_0\in\N$, and $\varepsilon\in (0,1)$ depending only on $f$, $d$, $\CC$, and $\phi$ are from Definition~\ref{defStrongNonIntegrability}; the constant $s_0$ is the unique positive real number satisfying $P(f,-s_0\phi)=0$; the constant $C\geq 1$ depending only on $f$, $d$, and $\CC$ is from Lemma~\ref{lmCellBoundsBM}; the constant $C_0>1$ depending only on $f$, $d$, and $\CC$ is from Lemma~\ref{lmMetricDistortion}; the constant $C_1>0$ depending only on $f$, $d$, $\CC$, $\phi$, and $\alpha$ is from Lemma~\ref{lmSnPhiBound}; the constant $A_0>2$ depending only on $f$, $\CC$, $d$, $\Hseminorm{\alpha,\, (S^2,d)}{\phi}$, and $\alpha$ is from Lemma~\ref{lmBasicIneq}; the constant $C_{10}=C_{10}(f,\CC,d,\alpha,T_0)>1$ depending only on $f$, $\CC$, $d$, $\alpha$, and $\phi$ is defined in (\ref{eqDefC10}) from Lemma~\ref{lmExpToLiniear}; and the constant $T_0>0$ depending only on $f$, $\CC$, $d$, $\phi$, and $\alpha$ is defined in (\ref{eqDefT0}), and according to Lemma~\ref{lmBound_aPhi} satisfies
\begin{equation}  \label{eqT0Bound_sctDolgopyat}
\sup\bigl\{ \Hseminormbig{\alpha,\, (S^2,d)}{\wt{a\phi}} \,\big|\, a\in\R, \abs{a}\leq 2 s_0     \bigr\} \leq T_0.
\end{equation}

We denote for each $b\in\R$ with $\abs{b}\geq 1$,
\begin{equation}   \label{eqDefCb}
\mathfrak{C}_b \coloneqq \bigl\{  X\in \X^{m(b)}(f,\CC) \,\big|\, X\subseteq Y^{M_0}_\b \cup Y^{M_0}_\w \bigr\},
\end{equation}
where we write 
\begin{equation}   \label{eqDefmb}
m(b) \coloneqq  \biggl\lceil \frac{1}{\alpha}  \log_\Lambda \biggl( \frac{C\abs{b}}{\epsilon_1} \biggr) \biggr\rceil    
\end{equation}
with the constant $C\geq 1$ depending only on $f$, $\CC$, and $d$ from Lemma~\ref{lmCellBoundsBM}.

Note that by (\ref{eqDef_epsilon1}),
\begin{equation*}
m(b) \geq \log_{\Lambda}\frac{1}{\epsilon_1} \geq M_0,
\end{equation*}
and if $X\in \mathfrak{C}_b$, then $\diam_d(X) \leq \bigl(  \frac{\epsilon_1}{\abs{b}}  \bigr)^{\frac{1}{\alpha}}$ by Lemma~\ref{lmCellBoundsBM}~(ii).

For each $X\in\mathfrak{C}_b$, we now fix choices of tiles $\mathfrak{X}_1 (X), \mathfrak{X}_2 (X) \in \X^{m(b)+m_0}(f,\CC)$ and $\mathfrak{X}'_1 (X), \mathfrak{X}'_2 (X) \in \X^{m(b)+ 2 m_0}(f,\CC)$ in such a way that for each $i\in\{1,2\}$,
\begin{equation} \label{eqXXXsubsetXX}
 x_i(X) \in \mathfrak{X}'_i (X)       \subseteq \mathfrak{X}_i(X).
\end{equation}
By Property~(i) in Definition~\ref{defStrongNonIntegrability}, (\ref{eqDef_m0}), and Lemma~\ref{lmCellBoundsBM}~(ii) and (v), it is easy to see that the constant $m_0$ we defined in (\ref{eqDef_m0}) is large enough so that the following inequalities hold:
\begin{align}
  d( \mathfrak{X}_i (X) , S^2\setminus X ) & \geq  \frac{\varepsilon}{10} C^{-1} \Lambda^{-m(b)}, \label{eqXXtoCX_LB}\\
 \diam_d (\mathfrak{X}_i (X))                     & \leq  \frac{\varepsilon}{10} C^{-1} \Lambda^{-m(b)}, \label{eqDiamXX_UB} \\
 d( \mathfrak{X}'_i(X), S^2 \setminus \mathfrak{X}_i(X) )  & \geq  \frac{\varepsilon}{10} C^{-1} \Lambda^{-m(b)-m_0}, \label{eqXXXtoCXX_LB}\\
 \diam_d (\mathfrak{X}'_i (X))                    & \leq  \frac{\varepsilon}{10} C^{-1} \Lambda^{-m(b)-m_0}  \label{eqDiamXXX_UB}
\end{align}
for $i\in\{1,2\}$, and that
\begin{equation} \label{eqXX1toXX2_LB}
     d(\mathfrak{X}_1 (X),\mathfrak{X}_2 (X)  )  \geq \frac{\varepsilon}{10} C^{-1} \Lambda^{-m(b)}.  
\end{equation}

For each $X\in \mathfrak{C}_b$ and each $i\in\{1,2\}$, we define a function $\psi_{i,X} \: S^2\rightarrow \R$ by
\begin{equation}    \label{eqDefCharactFn}
\psi_{i,X}(x) \coloneqq \frac{  d(x, S^2 \setminus \mathfrak{X}_i(X))^\alpha  }
                     {  d(x,\mathfrak{X}'_i(X))^\alpha + d(x, S^2 \setminus \mathfrak{X}_i(X))^\alpha  }
\end{equation}
for $x\in S^2$. Note that 
\begin{equation}  \label{eqPsi_iX}
\psi_{i,X}(x)=1  \text{ if }x\in \mathfrak{X}'_i(X), \quad \text{ and } \quad  \psi_{i,X}(x)=0 \text{ if } x\notin \mathfrak{X}_i(X).
\end{equation} 

\begin{definition}    \label{defFull}
We say that a subset $J\subseteq \{1,2\} \times \{1,2\} \times \mathfrak{C}_b$ has \defn{full projection} if $\pi_3(J) = \mathfrak{C}_b$, where $\pi_3\: \{1,2\} \times \{1,2\} \times \mathfrak{C}_b\rightarrow \mathfrak{C}_b$ is the projection $\pi_3(j,i,X)=X$. We write $\mathcal{F}$ for the collection of all subsets of $\{1,2\} \times \{1,2\} \times \mathfrak{C}_b$ that have full projections.
\end{definition}

For a subset $J \subseteq \{1,2\} \times \{1,2\} \times \mathfrak{C}_b$, we define a function $\beta_J \: S^2 \rightarrow \R$ as
\begin{equation}    \label{eqDefBetaJ}
\beta_J(x) \coloneqq  \begin{cases} 1-\eta \sum\limits_{i\in\{1,2\}} \sum\limits_{\substack{X\in\mathfrak{C}_b\\ \{1,i,X\}\in J}}  \psi_{i,X} \bigl(f^{N_1}(x)\bigr) & \text{if } x\in \inte\bigl(X_{\b,1}^{N_1+M_0}\bigr) \cup \inte\bigl( X_{\w,1}^{N_1+M_0} \bigr), \\  
       1-\eta \sum\limits_{i\in\{1,2\}} \sum\limits_{\substack{X\in\mathfrak{C}_b\\ \{2,i,X\}\in J}}  \psi_{i,X} \bigl(f^{N_1}(x)\bigr) & \text{if } x\in \inte\bigl(X_{\b,2}^{N_1+M_0}\bigr) \cup \inte\bigl(X_{\w,2}^{N_1+M_0}\bigr), \\
       1  & \text{otherwise}, \end{cases}
\end{equation}
for $x\in S^2$.

The only properties of potentials that satisfy $\alpha$-strong non-integrability used in this section are summarized in the following proposition.

\begin{prop}   \label{propSNI}
Let $f$, $\CC$, $d$, $\alpha$, $\phi$ satisfy the Assumptions. We assume in addition that $f(\CC)\subseteq\CC$ and that $\phi$ satisfies the $\alpha$-strong non-integrability condition. Let $b\in\R$ with $\abs{b}\geq 1$. Using the notation above, the following statement holds:

\smallskip

For each $\c\in\{\b,\w\}$, each $X \in \mathfrak{C}_b$, each $x\in \mathfrak{X}'_1(X)$, and each $y\in \mathfrak{X}'_2(X)$,
\begin{equation}   \label{eqSNIBounds}
     \delta_0 d(x,y)^\alpha
\leq \Abs{  S_{N_1 }\phi ( \tau_1 (x) ) -  S_{N_1 }\phi ( \tau_2 (x) )  -S_{N_1 }\phi ( \tau_1 (y) ) +  S_{N_1 }\phi ( \tau_2 (y) )   } 
\leq \delta_0^{-1} d(x,y)^\alpha,
\end{equation}
where we write $\tau_1 \coloneqq \Bigl(f^{N_1}\big|_{X^{N_1+M_0}_{\c,1}} \Bigr)^{-1}$ and $\tau_2 \coloneqq \Bigl(f^{N_1}\big|_{X^{N_1+M_0}_{\c,2}} \Bigr)^{-1}$.
\end{prop}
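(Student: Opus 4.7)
My plan is to verify the two inequalities in (\ref{eqSNIBounds}) separately, since the upper bound is essentially formal while the lower bound contains the actual content of the proposition. Throughout I would use the shorthand $V(z) \coloneqq S_{N_1}\phi(\tau_1(z)) - S_{N_1}\phi(\tau_2(z))$ for $z \in X$, so that the quantity to be estimated is simply $V(x) - V(y)$.

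For the upper bound I would invoke Lemma~\ref{lmSnPhiBound} with $n \coloneqq N_1$ and $m \coloneqq N_1 + M_0$: since $\tau_i(x), \tau_i(y)$ both lie in the single $(N_1 + M_0)$-tile $X^{N_1 + M_0}_{\c,i}$ and are mapped by $f^{N_1}$ onto $x$ and $y$ respectively, the lemma yields $|S_{N_1}\phi(\tau_i(x)) - S_{N_1}\phi(\tau_i(y))| \leq C_1 d(x,y)^\alpha$ for each $i \in \{1,2\}$. The triangle inequality then produces $|V(x) - V(y)| \leq 2 C_1 d(x,y)^\alpha$, and the choice $\delta_0 \leq (2C_1)^{-1}$ built into (\ref{eqDef_delta0}) closes this bound to $\delta_0^{-1} d(x,y)^\alpha$, giving the right-hand inequality in (\ref{eqSNIBounds}).

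For the lower bound I would anchor the argument at the two distinguished points $x_1(X), x_2(X) \in X$ supplied by the $(\CC, \alpha)$-strong non-integrability condition, and write
\[
|V(x) - V(y)| \geq |V(x_1(X)) - V(x_2(X))| - |V(x) - V(x_1(X))| - |V(y) - V(x_2(X))|.
\]
Lemma~\ref{lmSNIwoC} guarantees that the SNI condition holds with respect to the given curve $\CC$; its hypotheses are then applicable because $X \in \mathfrak{C}_b$ forces $X \subseteq Y^{M_0}_\c$ with $m(b) \geq M_0$ by (\ref{eqDefmb}), and $N_1 \geq N_0$ by (\ref{eqDef_N1}). Specialising (\ref{eqSNIBoundsDefn}) with $N \coloneqq N_1$ gives $|V(x_1(X)) - V(x_2(X))| \geq \varepsilon\, d(x_1(X), x_2(X))^\alpha$. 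The two perturbation terms are controlled by a second application of Lemma~\ref{lmSnPhiBound}: since $x, x_1(X) \in \mathfrak{X}'_1(X) \subseteq X^{N_1 + M_0}_{\c,1}$ after pulling back through $\tau_1$ (and analogously through $\tau_2$), each such term is bounded by $2 C_1 (\diam_d \mathfrak{X}'_i(X))^\alpha$.

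The hard part will be the bookkeeping that converts these three estimates, which are naturally expressed in terms of $d(x_1(X), x_2(X))^\alpha$ and $(\diam_d \mathfrak{X}'_i(X))^\alpha$, into a clean lower bound of the form $\delta_0 d(x,y)^\alpha$. For this I would use the lower bound $d(x,y) \geq d(\mathfrak{X}_1(X), \mathfrak{X}_2(X)) \geq \frac{\varepsilon}{10} C^{-1} \Lambda^{-m(b)}$ from (\ref{eqXX1toXX2_LB}), Property~(i) of Definition~\ref{defStrongNonIntegrability} to obtain $d(x_1(X), x_2(X)) \geq \varepsilon \diam_d(X)$, and the upper bound on $\diam_d \mathfrak{X}'_i(X)$ from (\ref{eqDiamXXX_UB}), so that all three quantities can be reduced to powers of $\Lambda^{-m(b)}$. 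The first condition in (\ref{eqDef_m0}) is designed precisely so that the $\Lambda^{-\alpha m_0}$ factor appearing in the perturbation absorbs $C_1$ and the discrepancy between $\varepsilon$ and $\varepsilon^\alpha$, while the second condition places $\mathfrak{X}_1(X), \mathfrak{X}_2(X)$ deep enough inside $X$ that the geometric constants line up. Executing this calculation yields $|V(x) - V(y)| \geq \delta_0 d(x,y)^\alpha$ with $\delta_0$ exactly as in (\ref{eqDef_delta0}), completing the proof.
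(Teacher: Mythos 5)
Your strategy coincides with the paper's proof: the upper bound comes from applying Lemma~\ref{lmSnPhiBound} once to each of the two branches $\tau_1,\tau_2$ inside the tile $X^{N_1+M_0}_{\c,i}$, together with $\delta_0 \leq (2C_1)^{-1}$ from (\ref{eqDef_delta0}); the lower bound is obtained exactly as in the paper by anchoring the fourfold difference at $x_1(X), x_2(X)$, invoking (\ref{eqSNIBoundsDefn}) for the main term and Lemma~\ref{lmSnPhiBound} for the two perturbations $\abs{V(x)-V(x_1(X))}$, $\abs{V(y)-V(x_2(X))}$, and then reducing $d(x_1(X),x_2(X))$, $\diam_d\mathfrak{X}'_i(X)$, $d(\mathfrak{X}_1(X),\mathfrak{X}_2(X))$ and $\diam_d(X)$ to powers of $\Lambda^{-m(b)}$ via (\ref{eqXX1toXX2_LB}), (\ref{eqDiamXXX_UB}), Property~(i), and Lemma~\ref{lmCellBoundsBM}~(ii). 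You also correctly note that Lemma~\ref{lmSNIwoC} is needed to convert the $\alpha$-SNI hypothesis into the $(\CC,\alpha)$-SNI form for the fixed curve, and that $m(b)\geq M_0$ (from (\ref{eqDefmb}) and (\ref{eqDef_epsilon1})) and $N_1\geq N_0$ (from (\ref{eqDef_N1})) make Definition~\ref{defStrongNonIntegrability} applicable to $X\in\mathfrak{C}_b$; the remaining work is the precise ratio bookkeeping that the paper carries out in its displayed chain, which your plan sketches but does not execute.
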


\begin{proof}
We first observe that the second inequality in (\ref{eqSNIBounds}) follows immediately from the triangle inequality, Lemma~\ref{lmSnPhiBound}, and (\ref{eqDef_delta0}).

It suffices to prove the first inequality in (\ref{eqSNIBounds}). Fix arbitrary $\c\in\{\b,\w\}$, $X \in \mathfrak{C}_b$, $x\in \mathfrak{X}'_1(X)$, and $y\in \mathfrak{X}'_2(X)$. By (\ref{eqXXXsubsetXX}), (\ref{eqXX1toXX2_LB}), Lemma~\ref{lmCellBoundsBM}~(ii), Lemma~\ref{lmSnPhiBound}, (\ref{eqXXXtoCXX_LB}), we get
\begin{align*}
&                         \frac{  \Abs{  S_{N_1 }\phi ( \tau_1 (x) ) -  S_{N_1 }\phi ( \tau_2 (x) )  -S_{N_1 }\phi ( \tau_1 (y) ) +  S_{N_1 }\phi ( \tau_2 (y) )   }   } { d(x,y)^\alpha } \\
&\qquad\geq    \frac{  \Abs{  S_{N_1 }\phi ( \tau_1 (x) ) -  S_{N_1 }\phi ( \tau_2 (x) )  -S_{N_1 }\phi ( \tau_1 (y) ) +  S_{N_1 }\phi ( \tau_2 (y) )   }   } { d(x_1(X),x_2(X))^\alpha }   \cdot
                            \frac{ d( \mathfrak{X}_1(X),\mathfrak{X}_2(X))^\alpha }{ (\diam_d(X))^\alpha}  \\
&\qquad\geq   \biggl( \varepsilon  -  \frac{ 2 C_1 ( \diam_d ( \mathfrak{X}'_1(X))^\alpha + 2 C_1 ( \diam_d ( \mathfrak{X}'_2(X))^\alpha }  {(\diam_d(X))^\alpha}    \biggr) 
                           \frac{ 10^{-\alpha} \varepsilon^\alpha C^{-\alpha} \Lambda^{-\alpha m(b)} }{ (\diam_d(X))^\alpha}  \\
&\qquad\geq   \biggl( \varepsilon  -  \frac{ 4  C_1 10^{-\alpha}  \varepsilon^\alpha C^{-\alpha} \Lambda^{-\alpha m(b) - \alpha m_0}   }  { C^{-\alpha} \Lambda^{-\alpha m(b)} }    \biggr)        
                           \frac{ 10^{-\alpha} \varepsilon^\alpha C^{-\alpha} \Lambda^{-\alpha m(b)} }{ C^{\alpha} \Lambda^{-\alpha m(b)} }  \\
&\qquad\geq  \frac{\varepsilon^{1+\alpha}} {2C^{2\alpha} 10^\alpha } \geq \delta_0,
\end{align*}
where the last two inequalities follow from (\ref{eqDef_m0}) and (\ref{eqDef_delta0}).
\end{proof}

\begin{lemma}   \label{lmBetaJ}
Let $f$, $\CC$, $d$, $\Lambda$, $\alpha$, $\phi$, $s_0$ satisfy the Assumptions. We assume in addition that $f(\CC)\subseteq \CC$ and that $\phi$ satisfies the $\alpha$-strong non-integrability condition. We use the notation in this section.

Given $b\in\R$ with $\abs{b}\geq 2 s_0 +1$. Then for each $X\in \mathfrak{C}_b$ and each $i\in\{1,2\}$, the function $\psi_{i,X} \: S^2\rightarrow \R$ defined in (\ref{eqDefCharactFn}) is H\"{o}lder with an exponent $\alpha$ and
\begin{equation}  \label{eqCharactFnHolderConst}
\Hseminorm{\alpha,\, (S^2,d)}{\psi_{i,X}}  \leq 20 \varepsilon^{-\alpha} C \Lambda^{\alpha ( m(b) + 2m_0 ) }.
\end{equation} 
Moreover, for each subset $J\subseteq \{1,2\}\times\{1,2\}\times\mathfrak{C}_b$, the function $\beta_J\:S^2\rightarrow\R$ defined in (\ref{eqDefBetaJ}) satisfies
\begin{equation} \label{eqBetaJBounds}
1\geq \beta_J(x) \geq 1-\eta > \frac{1}{2}
\end{equation}
for $x\in S^2$. In addition, $\beta_J \in \Holder{\alpha}(S^2,d)$ with $\Hseminorm{\alpha,\, (S^2,d)}{\beta_J} \leq L_\beta$, where
\begin{equation}   \label{eqDefLipBoundBetaJ}
L_{\beta} \coloneqq 40 \varepsilon^{-\alpha} C \Lambda^{ \alpha ( m(b)+2m_0 ) }   (\LIP_d(f))^{\alpha N_1} \eta
\end{equation}
is a constant depending only on $f$, $\CC$, $d$, $\alpha$, $\phi$, and $b$. Here $C\geq 1$ is a constant from Lemma~\ref{lmCellBoundsBM} depending only on $f$, $\CC$, and $d$.
\end{lemma}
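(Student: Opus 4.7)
\medskip
The plan breaks into three independent pieces, corresponding to the three claims: the H\"older bound on $\psi_{i,X}$, the pointwise bounds on $\beta_J$, and the H\"older bound on $\beta_J$.

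First I would prove (\ref{eqCharactFnHolderConst}) by direct computation with the quotient $\psi_{i,X}=u/(u+v)$, where $u(x)\coloneqq d(x,S^2\setminus\mathfrak{X}_i(X))^\alpha$ and $v(x)\coloneqq d(x,\mathfrak{X}'_i(X))^\alpha$. The building blocks $u$ and $v$ are $\alpha$-H\"older with seminorm at most $1$ since $d(\cdot,A)$ is $1$-Lipschitz and $|s^\alpha-t^\alpha|\leq|s-t|^\alpha$ for $s,t\geq 0$ and $\alpha\in(0,1]$. The numerator of $\psi_{i,X}(x)-\psi_{i,X}(y)$ is $u(x)v(y)-u(y)v(x)=u(x)(v(y)-v(x))+v(x)(u(x)-u(y))$, and combining the upper bound $\max\{u,v\}\leq\diam_d(\mathfrak{X}_i(X))^\alpha$ on $\mathfrak{X}_i(X)$ with (\ref{eqDiamXX_UB}) gives a numerator bound of $2(\varepsilon/10)^\alpha C^{-\alpha}\Lambda^{-\alpha m(b)}d(x,y)^\alpha$. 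For the denominator the triangle inequality together with the inequality $a^\alpha+b^\alpha\geq(a+b)^\alpha$ (valid for $\alpha\in(0,1]$) yields $u(z)+v(z)\geq d(\mathfrak{X}'_i(X),S^2\setminus\mathfrak{X}_i(X))^\alpha\geq(\varepsilon/10)^\alpha C^{-\alpha}\Lambda^{-\alpha(m(b)+m_0)}$ for $z\in\mathfrak{X}_i(X)$ by (\ref{eqXXXtoCXX_LB}). Taking the ratio and using $10^\alpha\leq 10$ and $C^\alpha\leq C$ gives exactly $20\varepsilon^{-\alpha}C\Lambda^{\alpha(m(b)+2m_0)}$; the case in which exactly one of $x,y$ lies outside $\mathfrak{X}_i(X)$ is handled similarly using $u(x)\leq d(x,y)^\alpha$, and produces a strictly smaller constant.

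For the bounds (\ref{eqBetaJBounds}) the key observation is the disjoint support structure: the tiles $\{\mathfrak{X}_i(X)\}_{i\in\{1,2\},X\in\mathfrak{C}_b}$ are pairwise disjoint, since distinct $X\in\mathfrak{C}_b$ have disjoint interiors with $\mathfrak{X}_i(X)\subseteq\inte(X)$ by (\ref{eqXXtoCX_LB}), while $\mathfrak{X}_1(X)$ and $\mathfrak{X}_2(X)$ are separated by (\ref{eqXX1toXX2_LB}). Consequently at any $y\in S^2$ at most one term $\psi_{i,X}(y)$ is nonzero, and that value lies in $[0,1]$; together with the disjointness of the interiors of $X^{N_1+M_0}_{\c,j}$, this gives $\beta_J(x)\in[1-\eta,1]$ at every $x\in S^2$. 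The strict inequality $1-\eta>1/2$ follows from $\eta\leq 2^{-12}$ in (\ref{eqDef_eta}).

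The harder part will be the H\"older bound $\Hseminorm{\alpha,\,(S^2,d)}{\beta_J}\leq L_\beta$. The technical obstacle is that $\beta_J$ is defined piecewise over the tiles $Q_{\c,j}\coloneqq X^{N_1+M_0}_{\c,j}$, so one must control the seminorm across the boundaries $\partial Q_{\c,j}$. The enabling observation is that if $x\in\partial Q_{\c,j}$, then $f^{N_1}(x)\in\partial Y^{M_0}_\c$, while each $\mathfrak{X}_i(X)$ for $X\subseteq Y^{M_0}_\c$ lies in $\inte(Y^{M_0}_\c)$ (as $\mathfrak{X}_i(X)\subseteq\inte(X)\subseteq\inte(Y^{M_0}_\c)$ by the cell-decomposition refinement property), and for $X\subseteq Y^{M_0}_{\c'}$ with $\c'\neq\c$, $f^{N_1}(x)\in\partial Y^{M_0}_\c$ is disjoint from $\inte(Y^{M_0}_{\c'})\supseteq\mathfrak{X}_i(X)$. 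Hence every $\psi_{i,X}(f^{N_1}(x))$ vanishes on $\partial Q_{\c,j}$, so $\beta_J\equiv 1$ there and $\beta_J$ is continuous across tile boundaries.

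To finish, I would decompose $\beta_J=1-\eta\sum_{(j,i,X,\c)}\varphi_{j,i,X,\c}$, where $\varphi_{j,i,X,\c}(x)\coloneqq\psi_{i,X}(f^{N_1}(x))\cdot\mathbbm{1}_{Q_{\c,j}}(x)$ (continuous by the previous paragraph), sum only over indices with $(j,i,X)\in J$ and $X\subseteq Y^{M_0}_\c$, and handle each $\varphi_{j,i,X,\c}$ by case analysis on whether $x_1,x_2$ belong to $Q_{\c,j}$. When both lie in $Q_{\c,j}$ one uses the composition bound $\Hseminorm{\alpha}{\psi_{i,X}\circ f^{N_1}}\leq\Hseminorm{\alpha}{\psi_{i,X}}(\LIP_d(f))^{\alpha N_1}$; when exactly one lies in $Q_{\c,j}$, say $x_1$, the vanishing on $\partial Q_{\c,j}$ gives $\psi_{i,X}(f^{N_1}(x_1))\leq\Hseminorm{\alpha}{\psi_{i,X}}(\LIP_d(f))^{\alpha N_1}d(x_1,\partial Q_{\c,j})^\alpha$ and the linear local connectivity of $(S^2,d)$ (discussed after Lemma~\ref{lmCellBoundsBM}) bounds $d(x_1,\partial Q_{\c,j})$ by a constant times $d(x_1,x_2)$. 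Finally, the disjoint support structure of the $\varphi_{j,i,X,\c}$ across $(i,X)$ (fixed $j$) means at any point at most one term is nonzero, so summing introduces only a factor of $2$ (from comparing two points that land in different supports), yielding the claimed $L_\beta$. Tracking the absolute constants carefully—which is the main bookkeeping challenge—produces the explicit bound in (\ref{eqDefLipBoundBetaJ}).
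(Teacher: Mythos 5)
Your handling of the H\"older bound (\ref{eqCharactFnHolderConst}) on $\psi_{i,X}$ and of the pointwise bounds (\ref{eqBetaJBounds}) on $\beta_J$ matches the paper's proof: the subadditivity $a^\alpha+b^\alpha\geq(a+b)^\alpha$ combined with (\ref{eqDiamXX_UB}) and (\ref{eqXXXtoCXX_LB}) is exactly how the paper controls the quotient, and the disjoint-support observation (via (\ref{eqXXtoCX_LB}) and (\ref{eqXX1toXX2_LB})) is what the paper uses for (\ref{eqBetaJBounds}). The observation that $\psi_{i,X}\circ f^{N_1}$ vanishes on $\partial X^{N_1+M_0}_{\c,j}$, which gives continuity of $\beta_J$ across tile boundaries, also matches.

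Where you part ways with the paper is in the final seminorm estimate. You decompose $\beta_J$ along the four tiles $Q_{\c,j}=X^{N_1+M_0}_{\c,j}$ and split on whether $x_1, x_2$ lie in the same $Q_{\c,j}$. The paper instead dichotomizes on whether $x, y$ lie in a common tile of the \emph{finer} level $N_1+m(b)$: in Case~1 (common $(N_1+m(b))$-tile) at most one summand $\psi_{i,X_*}$ (with $X_*=f^{N_1}$ of that tile) is relevant and the composition bound finishes, and in Case~2 the paper bounds $d(x,y)$ from below and uses the trivial bound $\abs{\beta_J(x)-\beta_J(y)}\leq\eta$. Both decompositions are workable. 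However, your claim to recover $L_\beta$ \emph{exactly} has a genuine gap: in the ``exactly one in $Q_{\c,j}$'' case you rely on ``linear local connectivity ... bounds $d(x_1,\partial Q_{\c,j})$ by a constant times $d(x_1,x_2)$.'' That is indeed what LLC gives you (take a continuum from $x_1$ to $x_2\notin Q_{\c,j}$ inside $B_d(x_1, Ld(x_1,x_2))$; it must meet $\partial Q_{\c,j}$), but the constant it gives is $L$, not $1$ — one cannot in a general metric space conclude $d(x_1,\partial Q_{\c,j})\leq d(x_1,x_2)$ merely from $x_1\in\inte Q_{\c,j}$, $x_2\notin Q_{\c,j}$, since $d(x_1, S^2\setminus\inte Q_{\c,j})\leq d(x_1,\partial Q_{\c,j})$ is the inequality that comes for free, and it points the wrong way. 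Thus your route delivers $\Hseminorm{\alpha,\,(S^2,d)}{\beta_J}\leq L^\alpha L_\beta$, not $L_\beta$. The paper's Case~2 sidesteps this by asserting $d\bigl(f^{N_1}(x),f^{N_1}(y)\bigr)\geq\frac{\varepsilon}{10}C^{-1}\Lambda^{-m(b)}$ directly; I would caution you that this intermediate step is itself quite condensed (the case where $x, y$ lie in distinct preimage tiles $Q_{\c,j}\neq Q_{\c,j'}$ with the same $\c$ deserves a separate look, e.g.\ via Lemma~\ref{lmCellBoundsBM}~(iii)). Either way, $L_\beta$ is a constant multiple of $\eta$ and only its dependence on $f,\CC,d,\alpha,\phi,b$ matters downstream, so the $L^\alpha$ factor is cosmetic; but since the lemma states (\ref{eqDefLipBoundBetaJ}) explicitly, you should either mirror the paper's $(N_1+m(b))$-tile dichotomy or enlarge the constant to $L^\alpha L_\beta$.
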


\begin{proof}
We will first establish (\ref{eqCharactFnHolderConst}). Consider distinct points $x,y\in S^2$.

If $x,y\in S^2 \setminus \mathfrak{X}_i(X)$, then $\frac{ \psi_{i,X}(x)-\psi_{i,X}(y) }{ d(x,y)^\alpha } = 0$.

If $x \in S^2 \setminus \mathfrak{X}_i(X)$ and $y \in \mathfrak{X}_i(X)$, then by (\ref{eqXXXtoCXX_LB}),
\begin{align*}
        \frac{ \abs{ \psi_{i,X}(x)-\psi_{i,X}(y) } }{ d(x,y)^\alpha }  
 =   &  \frac{ d(y, S^2\setminus \mathfrak{X}_i(X) ) }{ d(x,y)^\alpha }  
        \frac{ 1 }{ d(y,\mathfrak{X}'_i(X) )^\alpha + d(y, S^2\setminus \mathfrak{X}_i(X) )^\alpha } \\
\leq &  \frac{ 1 }{ d(\mathfrak{X}'_i(X), S^2\setminus \mathfrak{X}_i(X) )^\alpha } 
\leq    \frac{10^\alpha}{\varepsilon^\alpha} C^\alpha \Lambda^{ \alpha ( m(b)+m_0 ) }  
\leq \frac{20}{\varepsilon^\alpha} C \Lambda^{ \alpha ( m(b)+2m_0 ) }.
\end{align*}

Similarly, if $y \in S^2 \setminus \mathfrak{X}_i(X)$ and $x \in \mathfrak{X}_i(X)$, then $\frac{ \abs{ \psi_{i,X}(x)-\psi_{i,X}(y) } }{ d(x,y)^\alpha }  \leq 20 \varepsilon^{-\alpha} C \Lambda^{ \alpha ( m(b)+2m_0 ) }$.

If $x,y\in \mathfrak{X}_i(X)$, then by (\ref{eqDiamXX_UB}), (\ref{eqXXXsubsetXX}), and (\ref{eqXXXtoCXX_LB}),
\begin{align*}    
     & \frac{ \abs{ \psi_{i,X}(x)-\psi_{i,X}(y) } }{ d(x,y)^\alpha }  \\
\leq & \frac{ d(x, S^2 \setminus \mathfrak{X}_i(X))^\alpha \abs{d(x,\mathfrak{X}'_i(X))^\alpha - d(y,\mathfrak{X}'_i(X))^\alpha}   }
            { d(x,y)^\alpha  (d(x,\mathfrak{X}'_i(X))^\alpha + d(x, S^2 \setminus \mathfrak{X}_i(X))^\alpha ) (d(y,\mathfrak{X}'_i(X))^\alpha + d(y, S^2 \setminus \mathfrak{X}_i(X))^\alpha )}    \\
     &+\frac{\abs{d(x, S^2 \setminus \mathfrak{X}_i(X))^\alpha - d(y, S^2 \setminus \mathfrak{X}_i(X))^\alpha} d(x,\mathfrak{X}'_i(X))^\alpha}
     {  d(x,y)^\alpha   (d(x,\mathfrak{X}'_i(X))^\alpha + d(x, S^2 \setminus \mathfrak{X}_i(X))^\alpha ) (d(y,\mathfrak{X}'_i(X))^\alpha + d(y, S^2 \setminus \mathfrak{X}_i(X))^\alpha )}   \\ 
\leq & \frac{ d(x, S^2 \setminus \mathfrak{X}_i(X))^\alpha d(x,y)^\alpha  + d(x,y)^\alpha d(x,\mathfrak{X}'_i(X))^\alpha  }
            {  d(x,y)^\alpha  d( \mathfrak{X}'_i(X), S^2 \setminus \mathfrak{X}_i(X)) ^{2\alpha}} \\
\leq & \frac{    \frac{\varepsilon^\alpha}{10^\alpha}C^{-\alpha}\Lambda^{-\alpha m(b)}
               + \frac{\varepsilon^\alpha}{10^\alpha}C^{-\alpha}\Lambda^{-\alpha m(b)}  }
            { \bigl( \frac{\varepsilon}{10}C^{-1}\Lambda^{-m(b) - m_0 }  \bigr)^{2 \alpha}  } 
\leq  20 \varepsilon^{-\alpha} C \Lambda^{ \alpha ( m(b)+2m_0) } . 
\end{align*}

Hence $\Hseminorm{\alpha,\, (S^2,d)}{\psi_{i,X}} \leq 20 \varepsilon^{-\alpha} C \Lambda^{ \alpha ( m(b)+2m_0) }$, establishing (\ref{eqCharactFnHolderConst}).

In order to establish (\ref{eqBetaJBounds}), we only need to observe that for each $j\in\{1,2\}$, and each $X\in \inte\bigl(X_{\b,j}^{N_1+M_0}\bigr) \cup \inte\bigl( X_{\w,j}^{N_1+M_0} \bigr)$, at most one term in the summations in (\ref{eqDefBetaJ}) is nonzero. Indeed, we note that for each pair of distinct tiles $X_1,X_2\in\mathfrak{C}_b$, $\mathfrak{X}_{i_1}(X_1)\cap\mathfrak{X}_{i_2}(X_2) = \emptyset$ for all $i_1,i_2\in\{1,2\}$ by (\ref{eqXXtoCX_LB}), and $\mathfrak{X}_1(X_1) \cap \mathfrak{X}_2(X_1) = \emptyset$ by (\ref{eqXX1toXX2_LB}). Hence by (\ref{eqPsi_iX}), at most one term in the summations in (\ref{eqDefBetaJ}) is nonzero, and (\ref{eqBetaJBounds}) follows from (\ref{eqDef_eta}).

We now show the continuity of $\beta_J$. Note that for each $i\in \{1,2\}$ and each $X\in\mathfrak{C}_b$, by (\ref{eqXXtoCX_LB}), (\ref{eqPsi_iX}), and the continuity of $\psi_{i,X}$, we have 
\begin{equation*}
\psi_{i,X} \bigl( f^{N_1} \bigl( \partial X_{\c,j}^{N_1+M_0} \bigr) \bigr) = \psi_{i,X}\bigl( Y_\c^{M_0} \bigr) = \{0\}
\end{equation*}
for $\c\in\{\b,\w\}$ and $j\in\{1,2\}$. It follows immediately from (\ref{eqDefBetaJ}) that $\beta_J$ is continuous.

Finally, for arbitrary $x,y\in S^2$ with $x\neq y$, we will establish $\frac{\abs{\beta_J(x) - \beta_J(y)}}{ d(x,y)^\alpha } \leq L_\beta$ by considering the following two cases.

\smallskip

\emph{Case 1.} $x,y\in X^{N_1 + m(b)}$ for some $X^{N_1+m(b)} \in \X^{N_1+m(b)}$. If 
\begin{equation*}
X^{N_1+m(b)}  \nsubseteq \bigcup \bigl\{ X_{\c,j}^{N_1+M_0} \,\big|\, \c\in\{\b,\w\},\, j\in\{1,2\} \bigr\},
\end{equation*}
then $\beta_J(x)-\beta_J(y) = 1-1 = 0$. If
\begin{equation*}
X^{N_1+m(b)}  \subseteq \bigcup \bigl\{ X_{\c,j}^{N_1+M_0} \,\big|\, \c\in\{\b,\w\},\, j\in\{1,2\} \bigr\},
\end{equation*}
then by (\ref{eqPsi_iX}), 
\begin{align*}
          \frac{\abs{\beta_J(x) - \beta_J(y)}}{ d(x,y)^\alpha }
   =   &  \frac{ \Bigl( 1- \eta \sum\limits_{i\in\{1,2\}} \psi_{i,X_*} \bigl( f^{N_1} (x) \bigr) \Bigr)
                      -\Bigl( 1- \eta \sum\limits_{i\in\{1,2\}} \psi_{i,X_*} \bigl( f^{N_1} (y) \bigr) \Bigr) } { d(x,y)^\alpha }\\
\leq   &  2 \eta \Hseminorm{\alpha,\, (S^2,d)}{\psi_{i,X_*}}  \bigl( \LIP_d(f) \bigr)^{\alpha N_1}  \leq L_\beta,                    
\end{align*}
where we denote $X_* \coloneqq  f^{N_1} \bigl(X^{N_1+m(b)} \bigr)$.

\smallskip
\emph{Case 2.} $\card \bigl( \{x,y\} \cap X^{N_1+m(b)} \bigr) \leq 1$ for all $X^{N_1+m(b)} \in \X^{N_1+m(b)}$. We assume, without loss of generality, that $\beta_J(x)-\beta_J(y) \neq 0$. Then by (\ref{eqPsi_iX}) and (\ref{eqXXtoCX_LB}), $d\bigl( f^{N_1}(x),f^{N_1}(y)\bigr) \geq \frac{\varepsilon}{10} C^{-1}\Lambda^{-m(b)}$. Thus $d(x,y) \geq \frac{\varepsilon}{10} C^{-1}\Lambda^{-m(b)}  ( \LIP_d(f) )^{-N_1}$. Hence by (\ref{eqBetaJBounds}), 
$
\frac{\abs{\beta_J(x) - \beta_J(y)}}{ d(x,y)^\alpha } \leq  10 \varepsilon^{-\alpha} C \Lambda^{\alpha m(b)} ( \LIP_d(f) )^{\alpha N_1} \eta \leq L_\beta.
$ 
\end{proof}

\begin{definition}   \label{defDolgopyatOperator}
Let $f$, $\CC$, $d$, $\alpha$, $\phi$ satisfy the Assumptions. We assume in addition that $f(\CC)\subseteq \CC$ and that $\phi$ satisfies the $\alpha$-strong non-integrability condition. Let $a,b\in\R$ satisfy $\abs{b}\geq 1$. Denote $s \coloneqq a+\I b$. For each subset $J\subseteq \{1,2\}\times\{1,2\}\times \mathfrak{C}_b$, the \defn{Dolgopyat operator} $\MM_{J,s,\phi}$ on $\Holder{\alpha}\bigl(\bigl(X^0_\b,d\bigr),\C\bigr) \times \Holder{\alpha}\bigl(\bigl(X^0_\w,d \bigr),\C\bigr)$ is defined by
\begin{equation}   \label{eqDefDolgopyatOperator}
\MM_{J,s,\phi} (u_\b,u_\w) = \RRR_{ \wt{a\phi}}^{N_1+M_0}  \bigl( u_\b \beta_J|_{X^0_\b} ,  u_\w  \beta_J|_{X^0_\w} \bigr)
\end{equation}
for $u_\b \in \Holder{\alpha}\bigl(\bigl(X^0_\b,d\bigr),\C\bigr)$ and $u_\w\in\Holder{\alpha}\bigl(\bigl(X^0_\w,d\bigr),\C\bigr)$.
\end{definition}

Here $\mathfrak{C}_b$ is defined in (\ref{eqDefCb}), $\beta_J$ is defined in (\ref{eqDefBetaJ}), $M_0\in\N$ is a constant from Definition~\ref{defStrongNonIntegrability}, and $N_1$ is given in (\ref{eqDef_N1}). Note that in (\ref{eqDefDolgopyatOperator}), since $\beta_J \in \Holder{\alpha}(S^2,d)$ (see Lemma~\ref{lmBetaJ}), we have $u_\c \beta_J|_{X^0_\c}  \in \Holder{\alpha}\bigl(\bigl(X^0_\c,d\bigr),\C\bigr)$ for $\c\in\{\b,\w\}$.

\subsection{Cancellation argument}    \label{subsctCancellation}

\begin{lemma}    \label{lmGibbsDoubling}
Let $f$, $\CC$, $d$ satisfy the Assumptions. Let $\varphi\in \Holder{\alpha}(S^2,d)$ be a real-valued H\"{o}lder continuous function with an exponent $\alpha\in(0,1]$. Then there exists a constant $C_{\mu_\varphi}\geq 1$ depending only on $f$, $d$, and $\varphi$ such that for all integers $m,n\in\N_0$, and tiles $X^n\in \X^n(f,\CC)$, $X^{m+n}\in \X^{m+n}(f,\CC)$ satisfying $X^{m+n} \subseteq X^n$, we have
\begin{equation}    \label{eqGibbsDoubling}
\frac{  \mu_\varphi(X^n) }{ \mu_\varphi(X^{m+n}) } \leq C_{\mu_\varphi}^2 \exp  ( m  ( \norm{\varphi}_{\CCC^0(S^2) } +P(f,\varphi) )  ),
\end{equation}
where $\mu_\varphi$ is the unique equilibrium state for the map $f$ and the potential $\varphi$, and $P(f,\varphi)$ denotes the topological pressure for $f$ and $\varphi$.
\end{lemma}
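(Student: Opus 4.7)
The plan is to deduce the doubling estimate from the Gibbs property of the equilibrium state $\mu_\varphi$ established in the thermodynamic formalism developed in \cite{Li18} (see also \cite{Li17}). Specifically, for an expanding Thurston map $f$ with a visual metric $d$ and a real-valued H\"older continuous potential $\varphi\in\Holder{\alpha}(S^2,d)$, the unique equilibrium state $\mu_\varphi$ (Theorem~\ref{thmEquilibriumState}) enjoys a Gibbs-type bound on tiles: there exists a constant $C_{\mu_\varphi}\geq 1$ depending only on $f$, $d$, and $\varphi$ such that for each $k\in\N_0$, each $X^k\in\X^k(f,\CC)$, and each $x\in X^k$,
\begin{equation}   \label{eqGibbsTile}
\frac{1}{C_{\mu_\varphi}} \leq \frac{\mu_\varphi(X^k)}{\exp(S_k\varphi(x) - kP(f,\varphi))} \leq C_{\mu_\varphi}.
\end{equation}
The first step will be to record \eqref{eqGibbsTile} as the key input, citing the appropriate results from \cite{Li18} (whose proof relies on the Ruelle operator machinery summarized in Theorem~\ref{thmMuExist}, Theorem~\ref{thmRR^nConv}, Lemma~\ref{lmSnPhiBound}, and Lemma~\ref{lmSigmaExpSnPhiBound} recalled in Subsection~\ref{subsctThurstonMap}).

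Given \eqref{eqGibbsTile}, the second step is elementary. Fix tiles $X^n\in\X^n(f,\CC)$ and $X^{m+n}\in\X^{m+n}(f,\CC)$ with $X^{m+n}\subseteq X^n$, and choose any point $y\in X^{m+n}$. Since $y\in X^n$ as well, applying the upper bound in \eqref{eqGibbsTile} to $X^n$ at $y$ and the lower bound to $X^{m+n}$ at $y$ gives
\begin{equation*}
\frac{\mu_\varphi(X^n)}{\mu_\varphi(X^{m+n})} \leq C_{\mu_\varphi}^2 \exp\bigl(S_n\varphi(y) - S_{m+n}\varphi(y) + mP(f,\varphi)\bigr).
\end{equation*}
By \eqref{eqDefSnPt}, the telescoping difference satisfies
\begin{equation*}
S_{m+n}\varphi(y) - S_n\varphi(y) = \sum_{i=n}^{m+n-1}\varphi(f^i(y)),
\end{equation*}
so its absolute value is at most $m\norm{\varphi}_{\CCC^0(S^2)}$. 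Substituting this bound yields \eqref{eqGibbsDoubling}.

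There is no real obstacle here, since both the Gibbs bound and the telescoping estimate are standard; the only care needed is to make sure one applies \eqref{eqGibbsTile} at the \emph{same} point $y$ on both sides, which is what lets the $S_n\varphi$ terms combine into the controlled tail sum $\sum_{i=n}^{m+n-1}\varphi(f^i(y))$. The constant $C_{\mu_\varphi}$ produced depends only on $f$, $d$, and $\varphi$ (through $\Hseminorm{\alpha,\,(S^2,d)}{\varphi}$, $\norm{\varphi}_{\CCC^0(S^2)}$, and the data of the underlying visual metric), in agreement with the statement.
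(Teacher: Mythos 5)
Your proof is correct and follows essentially the same route as the paper: cite the Gibbs property of $\mu_\varphi$ on tiles from \cite{Li18}, evaluate both bounds at a common point $y\in X^{m+n}\subseteq X^n$, and use the telescoping identity for $S_{m+n}\varphi-S_n\varphi$ together with $\norm{\varphi}_{\CCC^0(S^2)}$ to control the exponent. The only cosmetic difference is that the paper first states the Gibbs bound with the abstract constant $P_{\mu_\varphi}$ and then invokes $P_{\mu_\varphi}=P(f,\varphi)$ at the end, whereas you fold this identification into the Gibbs bound from the start.
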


\begin{proof}
By Theorem~5.16, Corollary~5.18, and Theorem~1.1 in \cite{Li18}, the unique equilibrium state $\mu_\varphi$ is a \emph{Gibbs measure} with respect to $f$, $\CC$, and $\varphi$ as defined in Definition~5.3 in \cite{Li18}. More precisely, there exist constants $P_{\mu_\varphi} \in \R$ and $C_{\mu_\varphi} \geq 1$ such that for each $n\in\N_0$, each $n$-tile $X^n \in\X^n$, and each $x\in X^n$, we have
\begin{equation*}
           \frac{1}{C_{\mu_\varphi}}  
\leq    \frac{ \mu_\varphi(X^n)}  { \exp \bigl( S_n\varphi (x) - n P_{\mu_\varphi}  \bigr) }
\leq   C_{\mu_\varphi}.
\end{equation*}

We fix arbitrary integers $m,n\in\N_0$, and tiles $X^n\in \X^n$, $X^{m+n}\in \X^{m+n}$ satisfying $X^{m+n} \subseteq X^n$. Choose an arbitrary point $x \in X^{m+n}$. Then
\begin{equation*}
                 \frac{  \mu_\varphi(X^n) }{ \mu_\varphi(X^{m+n}) } 
\leq   C_{\mu_\varphi}^2  \frac{  \exp \bigl( S_n\varphi (x) - n P_{\mu_\varphi}  \bigr) }{ \exp \bigl( S_{n+m}\varphi (x) - (n+m) P_{\mu_\varphi}  \bigr) }
\leq   C_{\mu_\varphi}^2 \exp  ( m  ( \norm{\varphi}_{\CCC^0(S^2) } +P(f,\varphi)  )   ).
\end{equation*}

Inequality~(\ref{eqGibbsDoubling}) follows immediately from the fact that $P_{\mu_\varphi} = P(f,\varphi)$ (see \cite[Theorem~5.16 and Proposition~5.17]{Li18}).
\end{proof}

\begin{lemma}   \label{lmArgumentIneqalities}
For all $z_1,z_2\in \C\setminus\{0\}$, the following inequalities hold:
\begin{equation}  \label{eqArgumentTriangleIneq}
\abs{\Arg (z_1 z_2)} \leq \abs{\Arg(z_1)} + \abs{\Arg(z_2)},
\end{equation}
\begin{equation}    \label{eqTriangleIneqWithAngle}
\abs{z_1 + z_2}  \leq \abs{z_1} + \abs{z_2} -  \frac{1}{16} \Bigl(\Arg\Bigl(\frac{z_1}{z_2}\Bigr)\Bigr)^2  \min\{\abs{z_1}, \, \abs{z_2}\},
\end{equation}
\begin{equation}   \label{eqArgDist}
\AbsBig{ \Arg\Bigl( \frac{z_1}{z_2} \Bigr) } \leq \frac{2 \abs{z_1 - z_2}}{\min\{\abs{z_1}, \, \abs{z_2} \} }  .
\end{equation}
\end{lemma}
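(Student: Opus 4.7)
My plan is to prove the three inequalities separately; each is elementary but requires a careful case analysis owing to the fact that $\Arg$ takes values in $(-\pi,\pi]$. Throughout, write $z_j = r_j e^{\I \theta_j}$ with $r_j = \abs{z_j}$ and $\theta_j = \Arg(z_j) \in (-\pi,\pi]$.

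For (\ref{eqArgumentTriangleIneq}), I would split into two cases. If $\abs{\theta_1} + \abs{\theta_2} \geq \pi$, the inequality is immediate because $\abs{\Arg(z_1 z_2)} \leq \pi$ by the definition of principal argument. Otherwise $\theta_1 + \theta_2 \in (-\pi,\pi)$, so there is no branch correction and $\Arg(z_1 z_2) = \theta_1 + \theta_2$, whence the ordinary triangle inequality $\abs{\theta_1 + \theta_2} \leq \abs{\theta_1} + \abs{\theta_2}$ gives the claim.

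For (\ref{eqTriangleIneqWithAngle}), I would let $\theta \coloneqq \Arg(z_1/z_2) \in (-\pi,\pi]$ and apply the law of cosines:
\begin{equation*}
\abs{z_1+z_2}^2 = r_1^2 + r_2^2 + 2 r_1 r_2 \cos\theta = (r_1+r_2)^2 - 2 r_1 r_2 (1-\cos\theta).
\end{equation*}
Using the elementary bound $1-\cos\theta \geq 2\theta^2/\pi^2$ for $\abs{\theta} \leq \pi$, followed by $\sqrt{a^2 - b} \leq a - b/(2a)$ (valid whenever $0 \leq b \leq a^2$), I get
\begin{equation*}
\abs{z_1+z_2} \leq r_1 + r_2 - \frac{2 r_1 r_2 \theta^2 /\pi^2}{r_1+r_2}.
\end{equation*}
Since $\frac{r_1 r_2}{r_1+r_2} \geq \frac{1}{2}\min\{r_1,r_2\}$ and $\pi^2 < 16$, this yields (\ref{eqTriangleIneqWithAngle}).

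For (\ref{eqArgDist}), by symmetry of the statement in swapping $z_1 \leftrightarrow z_2$, I may assume $r_1 \leq r_2$. Setting $w \coloneqq z_2/z_1$, so $\abs{w} \geq 1$ and $\Arg(z_1/z_2) = -\Arg(w^{-1})$, the claim becomes $\abs{\Arg(w)} \leq 2\abs{w-1}$. Writing $w = re^{\I\theta}$ with $r\geq 1$,
\begin{equation*}
\abs{w-1}^2 = (r-1)^2 + 2r(1-\cos\theta) \geq 2(1-\cos\theta) \geq 4\theta^2/\pi^2,
\end{equation*}
so $\abs{w-1} \geq 2\abs{\theta}/\pi$, and since $\pi/2 < 2$, this gives $\abs{\theta} \leq 2\abs{w-1}$, which is exactly (\ref{eqArgDist}).

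There is no genuine obstacle here; the only mild subtlety is keeping track of the principal-branch wraparound in (\ref{eqArgumentTriangleIneq}), and verifying that the constants $1/16$ and $2$ in (\ref{eqTriangleIneqWithAngle}) and (\ref{eqArgDist}) are comfortably loose enough to absorb the $\pi^2$ factors arising from the universal convexity bound $1-\cos\theta \geq 2\theta^2/\pi^2$ on $[-\pi,\pi]$.
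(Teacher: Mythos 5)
Your proof is correct, and it takes a genuinely different route from the paper's. For (\ref{eqTriangleIneqWithAngle}) and (\ref{eqArgDist}) the paper argues synthetically from a plane figure: for (\ref{eqTriangleIneqWithAngle}) it projects $z_1$ onto the ray through $z_2$ to produce the chain $\abs{z_1+z_2} \leq \abs{z_2} + \abs{z_1}\cos(\theta/2)$ and then Taylor-expands $\cos(\theta/2)$, while for (\ref{eqArgDist}) it compares the chord $\abs{AB}$ with the arc-like quantity $\frac12\abs{OA}\measuredangle AOC$. You instead work entirely algebraically from the law of cosines identity $\abs{z_1+z_2}^2 = (r_1+r_2)^2 - 2r_1 r_2(1-\cos\theta)$, feeding in the single Jordan-type bound $1-\cos\theta \geq 2\theta^2/\pi^2$ (equivalently $\sin(\theta/2)\geq\theta/\pi$ on $[0,\pi]$) for both inequalities, together with $\sqrt{a^2-b}\leq a-b/(2a)$ and $\frac{r_1r_2}{r_1+r_2}\geq\frac12\min\{r_1,r_2\}$. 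Your approach buys a figure-free, self-contained argument and unifies (\ref{eqTriangleIneqWithAngle}) and (\ref{eqArgDist}) under one convexity estimate; the paper's buys geometric transparency at the cost of invoking two separate pictures. Your handling of (\ref{eqArgumentTriangleIneq}) (split on whether $\abs{\theta_1}+\abs{\theta_2}\geq\pi$) makes explicit the branch bookkeeping the paper dismisses as immediate. All the constant-tracking in your version ($\pi^2<16$ for (\ref{eqTriangleIneqWithAngle}), $\pi/2<2$ for (\ref{eqArgDist})) checks out.
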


\begin{proof}
Inequality (\ref{eqArgumentTriangleIneq}) follows immediately from the definition of $\Arg$ (see Section~\ref{sctNotation}).

We then verify (\ref{eqTriangleIneqWithAngle}). Without loss of generality, we assume that $\abs{z_1}\leq\abs{z_2}$ and $\theta \coloneqq \Arg\bigl(\frac{z_1}{z_2}\bigr) \geq 0$. Using the labeling in Figure~\ref{figTriangleIneqWithAngle}, we let $\overrightarrow{OQ} = z_2$ and $\overrightarrow{QC} = z_1$. Then
\begin{align*}
\abs{z_1+z_2}  =  & \abs{OA} + \abs{AC} \leq \abs{z_2} + \abs{BC} \leq \abs{z_2} + \abs{z_1}\cos\biggl( \frac{\theta}{2} \biggr) \\
             \leq & \abs{z_2} + \abs{z_1} \biggl( 1- \frac{\theta^2}{8} + \frac{\theta^4}{4! 2^4} \biggr) 
             \leq   \abs{z_2} + \biggl( 1- \frac{\theta^2}{16} \biggr) \abs{z_1} .
\end{align*}

\begin{figure}
    \centering
    \begin{overpic}
    [width=8cm, 
    tics=20]{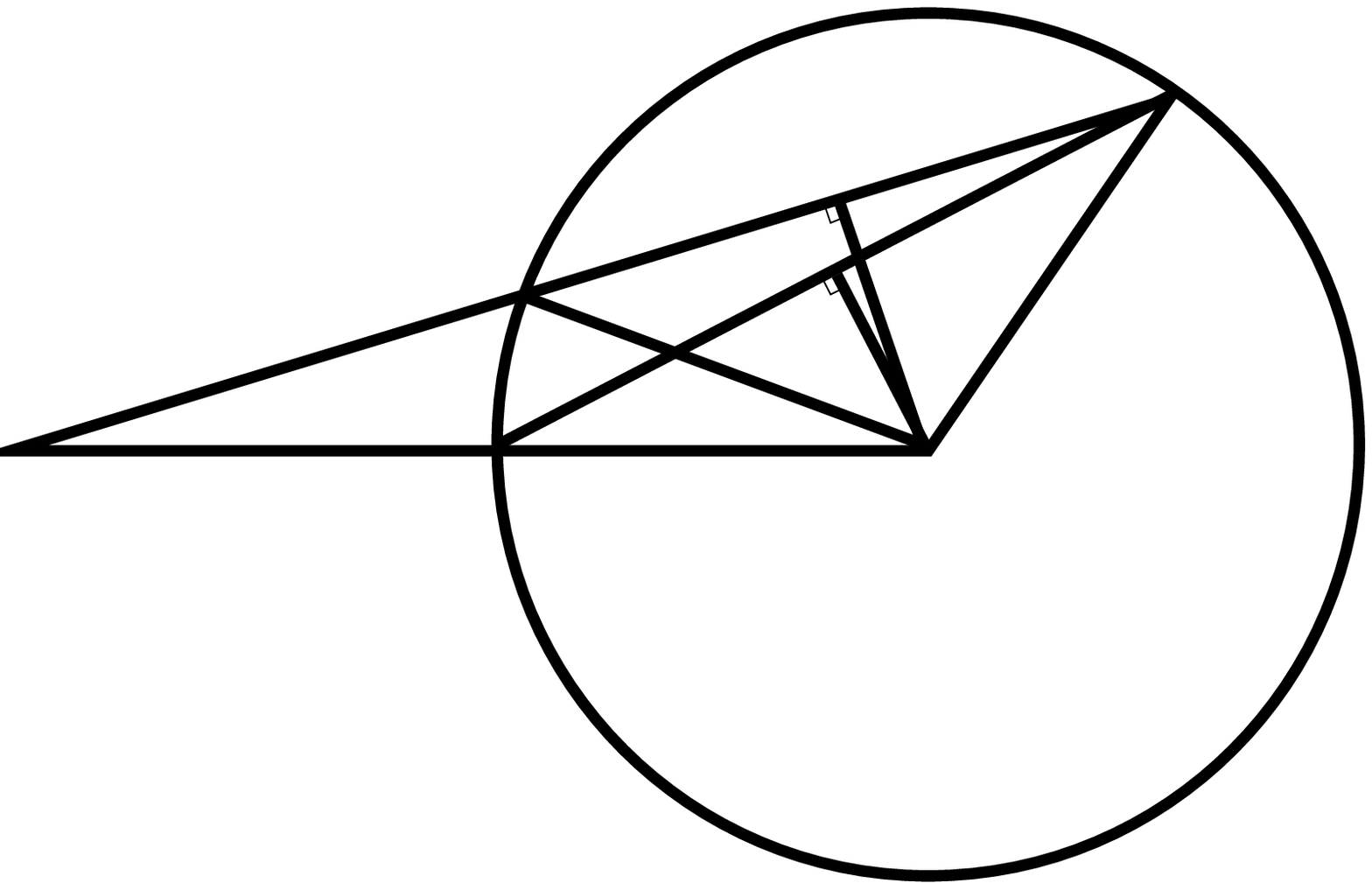}
    \put(-13,67){$O$}
    \put(77,100){$D$}
    \put(154,61){$Q$}
    \put(136,118){$A$}
    \put(131,88){$B$}
    \put(197,133){$C$}    
    \put(175,93){$z_1$}   
    \put(110,63){$z_2$}     
    \end{overpic}
    \caption{Proof of (\ref{eqTriangleIneqWithAngle}) of Lemma~\ref{lmArgumentIneqalities}.}
    \label{figTriangleIneqWithAngle}
\end{figure}

\begin{figure}
    \centering
    \begin{overpic}
    [width=5cm, 
    tics=20]{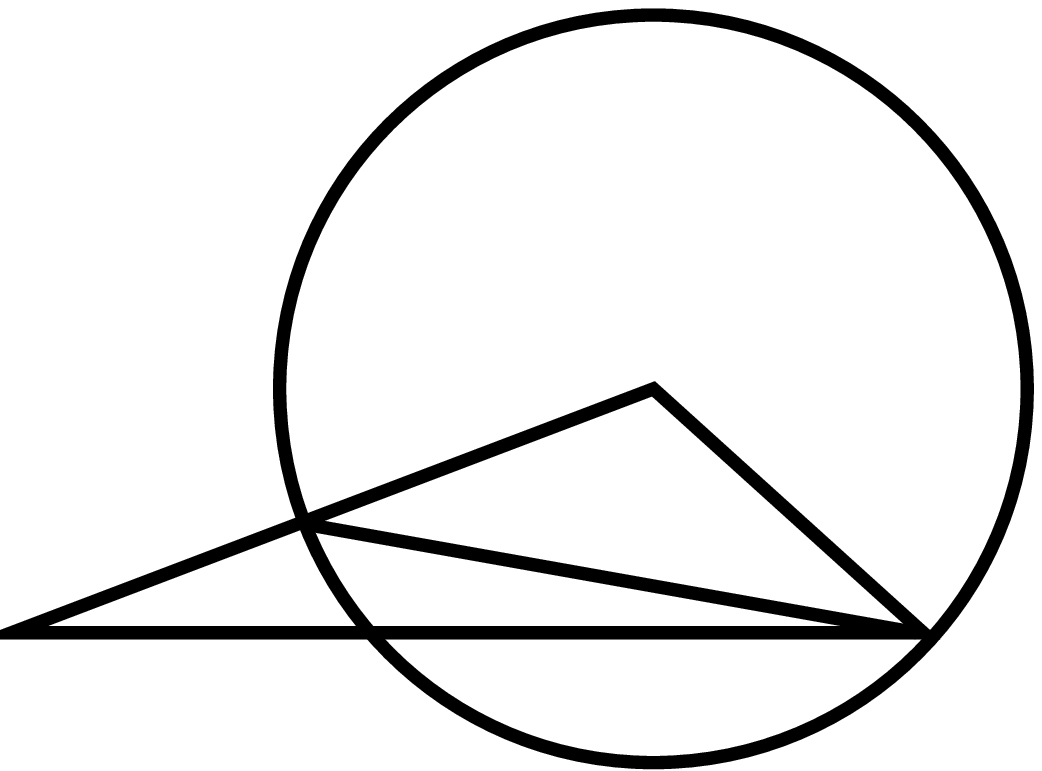}
    \put(-13,15){$B$}
    \put(25,35){$C$}
    \put(87,57){$O$}    
    \put(130,12){$A$}
    \end{overpic}
    \caption{Proof of (\ref{eqArgDist}) of Lemma~\ref{lmArgumentIneqalities}.}
    \label{figArgDist}
\end{figure}

Inequality (\ref{eqArgDist}) follows immediately from the following observation in elementary Euclidean plane geometry. As seen in Figure~\ref{figArgDist}, assume $A=z_1$ and $B=z_2$. Then $\abs{z_1-z_2}=\abs{AB}\geq \abs{AC} \geq \frac12 \abs{OA} \measuredangle AOC = \frac12 \abs{z_1} \Absbig{ \Arg\bigl( \frac{z_1}{z_2} \bigr) }$.    
\end{proof}

\begin{lemma}  \label{lmSupLeq2Inf}
Let $f$, $\CC$, $d$, $\alpha$, $\phi$, $s_0$ satisfy the Assumptions. We assume in addition that $f(\CC)\subseteq\CC$ and that $\phi$ satisfies the $\alpha$-strong non-integrability condition. Given $b\in\R$ with $\abs{b} \geq 2 s_0 +1$. Given $\c\in\{\b,\w\}$ and $h_\c\in K_{A\abs{b}} \bigl(X^0_\c,d\bigr)$. For each $m\geq  m(b) - M_0$ and each $m$-tile $X^m\in\X^m(f,\CC)$ with $X^m\subseteq X^0_\c$, we have
\begin{equation*}
 \sup \{ h_\c(x) \,|\, x\in X^m \}  \leq 2  \inf \{ h_\c(x) \,|\, x\in X^m \}.
\end{equation*}
\end{lemma}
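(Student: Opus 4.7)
\medskip

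The plan is to exploit the cone condition $h_\c \in K_{A\abs{b}}(X^0_\c,d)$ directly: for all $x,y \in X^0_\c$ we have
\[
\abs{h_\c(x) - h_\c(y)} \leq A\abs{b}\,(h_\c(x) + h_\c(y))\,d(x,y)^\alpha,
\]
so to conclude $\sup_{X^m} h_\c \leq 2 \inf_{X^m} h_\c$, it suffices to show that the prefactor $A\abs{b}\,d(x,y)^\alpha$ is bounded above by $1/4$ uniformly on the $m$-tile $X^m$; then taking $x_*,y_* \in X^m$ to be a maximizer and a minimizer of $h_\c$ (which exist by continuity and compactness) and rearranging $h_\c(x_*) - h_\c(y_*) \leq \frac14(h_\c(x_*) + h_\c(y_*))$ gives $h_\c(x_*) \leq \frac{5}{3} h_\c(y_*) \leq 2\, h_\c(y_*)$.

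To obtain the bound on the prefactor, I would combine three simple estimates. First, by Lemma~\ref{lmCellBoundsBM}~(ii), $\diam_d(X^m) \leq C\Lambda^{-m}$, and since $m \geq m(b) - M_0$, $d(x,y)^\alpha \leq C^\alpha \Lambda^{\alpha M_0} \Lambda^{-\alpha m(b)}$ for all $x,y \in X^m$. Second, by the definition of $m(b)$ in (\ref{eqDefmb}), we have $\Lambda^{\alpha m(b)} \geq C\abs{b}/\epsilon_1$, so $\Lambda^{-\alpha m(b)} \leq \epsilon_1/(C\abs{b})$. Third, by the choice of $\epsilon_1$ in (\ref{eqDef_epsilon1}), $\epsilon_1 \leq \frac{1}{4A}\Lambda^{-M_0}$, and since $\alpha \leq 1$ and $\Lambda > 1$, we have $\Lambda^{\alpha M_0} \leq \Lambda^{M_0}$, while $C \geq 1$ and $\alpha \leq 1$ give $C^{\alpha - 1} \leq 1$. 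Putting these together,
\[
A\abs{b}\,d(x,y)^\alpha \;\leq\; A\abs{b} \cdot C^\alpha \Lambda^{\alpha M_0} \cdot \frac{\epsilon_1}{C\abs{b}} \;=\; A C^{\alpha - 1} \Lambda^{\alpha M_0} \epsilon_1 \;\leq\; A \cdot 1 \cdot \Lambda^{M_0} \cdot \frac{1}{4A}\Lambda^{-M_0} \;=\; \frac14,
\]
which is exactly the bound needed.

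This is a short and essentially computational lemma whose only subtlety is bookkeeping: the constants $m_0$, $\epsilon_1$, and $m(b)$ have all been calibrated earlier in the section precisely so that on an $m$-tile with $m \geq m(b) - M_0$ the cone inequality contracts by a definite factor less than $1$. There is no genuine obstacle — the main thing to verify is that the constants chosen in (\ref{eqDef_epsilon1}) and (\ref{eqDefmb}) cooperate with the exponent $\alpha$ (note that we use $\alpha \leq 1$ to absorb the $\Lambda^{\alpha M_0}$ into the reserved $\Lambda^{-M_0}$ factor inside $\epsilon_1$), after which the estimate follows in a single line. No use of the strong non-integrability condition itself is needed; the hypothesis enters only indirectly through the constants $A$, $\epsilon_1$, $M_0$, and $m(b)$ that were fixed in Subsection~\ref{subsctDolgopyatOperator}.
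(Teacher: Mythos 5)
Your proof is correct and takes essentially the same route as the paper's: apply the cone inequality on $X^m$, bound $d(x,x')^\alpha$ by $\diam_d(X^m)^\alpha \leq C^\alpha\Lambda^{\alpha(M_0-m(b))}$ via Lemma~\ref{lmCellBoundsBM}~(ii) and $m\geq m(b)-M_0$, and then use (\ref{eqDefmb}) together with the choice of $\epsilon_1$ in (\ref{eqDef_epsilon1}) to get the factor $1/4$. The only cosmetic difference is that you isolate the scalar bound $A\abs{b}\,d(x,x')^\alpha\leq 1/4$ before evaluating at a maximizer/minimizer, whereas the paper carries the $h_\c$ factors through the chain; the conclusion $\sup \leq \tfrac{5}{3}\inf \leq 2\inf$ is the same.
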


Recall that the cone $K_{A\abs{b}} \bigl(X^0_\c,d\bigr)$ is defined in Definition~\ref{defCone}.

\begin{proof}
Consider arbitrary $x,x' \in X^m$. By Definition~\ref{defCone}, Lemma~\ref{lmCellBoundsBM}~(ii), (\ref{eqDefmb}), and (\ref{eqDef_epsilon1}),
\begin{align*}
\abs{h_\c(x) - h_\c(x')}   \leq & A\abs{b} (h_\c(x) + h_\c(x')) d(x,x')^\alpha  \\
                           \leq & A\abs{b} (h_\c(x) + h_\c(x'))  ( \diam_d(X^m)  )^\alpha \\
                           \leq & A\abs{b} (h_\c(x) + h_\c(x')) C\Lambda^{\alpha M_0 - \alpha m(b)} \\
                           \leq & A\abs{b} \frac{\epsilon_1}{\abs{b}}  \Lambda^{ \alpha M_0}(h_\c(x) + h_\c(x')) \\
                           \leq & \frac{1}{4}  (h_\c(x) + h_\c(x')),
\end{align*}
where $C\geq 1$ is a constant from Lemma~\ref{lmCellBoundsBM} depending only on $f$, $\CC$, and $d$. The lemma follows immediately.
\end{proof}

\begin{lemma}   \label{lmUHDichotomy}
Let $f$, $\CC$, $d$, $\alpha$, $\phi$, $s_0$ satisfy the Assumptions. We assume in addition that $f(\CC)\subseteq \CC$ and that $\phi$ satisfies the $\alpha$-strong non-integrability condition. Given $b\in\R$, $m\in\N$, $\c\in\{\b,\w\}$, $u_\c\in \Holder{\alpha}\bigl( \bigl( X^0_\c, d \bigr),\C \bigr)$, and $h_\c \in K_{A\abs{b}} \bigl( X^0_\c, d \bigr)$ such that $\abs{b} \geq 2 s_0 +1$, $m\geq N_1 + m(b)$, $\abs{u_\c(y)} \leq h_\c(y)$, and $\abs{u_\c(y) - u_\c(y')} \leq A\abs{b} (h_\c(y) + h_\c(y')) d(y,y')^\alpha$ whenever $y,y'\in X^0_\c$. Then for each $X^m \in \X^m(f,\CC)$ with $X^m\subseteq X^0_\c$, at least one of the following statements holds:

\begin{enumerate}
\smallskip
\item[(1)] $\abs{u_\c(x)} \leq \frac{3}{4} h_\c(x)$ for all $x\in X^m$.

\smallskip
\item[(2)] $\abs{u_\c(x)} \geq \frac{1}{4} h_\c(x)$ for all $x\in X^m$. 
\end{enumerate}
\end{lemma}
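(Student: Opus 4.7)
The plan is to argue by contradiction: I will assume statement (1) fails at some point $x_0 \in X^m$, i.e.\ $\abs{u_\c(x_0)} > \frac{3}{4}h_\c(x_0)$, and deduce that statement (2) holds throughout $X^m$. The engine is the cone condition on $h_\c$ combined with the H\"older-type control on $u_\c$, both used with step size bounded by the diameter of $X^m$. Since $m \geq N_1 + m(b) \geq m(b) - M_0$, Lemma~\ref{lmSupLeq2Inf} applies to $X^m$ and gives $h_\c(x) \leq 2 h_\c(y)$ for all $x,y \in X^m$; in particular $h_\c(x_0) \geq \frac{1}{2} h_\c(x)$ and $h_\c(x) + h_\c(x_0) \leq 4 h_\c(x_0)$ for every $x \in X^m$.

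For any $x \in X^m$, the assumed modulus of continuity of $u_\c$ gives
\begin{equation*}
\abs{u_\c(x)} \geq \abs{u_\c(x_0)} - A\abs{b}(h_\c(x)+h_\c(x_0))\,d(x,x_0)^\alpha \geq h_\c(x_0)\bigl(\tfrac{3}{4} - 4A\abs{b}\,d(x,x_0)^\alpha\bigr).
\end{equation*}
So it suffices to show $4A\abs{b}\,d(x,x_0)^\alpha \leq \frac{1}{4}$, which together with $h_\c(x_0) \geq \frac{1}{2} h_\c(x)$ yields $\abs{u_\c(x)} \geq \frac{1}{4} h_\c(x)$. By Lemma~\ref{lmCellBoundsBM}~(ii), $d(x,x_0) \leq \diam_d(X^m) \leq C\Lambda^{-m}$, so using $m \geq N_1 + m(b)$ and the defining inequality $\Lambda^{\alpha m(b)} \geq C\abs{b}/\epsilon_1$ from (\ref{eqDefmb}),
\begin{equation*}
4A\abs{b}\,d(x,x_0)^\alpha \leq 4A\abs{b}\, C^\alpha \Lambda^{-\alpha N_1}\Lambda^{-\alpha m(b)} \leq 4A\, C^{\alpha-1}\epsilon_1 \Lambda^{-\alpha N_1}.
\end{equation*}

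The final step is to check, from the choices of the constants $\epsilon_1$ and $N_1$, that the right-hand side is at most $\frac{1}{4}$. Since $C \geq 1$ and $\alpha \in (0,1]$ we have $C^{\alpha-1} \leq 1$; by (\ref{eqDef_epsilon1}), $4A\epsilon_1 \leq \Lambda^{-M_0} \leq 1$; and by (\ref{eqDef_N1}), $\Lambda^{\alpha N_1} \geq 2^{10}A$, so $\Lambda^{-\alpha N_1} \leq 2^{-10}A^{-1}$. Combining these estimates gives $4A\abs{b}\,d(x,x_0)^\alpha \leq 2^{-8} < \frac{1}{4}$, which completes the argument. I do not anticipate a serious obstacle here; the only subtlety is bookkeeping the chain of constants (\ref{eqDef_epsilon1}) and (\ref{eqDef_N1}) to be sure the product $A\epsilon_1\Lambda^{-\alpha N_1}$ is small enough, and making sure the hypothesis $m \geq N_1 + m(b)$ is strong enough to invoke Lemma~\ref{lmSupLeq2Inf}.
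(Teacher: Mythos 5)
Your argument is correct and is essentially the paper's proof read backwards: the paper assumes $\abs{u_\c(x_0)}\leq\frac14 h_\c(x_0)$ at some $x_0$ and propagates an upper bound to get (1), while you assume $\abs{u_\c(x_0)}>\frac34 h_\c(x_0)$ and propagate a lower bound to get (2), which is the other contrapositive of the same disjunction. The estimates used (the modulus-of-continuity bound for $u_\c$, Lemma~\ref{lmSupLeq2Inf} to compare values of $h_\c$ across $X^m$, the diameter bound $d(x,x_0)\leq C\Lambda^{-m}$ together with $m\geq N_1+m(b)$ and (\ref{eqDefmb}), and the numerical check via (\ref{eqDef_epsilon1}) and (\ref{eqDef_N1})) are identical, so the two proofs are the same in substance.
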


\begin{proof}
Assume that $\abs{u_\c(x_0)} \leq \frac{1}{4} h_\c(x_0)$ for some $x_0 \in X^m$. Then by Lemma~\ref{lmCellBoundsBM}~(ii), Lemma~\ref{lmSupLeq2Inf}, and (\ref{eqDefmb}), for each $x\in X^m$, 
\begin{align*}
\abs{u_\c(x)}  \leq & \abs{u_\c(x) - u_\c(x_0)} + \frac{1}{4} h_\c(x_0)
               \leq  A\abs{b} (h_\c(x) + h_\c(x_0))  ( \diam_d(X^m)  )^\alpha  + \frac{1}{4} h_\c(x_0) \\
               \leq & \Bigl(2 A\abs{b} C \Lambda^{-\alpha N_1 - \alpha m(b)}   +  \frac{1}{4} \Bigr)   \sup \{ h_\c(y) \,|\, y\in X^m \} \\
               \leq &  \Bigl(4 A \epsilon_1 \Lambda^{-\alpha N_1} + \frac{1}{2} \Bigr) h_\c(x) 
               \leq \frac{3}{4} h_\c(x),
\end{align*}
where $C\geq 1$ is a constant from Lemma~\ref{lmCellBoundsBM}. The last inequality follows from (\ref{eqDef_N1}) and the fact that $\epsilon_1\in (0,1)$ (see (\ref{eqDef_epsilon1})).
\end{proof}

\begin{lemma}   \label{lmUHDichotomySum}
Let $f$, $\CC$, $d$, $\alpha$, $\phi$, $s_0$ satisfy the Assumptions. We assume in addition that $f(\CC)\subseteq \CC$ and that $\phi$ satisfies the $\alpha$-strong non-integrability condition. Fix arbitrary $s \coloneqq a+\I b$ with $a,b\in \R$ satisfying $\abs{a-s_0}\leq s_0$ and $\abs{b}\geq b_0$. Given arbitrary $h_\b\in K_{A\abs{b}}\bigl( X^0_\b,d\bigr)$,  $h_\w\in K_{A\abs{b}}\bigl( X^0_\w,d\bigr)$, $u_\b\in\Holder{\alpha} \bigl( \bigl(X^0_\b,d\bigr), \C\bigr)$, and $u_\w\in\Holder{\alpha} \bigl( \bigl(X^0_\w,d\bigr), \C \bigr)$ satisfying the property that for each $\c\in\{\b,\w\}$, we have $\abs{u_\c (y)}\leq h_\c (y)$ and $\abs{ u_\c(y) - u_\c(y')} \leq A\abs{b} (h_\c(y) + h_\c(y')) d(y,y')^\alpha$ whenever $y,y'\in X^0_\c$.

Define the functions $Q_{\c,j}\: Y_\c^{M_0} \rightarrow \R$ for $j\in\{1,2\}$ and $\c\in\{\b,\w\}$ by
\begin{equation*}
    Q_{\c,j}(x) 
\coloneqq \frac{ \AbsBig{   \sum\limits_{k\in\{1,2\}} u_{ \varsigma (\c,k)} (\tau_k(x)) e^{S_{N_1} \wt{\minus s\phi} (\tau_k(x))} }   }
       { - 2\eta h_{\varsigma(\c,j)} (\tau_j(x)) e^{S_{N_1} \wt{\minus a\phi} (\tau_j(x))}
         + \sum\limits_{k\in\{1,2\}} h_{\varsigma(\c,k)} (\tau_k(x)) e^{S_{N_1} \wt{\minus a\phi} (\tau_k(x))}    }  ,
\end{equation*}
for $x\in Y_\c^{M_0}$, where we write $\tau_k \coloneqq \Bigl( f^{N_1} \big|_{X^{ N_1+M_0}_{\c,k} } \Bigr)^{-1}$ for $k\in\{1,2\}$, and we set $\varsigma(\c,j)\in\{\b,\w\}$ in such a way that $\tau_j \bigl( Y_{\c}^{M_0} \bigr) \subseteq X^0_{\varsigma(\c,j)}$ for $j\in \{1,2\}$.

Then for each $\c\in\{\b,\w\}$ and each $X\in \mathfrak{C}_b$ with $X\subseteq Y_\c^{M_0}$, we have
\begin{equation*}
\min\Bigl\{     \Norm{Q_{\c,j}}_{\CCC^0(\mathfrak{X}_i(X))}  \,\Big|\,  i,j\in\{1,2\} \Bigr\}   \leq 1.
\end{equation*}
\end{lemma}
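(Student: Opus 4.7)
The plan is to deduce this from a dichotomy/cancellation split: apply Lemma~\ref{lmUHDichotomy} to each of the two preimage functions $u_{\varsigma(\c,k)}\circ\tau_k$ and, in the "bad" case, extract a uniformly large argument for $z_1/z_2$ (where $z_k(x)\coloneqq u_{\varsigma(\c,k)}(\tau_k(x))\,e^{S_{N_1}\wt{-s\phi}(\tau_k(x))}$) from Proposition~\ref{propSNI}, then use the geometric inequality (\ref{eqTriangleIneqWithAngle}).

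Fix $\c\in\{\b,\w\}$ and $X\in\mathfrak{C}_b$ with $X\subseteq Y_\c^{M_0}$. For each $k\in\{1,2\}$ the set $\tau_k(X)$ is an $(N_1+m(b))$-tile contained in $X^0_{\varsigma(\c,k)}$, so Lemma~\ref{lmUHDichotomy} furnishes the alternative: either (I) $|u_{\varsigma(\c,k)}|\leq\tfrac{3}{4}h_{\varsigma(\c,k)}$ on $\tau_k(X)$ or (II) $|u_{\varsigma(\c,k)}|\geq\tfrac{1}{4}h_{\varsigma(\c,k)}$ on $\tau_k(X)$. If (I) holds for some $k$, I set $j\coloneqq k$ and let $k'$ be the other index; then on any $\mathfrak{X}_i(X)$ the numerator of $Q_{\c,j}$ is bounded by $h_{\varsigma(\c,k')}(\tau_{k'}(x))e^{S_{N_1}\wt{-a\phi}(\tau_{k'}(x))}+\tfrac{3}{4}h_{\varsigma(\c,k)}(\tau_k(x))e^{S_{N_1}\wt{-a\phi}(\tau_k(x))}$, which is at most the denominator since $1-2\eta\geq\tfrac{3}{4}$ by (\ref{eqDef_eta}); thus $\|Q_{\c,j}\|_{\CCC^0(\mathfrak{X}_i(X))}\leq 1$.

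The substantive case is when (II) holds for both $k=1,2$. Write $H_k(x)\coloneqq h_{\varsigma(\c,k)}(\tau_k(x))e^{S_{N_1}\wt{-a\phi}(\tau_k(x))}$, so that $\tfrac{1}{4}H_k(x)\leq|z_k(x)|\leq H_k(x)$. I would track the argument $\theta(x)\coloneqq\Arg(z_1(x)/z_2(x))$ as a sum of the "Birkhoff phase" $-b\bigl(S_{N_1}\phi(\tau_1(x))-S_{N_1}\phi(\tau_2(x))\bigr)$ (the $\Im$-part of $S_{N_1}\wt{-s\phi}(\tau_1(x))-S_{N_1}\wt{-s\phi}(\tau_2(x))$) and the "amplitude phase" $\Arg(u_{\varsigma(\c,1)}(\tau_1(x))/u_{\varsigma(\c,2)}(\tau_2(x)))$. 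By Proposition~\ref{propSNI}, the change of the Birkhoff phase between $x_1(X)$ and $x_2(X)$ has modulus at least $|b|\delta_0 d(x_1(X),x_2(X))^\alpha$. For the amplitude phase, I would combine (\ref{eqArgDist}) with the cone bound $|u_\c(y)-u_\c(y')|\leq A|b|(h_\c(y)+h_\c(y'))d(y,y')^\alpha$, the lower bound $|u_{\varsigma(\c,k)}\circ\tau_k|\geq\tfrac{1}{4}h_{\varsigma(\c,k)}\circ\tau_k$ on $\tau_k(X)$, Lemma~\ref{lmSupLeq2Inf} (so that $h_{\varsigma(\c,k)}$ varies by a factor at most $2$ on $\tau_k(X)$), and the contraction $d(\tau_k(x),\tau_k(x'))\leq C_0\Lambda^{-N_1}d(x,x')$ from Lemma~\ref{lmMetricDistortion}, producing an error $O\!\bigl(A|b|\,C_0^\alpha\Lambda^{-\alpha N_1}d(x_1,x_2)^\alpha\bigr)$ that the choice (\ref{eqDef_N1}) of $N_1$ makes smaller than $\tfrac{1}{2}|b|\delta_0 d(x_1(X),x_2(X))^\alpha$. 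Hence $|\theta(x_1(X))-\theta(x_2(X))|\geq\tfrac{1}{2}|b|\delta_0 d(x_1(X),x_2(X))^\alpha$, so there exists $i\in\{1,2\}$ with $|\theta(x_i(X))|\geq\vartheta\coloneqq\tfrac{1}{4}|b|\delta_0 d(x_1(X),x_2(X))^\alpha$; by Lemma~\ref{lmCellBoundsBM}~(ii) and (\ref{eqDefmb}), $\vartheta$ is bounded below by a positive constant depending only on $f,\CC,d,\alpha,\phi$. Repeating the same estimate on the smaller set $\mathfrak{X}_i(X)$, whose diameter is controlled by (\ref{eqDiamXX_UB}) and whose size was calibrated in (\ref{eqDef_m0})--(\ref{eqDef_epsilon1}) precisely so the oscillation of $\theta$ on it is at most $\vartheta/2$, gives $|\theta(x)|\geq\vartheta/2$ for all $x\in\mathfrak{X}_i(X)$.

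Finally I would choose $j\in\{1,2\}$ so that $H_j(x)=\min\{H_1(x),H_2(x)\}$ on $\mathfrak{X}_i(X)$ (which is a single value of $k$ throughout $\mathfrak{X}_i(X)$, again by Lemma~\ref{lmSupLeq2Inf}) and apply (\ref{eqTriangleIneqWithAngle}):
\[
|z_1(x)+z_2(x)|\leq H_1(x)+H_2(x)-\tfrac{1}{16}(\vartheta/2)^2\cdot\tfrac{1}{4}H_j(x)=H_1(x)+H_2(x)-\tfrac{\vartheta^2}{256}H_j(x).
\]
This is at most the denominator $H_1(x)+H_2(x)-2\eta H_j(x)$ provided $2\eta\leq\vartheta^2/256$, which is precisely the smallness condition that the choice of $\eta$ in (\ref{eqDef_eta}) was designed to guarantee. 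Therefore $\|Q_{\c,j}\|_{\CCC^0(\mathfrak{X}_i(X))}\leq 1$. The main obstacle will be the bookkeeping in the phase estimate in the second case: simultaneously reconciling the lower bound from strong non-integrability, the upper bound on the amplitude correction (whose size is forced upon us by the scaling $A|b|$ of the cone), and the smallness of the oscillation of $\theta$ across $\mathfrak{X}_i(X)$, and then matching all the absolute constants with the finely tuned choices (\ref{eqDef_m0})--(\ref{eqDef_eta}) so that $\eta$ still beats $\vartheta^2$ up to a numerical factor.
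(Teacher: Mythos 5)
The proposal follows the paper's blueprint closely up to the dichotomy from Lemma~\ref{lmUHDichotomy} and the use of Proposition~\ref{propSNI} to generate a phase difference, but the crucial localization step is wrong. You claim that, after finding $i$ with $|\theta(x_i(X))|\geq\vartheta$, the choice of constants in (\ref{eqDef_m0})--(\ref{eqDef_epsilon1}) makes the oscillation of $\theta$ on $\mathfrak{X}_i(X)$ at most $\vartheta/2$. That does not hold. The oscillation of $\theta$ on $\mathfrak{X}_i(X)$ is controlled, via (\ref{eqBasicIneqC})-type estimates, (\ref{eqArgDist}), and Lemma~\ref{lmMetricDistortion}, only down to a size of order $\Lambda^{-\alpha m_0}\sim\varepsilon^{1-\alpha}/C_1$, whereas your $\vartheta=\tfrac14|b|\delta_0\,d(x_1(X),x_2(X))^\alpha$ is of order $\delta_0\varepsilon^\alpha\epsilon_1\lesssim\delta_0^2\varepsilon^\alpha$, which is several powers of $\varepsilon$ (and a factor of $\delta_0\epsilon_1$) smaller. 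For small $\varepsilon$ the oscillation term dominates $\vartheta$, so the conclusion $|\theta(x)|\geq\vartheta/2$ on all of $\mathfrak{X}_i(X)$ is not warranted, and the final comparison $2\eta\leq\vartheta^2/256$ then collapses.

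The paper avoids this obstruction by a different argument. It introduces $\Theta(x)=\Arg\bigl(u(1,x)e(1,x)/(u(2,x)e(2,x))\bigr)$ and first splits into two cases according to whether $\sup_X|\Theta|\geq\pi/4$ or $\sup_X|\Theta|<\pi/4$ (this case split also handles the $\Arg$ branch-cut issue, which your sketch does not address). In the latter case, Proposition~\ref{propSNI} gives a lower bound on $|\Theta(x)-\Theta(y)|$ for $x$ in $\mathfrak{X}_1(X)$ and $y$ in $\mathfrak{X}_2(X)$ (a \emph{between}-sub-tile estimate), and then a one-line dichotomy argument — if $|\Theta(x_0)|$ is below the threshold at some $x_0\in\mathfrak{X}_1(X)$, then the difference bound forces $|\Theta|$ above a (slightly smaller) threshold at every $y\in\mathfrak{X}_2(X)$ — produces an $i_0$ on which $|\Theta|\geq 16\eta^{1/2}$ throughout, with no need to control the oscillation of $\Theta$ within a single $\mathfrak{X}_i(X)$. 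You should replace your ``shrink and bound the oscillation'' step by this between-sub-tile dichotomy. Two smaller issues: your claim that a single index $j$ achieves $H_j=\min\{H_1,H_2\}$ throughout $\mathfrak{X}_i(X)$ does not follow from Lemma~\ref{lmSupLeq2Inf}; the paper instead fixes one $k_0$ realizing the global infimum of $h(k,\cdot)e^{S_{N_1}\wt{\minus a\phi}\circ\tau_k}$ over $\mathfrak{X}_{i_0}(X)$ and both indices, and then uses the factor-$8$ comparability (\ref{eqPflmUHDichotomySum_QuotHBound}) to bring that infimum up to the value at a general $x$. Finally, your final inequality $2\eta\leq\vartheta^2/256$ and (\ref{eqDef_eta}) do not line up numerically once the factor $\tfrac14$ from $|u|\geq\tfrac14h$ and the factor $8$ from the infimum comparison are tracked; the paper's calibration $|\Theta|\geq 16\eta^{1/2}$ is what the second term of (\ref{eqDef_eta}) is actually designed to produce.
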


\begin{proof}
Fix arbitrary $\c\in\{ \b, \w \}$ and $X\in \mathfrak{C}_b$ with $X\subseteq Y_\c^{M_0}$. For typographic reason, we denote in this proof
\begin{equation}  \label{eqPflmUHDichotomySum_uhe}
u(i,x) \coloneqq u_{\varsigma(\c,i)} ( \tau_i(x) ), \qquad 
h(i,x) \coloneqq h_{\varsigma(\c,i)} ( \tau_i(x) ), \qquad
e(i,x) \coloneqq e^{S_{N_1} \wt{\minus s\phi} (\tau_i(x))}
\end{equation}
for $i\in\{1,2\}$ and $x \in X$.

If $\abs{ u(j,\cdot) } \leq \frac{3}{4} h(j,\cdot)$ on $X$, for some $j\in\{1,2\}$, then $ \Norm{Q_{\c,j}}_{\CCC^0(\mathfrak{X}_i(X))} \leq 1$ for all $i\in\{1,2\}$. Thus, by Lemma~\ref{lmUHDichotomy}, we can assume that 
\begin{equation}  \label{eqPflmUHDichotomySum_U14h}
\abs{ u(k,x) } \geq \frac{1}{4} h(k,x)  \qquad \text{for all } x\in  X \text{ and }k\in\{1,2\}.
\end{equation}

We define a function $\Theta\: X \rightarrow (-\pi,\pi]$ by setting
\begin{equation}  \label{eqPflmUHDichotomySum_Theta}
\Theta(x) \coloneqq \Arg  \biggl(   \frac{  u(1,x)  e(1,x) }{  u(2,x)  e(2,x)  }   \biggr)
\end{equation}
for $x\in X$.

\smallskip

We first claim that for all $x,y\in X$, we have
\begin{equation}   \label{eqPflmUHDichotomySum_ArgUxy}
\Absbigg{    \Arg \biggl(  \frac{   u(1,x)  /  u(2,x)    }{    u(1,y)  /  u(2,y)  }  \biggr) } 
\leq 16 A \epsilon_1  \Lambda^{-\alpha N_1}
\leq \frac{\pi}{16}
\end{equation}
and
\begin{equation}   \label{eqPflmUHDichotomySum_ArgEXPxy}
  \abs{b} \abs{ -S_{N_1} \phi (\tau_1(x)) + S_{N_1} \phi (\tau_2(x)) + S_{N_1} \phi (\tau_1(y)) - S_{N_1} \phi (\tau_2(y))  } 
\leq \frac{\pi}{16}.
\end{equation}

Indeed, by (\ref{eqArgumentTriangleIneq}) and (\ref{eqArgDist}) in Lemma~\ref{lmArgumentIneqalities}, (\ref{eqPflmUHDichotomySum_uhe}), (\ref{eqPflmUHDichotomySum_U14h}), Lemma~\ref{lmCellBoundsBM}~(ii), Lemma~\ref{lmSupLeq2Inf}, (\ref{eqDefCb}), and (\ref{eqDefmb}),
\begin{align*}
             \Absbigg{    \Arg \biggl(  \frac{   u(1,x)  /  u(2,x)    }{    u(1,y)  /  u(2,y)  }  \biggr) }   
& \leq  \Absbigg{ \Arg \biggl(   \frac{ u(1,x) }{ u(1,y) } \biggr)    }   +\Absbigg{ \Arg \biggl(   \frac{ u(2,x) }{ u(2,y) } \biggr)    }  \\
& \leq  \sum\limits_{j\in\{1,2\}} \frac{ 2 \abs{ u(j,x)  - u(j,y)   } }   { \inf \{ \abs{ u(j,z) }  \,|\, z\in X \}  }  \\
& \leq  \sum\limits_{j\in\{1,2\}} \frac{ 2 A\abs{b}  ( h(j,x)  + h(j,y) )  } { \inf \{  h(j,z)    \,|\, z\in X \}  }     d(\tau_j(x),\tau_j(y))^\alpha  \\ 
& \leq  4 A\abs{b} \sum\limits_{j\in\{1,2\}} \frac{ \sup \{  h(j,z)    \,|\, z\in X \}  } { \inf \{  h(j,z)   \,|\, z\in X \}  }   C\Lambda^{-\alpha N_1 - \alpha m(b)}  \\
& \leq   16 A\abs{b} \frac{\epsilon_1}{\abs{b}} \Lambda^{-\alpha N_1}
\leq    \frac{\pi}{16},
\end{align*} 
where $C\geq 1$ is a constant from Lemma~\ref{lmCellBoundsBM}. The last inequality follows from the fact that $N_1\geq \bigl\lceil \frac{1}{\alpha} \log_\Lambda \bigl(   2^8 A  \bigr) \bigr\rceil $ (see (\ref{eqDef_N1})) and the fact that $\epsilon_1\in (0,1)$ (see (\ref{eqDef_epsilon1})). We have now verified (\ref{eqPflmUHDichotomySum_ArgUxy}). To show (\ref{eqPflmUHDichotomySum_ArgEXPxy}), we note that by Lemma~\ref{lmCellBoundsBM}~(ii), (\ref{eqDefCb}), (\ref{eqDefmb}), and (\ref{eqDef_epsilon1}),
\begin{align*}
     & \abs{b} \abs{ -S_{N_1} \phi (\tau_1(x)) + S_{N_1} \phi (\tau_2(x)) + S_{N_1} \phi (\tau_1(y)) - S_{N_1} \phi (\tau_2(y))  }  \\
&\qquad \leq  \abs{b} \delta_0^{-1} d(x,y)^\alpha 
\leq   \abs{b} \delta_0^{-1} ( \diam_d(X) )^\alpha
\leq   \abs{b} \delta_0^{-1}  C^\alpha \Lambda^{-\alpha m(b)}    
\leq   \delta_0^{-1} \epsilon_1
\leq   \frac{\pi}{16}.
\end{align*}

The claim is now verified.

\smallskip

We will choose $i_0\in\{1,2\}$, by separate discussions in the following two cases, in such a way that
\begin{equation} \label{eqPflmUHDichotomySum_ThetaLB}
\abs{\Theta(x)} \geq 16 \eta^{\frac12} \qquad \text{for all } x\in \mathfrak{X}_{i_0}(X).
\end{equation}

\smallskip

\emph{Case 1.} $\abs{\Theta(y)} \geq \frac{\pi}{4}$ for some $y\in X$. Then by (\ref{eqArgumentTriangleIneq}) in Lemma~\ref{lmArgumentIneqalities}, (\ref{eqPflmUHDichotomySum_uhe}), (\ref{eqPflmUHDichotomySum_Theta}), (\ref{eqPflmUHDichotomySum_ArgUxy}), (\ref{eqPflmUHDichotomySum_ArgEXPxy}), and the fact that $\eta \in \bigl( 0, 2^{-12} \bigr)$ (see (\ref{eqDef_eta})), for each $x\in X$,
\begin{align*}
\abs{\Theta(x)}   \geq &   \abs{\Theta(y)}    -  \Absbigg{ \Arg \biggl(  \frac{ ( u(1,y) e(1,y) ) / ( u(2,y) e(2,y) ) }{    ( u(1,x) e(1,x) ) / ( u(2,x) e(2,x) ) }  \biggr)} \\
                  \geq &  \frac{\pi}{4}  -  \Absbigg{ \Arg \biggl(  \frac{   u(1,y)  /  u(2,y)  }{    u(1,x)   /   u(2,x)  }  \biggr)} 
                                                      -  \Absbigg{ \Arg \biggl(  \frac{   e(1,y)  /  e(2,y)  }{    e(1,x)   /   e(2,x)  }  \biggr)}  \\
                  \geq &  \frac{\pi}{4} - \frac{\pi}{16} - \frac{\pi}{16}
                  \geq    \frac{\pi}{8}
                  \geq    16\eta^{\frac12}.
\end{align*}
We can choose $i_0=1$ in this case.

\smallskip

\emph{Case 2.} $\abs{\Theta(z)} < \frac{\pi}{4}$ for all $z\in X$. Then by (\ref{eqArgumentTriangleIneq}) in Lemma~\ref{lmArgumentIneqalities}, (\ref{eqPflmUHDichotomySum_uhe}), (\ref{eqPflmUHDichotomySum_Theta}), (\ref{eqPflmUHDichotomySum_ArgUxy}), (\ref{eqPflmUHDichotomySum_ArgEXPxy}), $\abs{b} \geq b_0 \geq 1$ (see (\ref{eqDef_b0})), (\ref{eqSNIBounds}), (\ref{eqDefCb}), and (\ref{eqDefmb}), for each $x\in \mathfrak{X}_1(X)$ and each $y\in \mathfrak{X}_2(X)$,
\begin{align*}
&        \abs{ \Theta (x) - \Theta(y) } \\
&\qquad    =  \Absbigg{ \Arg \biggl(  \frac{ ( u(1,x) e(1,x) ) / ( u(2,x) e(2,x) ) }{    ( u(1,y) e(1,y) ) / ( u(2,y) e(2,y) ) }  \biggr)}\\
&\qquad\geq   \Absbigg{ \Arg \biggl(  \frac{   e(1,x)  /  e(2,x)  }{    e(1,y)   /   e(2,y)  }  \biggr)}  -  \Absbigg{ \Arg \biggl(  \frac{   u(2,y)  /  u(1,y)  }{    u(2,x)   /   u(1,x)  }  \biggr)}  \\ 
&\qquad\geq      \abs{b} \abs{ - S_{N_1} \phi (\tau_1(x)) + S_{N_1} \phi (\tau_2(x)) + S_{N_1} \phi (\tau_1(y)) - S_{N_1} \phi (\tau_2(y))  } 
       -   16 A \epsilon_1  \Lambda^{-\alpha N_1}   \\
&\qquad\geq      \abs{b} \delta_0 d(x,y)^\alpha -  16  A \epsilon_1  \Lambda^{-\alpha N_1} 
\geq       \abs{b} \delta_0 \bigl(10^{-1} C^{-1} \Lambda^{-m(b)} \bigr)^\alpha  -   16 A \epsilon_1  \Lambda^{-\alpha N_1}   \\
&\qquad\geq       \delta_0 \frac{1}{10 \Lambda}  C^{-2} \epsilon_1  -   16 A \epsilon_1  \Lambda^{-\alpha N_1}   
\geq    \frac{\delta_0 \epsilon_1}{20\Lambda C^2},
\end{align*}
where the last inequality follows from the observation that $ 16 A \Lambda^{-\alpha N_1} \leq \frac{\delta_0 }{20\Lambda C^2} $ since $N_1 \geq \bigl\lceil \frac{1}{\alpha} \log_\Lambda \bigl(  \frac{320 A \Lambda C^2}{\delta_0} \bigr) \bigr\rceil$ (see (\ref{eqDef_N1})).

\smallskip

We now claim that at least one of the following statements holds:
\begin{enumerate}
\smallskip
\item[(1)] $\abs{\Theta(x)} \geq \frac{\delta_0 \epsilon_1}{80\Lambda C^2}$ for all $x\in \mathfrak{X}_1(X)$.

\smallskip
\item[(2)] $\abs{\Theta(y)} \geq \frac{\delta_0 \epsilon_1}{80\Lambda C^2}$ for all $y\in \mathfrak{X}_2(X)$.
\end{enumerate}

Indeed, assume that statement (1) fails, then there exists $x_0\in\mathfrak{X}_1(X)$ such that $ \abs{\Theta(x_0)} \leq \frac{\delta_0 \epsilon_1}{80\Lambda C^2}$. Hence for all $y\in \mathfrak{X}_2(X)$,
\begin{equation*}
     \abs{\Theta(y)}  
\geq \abs{\Theta(y)-\Theta(x_0)} - \abs{\Theta(x_0)} 
\geq \frac{\delta_0 \epsilon_1}{20\Lambda C^2} - \frac{\delta_0 \epsilon_1}{80\Lambda C^2}
\geq \frac{\delta_0 \epsilon_1}{80\Lambda C^2}.
\end{equation*}
The claim is now verified.

\smallskip

Thus we can fix $i_0\in\{1,2\}$ such that $\abs{\Theta(x)} \geq \frac{\delta_0 \epsilon_1}{80\Lambda C^2} \geq 16\eta^{\frac{1}{2}}$ (see (\ref{eqDef_eta})) for all $x\in \mathfrak{X}_{i_0} (X)$ in this case.

\smallskip

By Lemma~\ref{lmSnPhiBound}, (\ref{eqPflmUHDichotomySum_uhe}), Lemma~\ref{lmSupLeq2Inf}, Lemma~\ref{lmCellBoundsBM}~(ii), (\ref{eqDefCb}), and (\ref{eqDefmb}), for arbitrary $x,y\in\mathfrak{X}_{i_0}(X)$ and $j\in\{1,2\}$,
\begin{align}   \label{eqPflmUHDichotomySum_QuotHBound}
               \AbsBigg{ \frac{   h(j,x)  \exp\bigl( S_{N_1} \wt{- a\phi} \bigl(\tau_j(x)\bigr)  \bigr) }
                                         {   h(j,y)  \exp\bigl( S_{N_1} \wt{- a\phi} \bigl(\tau_j(y)\bigr)  \bigr) }   }  
& \leq  \Absbigg{ \frac{   h(j,x)  } {   h(j,y) }  }  e^{  \abs{ S_{N_1} \wt{ \minus a\phi}  (\tau_j(x) ) - S_{N_1} \wt{ \minus a\phi}  (\tau_j(y) ) } }  \notag  \\
& \leq  2 \exp \biggl(  C_0 \Hseminormbig{\alpha,\, (S^2,d)}{\wt{- a\phi}}  \frac{ d(x,y)^\alpha  }  {  1- \Lambda^{-\alpha} }    \biggr)  \notag \\
& \leq  2 \exp \biggl(  C_0 \Hseminormbig{\alpha,\, (S^2,d)}{\wt{- a\phi}}  \frac{ C^\alpha \Lambda^{-\alpha m(b)}  }  {  1- \Lambda^{-\alpha} } \biggr) \\
& \leq  2 \exp \biggl(  \frac{\epsilon_1 }{\abs{b}}  C_0 \Hseminormbig{\alpha,\, (S^2,d)}{\wt{- a\phi}}  \frac{ 1 }  {  1- \Lambda^{-\alpha} }  \biggr) 
\leq   8,   \notag
\end{align}
where the last inequality follows from (\ref{eqDef_b0}), (\ref{eqT0Bound_sctDolgopyat}), the condition that $\abs{b} \geq b_0$, and the fact that $\epsilon_1\in(0,1)$ (see (\ref{eqDef_epsilon1})).

We fix $k_0\in\{1,2\}$ such that
\begin{equation}  \label{eqPflmUHDichotomySum_k0}
  \inf \{  h(j,x) e(j,x)               \,|\, x\in\mathfrak{X}_{i_0}(X),\, j\in\{1,2\} \} 
=\inf \{  h(k_0,x) e(k_0,x)     \,|\, x\in\mathfrak{X}_{i_0}(X) \} .  
\end{equation}   

Hence by (\ref{eqTriangleIneqWithAngle}) in Lemma~\ref{lmArgumentIneqalities}, (\ref{eqPflmUHDichotomySum_Theta}), (\ref{eqPflmUHDichotomySum_uhe}), (\ref{eqPflmUHDichotomySum_ThetaLB}), (\ref{eqPflmUHDichotomySum_k0}), (\ref{eqDefWideTildePsiComplex}), and (\ref{eqPflmUHDichotomySum_QuotHBound}), for each $x\in\mathfrak{X}_{i_0} (X)$, we have
\begin{align*}
&                     \abs{  u(1,x) e(1,x)  +  u(2,x) e(2,x)  }  \\
&\qquad \leq   - \frac{\Theta^2(x)}{16}  \min\limits_{k\in\{1,2\}}   \{  \abs{  u(k,x) e(k,x) } \}    + \sum\limits_{j\in\{1,2\}}   \abs{  u(j,x) e(j,x) }     \\
&\qquad\leq   - \frac{\Theta^2(x)}{16}  \min\limits_{k\in\{1,2\}}  \Bigl\{  h(k,x)   e^{ S_{N_1} \wt{\minus a\phi}  (\tau_k(x) )   } \Bigr\}
                          + \sum\limits_{j\in\{1,2\}}          h(j,x)   e^{ S_{N_1} \wt{\minus a\phi}  (\tau_j(x) )   }     \\     
&\qquad\leq  -16\eta   \inf  \Bigl\{  h(k_0,y)   e^{ S_{N_1} \wt{\minus a\phi}  (\tau_{k_0}(y) ) }  \,\Big|\,y\in\mathfrak{X}_{i_0}(X) \Bigr\}
                          + \sum\limits_{j\in\{1,2\}}          h(j,x)  e^{ S_{N_1} \wt{\minus a\phi}  (\tau_j(x) )   }     \\   
&\qquad\leq  -2 \eta   h(k_0,x)   e^{ S_{N_1} \wt{\minus a\phi}  (\tau_{k_0}(x) ) }  + \sum\limits_{j\in\{1,2\}}  h(j,x)  e^{ S_{N_1} \wt{\minus a\phi}  (\tau_j(x) )   }    .     
\end{align*}
Therefore, we conclude that $\norm{Q_{\c,k_0}}_{\CCC^0  ( \mathfrak{X}_{i_0} (X)  ) }  \leq 1$. 
\end{proof}

\begin{prop}    \label{propDolgopyatOperator}
Let $f$, $\CC$, $d$, $\alpha$, $\phi$, $s_0$ satisfy the Assumptions. We assume in addition that $f(\CC)\subseteq \CC$ and that $\phi$ satisfies the $\alpha$-strong non-integrability condition. We use the notation in this section.

There exist numbers $a_0\in (0, s_0 )$ and $\rho\in (0,1)$ such that for all $s \coloneqq a+\I b$ with $a,b\in\R$ satisfying $\abs{a-s_0}\leq a_0$ and $\abs{b}\geq b_0$, there exists a subset $\mathcal{E}_s\subseteq \mathcal{F}$ of the set $\mathcal{F}$ of all subsets of $\{1,2\}\times\{1,2\}\times\mathfrak{C}_b$ with full projection such that the following statements are satisfied:

\begin{enumerate}
\smallskip
\item[(i)] The cone $K_{A\abs{b}}\bigl(X^0_\b,d\bigr)\times K_{A\abs{b}}\bigl(X^0_\w,d\bigr)$ is invariant under $\MM_{J,\minus s,\phi}$, i.e.,
\begin{equation}   \label{eqDolgopyatOpStableCone}
\MM_{J,\minus s,\phi} \bigl( K_{A\abs{b}}\bigl(X^0_\b,d\bigr)\times K_{A\abs{b}}\bigl(X^0_\w,d\bigr) \bigr) 
           \subseteq K_{A\abs{b}}\bigl(X^0_\b,d\bigr)\times K_{A\abs{b}}\bigl(X^0_\w,d\bigr),
\end{equation}
for all $J\in \mathcal{F}$.

\smallskip
\item[(ii)] For all $J\in\mathcal{F}$, $h_\b \in K_{A\abs{b}}\bigl(X^0_\b,d\bigr)$, and $h_\w \in  K_{A\abs{b}}\bigl(X^0_\w,d\bigr)$, we have
\begin{equation}  \label{eqDolgopyatOpL2Shrinking}
                   \sum\limits_{ \c\in\{\b,\w\} }    \int_{X^0_\c} \! \abs{ \pi_\c ( \MM_{J,\minus s,\phi} (h_\b,h_\w) ) }^2 \,\mathrm{d}\mu_{\minus s_0\phi}   
\leq   \rho \sum\limits_{ \c\in\{\b,\w\} }    \int_{X^0_\c} \! \abs{ h_\c }^2 \,\mathrm{d}\mu_{\minus s_0\phi}.
\end{equation}

\smallskip
\item[(iii)] Given arbitrary $h_\b\in K_{A\abs{b}}\bigl( X^0_\b,d\bigr)$,  $h_\w\in K_{A\abs{b}}\bigl( X^0_\w,d\bigr)$, $u_\b\in\Holder{\alpha} \bigl( \bigl(X^0_\b,d\bigr), \C\bigr)$, and $u_\w\in\Holder{\alpha} \bigl( \bigl(X^0_\w,d\bigr), \C \bigr)$ satisfying the property that for each $\c\in\{\b,\w\}$, we have $\abs{u_\c (y)}\leq h_\c (y)$ and $\abs{ u_\c(y) - u_\c(y')} \leq A\abs{b} (h_\c(y) + h_\c(y')) d(y,y')^\alpha$ whenever $y,y'\in X^0_\c$. Then the following statement is true:

\smallskip

There exists $J\in\mathcal{E}_s$ such that
\begin{equation}   \label{eqDolgopyatOpPtwiseBound}
\AbsBig{ \pi_\c \Bigl(\RRR_{\wt{\minus s\phi}}^{N_1+M_0} (u_\b,u_\w) \Bigr)  (x) }    \leq \pi_\c (\MM_{J,\minus s,\phi} (h_\b,h_\w) ) (x)
\end{equation}
and
\begin{align}   \label{eqDolgopyatOpLipBound}
      & \AbsBig{   \pi_\c \Bigl(\RRR_{\wt{\minus s\phi}}^{N_1+M_0} (u_\b,u_\w) \Bigr)  (x)
                 - \pi_\c \Bigl(\RRR_{\wt{\minus s\phi}}^{N_1+M_0} (u_\b,u_\w) \Bigr)  (x') }   \\
\leq &  A\abs{b} (  \pi_\c (\MM_{J,\minus s,\phi} (h_\b,h_\w) ) (x) + \pi_\c (\MM_{J,\minus s,\phi} (h_\b,h_\w) ) (x') ) d(x,x')^\alpha\notag 
\end{align}
for each $\c\in\{\b,\w\}$ and all $x,x'\in X^0_\c$.
\end{enumerate}

\end{prop}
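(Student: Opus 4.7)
The plan is to construct the family $\mathcal{E}_s$ by a pigeonhole-style selection provided by Lemma~\ref{lmUHDichotomySum}, then deduce the pointwise and H\"older bounds (iii) from this choice, use (iii) to derive the cone invariance (i) via the basic inequalities of Lemma~\ref{lmBasicIneq}, and finally establish the $L^2$ contraction (ii) by combining the pointwise subtraction of $\eta$ by $\beta_J$ with Gibbs doubling (Lemma~\ref{lmGibbsDoubling}) and the cone condition on $(h_\b,h_\w)$.

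For (iii), given $(u_\b,u_\w)$ and $(h_\b,h_\w)$ as in the hypothesis, I apply Lemma~\ref{lmUHDichotomySum} once for each $\c\in\{\b,\w\}$ and each $X\in\mathfrak{C}_b$ with $X\subseteq Y^{M_0}_\c$ to obtain a pair $(j_\c(X),i_\c(X))\in\{1,2\}\times\{1,2\}$ with $\norm{Q_{\c,j_\c(X)}}_{\CCC^0(\mathfrak{X}_{i_\c(X)}(X))}\leq 1$. I take
\begin{equation*}
J \coloneqq \bigl\{(j_\c(X),i_\c(X),X) \,\big|\, \c\in\{\b,\w\}, \, X\in\mathfrak{C}_b,\, X\subseteq Y^{M_0}_\c\bigr\},
\end{equation*}
and let $\mathcal{E}_s$ consist of all such functional full-projection selections. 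Expanding the definition of $\pi_\c\bigl(\RRR_{\wt{\minus s\phi}}^{N_1+M_0}(u_\b,u_\w)\bigr)(x)$ via Lemma~\ref{lmSplitRuelleCoordinateFormula} as a sum over the $2(\deg f)^{N_1+M_0}$ preimage branches, the only branches on which $\beta_J$ differs from $1$ are those factoring through $\tau_j \colon Y^{M_0}_\c \to X^{N_1+M_0}_{\c,j}$; on the block of two such branches whose $f^{N_1}$-image lies in $\mathfrak{X}'_{i_\c(X)}(X)$, the cutoff $\psi_{i_\c(X),X}$ equals $1$, so $\beta_J$ subtracts exactly $\eta$ from the $j_\c(X)$-th term. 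The bound $Q_{\c,j_\c(X)}\leq 1$ of Lemma~\ref{lmUHDichotomySum} then absorbs the full modulus of the corresponding pair into $\pi_\c(\MM_{J,\minus s,\phi}(h_\b,h_\w))(x)$, while at every other preimage the trivial bound $|u_\c|\leq h_\c$ combined with $\beta_J\leq 1$ from (\ref{eqBetaJBounds}) suffices; this yields (\ref{eqDolgopyatOpPtwiseBound}). The H\"older bound (\ref{eqDolgopyatOpLipBound}) then follows by applying inequality (\ref{eqBasicIneqC}) of Lemma~\ref{lmBasicIneq}(ii) to the pair $(u_\b\beta_J,u_\w\beta_J)$ with majorant $(h_\b\beta_J,h_\w\beta_J)$, after verifying via Lemma~\ref{lmBetaJ} and (\ref{eqDef_eta}) that $h_\c\beta_J$ still lies in $K_{A|b|}(X^0_\c,d)$.

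Statement (i) is then an immediate specialization of (iii) with $u_\c=h_\c$, provided the resulting cone-constant inequality
\begin{equation*}
A_0\biggl(\frac{A|b|}{\Lambda^{\alpha(N_1+M_0)}}+\frac{\Hseminormbig{\alpha,\,(S^2,d)}{\wt{\minus a\phi}}}{1-\Lambda^{-\alpha}}\biggr)\leq A|b|
\end{equation*}
is verified; this is where the choices of $A$, $N_1$, and $b_0$ in (\ref{eqDef_A}), (\ref{eqDef_N1}), and (\ref{eqDef_b0}), together with the uniform bound (\ref{eqT0Bound_sctDolgopyat}), enter. For (ii), I integrate $|\pi_\c(\MM_{J,\minus s,\phi}(h_\b,h_\w))|^2$ against $\mu_{\minus s_0\phi}$ and pull the mass back by Lemma~\ref{lmRDiscontTildeDual} (using that $|a-s_0|\leq a_0$ is small to pass between $\wt{\minus a\phi}$ and $\wt{\minus s_0\phi}$). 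Writing $\beta_J^2 = 1-(2\eta-\eta^2)\sum\psi_{i,X}(f^{N_1}(\cdot))$ produces a negative correction whose magnitude on each $X\in\mathfrak{C}_b$ is at least $c\eta\int_{\mathfrak{X}'_{i_\c(X)}(X)} h_\c^2\,d\mu_{\minus s_0\phi}$. Gibbs doubling from Lemma~\ref{lmGibbsDoubling} bounds $\mu_{\minus s_0\phi}(\mathfrak{X}'_{i_\c(X)}(X))/\mu_{\minus s_0\phi}(X)$ below by a constant depending only on $m_0$ and $\phi$, while Lemma~\ref{lmSupLeq2Inf} guarantees $h_\c$ is essentially constant on each $X\in\mathfrak{C}_b$. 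Summing over $X\in\mathfrak{C}_b$, whose union covers $Y^{M_0}_\b\cup Y^{M_0}_\w$ (a set of definite $\mu_{\minus s_0\phi}$-mass), yields a uniform $\rho=1-c'\eta<1$ independent of $s$.

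The main obstacle is part (ii): the pointwise cancellation is localized to the small cells $\mathfrak{X}'_{i_\c(X)}(X)$ of generation $m(b)+2m_0$, whose $\mu_{\minus s_0\phi}$-mass is exponentially smaller than that of the ambient generation-$m(b)$ tile $X$, so propagating the pointwise gain to a global $L^2$ contraction requires simultaneously using Gibbs doubling to control the mass ratio, the cone condition to assert comparability of $h_\c$ across $X$, and the full-projection property of $J$ to guarantee that cancellation occurs on \emph{every} tile of $\mathfrak{C}_b$. Calibrating the parameters $\eta$, $m_0$, $N_1$, $\epsilon_1$, $a_0$ so that the same $\rho<1$ serves in (ii) while the numerical inequality underlying (i) is preserved, uniformly in $|b|\geq b_0$, is the delicate technical balance at the heart of the proof.
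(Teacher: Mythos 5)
Your choice of $J$ via Lemma~\ref{lmUHDichotomySum} and the broad outline for (iii) matches the paper's strategy, and your use of Lemma~\ref{lmGibbsDoubling} and Lemma~\ref{lmSupLeq2Inf} for (ii) is also on the right track. However, there is a genuine gap in the logic of part~(i), and two subsidiary errors worth flagging.

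Part~(i) cannot be obtained as a specialization of (iii). First, (iii) is an \emph{existence} result: it furnishes \emph{some} $J\in\mathcal{E}_s$ depending on the given $(u_\c,h_\c)$, while (i) must hold for \emph{all} $J\in\mathcal{F}$. Second, and more fundamentally, the conclusion of (iii) compares $\bigl|\pi_\c\bigl(\RRR_{\wt{\minus s\phi}}^{N_1+M_0}(u_\b,u_\w)\bigr)\bigr|$ against $\pi_\c(\MM_{J,\minus s,\phi}(h_\b,h_\w))$ --- two different functions --- whereas (i) demands the \emph{self}-referential cone inequality
\begin{equation*}
\bigl|\pi_\c(\MM_{J,\minus s,\phi}(h_\b,h_\w))(x)-\pi_\c(\MM_{J,\minus s,\phi}(h_\b,h_\w))(x')\bigr|\leq A\abs{b}\bigl(\pi_\c(\MM_{J,\minus s,\phi}(h_\b,h_\w))(x)+\pi_\c(\MM_{J,\minus s,\phi}(h_\b,h_\w))(x')\bigr)d(x,x')^\alpha.
\end{equation*}
Setting $u_\c=h_\c$ in (iii) does not produce this. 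The paper proves (i) by a direct expansion of $\pi_\b(\MM_{J,\minus s,\phi}(h_\b,h_\w))(x)-\pi_\b(\MM_{J,\minus s,\phi}(h_\b,h_\w))(x')$ into three terms, using Lemma~\ref{lmMetricDistortion} to turn the Lipschitz constant $L_\beta$ of $\beta_J$ (from Lemma~\ref{lmBetaJ}) into $L_\beta\, C_0^\alpha\Lambda^{-\alpha\iota}d(x,x')^\alpha$ at the level of the preimage branches; the gain $\Lambda^{-\alpha\iota}$ from the $\iota=N_1+M_0$ contractions is what absorbs $L_\beta$ into $A\abs{b}$, and the parameters $m_0,\eta,N_1$ are calibrated in (\ref{eqDef_m0})--(\ref{eqDef_eta}) precisely to make this work.

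Relatedly, your intermediate claim that $h_\c\beta_J\in K_{A\abs{b}}(X^0_\c,d)$ is false. If $h_\c\in K_{A\abs{b}}$ and $\beta_J$ is Lipschitz with constant $L_\beta>0$ and $\beta_J\in[1-\eta,1]$, then $\abs{h_\c(x)\beta_J(x)-h_\c(y)\beta_J(y)}\leq h_\c(x)L_\beta d(x,y)^\alpha + A\abs{b}(h_\c(x)+h_\c(y))d(x,y)^\alpha$, which cannot be bounded by $A\abs{b}(h_\c(x)\beta_J(x)+h_\c(y)\beta_J(y))d(x,y)^\alpha\leq A\abs{b}(h_\c(x)+h_\c(y))d(x,y)^\alpha$ since the extra $L_\beta$ term is positive. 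The cone is \emph{not} preserved by pointwise multiplication with $\beta_J$; it is only the composition $\RRR^{(\iota)}_{\wt{\minus a\phi}}\circ(\text{mult.\ by }\beta_J)$ that lands back in the cone, because the Ruelle iterate applies $f^{-\iota}$ to the argument and thereby shrinks the Lipschitz contribution of $\beta_J$ by $\Lambda^{-\alpha\iota}$. Consequently, your plan to deduce (\ref{eqDolgopyatOpLipBound}) by applying (\ref{eqBasicIneqC}) ``to the pair $(u_\b\beta_J,u_\w\beta_J)$ with majorant $(h_\b\beta_J,h_\w\beta_J)$'' is misdirected: the paper applies (\ref{eqBasicIneqC}) to $(u_\b,u_\w)$ with majorant $(h_\b,h_\w)$ and then bounds $\RR^{(\iota)}_{\wt{\minus a\phi}}(h_\c)$ by $2\RR^{(\iota)}_{\wt{\minus a\phi}}(h_\c\beta_J)$ via $\beta_J>\tfrac12$.

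For (ii), the identity $\beta_J^2=1-(2\eta-\eta^2)\sum\psi_{i,X}(f^{N_1}(\cdot))$ is incorrect (the correct expansion is $1-2\eta\sum\psi+\eta^2(\sum\psi)^2$, and $\sum\psi$ takes values in $[0,1]$, not $\{0,1\}$); you only have the one-sided estimate $\beta_J^2\leq 1-\eta$ on the set where $\psi=1$. More importantly, $\int\abs{\RRR^\iota(h\beta_J)}^2$ is not of the form $\int\RRR^\iota(\cdot)$, so you cannot directly pull back the square by Lemma~\ref{lmRDiscontTildeDual}. The paper inserts a Cauchy--Schwarz step $\bigl(\pi_\c(\MM_J(h_\b,h_\w))\bigr)^2\leq\pi_\c\bigl(\RRR^\iota_{\wt{\minus s_0\phi}}(h_\b^2,h_\w^2)\bigr)\cdot\pi_\c\bigl(\RRR^\iota_{\wt{\minus s_0\phi}}(\beta_J^2,\beta_J^2)\bigr)$ precisely to separate a linear term (which \emph{is} pulled back by the dual) from the factor controlled by the pointwise bound $\pi_\c(\RRR^\iota_{\wt{\minus s_0\phi}}(\beta_J^2,\beta_J^2))\leq 1-\eta e^{-\iota\|\wt{\minus s_0\phi}\|_{C^0}}$ on $W_J$. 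Without this step your argument as written does not go through.
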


\begin{proof}
For typographical convenience, we write $\iota \coloneqq N_1+M_0$ in this proof. 

We fix an arbitrary number $s=a+\I b$ with $a,b\in\R$ satisfying $\abs{a-s_0}\leq  s_0$ and $\abs{b}\geq b_0$.

\smallskip

(i) Without loss of generality, it suffices to show that for each $J\in \mathcal{F}$,
\begin{equation*}
\pi_\b \bigl( \MM_{J,\minus s,\phi} \bigl( K_{A\abs{b}}\bigl(X^0_\b,d\bigr)\times K_{A\abs{b}}\bigl(X^0_\w,d\bigr) \bigr)\bigr) 
           \subseteq K_{A\abs{b}}\bigl(X^0_\b,d\bigr).
\end{equation*}

Fix $J\in \mathcal{F}$, functions $h_\b \in K_{A\abs{b}}\bigl(X^0_\b,d\bigr)$, $h_\w \in K_{A\abs{b}}\bigl(X^0_\w,d\bigr)$, and points $x,x'\in X^0_\b$ with $x\neq x'$. For each $X^\iota \in \X^\iota_\b$, denote $y_{X^\iota} \coloneqq (f^\iota|_{X^\iota})^{-1}(x)$ and $y'_{X^\iota} \coloneqq (f^\iota|_{X^\iota})^{-1}(x')$.

Then by Definition~\ref{defDolgopyatOperator}, (\ref{eqSplitRuelleCoordinateFormula}) in Lemma~\ref{lmSplitRuelleCoordinateFormula}, Definition~\ref{defSplitRuelle}, and (\ref{eqDefWideTildePsiComplex}),
\begin{align*}
&             \abs{ \pi_\b ( \MM_{J,\minus s,\phi} (h_\b,h_\w) ) (x) - \pi_\b ( \MM_{J,\minus s,\phi} (h_\b,h_\w) ) (x') } \\
&\qquad =     \Absbigg{  \RR_{\wt{\minus a\phi},X^0_\b,X^0_\b}^{(\iota)} \bigl(h_\b \beta_J|_{X^0_\b} \bigr) (x) 
                       + \RR_{\wt{\minus a\phi},X^0_\b,X^0_\w}^{(\iota)} \bigl(h_\w \beta_J|_{X^0_\w} \bigr) (x) \\
&\qquad\quad           - \RR_{\wt{\minus a\phi},X^0_\b,X^0_\b}^{(\iota)} \bigl(h_\b \beta_J|_{X^0_\b} \bigr) (x')
                       - \RR_{\wt{\minus a\phi},X^0_\b,X^0_\w}^{(\iota)} \bigl(h_\w \beta_J|_{X^0_\w} \bigr) (x')  } \\
&\qquad\leq   \sum\limits_{\c\in\{\b,\w\}} \sum\limits_{\substack{X^\iota\in\X^\iota_\b\\X^\iota\subseteq X^0_\c}}
              \AbsBig{   h_\c( y_{X^\iota} )  \beta_J ( y_{X^\iota} )  e^{S_\iota\wt{\minus a\phi}(y_{X^\iota})}  
                      -  h_\c( y'_{X^\iota} ) \beta_J ( y'_{X^\iota} ) e^{S_\iota\wt{\minus a\phi}(y'_{X^\iota})}   } \\
&\qquad\leq  \sum\limits_{\c\in\{\b,\w\}} \sum\limits_{\substack{X^\iota\in\X^\iota_\b\\X^\iota\subseteq X^0_\c}}
                 \Absbig{  h_\c( y_{X^\iota} )   \beta_J ( y_{X^\iota} ) - h_\c( y'_{X^\iota} )  \beta_J ( y'_{X^\iota} ) }
                  e^{S_\iota\wt{\minus a\phi}(y'_{X^\iota})}  \\
&\qquad\quad + \sum\limits_{\c\in\{\b,\w\}} \sum\limits_{\substack{X^\iota\in\X^\iota_\b\\X^\iota\subseteq X^0_\c}}
                 h_\c( y_{X^\iota} )   \beta_J ( y_{X^\iota} )  
                 \AbsBig{ e^{S_\iota\wt{\minus a\phi}(y_{X^\iota})} - e^{S_\iota\wt{\minus a\phi}(y'_{X^\iota})} }   \\
&\qquad\leq    \sum\limits_{\c\in\{\b,\w\}} \sum\limits_{\substack{X^\iota\in\X^\iota_\b\\X^\iota\subseteq X^0_\c}}
                 h_\c( y_{X^\iota} ) \Absbig{    \beta_J ( y_{X^\iota} ) - \beta_J ( y'_{X^\iota} ) }
                 e^{S_\iota\wt{\minus a\phi}(y_{X^\iota})} e^{\Absbig{ S_\iota\wt{\minus a\phi}(y'_{X^\iota}) - S_\iota\wt{\minus a\phi}(y_{X^\iota})}}\\
&\qquad\quad    + \sum\limits_{\c\in\{\b,\w\}} \sum\limits_{\substack{X^\iota\in\X^\iota_\b\\X^\iota\subseteq X^0_\c}}  
                  \Absbig{  h_\c( y_{X^\iota} ) - h_\c( y'_{X^\iota} ) }   \beta_J ( y'_{X^\iota} ) 
                  e^{S_\iota\wt{\minus a\phi}(y'_{X^\iota})}\\
&\qquad\quad    + \sum\limits_{\c\in\{\b,\w\}} \sum\limits_{\substack{X^\iota\in\X^\iota_\b\\X^\iota\subseteq X^0_\c}}
                  h_\c( y_{X^\iota} )  \beta_J ( y'_{X^\iota} )  e^{S_\iota\wt{\minus a\phi}(y_{X^\iota})} 
                  \AbsBig{1 - e^{ S_\iota\wt{\minus a\phi}(y'_{X^\iota}) - S_\iota\wt{\minus a\phi}(y_{X^\iota}) } }  .
\end{align*}

By Lemma~\ref{lmSnPhiBound}, Lemma~\ref{lmBetaJ}, Lemma~\ref{lmMetricDistortion}, and Lemma~\ref{lmExpToLiniear}, the right-hand side of the last inequality is

\begin{align*}
\leq  &  \exp \biggl( \frac{ T_0 C_0 \bigl(\diam_d(S^2)\bigr)^\alpha}{1-\Lambda^{-\alpha}}  \biggr)  
         \Biggl(  \sum\limits_{\c\in\{\b,\w\}} \sum\limits_{\substack{X^\iota\in\X^\iota_\b\\X^\iota\subseteq X^0_\c}}
                      h_\c(y_{X^\iota}) L_\beta C_0^\alpha \Lambda^{-\iota \alpha} d(x,x')^\alpha  e^{S_\iota\wt{\minus a\phi}(y_{X^\iota})}   \\
      &         + \sum\limits_{\c\in\{\b,\w\}} \sum\limits_{\substack{X^\iota\in\X^\iota_\b\\X^\iota\subseteq X^0_\c}}
                      A\abs{b}  \Bigl(    h_\c(y_{X^\iota})   e^{S_\iota\wt{\minus a\phi}(y_{X^\iota})}  
                                       +  h_\c(y'_{X^\iota})  e^{S_\iota\wt{\minus a\phi}(y'_{X^\iota})}   \Bigr) 
           C_0^\alpha \Lambda^{-\alpha\iota} d(x,x')^\alpha    \Biggr)  \\
      & +C_{10} T_0 d(x,x')^\alpha   \sum\limits_{\c\in\{\b,\w\}} 
           \RR_{\wt{\minus a\phi},X^0_\b,X^0_\c}^{(\iota)}  \bigl(h_\c \beta_J|_{X^0_\c} \bigr) (x),
\end{align*}
where $C_0>1$ is a constant from Lemma~\ref{lmMetricDistortion} depending only on $f$, $\CC$, and $d$; $L_\beta$ is a constant defined in (\ref{eqDefLipBoundBetaJ}) in Lemma~\ref{lmBetaJ}; $T_0>0$ is a constant defined in (\ref{eqDefT0}) giving an upper bound of $\Hseminormbig{\alpha,\, (S^2,d)}{\wt{ - a\phi}}$ by Lemma~\ref{lmBound_aPhi} (c.f.\ (\ref{eqT0Bound})); and $C_{10} \coloneqq C_{10}(f,\CC,d,\alpha,T_0)>1$ is a constant defined in (\ref{eqDefC10}) in Lemma~\ref{lmExpToLiniear}. Both $T_0$ and $C_{10}$ depend only on $f$, $\CC$, $d$, $\phi$, and $\alpha$. Thus by (\ref{eqDefC10}), (\ref{eqBetaJBounds}) and (\ref{eqDefLipBoundBetaJ}) in Lemma~\ref{lmBetaJ}, Definition~\ref{defDolgopyatOperator}, (\ref{eqDefmb}), and the calculation above, we get
\begin{align*}
&              \frac{ \abs{ \pi_\b ( \MM_{J,\minus s,\phi} (h_\b,h_\w) ) (x) - \pi_\b ( \MM_{J,\minus s,\phi} (h_\b,h_\w) ) (x') }   }
                     { A\abs{b}  ( \pi_\b ( \MM_{J,\minus s,\phi} (h_\b,h_\w) ) (x) + \pi_\b ( \MM_{J,\minus s,\phi} (h_\b,h_\w) ) (x') )  
                       d(x,x')^\alpha  } \\
&\qquad\leq    \frac{C_{10}}{A\abs{b} (1-\eta)} ( L_\beta   + A\abs{b} ) \Lambda^{-\alpha\iota}
                 + \frac{C_{10} T_0}{ A\abs{b} }  \\
&\qquad\leq    \frac{C_{10}}{1-\eta} \biggl(
                   \frac{40 \varepsilon^{-\alpha} }{A\abs{b}} C  \Lambda^{2\alpha m_0 +1} \frac{ C \abs{b} }{\epsilon_1}
                           (\LIP_d(f))^{\alpha N_1} \eta   + 1 \biggr) \Lambda^{-\alpha(N_1+M_0)} + \frac{C_{10} T_0}{ A\abs{b} } \\
&\qquad\leq    1.      
\end{align*}
The last inequality follows from the observations that $\frac{C_{10}T_0}{A}\leq \frac13$ (see (\ref{eqDef_A})), that 
\begin{equation*}
\Lambda^{-\alpha (N_1 + M_0)} \leq \frac{1}{4 C_{10}} \leq \frac{1}{3}  \frac{1-\eta}{C_{10}}
\end{equation*} 
(see (\ref{eqDef_N1}) and (\ref{eqDef_eta})), and that by (\ref{eqDef_eta}) ,
\begin{equation*}  
 \frac{ 40 \varepsilon^{-\alpha} C_{10} C^2 \eta}{A \epsilon_1 (1-\eta)} \Lambda^{-\alpha N_1 - \alpha M_0 + 2 \alpha m_0 +1} (\LIP_d(f))^{\alpha N_1} \leq \frac13.
\end{equation*}

\smallskip

(ii) Fix $J\in \mathcal{F}$ and two functions $h_\b \in K_{A\abs{b}} \bigl(X^0_\b, d \bigr)$, $h_\w \in K_{A\abs{b}} \bigl(X^0_\w, d \bigr)$.

We first establish that 
\begin{equation}   \label{eqPfpropDolgopyatOperator_HolderIneq}
      \bigl( \pi_\c \bigl( \MM_{J,\minus s,\phi} (h_\b,h_\w) \bigr) (x) \bigr)^2
\leq         \pi_\c \Bigl( \RRR^\iota_{\wt{\minus a\phi}} \bigl( h_\b^2,h_\w^2\bigr) \Bigr) (x) 
     \cdot   \pi_\c \Bigl( \RRR^\iota_{\wt{\minus a\phi}} \Bigl( \bigl( \beta_J|_{X^0_\b} \bigr)^2, \bigl( \beta_J|_{X^0_\w} \bigr)^2 \Bigr) \Bigr) (x) 
\end{equation} 
for $\c \in \{ \b, \w \}$ and $x\in X^0_\c$. Indeed, it suffices to verify the case $\c=\b$ as shown below. Fix an arbitrary point $x\in X^0_\b$. For each $X^\iota \in \X_\b^\iota$, denote $y_{X^\iota} \coloneqq (f^\iota|_{X^\iota})^{-1}(x)$. Then by Definition~\ref{defDolgopyatOperator}, (\ref{eqSplitRuelleCoordinateFormula}) in Lemma~\ref{lmSplitRuelleCoordinateFormula}, and the Cauchy--Schwartz inequality, we have
\begin{align*}
&             \bigl( \pi_\b \bigl( \MM_{J,\minus s,\phi} (h_\b,h_\w) \bigr) (x) \bigr)^2 \\
&\qquad =     \biggl(   \sum\limits_{\c\in\{\b,\w\}} \RR_{\wt{\minus a\phi},X^0_\b,X^0_\c}^{(\iota)} \bigl(h_\c \beta_J|_{X^0_\c} \bigr)(x) \biggr)^2 \\
&\qquad =     \biggl(   \sum\limits_{\c\in\{\b,\w\}} \sum\limits_{\substack{X^\iota\in\X^\iota_\b\\X^\iota\subseteq X^0_\c}}
                      \bigl( h_\c \beta_J \exp\bigl( S_\iota \wt{- a\phi} \bigr)  \bigr) ( y_{X^\iota} )  \biggr)^2  \\
&\qquad \leq  \biggl(   \sum\limits_{\c\in\{\b,\w\}} \sum\limits_{\substack{X^\iota\in\X^\iota_\b\\X^\iota\subseteq X^0_\c}}
                      \bigl( h_\c^2    \exp\bigl( S_\iota \wt{- a\phi} \bigr)  \bigr) ( y_{X^\iota} )  \biggr)
        \biggl(   \sum\limits_{\c\in\{\b,\w\}} \sum\limits_{\substack{X^\iota\in\X^\iota_\b\\X^\iota\subseteq X^0_\c}}
                      \bigl( \beta_J^2 \exp\bigl( S_\iota \wt{- a\phi} \bigr)  \bigr) ( y_{X^\iota} )  \biggr)  \\
&\qquad =          \pi_\c \Bigl( \RRR^\iota_{\wt{\minus a\phi}} \bigl( h_\b^2,h_\w^2\bigr) \Bigr) (x) 
       \cdot \pi_\c \Bigl( \RRR^\iota_{\wt{\minus a\phi}} \Bigl( \bigl( \beta_J|_{X^0_\b} \bigr)^2, \bigl( \beta_J|_{X^0_\w} \bigr)^2 \Bigr) \Bigr) (x).                   
\end{align*}

\smallskip

We will focus on the case when the potential is $\wt{ - s_0\phi}$ for now, and only consider the general case at the end of the proof of statement~(ii).

Next, we define a set
\begin{equation}     \label{eqDefWJ}
W_J \coloneqq \bigcup\limits_{(j,i,X)\in J}  f^{M_0} ( \mathfrak{X}'_i(X) ).
\end{equation}
We claim that for each $\c\in\{\b,\w\}$ and each $x\in W_J \cap X^0_\c$, we have
\begin{equation}   \label{eqPfpropDolgopyatOperator_1-eta}
      \pi_\c \Bigl( \RRR^\iota_{\wt{\minus s_0\phi}} \Bigl( \bigl( \beta_J|_{X^0_\b} \bigr)^2, \bigl( \beta_J|_{X^0_\w} \bigr)^2 \Bigr) \Bigr) (x) 
\leq  1 - \eta \exp \Bigl( - \iota \Normbig{ \wt{ - s_0 \phi} }_{\CCC^0(S^2)} \Bigr).
\end{equation}
Indeed, it suffices to verify the case when $\c=\b$ as shown below. For each $x\in W_J \cap X^0_\b$, by (\ref{eqSplitRuelleCoordinateFormula}) in Lemma~\ref{lmSplitRuelleCoordinateFormula}, Definition~\ref{defSplitRuelle}, and (\ref{eqDefBetaJ}),
\begin{align*}
&            \pi_\b \Bigl( \RRR^\iota_{\wt{\minus s_0\phi}} \Bigl( \bigl( \beta_J|_{X^0_\b} \bigr)^2, \bigl( \beta_J|_{X^0_\w} \bigr)^2 \Bigr) \Bigr) (x) \\  
&\qquad =    \sum\limits_{\c\in\{\b,\w\}} \RR_{\wt{\minus s_0\phi},X^0_\b,X^0_\c}^{(\iota)} \Bigl( \bigl( \beta_J|_{X^0_\c} \bigr)^2 \Bigr)(x) \\
&\qquad =    \sum\limits_{\c\in\{\b,\w\}} \sum\limits_{\substack{X^\iota\in\X^\iota_\b\\X^\iota\subseteq X^0_\c}}
                 \beta_J^2 ( y_{X^\iota}) \exp \bigl( S_\iota \wt{ - s_0\phi} (y_{X^\iota}) \bigr)   \\
&\qquad\leq  \sum\limits_{\c\in\{\b,\w\}} \RR_{\wt{\minus s_0\phi},X^0_\b,X^0_\c}^{(\iota)} \bigl(\mathbbm{1}_{X^0_\c} \bigr) (x) 
            - \eta \psi_{i_X,X} \bigl( f^{N_1}  ( y_*   )  \bigr) \exp \bigl( S_\iota \wt{ - s_0\phi} (y_*) \bigr)\\
&\qquad\leq  1 - \eta \exp \Bigl( - \iota \Normbig{ \wt{ - s_0 \phi} }_{\CCC^0(S^2)} \Bigr),
\end{align*}
where $i_X,j_X\in\{1,2\}$ are chosen in such a way that $(j_X,i_X,X)\in J$ (due to the fact that $J\in\mathcal{F}$ has full projection (c.f.\ Definition~\ref{defFull})), and we denote $y_{X^\iota} \coloneqq (f^\iota|_{X^\iota})^{-1}(x)$ for $X^\iota \in \X^\iota_\b$, and write $y_* \coloneqq y_{X_{\b,j_X}^{N_1+M_0}}$. The last inequality follows from (\ref{eqSplitRuelleTildeSupDecreasing}) in Lemma~\ref{lmRtildeNorm=1}, (\ref{eqPsi_iX}), and (\ref{eqDefWJ}). The claim is now verified.

\smallskip

Next, we claim that for each $\c\in\{\b,\w\}$,
\begin{equation}   \label{eqPfpropDolgopyatOperator_RuelleH2inCone}
        \pi_\c \Bigl( \RRR^\iota_{\wt{\minus s_0\phi}} \bigl(h_\b^2,h_\w^2 \bigr) \Bigr) \in K_{A\abs{b}} (X^0_\c,d).
\end{equation}
Indeed, it suffices to verify the case $\c=\b$ as shown below. By (\ref{eqSplitRuelleCoordinateFormula}) in Lemma~\ref{lmSplitRuelleCoordinateFormula}, Lemma~\ref{lmConePower}, and Lemma~\ref{lmBasicIneq}~(i), for all $x,y\in X^0_\b$,
\begin{align*}
&            \AbsBig{   \pi_\b \Bigl( \RRR^\iota_{\wt{\minus s_0\phi}} \bigl( h_\b^2,h_\w^2 \bigr) \Bigr) (x) 
                              -  \pi_\b \Bigl( \RRR^\iota_{\wt{\minus s_0\phi}} \bigl( h_\b^2,h_\w^2 \bigr) \Bigr) (y)   } \\
&\qquad\leq  \sum\limits_{\c\in\{\b,\w\}}  \AbsBig{   \RR_{\wt{\minus s_0\phi},X^0_\b,X^0_\c}^{(\iota)} \bigl( h_\c^2 \bigr)(x)
                                                                                            - \RR_{\wt{\minus s_0\phi},X^0_\b,X^0_\c}^{(\iota)} \bigl( h_\c^2 \bigr)(y)  }\\
&\qquad\leq  A_0 \biggl(  \frac{ 2A\abs{b} }{ \Lambda^{\alpha\iota} }
                        + \frac{ \Hseminormbig{\alpha,\,(S^2,d)}{\wt{ - s_0\phi}} }{ 1 - \Lambda^{-\alpha} } \biggr)  d(x,y)^\alpha 
             \sum\limits_{\c\in\{\b,\w\}}   \sum\limits_{z\in\{x,y\}}   \RR_{\wt{\minus s_0\phi},X^0_\b,X^0_\c}^{(\iota)} \bigl( h_\c^2 \bigr)(z)  \\
&\qquad\leq  A\abs{b}  d(x,y)^\alpha   \sum\limits_{z\in\{x,y\}}   \pi_\b \Bigl( \RRR^\iota_{\wt{\minus s_0\phi}} \bigl( h_\b^2,h_\w^2 \bigr) \Bigr) (z) ,   
\end{align*}
where $A_0=A_0\bigl(f,\CC,d,\Hseminorm{\alpha,\,(S^2,d)}{\phi}, \alpha\bigr)>2$ is a constant from Lemma~\ref{lmBasicIneq} depending only on $f$, $\CC$, $d$, $\Hseminorm{\alpha,\,(S^2,d)}{\phi}$, and $\alpha$; and $C\geq 1$ is a constant from Lemma~\ref{lmCellBoundsBM} depending only on $f$, $\CC$, and $d$. The last inequality follows from $\frac{A_0}{\Lambda^{\alpha(N_1 + M_0)}} \leq \frac14$ (see (\ref{eqDef_N1})) and 
$ 
\frac{A_0 \Hseminorm{\alpha,\,(S^2,d)}{\wt{ \minus s_0\phi}} }{ 1 - \Lambda^{-\alpha} } \leq \frac12  b_0 \leq \frac12 A b_0 \leq \frac12 A \abs{b}
$
(see (\ref{eqDef_b0}) and (\ref{eqDef_A})). The claim now follows immediately.

\smallskip

We now combine (\ref{eqPfpropDolgopyatOperator_RuelleH2inCone}), Lemma~\ref{lmSupLeq2Inf}, Lemma~\ref{lmGibbsDoubling}, (\ref{eqDefWJ}), and $\abs{b} \geq b_0 \geq 2 s_0 + 1$ (see (\ref{eqDef_b0})) to deduce that for each $\c\in\{\b,\w\}$, we have
\begin{align}    \label{eqPfpropDolgopyatOperator_BoundIntW_J}
&             \int_{X^0_\c} \!  \pi_\c \Bigl(  \RRR_{\wt{\minus s_0\phi}}^\iota \bigl(h_\b^2,h_\w^2\bigr) \Bigr) \,\mathrm{d}\mu_{\minus s_0\phi}  \\
&\qquad\leq   \sum\limits_{\substack{X \in\mathfrak{C}_b \\X \subseteq Y^{M_0}_\c}} 
                  \int_{f^{M_0}(X)} \, \pi_\c \Bigl(  \RRR_{\wt{\minus s_0\phi}}^\iota \bigl(h_\b^2,h_\w^2\bigr) \Bigr) \,\mathrm{d}\mu_{\minus s_0\phi}    \notag\\
&\qquad\leq   \sum\limits_{\substack{X \in\mathfrak{C}_b \\X \subseteq Y^{M_0}_\c}}        \mu_{\minus s_0\phi} \bigl( f^{M_0}(X) \bigr)
                  \sup\limits_{ x\in f^{M_0}(X) }  \Bigl\{ \pi_\c \Bigl(  \RRR_{\wt{\minus s_0\phi}}^\iota \bigl(h_\b^2,h_\w^2\bigr) \Bigr) (x)  \Bigr\}    \notag \\
&\qquad\leq   \sum\limits_{\substack{X \in\mathfrak{C}_b \\X \subseteq Y^{M_0}_\c}}   \mu_{\minus s_0\phi} \bigl( f^{M_0}(X) \bigr) \cdot
                   2 \inf\limits_{  x\in f^{M_0}(X) }   \Bigl\{ \pi_\c \Bigl(  \RRR_{\wt{\minus s_0\phi}}^\iota \bigl(h_\b^2,h_\w^2\bigr) \Bigr) (x)   \Bigr\}      \notag  \\  
&\qquad\leq   C_{18} \sum\limits_{\substack{X \in\mathfrak{C}_b \\X \subseteq Y^{M_0}_\c}}    \mu_{\minus s_0\phi} \bigl(  f^{M_0} \bigl( \mathfrak{X}'_{i_{J,X}}(X) \bigr) \bigr)
                   \inf\limits_{ x\in f^{M_0} \bigl( \mathfrak{X}'_{i_{J,X}}(X) \bigr) }   \Bigl\{ \pi_\c \Bigl(  \RRR_{\wt{\minus s_0\phi}}^\iota \bigl(h_\b^2,h_\w^2\bigr) \Bigr) (x)   \Bigr\}  \notag \\     
&\qquad\leq   C_{18} \sum\limits_{\substack{X \in\mathfrak{C}_b \\X \subseteq Y^{M_0}_\c}} 
                  \int_{f^{M_0}\bigl(\mathfrak{X}'_{i_{J,X}} (X) \bigr)} \, \pi_\c \Bigl(  \RRR_{\wt{\minus s_0\phi}}^\iota \bigl(h_\b^2,h_\w^2\bigr) \Bigr) \,\mathrm{d}\mu_{\minus s_0\phi}    \notag \\    
&\qquad\leq     C_{18} \int_{W_J \cap X^0_\c} \!  \pi_\c \Bigl(  \RRR_{\wt{\minus s_0\phi}}^\iota \bigl(h_\b^2,h_\w^2\bigr) \Bigr) \,\mathrm{d}\mu_{\minus s_0\phi},      \notag                              
\end{align}
where $i_{J,X}\in \{1,2\}$ can be set in such a way that either $(1,i_{J,X},X)\in J$ or $(2,i_{J,X},X)\in J$ due to the assumption that $J\in\mathcal{F}$ has full projection, and the constant $C_{18}$ can be chosen as
\begin{equation}   \label{eqDefC18}
C_{18} \coloneqq 2 C_{\mu_{\minus s_0\phi}}^2 \exp \bigl( 2 m_0 \bigl( \norm{ {-}s_0\phi}_{\CCC^0(S^2)} + P(f,-s_0\phi) \bigr) \bigr) >1,
\end{equation}
which depends only on $f$, $\CC$, $d$, and $\phi$. Here the constant $C_{\mu_{\minus s_0\phi}}\geq 1$ is from Lemma~\ref{lmGibbsDoubling}, depending only on $f$, $d$, and $\phi$.

We now observe that by (\ref{eqSplitRuelleCoordinateFormula}) in Lemma~\ref{lmSplitRuelleCoordinateFormula} and Lemma~\ref{lmRDiscontTildeDual},
\begin{equation}   \label{eqPfpropDolgopyatOperator_IterateSplitRuelle}
   \sum\limits_{\c\in\{\b,\w\}}   
       \int_{X^0_\c} \!  \pi_\c \Bigl(  \RRR_{\wt{\minus s_0\phi}}^\iota \bigl(h_\b^2,h_\w^2\bigr) \Bigr) \,\mathrm{d}\mu_{\minus s_0\phi}
=  \sum\limits_{\c\in\{\b,\w\}}   \int_{X^0_\c} \! h_\c^2     \,\mathrm{d}\mu_{\minus s_0\phi}.
\end{equation}

Combining (\ref{eqPfpropDolgopyatOperator_IterateSplitRuelle}), (\ref{eqPfpropDolgopyatOperator_HolderIneq}), Lemma~\ref{lmRtildeNorm=1}, (\ref{eqBetaJBounds}) in Lemma~\ref{lmBetaJ}, (\ref{eqPfpropDolgopyatOperator_1-eta}), and (\ref{eqPfpropDolgopyatOperator_BoundIntW_J}), we get
\begin{align}  \label{eqPfpropDolgopyatOperator_Difference}
&               \sum\limits_{\c\in\{\b,\w\}}   \int_{X^0_\c} \! h_\c^2     \,\mathrm{d}\mu_{\minus s_0\phi}
             -  \sum\limits_{\c\in\{\b,\w\}}  \int_{X^0_\c} \Absbig{ \pi_\c \bigl( \MM_{J,\minus s_0,\phi} (h_\b,h_\w) \bigr) }^2 \, \mathrm{d}\mu_{\minus s_0\phi} \\
&\qquad =        \sum\limits_{\c\in\{\b,\w\}}   
                     \int_{X^0_\c} \!  \pi_\c \Bigl(  \RRR_{\wt{\minus s_0\phi}}^\iota \bigl(h_\b^2,h_\w^2\bigr) \Bigr) \,\mathrm{d}\mu_{\minus s_0\phi}
             -  \sum\limits_{\c\in\{\b,\w\}}  \int_{X^0_\c} \Absbig{ \pi_\c \bigl( \MM_{J,\minus s_0,\phi} (h_\b,h_\w) \bigr) }^2 \, \mathrm{d}\mu_{\minus s_0\phi}  \notag \\
&\qquad\geq     \sum\limits_{\c\in\{\b,\w\}}   
                 \int_{X^0_\c} \!  \pi_\c \Bigl(  \RRR_{\wt{\minus s_0\phi}}^\iota \bigl(h_\b^2,h_\w^2\bigr) \Bigr) \cdot
                 \Bigl( 1 - \pi_\c \Bigl( \RRR^\iota_{\wt{\minus s_0\phi}} \Bigl( \bigl( \beta_J|_{X^0_\b} \bigr)^2, \bigl( \beta_J|_{X^0_\w} \bigr)^2 \Bigr)\Bigr)\Bigr) \, \mathrm{d}\mu_{\minus s_0\phi}  \notag \\
&\qquad\geq     \sum\limits_{\c\in\{\b,\w\}}   
                 \int_{W_J \cap X^0_\c} \!  \pi_\c \Bigl(  \RRR_{\wt{\minus s_0\phi}}^\iota \bigl(h_\b^2,h_\w^2\bigr) \Bigr) \cdot
                 \Bigl( 1 - \pi_\c \Bigl( \RRR^\iota_{\wt{\minus s_0\phi}} \Bigl( \bigl( \beta_J|_{X^0_\b} \bigr)^2, \bigl( \beta_J|_{X^0_\w} \bigr)^2 \Bigr)\Bigr)\Bigr) \, \mathrm{d}\mu_{\minus s_0\phi}  \notag \\ 
&\qquad\geq   \eta \exp \Bigl( -\iota \Normbig{\wt{ - s_0\phi}}_{\CCC^0(S^2)} \Bigr)  \sum\limits_{\c\in\{\b,\w\}}   
                     \int_{W_J \cap X^0_\c} \!  \pi_\c \Bigl(  \RRR_{\wt{\minus s_0\phi}}^\iota \bigl(h_\b^2,h_\w^2\bigr) \Bigr) \,\mathrm{d}\mu_{\minus s_0\phi}  \notag \\
&\qquad\geq   \frac{\eta}{C_{18}}    \exp \Bigl( -\iota \Normbig{\wt{ - s_0\phi}}_{\CCC^0(S^2)} \Bigr)  \sum\limits_{\c\in\{\b,\w\}}   
                     \int_{X^0_\c} \!  \pi_\c \Bigl(  \RRR_{\wt{\minus s_0\phi}}^\iota \bigl(h_\b^2,h_\w^2\bigr) \Bigr) \,\mathrm{d}\mu_{\minus s_0\phi}  \notag \\
&\qquad\geq   \frac{\eta}{C_{18}}    \exp \Bigl( -\iota \Normbig{\wt{ - s_0\phi}}_{\CCC^0(S^2)} \Bigr)     
              \sum\limits_{\c\in\{\b,\w\}}   \int_{X^0_\c} \! h_\c^2     \,\mathrm{d}\mu_{\minus s_0\phi}.    \notag  
\end{align}

\smallskip

We now consider the general case where the potential is $\wt{- s\phi}$. Fix $\c'\in\{\b,\w\}$ and an arbitrary point $x\in X^0_{\c'}$. For each $X^\iota \in \X^\iota_{\c'}$, denote $y_{X^\iota} \coloneqq (f^\iota|_{X^\iota})^{-1}(x)$. Then by Definition~\ref{defDolgopyatOperator} and (\ref{eqSplitRuelleCoordinateFormula}) in Lemma~\ref{lmSplitRuelleCoordinateFormula},
\begin{align*}
     & \pi_{\c'} \bigl( \MM_{J,\minus s,\phi} (h_\b,h_\w) \bigr) (x) \\
 =   & \sum\limits_{\c\in\{\b,\w\}} \sum\limits_{\substack{X^\iota\in\X^\iota_{\c'}\\X^\iota\subseteq X^0_\c}}
                      h_\c ( y_{X^\iota} ) \beta_J ( y_{X^\iota} ) \exp\bigl( S_\iota \wt{ - a\phi} ( y_{X^\iota} ) \bigr)   \\
\leq & \sum\limits_{\c\in\{\b,\w\}} \sum\limits_{\substack{X^\iota\in\X^\iota_{\c'}\\X^\iota\subseteq X^0_\c}} 
                      h_\c ( y_{X^\iota} ) \beta_J ( y_{X^\iota} ) \exp\bigl( S_\iota \wt{ - s_0\phi} ( y_{X^\iota} ) \bigr)
                      \exp \bigl(  \Absbig{ S_\iota \wt{ - a\phi} ( y_{X^\iota} ) - S_\iota \wt{ - s_0\phi} ( y_{X^\iota} ) }  \bigr)  \\
\leq & \pi_{\c'} \bigl( \MM_{J,\minus s_0,\phi} (h_\b,h_\w) \bigr) (x) 
       e^{ \iota \bigl(   \abs{a-s_0} \norm{\phi}_{\CCC^0(S^2)} + \abs{P(f,-a\phi) - P(f,-s_0\phi)}
                                      + 2 \norm{ \log u_{\minus a\phi} - \log u_{\minus s_0\phi}}_{\CCC^0(S^2)}   \bigr)   } .
\end{align*}

Since the function $t\mapsto P(f,t\phi)$ is continuous (see for example, \cite[Theorem~3.6.1]{PrU10}) and the map $t\mapsto u_{t\phi}$ is continuous in $\Holder{\alpha}(S^2,d)$ equipped with the uniform norm $\norm{\cdot}_{\CCC^0(S^2)}$ by Corollary~\ref{corUphiCountinuous}, we can choose $a_0\in(0, s_0 )$ small enough, depending only on $f$, $\CC$, $d$, $\alpha$, and $\phi$ such that if $s=a+ \I b$ with $a,b\in\R$ satisfies $\abs{a-s_0} \leq a_0$ and $\abs{b} \geq 2 s_0 + 1$, then
\begin{align*}
     &  e^{ \iota \bigl(   \abs{a-s_0} \norm{\phi}_{\CCC^0(S^2)} + \abs{P(f,-a\phi) - P(f,-s_0\phi)}
                                        + 2 \norm{ \log u_{\minus a\phi} - \log u_{\minus s_0\phi}}_{\CCC^0(S^2)}   \bigr)   } \\
\leq & \Biggl( 1+ \frac{ \eta \exp\bigl( -\iota \Normbig{\wt{ - s_0\phi}}_{\CCC^0(S^2)} \bigr) } { C_{18} }   \Biggr)^{\frac{1}{2}} ,
\end{align*}
and consequently
\begin{equation}  \label{eqPfpropDolgopyatOperator_s_s0}
      \pi_{\c'} \bigl( \MM_{J,\minus s,\phi} (h_\b,h_\w) \bigr) (x)  
\leq  \Biggl( 1+ \frac{ \exp\bigl( -\iota \Normbig{\wt{ - s_0\phi}}_{\CCC^0(S^2)} \bigr) } { C_{18} }  \Biggr)^{\frac{1}{2}}
      \pi_{\c'}  ( \MM_{J,\minus s_0,\phi} (h_\b,h_\w)) (x).
\end{equation}

Therefore, if $s=a+\I b$ with $a,b\in\R$ satisfies $\abs{a-s_0} \leq a_0$ and $\abs{b} \geq b_0 \geq 2 s_0 + 1$ (see (\ref{eqDef_b0})), we get from (\ref{eqPfpropDolgopyatOperator_s_s0}) and (\ref{eqPfpropDolgopyatOperator_Difference}) that
\begin{align*}
&            \sum\limits_{\c\in\{\b,\w\}}   \int_{X^0_\c} \! \abs{ \pi_\c ( \MM_{J,\minus s,\phi} (h_\b,h_\w) ) }^2 \,\mathrm{d}\mu_{\minus s_0\phi}   \\
&\qquad\leq  \Biggl( 1 + \frac{ \eta \exp\bigl( -\iota \Normbig{\wt{ - s_0\phi}}_{\CCC^0(S^2)} \bigr) } { C_{18} }  \Biggr) 
             \sum\limits_{\c\in\{\b,\w\}}   \int_{X^0_\c} \! \abs{ \pi_\c ( \MM_{J,\minus s_0,\phi} (h_\b,h_\w) ) }^2 \,\mathrm{d}\mu_{\minus s_0\phi}   \\
&\qquad\leq  \Biggl( 1 - \frac{ \eta^2 \exp\bigl( -2\iota \Normbig{\wt{ - s_0\phi}}_{\CCC^0(S^2)} \bigr) } { C_{18}^2 }  \Biggr) 
             \sum\limits_{\c\in\{\b,\w\}}  \int_{X^0_\c} \! \abs{h_\c}^2 \,\mathrm{d}\mu_{\minus s_0\phi}.   
\end{align*}
We finish the proof of (ii) by choosing 
\begin{equation*} 
\rho \coloneqq 1 - \frac{ \eta^2 \exp\bigl( -2\iota \Normbig{\wt{ - s_0\phi}}_{\CCC^0(S^2)} \bigr) } { C_{18}^2 } \in (0,1),
\end{equation*}
which depends only on $f$, $\CC$, $d$, $\alpha$, and $\phi$.

\smallskip

(iii) Given arbitrary $h_\b$, $h_\w$, $u_\b$, and $u_\w$ satisfying the hypotheses in (iii), we construct a subset $J\subseteq \{1,2\}\times \{1,2\} \times \mathfrak{C}_b$ as follows:

For each $X\in\mathfrak{C}_b$, 
\begin{enumerate}
\smallskip
\item[(1)] if $\Normbig{Q_{\c_X, 1}}_{\CCC^0(\mathfrak{X}_1(X))} \leq 1$, then include $(1,1,X)$ in $J$, otherwise

\smallskip
\item[(2)] if $\Normbig{Q_{\c_X, 2}}_{\CCC^0(\mathfrak{X}_1(X))} \leq 1$, then include $(2,1,X)$ in $J$, otherwise

\smallskip
\item[(3)] if $\Normbig{Q_{\c_X, 1}}_{\CCC^0(\mathfrak{X}_2(X))} \leq 1$, then include $(1,2,X)$ in $J$, otherwise

\smallskip
\item[(4)] if $\Normbig{Q_{\c_X, 2}}_{\CCC^0(\mathfrak{X}_2(X))} \leq 1$, then include $(2,2,X)$ in $J$,
\end{enumerate}
where we denote $\c_X\in\{\b,\w\}$ with the property that $X\subseteq Y_{\c_X}^{M_0}$. Here functions $Q_{\c,j} \: Y_\c^{M_0} \rightarrow \R$, $\c\in\{\b,\w\}$ and $j\in\{1,2\}$, are defined in Lemma~\ref{lmUHDichotomySum}.

By Lemma~\ref{lmUHDichotomySum}, at least one of the four cases above occurs for each $X\in \mathfrak{C}_b$. Thus the set $J$ constructed above has full projection (c.f.\ Definition~\ref{defFull}). 

We finally set $\mathcal{E}_s \coloneqq \bigcup \{ J \}$, where the union ranges over all $h_\b$, $h_\w$, $u_\b$, and $u_\w$ satisfying the hypotheses in (iii).

We now fix such $h_\b$, $h_\w$, $u_\b$, $u_\w$, and the corresponding $J$ constructed above. Then for each $\c\in\{\b,\w\}$ and each $x\in X_\c^0$, we will establish (\ref{eqDolgopyatOpPtwiseBound}) as follows:

\begin{enumerate}
\smallskip
\item[(1)] If $x \notin \bigcup\limits_{X\in\mathfrak{C}_b}  f^{M_0} (\mathfrak{X}_1(X) \cup \mathfrak{X}_2(X))$, then by (\ref{eqPsi_iX}) and (\ref{eqDefBetaJ}), $\beta_J(y)=1$ for all $y\in f^{-(N_1+M_0)}(x)$. Thus (\ref{eqDolgopyatOpPtwiseBound}) holds for $x$ by Definition~\ref{defDolgopyatOperator}, (\ref{eqSplitRuelleCoordinateFormula}) in Lemma~\ref{lmSplitRuelleCoordinateFormula}, and Definition~\ref{defSplitRuelle}.

\smallskip
\item[(2)] If $x\in f^{M_0} (\mathfrak{X}_i(X))$ for some $X\in\mathfrak{C}_b$ and $i\in\{1,2\}$, then one of the following two cases occurs:

\begin{enumerate}
\smallskip
\item[(a)] $(1,i,X)\notin J$ and $(2,i,X)\notin J$. Then by (\ref{eqDefBetaJ}), $\beta_J(y)=1$ for all $y\in f^{-(N_1+M_0)}(x)$. Thus (\ref{eqDolgopyatOpPtwiseBound}) holds for $x$  by Definition~\ref{defDolgopyatOperator}, (\ref{eqSplitRuelleCoordinateFormula}) in Lemma~\ref{lmSplitRuelleCoordinateFormula}, and Definition~\ref{defSplitRuelle}.

\smallskip
\item[(b)] $(j,i,X)\in J$ for some $j\in\{1,2\}$. Then by the construction of $J$, we have $(j',i',X)\in J$ if and only if $(j',i')=(j,i)$. We denote the inverse branches $\tau_k \coloneqq \Bigl( f^{N_1} \big|_{X^{ N_1+M_0}_{\c,k} } \Bigr)^{-1}$ for $k\in\{1,2\}$. Write $z \coloneqq \Bigl( f^{N_1+M_0} \big|_{X^{ N_1+M_0}_{\c,j} } \Bigr)^{-1} (x)$. Then $\beta_J(y)=1$ for each $y \in f^{-(N_1+M_0)}(x) \setminus \tau_j(\mathfrak{X}_i(X)) = f^{-(N_1+M_0)}(x) \setminus \{z\}$. In particular, $\beta_J\bigl(\tau_{j_*}\bigl(f^{N_1}(z) \bigr)\bigr) = 1$, where $j_*\in\{1,2\}$ and $j_*\neq j$. By Lemma~\ref{lmUHDichotomySum}, we get $Q_{\c,j} \bigl( f^{N_1}(z) \bigr) \leq 1$, i.e.,
\begin{align*}
&              \Absbigg{ \sum\limits_{k\in\{1,2\}}  \Bigl( u_{\varsigma(\c,k)} e^{S_{N_1} \wt{-s\phi} } \Bigr) \bigl( \tau_k \bigl( f^{N_1} (z) \bigr)\bigr)  }  \\
&\qquad \leq   -2 \eta h_{\varsigma(\c,j)} (z) e^{S_{N_1} \wt{ \minus a\phi} (z) }
                     +  \sum\limits_{k\in\{1,2\}}  \Bigl( h_{\varsigma(\c,k)} e^{S_{N_1} \wt{ \minus a\phi} } \Bigr) \bigl( \tau_k \bigl( f^{N_1} (z) \bigr)\bigr)   \\
&\qquad \leq    \Bigl( \beta_J h_{\varsigma(\c,j  )} e^{S_{N_1} \wt{ \minus a\phi} } \Bigr) (z) 
              + \Bigl( \beta_J h_{\varsigma(\c,j_*)} e^{S_{N_1} \wt{ \minus a\phi} } \Bigr) \bigl( \tau_{j_*} \bigl( f^{N_1} (z) \bigr)\bigr),
\end{align*}
where $\varsigma(\c,k)$ is defined as in the statement of Lemma~\ref{lmUHDichotomySum}. Hence (\ref{eqDolgopyatOpPtwiseBound}) holds for $x$ by Definition~\ref{defDolgopyatOperator}, (\ref{eqSplitRuelleCoordinateFormula}) in Lemma~\ref{lmSplitRuelleCoordinateFormula}, and Definition~\ref{defSplitRuelle}.
\end{enumerate}
\end{enumerate}

We are going to establish (\ref{eqDolgopyatOpLipBound}) in the case $\c=\b$ now. The other case is similar. By (\ref{eqSplitRuelleCoordinateFormula}) in Lemma~\ref{lmSplitRuelleCoordinateFormula}, (\ref{eqBasicIneqC}) in Lemma~\ref{lmBasicIneq}, Definition~\ref{defSplitRuelle}, and (\ref{eqBetaJBounds}), for $x,x'\in X^0_\b$ with $x\neq x'$,
\begin{align*}
&  \frac{1}{ d(x,x')^\alpha}        \AbsBig{   \pi_\b \Bigl(\RRR_{\wt{\minus s\phi}}^{N_1+M_0} (u_\b,u_\w) \Bigr)  (x)
                                                                     - \pi_\b \Bigl(\RRR_{\wt{\minus s\phi}}^{N_1+M_0} (u_\b,u_\w) \Bigr)  (x') }   \\
&\qquad\leq   \frac{1}{ d(x,x')^\alpha}  \sum\limits_{\c\in\{\b,\w\}}  \AbsBig{   \RR_{\wt{\minus s\phi},X^0_\b,X^0_\c}^{(\iota)} (u_\c)   (x)
                                                                                                                                       - \RR_{\wt{\minus s\phi},X^0_\b,X^0_\c}^{(\iota)} (u_\c)   (x') }   \\
&\qquad\leq  A_0  \sum\limits_{\c\in\{\b,\w\}} \biggl( \biggl(     
                       \frac{A\abs{b}}{\Lambda^{\alpha\iota}}    \sum\limits_{z\in\{x,x'\}}   \RR_{\wt{\minus a\phi},X^0_\b,X^0_\c}^{(\iota)} (h_\c)   (z)    \biggr) 
                                  + \abs{b}  \RR_{\wt{\minus a\phi},X^0_\b,X^0_\c}^{(\iota)} (h_\c)   (x) \biggr) \\
&\qquad\leq  \biggl( \frac{ A_0 A}{\Lambda^{\alpha\iota}}  + 1 \biggr)  \abs{b}   \sum\limits_{\c\in\{\b,\w\}}  \sum\limits_{z\in\{x,x'\}}
                                \RR_{\wt{\minus a\phi},X^0_\b,X^0_\c}^{(\iota)} \bigl(2 h_\c \beta_J|_{X^0_\c}\bigr)   (z)  \\
&\qquad\leq   \biggl( \frac{ 2 A_0 A}{\Lambda^{\alpha\iota}}  + 2 \biggr)
               \abs{b}  \sum\limits_{z\in\{x,x'\}}  \pi_\b (\MM_{J,\minus s,\phi} (h_\b,h_\w) ) (z) \\
&\qquad\leq   A\abs{b}  \sum\limits_{z\in\{x,x'\}}  \pi_\b (\MM_{J,\minus s,\phi} (h_\b,h_\w) ) (z) ,
\end{align*}
where the last inequality follows from $\frac{ 2 A_0}{\Lambda^{\alpha\iota}} \leq \frac12$ (see (\ref{eqDef_N1})) and  $A\geq 4$ (see (\ref{eqDef_A})).
\end{proof}

\begin{proof}[Proof of Theorem~\ref{thmL2Shrinking}]
We set $\iota \coloneqq N_1 + M_0$, where $N_1\in\Z$ is defined in (\ref{eqDef_N1}) and $M_0\in\N$ is a constant from Definition~\ref{defStrongNonIntegrability}. We take the constants $a_0\in (0,  s_0 )$ and $\rho\in(0,1)$ from Proposition~\ref{propDolgopyatOperator}, and $b_0$ as defined in (\ref{eqDef_b0}).

Fix arbitrary $s \coloneqq a+\I b$ with $a,b\in\R$ satisfying $\abs{a-s_0} \leq a_0$ and $\abs{b}\geq b_0$. Fix arbitrary $u_\b\in\Holder{\alpha}\bigl(\bigl(X^0_\b,d\bigr),\C\bigr)$ and $u_\w\in\Holder{\alpha}\bigl(\bigl(X^0_\w,d\bigr),\C\bigr)$ satisfying 
\begin{equation}  \label{eqPfthmL2Shrinking_UNormBound}
\NHnorm{\alpha}{\Im(s)}{u_\b}{(X^0_\b,d)} \leq 1 \qquad \text{and}\qquad  \NHnorm{\alpha}{\Im(s)}{u_\w}{(X^0_\w,d)} \leq 1.
\end{equation}
We recall the constant $A\in\R$ defined in (\ref{eqDef_A}) and the subset $\mathcal{E}_s \subseteq \mathcal{F}$ constructed in Proposition~\ref{propDolgopyatOperator}. 

We will construct sequences $\{h_{\b,k}\}_{k=-1}^{+\infty}$ in $K_{A\abs{b}}\bigl( X^0_\b,d \bigr)$, $\{h_{\w,k}\}_{k=-1}^{+\infty}$ in $K_{A\abs{b}}\bigl( X^0_\w,d \bigr)$, $\{u_{\b,k}\}_{k=0}^{+\infty}$ in $\Holder{\alpha}\bigl(\bigl( X^0_\b,d \bigr),\C \bigr)$, $\{u_{\w,k}\}_{k=0}^{+\infty}$ in $\Holder{\alpha}\bigl(\bigl( X^0_\w,d \bigr),\C \bigr)$, and $\{ J_k \}_{k=0}^{+\infty}$ in $\mathcal{E}_s$ recursively so that the following properties are satisfied for each $k\in\N_0$, each $\c\in\{\b,\w\}$, and all $x,x'\in X^0_\c$:

\begin{enumerate}
\smallskip
\item[(1)] $u_{\c,k} = \pi_\c \Bigl(  \RRR_{\wt{\minus s\phi}}^{k\iota} (u_\b,u_\w) \Bigr)$.

\smallskip
\item[(2)] $\abs{u_{\c,k}(x)} \leq h_{\c,k}(x)$ and $\abs{u_{\c,k}(x) - u_{\c,k}(x')} \leq A\abs{b} (h_{\c,k}(x) + h_{\c,k}(x')) d(x,x')^\alpha$.

\smallskip
\item[(3)] $\sum\limits_{ \c'\in\{\b,\w\} } \int_{ X^0_{\c'} } \!   h_{\c',k}^2 \,\mathrm{d}\mu_{\minus s_0\phi} 
    \leq \rho \sum\limits_{ \c'\in\{\b,\w\} }    \int_{ X^0_{\c'} } \!   h_{\c',k-1}^2 \,\mathrm{d}\mu_{\minus s_0\phi}$.
            
\smallskip
\item[(4)] $\pi_\c \Bigl(  \RRR_{\wt{\minus s\phi}}^{\iota} (u_{\b,k},u_{\w,k}) \Bigr) (x) \leq \pi_\c \bigl(  \MM_{J_k,\minus s,\phi} (h_{\b,k},h_{\w,k}) \bigr) (x)$ and
\begin{align*}
     & \AbsBig{    \pi_\c \Bigl(  \RRR_{\wt{\minus s\phi}}^{\iota} (u_{\b,k},u_{\w,k}) \Bigr) (x) - \pi_\c \Bigl(  \RRR_{\wt{\minus s\phi}}^{\iota} (u_{\b,k},u_{\w,k}) \Bigr) (x')  }\\ 
\leq & A\abs{b} \bigl( \pi_\c \bigl(  \MM_{J_k,\minus s,\phi} (h_{\b,k},h_{\w,k}) \bigr) (x) + \pi_\c \bigl(  \MM_{J_k,\minus s,\phi} (h_{\b,k},h_{\w,k}) \bigr) (x')  \bigr) d(x,x')^\alpha.
\end{align*}
\end{enumerate}

We first set $h_{\c,-1}  \coloneqq \frac{1}{\rho}$, $h_{\c,0} \coloneqq \NHnormD{\Holder{\alpha}(X^0_\c,d)}{b}{u_\c}\in [0,1]$, and $u_{\c,0} \coloneqq u_\c$ for each $\c\in\{\b,\w\}$. Then clearly Properties~(1), (2), and (3) are satisfied for $k=0$. By Property~(2) for $k=0$, we can choose $j_0\in\mathcal{E}_s$ according to Proposition~\ref{propDolgopyatOperator}~(iii) such that Property~(4) holds for $k=0$.

We continue our construction recursively as follows. Assume that we have chosen $u_{\b,i} \in \Holder{\alpha}\bigl( \bigl(X^0_\b,d\bigr),\C \bigr)$, $u_{\w,i} \in \Holder{\alpha}\bigl( \bigl(X^0_\w,d\bigr),\C \bigr)$, $h_{\b,i} \in K_{A\abs{b}} \bigl( X^0_\b,d\bigr)$, $h_{\w,i} \in K_{A\abs{b}} \bigl( X^0_\w,d\bigr)$, and $J_i\in\mathcal{E}_s$ for some $i\in\N_0$. Then we define, for each $\c\in\{\b,\w\}$,
\begin{equation*}
u_{\c,i+1}   \coloneqq   \pi_\c \Bigl(\RRR_{\wt{\minus s\phi}}^\iota(u_{\b,i}, u_{\w,i}) \Bigr)  \qquad \text{and} \qquad 
h_{\c,i+1}   \coloneqq   \pi_\c  (\MM_{J_i,\minus s,\phi}(h_{\b,i}, h_{\w,i})  ).
\end{equation*}
Then for each $\c\in\{\b,\w\}$, by (\ref{eqSplitRuelleRestrictHolder}) we get $u_{\c,i+1} \in \Holder{\alpha}\bigl( \bigl(X^0_\c,d \bigr), \C \bigr)$, and by (\ref{eqDolgopyatOpStableCone}) in Proposition~\ref{propDolgopyatOperator} we have $h_{\c,i+1} \in K_{A\abs{b}} \bigl(X^0_\c,d \bigr)$. Property~(1) for $k=i+1$ follows from Property~(1) for $k=i$. Property~(2) for $k=i+1$ follows from Property~(4) for $k=i$. Property~(3) for $k=i+1$ follows from Proposition~\ref{propDolgopyatOperator}~(ii). By Property~(2) for $k=i+1$ and Proposition~\ref{propDolgopyatOperator}~(iii), we can choose $J_{i+1}\in\mathcal{E}_s$ such that Property~(4) for $k=i+1$ holds. This completes the recursive construction and the verification of Properties~(1) through (4) for all $k\in\N$.

By (\ref{eqSplitRuelleCoordinateFormula}) in Lemma~\ref{lmSplitRuelleCoordinateFormula}, Properties~(1), (2), (3), and Theorem~\ref{thmEquilibriumState}~(iii), we have
\begin{align*}
&            \int_{X^0_\c}\! \AbsBig{  \RR_{\wt{\minus s\phi}, X^0_\c, X^0_\b}^{(n\iota)} (u_\b) 
                                     + \RR_{\wt{\minus s\phi}, X^0_\c, X^0_\w}^{(n\iota)} (u_\w) }^2 \,\mathrm{d}\mu_{\minus s_0\phi} \\
&\qquad =    \int_{X^0_\c}\! \AbsBig{ \pi_\c \Bigl( \RRR_{\wt{\minus s\phi}}^{n\iota} (u_\b, u_\w) \Bigr) }^2 \, \mathrm{d}\mu_{\minus s_0\phi} 
        =    \int_{X^0_\c}\! \abs{ u_{\c,n} }^2 \, \mathrm{d}\mu_{\minus s_0\phi} \\
&\qquad\leq  \int_{X^0_\c}\! h_{\c,n} ^2 \, \mathrm{d}\mu_{\minus s_0\phi}
       \leq  \rho^n \biggl( \int_{X^0_\b} \!   h_{\b,0}^2 \,\mathrm{d}\mu_{\minus s_0\phi} + \int_{X^0_\w} \!  h_{\w,0}^2 \,\mathrm{d}\mu_{\minus s_0\phi} \biggr)\\
&\qquad\leq  \rho^n,
\end{align*}
for all $\c\in\{\b,\w\}$ and $n\in\N$.
\end{proof}

\section{Examples and genericity of strongly non-integrable potentials}    \label{sctExamples}
In this section, we try to discuss on how general the strong non-integrability condition is. We show in Subsection~\ref{subsctLattes} that for the Latt\`{e}s maps, in the class of continuously differentiable real-valued potentials, the weaker condition of non-local integrability implies the (stronger) $1$-strong non-integrability for some visual metric $d$ for $f$. This leads to a characterization of the Prime Orbit Theorems in this context, i.e., Theorem~\ref{thmLattesPOT}, proved at the end of Subsection~\ref{subsctLattes}. The proof relies on the geometric properties of various metrics in this context, and does not generalize to other rational expanding Thurston maps. However, we are able to show the genericity of the $\alpha$-strong non-integrability condition in the set $\Holder{\alpha}(S^2,d)$ of real-valued H\"{o}lder continuous functions with an exponent $\alpha$ on $S^2$ equipped with a visual metric $d$. See Theorem~\ref{thmSNIGeneric} for the precise statement. A constructive proof of Theorem~\ref{thmSNIGeneric} is given at the end of Subsection~\ref{subsctGeneric}, relying on Theorem~\ref{thmPerturbToStrongNonIntegrable} that gives a construction of a potential $\phi$ that satisfies the $\alpha$-strong non-integrability condition arbitrarily close to a given $\psi \in \Holder{\alpha}(S^2,d)$.

\subsection{Examples for Latt\`{e}s maps}   \label{subsctLattes}
In order to carry out the cancellation argument in Section~\ref{sctDolgopyat}, it is crucial to have both the lower bound and the upper bound in (\ref{eqSNIBounds}). As seen in the proof of Proposition~\ref{propSNI}, the upper bound in (\ref{eqSNIBounds}) is guaranteed automatically by the H\"{o}lder continuity of the potential $\phi$ with the right exponent $\alpha$. If we could assume in addition that the identity map on $S^2$ is a bi-Lipschitz equivalence (or more generally, snowflake equivalence) from a visual metric $d$ to the Euclidean metric on $S^2$, and the temporal distance $\phi^{f,\,\CC}_{\xi,\,\xi'}$ is nonconstant and continuously differentiable, then we could expect a lower bound with the same exponent as that in the upper bound in (\ref{eqSNIBounds}) near the same point. 

However, for a rational expanding Thurston map $f\: \widehat{\C}  \rightarrow \widehat{\C}$ defined on the Riemann sphere $\widehat{\C}$, the chordal metric $\sigma$ (see Remark~\ref{rmChordalVisualQSEquiv} for the definition), which is bi-Lipschitz equivalent to the Euclidean metric away from the infinity, is never a visual metric for $f$ (see \cite[Lemma~8.12]{BM17}). In fact, $(S^2,d)$ is snowflake equivalent to $\bigl(\widehat{\C}, \sigma \bigr)$ if and only if $f$ is topologically conjugate to a Latt\`{e}s map (see \cite[Theorem~18.1~(iii)]{BM17} and Definition~\ref{defLattesMap} below).

Recall that we call two metric spaces $(X_1,d_1)$ and $(X_2,d_2)$ are \defn{bi-Lipschitz}, \defn{snowflake}, or \defn{quasisymmetrically equivalent} if there exists a homeomorphism from $(X_1,d_1)$ to $(X_2,d_2)$ with the corresponding property.

We recall a version of the definition of Latt\`{e}s maps.

\begin{definition}   \label{defLattesMap}
Let $f\: \widehat{\C}  \rightarrow \widehat{\C}$ be a rational Thurston map on the Riemann sphere $\widehat{\C}$. If $f$ has a parabolic orbifold and is expanding, then it is called a \defn{Latt\`{e}s map}.
\end{definition}

See \cite[Chapter~3]{BM17} and \cite{Mil06} for other equivalent definitions and more properties of Latt\`{e}s maps.

The special phenomenon mentioned above is not common in the study of Prime Orbit Theorems for smooth dynamical systems, as we are endeavoring out of Riemannian setting into general self-similar metric spaces. We content ourselves with the smooth examples of strongly non-integrable potentals for Latt\`{e}s maps in Proposition~\ref{propLattes} below.

\begin{rem}  \label{rmCanonicalOrbifoldMetric}
For a Latt\`{e}s map $f\: \widehat{\C} \rightarrow \widehat{\C}$, the universal orbifold covering map $\Theta \: \C \rightarrow \widehat{\C}$ of the orbifold $\mathcal{O}_f = \bigl( \widehat{\C}, \alpha_f \bigr)$ associated to $f$ is holomorphic (see \cite[Theorem~A.26, Definition~A.27, and Corollary~A.29]{BM17}). Let $d_0$ be the Euclidean metric on $\C$. Then the \defn{canonical orbifold metric} $\omega_f$ of $f$ is the pushforward of $d_0$ by $\Theta$, more presicely,
\begin{equation*}
\omega_f(p,q)  \coloneqq \inf \bigl\{ d_0(z,w) \,\big|\, z\in \Theta^{-1}(p), \, w\in \Theta^{-1}(q)  \bigr\} 
\end{equation*}
for $p,q\in\widehat{\C}$ (see Section~2.5 and Appendices~A.9 and A.10 in \cite{BM17} for more details on the canonical orbifold metric). Let $\sigma$ be the chordal metric on $\C$ as recalled in Remark~\ref{rmChordalVisualQSEquiv}. By \cite[Proposition~8.5]{BM17}, $\omega_f$ is a visual metric for $f$. By \cite[Lemma~A.34]{BM17}, $\bigl( \widehat{\C}, \omega_f \bigr)$ and $\bigl( \widehat{\C}, \sigma \bigr)$ are bi-Lipschitz equivalent, i.e., there exists a bi-Lipschitz homeomorphism $h \:  \widehat{\C}  \rightarrow  \widehat{\C}$ from $\bigl( \widehat{\C}, \omega_f \bigr)$ to $\bigl( \widehat{\C}, \sigma \bigr)$. Moreover, by the discussion in \cite[Appendix~A.10]{BM17}, $h$ cannot be the identity map.
\end{rem}

\begin{prop}   \label{propLattes}
Let $f\: \widehat{\C} \rightarrow \widehat{\C}$ be a Latt\`{e}s map, and $d\coloneqq \omega_f$ be the canonical orbifold metric of $f$ on $\widehat{\C}$ (as recalled in Remark~\ref{rmCanonicalOrbifoldMetric}). Let $\phi \: \widehat{\C} \rightarrow \R$ be a continuously differentiable real-valued function on the Riemann sphere $\widehat{\C}$. Then $\phi \in \Holder{1} \bigl( \widehat{\C}, d \bigr)$, and the following statements are equivalent:
\begin{enumerate}
\smallskip
\item[(i)] $\phi$ is not co-homologous to a constant in $\CCC\bigl(\widehat{\C},\C \bigr)$, i.e., there are no constant $K\in\C$ and function $\beta \in \CCC \bigl( \widehat{\C},\C \bigr)$ with $\phi = K + \beta \circ f - \beta$.

\smallskip
\item[(ii)] $\phi$ is non-locally integrable with respect to $f$ and $d$ (in the sense of Definition~\ref{defLI}).

\smallskip
\item[(iii)] $\phi$ satisfies the $1$-strong non-integrability condition with respect to $f$ and $d$ (in the sense of Definition~\ref{defStrongNonIntegrability}).
\end{enumerate} 
\end{prop}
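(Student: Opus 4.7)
The first claim that $\phi \in \Holder{1}(\widehat\C,d)$ is immediate: since $\widehat\C$ is a compact smooth manifold and $\phi$ is continuously differentiable, $\phi$ is Lipschitz with respect to the chordal metric $\sigma$, and by Remark~\ref{rmChordalVisualQSEquiv} together with the bi-Lipschitz equivalence of $(\widehat\C,\sigma)$ and $(\widehat\C,\omega_f)$ recorded in Remark~\ref{rmCanonicalOrbifoldMetric}, this transfers to Lipschitz continuity with respect to $d = \omega_f$. The equivalence (i)$\Leftrightarrow$(ii) is then exactly the content of Theorem~\ref{thmNLI} applied to the Lipschitz (hence H\"older) function $\phi$, and the implication (iii)$\Rightarrow$(ii) is Proposition~\ref{propSNI2NLI}. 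Thus the only substantive work is the implication (i)$\Rightarrow$(iii).

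For (i)$\Rightarrow$(iii) the strategy is to exploit the flat affine structure of a Latt\`es map on its universal orbifold cover. The universal orbifold covering map $\Theta\:\C\rightarrow\widehat\C$ is holomorphic, and $f$ admits an affine lift $\widetilde{f}(z) = \lambda z + \mu$ with $\abs{\lambda}^2 = \deg f$ satisfying $\Theta\circ\widetilde{f} = f\circ\Theta$; moreover, by definition of $\omega_f$, the map $\Theta$ pushes the Euclidean metric on $\C$ forward to $d$. Every inverse branch of $\widetilde{f}^N$ is then a pure affine contraction $z\mapsto z/\lambda^N + c$ whose derivative has constant modulus $\abs{\lambda}^{-N}$. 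Choosing a Jordan curve $\CC$, fix $M_0$ large enough that each chosen $M_0$-tile $Y^{M_0}_\c$ lies in a region where $\Theta$ is a local isometry onto its image, and lift the potential to $\widetilde\phi \coloneqq \phi\circ\Theta$, which is $C^1$ in the neighborhood of any such lifted tile. For an $M$-tile $X \subseteq Y^{M_0}_\c$ with $M \ge M_0$, select $x_1(X),x_2(X)\in X$ whose Euclidean lifts $\widetilde x_1,\widetilde x_2$ satisfy Definition~\ref{defStrongNonIntegrability}(i); this is feasible because the lifted tiles have uniformly bounded Euclidean shape, a consequence of the affine self-similarity of the tiling.

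For the lower bound (\ref{eqSNIBoundsDefn}), writing $\widetilde{f}^k\circ\widetilde\tau_i(z) = z/\lambda^{N-k} + a_{k,i}$ and applying the fundamental theorem of calculus along the Euclidean segment from $\widetilde x_1$ to $\widetilde x_2$, the temporal distance rewrites as
\begin{equation*}
T_N \;=\; \int_0^1 \sum_{k=0}^{N-1} \lambda^{k-N}\bigl(\nabla\widetilde\phi(z_t+a_{k,1}) - \nabla\widetilde\phi(z_t+a_{k,2})\bigr)\cdot(\widetilde x_1-\widetilde x_2)\,\mathrm{d}t,
\end{equation*}
with geometric decay of tail terms in $\abs{\lambda}^{-(N-k)}$, so the series converges to a limit $T_\infty$ with error $\abs{T_N - T_\infty}\leq C\abs{\lambda}^{-N}d(x_1,x_2)$, and crucially the factor $(\widetilde x_1-\widetilde x_2)$ appears linearly, yielding the sharp exponent $\alpha=1$. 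If $T_\infty$ were to vanish identically as $(x_1,x_2,\{a_{k,i}\})$ varies, then the temporal distance $\phi^{f^n,\CC}_{\xi,\eta}$ in Definition~\ref{defLI} would vanish identically, contradicting (ii) via Theorem~\ref{thmNLI}; hence $T_\infty$ is non-zero on a non-empty open set of configurations. A compactness and equidistribution argument over the finite set of orbits of configurations modulo the cocompact covering group action on $\C$ then produces a uniform $\varepsilon>0$ and, for every $N\ge N_0$, choices of $(N+M_0)$-tiles $X^{N+M_0}_{\c,1},X^{N+M_0}_{\c,2}$ making $|T_N|\ge \varepsilon\, d(x_1,x_2)$. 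The main obstacle in executing this plan is obtaining the uniform $\varepsilon$ across the infinite family of tiles $X$ of all scales $M\ge M_0$: this is where the Euclidean rigidity is essential, since the affine self-similarity reduces arbitrarily small tiles to rescalings of finitely many prototypes on which continuity of $T_\infty$ yields the required uniform lower bound.
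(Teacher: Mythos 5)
Your overall blueprint is reasonable — reduce to (i)$\Rightarrow$(iii), lift to the universal orbifold cover where inverse branches are affine contractions, and use the $C^1$ structure to convert non-vanishing of the temporal distance into a linear (exponent $\alpha=1$) lower bound. This is the same spirit as the paper's argument. However there are two genuine gaps and one minor error.

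\textbf{The $\Holder{1}$ claim is mis-argued.} You derive $\phi\in\Holder{1}(\widehat\C,d)$ from "the bi-Lipschitz equivalence of $(\widehat\C,\sigma)$ and $(\widehat\C,\omega_f)$" in Remark~\ref{rmCanonicalOrbifoldMetric}. But that remark explicitly says the bi-Lipschitz map $h$ realizing the equivalence is \emph{not} the identity; so this equivalence does not transfer Lipschitz regularity of a fixed function $\phi$ from one metric to the other (only of $\phi\circ h$). The identity map $(\widehat\C,\sigma)\to(\widehat\C,\omega_f)$ is in fact \emph{not} bi-Lipschitz, because $\omega_f$ degenerates at the cone points. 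What saves the claim is the one-sided comparison $\sigma\lesssim\omega_f$ (inequality (A.43) of \cite{BM17}, cited as (\ref{eqPfrmCanonicalOrbifoldMetric_Chordal<Orbifold}) in the paper), which alone yields $\Holder{1}(\widehat\C,\sigma)\subseteq\Holder{1}(\widehat\C,\omega_f)$.

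\textbf{The final uniformity step is not a proof.} You claim a "compactness and equidistribution argument over the finite set of orbits of configurations modulo the cocompact covering group action" produces a uniform $\varepsilon$. This does not work as stated: the space of relevant configurations — a tile at one of infinitely many scales together with a choice of two points inside it and two infinite inverse-orbit sequences — is not a finite set modulo the deck group, and knowing $T_\infty\neq 0$ at a single configuration does not propagate to a lower bound $|T_N|\geq\varepsilon\, d(x_1,x_2)^\alpha$ across all small tiles, because $T_\infty$ scales to zero with $d(x_1,x_2)$. The mechanism the paper uses is more elementary and more local: with $\xi,\eta$ \emph{fixed once and for all}, the function $\Phi(z)=\phi^{f,\CC}_{\xi,\eta}(u_0,z)$ is shown to be $C^1$ on a neighborhood; since $\Phi(u_0)=0$ and $\Phi(v_0)\neq 0$, some partial derivative of $\Phi$ is bounded away from $0$ on a possibly smaller open set, and one picks $Y^{M_0}_\c$ \emph{inside} that set. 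For any small tile $X\subseteq Y^{M_0}_\c$ one then moves between $x_1(X)$ and $x_2(X)$ along that coordinate direction and applies the mean value theorem; the finite-$N$ correction is absorbed by a geometric tail estimate. Without isolating this step — a neighborhood where $\nabla\Phi$ is uniformly bounded below for one fixed pair of backward orbits — the argument does not close. Your "open set of configurations where $T_\infty\neq 0$" is about values, not derivatives, and the passage to a uniform gradient bound on a small neighborhood is precisely what needs to be written out.

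\textbf{A computational slip.} In your display for $T_N$, after applying the fundamental theorem of calculus along the segment, the argument of $\nabla\widetilde\phi$ should be $z_t/\lambda^{N-k}+a_{k,i}$, not $z_t+a_{k,i}$: the affine lift $\widetilde f^k\circ\widetilde\tau_i(z)=z/\lambda^{N-k}+a_{k,i}$ rescales the base point as well as translating it. The prefactor $\lambda^{k-N}$ is right; the missing rescaling inside $\nabla\widetilde\phi$ changes nothing qualitatively (the tail still decays geometrically), but the formula as written is incorrect.
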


\begin{proof}
We denote the Euclidean metric on $\C$ by $d_0$. Let $\sigma$ be the chordal metric on $\C$ as recalled in Remark~\ref{rmChordalVisualQSEquiv}. By \cite[Proposition~8.5]{BM17}, the canonical orbifold metric $d=\omega_f$ is a visual metric for $f$. Let $\Lambda>1$ be the expansion factor of $d$ for $f$. 

Let $\mathcal{O}_f=(S^2,\alpha_f)$ be the orbifold associated to $f$ (see Subsection~\ref{subsctNLI_Orbifold}). Since $f$ has no periodic critical points, $\alpha_f(z) < +\infty$ for all $z\in\widehat{\C}$ (see Definition~\ref{defRamificationFn}).

By inequality (A.43) in \cite[Appendix~A.10]{BM17}, 
\begin{equation}   \label{eqPfrmCanonicalOrbifoldMetric_Chordal<Orbifold}
\sup \bigg\{ \frac{\sigma (z_1,z_2)}{d (z_1,z_2)} \,\bigg|\, z_1,z_2 \in \widehat{\C}, \, z_1\neq z_2 \biggr\} < +\infty.
\end{equation}
By (\ref{eqPfrmCanonicalOrbifoldMetric_Chordal<Orbifold}) and the assumption that $\phi$ is continuously differentiable, we get $\phi \in \Holder{1} \bigl( \widehat{\C}, \sigma \bigr) \subseteq \Holder{1} \bigl( \widehat{\C}, d \bigr)$.

\smallskip

We establish the equivalence of statements~(i) through (iii) as follows.

\smallskip

(i) $\Longleftrightarrow$ (ii). The equivalence follows immediately from Theorem~\ref{thmNLI}.

\smallskip

(ii) $\Longleftrightarrow$ (iii). The backward implication follows from Proposition~\ref{propSNI2NLI}. To show the forward implication, we assume that $\phi$ is non-locally integrable. We observe from Lemma~\ref{lmCexistsL}, Theorem~\ref{thmNLI}, and Lemma~\ref{lmSNIwoC} that by replacing $f$ with an iterate of $f$ if necessary, we can assume without loss of generality that there exists a Jordan curve $\CC \subseteq S^2$ such that $\post f \subseteq \CC$, $f(\CC) \subseteq \CC$, and that there exist $\xi=\{ \xi_{\minus i} \}_{i\in\N_0} \in  \Sigma_{f,\,\CC}^-$ and $\eta=\{ \eta_{\minus i} \}_{i\in\N_0} \in  \Sigma_{f,\,\CC}^-$, $X^1\in\X^1 (f,\CC)$, and $u_0,v_0 \in X^1$ with $X^1 \subseteq f (\xi_0) = f(\eta_0)$, and
\begin{equation}   \label{eqPfrmCanonicalOrbifoldMetric_NLI}
\phi^{f,\,\CC}_{\xi,\,\eta} (u_0, v_0) \neq 0.
\end{equation}
By the continuity of $\phi^{f,\,\CC}_{\xi,\,\eta}$ (see Lemma~\ref{lmDeltaHolder} and Definition~\ref{defTemporalDist}), we can assume that $u_0,v_0\in \inte(X^1)$. Without loss of generality, we can assume that $\infty \notin  X^1$. We use the usual coordinate $z = (x,y) \in \R^2$ on $X^1$. We fix a constant $C_{22} \geq 1$ depending only on $f$ and $\CC$ such that 
\begin{equation}   \label{eqPfrmCanonicalOrbifoldMetric_ChordalEuclLocalBiLip}
 C_{22}^{-1} \sigma(z_1,z_2)   \leq d_0 (z_1,z_2)  \leq C_{22} d(z_1,z_2) \qquad \text{ for all } z_1,z_2 \in X^1.
\end{equation}

Recall that $\alpha_f = 1$ for all $z\in\widehat{\C}\setminus \post f$ (see Definition~\ref{defRamificationFn}). By Proposition~A.33 and the discussion proceeding it in \cite[Appendix~A.10]{BM17}, the following statements hold:
\begin{enumerate}
\smallskip
\item[(1)] The canonical orbifold metric $d$ is a singular conformal metric with a conformal factor $\rho$ that is continuous everywhere except at the points in $\supp(\alpha_f) \subseteq \post f$.

\smallskip
\item[(2)] $d(z_1,z_2) = \inf\limits_{\gamma} \int_\gamma \! \rho \,\mathrm{d} \sigma$, where the infimum is taken over all $\sigma$-rectifiable paths $\gamma$ in $\widehat{\C}$ joining $z_1$ and $z_2$.  

\smallskip
\item[(3)] For each $z\in\widehat{\C} \setminus \supp(\alpha_f)$, there exists a neighborhood $U_z \subseteq \widehat{\C}$ containing $z$ and a constant $C_z \geq 1$ such that $C_z^{-1} \leq \rho(u) \leq C_z$ for all $u \in U_z$. 
\end{enumerate}

Choose connected open sets $V$ and $U$ such that $u_0,v_0 \in V \subseteq \overline{V} \subseteq U \subseteq \overline{U} \subseteq \inte(X^1)$. By compactness and statement~(3) above, there exists a constant $C_{23} \geq 1$ such that
\begin{equation} \label{eqPfrmCanonicalOrbifoldMetric_ConformalDensity}
C_{23}^{-1} \leq \rho(z) \leq C_{23} \qquad \text{ for all } z\in\overline{U}. 
\end{equation}
Thus by (\ref{eqPfrmCanonicalOrbifoldMetric_ChordalEuclLocalBiLip}), (\ref{eqPfrmCanonicalOrbifoldMetric_Chordal<Orbifold}), and a simple covering argument using statement~(2) above, inequality~(\ref{eqPfrmCanonicalOrbifoldMetric_ConformalDensity}), and the fact that $\overline{V}\subseteq U$, there exists a constant $C_{24} \geq 1$ depending only on $f$, $\CC$, $d$, $\phi$, and the choices of $U$ and $V$ such that
\begin{equation} \label{eqPfrmCanonicalOrbifoldMetric_BiLipMetrics}
 C_{24}^{-1} d(z_1,z_2)   \leq d_0 (z_1,z_2)  \leq C_{24} d(z_1,z_2)  \qquad \text{ for all } z_1,z_2\in \overline{V}.
\end{equation}

We denote, for each $i\in\N$,
\begin{equation}  \label{eqPfrmCanonicalOrbifoldMetric_tau}
\tau_i \coloneqq  ( f|_{\xi_{1-i}}  )^{-1} \circ \cdots \circ ( f|_{\xi_{ \minus 1}} )^{-1} \circ ( f|_{\xi_{0}} )^{-1} \text{ and }
\tau'_i \coloneqq  ( f|_{\eta_{1-i}}  )^{-1} \circ \cdots \circ ( f|_{\eta_{ \minus 1}} )^{-1} \circ ( f|_{\eta_{0}}  )^{-1}.
\end{equation}
We define a function $\Phi \: X^1 \rightarrow \R$ by $\Phi(z)\coloneqq \phi^{f,\,\CC}_{\xi,\,\eta} (u_0, z)$ for $z\in X^1$ (see Definition~\ref{defTemporalDist} and Lemma~\ref{lmDeltaHolder}).

\smallskip

\emph{Claim.} $\Phi$ is continuously differentiable on $V$.

\smallskip

By Definition~\ref{defTemporalDist}, it suffices to show that the function $\mathcal{D}(\cdot) \coloneqq \Delta^{f,\,\CC}_{\phi,\, \xi} (u_0, \cdot)$ is continuously differentiable on $V$. By Lemma~\ref{lmDeltaHolder}, the function $\mathcal{D}(z) = \sum\limits_{i=0}^{+\infty} ( (\phi \circ \tau_i) (u_0) - (\phi \circ \tau_i) (z) )$ is the uniform limit of a series of continuous functions on $V$. Since $V \subseteq \inte(X^1)$, by (\ref{eqPfrmCanonicalOrbifoldMetric_tau}) and Proposition~\ref{propCellDecomp}~(i), the function $\phi \circ \tau_i$ is differentiable on $V$ for each $i\in\N$.

We fix an arbitrary integer $i\in\N$. For each pair of distinct points $z_1,z_2 \in \inte(X^1)$, we choose the maximal integer $m\in\N$ with the property that there exist two $m$-tiles $X^m_1,X^m_2 \in \X^m(f,\CC)$ such that $z_1 \in X^m_1$, $z_2 \in X^m_2$, and $X^m_1 \cap X^m_2 \neq \emptyset$. Then by Proposition~\ref{propCellDecomp}~(i) and Lemma~\ref{lmCellBoundsBM}~(i) and (ii),
\begin{align*}
            \frac{  \abs{ (\phi\circ \tau_i) (z_1) - (\phi\circ \tau_i) (z_2)  }  }  { d(z_1,z_2) }
\leq & \frac{  \Hnorm{1}{\phi}{ ( \widehat{\C}, d ) }  \diam_d ( \tau_i ( X^m_1 \cup X^m_2 ) ) }  { C^{-1} \Lambda^{-(m+1)} }  \\
\leq & \Hnorm{1}{\phi}{ ( \widehat{\C}, d ) }   \frac{ 2 C \Lambda^{- (m+i)} }  { C^{-1} \Lambda^{-(m+1)} }  \\
\leq & 2 C^2\Hnorm{1}{\phi}{ ( \widehat{\C}, d ) }   \Lambda^{1-i},
\end{align*}
where $C\geq 1$ is a constant from Lemma~\ref{lmCellBoundsBM} depending only on $f$, $\CC$, and $d$. Thus by (\ref{eqPfrmCanonicalOrbifoldMetric_BiLipMetrics}),
\begin{align*}
&                       \sup \biggl\{    \Absbigg{  \frac{\partial}{\partial x}  (\phi\circ\tau_i) (z)  }    \,\bigg|\, z\in V  \biggr\} \\
&\qquad \leq  \sup \biggl\{                       \frac{  \abs{ (\phi\circ \tau_i) (z_1) - (\phi\circ \tau_i) (z_2)  }  }  { d_0(z_1,z_2) }      \,\bigg|\, z_1, z_2\in V ,\, z_1 \neq z_2  \biggr\} \\    
&\qquad \leq  C_{24} \sup \biggl\{          \frac{  \abs{ (\phi\circ \tau_i) (z_1) - (\phi\circ \tau_i) (z_2)  }  }  { d     (z_1,z_2) }      \,\bigg|\, z_1, z_2\in V ,\, z_1 \neq z_2  \biggr\} \\                     
&\qquad \leq   2 C_{24} C^2\Hnorm{1}{\phi}{ ( \widehat{\C}, d ) }   \Lambda^{1-i}.
\end{align*}

Hence $\frac{\partial}{\partial x }  \mathcal{D}$ exists and is continuous on $V$. Similarly, $\frac{\partial}{\partial y }  \mathcal{D}$ exists and is continuous on $V$. Therefore $\mathcal{D}$ is continuously differentiable on $V$, establishing the claim.

\smallskip

By the claim, (\ref{eqPfrmCanonicalOrbifoldMetric_NLI}), and the simple observation that $\phi^{f,\,\CC}_{\xi,\,\eta} (u_0, u_0) =0$, there exist numbers $M_0 \in \N$, $\varepsilon \in (0,1)$, and $C_{25} > 1$, and $M_0$-tiles $Y^{M_0}_\b \in \X^{M_0}_\b(f,\CC)$ and $Y^{M_0}_\w \in \X^{M_0}_\w(f,\CC)$ such that $C_{25} \geq C_{24}$, $Y^{M_0}_\b \cup Y^{M_0}_\w \subseteq V \subseteq \inte(X^1)$, and at least one of the following two inequalities hold:
\begin{enumerate}
\smallskip
\item[(a)] $\inf \bigl\{ \Absbig{\frac{\partial}{\partial x}  \Phi(z) }   \,\big|\,  z\in h^{-1} \bigl( Y^{M_0}_\b \cup Y^{M_0}_\w \bigr)   \bigr\}  \geq 2 C_{25} \varepsilon$, or

\smallskip
\item[(b)] $\inf \bigl\{ \Absbig{\frac{\partial}{\partial y}  \Phi(z) }   \,\big|\,  z\in h^{-1} \bigl( Y^{M_0}_\b \cup Y^{M_0}_\w \bigr)   \bigr\}  \geq 2 C_{25} \varepsilon$.
\end{enumerate}

We assume now that inequality (a) holds, and remark that the proof in the other case is similar.

Without loss of generality, we can assume that $\varepsilon \in \bigl(0,(2C_{25}C)^{-2} \bigr)$.

Then by Lemma~\ref{lmCellBoundsBM}~(v), for each $\c\in\{\b,\w\}$, each integer $M \geq M_0$, and each $M$-tile $X\in \X^M(f,\CC)$ with $X\subseteq  Y^{M_0}_\c$, there exists a point $u_1(X) = (x_1(X), y_0(X)) \in X$ such that $B_d \bigl( u_1(X), C^{-1}\Lambda^{-M} \bigr) \subseteq X$. We choose $x_2(X)\in\R$ such that $\abs{x_1(X) - x_2(X) } = (4C_{25}C)^{-1} \Lambda^{-M}$. Then by (\ref{eqPfrmCanonicalOrbifoldMetric_BiLipMetrics}) and $C_{25} \geq C_{24}$, we get
\begin{align}   \label{eqPfrmCanonicalOrbifoldMetric_u1u2Def}
u_2(X) \coloneqq (x_2(X), y_0(X)) \in & B_{d_0} \bigl(u_1(X), (2C_{25}C)^{-1} \Lambda^{-M} \bigr)  \notag \\
                                                                 &     \subseteq B_d  \bigl(u_1(X), (  2    C)^{-1} \Lambda^{-M} \bigr)  \\
                                                                 &     \subseteq B_d  \bigl(u_1(X),  C^{-1} \Lambda^{-M} \bigr)   \subseteq X.  \notag
\end{align}
In particular, the entire horizontal line segment connecting $u_1(X)$ and $u_2(X)$ is contained in $\inte (X)$. By (\ref{eqPfrmCanonicalOrbifoldMetric_u1u2Def}), Lemma~\ref{lmCellBoundsBM}~(ii), (\ref{eqPfrmCanonicalOrbifoldMetric_BiLipMetrics}), and $C_{25} \geq C_{24}$, we get
\begin{align}  \label{eqPfrmCanonicalOrbifoldMetric_u1u2}
&                       \min \bigl\{  d \bigl( u_1(X), \widehat{\C} \setminus X \bigr), \, d \bigl( u_2(X),  \widehat{\C} \setminus X \bigr), \, d( u_1(X), u_2(X) ) \bigr\}  \\
&\qquad \geq \min \bigl\{ (  2    C)^{-1} \Lambda^{-M} ,  \,  C_{25}^{-1} (4C_{25}C)^{-1} \Lambda^{-M}   \bigr\}
                 \geq \varepsilon \diam_d(X).   \notag
\end{align}
On the other hand, by (\ref{eqPfrmCanonicalOrbifoldMetric_BiLipMetrics}), $C_{25} \geq C_{24}$, Definition~\ref{defTemporalDist}, inequality~(a) above, and the mean value theorem, 
\begin{equation*}
           \frac{ \Absbig{  \phi^{f,\,\CC}_{\xi,\,\eta} ( u_1(X), u_2(X) ) } } { d ( u_1(X), u_2(X) )}
\geq   \frac{ \Absbig{  \phi^{f,\,\CC}_{\xi,\,\eta} ( u_1(X), u_2(X) ) } } { C_{25} d_0( u_1(X), u_2(X) )}
 =        \frac{ \abs{ \Phi( u_1(X) ) - \Phi( u_2(X) )}  }   {  C_{25}  \abs{ x_1(X) - x_2(X) }} 
 \geq   2 \varepsilon.
\end{equation*}

We choose
\begin{equation}  \label{eqPfrmCanonicalOrbifoldMetric_N0}  
N_0    \coloneqq  \biggl\lceil  \log_\Lambda  \frac{ 2 C^2 \varepsilon^{-2}\Hseminorm{1, (\widehat{\C},d)}{\phi} C_0}{1-\Lambda^{-1}}     \biggr\rceil.    
\end{equation}
where $C_0 > 1$ is a constant depending only on $f$, $\CC$, and $d$ from Lemma~\ref{lmMetricDistortion}.

Fix arbitrary $N\geq N_0$. Define $X^{N+M_0}_{\c,1} \coloneqq \tau_{N} \bigl( Y^{M_0}_\c \bigr)$ and $X^{N+M_0}_{\c,2} \coloneqq \tau'_{N} \bigl( Y^{M_0}_\c \bigr)$ (c.f.\ (\ref{eqPfrmCanonicalOrbifoldMetric_tau})). Note that $\varsigma_1 = \tau_{N}|_{ Y^{M_0}_\c }$ and $\varsigma_2 = \tau'_{N}|_{ Y^{M_0}_\c }$.

Then by Definition~\ref{defTemporalDist}, Lemma~\ref{lmDeltaHolder}, (\ref{eqPfrmCanonicalOrbifoldMetric_u1u2}),  Lemma~\ref{lmSnPhiBound}, Proposition~\ref{propCellDecomp}~(i), and Lemma~\ref{lmCellBoundsBM}~(i) and (ii),
\begin{align*}
&                       \frac{ \abs{  S_{N }\phi ( \varsigma_1 (u_1(X)) ) -  S_{N }\phi ( \varsigma_2 (u_1(X)) )  -S_{N }\phi ( \varsigma_1 (u_2(X)) ) +  S_{N }\phi ( \varsigma_2 (u_2(X)) )   }  } 
                                  {  d(u_1(X),u_2(X))  }  \\
&\qquad \geq  \frac{ \Absbig{ \phi^{f,\,\CC}_{\xi,\,\eta} (u_1(X)) , u_2(X))  }   } { d(u_1(X),u_2(X)) }
                                 -  \limsup\limits_{n\to+\infty}       \frac{ \abs{  S_{n - N }\phi ( \tau_n (u_1(X)) ) -  S_{n - N }\phi ( \tau_n  (u_2(X)) )  }  }   {  \varepsilon \diam_d (X) }  \\
&\qquad\qquad    -  \limsup\limits_{n\to+\infty}       \frac{ \abs{  S_{n - N }\phi ( \tau'_n (u_1(X)) ) -  S_{n - N }\phi ( \tau'_n (u_2(X)) )  }  }   {  \varepsilon \diam_d (X) }              \\
&\qquad \geq 2 \varepsilon -  \frac{\Hseminorm{1, (\widehat{\C},d)}{\phi} C_0}{1-\Lambda^{-1}} \cdot
                                                     \frac{    d ( \tau_{N} (u_1(X)) , \tau_{N} (u_2(X)))  +  d ( \tau'_{N} (u_1(X)) , \tau'_{N} (u_2(X))) }   {  \varepsilon   \diam_d (X) }   \\
&\qquad \geq 2 \varepsilon -  \frac{\Hseminorm{1, (\widehat{\C},d)}{\phi} C_0}{1-\Lambda^{-1}} \cdot
                                                     \frac{  \diam_d ( \tau_{N} (X) )   +    \diam_d ( \tau'_{N} (X) ) }   {  \varepsilon   \diam_d (X) }           \\
&\qquad \geq 2 \varepsilon -   \frac{\Hseminorm{1, (\widehat{\C},d)}{\phi} C_0}{1-\Lambda^{-1}} \cdot
                                                     \frac{  2 C  \Lambda^{ -  (M  + N )}  }   {  \varepsilon  C^{-1} \Lambda^{ - M} }           
                \geq 2 \varepsilon -   \frac{ 2 C^2 \varepsilon^{-1}\Hseminorm{1, (\widehat{\C},d)}{\phi} C_0}{1-\Lambda^{-1}}     \Lambda^{ - N_0}
                \geq \varepsilon.                                                                                                 
\end{align*}
where the last inequality follows from (\ref{eqPfrmCanonicalOrbifoldMetric_N0}). 

Therefore $\phi$ satisfies the $1$-strong non-integrability condition with respect to $f$ and $d$.
\end{proof}

\begin{proof}[Proof of Theorem~\ref{thmLattesPOT}]
By Proposition~\ref{propLattes}, $\phi \in \Holder{\alpha}\bigl( \widehat{\C},d \bigr)$. So the existence and uniqueness of $s_0>0$ follows from Corollary~\ref{corS0unique}.

The implication (i)$\implies$(iii) follows from Proposition~\ref{propLattes} and Theorem~\ref{thmPrimeOrbitTheorem}. The implication (iii)$\implies$(ii) is trivial. The implication (ii)$\implies$(i) follows immediately by a contradiction argument using Theorem~\ref{thmNLI} and Corollary~\ref{corNecessary}.
\end{proof}

\subsection{Genericity of strongly non-integrable potentials}   \label{subsctGeneric}

We recall some concepts related to the expansion of expanding Thurston maps from a combinatorial point of view.

Suppose $f\: S^2 \rightarrow S^2$ be a Thurston map and $\CC\subseteq S^2$ is a Jordan curve with $\post f \subseteq \CC$. For each $n\in\N_0$, we denote by $D_n(f,\CC)$ the minimal number of $n$-tiles required to form a connected set joining opposite sides of $\CC$; more precisely,
\begin{align}   \label{eqDefDn}
D_n(f,\CC) \coloneqq \min \biggl\{ N\in\N \,\bigg|\, & \text{there exist } X_1, X_2, \dots, X_N \in \X^n(f,\CC)  \text{ such that } \\
                                                                                            & \bigcup\limits_{j=1}^{N} X_j \text{ is connected and joins opposite sides of } \CC \biggr\}.   \notag
\end{align}
See \cite[Section~5.7]{BM17} for more properties of $D_n(f,\CC)$. M.~Bonk and D.~Meyer showed in \cite[Proposition~16.1]{BM17} that the limit
\begin{equation}  \label{eqDefCombExpansionFactor}
\Lambda_0(f) \coloneqq \lim\limits_{n\to+\infty} D_n(f,\CC)^{1/n}
\end{equation}
exists and is independent of $\CC$. We have $\Lambda_0(f) \in (1,+\infty)$. The constant $\Lambda_0(f)$ is called the \defn{combinatorial expansion factor} of $f$.

The combinatorial expansion factor $\Lambda_0(f)$ serves as a sharp upper bound of the expansion factors of visual metrics of $f$; more precisely, for an expanding Thurston map $f$, the following statements are true:
\begin{enumerate}
\smallskip
\item[(i)] If $\Lambda$ is the expansion factor of a visual metric for $f$, then $\Lambda\in (1, \Lambda_0(f) ]$.

\smallskip
\item[(ii)] Conversely, if $\Lambda \in (1, \Lambda_0(f) )$, then there exists a visual metric for $f$ with expansion factor $\Lambda$.
\end{enumerate}
See \cite[Theorem~16.3]{BM17} for more details.

\begin{lemma}   \label{lmDisjointBackwardOrbits}
Let $f$ and $\CC$ satisfy the Assumptions. We assume in addition that $f(\CC) \subseteq \CC$. Then there exist two sequences of $1$-tiles $\{ \xi_{\minus i} \}_{i\in\N_0}, \{ \xi'_{\minus i'} \}_{i'\in\N_0}  \in \Sigma_{f,\,\CC}^-$ such that $f (\xi_0 ) = f (\xi'_0)$ and $\xi_{\minus i} = \xi_0 \neq \xi'_{\minus i'}$ for all $i,i'\in\N_0$.
\end{lemma}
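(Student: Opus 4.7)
The lemma asks for two backward-admissible sequences $\{\xi_{-i}\}, \{\xi'_{-i'}\} \in \Sigma_{f,\CC}^-$ of $1$-tiles such that $f(\xi_0) = f(\xi'_0)$ while the constant value $\xi_0$ never appears in the $\xi'$-sequence. My plan is to take the $\xi$-sequence to be literally constant, $\xi_{-i} \coloneqq \xi_0$ for all $i$, and then build $\{\xi'_{-i'}\}$ by freely choosing, at each backward step, any $1$-tile in the appropriate color class other than $\xi_0$.

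The constant sequence is admissible precisely when $f(\xi_0) \supseteq \xi_0$, equivalently, when $\xi_0 \in \X^1_{\b\b}(f,\CC) \cup \X^1_{\w\w}(f,\CC)$. To see that this union is nonempty I would pick any $p \in \post f \subseteq \CC$ and examine the flower $W^1(p)$: by Remark~\ref{rmFlower}, $\overline{W}^1(p)$ is the union of $2\deg_f(p)$ $1$-tiles arranged cyclically around $p$ with alternating colors, and the curve $\CC$ passes through $p$ and splits this cyclic arrangement into two arcs, one lying in $X^0_\b$ and the other in $X^0_\w$. Since colors alternate around $p$ and each arc contains at least one tile, each $0$-tile $X^0_\c$ contains at least one $1$-tile of color $\c$; that is, $\X^1_{\c\c}(f,\CC) \neq \emptyset$. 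Fix such a $\c \in \{\b,\w\}$ and a tile $\xi_0 \in \X^1_{\c\c}(f,\CC)$; then $f(\xi_0) = X^0_\c \supseteq \xi_0$ by Proposition~\ref{propCellDecomp}(i), so indeed $\{\xi_{-i}\}_{i\in\N_0}$ with $\xi_{-i} = \xi_0$ lies in $\Sigma_{f,\CC}^-$.

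Next I construct $\{\xi'_{-i'}\}_{i'\in\N_0}$ inductively. By~(\ref{eqCardBlackNTiles}) and Definition~\ref{defThurstonMap}, $\card \X^1_\c(f,\CC) = \deg f \geq 2$, so I may choose $\xi'_0 \in \X^1_\c(f,\CC) \setminus \{\xi_0\}$, ensuring $f(\xi'_0) = X^0_\c = f(\xi_0)$. Suppose $\xi'_0, \xi'_{-1}, \dots, \xi'_{-k}$ have been chosen, each distinct from $\xi_0$ and with the admissibility relation $f\bigl(\xi'_{-j-1}\bigr) \supseteq \xi'_{-j}$ holding for $0 \le j < k$. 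Let $c_{k+1} \in \{\b,\w\}$ be the unique color with $\xi'_{-k} \subseteq X^0_{c_{k+1}}$; by Proposition~\ref{propCellDecomp}(i), the admissibility requirement $f\bigl(\xi'_{-k-1}\bigr) \supseteq \xi'_{-k}$ is equivalent to $f\bigl(\xi'_{-k-1}\bigr) = X^0_{c_{k+1}}$, i.e., $\xi'_{-k-1} \in \X^1_{c_{k+1}}(f,\CC)$. Since $\card \X^1_{c_{k+1}}(f,\CC) = \deg f \geq 2$, I may pick $\xi'_{-k-1} \in \X^1_{c_{k+1}}(f,\CC) \setminus \{\xi_0\}$, completing the inductive step.

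The only nontrivial input is the existence of a self-covering $1$-tile, which is handled by the flower alternation argument above; the remainder of the construction merely exploits the fact that $\deg f \geq 2$ leaves a free choice within each color class at every backward step, making it trivial to avoid the single forbidden tile $\xi_0$.
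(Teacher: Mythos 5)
Your greedy construction of $\{\xi'_{-i'}\}$ is a clean alternative to the paper's explicit two-case construction and works correctly once a self-covering $1$-tile $\xi_0$ is in hand. However, the flower argument you use to produce $\xi_0$ has a gap. You assert that "each $0$-tile $X^0_\c$ contains at least one $1$-tile of color $\c$," but this does not follow from alternation of colors and nonemptiness of the two arcs: an arc consisting of a single tile carries only one color, which need not be $\c$. In fact the assertion itself is false. If one $0$-tile, say $X^0_\w$, equals a single black $1$-tile (a degenerate configuration the paper's proof isolates as its Case~1), then $X^0_\w$ contains no white $1$-tile, so $\X^1_{\w\w}(f,\CC) = \emptyset$. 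Your argument breaks down precisely when $\deg_f(p) = 1$, which holds for \emph{every} $p \in \post f$ whenever $\crit f \cap \post f = \emptyset$; then both arcs around $p$ have exactly one tile each, and nothing forces either of them to lie in $\X^1_{\b\b} \cup \X^1_{\w\w}$ (both could be "wrong-colored," one in $\X^1_{\b\w}$ and one in $\X^1_{\w\b}$).

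What is true, and all your construction needs, is the weaker statement $\X^1_{\b\b}(f,\CC) \cup \X^1_{\w\w}(f,\CC) \neq \emptyset$. The paper establishes this by a topological argument: if $X^0_\w$ contains no white $1$-tile, then no interior $1$-edge can exist in $X^0_\w$ (each $1$-edge separates a black $1$-tile from a white one), so $\partial X \subseteq \CC$ for every constituent $1$-tile $X \subseteq X^0_\w$; since $\partial X$ and $\CC$ are both Jordan curves this forces $\partial X = \CC$ and hence $X = X^0_\w$, i.e., $X^0_\w$ is a single (black) $1$-tile. Applying this and its color-swap, if $\X^1_{\b\b} = \X^1_{\w\w} = \emptyset$ then each $0$-tile is a single $1$-tile, whence $\card \X^1(f,\CC) = 2$ and $\deg f = 1$, contradicting Definition~\ref{defThurstonMap}. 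Substituting this reasoning for your flower argument closes the gap; the rest of your proof then goes through.
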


Recall that $\Sigma_{f,\,\CC}^-$ is defined in (\ref{eqDefSigma-}).

\begin{proof}
We first claim that if the white $0$-tile $X^0_\w \in\X^0$ does not contain a white $1$-tile, then there exists a black $1$-tile $X^1_\b \in \X^1_\b$ such that $X^1_\b = X^0_\w$.

Indeed, note that for each $1$-edge $e^1\in\E^1$, there exists a unique black $1$-tile $X_\b\in \X^1_\b$ and a unique white $1$-tile $X_\w\in \X^1_\w$ such that $X_\b \cap X_\w = e^1$. Suppose that $X^0_\w$ is a union  $X^0_\w = \bigcup\limits_{i=1}^k X_i$ of $k$ distinct black $1$-tiles $X_i \in \X^1_\b$, $i\in\{1,2,\dots,k\}$, then $\bigcup\limits_{i=1}^k \partial X_i \subseteq \partial X^0_\w = \CC$. Since each of $\CC$ and $\partial X_i$, $i\in\{1,2,\dots,k\}$, is a Jordan curve and $\partial X_j \neq \partial X_{j'}$ for $1 \leq j < j' \leq k$, we conclude that $k=1$, establishing the claim.

\smallskip

Similar statement holds if we exchange black and white.

Next, we observe that if the white $0$-tile $X^0_\w$ is also a white $1$-tile or the black $0$-tile $X^0_\b$ is also a black $1$-tile, then $f$ cannot be expanding.

Hence it suffices to construct the sequences $\{ \xi_{\minus i} \}_{i\in\N_0}$ and $\{ \xi'_{\minus i'} \}_{i'\in\N_0}$ in the following two cases:

\smallskip

\emph{Case 1.} Either $X^0_\w = X^1_\b$ for some black $1$-tile $X^1_\b \in \X^1_b$ or $X^0_\b = X^1_\w$ for some white $1$-tile $X^1_\w \in \X^1_w$. Without loss of generality, we assume the former holds. Since $\deg f \geq 2$, we can choose a black $1$-tile $Y^1_\b \in \X^1_\b$ and a white $1$-tile $Y^1_\w \in \X^1_\w$ such that $Y^1_\b \cup Y^1_\w \subseteq X^0_\b$. Then we define $\xi_{\minus i} \coloneqq Y^1_\b$ for all $i\in\N_0$, $\xi'_{\minus i'} \coloneqq X^1_\b$ if $i'\in\N_0$ is even, and $\xi'_{\minus i'} \coloneqq Y^1_\w$ if $i'\in\N_0$ is odd.

\smallskip

\emph{Case 2.} There exist black $1$-tiles $X^1_\b, Y^1_\b \in \X^1_\b$ and white $1$-tiles $X^1_\w, Y^1_\w \in \X^1_\w$ such that $X^1_\b \cup X^1_\w \subseteq X^0_\w$ and $Y^1_\b \cup Y^1_\w \subseteq X^0_\b$. Then we define $\xi_{\minus i} \coloneqq Y^1_\b$ for all $i\in\N_0$, $\xi'_0 \coloneqq X^1_\b$, and $\xi'_{\minus i'} \coloneqq X^1_\w$ for all $i'\in\N$.

\smallskip

It is trivial to check that in both cases, $\{ \xi_{\minus i} \}_{i\in\N_0}, \{ \xi'_{\minus i'} \}_{i'\in\N_0}  \in \Sigma_{f,\,\CC}^-$,  $f  (\xi_0  ) = f  (\xi'_0 )$, and $\xi_{\minus i} = \xi_0 \neq \xi'_{\minus i'}$ for all $i,i'\in\N_0$.
\end{proof}

\begin{theorem}     \label{thmPerturbToStrongNonIntegrable}
Let $f\: S^2 \rightarrow S^2$ be an expanding Thurston map with an Jordan curve $\CC\subseteq S^2$ satisfying $\post f\subseteq \CC$ and $f(\CC)\subseteq \CC$. Let $d$ be a visual metric on $S^2$ for $f$ with expansion factor $\Lambda>1$. Given $\alpha\in(0,1]$. 

We assume in addition that $\Lambda^\alpha < \Lambda_0(f)$. Then there exists a constant $C_{27} > 0$ such that for each $\varepsilon>0$ and each  real-valued H\"{o}lder continuous function $\varphi \in \Holder{\alpha}(S^2,d)$ with an exponent $\alpha$, there exist integers $N_0,M_0\in \N$, $M_0$-tiles $Y^{M_0}_\b \in \X^{M_0}_\b(f,\CC)$, $Y^{M_0}_\w \in \X^{M_0}_\w(f,\CC)$, and a real-valued H\"{o}lder continuous function $\phi \in \Holder{\alpha}(S^2,d)$ such that for each $\c\in\{\b,\w\}$, each integer $M\geq M_0$, and each $M$-tile $X\in\X^M(f,\CC)$ with $X\subseteq Y^{M_0}_\c$, there exist two points $x_1(X), x_2(X) \in X$ with the following properties:
\begin{enumerate}
\smallskip
\item[(i)] $\min \{ d(x_1(X), S^2 \setminus X), \, d(x_2(X), S^2 \setminus X), \, d(x_1(X), x_2(X)) \} \geq \varepsilon \diam_d(X)$.

\smallskip
\item[(ii)] for each integer $N' \geq N_0$, there exist two  $(N'+M_0)$-tiles $X^{N'+M_0}_{\c,1}, X^{N'+M_0}_{\c,2} \in \X^{N'+M_0}(f,\CC)$ such that $Y^{M_0}_\c = f^{N'}\bigl(  X^{N'+M_0}_{\c,1}  \bigr) =   f^{N'}\bigl(  X^{N'+M_0}_{\c,2}  \bigr)$, and that
\begin{equation}   \label{eqSNIBoundsPerturb}
\frac{ \Abs{  S_{N' }\phi ( \varsigma_1 (x_1(X)) ) -  S_{N' }\phi ( \varsigma_2 (x_1(X)) )  -S_{N' }\phi ( \varsigma_1 (x_2(X)) ) +  S_{N' }\phi ( \varsigma_2 (x_2(X)) )   }  } {d(x_1(X),x_2(X))^\alpha}
\geq \varepsilon,
\end{equation}
where we write $\varsigma_1 \coloneqq \Bigl(f^{N'}\big|_{X^{N'+M_0}_{\c,1}} \Bigr)^{-1}$ and $\varsigma_2 \coloneqq \Bigl(f^{N'}\big|_{X^{N'+M_0}_{\c,2}} \Bigr)^{-1}$. 

\smallskip
\item[(iii)] $\Hnorm{\alpha}{\phi - \varphi}{(S^2,d)} \leq C_{27} \varepsilon$.
\end{enumerate}
\end{theorem}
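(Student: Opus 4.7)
The approach is to write $\phi \coloneqq \varphi + \varepsilon \psi$ for a H\"older function $\psi \in \Holder{\alpha}(S^2,d)$ with norm bounded by a universal constant $C_{27}$, making (iii) automatic. The construction uses Lemma~\ref{lmDisjointBackwardOrbits}, which furnishes sequences $\{\xi_{-i}\}_{i \in \N_0}, \{\xi'_{-i'}\}_{i' \in \N_0} \in \Sigma^-_{f,\CC}$ with $f(\xi_0) = f(\xi'_0)$, $\xi_{-i} = \xi_0$ for every $i \in \N_0$, and $\xi_0 \neq \xi'_{-i'}$ for every $i' \in \N_0$. With $\tau \coloneqq (f|_{\xi_0})^{-1}$ and $\tau' \coloneqq (f|_{\xi'_0})^{-1}$, iterates of $\tau$ remain inside $\xi_0$, while $\tau'$ sends its domain into the disjoint 1-tile $\xi'_0$. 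Fix $M_0$ large enough that there exist $M_0$-tiles $Y^{M_0}_\b \in \X^{M_0}_\b(f,\CC)$ and $Y^{M_0}_\w \in \X^{M_0}_\w(f,\CC)$ contained in $\inte(f(\xi_0))$. For each $N' \geq N_0$ and $\c \in \{\b,\w\}$, set $X^{N'+M_0}_{\c,1} \coloneqq \tau^{N'}(Y^{M_0}_\c)$ and $X^{N'+M_0}_{\c,2} \coloneqq \tau^{N'-1}(\tau'(Y^{M_0}_\c))$; these yield the two inverse branches $\varsigma_1, \varsigma_2$ of $f^{N'}$ required in (ii), and their first iterates lie in the disjoint 1-tiles $\xi_0$ and $\xi'_0$. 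The points $x_1(X), x_2(X) \in X$ satisfying (i) can then be taken via Lemma~\ref{lmCellBoundsBM}(v).

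The core step is the design of $\psi$. The idea is to build $\psi$ as a finite sum of H\"older bumps placed on the tiles $\tau^i(Y^{M_0}_\c)$ for $0 \leq i < N_0$, each bump vanishing outside a small neighbourhood and outside any set reachable by $\tau'$-iterates. The disjointness between $\tau$- and $\tau'$-orbits then forces $\psi \circ \varsigma_1$ and $\psi \circ \varsigma_2$ to see different parts of $\psi$, producing a nontrivial four-point difference $\Delta_\psi$ in the numerator of (\ref{eqSNIBoundsPerturb}). The hypothesis $\Lambda^\alpha < \Lambda_0(f)$ enters precisely here: a H\"older cutoff supported on a single $n$-tile has seminorm of order $\Lambda^{\alpha n}$, but one can instead use a cutoff supported on a connected union of at most $\Lambda_0(f)^n$-many $n$-tiles whose H\"older seminorm is controlled by the ratio $\Lambda_0(f)^{\alpha n}/\Lambda^{\alpha n}$, which remains bounded under the hypothesis. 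This allows $\psi$ to be constructed with $\Hnorm{\alpha}{\psi}{(S^2,d)}$ bounded uniformly.

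The main obstacle is to ensure that the composite four-point difference $\Delta_\phi = \Delta_\varphi + \varepsilon \Delta_\psi$ is bounded below by $\varepsilon d(x_1, x_2)^\alpha$. By Lemma~\ref{lmSnPhiBound}, both $|\Delta_\varphi|$ and $|\Delta_\psi|$ are of order $d(x_1, x_2)^\alpha$, so for small $\varepsilon$ the $\varphi$-contribution can potentially cancel the $\varepsilon\psi$-contribution. This is circumvented by exploiting the freedom in choosing $(x_1(X), x_2(X))$ within $X$: one arranges $\psi$ with a rich enough oscillation on the relevant tiles so that the map $(x_1, x_2) \mapsto \Delta_\psi(x_1, x_2)/d(x_1, x_2)^\alpha$ achieves both signs and magnitude at least $c_0$ on a subset of $X \times X$ of positive measure. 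Combining this with continuity, one selects $(x_1(X), x_2(X))$ tile by tile so that $\varepsilon \Delta_\psi$ has the same sign as $\Delta_\varphi$, giving $|\Delta_\phi| = |\Delta_\varphi| + \varepsilon |\Delta_\psi| \geq \varepsilon c_0 d(x_1, x_2)^\alpha$; absorbing $c_0$ into the scaling of $\psi$ completes (ii), and (iii) follows with $C_{27} = \Hnorm{\alpha}{\psi}{(S^2,d)}$.
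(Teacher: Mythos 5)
Your overall framing---constructing $\phi = \varphi + \Upsilon$ with $\Upsilon$ supported along the $\xi$-backward orbit (and disjoint from the $\xi'$-orbit), using a graded bump whose H\"older seminorm is controlled via $\Lambda^\alpha < \Lambda_0(f)$---is in the right spirit and matches the paper's architecture in outline. However, the crucial step, the lower bound in~(ii), is where your argument has a genuine gap.

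You propose to handle the potential cancellation between $\Delta_\varphi$ and $\varepsilon\Delta_\psi$ by choosing $(x_1(X),x_2(X))$ tile by tile so that the two contributions share the same sign. This does not work as stated, and there is no obvious repair. First, the points $x_1(X),x_2(X)$ must simultaneously satisfy Property~(i), which already confines them to a compact subregion of $X\times X$; nothing guarantees that within this region the set where $|\Delta_\psi|\geq c_0 d^\alpha$ meets the set where $\Delta_\varphi$ and $\Delta_\psi$ agree in sign. Since $\varphi$ is arbitrary, an adversarial $\varphi$ can oscillate with the opposite phase exactly on the region where $\Delta_\psi$ is large, forcing cancellation of the same order $\varepsilon d^\alpha$. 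Second, and more importantly, the lower bound in~(ii) must hold for \emph{every} $N'\geq N_0$ with a single fixed pair $x_1(X),x_2(X)$; your sign-matching argument is pointwise in $N'$. The paper sidesteps the entire issue by making the perturbation \emph{adaptive} rather than fixed: it fixes the evaluation points $v_1(X),v_2(X)$ in advance as preimages of designated vertices, processes tiles scale by scale, and at each scale adds a bump \emph{only if} the temporal distance of the current intermediate function $\varphi_{m-1}$ at $(v_1(X),v_2(X))$ falls below the threshold $2\varepsilon d^\alpha$; the indicator $\delta_X$ in the recursion (\ref{eqPfthmPerturbToStrongNonIntegrable_deltaX}) encodes exactly this. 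When a bump is added, it is supported in a small flower around $v_1$ (not $v_2$) and only along the $\tau_j$-orbit (not the $\tau'_j$-orbit), so it contributes to precisely one of the four strands of the four-point difference with the full bump height $\approx C_{26}\Lambda^{-\alpha n}\varepsilon \gg 4\varepsilon d^\alpha$, overwhelming the small existing value. This dichotomy---either $\varphi$ already works, or we add a bump that forces it to work---is what your fixed-$\psi$-plus-sign-matching scheme is missing, and it cannot be reconstructed by a more careful choice of $(x_1,x_2)$.

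Two smaller points: (a) your proposed second inverse branch $\tau^{N'-1}\circ\tau'$ (applying $\tau'$ first, then $\tau^{N'-1}$) is not well-defined in general, because $\tau'(Y^{M_0}_\c)\subseteq\xi'_0$ and in both cases of Lemma~\ref{lmDisjointBackwardOrbits} the $1$-tile $\xi'_0$ lies outside the domain $f(\xi_0)$ of $\tau$; the paper instead takes $X^{N'+M_0}_{\c,2}=\tau'_{N'}(Y^{M_0}_\c)$, following the entire $\xi'$-backward orbit as in (\ref{eqPfthmPerturbToStrongNonIntegrable_tau}). (b) Your description of the bump as ``a cutoff supported on a connected union of at most $\Lambda_0(f)^n$-many $n$-tiles'' misdescribes the mechanism; the bump is supported on a single $(n+N)$-flower, and the point is that its level sets form a nested sequence $\{U_{\overline{i}}\}$ whose levels decrease by a factor $(D_N-1)^{-1}$ per scale step while diameters decrease by $\Lambda^{-\alpha N}$, so the H\"older quotient is governed by $\rho=\Lambda^{\alpha N}/(D_N-1)<1$ precisely because $\Lambda^\alpha<\Lambda_0(f)$.
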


\begin{proof}

Denote
\begin{equation}   \label{eqDefC26}
C_{26} \coloneqq 4 C^\alpha \Lambda^\alpha > 1.
\end{equation}
Here $C\geq 1$ is a constant from Lemma~\ref{lmCellBoundsBM} depending only on $f$, $\CC$, and $d$.

Since $\Lambda^\alpha < \Lambda_0(f) = \lim\limits_{n\to+\infty} D_n(f,\CC)^{1/n}$ (see (\ref{eqDefCombExpansionFactor})), we can fix $N\in\N$ large enough such that the following statements are satisfied:
\begin{itemize}
\smallskip
\item $3< 3  C_{26} C<\Lambda^{\alpha N} < D_N(f,\CC) - 1$.

\smallskip
\item There exist $u^1_\b, u^2_\b, u^1_\w, u^2_\w \in \V^N$ such that for all $\c \in \{ \b, \w \}$,
\begin{align}
\overline{W}^N \bigl( u^1_\c \bigr) \cup  \overline{W}^N \bigl( u^2_\c \bigr) & \subseteq \inte \bigl( X^0_\c \bigr),   \label{eqPfthmPerturbToStrongNonIntegrable_FlowerLocation} \\
\overline{W}^N \bigl( u^1_\c \bigr) \cap  \overline{W}^N \bigl( u^2_\c \bigr) &  = \emptyset.   \label{eqPfthmPerturbToStrongNonIntegrable_FlowersDisjoint}
\end{align}
\end{itemize}

We denote $D_N \coloneqq D_N(f,\CC)$ in the remaining part of this proof.

It suffices to establish the theorem for $\varepsilon >0$ sufficiently small. Fix arbitrary
\begin{equation}   \label{eqPfthmPerturbToStrongNonIntegrable_varepsilon}
\varepsilon \in \bigl(0, C^{-2} \Lambda^{ - 2N } \bigr)  \subseteq (0,1).
\end{equation} 

We define the following constants
\begin{align}
\rho    & \coloneqq \frac{ \Lambda^{\alpha N} } { D_N - 1 } \in (0,1), \label{eqPfthmPerturbToStrongNonIntegrable_rho}  \\
C_{27} & \coloneqq 1 + C_{26}   C \Bigl( 4   (1-\rho)^{-1} + \Lambda^{ \alpha N} \bigl( 1 - \Lambda^{ - \alpha N} \bigr)^{-1} \Bigr),  \label{eqDefC27}  \\
N_0    & \coloneqq  \biggl\lceil \frac{1}{\alpha} \log_\Lambda  \frac{ 2 C^2 \varepsilon^{-1 - \alpha} \bigl(\Hnorm{\alpha}{\varphi}{(S^2,d)} + \varepsilon C_{27} \bigr) C_0}{1-\Lambda^{-\alpha}}   \biggr\rceil.    \label{eqPfthmPerturbToStrongNonIntegrable_N0}  
\end{align}
Here $C_0 > 1$ is a constant depending only on $f$, $\CC$, and $d$ from Lemma~\ref{lmMetricDistortion}.

Choose two sequences of $1$-tiles $\xi \coloneqq \{ \xi_{\minus i} \}_{i\in\N_0}  \in \Sigma_{f,\,\CC}^-$ and $\xi' \coloneqq \{ \xi'_{ \minus i'} \}_{i'\in\N_0}  \in \Sigma_{f,\,\CC}^-$ as in Lemma~\ref{lmDisjointBackwardOrbits} such that $f  ( \xi_0  ) = f  ( \xi'_0 )$ and $\xi_{\minus i} = \xi_0 \neq \xi'_{\minus i'}$ for all $i,i'\in\N_0$. We denote, for each $j\in\N$,
\begin{equation}  \label{eqPfthmPerturbToStrongNonIntegrable_tau}
\tau_j \coloneqq \bigl( f|_{\xi_{1-j}} \bigr)^{-1} \circ \cdots \circ ( f|_{\xi_{ \minus 1}} )^{-1} \circ ( f|_{\xi_{0}} )^{-1} \text{ and }
\tau'_j \coloneqq \bigl( f|_{\xi'_{1-j}} \bigr)^{-1} \circ \cdots \circ \bigl( f|_{\xi'_{ \minus 1}} \bigr)^{-1} \circ \bigl( f|_{\xi'_{0}} \bigr)^{-1}.
\end{equation}

Since $f$ is expanding Thurston map, we have $f  ( \xi_0  ) \supsetneq \xi_0$. Thus we can fix a constant
\begin{equation}  \label{eqPfthmPerturbToStrongNonIntegrable_M0} 
M_0  \geq    \frac{1}{\alpha} \log_\Lambda \frac{2  C_{26}}{ 1 - \Lambda^{-\alpha N}  }  
\end{equation}
large enough such that we can choose $Y^{M_0}_\b \in \X^{M_0}_\b$ and $Y^{M_0}_\w \in \X^{M_0}_\w$ with $Y^{M_0}_\b \cap Y^{M_0}_\w \neq \emptyset$ and
\begin{equation}   \label{eqPfthmPerturbToStrongNonIntegrable_Y_location}
Y^{M_0}_\b \cup Y^{M_0}_\w \subseteq \inte  ( f  ( \xi_0  )  ) \setminus \xi_0.
\end{equation}
We fix such $Y^{M_0}_\b \in \X^{M_0}_\b$ and $Y^{M_0}_\w \in \X^{M_0}_\w$. See Figure~\ref{figPerturb}.

\smallskip

\begin{figure}
    \centering
    \begin{overpic}
    [width=15cm, 
    tics=20]{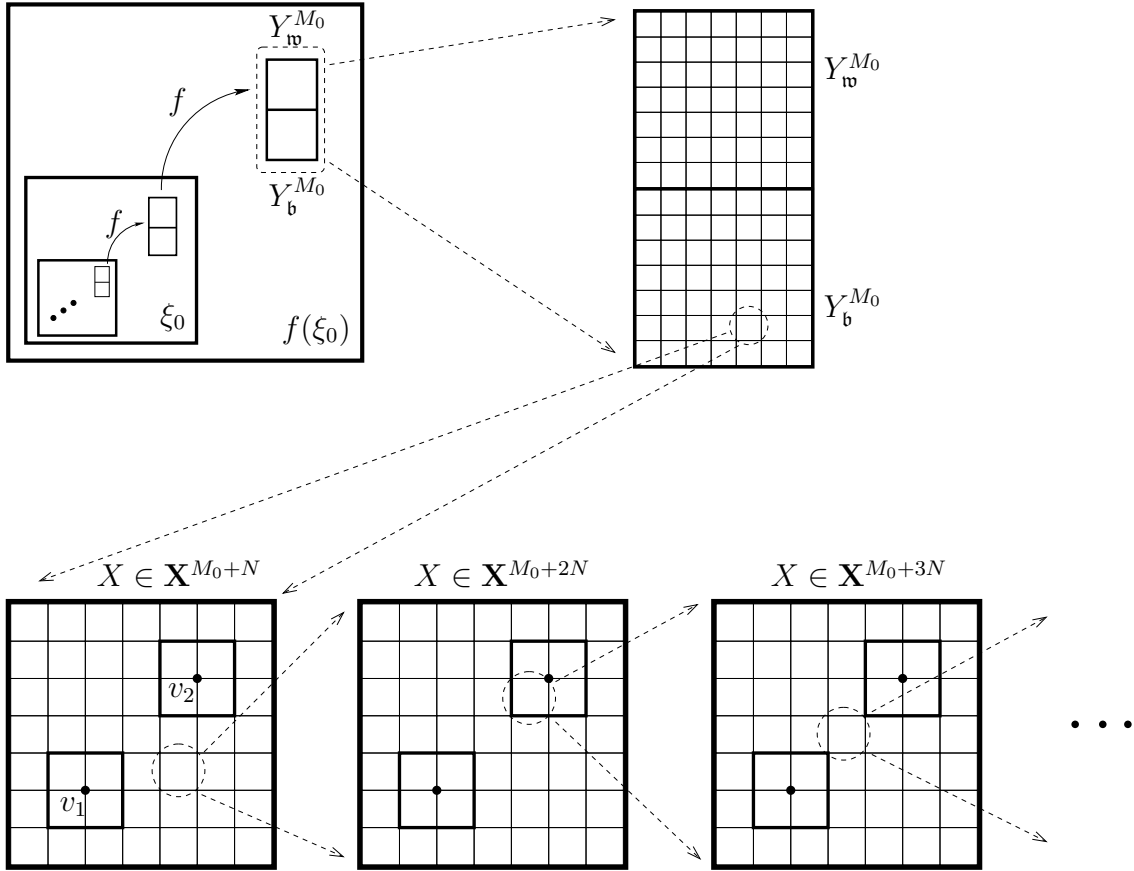}
    \put(100,316){$Y^{M_0}_\w$}
    \put(100,252){$Y^{M_0}_\b$}
    \put(62,288){$f$}
    \put(38,242){$f$}
    \put(105,202){$f(\xi_0)$}
    \put(59,207){$\xi_0$}
    \put(310,300){$Y^{M_0}_\w$}
    \put(310,210){$Y^{M_0}_\b$}
    \put(35,108){$X\in\X^{M_0 + N}$}
    \put(155,108){$X\in\X^{M_0 + 2N}$}
    \put(290,108){$X\in\X^{M_0 + 3N}$}
    \put(21,23){$v_1$}
    \put(62,66){$v_2$}
    \end{overpic}
    \caption{Constructions for the proof of Theorem~\ref{thmPerturbToStrongNonIntegrable}.}
    \label{figPerturb}
\end{figure}

We want to construct, for each $n\in\N_0$ and each $(n+N)$-vertex $v\in \V^{n+N}$, a non-negative bump function $\Upsilon_{v,n} \: S^2 \rightarrow [0,+\infty)$ that satisfies the following properties:
\begin{enumerate}
\smallskip
\item[(a)] $\Upsilon_{v,n} (v) = C_{26}\Lambda^{-\alpha n} \varepsilon$ and $\Upsilon_{v,n} (x) = 0$ if $x\in S^2 \setminus W^{n+N} (v)$.

\smallskip
\item[(b)] $\norm{\Upsilon_{v,n}}_{\CCC^0(S^2)} = C_{26}\Lambda^{-\alpha n} \varepsilon$.

\smallskip
\item[(c)] For each $m\in\N$, each $X\in \X^{n+mN}$, and each pair of points $x,y\in X$,
\begin{equation}  \label{eqPfthmPerturbToStrongNonIntegrable_SingleBumpDiffInEachScale}
\abs{ \Upsilon_{v,n} (x) - \Upsilon_{v,n} (y) } \leq C_{26} \Lambda^{-\alpha n} \varepsilon (D_N - 1)^{-(m-1)}.
\end{equation}
\end{enumerate}

\smallskip

Fix arbitrary $n\in\N_0$ and $v\in \V^{n+N}$.

In order to construct such $\Upsilon_{v,n}$, we first need to construct a collection of sets whose boundaries serve as level sets of $\Upsilon_{v,n}$. More precisely, we will construct a collection of closed subsets $\{U_{\overline{i}} \}_{\overline{i} \in I}$ of $W^{n+N}(v)$ indexed by
\begin{equation}  \label{eqPfthmPerturbToStrongNonIntegrable_I}
I \coloneqq \bigcup\limits_{k\in \N} \{0,1,\dots, D_N-1 \}^k
\end{equation}
that satisfy the following properties:
\begin{enumerate}
\smallskip
\item[(1)] $U_{\overline{i}}$ is either $\{v\}$ or a nonempty union of $(n+(k+1)N)$-tiles if the \emph{length} of $\overline{i}\in I$ is $k\in\N$, i.e., $\overline{i} \in \{ 0, 1, \dots, D_N - 1 \}^k$. Moreover, $U_{\overline{i}} = \{v\}$ if and only if $\overline{i} \eqqcolon (i_1, i_2, \dots, i_k) = (0, 0 , \dots, 0)$.

\smallskip
\item[(2)] $S^2 \setminus U_{\overline{i}}$ is a finite disjoint union of simply connected open sets for each $\overline{i}\in I$.

\smallskip
\item[(3)] $U_{(i_1,i_2,\dots, i_k)} = U_{(i_1,i_2,\dots, i_k,0)}$ for each $k\in\N$ and each $\overline{i} = (i_1,i_2,\dots, i_k) \in I$.

\smallskip
\item[(4)] $U_{\overline{i}} \subseteq \inter U_{\overline{j}}  \subseteq  U_{\overline{j}}  \subseteq  W^{n+N}(v)$ for all $\overline{i}, \overline{j} \in I$ with $\overline{i} < \overline{j}$.
\end{enumerate}
Here we say $\overline{i} < \overline{j}$, for $\overline{i} = (i_1, i_2, \dots, i_k) \in I$ and $\overline{j} = (j_1, j_2, \dots, j_{k'}) \in I$, if one of the following statements is satisfied:
\begin{itemize}
\smallskip
\item $k<k'$, $i_l = j_l$ for all $l \in \N$ with $l\leq k$, and $j_{l'} \neq 0$ for some $l'\in\N$ with $k < l' \leq k'$. 

\smallskip
\item There exists $l'\in\N$ with $l' \leq \min\{k, k'\}$ such that $i_{l'} < j_{l'}$ and $i_l = j_l$ for all $l\in\N$ with $l< l'$.
\end{itemize}

We say $\overline{i} \leq \overline{j}$ for $\overline{i}, \overline{j} \in I$ if either $\overline{i} < \overline{j}$ or $\overline{i} = \overline{j}$.

We denote
\begin{equation}  \label{eqPfthmPerturbToStrongNonIntegrable_Ik}  
I_0 \coloneqq \emptyset, \text{ and } I_l \coloneqq \bigcup\limits_{k=1}^l  \{1, \dots, D_N - 1\}^k \text{ for each } l\in\N.
\end{equation}
\smallskip

We construct $U_{\overline{i}}$ recursively on the length of $\overline{i}\in I$. 

We set $U_{(0)} \coloneqq \{v\}$. For $\overline{i} = (i_1)$, $i_1\in \{1, \dots, D_N - 1\}$, we define a connected closed set 
\begin{align*}
U{(i_1)}  \coloneqq \bigcup \biggl\{  X_{i_1} \,\bigg|\,  & \text{there exist } X_1, X_2, \dots, X_{i_1} \in \X^{n+2N} \\
                                                                                           & \text{ such that }  \bigcup\limits_{m=1}^{i_1} X_m \text{ is connected and } v \in   X_1   \biggr\}.
\end{align*}
Note that $U_{(i_1)} \subseteq W^{n+N}(v)$ for $i_1\in \{1, 2, \dots, D_N - 1\}$ since otherwise there would exist $X_1, X_2, \dots, X_{i_1} \in \X^{n+2N}$ such that the union $\bigcup\limits_{m=1}^{i_1} f^{n+N}(X_m)$ of $N$-tiles $f^{n+N}(X_m)\in \X^N$ (see Proposition~\ref{propCellDecomp}~(i)), $m \in \{1,2,\dots,i_1\}$, is connected and joins opposite sides of $\CC$ which is impossible due to the definition of $D_N$ (see (\ref{eqDefDn})). Then Properties~(1), (2), and (4) hold for $\overline{i},\overline{j} \in \{0, 1, \dots, D_N - 1\}^1$ by our construction.

\begin{figure}
    \centering
    \begin{overpic}
    [width=15cm, 
    tics=20]{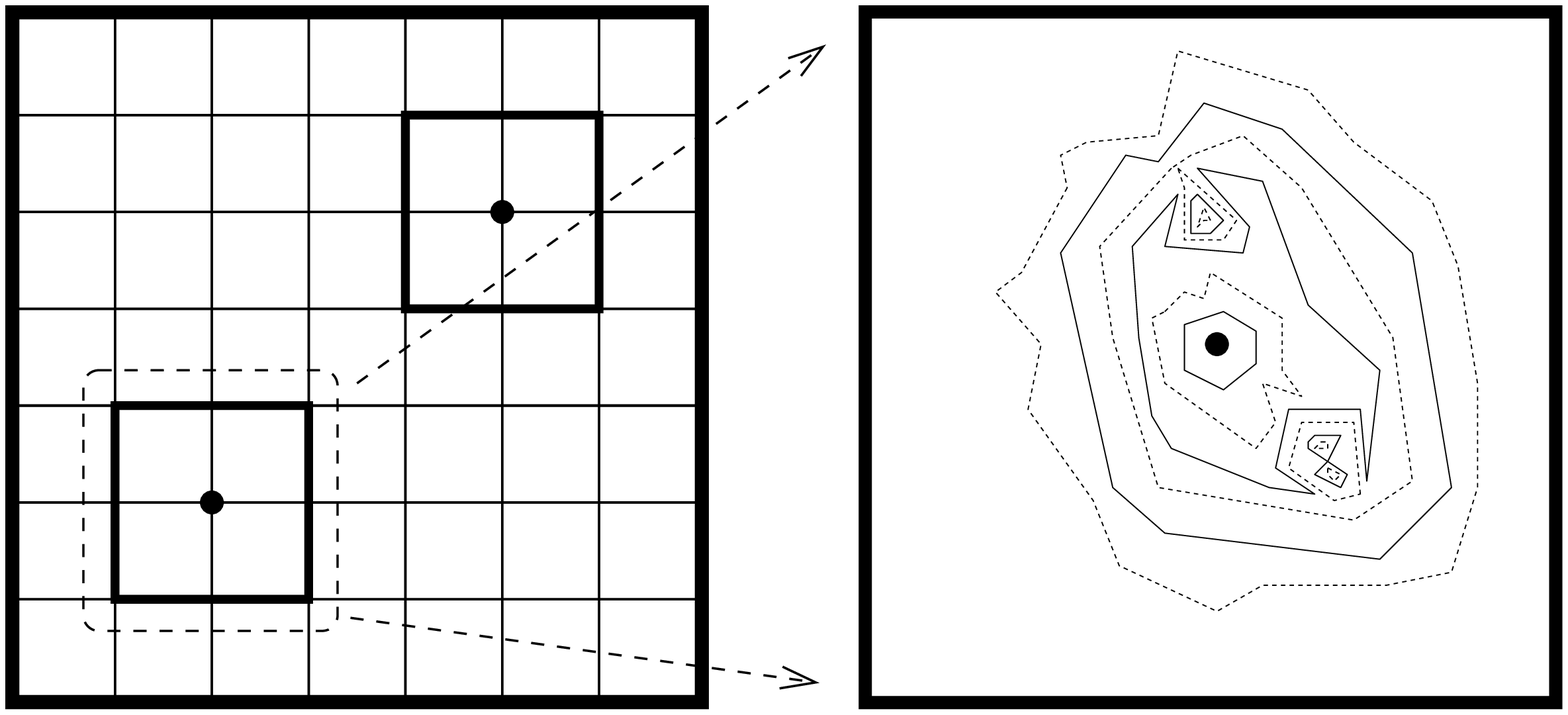}
    \put(47,45){$v_1(X)$}
    \put(126,142){$v_2(X)$}
    \put(5,-10){$X\in\X^{M_0 + N}$}
    \put(239,9){$W^{M_0 + 2 N} (v_1(X))$}
    \end{overpic}
    \caption{Level sets $\partial U_{(i_1)}$, $i_1\in\{1,2,\dots, D_N - 1\}$, of $\Upsilon_{v_1(X),\,M_0 + N}$.}
    \label{figBumpFn1}

    \centering
    \begin{overpic}
    [width=15cm, 
    tics=20]{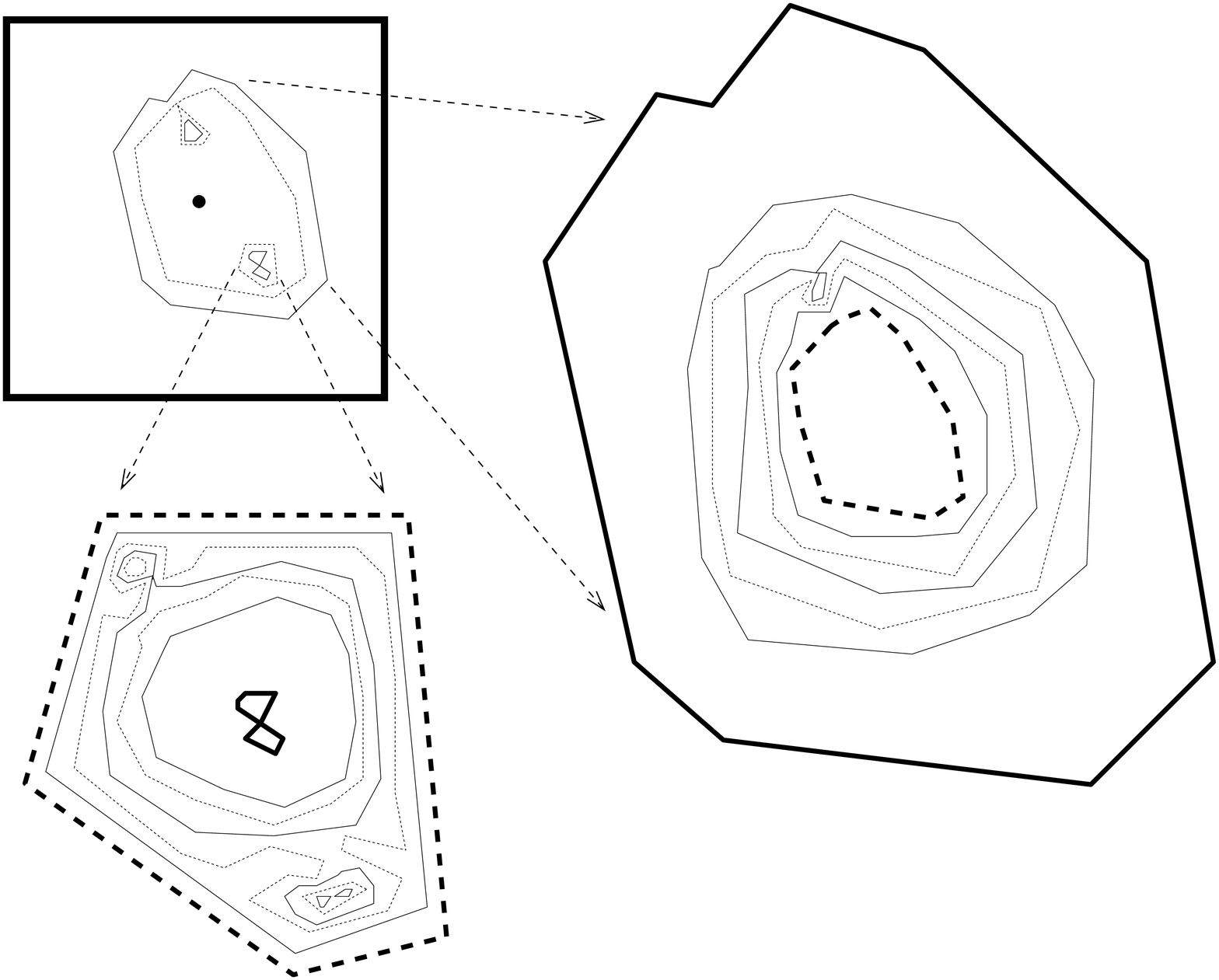}
    \put(60,278){$v_1(X)$}
    \put(5,324){$W^{M_0 + 2 N} (v_1(X))$}
    \end{overpic}
    \caption{Level sets $\partial U_{(4,i_2)}$, $i_2\in\{1,2,\dots, D_N - 1\}$, of $\Upsilon_{v_1(X),\,M_0 + N}$.}
    \label{figBumpFn2}
\end{figure}

Assume that we have constructed $U_{\overline{i}} \subseteq W^{n+N}(v)$ for each $\overline{i} \in I_l$ for some $l\in\N$, that Property~(3) is satisfied for each $\overline{i}\in I_{l-1}$, and that Properties~(1), (2), and (4) are satisfied for all $\overline{i}, \overline{j} \in I_l$.

Fix arbitrary $\overline{i} = (i_1, i_2, \dots, i_l ) \in  \{0, 1, \dots, D_N - 1\}^l$ and $i_{l + 1} \in \{1, 2, \dots, D_N - 1\}$. Denote $\overline{j} \coloneqq (i_1, i_2, \dots, i_l, i_{l + 1})$. We set $U_{(i_1, i_2, \dots, i_l, 0 )} \coloneqq U_{\overline{i}}$. We define a connected closed set
\begin{align*}
U_{\overline{j}}  \coloneqq U_{\overline{i}}  \cup 
                           \bigcup \biggl\{  X_{i_{l + 1}} \,\bigg|\,  & \text{there exist } X_1, X_2, \dots, X_{i_{l + 1}} \in \X^{n+(l + 2)N} \\
                                                                                               & \text{ such that }  \bigcup\limits_{m=1}^{i_{l + 1}} X_m \text{ is connected and } U_{\overline{i}} \cap  X_1 \neq \emptyset   \biggr\}.
\end{align*} 

\smallskip

\emph{Claim~1.} $U_{\overline{j}} \subseteq \inter U_{(i_1, i_2, \dots, i_{l-1}, 1 + i_l)}$ if $i_l \neq D_N - 1$, and $U_{\overline{j}} \subseteq W^{n+N}(v)$ if $i_l = D_N - 1$.

\smallskip

We first establish Claim~1 in the case $i_l \neq D_N - 1$. Denote $\overline{i'} \coloneqq (i_1, i_2, \dots, i_{l-1}, 1 + i_l)$. By Property~(1) of $\{U_{\overline{i}} \}_{\overline{i} \in I_l}$, $U_{\overline{i}}$ and $U_{\overline{i'}}$ are unions of $(n + (l+1)N)$-tiles. By Property~(4) of $\{U_{\overline{i}} \}_{\overline{i} \in I_l}$, $U_{\overline{i}} \subseteq \inter U_{\overline{i'}}$, so $\partial U_{\overline{i}} \cap \partial U_{\overline{i'}} = \emptyset$. We argue by contradiction and assume that $U_{\overline{j}} \nsubseteq \inter U_{\overline{i'}}$. Then there exist $X_1, X_2, \dots, X_{i_{l+1}} \in \X^{n+(l+2)N}$ such that the union $K \coloneqq \bigcup\limits_{m=1}^{i_{l + 1}} X_m$ is a connected set that intersects both $\partial U_{\overline{i}}$ and $\partial U_{\overline{i'}}$ nontrivially. Then $K$ cannot be a subset of a single $(n+(l+1)N)$-flower (of an $(n+(l+1)N)$-vertex).  Since each connected component of the preimage of a $0$-flower under $f^{n+(l+1)N}$ is an $(n+(l+1)N)$-flower, we observe that $f^{n+(l+1)N}(K)$ cannot be a subset of a single $0$-flower (of a $0$-vertex), or equivalently (see \cite[Lemma~5.33]{BM17}), $f^{n+(l+1)N}(K)$ joins opposite sides of $\CC$. Since $f^{n+(l+1)N}(K) = \bigcup\limits_{m=1}^{i_{l + 1}} f^{n+(l+1)N} (X_m)$ is connected, $\bigl\{  f^{n+(l+1)N} (X_m) \,\big|\, m\in\{1,2,\dots,i_{l+1} \}  \bigr\}    \subseteq \X^N$ (see Proposition~\ref{propCellDecomp}~(i))
, and $i_{l+1} \leq D_N - 1$, we get a contradiction to the definition of $D_N$ (see (\ref{eqDefDn})).

Claim~1 is now proved in the case $i_l \neq D_N - 1$. The argument for the proof of the case $i_l = D_N - 1$ is similar and we omit it here.

\smallskip

By Claim~1 and Property~(4) of $\{U_{\overline{i}} \}_{\overline{i} \in I_l}$, we have  $U_{\overline{j}} \subseteq W^{n+N}(v)$.

Then Properties~(1) and (2) hold for each $\overline{i} \in \{0, 1, \dots, D_N - 1\}^{l+1}$, Property~(3) holds for each $\overline{i} \in \{0, 1, \dots, D_N - 1\}^l$. In order to verify Property~(4) of $\{U_{\overline{i}} \}_{\overline{i} \in I_{l+1}}$, it suffices to observe that by Claim~1 and our construction, for all $\overline{j} \in I_l$ and $i_1, i_2, \dots, i_l, i_{l+1}, i'_{l+1} \in \{0, 1, \dots, D_N - 1\}$ with $1 \leq i_{l+1} < i'_{l+1}$ and $\overline{i} \coloneqq (i_1, i_2, \dots, i_l) < \overline{j}$, we have
\begin{equation*}
U_{\overline{i}} \subseteq \inter U_{\overline{i^1}}   \subseteq  U_{\overline{i^1}}  \subseteq \inter U_{\overline{i^2}}  \subseteq U_{\overline{i^2}}  \subseteq \inter U_{\overline{j}},
\end{equation*}
where $\overline{i^1} \coloneqq (i_1, i_2, \dots, i_l, i_{l+1})$ and $\overline{i^2} \coloneqq (i_1, i_2, \dots, i_l, i'_{l+1})$.

The construction of $\{U_{\overline{i}} \}_{\overline{i} \in I}$ and the verification of Properties~(1) through (4) is now complete.

\smallskip

We can now construct the bump function $\Upsilon_{v,\,n} \: S^2 \rightarrow [0, +\infty)$ and verify that it satisfies Properties~(a) through (c) of the bump functions.

We define 
\begin{equation}   \label{eqPfthmPerturbToStrongNonIntegrable_UpsilonDefOnBoundary}
\Upsilon_{v,\,n}(v) \coloneqq C_{26} \Lambda^{-\alpha n} \varepsilon 
\qquad\text{ and } \qquad 
\Upsilon_{v,\,n}(x) \coloneqq 0 \text{ if } x\in S^2 \setminus U_{(D_N-1)}.
\end{equation}
Property~(a) of the bump functions follows from Property~(4) of $\{U_{\overline{i}} \}_{\overline{i} \in I}$.

We denote, for each $k\in\N$,
\begin{equation*}    
I^*_k \coloneqq \{ (i_1, i_2, \dots, i_k) \in I_k \,|\, i_k \neq 0, i_l \neq D_N -1 \text{ for } 1 \leq l < k \}.
\end{equation*}
Define $I^* \coloneqq \bigcup\limits_{k\in\N}  I^*_k$.

For arbitrary $k\in \N$ and $\overline{i} = (i_1, i_2, \dots, i_k) \in I^*_k$, we define a subset $A_{\overline{i}}$ of $W^{n+N} (v)$ by
\begin{equation} \label{eqPfthmPerturbToStrongNonIntegrable_Annulus}
A_{\overline{i}}  \coloneqq U_{(i_1, i_2, \dots, i_{k-1}, i_k)} \setminus U_{(i_1, i_2, \dots, i_{k-1}, i_k - 1, D_N - 1)}.
\end{equation}
In particular $A_{(i_1)}  = U_{(i_1)} \setminus U_{ (i_1 - 1, D_N - 1) }$ for $i_1 \in \{1,2, \dots, D_N - 1 \}$. We note that by Property~(4) of $\{U_{\overline{i}} \}_{\overline{i} \in I}$,
\begin{equation} \label{eqPfthmPerturbToStrongNonIntegrable_DistinctADisjoint}
A_{\overline{i}}  \cap  A_{\overline{j}}  = \emptyset    \qquad\text{for all } \overline{i}, \overline{j}  \in I^* \text{ with } \overline{i} \neq \overline{j}.
\end{equation}
Thus we define, for each $k\in \N$ and each $\overline{i} = (i_1, i_2, \dots, i_k) \in I^*_k$,
\begin{equation}  \label{eqPfthmPerturbToStrongNonIntegrable_UpsilonDefOnAnnulus}
\Upsilon_{v, \, n}(x) \coloneqq C_{26} \Lambda^{-\alpha n} \varepsilon \bigg( 1 - \sum\limits_{j=1}^k \frac{ i_j } { (D_N - 1)^j } \biggr) 
\end{equation}
for each $x \in A_{\overline{i}}$.

With abuse of notation, for each $\overline{i}\in I^*$, we write $\Upsilon_{v,\,n}( A_{\overline{i}} ) \coloneqq \Upsilon_{v,\,n}(x)$ for any $x\in A_{\overline{i}}$.

So far we have defined $\Upsilon_{v,\,n}$ on
\begin{equation}   \label{eqPfthmPerturbToStrongNonIntegrable_UnionA}
\mathfrak{U} \coloneqq  \{v\} \cup \bigl(S^2 \setminus U_{(D_N - 1)}  \bigr) \cup \bigcup\limits_{\overline{i} \in I^*}  A_{\overline{i}} .
\end{equation}

\smallskip

\emph{Claim~2.} The set $\mathfrak{U}$ contains all vertices, i.e., $\bigcup\limits_{k\in\N_0} \V^k  \subseteq  \mathfrak{U}$.

\smallskip

In order to establish Claim~2, it suffices to show that $x \in \mathfrak{U}$ for each $x\in \V^{n + (m+1) N} \cap U_{(D_N - 1)}  \setminus \{v\}$ and each $m\in\N$. We fix an arbitrary integer $m\in\N$ and an arbitrary vertex $x\in \V^{n + (m+1) N} \cap U_{(D_N - 1)} \setminus \{v\}$. We choose a sequence $\{i_k\}_{k\in\N}$ in $\{0, 1, \dots, D_N - 2\}$ recursively as follows:

Let $i_1$ be the largest integer in $\{0,1, \dots, D_N - 2\}$ with $x\notin U_{(i_1)}$. Assume that we have chosen $\{ i_k \}_{k=1}^l$ in $\{0,1, \dots, D_N - 2\}$ for some $l\in\N$ with the property that $x \notin U_{(i_1,i_2,\dots,i_l)}$ and $x \in U_{(i_1,i_2,\dots,i_{l-1}, 1 + i_l)}$, then by Properties~(3) and (4) of  $\{U_{\overline{i}} \}_{\overline{i} \in I}$, we can choose $i_{l+1}$ to be the largest integer in $\{0,1, \dots, D_N - 1\}$ with $x\notin U_{(i_1, i_2, \dots, i_{l+1})}$. Assume that $i_{l+1} = D_N - 1$. Thus $(i_1, i_2, \dots, i_{l-1}, 1 + i_l) \in I^*$ and $x\in U_{(i_1, i_2, \dots, i_{l-1}, 1 + i_l)}  \setminus U_{(i_1, i_2, \dots, i_l, D_N - 1)} = A_{(i_1, i_2, \dots, i_{l-1}, 1 + i_l)}$. 

So we can assume, without loss of generality, that $i_{k} \neq D_N - 1$ for all $k\in\N$, i.e., $\{ i_k \,|\, k\in \N \} \subseteq \{ 0, 1, \dots, D_N - 2\}$ can be constructed above. Then $x \in U_{(i_1, i_2, \dots, i_{m-1}, 1 + i_m)}$. Since both $U_{(i_1, i_2, \dots, i_{m-1}, 1 + i_m)}$ and $U_{(i_1, i_2, \dots,  i_m)}$ are unions of $(n + (m+1) N)$-tiles (see Property~(1) of $\{U_{\overline{i}} \}_{\overline{i} \in I}$), we can see that $x\notin U_{(i_1, i_2, \dots,  i_m, D_N - 1)}$ since otherwise there would exist $X_1, X_2, \dots, X_{D_N - 1} \in \X^{n + (m + 2) N}$ such that the union $K \coloneqq \bigcup\limits_{k=1}^{D_N - 1} X_k$ is connected and have nontrivial intersections with $U_{(i_1, i_2, \dots,i_m)}$ and $\{x\}$, and consequently $K\cap \partial W^{n + (m + 1) N} (x) \neq \emptyset$. This is impossible since $f^{n + (m + 1) N} (K)$, as a union of $N$-tiles $f^{n + (m + 1) N}(X_l)$ (see Proposition~\ref{propCellDecomp}~(i)), $l\in\{1,2,\dots, D_N - 1\}$, cannot join opposite sides of $\CC$ due to the definition of $D_N$ in (\ref{eqDefDn}). Hence $(i_1, i_2, \dots, i_{m-1}, 1 + i_m) \in I^*$ and $x\in U_{(i_1, i_2, \dots, i_{m-1}, 1 + i_m)}  \setminus U_{(i_1, i_2, \dots, i_m, D_N - 1)} = A_{(i_1, i_2, \dots, i_{m-1}, 1 + i_m)}$. Claim~2 is now established.

\smallskip

\emph{Claim~3.} For the function $\Upsilon_{v,\,n}$ defined on $\mathfrak{U}$, inequality~(\ref{eqPfthmPerturbToStrongNonIntegrable_SingleBumpDiffInEachScale}) holds for each $m\in\N$, each $X\in \X^{n+mN}$, and each pair of points $x,y\in X\cap \mathfrak{U}$.

\smallskip

Fix arbitrary $m\in\N$, $X\in\X^{n+mN}$, and $x,y\in X\cap \mathfrak{U}$. Inequality~(\ref{eqPfthmPerturbToStrongNonIntegrable_SingleBumpDiffInEachScale}) holds for $x,y \in X \cap \mathfrak{U}$ trivially if $m=1$ by (\ref{eqPfthmPerturbToStrongNonIntegrable_UpsilonDefOnBoundary}) and (\ref{eqPfthmPerturbToStrongNonIntegrable_UpsilonDefOnAnnulus}). So without loss of generality, we can assume $m\geq 2$. We choose a sequence $\{i_k\}_{k\in\N}$ in $\{0, 1, \dots, D_N - 1\}$ recursively as follows:

Let $i_1$ be the largest integer in $\{0,1, \dots, D_N - 1\}$ with $X \nsubseteq U_{(i_1)}$. Assume that we have chosen $\{ i_k \}_{k=1}^l$ for some $l\in\N$ with the property that $X \nsubseteq U_{(i_1,i_2,\dots,i_l)}$, then by Properties~(3) and (4) of  $\{U_{\overline{i}} \}_{\overline{i} \in I}$, we can choose $i_{l+1}$ to be the largest integer in $\{0,1, \dots, D_N - 1\}$ with $X \nsubseteq U_{(i_1, i_2, \dots, i_{l+1})}$.

We establish Claim~3 by considering the following two cases:

\smallskip

\emph{Case 1.} $i_k = D_N - 1$ for some integer $k\in [1, m-1]$. Without loss of generality, we assume that $k$ is the smallest such integer. Recall that $m\geq 2$. If $k=1$, then by Property~(1) of $\{U_{\overline{i}} \}_{\overline{i} \in I}$, $X\subseteq \bigl(S^2 \setminus \inter U_{(D_N - 1)} \bigr) \subseteq \bigl(S^2 \setminus  U_{(D_N - 1)} \bigr) \cup A_{(D_N - 1)}$, and consequently $\Upsilon(x) = 0 = \Upsilon(y)$ by (\ref{eqPfthmPerturbToStrongNonIntegrable_UpsilonDefOnBoundary}) and (\ref{eqPfthmPerturbToStrongNonIntegrable_UpsilonDefOnAnnulus}). If $k \geq 2$, then $(i_1, i_2, \dots, i_{k-2}, 1 + i_{k-1} ), (i_1, i_2, \dots, i_{k-1}, D_N - 1 )  \in I^*$, and
\begin{align*}
X & \subseteq U_{(i_1, i_2, \dots, i_{k-2}, 1 + i_{k-1} )} \setminus \inter U_{(i_1, i_2, \dots, i_{k-1}, D_N - 1 )}  \\
   &  \subseteq A_{(i_1, i_2, \dots, i_{k-2}, 1 + i_{k-1} )} \cup A_{(i_1, i_2, \dots, i_{k-1}, D_N - 1 )}
\end{align*}
by our choice of $i_{k-1}$, the fact that both $U_{(i_1, i_2, \dots, i_{k-2}, 1 + i_{k-1} )}$ and $U_{(i_1, i_2, \dots, i_{k-1}, D_N - 1 )}$ are unions of $(n + (k+1) N)$-tiles (by Property~(1) of $\{U_{\overline{i}} \}_{\overline{i} \in I}$), and (\ref{eqPfthmPerturbToStrongNonIntegrable_Annulus}).  Hence by (\ref{eqPfthmPerturbToStrongNonIntegrable_UpsilonDefOnAnnulus}), $\Upsilon_{v, \, n}(x) = C_{26} \Lambda^{-\alpha n} \varepsilon \bigl( 1 - \sum_{j=1}^k \frac{ i_j } { (D_N - 1)^j } \bigr) = \Upsilon_{v, \, n}(y)$.

\smallskip

\emph{Case 2.} $i_k \leq D_N - 2$ for all integer $k\in [1, m-1]$. Then by our choice of $i_{m-1}$ and Properties~(1) and (4) of $\{U_{\overline{i}} \}_{\overline{i} \in I}$,
\begin{equation}   \label{eqPfthmPerturbToStrongNonIntegrable_XinAnnulus}
X \subseteq U_{(i_1, i_2, \dots, i_{m-2}, 1 + i_{m-1})}  \setminus \inter U_{(i_1, i_2, \dots,  i_{m-1})}  \subseteq U_{(i_1, i_2, \dots, i_{m-2}, 1 + i_{m-1})}  \setminus U_{\overline{j}}
\end{equation}
for each $\overline{j} \in I$ with $\overline{j} < (i_1, i_2, \dots,  i_{m-1})$.

Note that by (\ref{eqPfthmPerturbToStrongNonIntegrable_Annulus}) and Property~(4) of $\{U_{\overline{i}} \}_{\overline{i} \in I}$,
\begin{equation}  \label{eqPfthmPerturbToStrongNonIntegrable_AnnulusInU}
A_{\overline{i}} \subseteq U_{\overline{j}}  \text{ for all } \overline{i} \in I^* \text{ and } \overline{j}\in I \text{ with } \overline{i} \leq \overline{j}.
\end{equation}
By (\ref{eqPfthmPerturbToStrongNonIntegrable_UpsilonDefOnBoundary}) and (\ref{eqPfthmPerturbToStrongNonIntegrable_UpsilonDefOnAnnulus}),
\begin{equation}  \label{eqPfthmPerturbToStrongNonIntegrable_UpsilonMonotonicity}
\Upsilon_{v,\,n}(A_{\overline{i}})  \geq \Upsilon_{v,\,n} \bigl( A_{\overline{j}} \bigr)  \text{ for all } \overline{i}, \overline{j} \in I^* \text{ with } \overline{i} \leq \overline{j}.
\end{equation}

Thus by (\ref{eqPfthmPerturbToStrongNonIntegrable_XinAnnulus}), (\ref{eqPfthmPerturbToStrongNonIntegrable_AnnulusInU}), and (\ref{eqPfthmPerturbToStrongNonIntegrable_UpsilonMonotonicity}),
\begin{align*}
              \abs{ \Upsilon_{v,\,n} (x) - \Upsilon_{v,\,n} (y) } 
\leq  &   \inf  \bigl\{  \Upsilon_{v,\,n} ( A_{\overline{i}} )   \,\big|\,   \overline{j} \in I, \, \overline{i} \in I^*, \overline{i} \leq \overline{j} < (i_1, i_2, \dots, i_{m-1}) \bigr\}    \\
         & - \inf  \{  \Upsilon_{v,\,n} ( A_{\overline{i}} )   \,|\,   \overline{i} \in I^*, \overline{i} \leq  (i_1, i_2, \dots, i_{m-2}, 1 + i_{m-1})  \}   \\
\leq  & C_{26} \Lambda^{-\alpha n} \varepsilon (D_N - 1)^{-(m-1)},
\end{align*}
where the last identity follows easily from (\ref{eqPfthmPerturbToStrongNonIntegrable_UpsilonDefOnAnnulus}) and the definition of $I^*$ by separate explicit calculations depending on $i_{m-1} = 0$ or not.

Claim~3 is now established.

\smallskip

\emph{Claim~4.} The function $\Upsilon_{v,\,n}$ is continuous on $\mathfrak{U}$.

\smallskip

Fix arbitrary $x,y\in \mathfrak{U}$ and $m\in\N$ with $x\neq y$ and $y\in U^{n + m N}(x)$ (c.f.\ (\ref{defU^n})). Then there exist $X_1,X_2 \in \X^{n + m N}$ such that $x\in X_1$, $y\in X_2$, and $X_1 \cap X_2 \neq \emptyset$. It follows immediately from Definition~\ref{defcelldecomp}~(iii) that there exists an $( n + m N )$-vertex $z$ in $X_1 \cap X_2$. Then by Claim~2 and Claim~3, 
\begin{align*}
             \abs{ \Upsilon_{v,\,n}(x)  -  \Upsilon_{v,\,n}(y) } 
\leq &  \abs{ \Upsilon_{v,\,n}(x)  -  \Upsilon_{v,\,n}(z) }   +  \abs{ \Upsilon_{v,\,n}(z)  -  \Upsilon_{v,\,n}(y) }   \\
\leq &  2 C_{26} \Lambda^{-\alpha n} \varepsilon (D_N - 1)^{-(m-1)}. 
\end{align*}
Hence Claim~4 follows from Lemma~\ref{lmCellBoundsBM}~(iv) and the fact that $D_N - 1 > 1$.

\smallskip

Since we have defined $\Upsilon_{v,n}$ continuously on a dense subset $\mathfrak{U}$ of $S^2$ by Claim~2 and Claim~4, we can now extend $\Upsilon_{v,n}$ continuously to $S^2$. Property~(b) of the bump functions follows immediately from (\ref{eqPfthmPerturbToStrongNonIntegrable_UpsilonDefOnBoundary}) and (\ref{eqPfthmPerturbToStrongNonIntegrable_UpsilonDefOnAnnulus}). Property~(c) of the bump functions follows from Claim~3.

\smallskip

Recall $u^1_\b, u^2_\b, u^1_\w, u^2_\w \in \V^N$ defined above.

For each $n\in\N_0$, each $n$-tile $X\in\X^n$, and each $i\in\{1,2\}$, we define a point
\begin{equation}   \label{eqPfthmPerturbToStrongNonIntegrable_vi}
v_i(X) \coloneqq  \begin{cases} (f^n|_X)^{-1} \bigl( u^i_\b \bigr) & \text{if } X\in\X^n_\b, \\ (f^n|_X)^{-1}  \bigl( u^i_\w  \bigr)   & \text{if } X\in\X^n_\w.  \end{cases}
\end{equation}

Fix an arbitrary real-valued H\"{o}lder continuous function $\varphi \in \Holder{\alpha}(S^2,d)$ with an exponent $\alpha$. 

We are going to construct $\phi \in \Holder{\alpha}(S^2,d)$ for the given $\varphi$ by defining their difference $\Upsilon \in \Holder{\alpha}(S^2,d)$ supported on the (disjoint) backward orbits of $Y^{M_0}_\b \cup Y^{M_0}_\w$ along $\{ \xi_{\minus i} \}_{i\in\N_0}$ as the sum of a collection of non-negative bump functions constructed above.

We construct $\varphi_m \in  \Holder{\alpha}(S^2,d)$ recursively on $m\in\N_0$.

Set $\varphi_0 \coloneqq \varphi$.

Assume that $\varphi_i \in \Holder{\alpha}(S^2,d)$ has been constructed for some $i\in\N_0$, we define a number $\delta_X\in\{0,1\}$, each $(M_0 + (i+1) N)$-tile $X\in\X^{M_0 + (i+1) N}$ with $X\subseteq Y^{M_0}_\b \cup Y^{M_0}_\w$, by
\begin{equation} \label{eqPfthmPerturbToStrongNonIntegrable_deltaX}
\delta_X \coloneqq  \begin{cases} 1 & \text{if } \Absbig{ (\varphi_i)^{f,\,\CC}_{\xi,\,\xi'}   ( v_1(X), v_2(X) ) } < 2 \varepsilon d( v_1(X), v_2(X) )^\alpha,  \\ 0    &  \text{otherwise}.  \end{cases}
\end{equation} 
We define
\begin{equation}  \label{eqPfthmPerturbToStrongNonIntegrable_varphi_i}
\varphi_{i+1} \coloneqq \varphi_i  +  \sum\limits_{j\in\N}        \sum\limits_{\substack{X \in\X^{M_0 + (i+1) N} \\X \subseteq Y^{M_0}_\b \cup Y^{M_0}_\w}}  
                                                                      \delta_X \Upsilon_{v_1 ( \tau_j(X) ),  \, M_0 + (i+1) N + j}  ,
\end{equation}
and finally define the non-negative bump function $\Upsilon\: S^2 \rightarrow [0,1)$ by
\begin{equation}  \label{eqPfthmPerturbToStrongNonIntegrable_Upsilon}
\Upsilon   \coloneqq   \sum\limits_{j\in\N}    \sum\limits_{m\in\N}     \sum\limits_{\substack{X \in\X^{M_0 + m N} \\X \subseteq Y^{M_0}_\b \cup Y^{M_0}_\w}}  
                                                                      \delta_X \Upsilon_{v_1 ( \tau_j(X) ),  \, M_0 + m N + j}  .
\end{equation}
Here the function $\tau_j$ is defined in (\ref{eqPfthmPerturbToStrongNonIntegrable_tau}). It follows immediately from Property~(b) of the bump functions that the series in (\ref{eqPfthmPerturbToStrongNonIntegrable_varphi_i}) and (\ref{eqPfthmPerturbToStrongNonIntegrable_Upsilon}) converge uniformly and absolutely.

We set $\phi \coloneqq \varphi + \Upsilon$.

For each $\c\in\{\b,\w\}$, each integer $M\geq M_0$, and each $M$-tile $X\in \X^M$ with $X\subseteq Y^{M_0}_\c$, we choose an arbitrary $\bigl(M_0 + \bigl\lceil \frac{ M - M_0 }{N} \bigr\rceil N \bigr)$-tile $X'$ with $X' \subseteq X$ and define $x_i(X) \coloneqq v_i(X')$ for each $i\in\{1,2\}$.

Now we discuss some properties of the supports of the terms in the series defining $\Upsilon$ in (\ref{eqPfthmPerturbToStrongNonIntegrable_Upsilon}).  See Figure~\ref{figPerturb}.

Fix arbitrary integers $m,j\in\N$, by Property~(a) of the bump functions, (\ref{eqPfthmPerturbToStrongNonIntegrable_vi}), and properties of $u^1_\b, u^1_\w \in \V^N$, we have
\begin{align}   
                        \supp \Upsilon_{ v_1 ( \tau_j(X) ), \, M_0 + m N + j }   
\subseteq &  \overline{W}^{M_0 + (m+1) N + j} \bigl( v_1 \bigl( \tau_j(X) \bigr) \bigr)    \label{eqPfthmPerturbToStrongNonIntegrable_SuppInTile} \\
\subseteq & \inte \bigl( \tau_j(X) \bigr)
\subseteq \tau_j \bigl( Y^{M_0}_\b \cup Y^{M_0}_\w  \bigr),   \notag
\end{align}
for each $(M_0 + m N)$-tile $X\in \X^{M_0 + m N}$ with $X\subseteq Y^{M_0}_\b \cup Y^{M_0}_\w$. Consequently, by (\ref{eqPfthmPerturbToStrongNonIntegrable_SuppInTile}) and the fact that $\tau_{j_1} \bigl( Y^{M_0}_\b \cup Y^{M_0}_\w \bigr)$ and $\tau_{j_2} \bigl( Y^{M_0}_\b \cup Y^{M_0}_\w \bigr)$ are disjoint for distinct $j_1,j_2\in\N$ (c.f.\ Figure~\ref{figPerturb}), we have
\begin{equation}  \label{eqPfthmPerturbToStrongNonIntegrable_SuppDisjoint}
 \supp \Upsilon_{ v_1 ( \tau_{j_1} (X_1) ), \, M_0 + m N + j_1 }  \cap  \supp \Upsilon_{ v_1 ( \tau_{j_2} (X_2) ), \, M_0 + m N + j_2 }  = \emptyset
\end{equation}
for each pair of integers $j_1, j_2 \in \N$ and each pair of  $(M_0 + m N)$-tiles $X_1, X_2 \in\X^{M_0 + m N}$ with $X_1 \cup X_2 \subseteq Y^{M_0}_{\b} \cup Y^{M_0}_{\w}$ and $(j_1, X_1) \neq (j_2, X_2)$.

We are now ready to verify Property~(iii) in Theorem~\ref{thmPerturbToStrongNonIntegrable}.

\smallskip

\emph{Property~(iii).} By (\ref{eqPfthmPerturbToStrongNonIntegrable_SuppDisjoint}), Property~(b) of the bump functions, and (\ref{eqPfthmPerturbToStrongNonIntegrable_M0}),
\begin{align*}
                \norm{\Upsilon}_{\CCC^0(S^2)}  
\leq &    \sum\limits_{ m\in\N}  \sup \bigl\{  \norm{   \Upsilon_{ v_1 ( \tau_j(X) ), \, M_0 + m N + j }   }_{\CCC^0(S^2)}  \,\big|\, j\in\N, \, X\in \X^{M_0 + m N}, \, X\subseteq   Y^{M_0}_\b \cup Y^{M_0}_\w  \bigr\}  \\
\leq &    \sum\limits_{ m\in\N}  C_{26} \Lambda ^{ - \alpha (M_0 + m N)}  \varepsilon
\leq        \frac{ C_{26} } { 1-\Lambda^{-\alpha N} }  \Lambda^{-\alpha M_0} \varepsilon
\leq        \frac{\varepsilon}{2}.
\end{align*}

Fix $x,y\in S^2$ with $x\neq y$. 

Note that $\supp \Upsilon \subseteq \bigcup\limits_{j\in\N} \tau_j \bigl( Y^{M_0}_\b \cup Y^{M_0}_\w \bigr)$ and that this union is a disjoint union. We bound $\frac{ \abs{\Upsilon(x) - \Upsilon(y)} } { d(x,y)^\alpha }$ by considering the following cases:

\smallskip

\emph{Case 1.} $x \notin \supp \Upsilon$ and $y\notin \supp \Upsilon$. Then $\Upsilon(x) - \Upsilon(y)=0$.

\smallskip

\emph{Case 2.} $\{x, y\} \cap \tau_j \bigl( Y^{M_0}_\b \cup Y^{M_0}_\w \bigr) \neq \emptyset$ and $\{x, y\} \nsubseteq \tau_j  ( f(\xi_0) \setminus \xi_0  )$ for some $j\in\N$. Without loss of generality, we can assume that $j$ is the smallest such integer. Then by (\ref{eqPfthmPerturbToStrongNonIntegrable_Y_location}), Lemma~\ref{lmCellBoundsBM}~(i), and Property~(b) of the bump functions,
\begin{align*}
&                      \frac{ 1 } { d(x,y)^\alpha }  \abs{\Upsilon(x) - \Upsilon(y)}\\
&\qquad  \leq \frac{  \sum\limits_{ m\in\N}  \sup \bigl\{  \norm{   \Upsilon_{ v_1 ( \tau_j(X) ), \, M_0 + m N + j }   }_{\CCC^0(S^2)}  \,\big|\, X\in \X^{M_0 + m N}, \, X\subseteq   Y^{M_0}_\b \cup Y^{M_0}_\w \bigr\}  }  
                                                            { C^{-\alpha} \Lambda^{-\alpha (M_0 + j) } }  \\
&\qquad  \leq  C^{\alpha} \Lambda^{\alpha (M_0 + j) }      \sum\limits_{ m\in\N}  C_{26} \Lambda ^{ - \alpha (M_0 + m N + j)}  \varepsilon      \\
&\qquad  \leq  C \frac{ C_{26} \Lambda^{ - \alpha N }    } { 1 -  \Lambda^{ - \alpha N }     }     \varepsilon 
\leq     C \frac{ C_{26} (3C C_{26})^{-1}   } { 1 -  3^{-1}     }     \varepsilon
  =       \frac{ \varepsilon } {2}.
\end{align*} 
The last inequality follows from our choice of $N$ at the beginning of this proof.

\smallskip

\emph{Case 3.} $\{x, y\} \cap \tau_j \bigl( Y^{M_0}_\b \cup Y^{M_0}_\w \bigr) \neq \emptyset$ and $\{x, y\} \subseteq \tau_j  ( f(\xi_0) \setminus \xi_0  )$ for some $j\in\N$.  Note that such $j$ is unique. Then by (\ref{eqPfthmPerturbToStrongNonIntegrable_Upsilon}) and our constructions of $Y^{M_0}_\b, Y^{M_0}_\w \in \X^{M_0}$ and $\xi \in \Sigma_{f,\,\CC}^-$, we get that for each $z\in\{x,y\}$,
\begin{equation}  \label{eqPfthmPerturbToStrongNonIntegrable_PropertyIIICase3}
\Upsilon (z)  =    \sum\limits_{m\in\N}     \sum\limits_{\substack{X \in\X^{M_0 + m N} \\X \subseteq Y^{M_0}_\b \cup Y^{M_0}_\w}}  
                                                                      \delta_X \Upsilon_{v_1 ( \tau_j(X) ),  \, M_0 + m N + j}  (z).
\end{equation}

Since $f$ is an expanding Thurston map, we can define an integer
\begin{align*}
m_1 \coloneqq \max \bigl\{  k \in \Z   \,\big|\, \text{there exist } &  X_1, X_2 \in \X^{M_0 + k N + j}  \text{ such that } \\
                                                                                        &x\in X_1, \, y\in X_2, \text{ and } X_1 \cap X_2 \neq \emptyset   \bigr\}.
\end{align*}

If $m_1 \leq 0$, then by (\ref{eqPfthmPerturbToStrongNonIntegrable_PropertyIIICase3}), (\ref{eqPfthmPerturbToStrongNonIntegrable_SuppInTile}), Property~(b) of the bump functions, Lemma~\ref{lmCellBoundsBM}~(i), and (\ref{eqDefC27}), we have
\begin{align*}
&                       \frac{ 1 } { d(x,y)^\alpha }  \abs{\Upsilon(x) - \Upsilon(y)}\\
&\qquad  \leq \sum\limits_{m\in\N}  \frac{ \sup \bigl\{  \norm{   \Upsilon_{ v_1 ( \tau_j(X) ), \, M_0 + m N + j }   }_{\CCC^0(S^2)}  \,\big|\, X\in \X^{M_0 + m N}, \, X\subseteq   Y^{M_0}_\b \cup Y^{M_0}_\w \bigr\}    } 
                                                                          { d(x,y)^\alpha }                    \\
&\qquad  \leq \bigl( C^{-1}  \Lambda^{ -  (M_0 + N + j )}  \bigr)^{-\alpha} \sum\limits_{m\in\N}  C_{26} \Lambda^{ - \alpha (M_0 + m N + j) } \varepsilon         \\
&\qquad  \leq   C_{26}   C \bigl(  1 - \Lambda^{ - \alpha N}   \bigr)^{-1}  \varepsilon     
                  \leq    (C_{27} - 1 ) \varepsilon.                                                                 
\end{align*}

If $m_1 \geq 1$, then $y\in U^{M_0 + m_1 N + j} (x)$ and $y \notin U^{M_0 + (m_1 + 1) N + j} (x)$ (c.f.\ (\ref{defU^n})). Choose $X_1,X_2 \in \X^{M_0 + m_1 N + j}$ such that $x\in X_1$, $y\in X_2$, and $X_1 \cap X_2 = \emptyset$. For each $i\in\{1,2\}$ and each $m\in\N$ with $1\leq m \leq m_1$, we denote the unique $(M_0 + m N + j)$-tile containing $X_i$ by $Y^i_m$. Then by (\ref{eqPfthmPerturbToStrongNonIntegrable_PropertyIIICase3}), (\ref{eqPfthmPerturbToStrongNonIntegrable_SuppInTile}), Properties~(b) and (c) of the bump functions, Lemma~\ref{lmCellBoundsBM}~(i), (\ref{eqPfthmPerturbToStrongNonIntegrable_rho}), and (\ref{eqDefC27}),

\begin{align*}
&                       \frac{ 1 } { d(x,y)^\alpha }  \abs{\Upsilon(x) - \Upsilon(y)}\\
&\qquad  \leq  \sum\limits_{m\in\N}    \sum\limits_{\substack{X \in\X^{M_0 + m N} \\X \subseteq Y^{M_0}_\b \cup Y^{M_0}_\w}}  
                           \frac{ \delta_X  \abs{ \Upsilon_{v_1 ( \tau_j(X) ),  \, M_0 + m N + j} (x) -  \Upsilon_{v_1 ( \tau_j(X) ),  \, M_0 + m N + j} (y) } } { d(x,y)^\alpha }                                                  \\
&\qquad  \leq    \sum\limits_{m=m_1}^{+\infty} 
                                 \frac{ \sup \bigl\{  \norm{   \Upsilon_{ v_1 ( \tau_j(X) ), \, M_0 + m N + j }   }_{\CCC^0(S^2)}  \,\big|\, X\in \X^{M_0 + m N}, \, X\subseteq   Y^{M_0}_\b \cup Y^{M_0}_\w \bigr\} } { d(x,y)^\alpha }   \\
&\qquad\qquad + \sum\limits_{m=1}^{m_1 - 1}      \sum\limits_{i\in\{1,2\}}
                                \frac{   \abs{ \Upsilon_{v_1 ( Y^i_m ),  \, M_0 + m N + j} (x) -  \Upsilon_{v_1 ( Y^i_m ),  \, M_0 + m N + j} (y) }  } { d(x,y)^\alpha }                                         \\
&\qquad  \leq \frac{  \sum\limits_{m=m_1}^{+\infty}  C_{26} \Lambda^{ - \alpha (M_0 + m N + j) } \varepsilon 
                                   +  \sum\limits_{m=1}^{m_1 - 1}   4 C_{26} \Lambda^{ - \alpha (M_0 + m N + j) } \varepsilon (D_N - 1)^{ - (m_1 - m - 1)  }  } 
                                   {  C^{-\alpha}  \Lambda^{ - \alpha (M_0 + (m_1 + 1) N + j )}  }   \\
&\qquad  \leq   C_{26}   C \Bigl( \Lambda^{ \alpha N} \bigl( 1 - \Lambda^{ - \alpha N} \bigr)^{-1} + 4   (1-\rho)^{-1} \Bigr)  \varepsilon     
                  =    (C_{27} - 1 ) \varepsilon.                                                                        
\end{align*}

\smallskip

To summarize, we have shown that $\Hnorm{\alpha}{\phi - \varphi}{(S^2,d)} \leq \bigl(\frac{1}{2} +\frac{1}{2} + C_{27} - 1 \bigr) \varepsilon  = C_{27} \varepsilon$, establishing Property~(iii) in Theorem~\ref{thmPerturbToStrongNonIntegrable}.

\smallskip

Finally, we are going to verify Properties~(i) and (ii) in Theorem~\ref{thmPerturbToStrongNonIntegrable}.

Fix arbitrary $\c\in\{\b,\w\}$, $M\in\N$ with $M\geq M_0$, and $X_0 \in \X^M$ with $X_0 \subseteq Y^{M_0}_\c$. Denote $m_0 \coloneqq \bigl\lceil \frac{ M - M_0 }{N} \bigr\rceil$, $M' \coloneqq M_0 + m_0 N \in [M, M+N)$, and fix $X' \in \X^{M'}$ with $x_1(X_0) = v_1(X') \in \V^{M' + N}$ and $x_2(X_0) = v_2(X') \in \V^{M' + N}$.

\smallskip

\emph{Property~(i).} Fix arbitrary $i\in\{1,2\}$. Since $\overline{W}^{M' + N} (x_i(X_0)) \subseteq \inte(X') \subseteq \inte(X_0)$ and $\overline{W}^{M' + N} (x_1(X_0)) \cap \overline{W}^{M' + N} (x_2(X_0)) = \emptyset$ (which follows from (\ref{eqPfthmPerturbToStrongNonIntegrable_FlowerLocation}) and (\ref{eqPfthmPerturbToStrongNonIntegrable_FlowersDisjoint})), we get from Lemma~\ref{lmCellBoundsBM}~(i) and (ii) that
\begin{equation*}
d \bigl( x_i(X_0), S^2\setminus X_0 \bigr) 
\geq C^{-1} \Lambda^{ - (M'+N) } 
\geq C^{-1} \Lambda^{ - M - 2N } 
\geq C^{-2} \Lambda^{ - 2N }  \diam_d( X_0 ),
\end{equation*}
and similarly,
\begin{equation*}
d  ( x_1(X_0),  x_2(X_0)  ) 
\geq C^{-1} \Lambda^{ - (M'+N) } 
\geq C^{-1} \Lambda^{ - M - 2N } 
\geq C^{-2} \Lambda^{ - 2N }  \diam_d( X_0 ).
\end{equation*}

Property~(i) in Theorem~\ref{thmPerturbToStrongNonIntegrable} now follows from (\ref{eqPfthmPerturbToStrongNonIntegrable_varepsilon}).

\smallskip

\emph{Property~(ii).} We first show
\begin{equation}   \label{eqPfthmPerturbToStrongNonIntegrable_PrePropertyII}
 \AbsBig{ \phi^{f,\,\CC}_{\xi,\,\xi'} ( x_1(X_0), x_2(X_0) ) } \geq   2 \varepsilon d( x_1(X_0), x_2(X_0) )^\alpha.
\end{equation}

Indeed, observe that by our construction and (\ref{eqPfthmPerturbToStrongNonIntegrable_SuppInTile}), for each integer $m>m_0$, the sets
\begin{equation*}
                     \bigcup\limits_{j\in\N}     \bigcup\limits_{\substack{X \in\X^{M_0 + m N} \\X \subseteq Y^{M_0}_\b \cup Y^{M_0}_\w}}  \supp \Upsilon_{ v_1 ( \tau_j(X) ), \, M_0 + m N + j }
\subseteq   \bigcup\limits_{j\in\N}     \bigcup\limits_{\substack{X \in\X^{M_0 + m N} \\X \subseteq Y^{M_0}_\b \cup Y^{M_0}_\w}}    \inte( \tau_j(X) )
\end{equation*}
are disjoint from the backward orbits of $v_1(X') \in \V^{M_0 + (m_0 +1) N}$ and $v_2(X') \in \V^{M_0 + (m_0 +1) N}$ under $\xi$ and $\xi'$. Thus by (\ref{eqPfthmPerturbToStrongNonIntegrable_varphi_i}),
\begin{align*}
&                   \AbsBig{ \phi^{f,\,\CC}_{\xi,\,\xi'} ( x_1(X_0), x_2(X_0) ) }  \\
&\qquad =   \AbsBig{ \phi^{f,\,\CC}_{\xi,\,\xi'} ( v_1(X'),    v_2(X') ) }    \\
&\qquad =   \Absbigg{ \Bigl( \varphi_{m_0} +  \sum\limits_{j\in\N}    \sum\limits_{m=m_0 + 1}^{+\infty}     \sum\limits_{\substack{X \in\X^{M_0 + m N} \\X \subseteq Y^{M_0}_\b \cup Y^{M_0}_\w}}  
                                                                      \delta_X \Upsilon_{v_1 ( \tau_j(X) ),  \, M_0 + m N + j}  \Bigr)^{f,\,\CC}_{\xi,\,\xi'} ( v_1(X'), v_2(X') ) }  \\
&\qquad =   \AbsBig{ ( \varphi_{m_0} )^{f,\,\CC}_{\xi,\,\xi'} ( v_1(X'),    v_2(X') ) }                                                                  .
\end{align*}

We observe that for each $j\in\N$, the sets
\begin{equation*}
                   \bigcup\limits_{j\in\N}    \bigcup\limits_{\substack{X \in\X^{M_0 + m_0 N}  \setminus \{ X' \} \\ X \subseteq  Y^{M_0}_\b \cup Y^{M_0}_\w   }}  \supp \Upsilon_{ v_1 ( \tau_j(X) ), \, M_0 + m_0 N + j }
\subseteq  \bigcup\limits_{j\in\N}    \bigcup\limits_{\substack{X \in\X^{M_0 + m_0 N}  \setminus \{ X' \} \\ X \subseteq  Y^{M_0}_\b \cup Y^{M_0}_\w   }}   \inte( \tau_j(X) )
\end{equation*}
are disjoint from the backward orbits of $v_1(X')$ and $v_2(X')$ under $\xi$ and $\xi'$ (by (\ref{eqPfthmPerturbToStrongNonIntegrable_SuppInTile}) and our choices of $\xi$ and $\xi'$ from Lemma~\ref{lmDisjointBackwardOrbits}). See Figure~\ref{figPerturb}. Thus for each $X\in \X^{M_0 + m_0 N}$ with $X \subseteq  Y^{M_0}_\b \cup Y^{M_0}_\w$ and $X \neq X'$,  we have
\begin{equation}    \label{eqPfthmPerturbToStrongNonIntegrable_UpsilonTempDistr0}
( \Upsilon_{v_1 ( \tau_j(X) ),  \, M_0 + m_0 N + j}  )^{f,\,\CC}_{\xi,\,\xi'} ( v_1(X'), v_2(X') )  = 0.
\end{equation}

By our construction in (\ref{eqPfthmPerturbToStrongNonIntegrable_deltaX}) and (\ref{eqPfthmPerturbToStrongNonIntegrable_varphi_i}), if
\begin{equation*}
\AbsBig{ ( \varphi_{m_0 - 1} )^{f,\,\CC}_{\xi,\,\xi'} ( v_1(X'),    v_2(X') ) }  \geq 2 \varepsilon d( v_1(X'), v_2(X') )^\alpha,
\end{equation*}
then $\delta_{X'} = 0$, and consequently, by (\ref{eqPfthmPerturbToStrongNonIntegrable_varphi_i}) and (\ref{eqPfthmPerturbToStrongNonIntegrable_UpsilonTempDistr0}), we have
\begin{equation*}
         \AbsBig{ ( \varphi_{m_0} )^{f,\,\CC}_{\xi,\,\xi'} ( v_1(X'),    v_2(X') ) }  
=       \AbsBig{ ( \varphi_{m_0 - 1} )^{f,\,\CC}_{\xi,\,\xi'} ( v_1(X'),    v_2(X') ) }  
\geq  2 \varepsilon d( v_1(X'), v_2(X') )^\alpha.
\end{equation*}
On the other hand, if
\begin{equation*}
\AbsBig{ ( \varphi_{m_0 - 1} )^{f,\,\CC}_{\xi,\,\xi'} ( v_1(X'),    v_2(X') ) }  < 2 \varepsilon d( v_1(X'), v_2(X') )^\alpha,
\end{equation*}
then $\delta_{X'} = 1$ (see (\ref{eqPfthmPerturbToStrongNonIntegrable_deltaX})), and consequently, by (\ref{eqPfthmPerturbToStrongNonIntegrable_varphi_i}), (\ref{eqPfthmPerturbToStrongNonIntegrable_UpsilonTempDistr0}), Property~(a) of the bump functions, Lemma~\ref{lmCellBoundsBM}~(ii), and (\ref{eqDefC26}), we get
\begin{align*}
&                            \AbsBig{ ( \varphi_{m_0} )^{f,\,\CC}_{\xi,\,\xi'} ( v_1(X'),    v_2(X') ) }    \\
&\qquad \geq      \AbsBig{ \sum\limits_{j\in\N}   ( \Upsilon_{v_1(\tau_j(X')), \, M' + j}   )^{f,\,\CC}_{\xi,\,\xi'}  ( v_1(X'), v_2(X') ) } 
                            - \AbsBig{ ( \varphi_{m_0 - 1} )^{f,\,\CC}_{\xi,\,\xi'} ( v_1(X'),    v_2(X') ) }    \\
&\qquad \geq     \AbsBig{ \sum\limits_{j\in\N}    \Upsilon_{v_1(\tau_j(X')), \, M' + j}  ( v_1 ( \tau_j (X') ) )  } - 2 \varepsilon d( v_1(X'), v_2(X') )^\alpha    \\
&\qquad  =           \sum\limits_{j\in\N} C_{26} \Lambda^{ - \alpha (M'+j) } \varepsilon   -  2 \varepsilon d( v_1(X'), v_2(X') )^\alpha   \\
&\qquad \geq     \frac{ \Lambda^{-\alpha} \varepsilon } { 1- \Lambda^{-\alpha} } C_{26} C^{-\alpha}   ( \diam_d(X')  )^\alpha  -  2 \varepsilon d( v_1(X'), v_2(X') )^\alpha  \\
&\qquad \geq     2 \varepsilon d( v_1(X'), v_2(X') )^\alpha.
\end{align*}

Hence we have proved (\ref{eqPfthmPerturbToStrongNonIntegrable_PrePropertyII}). Now we are going to establish (\ref{eqSNIBoundsPerturb}).

Fix arbitrary $N'\geq N_0$. Define $X^{N'+M_0}_{\c,1} \coloneqq \tau_{N'} \bigl( Y^{M_0}_\c \bigr)$ and $X^{N'+M_0}_{\c,2} \coloneqq \tau'_{N'} \bigl( Y^{M_0}_\c \bigr)$ (c.f.\ (\ref{eqPfthmPerturbToStrongNonIntegrable_tau})). Note that $\varsigma_1 = \tau_{N'}|_{ Y^{M_0}_\c }$ and $\varsigma_2 = \tau'_{N'}|_{ Y^{M_0}_\c }$.

Then by Lemma~\ref{lmSnPhiBound}, Lemma~\ref{lmCellBoundsBM}~(i) and (ii), Proposition~\ref{propCellDecomp}~(i), and Properties~(i) and (iii) in Theorem~\ref{thmPerturbToStrongNonIntegrable},
\begin{align*}
&                       \frac{ \abs{  S_{N' }\phi ( \varsigma_1 (x_1(X_0)) ) -  S_{N' }\phi ( \varsigma_2 (x_1(X_0)) )  -S_{N' }\phi ( \varsigma_1 (x_2(X_0)) ) +  S_{N' }\phi ( \varsigma_2 (x_2(X_0)) )   }  } 
                                  {  d(x_1(X_0),x_2(X_0))^\alpha  }  \\
&\qquad \geq  \frac{ \Absbig{ \phi^{f,\,\CC}_{\xi,\,\xi'} (x_1(X_0)) , x_2(X_0))  }   } { d(x_1(X_0),x_2(X_0))^\alpha }
                                 -  \limsup\limits_{n\to+\infty}       \frac{ \abs{  S_{n - N' }\phi ( \tau_n (v_1(X')) ) -  S_{n - N' }\phi ( \tau_n (v_2(X')) )  }  }   {  \varepsilon^\alpha ( \diam_d (X_0))^\alpha  }  \\
&\qquad\qquad    -  \limsup\limits_{n\to+\infty}       \frac{ \abs{  S_{n - N' }\phi ( \tau'_n (v_1(X')) ) -  S_{n - N' }\phi ( \tau'_n (v_2(X')) )  }  }   {  \varepsilon^\alpha ( \diam_d (X_0))^\alpha  }              \\
&\qquad \geq 2 \varepsilon -  \frac{\Hseminorm{\alpha,\, (S^2,d)}{\phi} C_0}{1-\Lambda^{-\alpha}} \cdot
                                                     \frac{    d ( \tau_{N'} (v_1(X')) , \tau_{N'} (v_2(X')))^\alpha  +  d ( \tau'_{N'} (v_1(X')) , \tau'_{N'} (v_2(X')))^\alpha  }   {  \varepsilon^\alpha ( \diam_d (X_0))^\alpha  }   \\
&\qquad \geq 2 \varepsilon -  \frac{\Hseminorm{\alpha,\, (S^2,d)}{\phi} C_0}{1-\Lambda^{-\alpha}} \cdot
                                                     \frac{   ( \diam_d ( \tau_{N'} (X') ))^\alpha  +  ( \diam_d ( \tau'_{N'} (X') ))^\alpha  }   {  \varepsilon^\alpha ( \diam_d (X_0))^\alpha  }          \\
&\qquad \geq 2 \varepsilon -  \frac{ \bigl(\Hnorm{\alpha}{\varphi}{(S^2,d)} + \varepsilon C_{27} \bigr) C_0}{1-\Lambda^{-\alpha}} \cdot
                                                     \frac{  2 C^\alpha \Lambda^{ - \alpha (M_0 + m_0  N + N' )}  }   {  \varepsilon^\alpha C^{-\alpha} \Lambda^{ - \alpha (M_0 + m_0 N )} }          \\
&\qquad \geq 2 \varepsilon -  \frac{ 2 C^2 \varepsilon^{-\alpha} \bigl(\Hnorm{\alpha}{\varphi}{(S^2,d)} + \varepsilon C_{27} \bigr) C_0}{1-\Lambda^{-\alpha}}   \Lambda^{ - \alpha N_0}
        \geq \varepsilon.                                                                                                 
\end{align*}
The last inequality follows from (\ref{eqPfthmPerturbToStrongNonIntegrable_N0}). Property~(ii) in Theorem~\ref{thmPerturbToStrongNonIntegrable} is now established.

\smallskip

The proof of Theorem~\ref{thmPerturbToStrongNonIntegrable} is now complete.
\end{proof}

\begin{proof}[Proof of Theorem~\ref{thmSNIGeneric}]
Note that for each $n\in\N$, the map $F\coloneqq f^n$ is an expanding Thurston map with $\post F = \post f$ and with the combinatorial expansion factor $\Lambda_0(F) = (\Lambda_0(f))^n$ (by (\ref{eqDefCombExpansionFactor}) and Lemma~\ref{lmCellBoundsBM}~(vii)), and  $d$ is a visual metric for $F$ with expansion factor $\Lambda^n$ (by Lemma~\ref{lmCellBoundsBM}). Thus by \cite[Theorem~15.1]{BM17} (see also Lemma~\ref{lmCexistsL}) and Lemma~\ref{lmSNIwoC}, it suffices to prove Theorem~\ref{thmSNIGeneric} under the additional assumption of the existence of a Jordan curve $\CC \subseteq S^2$ satisfying $\post \subseteq \CC$ and $f(\CC) \subseteq \CC$. We fix such a curve $\CC$ and consider the cell decomposition induced by the pair $(f,\CC)$ in this proof.

We first show that $\mathcal{S}^\alpha$ is an open subset of $\Holder{\alpha}(S^2,d)$, for each $\alpha\in(0,1]$.

Fix $\alpha\in (0,1]$ and $\phi\in \mathcal{S}^\alpha$ with associated constants $N_0, M_0 \in \N$, $\varepsilon \in (0,1)$, and $M_0$-tiles $Y^{M_0}_\b \in \X^{M_0}_\b$ and $Y^{M_0}_\w \in \X^{M_0}_\w$ as in Definition~\ref{defStrongNonIntegrability}. For each $\c\in\{\b,\w\}$, each integer $M\geq M_0$, and each $X\in \X^M$ with $X\subseteq Y^{M_0}_\c$, we choose two points $x_1(X), x_2(X) \in X$ associated to $\phi$ as in Definition~\ref{defStrongNonIntegrability}.

Recall $C_0 > 1$ is a constant depending only on $f$, $\CC$, and $d$ from Lemma~\ref{lmMetricDistortion}.

\smallskip

\emph{Claim.} Fix an arbitrary $\psi \in \Holder{\alpha}(S^2,d)$ with
\begin{equation}   \label{eqPfthmSNIGeneric}
\Hnorm{\alpha}{\phi - \psi}{(S^2,d)} \leq \frac{ 1- \Lambda^{-\alpha}}{4C_0} \varepsilon.
\end{equation}
Then $\psi$ satisfies Properties~(i) and (ii) in Definition~\ref{defStrongNonIntegrability} with the constant $\varepsilon$ for $\phi$ replaced by $\frac{\varepsilon}{2}$ for $\psi$, and with the same constants $N_0, M_0\in\N$, $M_0$-tiles $Y^{M_0}_\b$, $Y^{M_0}_\w$, and points $x_1(X)$, $x_2(X)$ as those for $\phi$.

\smallskip

Property~(i) in Definition~\ref{defStrongNonIntegrability} for $\psi$ follows trivially from that for $\phi$. To establish Property~(ii) for $\psi$, we fix arbitrary integer $N\geq N_0$, and $(N+M_0)$-tiles $X^{N+M_0}_{\c,1}, X^{N+M_0}_{\c,2} \in \X^{N+M_0}$ that satisfies (\ref{eqSNIBoundsDefn}) and $Y^{M_0}_\c = f^N\bigl(  X^{N+M_0}_{\c,1}  \bigr) =   f^N\bigl(  X^{N+M_0}_{\c,2}  \bigr)$. Then by (\ref{eqSNIBoundsDefn}), Lemma~\ref{lmSnPhiBound}, and (\ref{eqPfthmSNIGeneric}),
\begin{align*}
&                              \frac{ \abs{  S_{N }\psi ( \varsigma_1 (x_1(X)) ) -  S_{N }\psi ( \varsigma_2 (x_1(X)) )  -S_{N }\psi ( \varsigma_1 (x_2(X)) ) +  S_{N }\psi ( \varsigma_2 (x_2(X)) )   }  } 
                                         {  d(x_1(X),x_2(X))^\alpha  }  \\
&\qquad       \geq  \frac{ \abs{  S_{N }\phi ( \varsigma_1 (x_1(X)) ) -  S_{N }\phi ( \varsigma_2 (x_1(X)) )  -S_{N }\phi ( \varsigma_1 (x_2(X)) ) +  S_{N }\phi ( \varsigma_2 (x_2(X)) )   }  } 
                                         {  d(x_1(X),x_2(X))^\alpha  }  \\
&\qquad\qquad   -  \sum\limits_{i\in\{1,2\}}    \frac{  \abs{  S_{N } (\psi - \phi) ( \varsigma_i (x_1(X)) )  -S_{N } (\psi - \phi) ( \varsigma_i (x_2(X)) )  }}    {  d(x_1(X),x_2(X))^\alpha  }    \\
&\qquad       \geq  \varepsilon -  \frac{2  \Hseminorm{\alpha,\, (S^2,d)}{\psi - \phi} C_0}{1-\Lambda^{-\alpha}}  
                       \geq \frac{\varepsilon}{2}.
\end{align*}
The claim is now established.

Hence $\mathcal{S}^\alpha$ is open in $\Holder{\alpha}(S^2,d)$.

Finally, recall that $1<\Lambda \leq \Lambda_0(f)$ (see \cite[Theorem~16.3]{BM17}). Thus if either $\alpha \in (0,1)$ or $\Lambda \neq \Lambda_0(f)$, then $\Lambda^{\alpha} < \Lambda_0(f)$, and the density of $\mathcal{S}^\alpha$ in $\Holder{\alpha}(S^2,d)$ follows immediately from Theorem~\ref{thmPerturbToStrongNonIntegrable}.
\end{proof}

\end{document}